\numberwithin{equation}{section}
\newtheorem{theorem}{Theorem}[section]
\newtheorem{lemma}[theorem]{Lemma}
\newtheorem{proposition}[theorem]{Proposition}
\newtheorem{example}[theorem]{Example}
\newtheorem{definition}[theorem]{Definition}
\newtheorem{corollary}[theorem]{Corollary}
\newtheorem{remark}[theorem]{Remark}
\newenvironment{proof}{\bf Proof. \rm}{$\Box$}
\newcommand{\be}{\begin{equation}}
\newcommand{\ee}{\end{equation}}
\newcommand{\bes}{\begin{equation*}}
\newcommand{\ees}{\end{equation*}}
\newcommand{\cA}{\mathcal{A}}
\newcommand{\cB}{\mathcal{B}}
\newcommand{\cE}{\mathcal{E}}
\newcommand{\cI}{\mathcal{I}}
\newcommand{\cH}{\mathcal{H}}
\newcommand{\cK}{\mathcal{K}}
\newcommand{\cM}{\mathcal{M}}
\newcommand{\cN}{\mathcal{N}}
\newcommand{\cF}{\mathcal{F}}
\newcommand{\cL}{\mathcal{L}}
\newcommand{\cT}{\mathcal{T}}
\newcommand{\cO}{\mathcal{O}}
\newcommand{\cP}{\mathcal{P}}
\newcommand{\cQ}{\mathcal{Q}}
\newcommand{\cR}{\mathcal{R}}
\newcommand{\cS}{\mathcal{S}}
\newcommand{\cX}{\mathcal{X}}
\newcommand{\tT}{\widetilde{T}}
\newcommand{\tV}{\widetilde{V}}
\newcommand{\mfp}{\mathfrak{p}}
\newcommand{\mfq}{\mathfrak{q}}
\newcommand{\mfB}{\mathfrak{B}}
\newcommand{\Rp}{\mathbb{R}_+}
\newcommand{\Rpt}{\mathbb{R}_+^2}
\newcommand{\Rpk}{\mathbb{R}_+^k}
\newcommand{\lel}{\left\langle}
\newcommand{\rir}{\right\rangle}
\newcommand{\diad}{\mathbb{D}^2}
\newcommand{\diadp}{\mathbb{D}_+^2}
\newcommand{\dpp}{\mathbb{D}_{++}}
\newcommand{\diadpp}{\mathbb{D}_{++}^2}
\newcommand{\mb}[1]{\mathbb{#1}}
\begin{document}


\title{\bf Product systems, subproduct systems and dilation theory of completely positive semigroups
\\
}

\author{Orr Moshe Shalit}
\date{}
\maketitle


\pagestyle{empty}
\thispagestyle{empty}
\
\pagebreak[4]

\pagestyle{empty}
\thispagestyle{empty}
\vspace*{5cm}
\begin{center}
\Large
\textbf{
Product systems, subproduct systems\\
and dilation theory of completely\\
positive semigroups\\
}
\end{center}
\vspace{1cm}
\begin{center}
Research Thesis
\end{center}
\vspace{1cm}
\begin{center}
In partial fulfillment of the requirements of the\\
Degree of Doctor of Philosophy\\
\end{center}
\vspace{2cm}
\begin{center}
Orr Moshe Shalit
\end{center}
\vspace{1cm}
\begin{center}
Submitted to the Senate of the\\
Technion - Israel Institute of Technology\\
\end{center}
\vspace{1cm}
\begin{center}
Tamuz 5769 \hspace{\fill} Haifa \hspace{\fill} July 2009\\
\end{center}
\pagebreak[4]

%
\pagestyle{empty}
\thispagestyle{empty}
\
\pagebreak[4]
%
%

\pagestyle{empty}
\thispagestyle{empty}
\vspace*{2cm}

\begin{center}

\Large To Nohar

\end{center}

\pagebreak[4]

\pagestyle{empty}
\thispagestyle{empty}
\
\pagebreak[4]

%
\pagestyle{empty}
\thispagestyle{empty}
%
%


\begin{center}
\Large
\textbf{
Acknowledgments
}
\end{center}

\noindent
The research was carried out under the supervision of
Professor Baruch Solel in the Department of Mathematics.

\noindent I thank Baruch for the excellent professional guidance, for being patient and meticulous, and for his concern for my development and well being.

%

\vspace{1cm}

\noindent I thank Daniel Markiewicz and Eliahu Levy for collaborating with me (separately), and for allowing me to publish our joint results in this thesis. I could not have reached these results without their help.

%

\vspace{1cm}
\noindent I thank the Department of Mathematics - staff and faculty - that made me feel at home from the very first day that I arrived at the Technion eight years ago. A special thanks goes to Hana Kaplan, for her dedication.

\vspace{1cm}
\noindent
I want to thank my friends at the Technion for making the time spent at the Technion a very pleasant one, and especially I want to thank Daniel Reem, Itay Ben-Dan and Sedi Bartz, for their special support.

\vspace{1cm}
\noindent
The generous financial support of the Technion---Israel Institute of
Technology is gratefully acknowledged. In addition, financial support
during parts of my graduate studies was provided by
  \begin{itemize}
    \item the Pollak Fellowship,
    \item the Jacobs-Qualcom Fellowship / Gutwirth Prize,
    \item the Promotion of Excellence in Mathematics Prize given by the Department of Mathematics,
  \end{itemize}
for which I am deeply grateful.

\vspace{1cm}
\noindent I am grateful to my loving and beloved family:

\noindent My wife's parents, Braha of blessed memory and Ilan, supported my family and me during all the years of my studies, in every way.

\noindent My parents, Malka and Meir, have always believed and me and helped me, and are with me wherever I go.

\noindent My beloved children - Anna, Tama, Gev, Em and Shem - accept me the way I am and give me the strength to go in my way.

\noindent Nohar, my dear wife, has given me so much, and received so little. This work is dedicated with great love to her.


\pagebreak[4]
%
%
%
\pagestyle{empty}
\thispagestyle{empty}
\
\pagebreak[4]
%

%
%
%
%
%
%
%
%
%
%

\addtocontents{toc}{\protect\thispagestyle{empty}}
\addtocontents{lot}{\protect\thispagestyle{empty}}
\addtocontents{lof}{\protect\thispagestyle{empty}}

\tableofcontents
\pagestyle{empty}
\thispagestyle{empty}

%
%

\pagestyle{plain}
\chapter*{Abstract}
\pagenumbering{arabic}
\setcounter{page}{1}
\addcontentsline{toc}{chapter}{\protect\numberline{}{Abstract}}
\vspace{-10pt}

Let $\cM$ be a von Neumann algebra acting on a Hilbert space $H$, and let $\{\Theta_s \}_{s \in \cS}$ be a semigroup of contractive, normal and completely positive maps (henceforth: CP maps) on $\cM$ over a unital and abelian semigroup $\cS$, that is:
\bes
\Theta_{s + t}  = \Theta_s \circ \Theta_t ,
\ees
for all $s, t \in \cS$, and $\Theta_0 = {\bf id}$.

A quadruple $(K, u, \cR, \{\alpha_s \}_{s \in \cS})$ consisting of a Hilbert space $K$, an isometric embedding $u: H \rightarrow K$, a von Neumann algebra $\cR$ (of operators acting on $K$) and a semigroup of normal $*$-endomorphisms $\{\alpha_s \}_{s \in \cS}$ is called a \emph{$*$-endomorphic dilation of $\{\Theta_s \}_{s \in \cS}$} (or an \emph{E-dilation}, for short) if $\cM=u^* \cR u$ and if for all $T \in \cR, s \in \cS$ the following equation holds:
\bes
\Theta_s (u^* T u) = u^* \alpha_s (T) u .
\ees

The purpose of this thesis is to develop dilation theory of semigroups of CP maps.
The central results of this thesis are
\begin{enumerate}
\item When $\cS = \Rpt$ and $\Theta_{s,t} = \phi_s \circ \psi_t$, where $\{\phi_t\}_{t \geq 0}$ and $\{\psi_t\}_{t \geq 0}$ are two \emph{strongly commuting} semigroups of \emph{unit preserving} CP maps, an E-dilation exists.
\item When $\cS = \Rpt$, $\cM = B(H)$ and $\Theta_{s,t} = \phi_s \circ \psi_t$, where $\{\phi_t\}_{t \geq 0}$ and $\{\psi_t\}_{t \geq 0}$ are two \emph{strongly commuting} semigroups of (not necessarily unit preserving) CP maps, and E-dilation exists.
\item For general $\cS$ we formulate some necessary and sufficient conditions for the existence of a (minimal) dilation.
\end{enumerate}
Moreover, we show that in general, when $\cS = \mb{N}^k$, an E-dilation does not necessarily exist
(thereby settling an open problem raised by Bhat in 1998).

The main tools used to prove the above results are \emph{product systems}, \emph{subproduct systems},
and their \emph{representations}. With every semigroup of CP maps we associate a subproduct system and a
representation from which the semigroup can be reconstructed. This association reduces the problem of constructing
an E-dilation to a semigroup of CP maps to the problem of constructing an isometric dilation to a subproduct system
representation. Regarding the latter problem, we obtain the following results:
\begin{enumerate}
\item When $\cS = \diadp$, then every completely contractive product system representation has an isometric dilation.
\item When $\cS \subseteq \mb{R}_+^k$ is sufficiently ``nice" (e.g., $\mb{R}_+^k$ or $\mb{N}^k$), then every fully coisometric product system representation has a fully coisometric and isometric dilation.
\item When $\cS \subseteq \mb{R}_+^k$ is \emph{commensurable} (e.g. $\mb{Q}_+^k$ or $\mb{N}^k$), then under suitable assumptions a product system representation has an isometric dilation.
\end{enumerate}
The latter two results are proved by a novel construction that allows to reduce these problems to analogous, well-studied problems in classical dilation theory.

The application of subproduct systems to the solution of the above problems have led us to study subproduct
systems in their own. Special attention is given to subproduct
systems over the semigroup $\mb{N}$, which are used as a framework for studying tuples of operators satisfying
homogeneous polynomial relations, and the operator algebras they generate. As applications we obtain a
noncommutative (projective) Nullstellansatz, a model for tuples of operators subject to homogeneous
polynomial relations, a complete description of all representations of Matsumoto's subshift C$^*$-algebra
when the subshift is of finite type, and a classification of certain operator algebras -- including an interesting
non-selfadjoint generalization of the noncommutative tori.


%
\chapter*{List of Symbols}
\addcontentsline{toc}{chapter}{\protect\numberline{}{List of Symbols}}
%
%
\noindent\textbf{Sets}\\
\hspace*{2cm}
\begin{tabular}{p{1.3in}p{3.8in}}
$ \mb{N} $
        \dotfill &
        the natural numbers $0,1,2,\ldots$\\
$ \mb{Z} $
        \dotfill &
        the integers\\
$ \mb{Q} $
        \dotfill & the rationals\\
$ \mb{R} $
        \dotfill & the reals\\
$ \mb{R}_+ $
        \dotfill & the nonnegative reals $[0,\infty)$\\
$ \Rpt,\,\, \Rpk $
        \dotfill & $\mb{R}_+ \times \mb{R}_+ $, $\,\,\mb{R}_+ \times \mb{R}_+ \times \cdots \times \mb{R}_+ \,$ ($k$ times)\\
$ \cS $
        \dotfill & usually an arbitrary subsemigroup $0 \in \cS \subseteq \Rpk$\\                     $ \mb{C} $
        \dotfill & the complex numbers\\
$ M_n(\mb{C}) $
        \dotfill & the complex $n \times n$ matrices\\
$ \mb{D}_+ $
        \dotfill & the nonnegative dyadic rationals $\{\frac{m}{2^n}:m,n \in \mb{N}\}$\\
$ \diad $
        \dotfill & the pairs of dyadic rationals $\left\{\left(\frac{k}{2^{n}},\frac{m}{2^{n}}\right) : k,m, \in \mathbb{Z}\right\}$ \\

\textbf{BUT ALSO}    \\
$ \mb{D} $
        \dotfill & the open unit disc $\{z \in \mb{C} : |z| < 1\}$\\

\end{tabular}

%
%
\noindent\textbf{Hilbert spaces}\\
\hspace*{2cm}
\begin{tabular}{p{1.3in}p{3.8in}}
$ B(H) $
        \dotfill & the bounded operators on a Hilbert space $H$\\
$ P_H $
        \dotfill &
        the orthogonal projection onto $H$\\
$ K \ominus H $
        \dotfill &
        the orthogonal complement of $H$ in $K$\\
$ \bigvee A $
        \dotfill & the closed linear span of the set $A$ in a Hilbert space\\
$ [A] $
        \dotfill & also the closed linear span of the set $A$ in a Hilbert space\\
\end{tabular}

%
%
\noindent\textbf{Subproduct systems and algebras}\\
\hspace*{2cm}
\begin{tabular}{p{1.3in}p{3.8in}}
$ X, Y $ \dotfill &
         will usually denote subproduct systems\\
$ \cF_X $
        \dotfill &
        the Fock space associated with a subproduct system $X$\\
$ \cK(\cF_X) $
        \dotfill &
        the compact operator on the Fock space\\
$ S^X $
        \dotfill & the shift representation of $X$ on $\cF_X$ \\
$ \cA_X $
        \dotfill & the operator algebra generated by the shift
        $\overline{\textrm{alg}}\{S^X(\xi) : \xi \in X\}$\\
$ \cE_X $
        \dotfill & the operator space $\overline{\textrm{span}}\cA_X \cA_X^*$\\
$ \cT_X $
        \dotfill & the C$^*$-algebra $C^*(\cA_X)$ generated by the shift\\
$ \cO_X $
        \dotfill & the C$^*$-algebra $\cT_X / \cK(\cF_X)$\\
\end{tabular}

\pagebreak[4]

%
%

\chapter*{Introduction}
\addcontentsline{toc}{chapter}{\protect\numberline{}{Introduction}}

\section*{Motivation: dilation theory of CP-semigroups}

Let $H$ be a separable Hilbert space, and let $\cM \subseteq B(H)$ be a von Neumann algebra. A \emph{CP map} on $\cM$ is a contractive, normal and completely positive map. A  \emph{CP-semigroup} on $\cM$ is a family $\Theta = \{\Theta_t\}_{t\geq0}$ of CP maps on $\cM$ satisfying the semigroup property
$$\Theta_{s+t}(a) = \Theta_s (\Theta_t(a)) \,\, ,\,\, s,t\geq 0, a\in \cM ,$$
$$\Theta_{0}(a) = a \,\, , \,\,  a\in B(H) ,$$
and the continuity condition
\be\label{eq:intro_cont}
\lim_{t\rightarrow t_0} \langle \Theta_t(a)h,g\rangle = \langle \Theta_{t_0}(a)h,g\rangle \,\, , \,\, a\in \cM, h,g \in H .
\ee
A CP-semigroup is called an \emph{E-semigroup} if each of
its elements is a $*$-endomorphism.

Let $\Theta$ be a CP-semigroup acting on $\cM$, and let $\alpha$ be an
E-semigroup acting on $\cR$, where $\cR$ is a von Neumann subalgebra of $B(K)$ and $K\supseteq H$. Denote the orthogonal projection of $K$ onto $H$ by $P_H$. We say that $\alpha$ is an \emph{E-dilation} of
$\Theta$ if for all $t \geq 0$ and $B \in \cR$
\begin{equation}\label{eq:dilation}
\Theta_t(P_H B P_H) = P_H \alpha_t (B) P_H .
\end{equation}
In the mid 1990's Bhat (building on earlier works with Parthasarathy) proved the following result, known today as ``Bhat's Theorem" (see \cite{Bhat96} for a discussion of the case where $\cM = B(H)$ and the CP maps are unit preserving, and also the works by SeLegue \cite{SeLegue}, Bhat-Skeide \cite{BS00}, Muhly-Solel \cite{MS02}, and Arveson \cite{Arv03} for different proofs and for the general case):
\begin{theorem}\label{thm:Bhat}
\emph{{\bf (Bhat).}} Every CP-semigroup has a unique minimal
E-dilation.
\end{theorem}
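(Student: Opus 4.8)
The plan is to encode the entire CP-semigroup in a single algebraic gadget --- a product system of $W^*$-correspondences over $\cM$ carrying a generating unit --- and then to manufacture the E-dilation from it by a dilation/inductive-limit procedure; this is essentially the route of Bhat and of Bhat--Skeide, while Muhly--Solel and Arveson repackage the same content via correspondence representations and via spectral $C^*$-algebras, respectively. First, note that uniqueness of the \emph{minimal} E-dilation is the soft half of the statement: from any E-dilation $(K,\cR,\alpha)$ one obtains a minimal one by compressing to the smallest subspace of $K$ that contains $H$ and reduces $\{\alpha_t(B):t\ge0,\ B\in\cR\}$, and any two minimal E-dilations are unitarily equivalent because the dilation equation \eqref{eq:dilation} together with the semigroup law forces the inner products of all ``words'' $\alpha_{t_1}(a_1)\cdots\alpha_{t_n}(a_n)h$ (with $a_i\in\cR$, $h\in H$), which span a dense subspace. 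So the whole problem reduces to \emph{existence}.

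For existence, the first step is to run Paschke's GNS construction on all the $\Theta_t$ at once. For each $t\ge0$ let $E_t$ be the $W^*$-correspondence over $\cM$ built from $\Theta_t$ (the self-dual completion of $\cM\otimes\cM$ with $\langle a_1\otimes b_1,a_2\otimes b_2\rangle=b_1^*\Theta_t(a_1^*a_2)b_2$ and left action $c\cdot(a\otimes b)=(ca)\otimes b$), with distinguished vector $\xi_t$ --- the class of $1\otimes1$ --- satisfying $\langle\xi_t,a\xi_t\rangle=\Theta_t(a)$ and $\overline{\cM\xi_t\cM}=E_t$; here $E_0=\cM$ and $\xi_0=1$. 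The identity $\Theta_{s+t}=\Theta_s\circ\Theta_t$ then yields unitary bimodule maps $U_{s,t}\colon E_s\otimes_\cM E_t\to E_{s+t}$ determined by $\xi_s\otimes\xi_t\mapsto\xi_{s+t}$ (one checks that inner products match using complete positivity of the $\Theta$'s, and that the image is dense), and associativity of the interior tensor product gives the coherence relations among the $U_{r,s}$. Thus $X=(\{E_t\}_{t\ge0},\{U_{s,t}\})$ is a product system over $\Rp$ and $\xi=\{\xi_t\}$ is a unit generating it. The continuity hypothesis \eqref{eq:intro_cont} enters precisely here: it is what lets one equip $X$ with a measurable (indeed continuous) structure for which $\xi$ is a measurable unit --- this part is routine but fiddly.

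The second step is the dilation itself. Representing $\cM$ on $H$ by the identity representation $\sigma$, one uses the maps $U_{s,t-s}$ together with the contractions $v_r\colon H\to E_r\otimes_\sigma H$, $h\mapsto\xi_r\otimes h$, to assemble a directed system of Hilbert spaces out of the $E_t\otimes_\sigma H$; passing to the appropriate inductive limit produces a Hilbert space $K$ containing (an isometric copy of) $H$, and the shift on the product system induces a semigroup $\{\alpha_t\}$ of normal $*$-endomorphisms of the von Neumann algebra $\cR$ generated on $K$ by the images of $\cM$ and of the shift operators. Unwinding the identifications gives $\cM=P_H\cR P_H$ and, for all $t$ and all $B\in\cR$, $\Theta_t(P_HBP_H)=P_H\alpha_t(B)P_H$, i.e.\ \eqref{eq:dilation}; passing to the subspace generated by $H$ under $\cR$ and the $\alpha_t$'s yields the minimal E-dilation. (Bhat's original argument instead first builds, for each fixed $t_0$, an E-dilation of the one-parameter subsemigroup $\{\Theta_{nt_0}\}_{n\in\mathbb{N}}$ from the GNS correspondence of $\Theta_{t_0}$, and then checks that these discrete dilations cohere as $t_0$ ranges over a dense set, passing to a limit; the product-system formulation above performs the continuous construction directly.)

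The hard part is twofold. First, when the $\Theta_t$ are not \emph{unit preserving} the vectors $v_r$ are merely contractions, the connecting maps of the directed system are correspondingly non-isometric, and one must run a contractive inductive limit, keep careful account of the defects $1-\Theta_r(1)$, and still argue that the shift restricted to the algebra generated by $H$ is genuinely $*$-\emph{endomorphic} and that $H$ embeds isometrically --- in the special case $\cM=B(H)$ one may instead enlarge by a direct summand to reduce to the unital situation. Second, one must propagate the continuity condition \eqref{eq:intro_cont} through the construction so that the resulting $\{\alpha_t\}$ really is an E-semigroup in the sense required (point-weak-$*$ continuity), which is where the measurability theory of product systems does its work. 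Once existence in this generality is secured, minimality and the uniqueness statement follow with little extra effort.
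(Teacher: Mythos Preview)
The paper does not supply its own proof of this theorem; it is quoted as background, with references to the several existing proofs (Bhat, SeLegue, Bhat--Skeide, Muhly--Solel, Arveson). So there is nothing in the paper to compare your argument against directly. That said, your sketch contains a genuine gap.

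Your assertion that the GNS correspondences $E_t=\cM\otimes_{\Theta_t}\cM$ assemble into a \emph{product system} with \emph{unitary} bimodule maps $U_{s,t}\colon E_s\otimes_\cM E_t\to E_{s+t}$ is incorrect. The bimodule map determined by $\xi_{s+t}\mapsto\xi_s\otimes\xi_t$ is indeed a well-defined isometry $E_{s+t}\hookrightarrow E_s\otimes_\cM E_t$, but its range $\overline{\cM(\xi_s\otimes\xi_t)\cM}$ is in general a \emph{proper} sub-bimodule: after balancing, $E_s\otimes_\cM E_t$ is spanned by elements of the form $a\xi_s\otimes c\,\xi_t d$, and the middle factor $c$ cannot in general be moved across the unit. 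Hence the adjoint $U_{s,t}$ is only a coisometry, and $\{E_t\}$ is merely a \emph{subproduct} system --- this is exactly the content of Theorem~\ref{thm:GNS} of the present paper. The actual Bhat--Skeide route (and, on the commutant side, the Muhly--Solel route described in \S\ref{subsec:des_MS}) requires a further inductive limit \emph{over partitions} to manufacture a genuine product system from these GNS pieces; only after that step do the shift maps of your second stage yield $*$-endomorphisms. You have skipped precisely this step.
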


Naturally, there is also a version of the above theorem with $\mb{R}_+$ replaced with $\mb{N}$. In \cite{Bhat98}, Bhat showed that given a pair of commuting CP maps $\Theta$ and $\Phi$ on $B(H)$, there is a Hilbert space $K\supseteq H$ and a pair of commuting normal $*$-endomorphisms $\alpha$ and $\beta$ acting on $B(K)$ such that
\bes
\Theta^m \circ \Phi^n (P_H B P_H) = P_H \alpha^m \circ \beta^n (B) P_H \,\, , \,\, B \in B(K)
\ees
for all $m,n \in \mathbb{N}$. Later on Solel, using a different method (product systems and their representations), proved this result for commuting CP maps on arbitrary von Neumann algebras \cite{S06}.

A natural question is then this: \emph{given {\bf two} commuting CP-semigroups, can one simultaneously dilate them
to a pair of commuting E-semigroups?} This question was the starting point of the research documented here. We still do
not know the answer to it, but that is all for the good. The difficulty of this question has led us into unknown
mathematical territories, where we have made several exciting discoveries.

This thesis contains the mathematical results that we (the author, under the supervision of Professor Baruch Solel) obtained while trying to solve the problem of dilating two
commuting CP-semigroups. A detailed account of the results in this thesis is given in the section
\emph{Detailed overview of the thesis} below. The results in this thesis are taken from several papers that the author
has written: \cite{LevyShalit} (with Eliahu Levy), \cite{MarkiewiczShalit} (with Daniel Markiewicz), \cite{ShalitReprep,ShalitCP0Dil,ShalitWhatType,ShalitCPDil}, and \cite{ShalitSolel} (with Baruch Solel).

\section*{The role of product systems and subproduct systems}

Product systems of Hilbert spaces over $\mb{R}_+$ were introduced by Arveson some 20 years ago in his study of E$_0$-semigroups \cite{Arv89}. In a few imprecise words, a product system of Hilbert spaces over $\mb{R}_+$ is a bundle $\{X(t)\}_{t\in\mb{R}_+}$ of Hilbert spaces such that
\bes
X(s+t) = X(s) \otimes X(t) \,\, , \,\, s,t \in \mb{R}_+.
\ees
We emphasize immediately that Arveson's definition of product systems required also that the bundle carry a certain Borel measurable structure, but we do not deal with these matters here. To every E$_0$-semigroup Arveson associated a product system, and it turns out that the product system associated to an E$_0$-semigroup is a complete cocycle conjugacy invariant of the E$_0$-semigroup.  See \cite{Arv03} for a detailed account.

Later, product systems of Hilbert $C^*$-correspondences over $\mb{R}_+$ appeared (see the survey \cite{SkeideSurvey} by Skeide). In \cite{BS00}, Bhat and Skeide associate with every semigroup of completely positive maps on a C$^*$-algebra $A$ a product system of Hilbert $A$-correspondences. This product system was then used in showing that every semigroup of completely positive maps can be dilated to a semigroup of $*$-endomorphisms. Muhly and Solel introduced a different construction \cite{MS02}: to every CP$_0$-semigroup on a von Neumann algebra $\cM$ they associated a product system of Hilbert W$^*$-correspondences over $\cM'$, the commutant of $\cM$. Again, this product system is then used in constructing an E$_0$-dilation for the original CP-semigroup.

Product systems of C$^*$-correspondences over semigroups other than $\mb{R}_+$ were first studied by Fowler \cite{Fowler2002}, and they have been studied since then by many authors. In \cite{S06}, product systems over $\mb{N}^2$ (and their representations) were studied, and the results were used to prove that every pair of commuting CP maps has a $*$-endomorphic dilation. Product systems over $\Rpt$ were also central to the proof of Theorem \ref{thm:EDil} below, where every pair of strongly commuting CP-semigroups is associated with a product system over $\Rpt$. However, the construction of the product system is one of the hardest parts in that proof. Furthermore, that construction fails when one drops the assumption of strong commutativity.

On the other hand, there is another object that may be naturally associated with a semigroup of CP maps over \emph{any} semigroup: this object is the \emph{subproduct system}, which, when the CP maps act on $B(H)$, is the bundle of Arveson's ``metric operator spaces" (introduced in \cite{Arv97b}). Roughly, a subproduct system of correspondences over a semigroup $\cS$ is a bundle $\{X(s)\}_{s\in\cS}$ of correspondences such that
\bes
X(s+t) \subseteq X(s) \otimes X(t) \,\, , \,\, s,t \in \cS.
\ees
See Definition \ref{def:subproduct_system} below. Of course, a difficult problem cannot be made easy just by introducing a new notion, and the problem of dilating commuting CP-semigroups remains unsolved. However, subproduct systems did already provide us with an efficient general framework for tackling various problems in operator algebras, and in particular it has led us to progress dilation theory of completely positive semigroups. We have our reasons to believe that they will play an important part in noncommutative analysis, in particular in the further development of dilation theory of completely positive semigroups.

\section*{Brief overview of the thesis}

The thesis contains three major achievements, and is divided into three parts. The first achievement, to which the Part \ref{part:I} is devoted, is a partial solution to the problem of dilating two commuting CP-semigroups. Here is one of the main results we obtain\footnote{we also have a result for general von Neumann algebras $\cM$ instead of $B(H)$, under the additional assumption that the semigroups are unit preserving.}:
\begin{theorem}\label{thm:EDil}{\bf (Theorem \ref{thm:main})}
Let $\{\phi_t\}_{t\geq0}$ and $\{\theta_t\}_{t\geq0}$ be two strongly
commuting CP-semigroups on $B(H)$, where $H$ is a separable Hilbert space. Then there is a separable Hilbert space $K$ containing $H$ and an orthogonal projection $P_H:K \rightarrow H$, and two commuting
E-semigroups $\alpha$ and $\beta$ on $B(K)$ such that
$$\phi_s \circ \theta_t (P_H B P_H) = P_H \alpha_s \circ \beta_t (B) P_H$$
for all $s,t \geq 0$ and all $B \in B(K)$.
\end{theorem}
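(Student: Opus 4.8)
The plan is to translate the statement into a question about \emph{product systems of Hilbert spaces over $\Rpt$ and their representations}, solve it there, and translate back. Recall (following Muhly--Solel) that since $\cM = B(H)$ has trivial commutant, a CP-semigroup on $B(H)$ over a semigroup $\cS$ is encoded by a product system of Hilbert spaces $X = \{X(s)\}_{s\in\cS}$ together with a completely contractive covariant representation $T$ of $X$ on $H$ from which the semigroup is recovered; moreover an E-dilation of the CP-semigroup corresponds precisely to an \emph{isometric dilation} of $T$ on a larger Hilbert space $K\supseteq H$, the dilating $*$-endomorphisms being read off from the isometric representation. So it suffices to: (a) build a product system $X = \{X(s,t)\}_{(s,t)\in\Rpt}$ and a completely contractive representation $T$ of $X$ on $H$ whose associated CP-semigroup is $\Theta_{s,t} := \phi_s\circ\theta_t$; (b) dilate $T$ to an isometric representation $V$ of $X$ on some $K\supseteq H$; and (c) extract from $V$ a commuting pair of E-semigroups on $B(K)$ and verify the dilation identity.

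For (a), I would first apply the one-parameter theory (over $\Rp$) separately to $\phi$ and to $\theta$, obtaining product systems of Hilbert spaces $X_1 = \{X_1(s)\}_{s\geq0}$ and $X_2 = \{X_2(t)\}_{t\geq0}$ with associated representations on $H$. The role of the \emph{strong commutativity} hypothesis is exactly to supply a unitary ``commutation'' isomorphism $X_1(s)\otimes X_2(t)\to X_2(t)\otimes X_1(s)$ that is compatible with both product-system multiplications and with the representations; using it one \emph{defines} $X(s,t) := X_1(s)\otimes X_2(t)$ and verifies that this is a genuine product system over $\Rpt$ --- associativity of the multiplication maps, and the Borel/continuity structure, being inherited via the commutation isomorphism from $X_1$ and $X_2$. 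Dropping strong commutativity, the same recipe yields only a \emph{subproduct} system, which is why the theorem is stated in the strongly commuting case. Checking all the product-system axioms for $X$ --- continuity in particular --- is the most technical part of the proof, and I expect it to be the main obstacle.

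For (b) and (c): with $(X,T)$ in hand, I would restrict to the dense sub-semigroup $\diadp$ of dyadic points, apply the dilation theorem there (every completely contractive product system representation over $\diadp$ has an isometric dilation), and then extend the isometric dilation continuously from $\diadp$ to all of $\Rpt$, using a minimality normalization together with the continuity built into $X$; this produces an isometric covariant representation $V$ of $X$ on a Hilbert space $K\supseteq H$ dilating $T$. Finally, an isometric representation of a product system of Hilbert spaces over $\Rpt$ gives rise, by the standard construction, to normal $*$-endomorphisms $\alpha_s$ of $B(K)$ determined by $V_{s,0}$ and $\beta_t$ determined by $V_{0,t}$; the identities $X(s,t) = X(s,0)\otimes X(0,t) = X(0,t)\otimes X(s,0)$ force $\alpha_s\circ\beta_t = \beta_t\circ\alpha_s$, and the fact that $V$ dilates $T$ gives $P_H\,\alpha_s\circ\beta_t(B)\,P_H = \Theta_{s,t}(P_HBP_H) = \phi_s\circ\theta_t(P_HBP_H)$ for all $B\in B(K)$, as required. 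One still has to check that $\alpha$ and $\beta$ satisfy the continuity condition \eqref{eq:intro_cont}, which again follows from the continuity of $V$, hence of $X$.
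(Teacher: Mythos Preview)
Your overall architecture matches the paper's: step (a) is Theorem \ref{thm:rep_SC}, and the idea of restricting to $\diadp$ and invoking the dilation theorem there (Theorem \ref{thm:IsoDilDiad}) is exactly how the paper proceeds. But your step (b)--(c) contains a real gap, and it is precisely the gap the paper works hardest to get around.

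You propose to ``extend the isometric dilation continuously from $\diadp$ to all of $\Rpt$, using \ldots the continuity built into $X$''. Two problems. First, the product system $X$ constructed in Theorem \ref{thm:rep_SC} carries no continuity or Borel structure; the paper explicitly does \emph{not} deal with measurable product systems, and the Muhly--Solel construction used there makes no use of continuity in $t$. So there is nothing to lean on. Second, and more seriously, extending an isometric representation of $X$ from $\diadp$ to $\Rpt$ is exactly what the paper says it could \emph{not} achieve (see the opening of Chapter \ref{chap:nonunital}). The paper's substitute is to \emph{not} extend $V$ at all: instead it passes immediately to the $*$-endomorphism semigroup $\alpha$ over $\diadp$ on $B(K)$, proves enough continuity of $\alpha$ at the level of operators (weak convergence $\alpha_t(a)\to a$ for $a\in B(H)$, then strong convergence, then $\alpha_t(1)\to 1$ weakly), checks that the compacts on $K$ lie in the generated C$^*$-algebra, and finally invokes Theorem \ref{thm:SeLegue} to extend $\beta$ and $\gamma$ (and hence $\alpha$) from $\diadp$ to $\Rpt$. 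This last step crucially uses that the dilating algebra is a type I factor --- and this is exactly why the nonunital theorem is stated for $\cM=B(H)$ rather than a general von Neumann algebra. Your outline does not explain where the hypothesis $\cM=B(H)$ enters; in the paper it enters precisely here, in the extension-of-endomorphisms argument.
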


In other words: every two-parameter CP-semigroup that satisfies an additional condition of \emph{strong commutativity} has a two-parameter E-dilation. 

In order to prove Theorem \ref{thm:EDil} we proved several new results in the theory of isometric dilations of product
system representations. To this end, we came up with a very neat trick that reduces (in some cases) the problem of
dilating a product system representation to an isometric one, to the problem of dilating a semigroup of contractions on
a Hilbert space to a semigroup
of isometries on some larger Hilbert space. The latter problem is classical and well studied.

The second major achievement of this thesis is progress in the development of a general dilation theory for semigroups of
CP maps (over an arbitrary semigroup $\cS$). This progress is the subject of Part \ref{part:II}. Building on
earlier works of Arveson and of Muhly and Solel, we associate with each semigroup $\Theta = \{\Theta_s\}_{s\in\cS}$ of
CP maps a
\emph{subproduct system} $X$ and a \emph{subproduct system representation} $T$ from which $\Theta$ can be reconstructed
(Proposition \ref{prop:semigroup} and Theorem \ref{thm:reprep}). We show that the pairing $\Theta \leftrightarrow (X,T)$
is, essentially, bijective. From here we deduce several new results regarding $*$-endomorphic dilation of $k$-tuples
of commuting CP maps.

The third major achievement of this thesis is the progress made in the study of subproduct systems over $\mb{N}$. We show
how these subproduct systems fit naturally in the study of noncommutative algebraic geometry, and we find
that certain universal operator algebras (generated by a row contraction subject to homogeneous
noncommutative polynomial identities) can be realized explicitly in terms of (a representation of) a subproduct
system. We study two classes of examples of such universal algebras, and we show how these algebras can be classified -
up to isometric isomorphism - by the subproduct systems that give rise to them. In the last chapter of the thesis,
we show that Matsumoto's C$^*$-algebras associated to \emph{subshifts} also fit into the framework provided by
subproduct systems, and we study their representation theory using the tools developed here.

\noindent {\bf Note to the reader:}
The three parts of this thesis can be read independently and in any order, with the exceptions that one should read Chapter \ref{chap:subproduct} before Part \ref{part:III}, and also, some results in Part \ref{part:II} depend on Chapter \ref{chap:representing_representations}.

\section*{Detailed overview of the thesis}

Following this introduction there is a chapter containing some preliminary material.

Chapters \ref{chap:cont_and_ext} through \ref{chap:nonunital} constitute the first part of the thesis.
In Chapter \ref{chap:cont_and_ext} we study some fine continuity and extendability properties of semigroups. Section \ref{sec:point_strong} contains a proof that every CP-semigroup is continuous in the \emph{point-strong operator topology}. By that we mean that a CP-semigroup $\Theta$, which is \emph{a priori} only assumed to satisfy the continuity condition (\ref{eq:intro_cont}), actually satisfies the seemingly stronger continuity condition
\bes
\lim_{t \rightarrow t_0}\|\Theta_t(a)h - \Theta_{t_0}(a)h\| = 0,
\ees
for all $a \in \cM, h \in H$.
This result is taken from the paper \cite{MarkiewiczShalit}, written jointly by Daniel Markiewicz and the author of this thesis.

In Section \ref{sec:extension} it is shown that a semigroup of CP maps $\{\Theta_t\}_{t \in \cS}$ (acting on $B(H)$), that is parameterized by
a dense subsemigroup $\cS \subseteq \mb{R}_+$, and that satisfies the continuity condition (\ref{eq:intro_cont}) (with $t,t_0 \in \cS$), can be extended to a CP-semigroup (parameterized by $\mb{R}_+$). This result is taken from \cite{ShalitCPDil}, and the proof uses ideas from \cite{SeLegue}.
Section \ref{sec:continuous_extension} contains a similar result, but for weakly continuous semigroups of contractions on a separable and reflexive Banach space instead of semigroups of CP maps, a result that appeared in a joint paper with Eliahu Levy \cite{LevyShalit}. This section is not used anywhere else in the thesis.

Chapter \ref{chap:representing_representations} contains a construction by which we ``represent" a product system representation as a semigroup of contractions on a Hilbert space. That is, for a given product system $X = \{X(s)\}_{s\in\cS}$ and a representation $T$ of $X$ on a Hilbert space $H$, we construct a Hilbert space $\cH$ and semigroup of contractions $\hat{T} = \{\hat{T}_s\}_{s\in\cS}$ on $\cH$ that encodes all the information about $T$ (Section \ref{sec:rep}). Using this construction we derive several new existence theorems for isometric dilations of product system representations. Most importantly for dilation of commuting CP-semigroups, we prove that every fully-coisometric representation (of a subproduct system over $\mb{R}_+^k$) has an isometric and fully-coisometric dilation (Theorem \ref{thm:isoDilFC}). Several other sufficient conditions for the existence of a \emph{regular} isometric dilation are given, among them a ``double commutation" condition and a norm condition. The results in this chapter are taken from the papers \cite{ShalitCP0Dil} and \cite{ShalitReprep} (the existence of a regular isometric dilation under a norm condition has not appeared before).

In Chapter \ref{chap:strong_commutativity} we define the notion strong commutativity (introduced in \cite{S06}), and we study it. Most of the material in this chapter is borrowed from \cite{ShalitCP0Dil}, Section \ref{sec:GNS} is from \cite{ShalitCPDil}. Section \ref{subsec:QCS} is new.

Chapter \ref{chap:unital} contains our first main result regarding dilation of commuting CP-semigroups, Theorem \ref{thm:scudil}:
\begin{theorem}
Let $\{\phi_t\}_{t\geq0}$ and $\{\theta_t\}_{t\geq0}$ be two strongly
commuting unit preserving CP-semigroups on a von Neumann algebra $\cM \subseteq B(H)$, where $H$ is a separable Hilbert space. Then there is a separable Hilbert space $K$ containing $H$ and an orthogonal projection $P_H:K \rightarrow H$, a von Neumann algebra $\cR \subseteq B(K)$ such that $\cM = P_H \cR P_H \subseteq \cR$, and two commuting
unit preserving E-semigroups $\alpha$ and $\beta$ on $\cR$ such that
$$\phi_s \circ \theta_t (P_H B P_H) = P_H \alpha_s \circ \beta_t (B) P_H$$
for all $s,t \geq 0$ and all $B \in \cR$.
\end{theorem}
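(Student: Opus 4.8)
The plan is to translate the problem into the language of product systems and their representations, perform the dilation at the level of representations using Theorem~\ref{thm:isoDilFC}, and then translate back. Write $\cN = \cM'$ and form the two-parameter CP$_0$-semigroup $\Theta = \{\Theta_{(s,t)}\}_{(s,t)\in\Rpt}$ on $\cM$ given by $\Theta_{(s,t)} = \phi_s\circ\theta_t$; this is well defined because $\phi$ and $\theta$ commute, and each $\Theta_{(s,t)}$ is unit preserving because $\phi$ and $\theta$ are. First I would invoke the constructions of Part~\ref{part:I} (Proposition~\ref{prop:semigroup} and Theorem~\ref{thm:reprep}, together with Chapter~\ref{chap:strong_commutativity}): in general $\Theta$ has an associated subproduct system, and the hypothesis that $\phi$ and $\theta$ \emph{strongly} commute is exactly what is needed to promote it to a genuine \emph{product} system $X = \{X(s,t)\}_{(s,t)\in\Rpt}$ of W$^*$-correspondences over $\cN$, carrying a representation $T$ of $X$ on $H$ (together with the identity representation of $\cN$ on $H$) such that
\bes
\Theta_{(s,t)}(a) = \tT_{(s,t)}\,(\mathrm{id}_{X(s,t)}\otimes a)\,\tT_{(s,t)}^* \qquad (a\in\cM,\ (s,t)\in\Rpt).
\ees
Since each $\Theta_{(s,t)}$ is unital, $T$ is a \emph{fully coisometric} representation, and the point-strong continuity of $\Theta$ (Section~\ref{sec:point_strong}) guarantees that $X$ carries the appropriate measurable structure and that $T$ is continuous.

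Next I would dilate the representation. Because $\Rpt = \mb{R}_+^2$ is one of the ``nice'' semigroups to which Theorem~\ref{thm:isoDilFC} applies, the fully coisometric representation $T$ of $X$ admits an isometric and fully coisometric dilation: there is a separable Hilbert space $K \supseteq H$, a normal representation $\rho$ of $\cN$ on $K$ that restricts on $H$ to the identity representation and for which $H$ is reducing, and a continuous, isometric, fully coisometric representation $V$ of $X$ on $K$ such that $H$ is coinvariant for $V$ and $T$ is the compression of $V$ to $H$. Tracking the coinvariance and reducing properties of $H$ --- exactly as in the single-parameter dilation theory of Bhat and of Muhly--Solel \cite{MS02} --- one arranges that, setting $\cR := \rho(\cN)'$, the projection $P_H$ is central in the relevant von Neumann algebra, so that $\cM = P_H\cR P_H \subseteq \cR$.

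Now I would convert $V$ back into E-semigroups. Restricting $V$ to the two coordinate rays gives isometric, fully coisometric representations $V^{(1)}$ of $\{X(s,0)\}_{s\ge 0}$ and $V^{(2)}$ of $\{X(0,t)\}_{t\ge 0}$; via the Muhly--Solel correspondence between isometric representations and E-semigroups, set
\bes
\alpha_s(B) = \tV^{(1)}_s\,(\mathrm{id}_{X(s,0)}\otimes B)\,\tV^{(1)*}_s, \qquad \beta_t(B) = \tV^{(2)}_t\,(\mathrm{id}_{X(0,t)}\otimes B)\,\tV^{(2)*}_t \qquad (B\in\cR).
\ees
Isometry of $V$ makes $\alpha_s$ and $\beta_t$ normal $*$-endomorphisms of $\cR$, full coisometry makes them unital, and continuity of $V$ gives the continuity condition (\ref{eq:intro_cont}); so $\alpha$ and $\beta$ are unit preserving E-semigroups on $\cR$. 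The canonical isomorphisms $X(s,0)\otimes X(0,t) \cong X(s,t) \cong X(0,t)\otimes X(s,0)$ --- available precisely because $X$ is a product (not merely a subproduct) system --- yield $\alpha_s\circ\beta_t = \beta_t\circ\alpha_s$ and $\alpha_s\circ\beta_t(B) = \tV_{(s,t)}(\mathrm{id}_{X(s,t)}\otimes B)\tV_{(s,t)}^*$. Finally, compressing to $H$ and using $\tT_{(s,t)} = P_H\tV_{(s,t)}|_{X(s,t)\otimes H}$ together with the reconstruction formula for $\Theta$ gives
\bes
P_H\,\alpha_s\circ\beta_t(B)\,P_H = \Theta_{(s,t)}(P_H B P_H) = \phi_s\circ\theta_t(P_H B P_H) \qquad (B\in\cR),
\ees
which is the asserted dilation identity.

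I expect the main obstacle to be the very first step: producing the product system $X$ over the \emph{continuous} parameter semigroup $\Rpt$, together with its representation $T$. For a single CP$_0$-semigroup this is already substantial (it is the heart of the Muhly--Solel construction), and here one must in addition glue the two one-parameter product systems along the strong-commutation data and verify the measurable structure and the continuity of $T$. This is where strong commutativity is genuinely used --- if one drops it, only a subproduct system survives, and a subproduct system representation need not admit any isometric dilation (this is exactly the obstruction behind the negative answer to Bhat's 1998 problem). A secondary, more routine, difficulty is the continuity-and-separability bookkeeping: at each stage (the construction of $(X,T)$, the dilation to $V$, and the passage to $\alpha$ and $\beta$) one must check that measurability and continuity are preserved, for which the point-strong continuity theorem of Section~\ref{sec:point_strong} is the essential starting point.
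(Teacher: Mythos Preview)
Your overall architecture matches the paper's: build the product system $X$ and the fully coisometric identity representation $T$ from strong commutativity (Theorem~\ref{thm:rep_SC}), dilate $T$ isometrically via Theorem~\ref{thm:isoDilFC}, and read off the E$_0$-dilation from Proposition~\ref{prop:semigroup}. The compression identity and the identification $\cM = P_H\cR P_H$ go through essentially as you say (though $P_H$ is \emph{coinvariant}, not central; and the paper restricts from $\rho(\cM')'$ to the smaller algebra $W^*\big(\bigcup_s\widetilde{\alpha}_s(\cM)\big)$ to get minimality, which is needed as an intermediate step below).

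There is, however, a genuine gap in your continuity argument. You write that the point-strong continuity of $\Theta$ ``guarantees that $X$ carries the appropriate measurable structure and that $T$ is continuous,'' and later that ``continuity of $V$ gives the continuity condition.'' None of this is available: Theorem~\ref{thm:rep_SC} produces $X$ as a bare \emph{discrete} product system with no measurable structure, and Theorem~\ref{thm:isoDilFC} says nothing about continuity of the dilated representation. The paper spends three of its five steps on this. First, it uses the explicit form of $K$ coming from minimality (equation~(\ref{eq:partition1})) to compute $\langle\alpha_t(a)k_1,k_2\rangle$ directly in terms of $P$ and show, via the \emph{joint} strong continuity of $P$ (Proposition~\ref{prop:continuity4}), that $\beta$ and $\gamma$ are weakly continuous at $0$ on $\cM$. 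It then bootstraps to strong right-continuity on the $C^*$-algebra generated by $\bigcup_s\alpha_s(\cM)$ (Lemma~\ref{lem:continuity2}), uses this to prove $K$ is separable, and finally invokes a measurability argument (Arveson's Proposition~2.3.1) to pass from weak continuity on a dense subalgebra to weak continuity on all of $\cR$. You should not call this ``routine bookkeeping'': it is where the two-parameter structure actually bites, and it does not reduce to any off-the-shelf continuity of $V$.
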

This result was published in \cite{ShalitCP0Dil}. The proof of this Theorem depends on Theorem \ref{thm:isoDilFC} together with the constructions of Section \ref{sec:rep_SC}, where we associate with each pair of strongly commuting CP-semigroups a product system (over $\Rpt$) and a product system representation which ``represents" in some way the CP-semigroups.

We close Chapter \ref{chap:unital} with Section \ref{sec:type}, in which we study the cocycle conjugacy classes of the dilating semigroups $\alpha$ and $\beta$ (in the notation of the above theorem). We show that (in the interesting cases) $\alpha$ is cocycle conjugate to $\phi$'s minimal dilation, and that $\beta$ is cocycle conjugate to $\theta$'s minimal dilation.
Combining this result with Daniel Markiewicz' earlier work on \emph{quantized convolution semigroups}, we deduce Corollary \ref{cor:quantized}, which says that for every pair of quantized convolution semigroups, there exists a pair of commuting type I E$_0$-semigroups that dilate them simultaneously. This is a very concrete yet nontrivial class of examples to which our theory applies.

The final Chapter of Part \ref{part:I} is Chapter \ref{chap:nonunital}, where Theorem \ref{thm:EDil} is proved
(Theorem \ref{thm:main}). That is, we prove the existence of an E-dilation to a pair of strongly commuting
CP-semigroups on $B(H)$, where
the difference from the previous chapter is that we drop the assumption of unitality and add an assumption that the
semigroups act on a type I factor. The proof is significantly different from the proof of Theorem \ref{thm:scudil}.
The isometric dilation theorems obtained in Chapter \ref{chap:representing_representations} are not applicable in this
case, and we need to develop an appropriate isometric dilation theorem, Theorem \ref{thm:IsoDilDiad}. The results in
this chapter are taken from \cite{ShalitCPDil}.

Subproduct systems, their representations, and their units, are defined in Chapter \ref{chap:subproduct}. The following two Chapters, \ref{chap:subncp} and \ref{chap:subunitsncp},
can be viewed as a significant reorganization and sharpening of some known results, including several new observations.

Chapter \ref{chap:subncp} establishes the correspondence between $cp$-semigroups and subproduct systems. It is shown that given a subproduct system $X$ of $\cN$- correspondences and a subproduct system representation $R$ of $X$ on $H$, we may construct a $cp$-semigroup $\Theta$ acting on $R_0(\cN)'$. We denote this assignment as $\Theta = \Sigma(X,R)$. Conversely, it is shown that given a  $cp$-semigroup $\Theta$ acting on $\cM$, there is a subproduct system $E$ (called the \emph{Arveson-Stinespring subproduct system} of $\Theta$) of $\cM'$-correspondences and an \emph{injective} representation $T$ of $E$ on $H$ such that $\Theta = \Sigma(E,T)$. Denoting this assignment as $(E,T) = \Xi(\Theta)$, we have that $\Sigma \circ \Xi$ is the identity. In Theorem \ref{thm:essentially_inverse} we show that $\Xi \circ \Sigma$ is also, after restricting to pairs $(X,R)$ with $R$ an injective representation (and up to some ``isomorphism"), the identity. This allows us to deduce (Corollary \ref{cor:onlyproductisorep}) that a subproduct system that is not a product system has no isometric representations.
We introduce the \emph{Fock spaces} associated to a subproduct system and the
canonical \emph{shift representations}. These constructs allow us to show that every subproduct system
is the Arveson-Stinespring subproduct system of some $cp$-semigroup.

In Chapter \ref{chap:subunitsncp} we briefly sketch the picture that is dual to that of Chapter \ref{chap:subncp}.
It is shown that given a subproduct system and a unit of that subproduct system one may construct a $cp$-semigroup, and that every $cp$-semigroup arises this way (via the GNS construction).

In Chapter \ref{chap:aut_dil}, we construct for every subproduct system $X$ and every fully coisometric subproduct system
representation $T$ of $X$ on a Hilbert space, a semigroup $\hat{T}$ of contractions on a Hilbert space that captures ``all the information" about $X$ and $T$. This construction is a modification of the construction introduced in Chapter \ref{chap:representing_representations} for the case where $X$ is a \emph{product} system. It turns out that when $X$ is merely a \emph{sub}product system, it is hard to apply $\hat{T}$ to obtain new results about the representation $T$. However, when $X$ is a true \emph{product} system $\hat{T}$ is very handy, and we use it to prove that every $e_0$-semigroup has a $*$-automorphic dilation (in a certain sense).

Chapter \ref{chap:dil} contains some general remarks regarding dilations and pieces of subproduct system representations,
and the connection between the dilation theories of $cp$-semigroups and of representations of subproduct systems is made.
We define the notion of a \emph{subproduct subsystem} and then we define \emph{dilations} and \emph{pieces} of subproduct
system representations. These notions generalize the notions of \emph{commuting piece} or \emph{$q$-commuting piece} of
\cite{BBD03} and \cite{Dey07}, and also generalizes the definition of \emph{dilation} of a product system
representation of \cite{MS02}. Proposition \ref{prop:dil_rep_dil_CP}, Theorem \ref{thm:edil_repdil} and Corollary \ref{cor:edil_repdil} show that the 1-1 correspondences $\Sigma$ and $\Xi$ between $cp$-semigroups and subproduct systems with representations take isometric dilations of representations to $e$-dilations and vice-versa.
This is used to obtain an example of three commuting, normal and contractive CP maps on $B(H)$ for which there exists no
$e$-dilation acting on a $B(K)$, and no \emph{minimal} dilation acting on any von Neumann algebra (Theorem \ref{thm:parrot}).
This resolves an open problem raised by Bhat in 1998.

In Chapter \ref{chap:dil} we also present a reduction of both the problem of constructing an $e_0$-dilation to a $cp_0$-semigroup, and the problem of constructing an $e$-dilation to a $k$-tuple of commuting CP maps with \emph{small enough norm},
to the problem of embedding a subproduct system in a larger \emph{product system}. We show that not every subproduct system can be embedded in a product system (Proposition \ref{prop:sprdctcntrexample}), and we use this to construct an example of three commuting CP maps $\theta_1, \theta_2, \theta_3$ such that for \emph{any} $\lambda >0$ the three-tuple $\lambda \theta_1, \lambda \theta_2, \lambda \theta_3$ has no $e$-dilation (Theorem \ref{thm:strongparrot}). This unexpected phenomenon has no counterpart in the classical theory of isometric dilations, and provides the first example of a theorem in classical dilation theory that cannot be generalized to the theory of $e$-dilations of $cp$-semigroups.

The developments described in the second part of the thesis indicate that subproduct systems are
worthwhile objects of study, but to make progress we must look at plenty concrete examples.
In the third and final part of the paper we begin studying subproduct systems of Hilbert spaces over the semigroup $\mb{N}$. In Chapter \ref{chap:subproductN} we show that every subproduct system (of W$^*$-correspondences) over $\mb{N}$ is isomorphic to a \emph{standard} subproduct system, that is, it is a subproduct subsystem of the full product system $\{E^{\otimes n}\}_{n\in\mb{N}}$ for some W$^*$-correspondence $E$. Using the results of the previous section, this gives a new proof to the discrete analogue of Bhat's Theorem: \emph{every $cp_0$-semigroup over $\mb{N}$ has an $e_0$-dilation}. Given a subproduct system we define the \emph{standard $X$-shift}, and we show that if $X$ is a subproduct subsystem of $Y$, then the standard $X$-shift is the maximal $X$-piece of the standard $Y$-shift, generalizing and unifying results from \cite{BBD03,Dey07,Popescu06}.

In Chapter \ref{chap:projective} we explain why subproduct systems are convenient for studying noncommutative
projective algebraic geometry. We show that every homogeneous ideal $I$ in the algebra
$\mb{C}\langle x_1, \ldots, x_d\rangle$ of noncommutative polynomials corresponds to a unique subproduct system
$X_I$, and vice-versa. The representations of $X_I$ on a Hilbert space $H$ are precisely determined by the $d$-tuples in the zero set of $I$,
\bes
Z(I) = \{\underline{T} = (T_1, \ldots, T_d) \in B(H)^d : \forall p \in I. p(\underline{T}) = 0\}.
\ees
A noncommutative version of the Nullstellansatz is obtained, stating that
\bes
\{p \in \mb{C}\langle x_1, \ldots, x_d\rangle : \forall \underline{T} \in Z(I). p(\underline{T}) =0\} = I.
\ees

Chapter \ref{chap:universal} starts with a review of a powerful tool, Gelu Popescu's ``Poisson Transform" \cite{Popescu99}.
Using this tool we derive some basic results (obtained previously by Popescu in \cite{Popescu06}) which allow us to
identify the operator algebra $\cA_X$ generated by the $X$-shift as the universal unital operator algebra generated by a row
contraction subject to homogeneous polynomial identities. We then prove that every completely bounded representation of
a subproduct system $X$ is a piece of a scaled inflation of the $X$-shift, and derive a related ``von Neumann inequality".

In Chapter \ref{chap:operator_algebra} we discuss the relationship between a subproduct system $X$ and $\cA_X$, the (non-selfadjoint)
operator algebra generated by the $X$-shift. The main result in this section is Theorem \ref{thm:algebra_iso}, which
says that $X \cong Y$ if and only if $\cA_X$ is completely isometrically isomorphic to $\cA_Y$ by an isomorphism that
preserves the vacuum state. This result is used in Chapter \ref{chap:qcommuting}, where we study the universal
norm closed unital operator algebra generated by a row contraction $(T_1, \ldots, T_d)$ satisfying the relations
\bes
T_i T_j = q_{ij}T_j T_i \,\, , \,\, 1 \leq i < j \leq d,
\ees
where $q = (q_{ij})_{i,j=1}^d \in M_n(\mb{C})$ is a matrix such that $q_{ji} = q_{ij}^{-1}$. These non-selfadjoint analogues of the noncommutative tori, are shown to be classified by their subproduct systems when $q_{ij}\neq 1$ for all $i,j$. In particular, when $d=2$, we obtain the universal algebra for the relation
\bes
T_1 T_2 = q T_2 T_1,
\ees
which we call $\cA_q$. It is shown that $\cA_q$ is isomorphically isomorphic to $\cA_r$ if and only if $q = r$ or $q = r^{-1}$.

In Chapter \ref{chap:A} we describe all standard maximal subproduct systems $X$ with $\dim X(1) = 2$ and
$\dim X(2) = 3$, and classify their algebras up to isometric isomorphisms. It is shown that the algebras are isometrically
isomorphic if and only if the subproduct systems from which they come are isomorphic, and we give an effective
description of when this happens.

In the closing section of this paper, Chapter \ref{chap:subshift}, we find that subproduct systems are
also closely related to \emph{subshifts} and to the \emph{subshift C$^*$-algebras} introduced by K. Matsumoto \cite{Ma}.
We show how every subshift gives rise to a subproduct system, and characterize the subproduct systems that come from
subshifts. We use this connection together with the results of Chapter \ref{chap:universal} to describe all
representations of subshift C$^*$-algebras that come from a subshift of \emph{finite type}
(Theorem \ref{thm:rep_subshift}).

All of the results in Parts \ref{part:II} and \ref{part:III} are taken from \cite{ShalitSolel}.

\chapter*{Preliminaries}
\addcontentsline{toc}{chapter}{\protect\numberline{}{Preliminaries}}

\section*{Who is $\cS$?}
Throughout this thesis, $\cS$ will denote a subsemigroup of $\Rpk$ that contains $0$. In some places we will need to
put some additional restrictions on $\cS$, but in most places one can take arbitrary unital semigroups, not necessarily
abelian. Many of the parts that do not work for arbitrary semigroups, should work for, at least, \emph{Ore semigroups}.
However, we are most interested in the case where $\cS$ is either $\mb{N}^k$ or $\Rpk$.

\section*{$C^*$/$W^*$-correspondences, product systems and their representations}\label{subsec:correspondences}

We begin by defining Hilbert C$^*$-modules. Our references for this material are \cite{cL94} and \cite{Pas}.

\begin{definition}
Let $\cA$ be a $C^*$-algebra. A vector space $E$ (over $\mathbb{C}$) is called an \emph{inner product $A$-module} if it is
a right $\cA$-module with a map $\langle \cdot , \cdot \rangle : E \times E \rightarrow \cA$ such that for all
$x,y,z \in E, \alpha$, $\beta \in \mathbb{C}$, and $a \in \cA$
\begin{align}
\langle x, \alpha y + \beta z \rangle
&= \alpha \langle x, y \rangle + \beta \langle x, z \rangle \,\,, \\
\langle x, ya \rangle & = \langle x, y \rangle a \,\,, \\
\langle x, y \rangle &= \langle y, x \rangle ^* \,\,, \\
\langle x, x \rangle \geq 0 ; \,\, &\mathrm{and} \,\, \langle x, x \rangle = 0 \Rightarrow x = 0 \,\,,
\end{align}
and
\bes
\alpha (xa) = (\alpha x)a = x (\alpha a) .
\ees
\end{definition}

For an inner product $\cA$-module $E$ we can define a norm
\begin{equation}\label{eq:normE}
\|x\|_E := \sqrt{ \| \langle x, x \rangle \|_{\cA}} .
\end{equation}
This norm satisfies an analog of the Cauchy-Schwartz inequality
$$\|\langle x, y \rangle \|_{\cA} \leq \|x \|_E \|y\|_E .$$
It is sometimes convenient to define an $\cA$-valued ``norm" $|\cdot |$ on $E$, by
$$|x| := \langle x, x \rangle^{\frac{1}{2}}.$$
If $E$ is complete with respect to the norm (\ref{eq:normE}) then it is called a \emph{Hilbert $\cA$-module}, or a
\emph{Hilbert $C^*$-module} (over the $C^*$-algebra $\cA$).

The simplest example of a Hilbert $C^*$-module is a Hilbert space, the $C^*$-algebra being $\mathbb{C}$. Any $C^*$-algebra
is a Hilbert $C^*$-module over itself, with the inner product $\langle a, b \rangle := a^* b$.

\begin{definition}
Let $E$ and $F$ be Hilbert $C^*$-modules over a $C^*$-algebra $\cA$. A mapping $T : E \rightarrow F$ is called
\emph{adjointable} if there exists a map $S : F \rightarrow E$ such that for all $x \in E, y \in F$
$$\langle T(x), y \rangle = \langle x, S(y) \rangle .$$
If $T$ and $S$ are as above then we denote $T^*=S$.
\end{definition}

It is a fact that, just like in the Hilbert space setting, every adjointable map is $\cA$-linear and bounded.
On the other hand, and contrary to what one is used to from Hilbert spaces, not every bounded, $\cA$-linear mapping is
adjointable. It is the space of adjointable operators between Hilbert modules, rather than the space of bounded ones,
that serves as the useful generalization of $B(H)$. We denote by $\cL (E,F)$ the space of adjointable operators
between $E$ and $F$, and if $E=F$ this space is denoted by $\cL(E)$. $\cL(E)$ is a $C^*$-algebra (with respect to
the usual operations and the operator norm).

\begin{definition}
Let $E$ be a Hilbert $C^*$-module over $\cA$. $E$ is called \emph{self-dual} if every (norm) continuous $\cA$-linear map
$T : E \rightarrow \cA$ is given by the formula
$$T(x) = \langle u_T , x \rangle \,\, , \,\, x \in E \,\,,$$
for some $u_T \in E$.
\end{definition}

\begin{definition}
Let $E$ be a Hilbert $C^*$-module over the von Neumann algebra $\cM$. $E$ is called a \emph{Hilbert $W^*$-module over
$\cM$} in case $E$ is self-dual.
\end{definition}

\begin{definition}
Let $\cA$ and $\cB$ be $C^*$-algebras. A \emph{Hilbert $C^*$-correspondence from $\cA$ to $\cB$} is a Hilbert $C^*$-module $E$ over
$\cB$, endowed with the structure of a left module over $\cA$ via a $*$-homomorphism $\varphi_E : \cA \rightarrow \cL(E)$.
A \emph{$C^*$-correspondence over $\cA$} is simply a $C^*$-correspondence from $\cA$ to $\cA$. If $\cA$ and $\cB$ are
von Neumann
algebras and $E$ is a $W^*$-module over $\cB$, then $E$ is called a $W^*$-correspondence from $\cA$ to $\cB$ (or
a \emph{$W^*$-correspondence over $\cA$, in case $\cA=\cB$}) if $\varphi_E : \cA \rightarrow \cL(E)$ is normal.
\end{definition}

To explain the statement ``$\varphi_E$ is normal" we note that if $E$ is a Hilbert $W^*$-module over the von Neumann
algebra $\cM$, then $\cL(E)$ is known to be a von Neumann algebra, i.e., $\cL(E)$ is a $C^*$-algebra which is also a
dual space and which can be
represented faithfully on a Hilbert space such that the weak-$*$ topology on
$\cL(E)$ (generated by its pre-dual) coincides with the $\sigma$-weak topology induced by the representation.

We will usually omit $\varphi_E$ when writing the left action: for $a \in \cA$ and $x \in E$ we shall write
$ax$ for $\varphi_E(a)x$.

The following notion of representation of a $C^*$-correspondence was studied extensively in \cite{MS98}, and
turned out to be a very useful tool.
\begin{definition}
Let $E$ be a $C^*$-correspondence over $\cA$, and let $H$ be a
Hilbert space. A pair $(\sigma, T)$ is called a \emph{covariant representation} of $E$ on $H$ if
\begin{enumerate}
    \item $T: E \rightarrow B(H)$ is a completely bounded linear map;
    \item $\sigma : \cA \rightarrow B(H)$ is a nondegenerate $*$-homomorphism; and
    \item $T(xa) = T(x) \sigma(a)$ and $T(a\cdot x) = \sigma(a) T(x)$ for all $x \in E$ and  all $a\in\cA$.
\end{enumerate}
It called a \emph{completely contractive covariant representation} (or, for
brevity, a \emph{c.c. representation}) if $T$ is completely contractive.
If $\cA$ is a $W^*$-algebra and $E$ is $W^*$-correspondence then
we also require that $\sigma$ be normal.
\end{definition}
We shall usually omit the adjective ``covariant" (see the following line for an example).

Given a $C^*$-correspondence $E$ and a representation
$(\sigma,T)$ of $E$ on $H$, one can form the Hilbert space $E
\otimes_\sigma H$, which is defined as the Hausdorff completion of
the algebraic tensor product with respect to the inner product
$$\langle x \otimes h, y \otimes g \rangle = \langle h, \sigma (\langle x,y\rangle) g \rangle .$$
One then defines $\widetilde{T} : E \otimes_\sigma H \rightarrow H$ by
$$\widetilde{T} (x \otimes h) = T(x)h .$$

As in the theory of contractions on a Hilbert space, there are
certain particularly nice representations which deserve to be
singled out.
\begin{definition}
A representation $(\sigma, T)$ is called \emph{isometric} if
for all $x, y \in E$,
\begin{equation*}
T(x)^*T(y) = \sigma(\langle x, y \rangle) .
\end{equation*}
(This is the case if and only if $\widetilde{T}$ is an isometry.) It
is called \emph{fully coisometric} if $\widetilde{T}$ is a coisometry.
\end{definition}
Note: $T$ is completely contractive if and only if $\|\widetilde{T}\|\leq 1$.

Given two Hilbert $C^*$-correspondences $E$ and $F$ over $\cA$,
the \emph{balanced} (or \emph{inner}) tensor product $E
\otimes_{\cA} F$ is a Hilbert $C^*$-correspondence over $\cA$
defined to be the Hausdorff completion of the algebraic tensor
product with respect to the inner product
$$\langle x \otimes y, w \otimes z \rangle = \langle y , \langle x,w\rangle \cdot z \rangle \,\, , \,\,  x,w\in E, y,z\in F .$$
The left and right actions are defined as $a \cdot (x \otimes y) =
(a\cdot x) \otimes y$ and $(x \otimes y)a = x \otimes (ya)$,
respectively, for all $a\in A, x\in E, y\in F$. We shall usually
omit the subscript $\cA$, writing just $E \otimes F$. When working
in the context of $W^*$-correspondences, that is, if $E$ and $F$
are $W$*-correspondences and $\cA$ is a $W^*$-algebra, then $E
\otimes_{\cA} F$ is understood to be the \emph{self-dual
extension} of the above construction (see \cite{Pas}).

Suppose $\cS$ is an abelian cancellative semigroup with identity
$0$ and $p : X \rightarrow \cS$ is a family of
$W^*$-correspondences over $\cA$. Write $X(s)$ for the
correspondence $p^{-1}(s)$ for $s \in \cS$. We say that $X$ is a
(discrete) \emph{product system} over $\cS$ if $X$ is a semigroup,
$p$ is a semigroup homomorphism and, for each $s,t \in \cS
\setminus \{0\}$, the map $X(s) \times X(t) \ni (x,y) \mapsto xy
\in X(s+t)$ extends to an isomorphism $U_{s,t}$ of correspondences
from $X(s) \otimes_{\cA} X(t)$ onto $X(s+t)$. The associativity of
the multiplication means that, for every $s,t,r \in \cS$,
\begin{equation}
U_{s+t,r} \left(U_{s,t} \otimes I_{X(r)} \right) = U_{s,t+r} \left(I_{X(s)} \otimes U_{t,r} \right).
\end{equation}
We also require that $X(0) = \cA$ and that the multiplications
$X(0) \times X(s) \rightarrow X(s)$ and $X(s) \times X(0)
\rightarrow X(s)$ are given by the left and right actions of $\cA$
and $X(s)$.

Usually in this thesis, we will think of a product system as a bundle $X = \{X(s)\}_{s\in\cS}$ of C$^*$-correspondences together with a family $\{U_{s,t}\}_{s,t \in \cS}$ of unitary correspondence maps $U_{s,t} : X(s) \otimes X(t) \rightarrow X(s+t)$.

\begin{definition}
Let $H$ be a Hilbert space, $\cA$ a $W^*$-algebra and $X$ a
product system of Hilbert $\cA$-correspondences over the semigroup
$\cS$. Assume that $T : X \rightarrow B(H)$, and write $T_s$ for
the restriction of $T$ to $X(s)$, $s \in \cS$, and $\sigma$ for
$T_0$. $T$ (or $(\sigma, T)$) is said to be a \emph{covariant representation} of $X$ if
\begin{enumerate}
    \item For each $s \in \cS$, $(\sigma, T_s)$ is a covariant representation of $X(s)$; and
    \item $T(xy) = T(x)T(y)$ for all $x, y \in X$.
\end{enumerate}
$T$ is said to be an isometric/fully coisometric/completely-contractive representation if it is an isometric/fully coisometric/completely-contractive representation on every fiber $X(s)$.
\end{definition}

\section*{Completely positive maps and Stinespring's Theorem}

Let $A$ and $B$ be $C^*$-algebras, and let $\Theta$ be a linear map from
$A$ to $B$. $\Theta$ is said to be \emph{positive} if for all $a \in A$
\bes
a \geq 0 \Rightarrow \Theta(a) \geq 0.
\ees
We denote by $M_n (A)$ the $C^*$-algebra of $n \times
n$ matrices with entries in $A$. For any positive integer $n$, we
define $\Theta_n : M_n (A) \rightarrow M_n (B)$ to be the map that
acts elementwise as $\Theta$, i.e.,
$$\Theta_n (a_{ij}) = (\Theta (a_{ij})) .$$
\begin{definition}
A linear map $\Theta: A \rightarrow B$ is said to be \emph{completely positive}, if for all $n \geq 0$, $\Theta_n$ is a positive map $M_n (A) \rightarrow M_n (B)$.
\end{definition}
To keep terminology short, we shall use the term \emph{CP map} to
mean a completely positive, \emph{contractive} (because
dilation theory is sensible only for contractive maps) and \emph{normal} map.

\noindent {\bf Examples.} The transpose map $M_2 (\mathbb{C})
\rightarrow M_2 (\mathbb{C})$ is positive but not completely
positive. Every $*$-endomorphism is completely positive, and so is
any composition of completely positive maps. If $\Theta: A
\rightarrow B$ is positive, and if either $A$ or $B$ is
commutative, then $\Theta$ is completely positive. The
prototypical example of a completely positive, normal map is the
map \bes T \mapsto \sum S_n T S_n^* , \ees defined for $T$ in some
$B(H)$, where $S_n$ are bounded operators such that $\sum S_n
S_n^*$ converges strongly. In fact, any normal completely positive map on
$B(H)$ is of this form, due to the following theorem and some
$C^*$-representation theory.

\begin{theorem}\label{thm:Stinespring}
\emph{(Stinespring's Dilation Theorem, \cite{Stinespring}.)} Let $A$ be a $C^*$-algebra, let $H$ be a Hilbert space, and let $\Theta : A \rightarrow B(H)$ be a linear map. $\Theta$ is completely positive if and only if there exists a Hilbert space $K$, a bounded linear map $V \in B(H,K)$ and a $*$-representation $\rho$ of $A$ on $K$ such that
\bes
\Theta (a) = V^* \rho (a) V ,
\ees
for all $a \in A$.
\end{theorem}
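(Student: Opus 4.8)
The plan is to prove the two directions separately, with the bulk of the work in the ``only if'' direction. First I would dispose of the easy ``if'' direction: given $K$, $V \in B(H,K)$ and a $*$-representation $\rho$, I claim $\Theta(a) = V^*\rho(a)V$ is completely positive. To see this, fix $n$ and a positive element $(a_{ij}) \in M_n(A)$. Positivity of $(a_{ij})$ means $(a_{ij}) = (b_{ij})^*(b_{ij})$ for some $(b_{ij}) \in M_n(A)$, so $a_{ij} = \sum_k b_{ki}^* b_{kj}$. Applying $\mathrm{id}_{M_n} \otimes (\rho(\cdot))$ and using that $\rho$ is a $*$-homomorphism shows $(\rho(a_{ij}))$ is positive in $M_n(B(K)) = B(K^n)$; then conjugating by the operator $\mathrm{diag}(V,\ldots,V) \in B(H^n, K^n)$ preserves positivity, and this conjugation is exactly $\Theta_n$. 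Hence $\Theta_n \geq 0$ for all $n$, so $\Theta$ is completely positive.

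For the ``only if'' direction — the substantive part — I would carry out the GNS-type construction on the algebraic tensor product $A \otimes H$. Define a sesquilinear form on $A \otimes H$ by
\bes
\langle a \otimes h, b \otimes g \rangle_0 = \langle \Theta(a^*b) h, g \rangle_H,
\ees
extended bilinearly. The key point is that this form is positive semidefinite: for a finite sum $\xi = \sum_i a_i \otimes h_i$ one computes $\langle \xi, \xi \rangle_0 = \sum_{i,j} \langle \Theta(a_i^* a_j) h_j, h_i \rangle_H$, and this is $\geq 0$ precisely because the matrix $(a_i^* a_j)_{ij}$ is positive in $M_n(A)$ (it factors as the Gram matrix of the column $(a_1,\ldots,a_n)^t$) and $\Theta_n$ is positive, so $(\Theta(a_i^*a_j))_{ij} \geq 0$ in $M_n(B(H))$, which applied to the vector $(h_1,\ldots,h_n)$ gives exactly the claimed inequality. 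Let $N = \{\xi : \langle \xi, \xi \rangle_0 = 0\}$, which by Cauchy–Schwarz for the semidefinite form is a subspace, and let $K$ be the Hilbert space completion of $(A \otimes H)/N$. Define $\rho(a)$ on the dense subspace by $\rho(a)(b \otimes h) = (ab) \otimes h$; one checks $\rho(a)$ is well defined on the quotient and bounded with $\|\rho(a)\| \leq \|a\|$ (using that $a^* a \leq \|a\|^2 \cdot 1$ so $b^* a^* a b \leq \|a\|^2 b^* b$ at the matrix level), hence extends to $K$, and $\rho$ is visibly a $*$-homomorphism. Finally set $V h = 1_A \otimes h$ (or approximate using an approximate identity if $A$ is nonunital), so that $\langle Vh, Vg\rangle_K = \langle \Theta(1)h, g\rangle$ shows $V$ is bounded, and $V^* \rho(a) V h = V^*(a \otimes h)$, whose inner product against $g$ is $\langle a \otimes h, 1 \otimes g\rangle_0 = \langle \Theta(a)h, g\rangle$, giving $\Theta(a) = V^*\rho(a)V$.

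The main obstacle I anticipate is the handling of a \emph{nonunital} $C^*$-algebra $A$: the naive choice $Vh = 1_A \otimes h$ is unavailable, so I would instead work with a bounded approximate identity $(e_\lambda)$ in $A$, show that $e_\lambda \otimes h$ is Cauchy in $K$ (using complete positivity and contractivity of $\Theta$ to control $\langle \Theta((e_\lambda - e_\mu)^2)h,h\rangle$), and define $Vh$ as the limit; one then verifies the intertwining relation $\rho(a)V = (a \otimes \cdot)$ in the limit and that $V$ is bounded by $\|\Theta\|^{1/2} \le 1$. A secondary routine point is checking that $\rho(a)$ genuinely descends to the quotient by $N$ and is bounded — this is where complete positivity (not just positivity) is used a second time, via the inequality $(b_i^* a^* a b_j)_{ij} \leq \|a\|^2 (b_i^* b_j)_{ij}$ in $M_n(A)$. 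Everything else is bookkeeping.
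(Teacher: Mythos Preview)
Your proof is correct and follows the standard GNS-type construction that is essentially Stinespring's original argument. Note, however, that the paper does not actually prove this theorem: it is stated in the Preliminaries chapter as a classical result with a citation to \cite{Stinespring}, and is used as background material rather than derived. So there is no ``paper's own proof'' to compare against --- your write-up simply supplies the well-known proof that the paper takes for granted.
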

The triple $(K,V,\rho)$ is called \emph{the Stinespring dilation} of $\Theta$. If
\bes
K = \bigvee\{\rho(a) V h : a \in A, h \in H\},
\ees
then $(K,V,\rho)$ is called a \emph{minimal Stinespring dilation}. The minimal Stinespring dilation is unique up to unitary equivalence. If $\Theta$ is unital and $(K,V,\rho)$ is minimal, then $V$ is an isometry. If $A$ is a von Neumann algebra and $\Theta$ is normal, then $\rho$ is also normal.

The goal of this thesis can be described as a version of Stinespring's Theorem that works for a semigroup $\Theta = \{\Theta_s \}_{s \in \cS}$ of CP maps rather than for a single one.

\section*{$cp$-semigroups and $e$-dilations}

Let $\cS$ be a unital subsemigroup of $\Rp^k$, and let $\cM$ be a
von Neumann algebra acting on a Hilbert space $H$. A \emph{CP map}
is a completely positive, contractive and normal map on $\cM$. A
\emph{CP-semigroup over $\cS$} is a family $\{\Theta_s\}_{s \in
\cS}$ of CP maps on $\cM$ such that:
\begin{enumerate}
    \item For all $s,t \in \cS$
    $$\Theta_s \circ \Theta_t = \Theta_{s + t} \,;$$
    \item $\Theta_0 = {\bf id}_{\cM}$;
    \item For all $h,g\in H$ and all $a\in \cM$, the function
    $$\cS\ni s \mapsto \langle \Theta_s(a) h,g \rangle $$
    is continuous.
\end{enumerate}
A CP-semigroup is called an \emph{E-semigroup} if it consists of $*$-endomorphisms. A CP (E) - semigroup is
called a \emph{CP$_0$ (E$_0$)-semigroup} if all its elements are unital. A semigroup will be called a $cp$($cp_0$/$e$/$e_0$)-semigroup if it satisfies all the conditions that CP(CP$_0$/E/E$_0$)-semigroup does, except (perhaps) the continuity condition 3 above. Let's say it again, a CP-semigroup, with uppercase letters, is a $cp$-semigroup (with lowercase letters) that satisfies the continuity condition 3 above. Don't worry: in the body of the thesis we never discuss CP-semigroups and $cp$-semigroups at the same time, so there will be no confusion.

\begin{definition}\label{def:dilation}
Let $\cM$ be a von Neumann algebra of operators acting on a
Hilbert space $H$, and let $\Theta = \{\Theta_s\}_{s \in \cS}$ be
a $cp$-semigroup over the semigroup $\cS$. An \emph{$e$-dilation of
$\Theta$} is a quadruple $(K,u,\cR,\alpha)$, where $K$ is a
Hilbert space, $u: H \rightarrow K$ is an isometry, $\cR$ is a von
Neumann algebra satisfying $u^* \cR u = \cM$, and $\alpha$ is an
$e$-semigroup over $\cS$ such that \be\label{eq:CPdef_dil1} \Theta_s
(u^* a u) = u^* \alpha_s (a) u \,\, , \,\, a \in \cR \ee for all
$s \in \cS$.

If $(K,u,\cR,\alpha)$ is a dilation of $\Theta$, then $(\cM,
\Theta)$ is called a \emph{compression} of $(K,u,\cR,\alpha)$.
\end{definition}

In the above definition, if $\alpha$ is an E-semigroup (or E$_0$-semigroup), then we call it an \emph{E-dilation} (\emph{E$_0$-dilation}).

Let us review some basic facts regarding $e$-dilations. Most of the
content of the following paragraphs is spelled out in \cite[Chapter 8]{Arv03},
for the case where $\cS = \mathbb{R}_+$.

Note that by putting $a = u x u^*$ in (\ref{eq:CPdef_dil1}), for
any $x \in \cM$, one has \be\label{eq:CPdef_dil2} \Theta_s (x) =
u^* \alpha_s (u x u^*) u \,\, , \,\, x \in \cM. \ee

If one identifies $\cM$ with $u \cM u^*$, $H$ with $u H$, and $p$
with $u u^*$, one may give the following equivalent definition,
which we shall use interchangeably with definition
\ref{def:dilation}: \emph{a triple $(p,\cR,\alpha)$ is called an
\emph{$e$-dilation} of $\Theta$ if $\cR$ is a von Neumann algebra
containing $\cM$, $\alpha$ is an $e$-semigroup on $\cR$ and $p$ is a
projection in $\cR$ such that $\cM = p \cR p$ and
$$\Theta_s (p a p) = p \alpha_s (a) p $$
holds for all $s \in \cS, a \in \cR$.}

With this change of notation, we have
$$p\alpha_s (a) p = \Theta_s (pap) = \Theta_s (p^2 a p^2) = p\alpha_s (pap) p, $$
so, taking $a = 1-p$,
$$0 = p \alpha_s (p (1-p) p) p = p \alpha_s (1-p) p .$$
This means that for all $s \in \cS$, $\alpha_s (1-p) \leq 1 - p$.
A projection with this property is called \emph{coinvariant} (note
that if $\alpha$ is an $e$$_0$-semigroup then $p$ is a coinvariant
projection if and only if it is increasing, i.e., $\alpha_s(p)
\geq p$ for all $s\in \cS$). Equivalently,
\be\label{eq:CPdef_dil3} u u^* \alpha_s(1) = u u^* \alpha_s (u
u^*) \,\, , \,\, s \in \cS . \ee One can also show that
(\ref{eq:CPdef_dil2}) and (\ref{eq:CPdef_dil3}) together imply
(\ref{eq:CPdef_dil1}), and this leads to another equivalent
definition of $e$-dilation of a $cp$-semigroup.

Let $\Theta = \{\Theta_s \}_{s \in \cS}$ be a $cp$-semigroup on a
von Neumann algebra $\cM$, and let $(K,u,\cR,\alpha)$ be an
$e$-dilation of $\Theta$. Assume that $q \in \cR$ is a projection
satisfying $u u^* \leq q$. Assume furthermore that $q$ is
coinvariant. Then one can show that the maps
$$\beta_s : a \mapsto  q \alpha_s (a) q $$
are the elements of a $cp$-semigroup on $q \cR q$.

If the maps $\{\beta_s\}$ happen to be multiplicative on $q\cR q$,
then we  say that $q$ is \emph{multiplicative}. In this case,
$(qK,u, q \cR q, \beta)$ is an $e$-dilation of $\Theta$, which is in
some sense ``smaller" than $(K,u,\cR,\alpha)$.

On the other hand, consider the von Neumann algebra
$$\widetilde{\cR} =
W^*\left(\bigcup_{s \in \cS} \alpha_s (u \cM u^*)\right) .$$ This
algebra is clearly invariant under $\alpha$, and it contains $u
\cM u^*$. Thus, restricting $\alpha$ to $\widetilde{\cR}$, we obtain a
``smaller" dilation. This discussion leads to the following natural
definition.
\begin{definition}\label{def:min_dil}
Let $(K,u,\cR,\alpha)$ be an $e$-dilation of the $cp$-semigroup
$\Theta$.  $(K,u,\cR,\alpha)$ is said to be a \emph{minimal} dilation
if there is no multiplicative, coinvariant projection $1 \neq q
\in \cR$ such that $u u^* \leq q$, and if
\be\label{eq:W^*-generator} \cR = W^*\left(\bigcup_{s \in \cS}
\alpha_s (u \cM u^*)\right) . \ee
\end{definition}

In \cite{Arv03} Arveson defines a minimal dilation slightly
differently:
\begin{definition}\label{def:min_dil_Arv}
Let $(K,u,\cR,\alpha)$ be an $e$-dilation of the $cp$-semigroup
$\Theta$.  $(K,u,\cR,\alpha)$ is said to be a \emph{minimal} dilation
if the central support of $u u^*$ in $\cR$ is $1$, and if
(\ref{eq:W^*-generator}) holds.
\end{definition}
The two definitions have been shown to be equivalent in the case
where $\Theta$ is a $cp_0$-semigroup over $\Rp$ (\cite[Section 8.9]{Arv03}). We now treat the general case.

\begin{proposition}\label{prop:equiv_def_min}
Definition \ref{def:min_dil} holds if \ref{def:min_dil_Arv} does.
\end{proposition}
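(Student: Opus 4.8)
The plan is to exploit the fact that both Definition~\ref{def:min_dil} and Definition~\ref{def:min_dil_Arv} already include the generation requirement (\ref{eq:W^*-generator}); so, assuming Definition~\ref{def:min_dil_Arv}, I only have to rule out the existence of a multiplicative, coinvariant projection $1\neq q\in\cR$ with $uu^*\le q$. Throughout I would work in the equivalent formulation in which $\cM\subseteq\cR$ and $p:=uu^*\in\cR$ with $\cM=p\cR p$ (the discussion following Definition~\ref{def:dilation}).

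First I would reduce the whole statement to a single claim: \emph{every multiplicative, coinvariant projection $q\in\cR$ with $p\le q$ is central in $\cR$}. Granting this, such a $q$ dominates the central support $z$ of $p$ (the smallest central projection above $p$); since Definition~\ref{def:min_dil_Arv} asserts $z=1$, I would get $q\ge 1$, i.e. $q=1$, so no nontrivial such $q$ exists and Definition~\ref{def:min_dil} holds.

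To establish the claim I would show that $q$ commutes with a self-adjoint generating set of $\cR$. By (\ref{eq:W^*-generator}), $\cR=W^*\left(\bigcup_{s\in\cS}\alpha_s(\cM)\right)$, and $\bigcup_{s\in\cS}\alpha_s(\cM)$ is self-adjoint because each $\alpha_s$ is a $*$-endomorphism; hence it is enough to check $q\,\alpha_s(m)=\alpha_s(m)\,q$ for all $s\in\cS$ and all $m\in\cM$. Since $p\le q$ and $\cM=p\cR p$, one has $\cM\subseteq q\cR q$, so it suffices to prove more generally that $q$ commutes with $\alpha_s(a)$ for every $a$ in the corner $q\cR q$. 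Here the hypothesis that $q$ is multiplicative is exactly what is needed: it means the compression $\beta_s:=q\,\alpha_s(\cdot)\,q$ is multiplicative on $q\cR q$. Using this together with the multiplicativity of $\alpha_s$ itself, for $a,b\in q\cR q$ one computes
\[ q\,\alpha_s(a)\,\alpha_s(b)\,q=q\,\alpha_s(ab)\,q=\beta_s(ab)=\beta_s(a)\,\beta_s(b)=q\,\alpha_s(a)\,q\,\alpha_s(b)\,q, \]
hence $q\,\alpha_s(a)\,(1-q)\,\alpha_s(b)\,q=0$. Specializing to $b=a^*$ and using that $\alpha_s$ is a $*$-map, the operator $w:=q\,\alpha_s(a)\,(1-q)$ satisfies $ww^*=q\,\alpha_s(a)\,(1-q)\,\alpha_s(a^*)\,q=0$, so $w=0$; that is, $q\,\alpha_s(a)=q\,\alpha_s(a)\,q$. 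Taking adjoints and recalling that $q\cR q$ is self-adjoint gives $\alpha_s(a)\,q=q\,\alpha_s(a)\,q$, whence $q\,\alpha_s(a)=\alpha_s(a)\,q$. (Coinvariance of $q$ is implicit in calling $q$ multiplicative, since it is what makes the $\beta_s$ a $cp$-semigroup in the first place, but the computation above only invokes multiplicativity of the individual maps $\beta_s$.)

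The argument is short, and the only substantive step is the computation just sketched; the point to appreciate — and the closest thing to an obstacle — is the realization that a multiplicative coinvariant projection above $uu^*$ in a $W^*$-generated $e$-dilation is automatically \emph{central}. Once that is established, Arveson-minimality (central support $=1$) finishes the matter with no further work, and the remaining care needed is just the routine bookkeeping with the corner $q\cR q$ and the inclusion $\cM=p\cR p\subseteq q\cR q$.
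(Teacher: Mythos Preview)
Your argument is correct and follows essentially the same approach as the paper: both show that a multiplicative coinvariant projection $q\ge uu^*$ must commute with $\alpha_s(q\cR q)\supseteq\alpha_s(\cM)$ for all $s$, hence is central in $\cR$ by the generation condition (\ref{eq:W^*-generator}), and therefore must dominate the central support of $uu^*$, forcing $q=1$. The only difference is cosmetic: the paper argues by contrapositive and cites an easy generalization of \cite[Proposition 8.9.4]{Arv03} for the key commutation step, whereas you supply that computation explicitly.
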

\begin{proof}
Assume that Definition
\ref{def:min_dil} is violated. If (\ref{eq:W^*-generator}) is
violated, then Definition \ref{def:min_dil_Arv} is, too. So assume
that (\ref{eq:W^*-generator}) holds, and that there is a
multiplicative, coinvariant projection $1 \neq q \in \cR$ such
that $u u^* \leq q$. Denote $\cA = \{\alpha_s (a) : a \in u \cM
u^* , s \in \cS \}$. By an easy generalization of \cite[Proposition
8.9.4]{Arv03}, $q$ commutes with $\alpha_s(q \cR q)$ for
all $s \in \cS$, so $q$ commutes with $\cA$, thus $q$ commutes
with $W^*(\bigcup_{s \in \cS} \alpha_s (u \cM u^*))$. In
other words, $q$ is central in $\cR$.
\end{proof}

Below, we will work with Definition \ref{def:min_dil_Arv}.
Whether or not the two definitions are equivalent remains an interesting open question.
To prove that they are, it would be enough to show that the central support of $p = uu^*$  in
$W^*\left(\bigcup_{s \in \cS} \alpha_s (u \cM u^*)\right)$ is a coinvariant projection, because the central support is clearly a multiplicative projection. This has been done by Arveson in \cite[Proposition 8.3.4]{Arv03}, for the case of a $cp_0$-semigroup over $\cS = \Rp$. Arveson's proof makes use of the order structure of $\Rp$ and cannot be extended to the case $\Rpt$ or other semigroups with which we are concerned in this thesis.


\part{E-dilation of two-parameter CP-semigroups}\label{part:I}


\chapter{Continuity and extendability of CP-semigroups}\label{chap:cont_and_ext}

\section{Continuity of CP-semigroups in the point-strong operator topology}\label{sec:point_strong}

\subsection{Introduction}
Let $\cM$ be a von Neumann algebra acting on a Hilbert space $H$. Let $\phi = \{\phi_t\}_{t\geq 0}$ be a CP-semigroup on $\cM$. Recall that $\phi$ is assumed to satisfy the continuity condition
\begin{equation}\label{eq:WOT}
\lim_{t\to t_0} \langle \phi_t(A)\xi,\eta \rangle = \langle
\phi_{t_0}(A)\xi, \eta \rangle, \quad A\in \cM, \xi, \eta \in H.
\end{equation}
We shall refer to this type of continuity as \emph{continuity in the point-weak
operator topology}. It is equivalent to \emph{continuity in the point-$\sigma$-weak topology}, that is, for all $A \in \cM$ and $\omega \in \cM_*$, $\lim_{t\to t_0}
\omega(\phi_t(A)) = \omega(\phi_{t_0}(A))$,
where $\cM_*$ denotes the predual of $\cM$.

In this section we prove that CP-semigroups satisfy a seemingly stronger continuity condition, namely
\begin{equation}\label{eq:STOP}
\lim_{t\rightarrow t_0} \|\phi_t(A)\xi -\phi_{t_0}(A)\xi \| = 0 ,
\end{equation}
for all $A\in \cM, \xi \in H$. A semigroup satisfying (\ref{eq:STOP}) will be said
to be \emph{continuous in the point-strong operator topology}.
The proposition that CP-semigroups are continuous in the point-strong operator
topology has appeared in the literature earlier, but the proofs that
are available seem to
be incomplete. In the proofs of which we are aware, only
continuity \emph{from the right} in the point-strong operator topology is
established. By this we mean that (\ref{eq:STOP}) holds for limits taken with
$t\searrow t_0$.

We consider the continuity of CP-semigroups in the point-strong
operator topology to be an important property, because it plays a
crucial role in the existence of dilations of CP-semigroups to
E-semigroups. We are aware of five different proofs for
the fact that every CP-semigroup has a dilation to an E-semigroup:
Bhat~\cite{Bhat96}, SeLegue~\cite{SeLegue},
Bhat--Skeide~\cite{BS00}, Muhly--Solel~\cite{MS02} and
Arveson~\cite{Arv03} (some of the authors require some
additional conditions, notably that the CP-semigroup be unital or
that the Hilbert space be separable). In order to show that the
minimal dilation of a CP-semigroup to an E-semigroup is continuous
in the point-weak operator topology, all authors make use of
continuity of the CP-semigroup in the point-strong operator topology, either
implicitly or explicitly. Thus, in effect there was a gap in the proof of what is known as
``Bhat's Theorem" (although Bhat did avoid making this mistake in his early paper \cite{Bhat96} - he did not claim there that a CP-semigroup has an E-dilation - rather, he stated that a CP-semigroup that is continuous in the point-strong operator topology has an E-dilation).

\noindent {\bf Author's note:} After two years of working on my PhD. thesis I came to the conclusion that all of the known proofs to Bhat's Theorem rely on the fact that CP-semigroups are continuous in the point-strong operator topology, and that this continuity of CP-semigroups is not justified in the literature. I notified Daniel Markiewicz of this problem, and he came up with the proof given below, modulo a couple of minor changes for which I am responsible. The following proof will appear in \cite{MarkiewiczShalit}. I am grateful to Daniel for giving me permission to include the proof here.

\subsection{Proof}

Let $\cM$ be a von Neumann algebra acting on a Hilbert space. Let $\phi = \{\phi_t\}_{t\geq 0}$ be a
CP-semigroup acting on $\cM$. We denote by $\cM_*$ the set of
$\sigma$-weakly continuous linear functionals on $\cM$. We shall denote by
$\sigma(\cM_*,\cM)$ the pointwise convergence topology of $\cM_*$ as a subset of the
dual space of $\cM$.

Let $\delta$ be the generator of $\phi$, and let $D(\delta)$ denote its
domain:
\begin{equation*}
D(\delta) = \{ A \in \cM : \exists \delta(A) \in \cM\,  \forall \omega \in
\cM_* \, \lim_{t\to 0+} t^{-1} \omega(\phi_t(A) - A)=\omega(\delta(A))  \}.
\end{equation*}

\begin{lemma}\label{lemma:right-cts} For every $A\in \cM$ and $\xi \in H$, the map
$t \mapsto \phi_t(A)\xi$ is continuous from the right (in norm).
\end{lemma}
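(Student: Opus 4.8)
The plan is to exploit the semigroup property together with the two continuity facts we already have available: the point-weak (equivalently point-$\sigma$-weak) continuity that is built into the definition of a CP-semigroup, and the boundedness/positivity structure of CP maps. The key observation is that for a \emph{bounded} net of operators converging weakly, one gets strong convergence as soon as one controls the ``diagonal'' inner products $\langle \phi_t(A)\xi, \xi\rangle$ appropriately; and positivity is exactly what lets us do that. So I would first reduce to the case $A \geq 0$ (write a general $A$ as a linear combination of four positive elements, or as $A = B^*B$-type pieces via the $C^*$-identity and polarization), and even to $A$ a positive \emph{contraction}.

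For the reduction to the right-continuity at a general point $t_0$: fix $t_0 \geq 0$ and write $t = t_0 + s$ with $s \searrow 0$. Then $\phi_t(A) = \phi_{t_0}(\phi_s(A))$. So it suffices to show $\|\phi_{t_0}(\phi_s(A))\xi - \phi_{t_0}(A)\xi\| \to 0$ as $s \searrow 0$. Since $\phi_{t_0}$ is a fixed normal CP map, hence $\sigma$-weakly--to--$\sigma$-weakly continuous and norm-bounded, and since $\phi_s(A) \to A$ in the point-$\sigma$-weak topology as $s \searrow 0$ (this is the hypothesis), we get $\phi_{t_0}(\phi_s(A)) \to \phi_{t_0}(A)$ $\sigma$-weakly, with the whole net bounded by $\|A\|$. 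Now I want to upgrade this weak convergence to strong convergence applied to a vector $\xi$. For $A$ a positive contraction, consider $0 \leq \phi_s(A) \leq 1$, so $0 \leq \phi_{t_0}(\phi_s(A)) \leq \phi_{t_0}(1) \leq 1$; writing $B_s = \phi_{t_0}(\phi_s(A))$ and $B = \phi_{t_0}(A)$, we have $0 \leq B_s, B \leq 1$ and $B_s \to B$ weakly. Then
\bes
\|(B_s - B)\xi\|^2 \leq \|(B_s-B)^{1/2}\|^2 \cdot \langle (B_s - B)(B_s-B)\xi, \dots \rangle,
\ees
or more cleanly: for selfadjoint $C$ with $-1 \leq C \leq 1$ one has $\|C\xi\|^2 = \langle C^2\xi,\xi\rangle \leq \langle C\xi,\xi\rangle^{1/2}\cdot(\text{bounded})$... the precise inequality I would use is $\|C\xi\|^2 \le \|C\| \, |\langle C\xi,\xi\rangle|$ when $C \ge 0$, and for the difference $B_s - B$ I instead use that $B_s \to B$ weakly plus $B_s^2 \to$(something) — this is the standard fact that on a bounded set, weak convergence $B_s \to B$ together with $B_s^2 \to B^2$ weakly implies $B_s \to B$ strongly, via $\|(B_s-B)\xi\|^2 = \langle B_s^2\xi,\xi\rangle - \langle B_s\xi, B\xi\rangle - \langle B\xi, B_s\xi\rangle + \langle B^2 \xi,\xi\rangle \to 0$. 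So I also need $B_s^2 \to B^2$ weakly, which is not automatic from weak convergence alone.

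This last point is the main obstacle, and it is exactly where the Kadison–Schwarz inequality for CP maps enters. Since $\phi_{t_0}$ is a (2-positive, hence Kadison–Schwarz) map, $\phi_{t_0}(X^*X) \geq \phi_{t_0}(X)^*\phi_{t_0}(X)$; applying this with $X = \phi_s(A)$ (which is positive, so $X^*X = X^2$) gives $\phi_{t_0}(\phi_s(A)^2) \geq B_s^2$, and similarly one relates $\phi_{t_0}(\phi_s(A^2))$ to $\phi_{t_0}(\phi_s(A))^2$ using Kadison–Schwarz for $\phi_s$ as well. Now $\phi_s(A^2) \to A^2$ point-$\sigma$-weakly (hypothesis again, applied to the element $A^2$), hence $\phi_{t_0}(\phi_s(A^2)) \to \phi_{t_0}(A^2) =: C$ weakly; and the Kadison–Schwarz inequalities sandwich $B_s^2$ between $B^2$-like and $C$-like quantities. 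Combined with weak lower semicontinuity of the positive-cone ordering and the fact that $\langle B_s^2 \xi, \xi\rangle = \|B_s\xi\|^2$ is squeezed from above by $\langle \phi_{t_0}(\phi_s(A^2))\xi,\xi\rangle$ and from below (via $\liminf \|B_s \xi\| \geq \|B\xi\|$ from weak convergence), I expect to pin down $\langle B_s^2\xi,\xi\rangle \to \langle B^2\xi,\xi\rangle$, which closes the argument. I would carry out the steps in this order: (1) reduce to $A$ a positive contraction; (2) reduce right-continuity at $t_0$ to the behavior of $\phi_{t_0}\circ \phi_s$ on $A$ as $s\searrow 0$, getting weak convergence $B_s \to B$ for free; (3) use Kadison–Schwarz for $\phi_{t_0}$ (and $\phi_s$) plus the hypothesis applied to $A^2$ to control $\langle B_s^2 \xi,\xi\rangle$; (4) assemble $\|(B_s-B)\xi\|^2 \to 0$ from the weak convergence of $B_s$ and of $B_s^2$. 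The delicate part is making the squeeze in step (3) genuinely two-sided, and I would be prepared to instead invoke the general lemma that a bounded net of positive operators converging $\sigma$-weakly, when composed with a normal 2-positive map, converges strongly on vectors — phrased and proved directly as above.
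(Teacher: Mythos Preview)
Your argument has a genuine gap, and it stems from factoring the semigroup in the wrong order. You write $\phi_t(A) = \phi_{t_0}(\phi_s(A))$ with $s \searrow 0$, putting the fixed map $\phi_{t_0}$ on the \emph{outside}. With that choice, Kadison--Schwarz bounds $B_s^2 = \phi_{t_0}(\phi_s(A))^2$ only by $\phi_{t_0}(\phi_s(A^2))$, whose weak limit is $\phi_{t_0}(A^2)$, not $\phi_{t_0}(A)^2 = B^2$. Since $\phi_{t_0}(A^2) \geq \phi_{t_0}(A)^2$ with strict inequality in general, the squeeze you describe does not close: you get $\|B\xi\|^2 \leq \liminf \langle B_s^2 \xi,\xi\rangle \leq \limsup \langle B_s^2 \xi,\xi\rangle \leq \langle \phi_{t_0}(A^2)\xi,\xi\rangle$, and the two endpoints differ. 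The fallback ``general lemma'' you propose --- that a bounded net of positive operators converging $\sigma$-weakly is sent to a strongly convergent net by a normal CP map --- is false already for $\phi_{t_0} = \mathrm{id}$: take $P_n$ the rank-one projection onto $\frac{1}{\sqrt{2}}(e_1 + e_{n+1})$ in $\ell^2$; then $P_n \to \frac{1}{2}P_{e_1}$ weakly but $P_n e_1 \not\to \frac{1}{2}e_1$ in norm.

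The fix is to factor the other way: write $\phi_{t_0+h}(A) = \phi_h(\phi_{t_0}(A))$, set $B = \phi_{t_0}(A)$, and expand $\|\phi_h(B)\xi - B\xi\|^2$. Now Kadison--Schwarz applied to $\phi_h$ gives $\phi_h(B)^*\phi_h(B) \leq \phi_h(B^*B)$, and the point-weak hypothesis applied to the single element $B^*B \in \cM$ gives $\langle \phi_h(B^*B)\xi,\xi\rangle \to \langle B^*B\xi,\xi\rangle = \|B\xi\|^2$ as $h \searrow 0$. Together with the weak convergence of the cross term, $\|\phi_h(B)\xi - B\xi\|^2 \to 0$ immediately. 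This is the paper's proof; it is three lines, needs no reduction to positive $A$, and avoids the whole ``$B_s^2 \to B^2$'' issue entirely because only one application of Kadison--Schwarz is needed, to the map $\phi_h$ that is itself tending to the identity.
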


The proof of this result can be found in the literature, for example as
Lemma A.1 of \cite{Bhat2001} or Proposition 4.1 item 1 in
\cite{MS02}. For completeness, let us present the argument from
\cite{Bhat2001}.
Let $A \in \cM$, $\xi \in H$ and $t \geq 0$. For all $h>0$, we have, using the
Schwartz inequality for completely positive maps,
\begin{multline*}
\|\phi_{t+h}(A)\xi - \phi_{t}(A)\xi\| = \\
= \langle \phi_{t+h}(A)^* \phi_{t+h}(A)
\xi, \xi \rangle - 2 \Re \langle \phi_{t+h}(A) \xi, \phi_{t}(A) \xi \rangle +
\|\phi_{t}(A)\xi \|^2 \hphantom{XXX} \\
\leq  \langle \phi_h(\phi_{t}(A)^* \phi_{t}(A)) \xi, \xi \rangle - 2 \Re \langle
\phi_{t+h}(A) \xi, \phi_{t}(A) \xi \rangle + \|\phi_{t}(A)\xi \|^2
\xrightarrow[h \to 0]{} 0.
\end{multline*}

We remark, however, that two-sided continuity
does not follow directly from continuity from the right. This is in contrast
with the situation of the classical theory of $C_0$-semigroups on Banach spaces
(see for example \cite{HillePhillips1974}). If $T = \{T_t\}_{t\geq 0}$ is a
semigroup of contractions on a Banach space $X$ such that the maps
\begin{equation*}
t \mapsto T_t (x)
\end{equation*}
are continuous from the right in norm for all $x \in X$, then it is easy to show
that these maps are also continuous from the left in
norm\footnote{
for given $x \in X$, $0\leq t \leq a$, $\| T_{a-t}(x) - T_a(x) \| = \|
T_{a-t} (x - T_t(x)) \| \leq \| x - T_t(x)  \|.$
}.
In fact, when $X$ is separable, for
instance, it can be proved by measurability and integrability techniques that if
the  maps $t \mapsto f(T_t (x))$ are measurable for all $x \in X$ and $f \in
X^*$, then the maps $t \mapsto T_t(x)$ are continuous in norm for $t > 0$. In
the case of CP-semigroups on von Neumann algebras, however, these techniques
seem to require considerable modification. We provide here an alternative
approach to the problem.

Recall that a function $g: [0, 1] \to
H$ is \emph{weakly measurable} if for all $\eta \in H$, the complex-valued
function
$g_{\eta}(t)=\langle\eta, g(t) \rangle$ is measurable. We will say that the function $g$
is \emph{strongly measurable} if there exists a family of countably-valued
functions
(i.e. assuming a set of values which is at most countable)
converging Lebesgue almost everywhere to $g$. (For more details, see Definition
3.5.4, p.~72, and the surrounding discussion in \cite{HillePhillips1974}).

\begin{lemma}\label{lemma:bochner-int}
For all $\xi\in H, A\in M$, the function $f:[0,1] \to H$ given by $f(t)
= \phi_t(A)\xi$ is strongly measurable and Bochner integrable on the interval
$[0,1]$.
\end{lemma}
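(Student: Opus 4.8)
The plan is to deduce both assertions from Lemma \ref{lemma:right-cts} (right-continuity of $t\mapsto\phi_t(A)\xi$ in norm) together with the standard machinery of Bochner integration as developed in \cite{HillePhillips1974}. For strong measurability I would exhibit $f$ as an everywhere pointwise limit of finitely-valued step functions, which is precisely the definition of strong measurability recalled above; for Bochner integrability I would combine strong measurability with the uniform bound $\|\phi_t(A)\xi\|\le\|A\|\,\|\xi\|$ coming from contractivity of CP maps, and invoke the criterion that a strongly measurable $H$-valued function on $[0,1]$ is Bochner integrable iff its norm is Lebesgue integrable.

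For strong measurability, for each $n\ge 1$ I would define $f_n:[0,1]\to H$ by $f_n(t)=\phi_{\lceil 2^n t\rceil 2^{-n}}(A)\xi$; that is, $f_n$ is the step function taking the constant value $f(k2^{-n})$ on each interval $((k-1)2^{-n},k2^{-n}]$, and $f_n(0)=f(0)$. Each $f_n$ assumes only the finitely many values $f(0),f(2^{-n}),\dots,f(1)$, hence is countably-valued in the required sense. For a fixed $t$, the points $s_n:=\lceil 2^n t\rceil 2^{-n}$ satisfy $t\le s_n\le t+2^{-n}$, so $s_n\to t$ from the right, and therefore $f_n(t)=\phi_{s_n}(A)\xi\to\phi_t(A)\xi=f(t)$ by Lemma \ref{lemma:right-cts}. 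Thus $f$ is the everywhere limit of countably-valued functions, so $f$ is strongly measurable. (Alternatively one could observe that $f$ is weakly measurable, since $t\mapsto\langle\phi_t(A)\xi,\eta\rangle$ is continuous for every $\eta\in H$, and that by right-continuity its range lies in the separable set $\overline{\{\phi_q(A)\xi : q\in\mathbb{Q}\cap[0,1]\}}$, and then appeal to Pettis's measurability theorem; but the step-function argument is cleaner and self-contained.)

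For Bochner integrability, recall that strong measurability of $f$ makes $t\mapsto\|f(t)\|$ Lebesgue measurable, and that a strongly measurable $H$-valued function on $[0,1]$ is Bochner integrable precisely when $\int_0^1\|f(t)\|\,dt<\infty$. Since each $\phi_t$ is a contractive CP map, $\|f(t)\|=\|\phi_t(A)\xi\|\le\|\phi_t\|\,\|A\|\,\|\xi\|\le\|A\|\,\|\xi\|$ for all $t$, whence $\int_0^1\|f(t)\|\,dt\le\|A\|\,\|\xi\|<\infty$, and Bochner integrability follows. I do not expect any genuine obstacle here: the only point deserving attention is that the argument uses \emph{only} the one-sided continuity supplied by Lemma \ref{lemma:right-cts}, not the two-sided norm continuity that this section is ultimately devoted to proving — everything else is a routine verification of the Bochner criterion.
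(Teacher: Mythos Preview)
Your proof is correct. For strong measurability you take a slightly more direct route than the paper: you build explicit dyadic step functions $f_n$ and use the right-continuity of Lemma~\ref{lemma:right-cts} to get \emph{everywhere} pointwise convergence $f_n\to f$, which verifies the definition of strong measurability by hand. The paper instead argues that $f$ is weakly continuous (hence weakly measurable) and, by right-continuity, separably valued, then invokes Pettis's measurability theorem (Theorem~3.5.3 of \cite{HillePhillips1974}) --- exactly the alternative you sketch in your parenthetical remark. Your approach is marginally more self-contained (no appeal to Pettis); the paper's is a bit shorter. For Bochner integrability the two arguments are essentially identical: both reduce to $\int_0^1\|f(t)\|\,dt<\infty$, with you citing the contractivity bound $\|\phi_t(A)\xi\|\le\|A\|\,\|\xi\|$ and the paper noting that $t\mapsto\|f(t)\|$ is right-continuous and bounded.
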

\begin{proof}  The function $f$ is weakly continuous, since
$\phi$ is continuous in the point-weak operator topology. In particular, it is
weakly measurable.  Furthermore, by Lemma \ref{lemma:right-cts}, the function
$f$ is continuous from the right in norm, hence it is separably valued (i.e.,
its range is contained in a separable subspace of $H$). By Theorem~3.5.3 of
\cite{HillePhillips1974}, the function $f$ is strongly measurable because it
is weakly measurable and separably valued.

Thanks to Theorem 3.7.4, p.~80 of \cite{HillePhillips1974}, in order to show
that $f$ is Bochner integrable it is enough to show that $f$ is strongly
measurable and that
\begin{equation*}
\int_0^1 \|f(t)\|dt < \infty.
\end{equation*}
The latter condition is easy to verify, as $t \mapsto \|f(t)\|$ is a
right-continuous, bounded function on $[0,1]$.
\end{proof}

We thank Michael Skeide for the idea to use the continuity of $f$ from the
right in order to avoid making the assumption that $H$ is separable.

\begin{lemma}\label{lemma:domain-density}
Let $A \in B(H)$ be positive. Then there exists a sequence $A_n \in
D(\delta)$ of positive operators such that $A_n \to A$ in the $\sigma$-strong*
topology.
\end{lemma}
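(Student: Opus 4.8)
The plan is to produce the approximating sequence by integrating the semigroup against smooth (or at least $L^1$) kernels, a standard device for constructing elements of the domain of a generator. Given a positive $A \in B(H)$ (actually we want $A \in \cM$, since $\delta$ is the generator of $\phi$ acting on $\cM$; I will assume this is the intended reading), set
\be
A_r = r \int_0^\infty e^{-rt}\, \phi_t(A)\, dt,
\ee
where the integral is interpreted in the $\sigma$-weak sense: for $\omega \in \cM_*$, $\omega(A_r) = r\int_0^\infty e^{-rt} \omega(\phi_t(A))\, dt$, which makes sense because $t \mapsto \omega(\phi_t(A))$ is bounded and continuous by the point-weak continuity of $\phi$, and because $\cM_*$ is norm-closed and $\omega \mapsto r\int_0^\infty e^{-rt}\omega(\phi_t(A))dt$ is a bounded functional that is a $\sigma$-weak limit of elements of the form $\omega \mapsto \omega(\phi_{t}(A))$; one checks it defines a normal functional, hence $A_r \in \cM$. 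Since each $\phi_t$ is positive and the measure $r e^{-rt}dt$ is a probability measure, $A_r \geq 0$ and $\|A_r\| \leq \|A\|$. Then I would take $A_n = A_{1/\epsilon_n}$ for a sequence $\epsilon_n \to 0$, i.e. let the parameter $r \to \infty$.

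The two things to verify are (i) $A_r \in D(\delta)$, and (ii) $A_r \to A$ in the $\sigma$-strong$^*$ topology as $r \to \infty$. For (i): the resolvent identity computation gives, for $h > 0$,
\be
\tfrac{1}{h}\bigl(\phi_h(A_r) - A_r\bigr) = \tfrac{r}{h}\int_0^\infty e^{-rt}\bigl(\phi_{t+h}(A) - \phi_t(A)\bigr)dt = \tfrac{e^{rh}-1}{h}\, A_r - \tfrac{r(e^{rh}-1)}{h}\int_0^h e^{-rt}\phi_t(A)\,dt,
\ee
and letting $h \to 0+$ inside any $\omega \in \cM_*$ (using point-weak continuity and boundedness to pass the limit through the integral) yields $\omega\bigl(\tfrac{1}{h}(\phi_h(A_r)-A_r)\bigr) \to \omega(r A_r - rA)$. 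Hence $\delta(A_r)$ exists and equals $r(A_r - A) \in \cM$, so $A_r \in D(\delta)$. For (ii): one writes $A_r - A = r\int_0^\infty e^{-rt}(\phi_t(A) - A)\, dt$, and for fixed $\xi \in H$ estimates $\|(A_r - A)\xi\| \leq r\int_0^\infty e^{-rt}\|\phi_t(A)\xi - A\xi\|\, dt$; splitting the integral at $t = \delta_0$, the near part is small because $t \mapsto \phi_t(A)\xi$ is norm-continuous from the right at $0$ (Lemma~\ref{lemma:right-cts}), so $\|\phi_t(A)\xi - A\xi\|$ is uniformly small on $[0,\delta_0]$, and the far part is bounded by $2\|A\|\|\xi\|\, r\int_{\delta_0}^\infty e^{-rt}dt = 2\|A\|\|\xi\| e^{-r\delta_0} \to 0$. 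This shows $A_r \xi \to A\xi$ in norm for every $\xi$, i.e. $A_r \to A$ strongly; applying the same argument to $A^* = A$ gives $\sigma$-strong$^*$ convergence (and since $\|A_r\| \leq \|A\|$ the convergence on a bounded set is automatically $\sigma$-strong$^*$ once it is strong$^*$, or one argues with $A_r^*\xi = (A_r\xi)$ directly as $A_r$ is self-adjoint).

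The main obstacle I anticipate is the careful justification that $A_r$ genuinely lies in $\cM$ and that the $\sigma$-weak-valued Bochner-type integral is well-behaved enough to commute with the difference quotient and with $\omega \in \cM_*$ — i.e., the measure-theoretic bookkeeping of the weak integral. Here the earlier Lemma~\ref{lemma:bochner-int} is exactly what makes this painless: $t \mapsto \phi_t(A)\xi$ is strongly measurable and Bochner integrable on bounded intervals, and with the exponential weight it is Bochner integrable on $[0,\infty)$; this legitimizes all the manipulations above (in particular $\|(A_r-A)\xi\| \leq r\int_0^\infty e^{-rt}\|\phi_t(A)\xi - A\xi\|dt$ and the interchange of limit and integral). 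Everything else is routine. One remark: the statement is phrased for $A \in B(H)$, but for $A_r$ and $\delta(A_r)$ to make sense we need $A \in \cM$; I would simply note this, as $\delta$ is by definition the generator of the CP-semigroup on $\cM$ and the lemma is applied in that context.
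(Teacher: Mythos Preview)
Your proof is correct (modulo a harmless algebraic slip: in the resolvent computation the second term should carry the factor $re^{rh}/h$ rather than $r(e^{rh}-1)/h$, but both have the same limit as $h\to 0^+$, so the conclusion $\delta(A_r)=r(A_r-A)$ is unaffected). You also correctly flag that the statement should read $A\in\cM$ rather than $A\in B(H)$.

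The paper takes a closely related but different route: instead of the Laplace-type averages $A_r=r\int_0^\infty e^{-rt}\phi_t(A)\,dt$, it uses the Ces\`aro means $A_n=n\int_0^{1/n}\phi_t(A)\,dt$. Both constructions are textbook devices for producing elements of the generator's domain, and both rely on Lemma~\ref{lemma:bochner-int} to pass from the $\sigma$-weak integral to the Bochner integral $A_n\xi=n\int_0^{1/n}\phi_t(A)\xi\,dt$ and then invoke right strong continuity (Lemma~\ref{lemma:right-cts}) for the convergence. The Ces\`aro version is marginally cleaner here because the integral is over a bounded interval, so Lemma~\ref{lemma:bochner-int} applies directly without the extra (easy) step of extending Bochner integrability to $[0,\infty)$ against the exponential weight; on the other hand, your resolvent approach yields the explicit formula $\delta(A_r)=r(A_r-A)$, which the paper does not need but which is a pleasant bonus. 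The final passage from strong to $\sigma$-strong$^*$ convergence via self-adjointness and uniform boundedness is identical in both proofs.
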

\begin{proof}
Recall that the sequence
\begin{equation*}
A_n = n \int_0^{1/n} \phi_t(A) dt
\end{equation*}
(integral taken in the $\sigma$-weak sense) converges in the
$\sigma$-weak topology to $A$. Furthermore $A_n \in D(\delta)$ and it is a
positive operator for each $n>0$ since $\phi_t$ is a CP map for all $t$. It is
easy to check that $\| A_n \| \leq \|A \|$ for all $n$ since $\phi_t$ is
contractive.

By Lemma~\ref{lemma:bochner-int}, for each $\xi \in H$ the map $t \mapsto \phi_t(A) \xi$ is
Bochner integrable on $[0,1]$, hence in
fact we have
\begin{equation*}
A_n \xi = n \int_0^{1/n} \phi_t(A)\xi dt
\end{equation*}
where the integral is taken in the Bochner sense. The identity holds because
for all $\eta \in H$, $n\in \mb{N}$ we have:
\begin{equation*}
\langle A_n \xi , \eta \rangle = n \int_0^{1/n} \langle \phi_t(A)\xi, \eta \rangle dt = \langle
 n \int_0^{1/n} \phi_t(A)\xi dt, \eta \rangle.
\end{equation*}

We now show that $A_n \to A$ strongly. Let $\xi \in H$ be fixed.
\begin{align*}
\|A\xi -  A_n \xi\| &= \|n \int_0^{1/n}A\xi dt  -
n \int_0^{1/n}\phi_t(A)\xi  dt \| \\
&\leq n \int_0^{1/n} \|A\xi - \phi_t(A)\xi \|dt  .
\end{align*}
The latter goes to zero by continuity from the
right (Lemma~\ref{lemma:right-cts}).
Since $A_n$, $A$ are positive operators, by considering adjoints we obtain that
$A_n \to A$ in the strong* topology. Finally, since the sequence is
bounded, we have convergence in the $\sigma$-strong* topology.
\end{proof}

\begin{lemma}\label{lemma:uniformity}
Let $A_n$ be a bounded sequence of operators in $\cM$ converging to $A$ in the
$\sigma$-strong* topology and let $t_0\geq0$. Then for every sequence $t_k \to
t_0$, $\xi \in H$ and $\epsilon>0$, there exists $N\in \mb{N}$ such that for $n\geq
N$,
\begin{equation*}
\| \phi_{t_k}(A_n -A) \xi  \| < \epsilon, \quad \text{for all }k .
\end{equation*}
\end{lemma}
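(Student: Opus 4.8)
The plan is to reduce, via the Schwarz inequality for completely positive contractions, to an assertion about a bounded sequence of positive operators tending to $0$ $\sigma$-weakly, and then to extract the uniformity in $k$ by a compactness (Dini-type) argument that uses the point-weak continuity of $\phi$ and the semigroup law.

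First I would put $B_n := A_n - A$, so that $M := \sup_n \|B_n\| < \infty$ and $B_n \to 0$ in the $\sigma$-strong$^*$ topology; then $C_n := B_n^* B_n$ is a bounded sequence of positive operators with $C_n \to 0$ $\sigma$-weakly (hence $\sigma$-strongly, these topologies agreeing on bounded positive nets). Since each $\phi_t$ is a contractive CP map, the Schwarz inequality $\phi_t(B)^* \phi_t(B) \le \phi_t(B^* B)$ gives $\|\phi_{t_k}(B_n)\xi\|^2 \le \langle \phi_{t_k}(C_n)\xi, \xi\rangle$, so it suffices to prove that $\sup_k \langle \phi_{t_k}(C_n)\xi, \xi\rangle \to 0$ as $n \to \infty$.

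Next I would argue by contradiction. Set $L := \{t_k : k \in \mb{N}\} \cup \{t_0\}$, a compact subset of $\Rp$. If the conclusion failed, I could pass to a subsequence $n_j \to \infty$ and pick indices $k_j$ with $\langle \phi_{t_{k_j}}(C_{n_j})\xi, \xi\rangle \ge \epsilon$, and after a further subsequence assume $t_{k_j} \to \tau \in L$. If $(k_j)$ had a bounded subsequence, then $t_{k_j}$ would be eventually equal to a single value $t_\ast$, and $\langle \phi_{t_\ast}(C_{n_j})\xi, \xi\rangle \to 0$ by normality of the single map $\phi_{t_\ast}$ --- a contradiction; so $k_j \to \infty$ and $\tau = t_0$. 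Using $\phi_{t_0 + h} = \phi_h \circ \phi_{t_0}$ when $t_{k_j} \ge t_0$ (with $\phi_{t_0}(C_{n_j})$ again bounded, positive and $\sigma$-strong-null) and $\phi_{t_0} = \phi_{t_0 - t_{k_j}} \circ \phi_{t_{k_j}}$ when $t_{k_j} < t_0$, this reduces the matter to the case $t_0 = 0$: one must rule out that $\langle \phi_{h_j}(D_{n_j})\xi, \xi\rangle \ge \epsilon$ for $h_j \to 0^+$ and a bounded positive $\sigma$-strong-null sequence $(D_m)$, which is exactly where the point-strong right-continuity of $\phi$ (Lemma \ref{lemma:right-cts}) enters.

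The mechanism for the required uniformity is Dini's theorem. For any decreasing sequence of positive operators $F_N \searrow 0$, the functions $t \mapsto \langle \phi_t(F_N)\xi, \xi\rangle$ are continuous on $L$ (point-weak continuity), decreasing in $N$ (positivity of $\phi_t$), and tend pointwise to $0$ (normality of $\phi_t$ and $F_N \to 0$ $\sigma$-weakly), hence converge uniformly on the compact set $L$. Applying this to suitably weighted tails $F_N = \sum_{i \ge N} w_i C_{n_i}$ --- where the functions are defined directly as uniformly convergent series of continuous functions, so that only $\sigma$-weak convergence of the operator series is needed --- and confronting the uniform bound with the single-term lower bound $\langle \phi_{t_{k_j}}(C_{n_j})\xi, \xi\rangle \ge \epsilon$ produces the contradiction. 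I expect the main obstacle to be precisely this passage from pointwise to uniform convergence over the (merely compact, not finite) parameter set $L$: for each fixed $t$ the convergence $\langle \phi_t(C_n)\xi, \xi\rangle \to 0$ is immediate from normality, but making it uniform in $t$ is where one must genuinely use that $A_n \to A$ in the $\sigma$-strong$^*$ and not merely the $\sigma$-weak topology, together with the semigroup property and the strong right-continuity; some care is needed in choosing the auxiliary monotone sequence so that the Dini estimate is not swamped by the weights $w_i$.
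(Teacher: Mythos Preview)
Your opening reduction via the Schwarz inequality is exactly what the paper does: with $C_n = (A_n - A)^*(A_n - A)$ one must show $\sup_k \langle \phi_{t_k}(C_n)\xi,\xi\rangle \to 0$.  From here, however, the paper takes a very different route: it sets $\omega_k = \langle \phi_{t_k}(\cdot)\xi,\xi\rangle \in \cM_*$, notes that $\omega_k \to \omega_0$ in $\sigma(\cM_*,\cM)$ by point-weak continuity, and then invokes Takesaki's Lemma~III.5.5 (if $\rho_k \to \rho_0$ in $\sigma(\cM_*,\cM)$ and $a_n$ is bounded and $\sigma$-strong$^*$-null, then $\rho_k(a_n) \to 0$ uniformly in $k$).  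That lemma is precisely the uniform statement you are after, and its proof rests on the characterization of relatively weakly compact subsets of $\cM_*$ --- not on Dini.

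Your Dini strategy, as you yourself suspect, does not close.  With $F_N = \sum_{i\ge N} w_i C_{n_i}$ and $w_i$ summable, Dini gives $\sup_{t\in L}\langle \phi_t(F_N)\xi,\xi\rangle < \epsilon'$ for $N$ large, but the only lower bound available is $\langle \phi_{t_{k_j}}(F_j)\xi,\xi\rangle \ge w_j \epsilon$, and since $w_j \to 0$ this never contradicts the uniform upper bound.  No choice of summable weights avoids this: the weights must decay for the series to converge, and that decay swallows the lower bound.  The monotonicity that Dini needs is simply absent from the sequence $(C_n)$, and there is no lattice supremum of positive operators in a general von Neumann algebra to manufacture it.  Separately, your reduction to $t_0 = 0$ is incomplete on the left: writing $\phi_{t_0} = \phi_{t_0 - t_{k_j}} \circ \phi_{t_{k_j}}$ when $t_{k_j} < t_0$ gives no control on $\langle \phi_{t_{k_j}}(C_{n_j})\xi,\xi\rangle$ from knowledge of $\phi_{t_0}$, since a CP contraction can shrink things arbitrarily.

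The honest fix is to cite Takesaki~III.5.5 (or to reproduce its argument, which uses Akemann's characterization of weak compactness in $\cM_*$ rather than elementary Dini).  The point-strong right-continuity of Lemma~\ref{lemma:right-cts} is not actually used in the paper's proof of this lemma; it is used only in the lemma that precedes it.
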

\begin{proof}
Let $B_n=(A_n-A)^*(A_n-A)$, $\omega_k(X)= \langle \phi_{t_k}(X) \xi, \xi \rangle$ and
$\omega(X)=\langle  \phi_{t_0}(X) \xi, \xi \rangle$. Then we have that
\begin{equation*}
\| \phi_{t_k}(A_n - A)\xi  \|^2 = \langle \phi_{t_k}(A_n -A)^*
\phi_{t_k}(A_n -A) \xi, \xi \rangle \leq \omega_k(B_n)
\end{equation*}
since $\phi_t$ is a CP map for all $t$. Since $\phi$ is a point-$\sigma$-weakly
continuous semigroup, we have that $\{\omega_k\}_{k \in \mb{N}}$ is a
sequence of $\sigma$-weakly continuous linear functionals such that $\omega_k(X)
\to \omega(X)$ for all $X \in \cM$. Furthermore, $B_n$ is a bounded sequence
converging in the $\sigma$-strong* topology to 0. The latter holds because $A_n$
is a bounded sequence converging to $A$ in the $\sigma$-strong* topology and
multiplication is jointly continuous with respect to this topology in bounded
sets (of course * is also continuous). Finally, we obtain the desired
conclusion by applying Lemma III.5.5, p.151 of
\cite{Takesaki2002}, which states the following. Let $\cM$ be a von Neumann
algebra and let $\rho_k$ be a sequence in $\cM_*$ converging to $\rho_0 \in
\cM_*$ in the $\sigma(\cM_*, \cM)$ topology. If a bounded sequence $\{a_n\}_{n \in \mb{N}}$ converges
$\sigma$-strongly* to $0$, then $\lim_{n\to \infty} \rho_k(a_n) = 0$ uniformly
in  $k$.
\end{proof}

\begin{theorem}\label{thm:pointstrongcont}
Let $\phi$ be a CP-semigroup acting on a von Neumann algebra $\cM \subseteq B(H)$. Then for all
$\xi \in H$, $A \in \cM$ and $t_0 \geq 0$,
\begin{equation*}
\lim_{t \rightarrow t_0}\|\phi_t(A)\xi - \phi_{t_0}(A)\xi\| = 0
\end{equation*}
\end{theorem}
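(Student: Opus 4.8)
The plan is to reduce two-sided continuity at an arbitrary point $t_0$ to the combination of right-continuity (Lemma~\ref{lemma:right-cts}) and an approximation argument that uses the density lemma (Lemma~\ref{lemma:domain-density}) together with the uniformity lemma (Lemma~\ref{lemma:uniformity}). The key observation is that for operators $A$ lying in the domain $D(\delta)$ of the generator, the map $t\mapsto\phi_t(A)$ has good differentiability properties and two-sided continuity should come easily; the general case then follows by approximating an arbitrary $A$ by elements of $D(\delta)$, provided we can control the approximation \emph{uniformly in $t$} near $t_0$ --- which is exactly what Lemma~\ref{lemma:uniformity} supplies.

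First I would reduce to the case $A\geq 0$: a general $A\in\cM$ is a linear combination of (at most four) positive elements, so it suffices to prove the statement for positive $A$. Fix such an $A$, a vector $\xi\in H$, a point $t_0\geq 0$, a sequence $t_k\to t_0$, and $\epsilon>0$. By Lemma~\ref{lemma:domain-density} choose a bounded sequence of positive operators $A_n\in D(\delta)$ with $A_n\to A$ $\sigma$-strongly$^*$. Write the triangle-inequality split
\[
\|\phi_{t_k}(A)\xi-\phi_{t_0}(A)\xi\|\leq \|\phi_{t_k}(A-A_n)\xi\|+\|\phi_{t_k}(A_n)\xi-\phi_{t_0}(A_n)\xi\|+\|\phi_{t_0}(A_n-A)\xi\|.
\]
For the first and third terms, Lemma~\ref{lemma:uniformity} (applied to the sequence $(t_k)$ and also to the constant sequence $t_0$, or simply by appending $t_0$ to $(t_k)$) gives an $N$ such that for $n\geq N$ both terms are $<\epsilon/3$ for \emph{all} $k$. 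Fix one such $n$, say $n=N$. It then remains to handle the middle term $\|\phi_{t_k}(A_N)\xi-\phi_{t_0}(A_N)\xi\|$ for a \emph{single} operator $A_N\in D(\delta)$, and show it is $<\epsilon/3$ for $k$ large.

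The remaining task --- which I expect to be the main obstacle --- is to prove two-sided point-strong continuity of $t\mapsto\phi_t(B)\xi$ when $B\in D(\delta)$. Here one can exploit that $\phi_t(B)-B=\int_0^t\phi_s(\delta(B))\,ds$ in an appropriate (e.g.\ $\sigma$-weak, hence by Lemma~\ref{lemma:bochner-int}-type reasoning, Bochner) sense, so that for $t<t_0$,
\[
\|\phi_{t_0}(B)\xi-\phi_t(B)\xi\|=\Big\|\int_t^{t_0}\phi_s(\delta(B))\xi\,ds\Big\|\leq (t_0-t)\,\|\delta(B)\|\,\|\xi\|\xrightarrow[t\nearrow t_0]{}0,
\]
using that $\|\phi_s(\delta(B))\xi\|\leq\|\delta(B)\|\,\|\xi\|$; the same estimate handles $t>t_0$. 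This establishes two-sided continuity on $D(\delta)$, and combining the three estimates completes the proof that $\|\phi_{t_k}(A)\xi-\phi_{t_0}(A)\xi\|<\epsilon$ for $k$ large; since $(t_k)$ was arbitrary, $\lim_{t\to t_0}\|\phi_t(A)\xi-\phi_{t_0}(A)\xi\|=0$. The subtle point to verify carefully is the Bochner-integral representation of $\phi_t(B)-B$ on $D(\delta)$ and that the integrand $s\mapsto\phi_s(\delta(B))\xi$ is genuinely Bochner integrable --- but this follows from the same separable-valuedness and right-continuity reasoning as in Lemma~\ref{lemma:bochner-int}, applied to $\delta(B)\in\cM$ in place of $A$.
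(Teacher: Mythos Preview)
Your proposal is correct and follows essentially the same strategy as the paper's proof: the triangle-inequality split, the use of Lemmas~\ref{lemma:domain-density} and~\ref{lemma:uniformity} to control the outer terms uniformly in $k$, and then a separate argument for the middle term with $A_N\in D(\delta)$.

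The only notable difference is in how you handle the middle term. The paper obtains the Lipschitz estimate via the Principle of Uniform Boundedness: for $X\in D(\delta)$, the family $\{\tfrac{1}{s}(\phi_s(X)-X)\}_{s>0}$ is pointwise bounded on $\cM_*$ (since the limit as $s\to0^+$ exists for each $\omega\in\cM_*$), hence norm-bounded by some $C_X$; then contractivity gives $\|\phi_{t_k}(A_N)-\phi_{t_0}(A_N)\|\leq\|\phi_{|t_k-t_0|}(A_N)-A_N\|\leq C_{A_N}|t_k-t_0|$. You instead invoke the integral representation $\phi_t(B)-B=\int_0^t\phi_s(\delta(B))\,ds$, which yields the same Lipschitz bound with the explicit constant $\|\delta(B)\|$. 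Both routes are valid; the PUB argument is slightly more elementary in that it avoids having to justify the integral formula (which, as you correctly flag, requires checking that $s\mapsto\omega(\phi_s(B))$ has a continuous right-derivative and then invoking a real-analysis lemma to upgrade to full differentiability). Your explicit reduction to positive $A$ is not in the paper, but is harmless---and in fact Lemma~\ref{lemma:domain-density} is stated only for positive $A$, so making this reduction is arguably cleaner.
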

\begin{proof}
Let $\epsilon >0$ be given, and let $\{t_k\}_{k \in \mb{N}}$ be a sequence converging to
$t_0$. By Lemma~\ref{lemma:domain-density}, there is a bounded sequence $\{A_n\}_{n\in\mb{N}}$
of operators in $D(\delta)$ such that $A_n
\to A$ in the $\sigma$-strong* topology. By Lemma~\ref{lemma:uniformity}, there
exists
$N \in \mb{N}$ such that for $n \geq N$,
\begin{equation*}
\| \phi_{t_k}(A_n -A) \xi  \| < \frac{\epsilon}{3}, \quad \text{for all }
k\geq 0 .
\end{equation*}
By an application of the Principle of Uniform Boundedness, if $X \in D(\delta)$
there exists $C_X > 0$ such that
\begin{equation*}
\sup_{s >0 } \frac{1}{s} \| \phi_s(X) - X \| \leq C_X <\infty  .
\end{equation*}
Now notice that $A_n \in D(\delta)$ for all $n$, and in particular $\exists
C>0$ such that
\begin{equation*}
\sup_{s >0 } \frac{1}{s} \| \phi_s(A_N) - A_N \| \leq C  .
\end{equation*}
Because $\phi_s$ is a contraction for all $s$, we obtain that for all $k$,
\begin{align*}
 \|  \phi_{t_k}(A_N)\xi - \phi_{t_0}(A_N)\xi \| & \leq \|  \phi_{t_k}(A_N) -
\phi_{t_0}(A_N) \|\; \| \xi \| \\
& \leq \|  \phi_{|t_k-t_0|}(A_N) -A_N \| \; \| \xi
\| \\
& \leq C \| \xi \| \; | t_k - t_0 | .
\end{align*}
In particular, we must have that $\| \phi_{t_k}(A_N)\xi
- \phi_{t_0}(A_N)\xi \| \to 0$ as $k \to \infty$. Thus there is $K\in \mb{N}$
such that for $k\geq K$,
\begin{equation*}
\| \phi_{t_k}(A_N)\xi - \phi_{t_0}(A_N)\xi \| < \frac{\epsilon}{3} .
\end{equation*}
We conclude that for $k \geq K$,
\begin{multline*}
\| \phi_{t_k}(A)\xi - \phi_{t_0}(A)\xi \| \leq
\| \phi_{t_k}(A -A_N) \xi  \| + \\
+ \| \phi_{t_k}(A_N)\xi - \phi_{t_0}(A_N)\xi \|  +
\| \phi_{t_0}(A_N -A) \xi  \| < \epsilon.
\end{multline*}
\end{proof}

\section{Extension of densely parameterized positive semigroups}\label{sec:extension}
Let $\cS$ be a dense subsemigroup of $\Rp$, and let $\phi = \{\phi_s\}_{s\in \cS}$ be a bounded semigroup over $\cS$ acting on $\cM$, such that $\phi_s$ is a normal positive
linear map for all $s\in\cS$. It is the purpose of this section to give sufficient conditions under which the semigroup $\phi$ can be extended to a continuous semigroup (of positive normal maps)
$\hat{\phi} = \{\hat{\phi}_t\}_{t\geq 0}$
such that $\hat{\phi}_s = \phi_s$ for all $s \in \cS$.
The results of this section will be used in Chapter \ref{chap:nonunital} in the construction of an E-dilation to a two-parameter CP-semigroup. The results of this section are taken from \cite{ShalitCPDil}.

We follow the ideas of SeLegue, who in \cite[pages 37-38]{SeLegue} proved that a semigroup of unital, normal
$*$-endomorphisms over the positive dyads, which is known to be weakly continuous only on a strong operator
dense subalgebra of $B(H)$, can be extended continuously to an E$_0$-semigroup (over $\Rp$). The crucial
step in SeLegue's argument was to use a result of Arveson \cite[Proposition 1.6]{Arv97a} regarding convergence
of nets of normal states on $B(H)$. As we are interested in non-unital semigroups as well, we will have to generalize
a bit Arveson's result. The proof, however, remains very much the same.

\noindent{\bf Author's note:} I am grateful to Daniel Markiewicz who, during the Be'er-Sheva/Haifa/Tel-Aviv Operator Algebras/Operator Theory Seminar, told me about SeLegue's thesis.

\begin{lemma}\label{lem:arveson}(Arveson \cite[Proposition 1.6]{Arv97a})
Let $\cM$ be a direct sum of type $I$ factors, let $\{\rho_i\}_{i}$ be a bounded net of positive linear functionals on $\cM$, and let $\omega$ be a positive normal linear functional on $\cM$ such that
$$\lim_{i} \rho_i(x) = w(x)$$
for all compact $x \in \cM$, and also
$$\lim_{i} \rho_i(1) = w(1) .$$
Then
$$\lim_{i} \|\rho_i - \omega\| = 0.$$
\end{lemma}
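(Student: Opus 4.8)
The plan is to reduce to the case of a single type $I$ factor, and then to $B(H)$, where the result follows from a compactness argument. First I would observe that since $\cM = \bigoplus_k \cM_k$ is a direct sum of type $I$ factors, and $\omega$ is normal, $\omega$ is determined by its restrictions $\omega_k = \omega|_{\cM_k}$ with $\sum_k \|\omega_k\| = \|\omega\| < \infty$. Given $\varepsilon > 0$, choose a finite set $F$ of indices with $\sum_{k \notin F} \|\omega_k\| < \varepsilon$, and let $e = \sum_{k \in F} 1_{\cM_k}$, a central projection which is a norm-limit of compacts (it lies in the von Neumann algebra but more importantly $e x$ is compact for $x$ compact, and we can use the convergence hypothesis on the central projections in each $\cM_k$ — here one must be slightly careful, as the identity of a type $I_\infty$ factor is not compact, but $1_{\cM_k}$ for $\cM_k$ finite-dimensional is, and for the infinite ones one truncates by finite-rank central-like projections). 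The point is that $\rho_i(e) \to \omega(e)$ by combining the compact-convergence hypothesis with the $\rho_i(1) \to \omega(1)$ hypothesis, and since $0 \le \rho_i, \omega$, this forces $\|\rho_i(1-e)\| = \rho_i(1-e) \to \omega(1-e) < \varepsilon$ (using positivity: $\|\rho_i\| = \rho_i(1)$). So up to $\varepsilon$-errors uniformly in $i$, everything is supported on $e\cM e$, a \emph{finite} direct sum of type $I$ factors.

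Next I would handle a single type $I$ factor $\cM_k = B(H_k)$. Here the heart of the matter is Arveson's original argument: a bounded net of positive functionals $\rho_i$ on $B(H_k)$ converging on the compacts $\cK(H_k)$ to a normal state $\omega$, and with $\rho_i(1) \to \omega(1)$, converges in norm. The idea is that each $\rho_i$ decomposes (via the decomposition of $B(H_k)^*$) as $\rho_i = \rho_i^n + \rho_i^s$ into a normal part and a singular part, with $\|\rho_i\| = \|\rho_i^n\| + \|\rho_i^s\|$ because both parts are positive. The singular part annihilates the compacts, so $\rho_i^n \to \omega$ weak-$*$ on $\cK(H_k)$; since the predual of $B(H_k)$ is $\mathcal{L}^1(H_k)$ and the $\rho_i^n$ are positive trace-class operators with bounded trace, one extracts convergence of $\rho_i^n$ to $\omega$ in trace-norm (positive $L^1$ functions converging weakly on $c_0$ plus convergence of the total mass gives $L^1$-norm convergence — a Scheffé-type lemma). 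Meanwhile $\|\rho_i^s\| = \rho_i^s(1) = \rho_i(1) - \rho_i^n(1) \to \omega(1) - \omega(1) = 0$. Hence $\|\rho_i - \omega\| \le \|\rho_i^n - \omega\| + \|\rho_i^s\| \to 0$.

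Finally I would reassemble: on the finite sum $e \cM e = \bigoplus_{k \in F} B(H_k)$, norm convergence holds because it holds on each of the finitely many summands (and finite direct sums of Banach spaces carry the $\ell^1$-sum norm on the dual), and the contribution of $(1-e)$ is controlled by $\varepsilon$ uniformly. Letting $\varepsilon \to 0$ gives $\|\rho_i - \omega\| \to 0$. The main obstacle I anticipate is the bookkeeping in the reduction from the infinite direct sum to a finite one, and in particular the handling of type $I_\infty$ summands, where the identity is not compact; there one must truncate each such factor by an increasing sequence of finite central-rank projections and use normality of $\omega$ together with the positivity of the $\rho_i$ to push the errors below $\varepsilon$ uniformly in $i$. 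The Scheffé-type step (weak convergence of positive $L^1$ elements plus mass convergence implies norm convergence) is standard but deserves to be stated carefully, as it is what converts the weak hypothesis into the strong conclusion.
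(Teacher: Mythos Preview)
Your approach is workable but takes a considerably longer route than the paper's proof, and there is one genuine ordering issue in your Scheff\'e step.

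The paper's argument is much more direct. It never reduces to individual factors, never invokes the normal/singular decomposition of a functional, and never appeals to a Scheff\'e-type lemma. Instead it simply picks a single \emph{finite-rank} projection $p\in\cM$ with $\omega(1-p)<\epsilon$ (possible because $\omega$ is normal and $\cM$, being a direct sum of type~$I$ factors, has enough finite-rank projections). Since $p$ is compact, $\rho_i(p)\to\omega(p)$; combined with $\rho_i(1)\to\omega(1)$ this gives $\rho_i(1-p)\to\omega(1-p)<\epsilon$. One then writes $x=pxp+(x-pxp)$. The corner $p\cM p$ is finite-dimensional, so $\rho_i\to\omega$ uniformly there; the off-corner pieces are controlled by the Cauchy--Schwarz inequality for positive functionals, $|\mu((1-p)x)|\le\mu(1-p)^{1/2}\|x\|$. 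This yields $\|\rho_i-\omega\|\le\epsilon+6\epsilon^{1/2}$ for large $i$. The whole proof is a single $\epsilon$-argument with no auxiliary lemmas.

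Your route through the normal/singular decomposition plus a trace-class Scheff\'e lemma is valid in principle, but as written it is circular: you invoke Scheff\'e (which requires mass convergence $\rho_i^n(1)\to\omega(1)$ as an \emph{input}) to conclude $\rho_i^n\to\omega$ in trace norm, and only afterwards write ``meanwhile $\rho_i^s(1)=\rho_i(1)-\rho_i^n(1)\to 0$,'' which is the very thing you needed. The fix is to first establish $\liminf_i\rho_i^n(1)\ge\omega(1)$ by testing against finite-rank $p_n\nearrow 1$ and using positivity, then combine with $\rho_i^n(1)\le\rho_i(1)\to\omega(1)$; only then does Scheff\'e apply. Similarly, in your reduction to a finite direct sum, $\rho_i(e)\to\omega(e)$ for the central projection $e=\sum_{k\in F}1_{\cM_k}$ does \emph{not} follow from the stated hypotheses when some $\cM_k$ are type~$I_\infty$ (as you acknowledge); you again need the $\liminf$ argument via finite-rank projections below $e$ and $1-e$. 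Both patches amount to reinventing the finite-rank truncation that the paper's proof uses directly from the start.
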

\begin{proof}
Without loss of generality, we shall assume that all of the functionals involved have norm $\leq 1$.
We shall show that if $i$ is large enough then $ \|\rho_i - \omega\|$ is arbitrarily small. Let $\epsilon > 0$. Since $\omega$ is normal, there exists a finite rank projection $p \in \cM$ such that
\be\label{eq:1a}
\omega(1-p) \leq \epsilon.
\ee
Since $p\cM p$ is a von Neumann algebra on the finite dimensional space $pH$, and since $pxp$ is compact for all $x \in \cM$, we have that
\be\label{eq:2a}
\lim_i \sup_{x \in \cM_1}| \rho_i(pxp) - \omega(pxp)| = 0,
\ee
where $\cM_1$ denotes the unit ball of $\cM$.
Now,
\begin{align*}
\|\rho_i - \omega\| &= \sup_{x\in\cM_1} |\rho_i(x) - \omega(x)| \\
&\leq \sup_{x \in \cM_1}|\rho_i(pxp) - \omega(pxp)| + \sup_{x\in \cM_1}|\rho_i(x-pxp)|
+ \sup_{x\in \cM_1}|\omega(x-pxp)|.
\end{align*}
By (\ref{eq:2a}), the first term in the last expression is smaller than $\epsilon$ when $i$ is large. We now estimate the second and third terms.
Write $x - pxp = (1-p)x + px(1-p)$. Then
$$\sup_{x\in \cM_1}|\mu(x-pxp)| \leq \sup_{x\in \cM_1}|\mu((1-p)x)|+ \sup_{x\in \cM_1}|\mu(px(1-p))|,$$
with $\mu = \rho_i$ or $\mu = \omega$. But by the Schwartz inequality,
$$|\mu((1-p)x)| \leq \mu(1-p)^{1/2}\|x\|$$
and
$$|\mu(px(1-p))| \leq \mu(1-p)^{1/2}\|px\| \leq \mu(1-p)^{1/2}\|x\| .$$
Thus, using (\ref{eq:1a}), we obtain the following estimate for the third term:
$$\sup_{x\in \cM_1}|\omega(x-pxp)| \leq 2 \epsilon^{1/2} .$$
Now, $\rho_i(1) \rightarrow \omega(1)$ and $\rho_i(p) \rightarrow \omega(p)$, thus for all $i$ large enough,
$$\rho_i(1-p) \leq \omega(1-p) + \epsilon \leq 2 \epsilon,$$
so
$$\sup_{x\in \cM_1}|\rho_i(x-pxp)| \leq 4 \epsilon^{1/2} .$$
We conclude that for all $i$ large enough, $\|\rho_i - \omega\| \leq 6 \epsilon^{1/2} + \epsilon$. This completes the proof.
\end{proof}

We now give a somewhat generalized version of SeLegue's Theorem discussed above.
\begin{theorem}\label{thm:SeLegue}
(SeLegue, \cite[pp.~ 37-38]{SeLegue})
Let $\cM \subseteq B(H)$ be direct sum of type $I$ factors. Let $\cS$ be a dense subsemigroup of $\Rp$, and let $\phi = \{\phi_s\}_{s\in \cS}$ be a bounded semigroup over $\cS$ acting on $\cM$, such that $\phi_s$ is a normal positive
linear map for all $s\in\cS$. Assume that for all compact $x \in \cM$ and all $\rho \in \cM_*$,
$$\lim_{\cS \ni s \rightarrow 0} \rho (\phi_s(x)) = \rho(x) \quad \text{\rm and} \quad  \lim_{\cS \ni s \rightarrow 0} \rho (\phi_s(1)) = \rho(1).$$
Then $\phi$ can be extended to a semigroup of normal positive linear maps $\hat{\phi} = \{\hat{\phi}_t\}_{t\geq 0}$
such that $\hat{\phi}_s = \phi_s$ for all $s \in \cS$, satisfying the continuity condition
\be\label{eq:weakstarcont}
\lim_{t \rightarrow t_0} \rho (\hat{\phi}_t (a)) = \rho (\hat{\phi}_{t_0} (a)) \,\,\, \text{\rm for all} \,\,\,  a \in \cM, \rho \in \cM_*.
\ee
Moreover, if $\phi$ consists of
contractions/completely positive maps/unital maps/$*$-endomorphisms then so does $\hat{\phi}$.
\end{theorem}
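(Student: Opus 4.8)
The plan is to follow SeLegue's strategy, with Arveson's Lemma \ref{lem:arveson} as the main lever. The first step is to upgrade the hypothesised $\sigma$-weak continuity at $0$ on compacts into \emph{norm} continuity of the pre-adjoint action. Fix a positive normal functional $\omega \in \cM_*$. The net $(\omega \circ \phi_w)_{w \in \cS}$ is bounded as $w \to 0$ (because $\phi$ is a bounded semigroup), it converges to $\omega$ on every compact $x \in \cM$, and $\omega(\phi_w(1)) \to \omega(1)$; since $\cM$ is a direct sum of type $I$ factors, Lemma \ref{lem:arveson} yields $\|\omega \circ \phi_w - \omega\| \to 0$. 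Decomposing a general element of $\cM_*$ into positive parts, $\lim_{\cS \ni w \to 0}\|\omega \circ \phi_w - \omega\| = 0$ for all $\omega \in \cM_*$; in particular $\phi_w(x) \to x$ $\sigma$-weakly as $\cS \ni w \to 0$, now for \emph{every} $x \in \cM$ and not only the compact ones.

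Next I would construct the extension. Fix $t \geq 0$ and $\omega \in \cM_*$. Using the semigroup relation to compare two nearby parameters (for $s \leq s'$ close to $t$ write $\phi_{s'} = \phi_{s'-s}\circ\phi_s$, with $s'-s \in \cS$), one gets $\|\omega\circ\phi_{s'} - \omega\circ\phi_s\| = \|(\omega\circ\phi_{s'-s} - \omega)\circ\phi_s\| \leq M\,\|\omega\circ\phi_{s'-s} - \omega\|$, where $M = \sup_s\|\phi_s\|$; by the first step this tends to $0$, so $(\omega\circ\phi_s)_s$ is norm-Cauchy as $\cS \ni s \to t$ and, $\cM_*$ being norm-closed in $\cM^*$, converges in norm to some $\mu_\omega \in \cM_*$. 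For fixed $a$, $\omega \mapsto \mu_\omega(a)$ is linear with $|\mu_\omega(a)| \leq M\|a\|\,\|\omega\|$, hence is implemented by an element $\hat\phi_t(a) \in (\cM_*)^* = \cM$; then $\omega\circ\hat\phi_t = \mu_\omega \in \cM_*$ for every $\omega$, so $\hat\phi_t$ is \emph{automatically normal}, and $\|\hat\phi_t\| \leq M$. Testing the net along $s = t + w$, $w \to 0$, and using $\phi_w(a) \to a$ $\sigma$-weakly, shows $\hat\phi_t = \phi_t$ for $t \in \cS$, so $\hat\phi$ extends $\phi$. Positivity, contractivity and complete positivity pass to the limit because the corresponding cones/balls are $\sigma$-weakly closed (and the construction commutes with matrix amplification); unitality passes since $\phi_s(1) = 1$; for the $*$-endomorphism case one verifies multiplicativity of $\hat\phi_t$ by a separate argument (upgrading the convergence to the $\sigma$-strong$^*$ topology via Kadison--Schwarz, or using the explicit structure of normal $*$-endomorphisms of a direct sum of type $I$ factors).

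To finish I would establish the semigroup law and the continuity condition (\ref{eq:weakstarcont}); both reduce to one observation, which I would isolate: if $\cS \ni s_n \to s$ and $b_n \to b$ $\sigma$-weakly with $\sup_n\|b_n\| < \infty$, then $\phi_{s_n}(b_n) \to \hat\phi_s(b)$ $\sigma$-weakly, since for $\omega \in \cM_*$ one writes $\omega(\phi_{s_n}(b_n)) = (\omega\circ\hat\phi_s)(b_n) + (\omega\circ\phi_{s_n} - \omega\circ\hat\phi_s)(b_n)$, the first term tending to $(\omega\circ\hat\phi_s)(b)$ by normality of $\omega\circ\hat\phi_s$ and the second being bounded by $\|\omega\circ\phi_{s_n} - \omega\circ\hat\phi_s\|\cdot\sup_m\|b_m\| \to 0$ by the norm convergence above. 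Applying this with $b_n = \phi_{t_n}(a)$, $t_n \to t$ in $\cS$, gives $\hat\phi_s\circ\hat\phi_t(a) = \sigma\text{-w-}\lim\phi_{s_n}(\phi_{t_n}(a)) = \sigma\text{-w-}\lim\phi_{s_n+t_n}(a) = \hat\phi_{s+t}(a)$; applying it with $b_n = a$ constant and $s_n \in \cS$ chosen within $1/n$ of $t_n$ and of realising $\hat\phi_{t_n}(a)$ (with $t_n \to t_0$ in $\Rp$) gives $\omega(\hat\phi_{t_n}(a)) \to \omega(\hat\phi_{t_0}(a))$, i.e. (\ref{eq:weakstarcont}).

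I expect the main obstacle to be the second paragraph: producing a limit that is still \emph{normal}. The crux is that Arveson's hypothesis must be converted into the norm estimate $\|\omega\circ\phi_w - \omega\| \to 0$; with only $\sigma$-weak control one obtains merely a cluster point of $\{\phi_s\}_{s\to t}$, which need not be normal (a free-ultrafilter limit of vector states on $B(H)$ is the standard warning). Thus Lemma \ref{lem:arveson} --- and with it the hypothesis that $\cM$ is a direct sum of type $I$ factors --- is doing the essential work, and the rest is bookkeeping, modulo the $*$-endomorphism case, which needs its own small argument.
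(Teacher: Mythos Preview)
Your proof is correct and follows essentially the same route as the paper: use Lemma~\ref{lem:arveson} to upgrade the hypothesis to norm convergence $\|\omega\circ\phi_w - \omega\|_{\cM_*}\to 0$, extend the resulting (uniformly) strongly continuous predual semigroup from $\cS$ to $\Rp$, and take adjoints. The paper phrases the middle step as extending the predual semigroup $T_s := (\phi_s)_*$ on $\cM_*$ directly rather than working pointwise with $\omega\circ\phi_s$, but this is only notational.

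One remark on the $*$-endomorphism clause: your parenthetical (``via Kadison--Schwarz'') is too vague, and a direct attempt along those lines runs into a circularity --- to pass from weak to strong convergence you would need $\|\phi_s(a)h\|^2=\langle\phi_s(a^*a)h,h\rangle\to\langle\hat\phi_t(a^*a)h,h\rangle$ to equal $\|\hat\phi_t(a)h\|^2$, which is exactly the multiplicativity of $\hat\phi_t$ you are trying to establish. The paper's route avoids this: the already-constructed $\hat\phi$ is a CP-semigroup over $\Rp$ satisfying~(\ref{eq:weakstarcont}), so Theorem~\ref{thm:pointstrongcont} applies and gives $\phi_s(x)\to\hat\phi_t(x)$ in the \emph{strong} operator topology as $\cS\ni s\to t$; then $\hat\phi_t(ab)=\lim_s\phi_s(a)\phi_s(b)=\hat\phi_t(a)\hat\phi_t(b)$ by joint strong continuity of multiplication on bounded sets.
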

\begin{proof}
We assume, without loss of generality, that $\|\phi_s\| \leq 1$ for all $s \in \cS$. As $\phi_s$ is normal for all $s$, there is a contraction semigroup $T = \{T_s\}_{s\in\cS}$ acting on the predual
$\cM_*$ of $\cM$ such that $T_s^* = \phi_s$ for all $s\in \cS$.
The assumed continuity of $\phi$ at $0$ implies that for all compact $a\in \cM$ and all $\rho \in \cM_*$,
$$T_s \rho (a) \rightarrow \rho(a) \quad \text{\rm and} \quad T_s \rho (1) \rightarrow \rho(1)$$
as $\cS \ni s \rightarrow 0$. Let $\rho$ be a normal state in $\cM_*$. For all $s \in \cS$,
the functional $T_s \rho = \rho \circ \phi_s$ is a positive and normal, because $\phi$ is positive and normal.
Applying Lemma \ref{lem:arveson} to the net $\{T_s \rho\}_{s\in\cS \setminus \{0\}}$, we
obtain that
$$\lim_{\cS \ni s \rightarrow 0}\|T_s \rho - \rho\| = 0 .$$
Any $\rho \in \cM_*$ is a linear combination of normal states, thus
$\lim_{\cS \ni s \rightarrow 0}\|T_s \rho - \rho\| = 0$ for all $\rho \in \cM_*$,
and it follows that for all $s_0 \in \cS$, $\rho \in \cM_*$,
$$\lim_{\cS \ni s \rightarrow s_0}\|T_s \rho - T_{s_0}\rho\| = 0 .$$
In fact, by standard operator-semigroup methods, for every $\rho$ the map $\cS \ni s \mapsto T_s \rho \in \cM_*$ is uniformly continuous on bounded intervals, thus it may be extended
to a unique uniformly continuous map $\Rp \longrightarrow \cM_*$. For all $t \in \Rp$ this gives rise to a well defined
contraction $\hat{T}_t$, such that for $s \in \cS$, $\hat{T}_s = T_s$. It is easy to see that $\{\hat{T}\}_{t \geq 0}$ is a semigroup.

Now define $\hat{\phi}_t = \hat{T}^*_t$. Then $\hat{\phi} = \{\hat{\phi}_t\}_{t\geq 0}$ is a semigroup of normal linear maps
extending $\phi$ and satisfying the continuity condition (\ref{eq:weakstarcont}). With (\ref{eq:weakstarcont}) in mind, the positivity of $\hat{\phi}$ is obvious, and
the final claim of the theorem is also quite clear,
except, perhaps, the part about $*$-endomorphisms. Assume that $\phi$ is a semigroup of $*$-endomorphisms. For $t \in \Rp$,
$a,b \in \cM$, we have
$$\hat{\phi}_t(ab) = \lim_{\cS \ni s \rightarrow t} \phi_s(ab) = \lim_{\cS \ni s \rightarrow t} \phi_s(a)\phi_s(b) ,$$
where convergence is in the weak operator topology. But $\hat{\phi}$ is a CP-semigroup. Thus, by Theorem \ref{thm:pointstrongcont}, for all $x \in \cM$,
$\phi_s(x)$ converges to $\phi_t(x)$ in the \emph{strong} operator topology as $s \rightarrow t$, so
$$\hat{\phi}_t(ab) =  \lim_{\cS \ni s \rightarrow t} \phi_s(a)\phi_s(b) = \hat{\phi}_t(a)\hat{\phi}_t(b) ,$$
because on bounded sets of $\cM$ multiplication is jointly continuous with respect to the strong operator topology.
\end{proof}

\section{Continuous extension of a densely parameterized semigroup on a Banach space}\label{sec:continuous_extension}

\subsection{Author's preface}
In the course of the construction of an E-dilation to a CP-semigroup, the problem of continuously extending a densely parameterized CP-semigroup arose (see Chapter \ref{chap:nonunital}). This problem was solved in the previous section. However, the problem also makes sense for general densely parameterized semigroups of operators on Banach spaces, and my first attempt to solve the problem was in this general setting. I asked Eliahu Levy about it, and he come up with an idea of a proof. After a few months of working on this idea, we came out with the partial solution that appears below, and that should see light in \cite{LevyShalit}. In Theorem \ref{thm:weak}, we show that every right weakly-continuous semigroup of (linear) operators on a reflexive and separable Banach space, parameterized by a dense subsemigroup of $\mb{R}_+$, can be extended to weakly continuous semigroup parameterized by $\mb{R}_+$. Eliahu Levy has managed to prove the analog of this theorem for arbitrary Banach spaces using a different approach \cite{Levy}.

It is honest to say that the best parts of the proof of Theorem \ref{thm:weak} (the clever use of the ``Ulam-Kuratowski Theorem" and the semigroup argument at the end) were Eliahu's ideas. I am grateful to Eliahu for giving me permission to include the proof here.

Finally, I should also say that the main part of this section, Theorem \ref{thm:weak}, is inappropriate for applications in the theory of CP-semigroups, as there are no infinite dimensional von Neumann algebras with a separable and reflexive predual.

\subsection{Introduction}
Let $X$ be a Banach space, and let $\cS$ be a dense sub-semigroup of
$\Rp$. A \emph{semigroup of operators over $\cS$} is a family $T =
\{T_s\}_{s\in\cS}$ of operators on $X$ such that
$$T_{s+t} = T_s \circ T_t \,\, , \,\, s,t \in \cS .$$
If $0 \in \cS$, we also require that $T_0 = I$.
We shall refer below to such a semigroup
as a \emph{densely parameterized} semigroup.
A \emph{contractive semigroup
on $X$} (over $\cS$) is simply a semigroup $T = \{T_s\}_{s\in\cS}$
such that $T_s$ is a contraction for all $s\in\cS$, that is, $\|T_s\|$ is a linear operator
such that $\|T_s\| \leq 1$.

The word
\emph{operator} shall mean henceforth \emph{linear operator}
unless otherwise stated.
A semigroup $T$ (over $\cS$)
is said to be \emph{weakly continuous} if for all $x\in X, y \in
X^*$, the function $\cS\ni s \mapsto y(T_s(x))$ is a continuous
function. Left and right weak continuity are defined similarly.

The theory of weakly continuous semigroups over $\Rp$ is highly developed \cite{HillePhillips1974, EngelNagel}. Some of
the techniques used for semigroups over $\Rp$ cannot be used when
one considers a semigroup of operators over an arbitrary semigroup
$\cS$. For example, the existence of a generator for the semigroup
can be proved using Bochner integration. But if one has a
semigroup of operators, say, over the rational numbers, then one cannot
integrate. The main result of this section is that if $\cS$ is a
dense sub-semigroup of $\Rp$ and $X$ is a separable, reflexive
Banach space, then every right weakly continuous contractive
semigroup on $X$ over $\cS$ can be extended to a weakly continuous
semigroup over $\Rp$.

A similar but weaker result is also obtained for
semigroups of nonlinear operators. A nonlinear map $A$ is said to be \emph{nonexpansive}
if $A$ is Lipschitz continuous with a Lipschitz constant, denoted by $\|A\|$, that is not larger than $1$.
We shall show that, under the same assumptions on $X$ and $\cS$, every right weakly continuous
semigroup of nonexpansive maps that are continuous with respect to the weak topology on $X$ can be extended to a right weakly continuous semigroup over $\Rp$.

The result that every densely parameterized semigroup of (linear) contractions
that is weakly continuous from the right may be extended to a continuous semigroup
parameterized by $\Rp$ may seem rather expected. Indeed, if the semigroup is assumed to
be \emph{strongly} continuous from the right, that is, if for all $x\in X$ the function $\cS\ni s \mapsto T_s(x)$ is continuous from the right (where $X$ is given the norm topology),
then constructing a continuous extension is straightforward. One is tempted to think that a densely parameterized
semigroup that is continuous with respect to any reasonable topology can always be extended to a continuous
semigroup (with respect to the same topology) over $\Rp$. The following example may serve to illustrate
that things do not always work as expected.
\begin{example}
\emph{
Let $X$ be the closed subspace of $L^\infty(\mathbb{R})$ spanned by the functions $x \mapsto e^{iqx}$ with $q \in \mathbb{Q}$. We endow $X$ with the topology inherited from the weak-$*$ topology on $L^\infty(\mathbb{R})$.
We call this topology the $L^1$ weak topology on $X$. Let $T = \{T_s\}_{s \in \mathbb{Q}}$ be a group of isometric multiplication operators on $X$ given by
\bes
(T_s f)(x) = e^{isx}f(x).
\ees
For every $f \in X$, the function $s \mapsto T_s f$ is continuous with respect to the $L^1$ weak topology,
but $T$ cannot be extended to an $L^1$ weakly continuous semigroup over $\mathbb{R}$. Indeed,
if $T$ was extendable then for $r \notin \mathbb{Q}$ and for all $g \in L^1$ we would have
\bes
\lim_{s \rightarrow r} \int_{\mathbb{R}} g(x)e^{isx}f(x)dx = \int_{\mathbb{R}} g(x)(T_r f)(x)dx ,
\ees
from which it follows (using Lebesgue's Dominated Convergence Theorem) that $T_r$ must be given by multiplication by
$e^{irx}$. However, $X$ is not closed under multiplication by $e^{irx}$.}
\end{example}

\subsection{The main result}
For the remainder of this section, let $X$ be a separable and reflexive
Banach space, with a dual $X^*$, and let $\cS$ be a dense
sub-semigroup of $\Rp = [0,\infty)$.

Recall that the pair $(X,X^*)$ satisfies:
\begin{equation}\label{eq:norm}
\|x\| = \max_{y \in X^*, \|y\|=1} |y(x)|,
\end{equation}
and that $X$ is \emph{weakly sequentially complete}, that is,
it has the property: if $\{x_n\} \subset X$ is such that
for all $y\in X^*$, $\{y(x_n)\}$ converges, then there is $x\in X$ such that $y(x_n) \rightarrow y(x)$ for all
$y \in X^*$.

\begin{theorem}\label{thm:weak}
Let $X$ and $\cS$ be as above. Let $T = \{T_s\}_{s\in\cS}$ be a
contractive semigroup on $X$, such that
\begin{equation}\label{eq:continuity}
\lim_{\cS\ni s\rightarrow 0^+} y(T_s(x)) = y(x) \,\, , \,\, x \in X, y \in X^* .
\end{equation}
Then $T$ can be extended to a weakly continuous contractive
semigroup $\{T_t\}_{t\geq 0}$.
\end{theorem}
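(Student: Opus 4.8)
The plan is to follow the classical two-step strategy for extending a densely-parameterized semigroup: first, upgrade the hypothesis (\ref{eq:continuity}) of right weak continuity at $0$ to right weak continuity at every point of $\cS$; second, for each fixed $x \in X$ show that the orbit map $\cS \ni s \mapsto T_s(x)$ extends to a weakly continuous map $[0,\infty) \to X$, and that these extensions assemble into a semigroup. The first step is routine: for $s_0 \in \cS$ and $s \to s_0^+$ in $\cS$, write $T_s(x) = T_{s_0}(T_{s-s_0}(x))$ and use (\ref{eq:continuity}) together with weak-weak continuity of the contraction $T_{s_0}$ (every bounded operator is weak-weak continuous). Left continuity at $s_0$ is subtler because one cannot simply pull $T_{s_0}$ out; here is where separability and reflexivity must enter.

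The heart of the argument is showing that, for fixed $x \in X$, the function $f_x : \cS \to X$, $f_x(s) = T_s(x)$, extends continuously to $\Rp$ in the weak topology. Since $X$ is reflexive and $\|T_s(x)\| \le \|x\|$ for all $s$, the set $\{T_s(x) : s \in \cS\}$ is relatively weakly compact, and because $X$ is separable this weak-closure is weakly metrizable. So the real issue is that the weak limit along $s \to t^-$ (or $s \to t$) should \emph{exist} and be independent of the sequence chosen. The clean way to get this, and the step I expect to be the main obstacle, is a Baire-category / continuity-set argument: one shows that the (generally upper- or lower-semicontinuous, or merely measurable) map $s \mapsto f_x(s)$ is weakly continuous at a residual (comeager) set of points, invoking the ``Ulam--Kuratowski'' type theorem that a function into a separable metric space with a suitable measurability/semicontinuity property has a comeager set of continuity points. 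Having continuity on a dense $G_\delta$ set $D \subseteq \Rp$, one then propagates continuity to all of $\Rp$ using the semigroup law: for $t \notin D$ pick $r \in D$ small, write the limit of $T_{s}(x)$ as $s \to t$ through $\cS$ via $T_{s}(x) = T_{s - r}(T_r(x))$ and reduce to continuity at $t - r \in D$ (adjusting so that $t-r$ lands in the good set), using that each $T_{s-r}$ is weak-weak continuous and uniformly bounded. Weak sequential completeness of $X$ guarantees the candidate limit actually lies in $X$.

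Once the orbit maps $t \mapsto T_t(x) =: \widetilde{T}_t(x)$ are defined on $\Rp$ and weakly continuous, linearity and the contraction bound $\|\widetilde{T}_t(x)\| \le \liminf \|T_s(x)\| \le \|x\|$ (via (\ref{eq:norm}) and weak lower semicontinuity of the norm) pass to the extension, so each $\widetilde{T}_t$ is a well-defined contraction. The semigroup identity $\widetilde{T}_{s+t} = \widetilde{T}_s \widetilde{T}_t$ for $s,t \in \Rp$ follows by a density-and-continuity argument: it holds for $s,t \in \cS$ by hypothesis, and both sides are separately weakly continuous in $s$ and in $t$ (again using weak-weak continuity of the already-constructed contractions and of $\widetilde{T}$), so it extends to all real $s,t \ge 0$. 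Finally $\widetilde{T}_0 = I$ and the extended family is weakly continuous on $\Rp$ by construction, completing the proof. The delicate point throughout — and the one place the hypotheses on $X$ are genuinely used — is the passage from right continuity plus boundedness to \emph{two-sided} weak continuity of the orbit maps; the category argument is what makes this go through in the absence of any integration tool.
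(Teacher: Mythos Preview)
Your high-level architecture matches the paper: a Baire-category argument produces a ``good'' residual set of times, and then a semigroup propagation step extends this to all of $\Rp$. But the implementation you sketch has real gaps at exactly the two delicate points.

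First, you never say precisely what the good set $D$ is. Your orbit map $s\mapsto T_s(x)$ is only defined on $\cS$, so speaking of its ``continuity points in $\Rp$'' is not yet meaningful; you need a device that, at each $t\in\Rp$, measures whether the function extends continuously there. The paper does this scalarwise: for each $x\in X$, $y\in X^*$ it forms the right upper- and lower-semicontinuous envelopes $f(\cdot;x,y)^-$ and $f(\cdot;x,y)_-$ of the right-continuous scalar function $s\mapsto y(T_s x)$, and the good set $E$ is exactly where these coincide for \emph{all} $(x,y)$. This is what makes the Kuratowski--Ulam step work: the set $A=\{(t,x,y): f^-=f_-\}$ is a dense $G_\delta$ in $\Rp\times X\times X^*$ (with the half-open topology on $\Rp$), and Kuratowski--Ulam --- which is the category analogue of Fubini, not a continuity-point theorem as you describe it --- gives residually many $t$ for which the fibre $A_t$ is residual in $X\times X^*$. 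A separate closure lemma then upgrades ``residual fibre'' to ``full fibre''. Working fibre-by-fibre with individual orbit maps $f_x$, as you propose, does not obviously produce a single residual set of $t$ that works simultaneously for all $x$ and all $y$.

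Second, your propagation step is the more serious gap. You write ``pick $r\in D$ small, write $T_s(x)=T_{s-r}(T_r(x))$ and reduce to continuity at $t-r\in D$''. But for $s\in\cS$ and $r\in D\setminus\cS$ the operator $T_{s-r}$ is not defined; and even if you arrange $r\in\cS$, the parameter $s-r$ need not lie in $\cS$ since $\cS$ is only an additive semigroup. The paper's mechanism is different and does not factor through such an identity. Using the envelope calculus (Lemma~\ref{lem:varphipsi}) it shows first that $E+\cS\subseteq E$, then that $E+E\subseteq E$, i.e.\ $E$ is itself a sub-semigroup of $\Rp$. The final stroke is the observation that a \emph{residual} sub-semigroup of $\Rp$ must be all of $\Rp$: for any $r>0$ the set $(r-E)\cap[0,r]$ is residual and hence meets $E$, so $r\in E+E=E$. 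Your ``adjusting so that $t-r$ lands in the good set'' is groping toward this, but without the envelope lemmas you have no way to verify that $E$ is closed under addition, and without that the residual-intersection trick has nothing to bite on.
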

\begin{remark}
\emph{If $T$ is a \emph{nonlinear} semigroup of nonexpansive
maps satisfying, in addition to the above conditions, the
assumption that for all $s\in \cS$, $T_s$ is continuous in the
weak topology of $X$, then the following proof will guarantee that
we can extend $T$ to a \emph{right} weakly continuous semigroup over $\Rp$ of
nonexpansive maps. Throughout the proof, we shall indicate where the differences between
linear and nonlinear semigroups  occur.}
\end{remark}
\begin{proof} We shall split the proof into a number of logical steps.

\vspace{0.3cm}
\noindent{\bf 1. Simplifying assumptions.}

We assume that $X$ is a real Banach space,
as the complex case follows easily by considering
the real and imaginary parts of the functionals appearing in the proof.
%
We also assume that $T$ is right continuous at any $s\in \cS$, as
this clearly follows from (\ref{eq:continuity}).

\vspace{0.3cm}
\noindent{\bf 2. Preliminary definitions.}

For any (real valued) continuous function $\varphi$ on $\cS$ we define a function $\varphi^-$ on $\Rp$ by
\bes
\varphi^- (t) = \inf\{h(t) : h\in RUSC(\Rp), \forall s\in\cS . \varphi(s) \leq h(s)\}
\ees
for all $t \in \Rp$, where $RUSC(\Rp)$ denotes the space of right upper-semicontinuous%
\footnote{A \emph{right upper semicontinuous} function is just an
upper semicontinuous function with respect to the half-open topology
generated on $\Rp$ by the half open intervals of the type: $[a,b)$.

Note, that the open sets for the latter topology are characterized as those
whose connected components (with respect to the usual topology) are all intervals
open above, necessarily at most countable in number. Thus any set open for the
half-open topology turns into a usual open set by deleting an at most countable
set of points, hence the half-open interior (resp.\ closure) of any set differs
from the usual one by an at most countable set. One concludes that the properties
of a set being dense, resp.\ Baire, meager, residual, coincide for the half-open
and the usual topologies. In particular, $\Rp$ with the half-open topology is
a Baire space.}
(RUSC) functions on $\Rp$.
Similarly, we define $\varphi_-$ as the supremum of all right lower-semicontinuous functions (RLSC) smaller than
$\varphi$. It is clear that $\varphi_- \leq \varphi \leq \varphi^-$, $\varphi^-$ is RUSC, and $\varphi_-$ is RLSC.

For every fixed $x \in X, y \in X^*$ we can define a right continuous function on $\cS$ by
\be\label{eq:deff}
f(s;x,y) = y(T_s (x)) .
\ee
Our aim is to prove
\be\label{eq:what_to_show}
\left(f(t;x,y)\right)^- = \left(f(t;x,y)\right)_- \,\, , \,\, t\in \Rp, x\in X,y\in X^*.
\ee
Before we do that, we concentrate in the next two steps to show how the theorem follows from this fact.

\vspace{0.3cm}
\noindent{\bf 3. Showing how (\ref{eq:what_to_show}) gives rise to a weakly right-continuous contractive semigroup.}

Define
\be\label{eq:E}
E = \{t\in\Rp : \forall x\in X,y\in X^* . \left(f(t;x,y)\right)^- = \left(f(t;x,y)\right)_- \} .
\ee

Observe that $\cS
\subseteq E$. This follows from the fact that for all fixed $s\in
\cS$, $x\in X$ and $y\in X^*$, the functions
$$(y(T_s(x))+\epsilon)\cdot \chi_{[s,s+\delta)} + \infty\cdot\chi_{[s,s+\delta)^c}$$
and
$$(y(T_s(x))-\epsilon)\cdot \chi_{[s,s+\delta)} - \infty\cdot\chi_{[s,s+\delta)^c}$$
are right continuous, and for some $\delta>0$ they dominate and are dominated by the function  $\cS \ni t \mapsto f(t;x,y)$, respectively. We then have $f(s;x,y)^- - f(s;x,y)_- < 2\epsilon$, for all $\epsilon$, so $s \in E$.

For any $t\in \Rp$, if $\cS\ni s_n \searrow t$, then for all $x\in X, y\in X^*$,
\begin{align*}
& \left(f(t;x,y)\right)_- \leq \liminf \left(f(s_n;x,y)\right)_- = \liminf y(T_{s_n}(x)) \leq\\
& \leq \limsup y(T_{s_n}(x)) = \limsup \left(f(s_n;x,y)\right)^- \leq \left(f(t;x,y)\right)^- ,
\end{align*}
because $\left(f(\cdot;x,y)\right)_-$ is RLSC and $\left(f(\cdot;x,y\right)^-$ is RUSC.
If $t\in E$, then this means that $y(T_{s_n}(x)) \longrightarrow \left(f(t;x,y)\right)^-$ regardless of the choice of
$\{s_n\}$ and for all $x\in X$ and $y\in X^*$. Thus for $t\in E \setminus \cS$ we may define $T_t x$ to be the
weak limit $\lim_n T_{s_n} x$, where $\{s_n\}$ is any sequence in $\cS$ converging to $t$ from the right (this is where we
use the fact that $X$ is weakly sequentially complete). Note that
for $t\in E \cap \cS$ this weak limit would turn out to be the same $T_t$ that we started with. We will use this
below before we shall actually prove that $E = \Rp$.

Now if $E = \Rp$, then we get a family $\{T_t\}_{t\geq 0}$ of
linear operators on $X$. Equation (\ref{eq:norm}) implies that the
operators in this family are contractions. $\{T_t\}_{t\geq 0}$ is
weakly continuous from the right, since $y(T_t(x)) =
\left(f(t;x,y) \right)^- = \left(f(t;x,y) \right)_-$, a
right-continuous function in $t$. Also, in either case $0\in \cS$
or $0\notin\cS$, $T_0 = I$ by assumption.

To show that $\{T_t\}_{t\geq 0}$ is a semigroup, we first show
that \be\label{eq:halfSG} T_{s+t} = T_s \circ T_t \,\, , \,\, s
\in \cS, t \in \Rp. \ee Let $\cS\ni t_n \searrow t$, and fix $x\in
X, y\in X^*$. On one hand \be\label{eq:T converges} y(T_s\circ
T_{t_n}(x)) = y(T_{s+t_n}(x)) \longrightarrow y(T_{s+t}(x)) . \ee
On the other hand, \bes y(T_s\circ T_{t_n}(x)) =
y(T_s(T_{t_n}(x))) \longrightarrow y(T_s(T_t(x))) = y(T_s \circ
T_t (x)) , \ees because $T_{t_n}(x)$ converges weakly to
$T_{t}(x)$, and $T_s$ is continuous in the weak topology (as any
bounded operator. This is the main reason why in the nonlinear
case we assume that $T_s$ is weakly continuous, for all $s\in
\cS$). Together with (\ref{eq:T converges}) and (\ref{eq:norm}),
this means that (\ref{eq:halfSG}) holds.

Now let $s,t\in \Rp$, and let $\cS\ni s_n \searrow s$. On one
hand, from equation (\ref{eq:halfSG}) and the weak right
continuity of $\{T_t\}_{t\geq 0}$, it follows that for all $x\in
X$ \bes y(T_{s_n} \circ T_{t} (x)) = y(T_{s_n + t}(x))
\longrightarrow y(T_{s+t} (x)). \ees On the other hand, for all
$x$ and $y$, \bes y(T_{s_n}(T_t(x))) \rightarrow y(T_{s}(T_t(x)))
, \ees where we used again the weak right-continuity of
$\{T_t\}_{t\geq 0}$. Thus \bes T_{s+t} = T_s \circ T_t \,\, , \,\,
s, t \in \Rp. \ees

\vspace{0.3cm} \noindent{\bf 4. $\{T_t\}_{t\geq 0}$ (once defined)
is two sided weakly continuous.}

From the previous step, it follows that the semigroup $T$ extends
to a right weakly continuous contractive semigroup which we shall
also call $T$. It follows from classical results that $T$ is
weakly (and, in fact, strongly) continuous from the left as well
(see the corollary on page 306, \cite{HillePhillips1974}. This step does not hold
for the nonlinear case).

\vspace{0.3cm} \noindent{\bf 5. Two Lemmas.}

In this step we prove two technical lemmas, in order to make the
main parts of the proof smoother.

\begin{lemma}\label{lem:subspace}
For every $t\in\Rp$, the set
$$A_t = \{(x,y) \in X \times X^* :  \left(f(t;x,y)\right)^- = \left(f(t;x,y)\right)_- \}$$
is closed in $X \times X^*$.
\end{lemma}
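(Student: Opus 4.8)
The plan is to rewrite the defining condition of $A_t$ so that membership becomes the existence of a one-sided limit, and then prove closedness by a direct uniform-oscillation estimate exploiting $\|T_s\|\le 1$. First, from the definition of the envelopes $\varphi^-,\varphi_-$ and the density of $\cS$ in $\Rp$, one checks the explicit formulas
$$\bigl(f(\cdot\,;x,y)\bigr)^-(t)=\limsup_{\cS\ni s\to t^+}y(T_s(x)),\qquad\bigl(f(\cdot\,;x,y)\bigr)_-(t)=\liminf_{\cS\ni s\to t^+}y(T_s(x)),$$
the one-sided limits being taken through $s\in\cS$. Indeed, as functions of $t$ the right-hand sides are RUSC and RLSC respectively, they dominate resp.\ are dominated by $f(\cdot\,;x,y)$ on $\cS$, and any RUSC $h\ge f(\cdot\,;x,y)$ on $\cS$ satisfies $h(t)\ge\limsup_{\cS\ni s\to t^+}h(s)\ge\limsup_{\cS\ni s\to t^+}y(T_s(x))$ (symmetrically for RLSC), so the right-hand sides are the extremal such functions. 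Since $|y(T_s(x))|\le\|y\|\,\|x\|$, these $\limsup$ and $\liminf$ are finite, and hence
$$A_t=\{(x,y)\in X\times X^*:\ \textstyle\lim_{\cS\ni s\to t^+}y(T_s(x))\ \text{exists in }\mb{R}\}.$$

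Next I would prove closedness of this set directly. Let $(x_n,y_n)\to(x,y)$ in the norm of $X\times X^*$ with $(x_n,y_n)\in A_t$, and set $L_n=\lim_{\cS\ni s\to t^+}y_n(T_s(x_n))$. For every $s\in\cS$, using $\|T_s\|\le 1$,
$$|y(T_s(x))-y_n(T_s(x_n))|\le\|y-y_n\|\,\|T_s(x)\|+\|y_n\|\,\|T_s(x-x_n)\|\le\|y-y_n\|\,\|x\|+\|y_n\|\,\|x-x_n\|=:\epsilon_n,$$
and $\epsilon_n\to 0$. The crucial feature is that $\epsilon_n$ does not depend on $s$. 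Hence from $L_n-\epsilon_n\le y(T_s(x))\le L_n+\epsilon_n$ for all $s$ near $t$, taking $\limsup$ and then $\liminf$ over $\cS\ni s\to t^+$ gives
$$0\ \le\ \bigl(f(\cdot\,;x,y)\bigr)^-(t)-\bigl(f(\cdot\,;x,y)\bigr)_-(t)\ \le\ 2\epsilon_n$$
for every $n$; letting $n\to\infty$ forces $\bigl(f(\cdot\,;x,y)\bigr)^-(t)=\bigl(f(\cdot\,;x,y)\bigr)_-(t)$, i.e.\ $(x,y)\in A_t$.

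The only point that genuinely requires care is the verification of the envelope formulas in the half-open topology on $\Rp$, together with the (harmless) observation that when $t\in\cS$ the right-continuity of $f(\cdot\,;x,y)$ makes $A_t=X\times X^*$, so the statement is trivial in that case. Everything after the reformulation is routine: the entire content is that contractivity of each $T_s$ makes the comparison of $y(T_s(x))$ with $y_n(T_s(x_n))$ uniform in $s$, which is precisely what lets a uniform bound on the oscillation of $s\mapsto y_n(T_s(x_n))$ pass to $s\mapsto y(T_s(x))$.
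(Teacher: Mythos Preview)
Your proof is correct and follows essentially the same route as the paper's: both hinge on the single observation that contractivity makes the comparison $|y(T_s(x))-y_n(T_s(x_n))|\le \|y-y_n\|\|x\|+\|y_n\|\|x-x_n\|$ uniform in $s$, which lets the oscillation of $s\mapsto y_n(T_s(x_n))$ control that of $s\mapsto y(T_s(x))$. The only cosmetic difference is that you first rewrite the envelopes as $\varphi^-(t)=\limsup_{\cS\ni s\to t^+}\varphi(s)$ and $\varphi_-(t)=\liminf_{\cS\ni s\to t^+}\varphi(s)$ (which is valid, as you sketch), whereas the paper stays with the $\inf/\sup$ definitions and argues via a $\delta$-neighborhood; once that reformulation is in hand the two computations are identical.
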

\begin{proof}
Let $(x,y) \in \overline{A_t}$. We shall show that
$f(t;x,y)^- - f(t;x,y)_- \leq \epsilon$ for every $\epsilon > 0$. Indeed, given $\epsilon \in (0,1)$, let $(w,z) \in A_t$
such that  $\|w-x\|, \|z-y\| < \frac{\epsilon}{6 (M + N)}$, where $M:= \max\{\|x\|,\|y\|\} + 1$, and
$N$ is a bound for $\|T_s(x)\|$ for all $s \in \cS\cap[0,t+1]$. The existence of such a bound $N$ follows from
(\ref{eq:continuity}), together with the semigroup property and the Principle of Uniform Boundedness
(of course, if $T$ is a semigroup of linear operators, $N$ can be taken to be $\|x\|$).

Because $(w,z)\in A_t$, there is a $\delta\in(0,1)$ such that for all $s\in [t,t+\delta)\cap \cS$,
$$f(t;w,z)_- - \epsilon/6 < f(s;w,z) < f(t;w,z)^- + \epsilon/6 .$$
But then for all $s \in [t,t+\delta)\cap \cS$
\begin{align*}
f(s;x,y) &= y(T_s(w)) + y(T_s(x)) - y(T_s(w))\\
&\leq y(T_s(w)) + \|y\|\|T_s\|\|x-w\| \\
&\leq z(T_s(w)) + (y-z)(T_s(w)) + \epsilon/6 \\
&< f(t;w,z)^- + \epsilon/2 .
\end{align*}
Similarly, for all such $s$, $f(s;x,y)
> f(t;w,z)_- - \epsilon/2$. It follows that $f(t;x,y)^- - f(t;x,y)_- \leq
\epsilon$, for all $\epsilon$, in other words, $(x,y)\in A_t$.
\end{proof}

\begin{lemma}\label{lem:varphipsi}
Let $\varphi,\psi: \cS \rightarrow \mathbb{R}$ be right continuous, and let $c \in \Rp$ be such that
$$\varphi^-(s+c) \leq \psi(s) \,\, , \,\, s\in \cS .$$
Then
\be\label{eq:varphi}
\varphi^-(t+c) \leq \psi^-(t) \,\, , \,\, t\in \Rp .
\ee
A similar statement, with inequalities reversed and using $\varphi_-,\psi_-$ instead of $\varphi^-,\psi^-$, is also true.
\end{lemma}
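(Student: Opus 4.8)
The plan is to make the envelope $\varphi^-$ completely explicit and then collapse the whole statement to a single pointwise inequality that can be checked by hand. First I would record the formula
\[
\varphi^-(t)=\inf_{\delta>0}\ \sup_{s\in\cS\cap[t,t+\delta)}\varphi(s)\qquad(t\in\Rp),
\]
and the analogous one for $\psi^-$. The right-hand side dominates $\varphi$ on $\cS$, it is RUSC for the half-open topology (a short $\delta/2$ argument, using that $\cS$ is dense so the suprema are over nonempty sets), and any RUSC $h\geq\varphi$ on $\cS$ dominates it, since $h(t)=\inf_\delta\sup_{[t,t+\delta)}h\geq\inf_\delta\sup_{\cS\cap[t,t+\delta)}\varphi$. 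As $\varphi^-$ is the pointwise infimum of all such $h$ (an infimum of RUSC functions is RUSC and still lies in the family), it equals this formula. Write $L:=\varphi^-$ for brevity.

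With the formula for $\psi^-$ in hand, the reduction is immediate. From the hypothesis $\psi(s)\geq\varphi^-(s+c)=L(s+c)$ for $s\in\cS$, for every $t$ and every $\delta>0$ we get $\sup_{s\in\cS\cap[t,t+\delta)}\psi(s)\geq\sup_{s\in\cS\cap[t,t+\delta)}L(s+c)$. Hence it suffices to prove, for all $\delta>0$, the key inequality
\[
L(t+c)\ \leq\ \sup_{s\in\cS\cap[t,t+\delta)}L(s+c),
\]
which I will call $(\star)$: taking the infimum over $\delta$ then gives $L(t+c)\leq\psi^-(t)$, which is exactly (\ref{eq:varphi}).

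To prove $(\star)$, fix $t$, set $a:=t+c$, and let $\epsilon>0$. By the formula $\sup_{u\in\cS\cap[a,a+\delta)}\varphi(u)\geq L(a)$, so I may choose $u_0\in\cS\cap[a,a+\delta)$ with $\varphi(u_0)>L(a)-\tfrac{\epsilon}{2}$. This is the one place where right-continuity of $\varphi$ is essential: it spreads a single large value into a whole right-interval, yielding $\gamma>0$ with $\varphi(u)>L(a)-\epsilon$ for all $u\in\cS\cap[u_0,u_0+\gamma)$. Since $u_0\in[a,a+\delta)$ forces $u_0-c\in[t,t+\delta)$, the interval $J:=\bigl[\,u_0-c,\ \min(t+\delta,\,u_0-c+\gamma)\,\bigr)$ is a nondegenerate subinterval of $[t,t+\delta)$, so by density of $\cS$ I can pick $s\in\cS\cap J$. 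Then $s+c\in[u_0,u_0+\gamma)$, and every small right-neighborhood $[s+c,s+c+\mu)$ lies inside $[u_0,u_0+\gamma)$, whence $\sup_{u\in\cS\cap[s+c,s+c+\mu)}\varphi(u)\geq L(a)-\epsilon$; letting $\mu\to0$ gives $L(s+c)\geq L(a)-\epsilon$. As $s\in\cS\cap[t,t+\delta)$, the right-hand side of $(\star)$ is $\geq L(a)-\epsilon$, and letting $\epsilon\to0$ establishes $(\star)$.

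Finally, the reversed statement for $\varphi_-,\psi_-$ follows by applying the proved implication to $-\varphi,-\psi$, using $(-\varphi)^-=-\varphi_-$ and $(-\psi)^-=-\psi_-$. The genuine obstacle throughout is the \emph{mismatch between $\cS$, along which the envelope is built, and $\cS+c$, where the hypothesis forces us to read it off}: when $c$ is irrational, $(\cS+c)\cap\cS$ may be empty, so the hypothesis controls $\varphi^-$ only at points $s+c\notin\cS$, and the envelope formula alone does not suffice — an RUSC function dominated by $\psi$ merely on the dense set $\cS$ can still exceed $\psi^-$ at an isolated point, since isolated upward spikes are RUSC. What defeats this is precisely that $\varphi^-$ is a \emph{right}-limsup envelope of a \emph{right}-continuous function: largeness of $\varphi$ can never be concentrated at a single point but always persists on a right-interval, and density of $\cS$ then lets the shifted points $s+c$ reach into that interval.
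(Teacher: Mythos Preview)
Your proof is correct. The explicit formula $\varphi^-(t)=\inf_{\delta>0}\sup_{s\in\cS\cap[t,t+\delta)}\varphi(s)$ is justified as you indicate, the reduction to $(\star)$ is clean, and the $\epsilon$-argument for $(\star)$ using right-continuity of $\varphi$ together with density of $\cS$ goes through exactly as written.

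The paper takes a genuinely different route. Rather than making $\varphi^-$ explicit, it works directly with the definition as an infimum over RUSC majorants: given any RUSC $h\geq\psi$ on $\cS$, it forms the shift $h_c(t)=h(t-c)$ (set to $+\infty$ for $t<c$), and shows that $h_c$ is RUSC and dominates $\varphi$ on $\cS$. The only nontrivial point is checking $\varphi(s)\leq h_c(s)$ for $s\in\cS$, which is done by picking $s_n\in\cS$ with $s_n+c\searrow s$ and chaining $\varphi(s)\leq\limsup_n\varphi^-(s_n+c)\leq\limsup_n\psi(s_n)\leq\limsup_n h(s_n)\leq h(s-c)=h_c(s)$; once $h_c$ dominates $\varphi$, the definition of $\varphi^-$ gives $\varphi^-(t+c)\leq h_c(t+c)=h(t)$, and taking the infimum over $h$ yields the claim. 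So the paper's idea is ``shift the majorants of $\psi$ to get majorants of $\varphi$'', whereas yours is ``compute the envelope and verify a pointwise limsup inequality''. Both hinge on the same two ingredients (right-continuity of $\varphi$, density of $\cS$), but the paper's argument is shorter and avoids the preliminary formula, while yours is more self-contained and gives a concrete description of $\varphi^-$ that could be reused elsewhere.
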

\begin{proof}
Let $h$ be a RUSC function dominating $\psi$ on $\cS$. Then the
function $h_c$ given by $h_c(t) = h(t-c)$ for $t\geq c$, and
$h_c(t) = \infty$ for $t<c$, is RUSC and dominates $\varphi^-$ on
$c+\cS$. Let $c\leq s \in \cS$, and let $c + s_n \in c+\cS$ such
that $c + s_n \searrow s$. Since
$\varphi$ is right continuous at
$s$, we have
$$\varphi(s)  = \lim_{n} \varphi^-(s_n + c) \leq \lim\sup_{n} \psi(s_n) \leq \lim\sup_{n} h(s_n) \leq h_c(s) .$$
Thus, $h_c$ is RUSC and dominates $\varphi^-$ on $\cS$, so
$\varphi^-(t+c) \leq h(t)$ for all $t\in\Rp$, from which
(\ref{eq:varphi}) follows.

The similar statement, involving $\varphi_-,\psi_-$ instead of $\varphi^-,\psi^-$, is obtained immediately by multiplying by $-1$.
\end{proof}

\vspace{0.3cm}
\noindent{\bf 6. Proof of (\ref{eq:what_to_show}).}

Now we turn to prove that $\left(f(t;x,y)\right)^- =
\left(f(t;x,y)\right)_-$, for all $t\in \Rp, x\in X,y\in X^*$.
That is, we turn to prove that $E = \Rp$.

Consider the space $\cX = \Rp \times X \times X^*$ with
half-open$\times$norm$\times$norm topology. Recall that with the
half-open topology $\Rp$ is a Baire space.
Denote the subspace $\cS \times X \times X^*$ by $\cX_0$. A
straightforward computation shows that $f$ is jointly continuous
on $\cX_0$. It then follows that $(t,x,y)\mapsto f(t;x,y)^-$ is
upper and $(t,x,y)\mapsto f(t;x,y)_-$ is lower semicontinuous on
$\cX$, which means that the sets
$$A_n := \{(t,x,y) \in \cX : f(t;x,y)^- - f(t;x,y)_- < 1/n \}$$
are all open and contain the dense set $\cX_0$. We conclude that
the set
$$A := \bigcap_{n=1}^\infty A_n = \{(t,x,y) \in \cX : f(t;x,y)^- - f(t;x,y)_- = 0 \}$$
is a dense $G_\delta$ in $\cX$.

By the results in
\cite[Section II.22.V]{Kur}, (sometimes referred to as the
Kuratowski-Ulam Theorem. To apply this theorem we need the
separability assumption), it follows
that there is a dense $G_\delta$ (in the half-open topology) set $E' \subseteq \Rp$ of points $t$ for
which the set
$$A_t = \{(x,y) \in X \times X^* :  \left(f(t;x,y)\right)^- = \left(f(t;x,y)\right)_- \}$$
is residual, and, in particular, dense in $X \times X^*$. But by
Lemma \ref{lem:subspace}, $A_t$ is closed, so for all $t\in E'$,
$A_t = X \times X^*$. In other words, we obtain that $E$ contains a dense $G_\delta$ in $\Rp$ in the
half-open topology, and it follows that $E$ is residual in $\Rp$
in the standard topology (because every open set $U$ in the half
open topology contains an open set $V$ in the standard one, such
that $V$ is dense in $U$).

By the discussion following the definition of $E$, we can define
$T_t x$ for all $t\in E$ and all $x\in X$, consistently with the
definition of $T_t x$ for $t\in \cS$. For $s,t \in \cS$, we have
\bes f(t+s;x,y) = f(t;T_s(x),y). \ees It follows that for
$t\in\Rp$, $s\in\cS$, \be\label{eq:s+t1} \left(f(t+s;x,y)\right)^-
= \left(f(t;T_s (x),y)\right)^-, \ee and similarly for $f_-$. So
whenever $t\in E$ and $s\in\cS$, then $t+s$ is also in $E$. Now in
(\ref{eq:s+t1}) we put $\cS \ni s_n \searrow s \in E$, to get, for all $t\in \cS$,
\begin{align*}
\left(f(t+s;x,y)\right)^-
&= \lim_n \left(f(t+s_n;x,y)\right)^- \\
&= \lim_n f(t;T_{s_n} (x),y) \\
(*)&= y(T_t(T_s(x))) \\
&= f(t;T_s(x),y)
\end{align*}
(equality $(*)$ follows from the fact that
$T_t$ is weakly continuous).
It follows using Lemma \ref{lem:varphipsi} that
$$\left(f(t+s;x,y)\right)^- \leq \left(f(t;T_s(x),y)\right)^- \,\, , \,\, s\in E, t \in \Rp .$$
Similarly,
$$\left(f(t+s;x,y)\right)_- \geq \left(f(t;T_s(x),y)\right)_- \,\, , \,\, s\in E, t \in \Rp .$$
In particular, if $s,t \in E$, then
$$ \left(f(t+s;x,z)\right)^- \leq \left(f(t;T_s(x),y)\right)^- = \left(f(t;T_s(x),y)\right)_- \leq \left(f(t+s;x,z)\right)_- .$$
Thus, $E$ is a semigroup.

But then $E$ must be $\Rp$. Indeed, for $0<r\in\Rp$, $r - E$ contains a dense $G_\delta$ in $[0,r]$, so it must
intersect $E$. Thus $r$ is a sum of two elements in $E$, and hence is in $E$. It follows that $E = \Rp$, and the proof is complete.
\end{proof}

\begin{remark}
\emph{Note that for Hilbert spaces the above result is trivial,
because weak continuity implies strong continuity at $0$:
$$\|T_t h - h \|^2 = \|T_t h\|^2 - 2 \Re \langle T_t h, h \rangle + \|h\|^2 \leq 2\|h\|^2 - 2 \Re\langle T_t h, h \rangle \rightarrow 0 $$
as $t\rightarrow 0$ (see, for example, \cite[Section I.6]{SzNF70}), and strong continuity at $0$ implies uniform strong
continuity
(this remark -- that is, the \emph{triviality} of the
result -- is not true, in our opinion at least, for nonlinear
semigroups).}
\end{remark}

One might ask where in the proof we used the reflexivity of $X$. Checking the proof, one can see that
we need both $X$ and $X^*$ to be separable (in order to use the Kuratowski-Ulam Theorem), and that we need $X$ to be
weakly sequentially complete. These two conditions turn out to be equivalent to having $X$ separable and reflexive.

Another condition one might question is the contractiveness of the semigroup. This condition is not essential, as the following result shows.
\begin{corollary}
Let $X$ and $\cS$ be as above, and let $T = \{T_s\}_{s\in \cS}$ be a semigroup of operators on $X$ such that
(\ref{eq:continuity}) holds. Then $T$ can be extended to a weakly continuous semigroup of operators over $\Rp$ if and only if there exist $M,a\geq0$ such that for all $t \in \cS$,
\be\label{eq:ineq}
\|T_t\| \leq Me^{at} .
\ee
\end{corollary}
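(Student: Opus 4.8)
The plan is to prove both directions, the harder one being ``if'', which I would deduce from Theorem \ref{thm:weak} by a rescaling-and-renorming reduction to the contractive case.

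First, the necessity of (\ref{eq:ineq}). Suppose $T$ extends to a weakly continuous semigroup $\{T_t\}_{t\geq0}$ over $\Rp$. For fixed $x\in X$ and $y\in X^*$ the map $t\mapsto y(T_tx)$ is continuous, hence bounded, on $[0,1]$; a double application of the uniform boundedness principle (first to $\{y\mapsto y(T_tx)\}_{t\in[0,1]}$ for fixed $x$, then to $\{T_t\}_{t\in[0,1]}$) gives $M:=\sup_{t\in[0,1]}\|T_t\|<\infty$, and we may take $M\geq1$. Writing $t=n+r$ with $n=\lfloor t\rfloor$, $r\in[0,1)$, and using $T_t=T_1^{\,n}T_r$, one gets $\|T_t\|\leq M^{\,n+1}\leq Me^{at}$ with $a:=\log M\geq0$; in particular this holds for all $t\in\cS$.

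For sufficiency, assume $\|T_t\|\leq Me^{at}$ on $\cS$. The key observation is that the twisted family $S_s:=e^{-as}T_s$ ($s\in\cS$) is again a semigroup --- the factor $e^{-as}$, unlike $M^{-1}$, respects the semigroup law --- it is uniformly bounded by $M$, and it still satisfies (\ref{eq:continuity}) since $y(S_sx)=e^{-as}y(T_sx)\to y(x)$ as $\cS\ni s\to0^+$. To upgrade uniform boundedness to genuine contractivity I would renorm $X$ by
\bes
\|x\|':=\sup_{s\in\cS}\|S_sx\| ;
\ees
using $|y(x)|=\lim_{s\to0^+}|y(S_sx)|\leq\|y\|\,\|x\|'$ together with (\ref{eq:norm}) one checks $\|x\|\leq\|x\|'\leq M\|x\|$, so $\|\cdot\|'$ is equivalent to $\|\cdot\|$, and $(X,\|\cdot\|')$ is still separable and reflexive with the same weak topology. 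Since $\|S_sx\|'=\sup_{r\in\cS}\|S_{r+s}x\|\leq\|x\|'$, the semigroup $S$ is contractive on $(X,\|\cdot\|')$ and meets the hypotheses of Theorem \ref{thm:weak}, which therefore provides a weakly continuous contractive extension $\{S_t\}_{t\geq0}$ of $S$ on $(X,\|\cdot\|')$ --- and hence a weakly continuous one on $(X,\|\cdot\|)$, the weak topology being unchanged. Setting $T_t:=e^{at}S_t$ then yields a semigroup ($T_{s+t}=e^{a(s+t)}S_sS_t=T_sT_t$) that extends $T$ (for $s\in\cS$, $e^{as}S_s=e^{as}e^{-as}T_s=T_s$) and is weakly continuous ($y(T_tx)=e^{at}y(S_tx)$, a product of continuous functions of $t$).

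The only genuinely delicate step is this reduction: one cannot make $\{T_s\}$ contractive by dividing by $M$ without destroying the semigroup property, so one must first strip off the exponential growth via the twist $T_s\mapsto e^{-as}T_s$ and then absorb the remaining uniform bound into an equivalent norm --- verifying along the way that separability, reflexivity, and the weak topology (everything Theorem \ref{thm:weak} requires) survive the renorming.
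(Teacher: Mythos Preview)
Your proof is correct and follows essentially the same approach as the paper: for sufficiency, twist by $e^{-as}$ to obtain a uniformly bounded semigroup, renorm via $\|x\|'=\sup_{s\in\cS}\|S_sx\|$ to make it contractive, apply Theorem \ref{thm:weak}, and untwist. You supply somewhat more detail than the paper does (in particular, an explicit argument for necessity and for the inequality $\|x\|\leq\|x\|'$), but the ideas are identical.
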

\begin{remark}
\emph{Any semigroup bounded on all bounded subsets of $\cS$ will satisfy (\ref{eq:ineq}) for appropriate $M$ and $a$.
Assuming that each $T_s$ is weakly continuous, the above result also holds for nonlinear semigroups, with the extended semigroup being only \emph{right}-weakly continuous.}
\end{remark}
\begin{proof}
It is a well known result that any weakly continuous semigroup over $\Rp$ satisfies (\ref{eq:ineq}) for appropriate $M$ and $a$, and for all $t\in\Rp$. Thus, if $T$ can be extended to a semigroup over $\Rp$, it must satisfy (\ref{eq:ineq}).

Conversely, if $T$ satisfies (\ref{eq:ineq}), then one can define a new semigroup $U$ by
$$U_s = e^{-as}T_s \,\, , \,\, s\in\cS .$$
Now one defines a new norm on $X$ by
$$\|x\|_{\rm new} = \sup_{s\in \cS} \|U_s x\| ,$$
and with this norm $U$ is a contractive semigroup (this is a standard construction). (\ref{eq:continuity}) and (\ref{eq:ineq}) together imply that $\|\cdot \|_{\rm new}$ is equivalent to $\|\cdot\|$. One checks that the normed space $(X, \|\cdot\|_{\rm new})$ is a separable, reflexive Banach space. Thus, with this new norm, $U$ satisfies the assumptions of Theorem \ref{thm:weak}, so it can be extended. Then one puts
$$T_t = e^{at}U_t \,\, , \,\, t \in \Rp$$
to obtain the desired extension of $T$.
\end{proof}

\chapter{Representing representations as contractive semigroups on a Hilbert space and applications to isometric dilations}\label{chap:representing_representations}


In this chapter we introduce one of our key constructions, which allows to prove the existence of isometric dilations to representations of product systems via reduction to classical dilation theory of semigroups of contractions on a Hilbert space. This construction comes from
\cite{ShalitReprep}, and was used (with several technical differences) also in \cite{ShalitCP0Dil}.

\section{Introduction}
In many ways, representations of product systems are analogous to semigroups of contractions on Hilbert spaces.
For example, when $A = \mathbb{C}$ and $E$ is the trivial product system $\mathbb{C}\times [0,\infty)$, 
then $\{T_t(1)\}_{t\geq 0}$ is a contractive semigroup whenever $T$ is a completely contractive representation of $E$. 
Many proofs of results concerning
representations
are based on the ideas of the proofs of the analogous results concerning contractions on a Hilbert space,
with the appropriate, sometimes highly non-trivial, modifications made. For example, the proof given in
\cite{MS02} that every c.c. representation has an isometric dilation uses some methods from the classical proof that every contraction on a Hilbert space has an isometric dilation.

A point of view that has proved fruitful is that one may try to
exploit the \emph{results} rather than the  \emph{methods} of the
theory of contractive semigroups on a Hilbert space when attacking
problems concerning representations of product systems. In other
words, we wish to find a systematic way to \emph{reduce} (problems
concerning) a representation of a product system to (analogous
problems concerning) a \emph{semigroup of contractions on a
Hilbert space}. This chapter contains the
first steps in this direction. In Section \ref{sec:rep}, given a
product system $X$ over a semigroup $\cS$ and c.c representation
$(\sigma,T)$ of $X$ on a Hilbert space $H$, we construct a Hilbert
space $\cH$ and a contractive semigroup $\hat{T} =
\{\hat{T}_s\}_{s\in\cS}$ on $\cH$, such that $\hat{T}$ contains
all the information regarding the representation. In Section
\ref{sec:dil} we show that if $\hat{T}$ has a regular isometric
dilation, then so does $T$.

In Section \ref{sec:dbly}, we prove that doubly commuting
representations of product systems of Hilbert correspondences over
certain subsemigroups of $\Rp^k$ have doubly commuting, regular
isometric dilations. This was already proved in \cite{S08} for the
case $\cS = \mathbb{N}^k$. Our proof is based on the construction
made in Section \ref{sec:rep}.

Section \ref{sec:iso_dil} contains the first result that will directly be applied to dilation theory of CP-semigroups: the existence of an isometric dilation to a fully coisometric product system representation. This result will be used both in Chapter \ref{chap:unital}, in the construction of an E$_0$-dilation to a (strongly commuting) two-parameter CP$_0$-semigroup, and in Part \ref{part:II}, in the construction of an E$_0$-dilation to a $k$-tuple of commuting unital CP maps (under additional assumptions).

This is a good point to remark that our approach has some
limitations. For example, the construction introduced in section
\ref{sec:rep} does not seem to be canonical in any nice way, and
we cannot obtain all of the
results in \cite{S08}. We will illustrate these limitations in
section \ref{sec:further}, after proving another sufficient
condition for the existence of a regular, isometric dilation. One
might wonder, indeed, how far can one get by trying to reduce
representations of product systems to semigroups of operators on a
Hilbert space, as the former are certainly ``much more
complicated". Indeed, in Section \ref{sec:iso_dil_d} we will construct an isometric dilation of a c.c. representation of a product system over $\diadp$, and we have not been able to do that using the methods of this chapter.

\subsection{Notation for this chapter}

Throughout this chapter, $\Omega$ will denote some fixed set, and $\mb{R}_+^\Omega$ will denote the semigroup product of $\mb{R}_+$ with itself $|\Omega|$ times, that is, the space of functions $\Omega \rightarrow \mb{R}_+$. $\cS$ will be any subsemigroup of $\mb{R}_+^\Omega$, and in different sections we will impose different additional conditions on $\cS$ (in many places $\cS$ can be taken to be any abelian cancellative semigroup with identity $0$ and an appropriate partial ordering, or, more generally, an Ore semigroup).

We denote by $\cS - \cS$ the subgroup of $\mathbb{R}^\Omega$ generated by $\cS$ (with addition and subtraction defined in the obvious way). For $s \in \cS - \cS$ we shall denote by $s_+$ the element in $\cS$ that sends $j\in \Omega$ to $\max\{0,s(j)\}$, and $s_- = s_+ - s$.

$\cS$ becomes a partially ordered set if one introduces the relation
$$s \leq t \Longleftrightarrow \forall j\in\Omega . s(j) \leq t(j) .$$
$s<t$ means that $s \leq t$ and $s \neq t$; $s \ngeq t$ means that $s \geq t$ is false.

\section{Representing representations as contractive semigroups on a Hilbert space}\label{sec:rep}

Let $A$ be a $C^*$-algebra, and let $X$ be a discrete product
system of $C^*$-correspondences over $\cS$. Let $(\sigma,T)$ be a
completely contractive covariant representation of $X$ on the
Hilbert space $H$. Our assumptions do not imply that $X(0) \otimes
H \cong H$. This unfortunate fact will not cause any real trouble,
but it will make our exposition a little clumsy.

Define $\cH_0$ to be the space of all finitely supported functions
$f$ on $\cS$ such that for all $0 \neq s \in \cS$, $f(s) \in X(s)
\otimes_{\sigma} H$ and such that $f(0)\in H$. We equip $\cH_0$
with the inner product
$$\langle \delta_s \cdot \xi, \delta_t \cdot \eta \rangle = \delta_{s,t} \langle \xi, \eta \rangle  ,$$
for all $s,t \in \cS - \{0\}, \xi \in X(s) \otimes H, \eta \in
X(t) \otimes H$ (where the $\delta$'s on the left-hand side are
Dirac deltas, the $\delta$ on the right-hand side is Kronecker's
delta). If $s$ or $t$ is $0$, then the inner product is
defined similarly. Let $\cH$ be the completion of $\cH_0$ with
respect to this inner product. Note that
$$\cH \cong H \oplus \Big(\oplus_{0 \neq s \in \cS} X(s)\otimes H \Big) .$$
We
define a family $\hat{T} = \{\hat{T}_s\}_{s \in \cS}$ of operators
on $\cH_0$ as follows. First, we define
$\hat{T}_0$ to be the identity. Now assume that $s>0$.
It is more convenient to define the adjoint of $\hat{T}_s$, and we do that by the formula
$$\hat{T}^*_s \big(\delta_t \cdot x_t \otimes h \big) = \delta_{t+s}\cdot x_t \otimes \widetilde{T}^*_s h ,$$
for $x_t \in X(t), h \in H$, with $t \neq 0$ (of course, $\delta_{t+s}\cdot x_t \otimes \widetilde{T}^*_s h$ means $\delta_{t+s}\cdot (U_{t,s} \otimes I_H) x_t \otimes \widetilde{T}^*_s h$, that is, we identify $X(t) \otimes X(s)$ with $X(t+s)$). We also define $\hat{T}^*_s \delta_0 h = \delta_s \widetilde{T}^*_s h$.
Since $\widetilde{T}_s^*$ is a contraction, $\hat{T}_s^*$
extends uniquely to a contraction in $B(\cH)$.

The family $\{\hat{T}_s\}_{s\in\cS}$ can be described as follows. If $t\in \cS$ and $t \ngeq s$, then  $\hat{T}_s (\delta_t \cdot \xi ) = 0$ for all
$\xi \in X(t) \otimes_{\sigma} H$ (or all $\xi \in H$, if $t=0$). If $\xi \in X(s) \otimes_\sigma H$, then
$\hat{T}_s (\delta_s \cdot \xi ) = \delta_0 \cdot \widetilde{T}_s \xi$. Finally, if $t
> s > 0$, then
\be\label{eq:def:hat} \hat{T}_s \left(\delta_t \cdot (x_{t-s}
\otimes x_s \otimes h) \right) = \delta_{t-s} \cdot
\left(x_{t-s}\otimes \widetilde{T}_s (x_s \otimes h) \right) .
\ee

%
%
We now show that $\hat{T}$ is a semigroup. Let $s,t \in \cS$. If
either $s = 0$ or $t = 0$ then it is clear that the semigroup
property $\hat{T}_s \hat{T}_t = \hat{T}_{s+t}$ holds. Assume that
$s,t >0$. Then
\begin{align*}
\hat{T}_s^* \hat{T}_t^* \delta_u x_u \otimes h
&= \hat{T}_s^* \delta_{u+t} x_u \otimes \widetilde{T}_t^* h \\
&= \delta_{u+t+s} x_u \otimes (I_{X(t)} \otimes \widetilde{T}_s^*) \widetilde{T}_t^* h \\
&= \delta_{u+t+s} x_u \otimes \widetilde{T}_{s+t}^* h \\
&= \hat{T}_{s+t}^* \delta_u x_u \otimes h .
\end{align*}

Note that if $T$ is a fully coisometric representation, then $\hat{T}$ is a semigroup of coisometries.

We summarize the construction in the following proposition.
\begin{proposition}\label{prop:technology}
Let $A$, $X$, and $\cS$ and $(\sigma,T)$ be as above, and let
$$\cH = H \oplus \Big(\oplus_{0 \neq s \in \cS} X(s)\otimes_\sigma H \Big).$$
There exists a contractive semigroup $\hat{T} = \{\hat{T}_s\}_{s\in\cS}$ on $\cH$ such for all
$0\neq s\in\cS$, $x \in X(s)$ and $h\in H$,
$$\hat{T}_s \left(\delta_s \cdot x \otimes h \right) = T_s(x)h .$$
If $T$ is a fully coisometric representation, then $\hat{T}$ is a semigroup of coisometries.
\end{proposition}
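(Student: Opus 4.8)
The statement essentially just collects and names the construction carried out in the paragraphs immediately preceding it, so the ``proof'' is mostly a matter of checking that the pieces fit together. The plan is to proceed in four short steps. First I would recall the definition of $\cH$ as the Hilbert space completion of $\cH_0$, noting the orthogonal decomposition $\cH \cong H \oplus \bigl(\oplus_{0\neq s\in\cS} X(s)\otimes_\sigma H\bigr)$ that follows directly from the definition of the inner product on $\cH_0$ (distinct Dirac deltas are orthogonal, and on each fiber the inner product is the one inherited from $X(s)\otimes_\sigma H$). Second, I would recall the definition of the operators $\hat T_s$ via their adjoints: $\hat T_s^*(\delta_t\cdot x_t\otimes h) = \delta_{t+s}\cdot x_t\otimes\widetilde T_s^* h$ (with the identification $X(t)\otimes X(s)\cong X(t+s)$ via $U_{t,s}$), and $\hat T_s^*\delta_0 h = \delta_s\widetilde T_s^* h$, together with $\hat T_0 = I$. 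Since $T$ is completely contractive, $\widetilde T_s$ is a contraction, hence so is $\hat T_s^*$ on each summand, and therefore $\hat T_s^*$ (and thus $\hat T_s$) extends uniquely to a contraction on all of $\cH$.

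Third, I would verify the semigroup property $\hat T_s\hat T_t = \hat T_{s+t}$. The cases $s=0$ or $t=0$ are immediate from $\hat T_0 = I$. For $s,t>0$ it is cleanest to check it on adjoints, exactly as in the displayed computation preceding the proposition:
\begin{align*}
\hat T_s^*\hat T_t^*\,\delta_u\, x_u\otimes h
&= \hat T_s^*\,\delta_{u+t}\,x_u\otimes\widetilde T_t^* h \\
&= \delta_{u+t+s}\,x_u\otimes (I_{X(t)}\otimes\widetilde T_s^*)\widetilde T_t^* h \\
&= \delta_{u+t+s}\,x_u\otimes\widetilde T_{s+t}^* h \\
&= \hat T_{s+t}^*\,\delta_u\, x_u\otimes h,
\end{align*}
where the third equality uses the compatibility of the multiplication maps $U_{\cdot,\cdot}$ with the representation $T$ (i.e.\ $\widetilde T_{s+t} = \widetilde T_t(I_{X(t)}\otimes\widetilde T_s)$ after identifying $X(t+s)$ with $X(t)\otimes X(s)$), which is precisely the multiplicativity $T(xy)=T(x)T(y)$ in the definition of a representation of a product system. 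Since $\hat T_s^*\hat T_t^* = \hat T_{s+t}^*$ on the dense subspace $\cH_0$ and all operators are bounded, it holds on $\cH$, and taking adjoints gives $\hat T_t\hat T_s = \hat T_{s+t} = \hat T_s\hat T_t$.

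Fourth, I would read off the two asserted properties. The formula $\hat T_s(\delta_s\cdot x\otimes h) = T_s(x)h$ (viewed in the $\delta_0$-summand $H$) follows by taking adjoints of the defining formula, or directly from the description $\hat T_s(\delta_s\cdot\xi) = \delta_0\cdot\widetilde T_s\xi$ together with $\widetilde T_s(x\otimes h) = T_s(x)h$. Finally, if $T$ is fully coisometric then each $\widetilde T_s$ is a coisometry, i.e.\ $\widetilde T_s\widetilde T_s^* = I_H$, hence $\hat T_s^*$ is an isometry on each summand (the $\widetilde T_s^*$ appearing in its definition is an isometry and the remaining tensor factors are unchanged), so $\hat T_s$ is a coisometry on $\cH$. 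I do not anticipate a genuine obstacle here; the only point requiring a little care is bookkeeping the identifications $X(t)\otimes X(s)\cong X(t+s)$ (and the associativity coherence of the $U_{s,t}$) when checking the semigroup law, and making sure the contractive extension from $\cH_0$ to $\cH$ is justified summand-by-summand before passing to the completion.
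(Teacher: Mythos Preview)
Your proof is correct and follows essentially the same approach as the paper: the construction of $\hat T_s$ via its adjoint, the verification of contractivity through $\widetilde T_s^*$, the semigroup computation on adjoints, and the coisometry claim are all carried out in the paper in the discussion immediately preceding the proposition, and your write-up matches that line-by-line.
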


\section{Regular isometric dilations of product system representations}\label{sec:dil}

\subsection{Notation for Sections \ref{sec:dil}, \ref{sec:dbly} and \ref{sec:further}}\label{subsec:notation}
A \emph{commensurable semigroup} is a semigroup $\Sigma$ such that for every $N$
elements $s_1, \ldots, s_N \in \Sigma$, there exist $s_0 \in \Sigma$ and $a_1, \ldots, a_N \in \mathbb{N}$ such that
$s_i = a_i s_0$ for all $i = 1, \ldots N$. For example, $\mathbb{N}$ is a commensurable semigroup. If $r\in \Rp$, then $r\cdot \mathbb{Q}_+$ is commensurable, and any commensurable subsemigroup of $\Rp$ is contained in such a semigroup.

Throughout this section and the next two, $\cS$ will denote a semigroup
$$\cS = \sum_{i\in \Omega}\cS_i ,$$
where $\cS_i$ is a commensurable and unital (i.e., contains $0$) subsemigroup of $\Rp$.
To be more precise,
$\cS$ is the subsemigroup of $\Rp^\Omega$ consisting of finitely supported functions $s$ such that $s(j) \in \cS_j$ for all $j \in \Omega$. Still another way to describe $\cS$ is the following:
\bes
\cS = \left\{ \sum_{j\in \Omega} {\bf e_j}(s_j) : s_j \in \cS_j, {\rm \,\,all \,\, but\,\, finitely\,\, many\,} s_j {\rm's \,\,are\, }0 \right\},
\ees
where ${\bf e_i}$ is the inclusion of $\cS_i$ into $\prod_{j\in \Omega}\cS_j$. Here is a good example to keep in mind: if $|\Omega| = k \in \mathbb{N}$, and if $\cS_i = \mathbb{N}$ for all $i\in\Omega$, then $\cS = \mathbb{N}^k$.

If $u = \{u_1, \ldots, u_N\} \subseteq \Omega$, we let $|u|$ denote the number of elements in $u$ (this notation will only be used for finite sets). We shall denote by ${\bf e}[u]$ the element of $\mathbb{R}^\Omega$ having $1$ in the $i$th place for every $i\in u$, and having $0$'s elsewhere, and we denote $s[u]: = {\bf e}[u]\cdot s$, where multiplication is pointwise.

\subsection{Regular isometric dilations of product system representations}\label{sec:regular}
Let $H$ be a Hilbert space, and let $T = \{T_s\}_{s\in\cS}$ be a
semigroup of contractions over $\cS$. A semigroup $V =
\{V_s\}_{s\in\cS}$ on a Hilbert space $K \supseteq H$ is said to
be a \emph{regular dilation of $T$} if for all $s\in\cS - \cS$
$$P_H V_{s_-}^*V_{s_+} \big|_H = T_{s_-}^*T_{s_+} .$$
Here and henceforth $P_H$ will denote the orthogonal projection from $K$ onto $H$.
$V$ is said to be an \emph{isometric} dilation if it consists of
isometries. An isometric dilation $V$ is said to be a
\emph{minimal} isometric dilation if
$$K = \bigvee_{s\in\cS}V_s H .$$

The notion of regular isometric dilations can be naturally
extended to representations of product systems.
\begin{definition}
Let $X$ be a product system over $\cS$, and let $(\sigma,T)$ be a
c.c. representation of $X$ on a Hilbert space $H$. An isometric
representation $(\rho,V)$ on a Hilbert space $K \supseteq H$ is said
to be a \emph{regular isometric dilation} if for all $a\in A =
X(0)$, $H$ reduces $\rho(a)$ and
$$\rho(a)\big|_H = \sigma(a) ,$$
and for all $s\in\cS - \cS$
$$P_{X(s_-) \otimes H} \widetilde{V}_{s_-}^* \widetilde{V}_{s_+}\big|_{X(s_+) \otimes H} = \widetilde{T}_{s_-}^* \widetilde{T}_{s_+}.$$
Here, $P_{X(s_-) \otimes H}$ denotes the orthogonal projection of
${X(s_-) \otimes_{\rho} K}$ onto ${X(s_-) \otimes_{\rho} H}$.
$(\rho,V)$ is said to be a \emph{minimal} dilation if
$$K = \bigvee \{V(x)h : x \in X, h \in H\} .$$
\end{definition}
In \cite{S08}, Solel studied regular isometric dilations of product
system representations over $\mathbb{N}^k$, and proved some
necessary and sufficient conditions for the existence of a regular
isometric dilation. One of our aims in this chapter is to show how
the construction of Proposition \ref{prop:technology} can be used
to generalize \emph{some} of the results in \cite{S08}. The
following proposition is the main tool.

\begin{proposition}\label{prop:mainprop}
Let $A$ be a $C^*$-algebra, let $X = \{X(s)\}_{s \in \cS}$ be a
product system of $A$-correspondences over $\cS$, and let
$(\sigma,T)$ be a c.c. representation of $X$ on a Hilbert space $H$.
Let $\hat{T}$ and $\cH$ be as in Proposition
\ref{prop:technology}. Assume that $\hat{T}$ has a regular
isometric dilation. Then there exists a Hilbert space $K \supseteq
H$ and an isometric representation $V$ of $X$ on $K$, such that
\begin{enumerate}
    \item\label{it:V_0} $P_H$ commutes with $V_0 (A)$, and $V_0(a) P_H = \sigma(a) P_H$, for all $a \in
    A$;
    \item\label{it:regDil} $P_{X(s_-) \otimes H} \widetilde{V}_{s_-}^* \widetilde{V}_{s_+}\big|_{X(s_+) \otimes H} = \widetilde{T}_{s_-}^* \widetilde{T}_{s_+}$ for all $s \in \cS -
    \cS$;
    \item $K = \bigvee \{V(x)h : x \in X, h\in H\} $
    ;
    \item\label{it:V*1} $P_H V_s(x)\big|_{K \ominus H} = 0$ for all $s \in \cS$, $x \in X(s)$.
\end{enumerate}
That is, if $\hat{T}$ has a regular isometric dilation, then so
does $T$. If $\sigma$ is nondegenerate and $X$ is essential (that is, $A X(s)$ is dense in $X(s)$ for all $s\in \cS$)
then $V_0$ is also nondegenerate.
\end{proposition}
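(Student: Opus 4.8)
The plan is to recover the isometric representation $V$ of $X$ from a dilation of the contractive semigroup $\hat T$, essentially by reversing the construction of Proposition~\ref{prop:technology}. By hypothesis, fix a minimal regular isometric dilation $\hat V = \{\hat V_s\}_{s\in\cS}$ of $\hat T$, acting on a Hilbert space $\cK \supseteq \cH$. Since $\hat V$ is a semigroup of isometries and $0\in\cS$, one has $\hat V_0 = I_\cK$; taking $s_-=0$, respectively $s_+=0$, in the defining identity of a regular dilation gives $P_\cH\hat V_s|_\cH = \hat T_s$ and $\hat V_s^*|_\cH = \hat T_s^*$ for all $s\in\cS$, so in particular $\cH$ is co-invariant for $\hat V$.

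\textbf{Main construction.} Recall the grading $\cH = \bigoplus_{s\in\cS}\cH(s)$ with $\cH(0)=H$ and $\cH(s) = X(s)\otimes_\sigma H$ for $s\neq 0$, and that for each $0\neq s\in\cS$ and $x\in X(s)$ there is a bounded creation operator $\ell_x$ on $\cH$, sending $\delta_t\cdot\eta$ to $\delta_{s+t}\cdot(x\otimes\eta)$ via the product system identifications, for which $T_s(x) = P_H\hat T_s\ell_x|_H$. The heart of the argument is to exhibit, inside $\cK$, a Hilbert space $K\supseteq H$ and, for each $s$, an isometry $\widetilde V_s : X(s)\otimes_\rho K \to K$ extending $\widetilde T_s : X(s)\otimes_\sigma H \to H$, where $\rho$ is a normal representation of $A$ on $K$ restricting to $\sigma$ on $H$. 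Concretely, I would let $K$ be the closed subspace of $\cK$ generated by $H$ together with all vectors $\hat V_s\,\widehat{\ell_x}\,h$ (with $h\in H$), where $\widehat{\ell_x}$ is a bounded extension of $\ell_x$ to $\cK$ compatible with $\hat V$; arranging such extensions---equivalently, identifying a suitable subspace $\cK(s)\subseteq\cK$ with $X(s)\otimes_\rho K$ so that it extends $\cH(s)=X(s)\otimes_\sigma H$ and so that $\hat V_s$ restricts on it to $\widetilde V_s$---is the technical core. One then sets $V_0 := \rho$ and $V_s(x) := P_K\,\hat V_s\,\widehat{\ell_x}|_K$, and minimality of $\hat V$ keeps $K$ and the $\cK(s)$ from being too large.

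\textbf{Verification of the asserted properties.} That each $(\rho,V_s)$ is a covariant representation of $X(s)$, and that $V(xy)=V(x)V(y)$ for $x\in X(s),\,y\in X(t)$, follow respectively from the way $\ell_x$ encodes the left and right $A$-actions and from the semigroup identity $\hat V_{s+t}=\hat V_s\hat V_t$ together with associativity of the maps $U_{s,t}$. The representation $V$ is isometric because each $\hat V_s$ is an isometry, so that $\widetilde V_s$ is too, i.e.\ $V_s(x)^*V_s(y)=\rho(\langle x,y\rangle)$. Item~(\ref{it:V_0}) holds by the construction of $\rho$; item~(\ref{it:V*1}), co-invariance of $H$ for $V$, is inherited from co-invariance of $\cH$ for $\hat V$; item~(3) is built into the definition of $K$; and item~(\ref{it:regDil}) is precisely the translation, under the identifications $\cH(s)=X(s)\otimes_\sigma H$ and $\cK(s)=X(s)\otimes_\rho K$, of the identity $P_\cH\hat V_{s_-}^*\hat V_{s_+}|_\cH = \hat T_{s_-}^*\hat T_{s_+}$ that holds for the regular dilation $\hat V$.

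\textbf{Nondegeneracy, and the main obstacle.} For the final assertion, assume $\sigma$ is nondegenerate and $X$ is essential. By item~(\ref{it:V_0}), $\overline{\rho(A)H}=\overline{\sigma(A)H}=H$. Given $0\neq s\in\cS$ and $x\in X(s)$, write $x$ as a limit of finite sums $\sum a_i y_i$ with $a_i\in A$, $y_i\in X(s)$ (possible since $\overline{AX(s)}=X(s)$); using $V_s(a\cdot y)=\rho(a)V_s(y)$ we get $V_s(x)h=\lim\sum\rho(a_i)V_s(y_i)h\in\overline{\rho(A)K}$ for all $h\in H$. Combining this with the $s=0$ case and item~(3), $K=\bigvee\{V(x)h : x\in X,\,h\in H\}\subseteq\overline{\rho(A)K}$, so $V_0=\rho$ is nondegenerate. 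The step I expect to be the genuine obstacle is the main construction: producing $K\subseteq\cK$ and the compatible extensions $\widehat{\ell_x}$---equivalently, the fibers $\cK(s)\cong X(s)\otimes_\rho K$---so that $\hat V$ really restricts to isometric correspondence maps $\widetilde V_s$. The possible degeneracy of $\sigma$ (which makes $X(0)\otimes_\sigma H\not\cong H$, so the $s=0$ fiber is special) is a secondary nuisance, best handled by first treating the nondegenerate case and then reducing the general one to it.
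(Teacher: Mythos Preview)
Your overall plan is right and matches the paper's: build $K$ inside the dilation space $\cK$ and read $V$ off from $\hat V$. Your definition of $K$ is in fact exactly the paper's, since for $h\in H$ one has $\widehat{\ell_x}h=\ell_x h=\delta_s\cdot(x\otimes h)\in\cH$, so $K=\bigvee\{\hat V_s(\delta_s\cdot x\otimes h):s\in\cS,\,x\in X(s),\,h\in H\}$. But the ``genuine obstacle'' you flag---extending $\ell_x$ to $\cK$ and carving out fibers $\cK(s)\cong X(s)\otimes_\rho K$---is a detour; the paper never does this, and it is what is blocking you.

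The missing idea is that the \emph{regular} dilation hypothesis lets you compute all the inner products you need, so $V_0$ and $V_s$ can be defined directly by formulas on the spanning set of $K$. Since the $\hat V_s$ are isometries,
\[
\langle \hat V_s\xi,\hat V_t\eta\rangle=\langle \hat V_{(s-t)_-}^*\hat V_{(s-t)_+}\xi,\eta\rangle=\langle \hat T_{(s-t)_-}^*\hat T_{(s-t)_+}\xi,\eta\rangle
\]
for $\xi,\eta\in\cH$, and the last expression is explicit from the construction of $\hat T$. Using this, the paper simply \emph{prescribes}
\[
V_0(a)\,\hat V_s(\delta_s\cdot x_s\otimes h)=\hat V_s(\delta_s\cdot ax_s\otimes h),\qquad
V_s(x_s)\,\hat V_t(\delta_t\cdot x_t\otimes h)=\hat V_{s+t}(\delta_{s+t}\cdot x_s\otimes x_t\otimes h),
\]
and uses the inner-product identity above to check that each formula extends to a well-defined bounded operator on $K$ (no $P_K$ needed---the range already lies in $K$). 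The same computation shows $V_s(x)^*V_s(y)=V_0(\langle x,y\rangle)$, the semigroup property is immediate from $\hat V_{s+t}=\hat V_s\hat V_t$, and item~(\ref{it:regDil}) drops out of a one-line calculation. Item~(\ref{it:V*1}) is not really ``inherited from co-invariance of $\cH$'': the paper observes it holds for \emph{any} minimal isometric dilation, via $P_HV_s(x)V_t(y)h=T_s(x)T_t(y)h=P_HV_s(x)P_HV_t(y)h$. Your nondegeneracy argument is fine and is essentially what the paper asserts.
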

\begin{proof}
Construct $\cH$ and $\hat{T}$ as in Proposition \ref{prop:technology}.

Let $\hat{V} = \{\hat{V}_s \}_{s \in \cS}$ be a minimal, regular,
isometric dilation of $\hat{T}$ on some Hilbert space $\cK$.
Minimality means that
\bes \cK = \bigvee \{\hat{V}_t(\delta_s
\cdot(x \otimes h)) : s,t \in \cS, x \in X(s), h \in H \} .
\ees

Introduce the Hilbert space $K$,
\bes K = \bigvee
\{\hat{V}_s(\delta_s \cdot(x \otimes h)) : s \in \cS, x \in X(s),
h \in H \} .
\ees

We consider $H$ as embedded in $K$ (or in $\cH$ or in $\cK$) by
the identification
\bes h \leftrightarrow \delta_0 \cdot h .
\ees

Next, we define a left action of $A$ on $\cH$ by
\bes a \cdot
(\delta_s \cdot x \otimes h) = \delta_s \cdot ax \otimes h ,
\ees
for all $a \in A, s \in \cS \setminus \{0\}, x \in X(s)$ and $h \in H$, and
\be\label{eq:V_0onH}
 a \cdot (\delta_0 \cdot h) = \delta_0 \cdot
\sigma(a) h \,\, , \,\, a \in A, h \in H .
\ee

By Lemma 4.2 in \cite{cL94}, this extends to a bounded linear
operator on $\cH$. Indeed, this follows from the following
inequality:
\begin{align*}
\left\|\sum_{i=1}^n a x_i \otimes h_i \right\|^2 & = \sum_{i,j=1}^n \langle
h_i,
\sigma (\langle a x_i, a x_j \rangle) h_j \rangle \\
& = \left \langle \big(\sigma (\langle a x_i, a x_j \rangle ) \big) (h_1, \ldots, h_n)^T,  (h_1, \ldots, h_n)^T \right \rangle_{H^{(n)}} \\
(*)& \leq \|a\|^2 \left \langle \big(\sigma (\langle x_i, x_j \rangle ) \big) (h_1, \ldots, h_n)^T,  (h_1, \ldots, h_n)^T \right \rangle_{H^{(n)}} \\
& = \|a\|^2 \left\|\sum_{i=1}^n x_i \otimes h_i \right\|^2 .
\end{align*}
The inequality (*) follows from the complete positivity of $\sigma$
and from $(\langle a x_i, a x_j \rangle ) \leq \|a\|^2 (\langle
x_i, x_j \rangle )$, which is the content of the cited lemma.

In fact, this is a $*$-representation (and it is faithful if $\sigma$
is). Explanation: it is clear that this is a homomorphism of
algebras. To see that it is a $*$-representation it is enough to
take $s \in \cS, x,y \in X(s)$ and $h,k \in H$ and to compute
\begin{align*}
\langle ax \otimes h, y \otimes k \rangle = \langle  h, \sigma
(\langle ax,  y \rangle)  k \rangle = \langle  h, \sigma (\langle x,
a^* y \rangle)  k \rangle = \langle x \otimes h, a^*y \otimes k
\rangle ,
\end{align*}
(recall that the left action of $A$ on X(s) is adjointable). Note that
this left action commutes with $\hat{T}$:
\bes a \hat{T}_s
(\delta_t x_{t-s} \otimes x_s \otimes h) = \delta_{t-s} a x_{t-s}
\otimes T_s (x_s) h = \hat{T}_s (\delta_t a x_{t-s} \otimes x_s
\otimes h) , \ees or \bes a \hat{T}_s (\delta_s x_s \otimes h) =
\delta_{0} \sigma (a)  T_s (x_s) h = \delta_{0}   T_s (a x_s) h=
\hat{T}_s (\delta_s a x_s \otimes h) .
\ees

We shall now define a representation $V$ of $X$ on $K$. We wish to
define $V_0$ by the rules
\be \label{eq:V_0 definition} V_0(a)
\hat{V}_s (\delta_s \cdot x_s \otimes h) = \hat{V}_s (\delta_s
\cdot a x_s \otimes h) , \ee and \bes V_0(a)  (\delta_0 \cdot h) =
\delta_0 \cdot \sigma(a) h .
\ees
To see that this extends
to a bounded, linear operator on $K$, let $\sum_{t} \hat{V}_t
(\delta_t \cdot x_t \otimes h_t) \in K$ (a finite sum), and
compute
\begin{align*}
\| \sum_{t} \hat{V}_t (\delta_t \cdot a x_t \otimes h_t) \|^2 &=
\sum_{s,t} \langle \hat{V}_s (\delta_s \cdot a x_s \otimes
h_s) , \hat{V}_t (\delta_t \cdot a x_t \otimes h_t) \rangle \\
&= \sum_{s,t} \langle \hat{V}_{(s-t)_-}^* \hat{V}_{(s-t)_+}
(\delta_s \cdot a x_s \otimes h_s) , \delta_t \cdot
a x_t \otimes  h_t \rangle \\
(*)&= \sum_{s,t} \langle \hat{T}_{(s-t)_-}^* \hat{T}_{(s-t)_+}
(\delta_s \cdot a x_s \otimes h_s) , \delta_t \cdot
a x_t \otimes  h_t \rangle \\
&= \sum_{s,t} \langle \hat{T}_{(s-t)_-}^* \hat{T}_{(s-t)_+}
(\delta_s \cdot a^* a x_s \otimes h_s) , \delta_t \cdot
 x_t \otimes  h_t \rangle \\
&= \sum_{s,t} \langle \hat{V}_s (\delta_s \cdot a^* a x_s \otimes
h_s) , \hat{V}_t (\delta_t \cdot  x_t \otimes h_t) \rangle .
\end{align*}
(The computation would have worked for finite sums including
summands from $H$, also). Step (*) is justified because $\hat{V}$
is a regular dilation of $\hat{T}$. This will be used repeatedly.
We conclude that if $a \in A$ is unitary then
\bes \left \|
\sum_{t} \hat{V}_t (\delta_t \cdot a x_t \otimes h_t) \right \| =
\left \| \sum_{t} \hat{V}_t (\delta_t \cdot  x_t \otimes h_t)
\right \| .
\ees
For general $a \in A$, we may write $a =
\sum_{i=1}^4 \lambda_i u_i$, where $u_i$ is unitary and
$|\lambda_i| \leq 2 \|a \|$. Thus,
\bes \left \| \sum_{t}
\hat{V}_t (\delta_t \cdot a x_t \otimes h_t) \right \| = \left
\|\sum_{i=1}^4  \lambda_i \sum_{t} \hat{V}_t (\delta_t u_i \cdot
x_t \otimes h_t) \right \| \leq 8 \|a\| \left \| \sum_{t}
\hat{V}_t (\delta_t \cdot  x_t \otimes h_t) \right \| .
\ees
In fact, we will soon see that $V_0$ is a representation, so this is
quite a lousy estimate. But we proved it only to show that $V_0(a)$
can be extended to a well defined operator on $K$.

It is immediate that $V_0$ is linear and multiplicative. To see
that it is $*$-preserving, let $s,t \in \cS$, $x \in X(s), x' \in
X(t)$ and $h,h' \in H$.
\begin{align*}
\langle V_0(a)^* \hat{V}_s (\delta_s \cdot x \otimes h),
\hat{V}_t(\delta_t \cdot x' \otimes h') \rangle
& = \langle  \hat{V}_s (\delta_s \cdot x \otimes h), V_0(a) \hat{V}_t(\delta_t \cdot x' \otimes h') \rangle \\
& = \langle  \hat{V}_s (\delta_s \cdot x \otimes h),  \hat{V}_t(\delta_t \cdot ax' \otimes h') \rangle \\
& = \langle \hat{V}_{(s-t)_-}^* \hat{V}_{(s-t)_+} (\delta_s \cdot x \otimes h),  \delta_t \cdot ax' \otimes h' \rangle \\
& = \langle \hat{T}_{(s-t)_-}^* \hat{T}_{(s-t)_+} (\delta_s \cdot x \otimes h),  \delta_t \cdot ax' \otimes h' \rangle \\
& = \langle \hat{T}_{(s-t)_-}^* \hat{T}_{(s-t)_+} (\delta_s \cdot a^* x \otimes h), \delta_t \cdot x' \otimes h'\rangle\\
& = \langle \hat{V}_s (\delta_s \cdot a^* x \otimes h),  \hat{V}_t(\delta_t \cdot x' \otimes h') \rangle \\
& = \langle V_0(a^*) \hat{V}_s (\delta_s \cdot x \otimes h),
\hat{V}_t(\delta_t \cdot x' \otimes h') \rangle.
\end{align*}
Thus, $V_0(a)^* = V_0(a^*)$.

By (\ref{eq:V_0onH}), $H$ reduces $V_0 (A)$, and $V_0(a) \big|_H =
\sigma(a) \big|_H$ (under the appropriate identifications). The assertion about nondegeneracy of $V_0$ is clear from the definitions.

To define $V_s$ for $s > 0$, we will show that the rule
\be\label{eq:definition V_s1}
V_s(x_s) \hat{V}_t(\delta_t \cdot x_t
\otimes h) = \hat{V}_{s+t} (\delta_{s+t} \cdot x_s \otimes x_t
\otimes h)
\ee
can be extended to a well defined operator on $K$.
Let $\sum \hat{V}_{t_i}(\delta_{t_i} \cdot x_i \otimes h_i) $ be a
finite sum in $K$, and let $s \in \cS, x_s \in X(s)$. To estimate
\begin{align*}
\| \sum \hat{V}_{t_i + s}(\delta_{t_i + s} \cdot & x_s \otimes x_i
\otimes h_i) \|^2 = \\
&= \sum \langle \hat{V}_{t_i + s}(\delta_{t_i + s} \cdot x_s \otimes x_i \otimes h_i), \hat{V}_{t_j + s}(\delta_{t_j + s} \cdot x_s \otimes x_j \otimes h_j) \rangle \\
&= \sum \langle \hat{V}_s  \hat{V}_{t_i}(\delta_{t_i + s} \cdot x_s \otimes x_i \otimes h_i), \hat{V}_s \hat{V}_{t_j}(\delta_{t_j + s} \cdot x_s \otimes x_j \otimes h_j) \rangle \\
&= \sum \langle  \hat{V}_{t_i}(\delta_{t_i + s} \cdot x_s \otimes
x_i \otimes h_i), \hat{V}_{t_j}(\delta_{t_j + s} \cdot x_s \otimes
x_j \otimes h_j) \rangle,
\end{align*}
we look at each summand of the last equation. Denoting $\xi_i =
x_i \otimes h_i$, we have
\begin{align*}
\big\langle \hat{V}_{t_i}(\delta_{t_i + s} \cdot x_s \otimes
\xi_i), & \hat{V}_{t_j}(\delta_{t_j + s} \cdot x_s \otimes \xi_j)
\big\rangle =\\
&= \big\langle \hat{V}_{(t_i - t_j)_-}^* \hat{V}_{(t_i - t_j)_+}(\delta_{t_i + s} \cdot x_s \otimes \xi_i), \delta_{t_j + s} \cdot x_s \otimes \xi_j \big\rangle \\
&= \big\langle \hat{T}_{(t_i - t_j)_-}^* \hat{T}_{(t_i - t_j)_+}(\delta_{t_i + s} \cdot x_s \otimes \xi_i), \delta_{t_j + s} \cdot x_s \otimes \xi_j \big\rangle \\
&= \big\langle \delta_{t_j + s} \cdot x_s \otimes \left(I \otimes \widetilde{T}_{(t_i-t_j)_-}^*\right) \left(I \otimes \widetilde{T}_{(t_i-t_j)_+}\right) \xi_i, \\
& \quad \quad \quad \delta_{t_j + s} \cdot x_s \otimes \xi_j \big\rangle \\
&= \big\langle \delta_{t_j} \cdot \left(I \otimes \widetilde{T}_{(t_i-t_j)_-}^*\right) \left(I \otimes \widetilde{T}_{(t_i-t_j)_+}\right) \xi_i, \delta_{t_j} \cdot |x_s|^2  \xi_j \big\rangle \\
&= \big\langle \hat{T}_{(t_i - t_j)_-}^* \hat{T}_{(t_i - t_j)_+}(\delta_{t_i} \cdot \xi_i), \delta_{t_j} \cdot |x_s|^2 \xi_j \big\rangle \\
&= \big\langle \hat{V}_{t_i}(\delta_{t_i } \cdot |x_s| \xi_i), \hat{V}_{t_j}(\delta_{t_j} \cdot |x_s| \xi_j) \big\rangle \\
&= \big\langle V_0(|x_s|) \hat{V}_{t_i}(\delta_{t_i } \cdot
\xi_i), V_0(|x_s|) \hat{V}_{t_j}(\delta_{t_j} \cdot \xi_j)
\big\rangle ,
\end{align*}
(again, this argument works also if some $\xi$'s are in $H$). This
means that
\begin{align*}
\| \sum \hat{V}_{t_i + s}(\delta_{t_i + s} \cdot x_s \otimes x_i
\otimes h_i) \|^2
&= \| V_0 (|x_s|) \sum \hat{V}_{t_i}(\delta_{t_i} \cdot x_i \otimes h_i) \|^2 \\
&\leq \| V_0 (|x_s|)\|^2 \left \| \sum \hat{V}_{t_i}(\delta_{t_i}
\cdot x_i \otimes h_i) \right \|^2,
\end{align*}
so the mapping $V_s$ defined in (\ref{eq:definition V_s1}) does
extend to a well defined operator on $K$. Now it is clear from the
definitions that for all $s \in \cS$, $(V_0, V_s)$ is a covariant
representation of $X(s)$ on $K$. We now show that it is isometric.
Let $s,t,u \in \cS$, $x, y \in X(s)$, $x_t \in X(t)$, $x_u \in
X(u)$ and $h,g \in H$. Then
\begin{align*}
\langle V_s(x)^* V_s(y) \hat{V}_t \delta_t \cdot x_t \otimes h,&
\hat{V}_u \delta_u \cdot x_u \otimes g \rangle =\\
&= \langle  \hat{V}_{t+s} \delta_{t+s} \cdot y \otimes x_t \otimes h, \hat{V}_{u+s} \delta_{u+s} \cdot x \otimes x_u \otimes g \rangle \\
&= \langle  \hat{V}_{(t-u)_-}^* \hat{V}_{(t-u)_+} \delta_{t+s} \cdot y \otimes x_t \otimes h,  \delta_{u+s} \cdot x \otimes x_u \otimes g \rangle \\
(*)&= \langle  \hat{V}_{(t-u)_-}^* \hat{V}_{(t-u)_+} \delta_t \cdot x_t \otimes h,  \delta_{u} \cdot \langle y, x \rangle x_u \otimes g \rangle \\
&= \langle  \hat{V}_{t} \delta_{t} \cdot x_t \otimes h, \hat{V}_{u} \delta_{u} \cdot \langle y, x \rangle x_u \otimes g \rangle \\
&= \langle V_0(\langle x,y \rangle) \hat{V}_t \delta_t \cdot x_t
\otimes h, \hat{V}_u \delta_u \cdot x_u \otimes g \rangle .
\end{align*}
The justification of (*) was essentially carried out in the proof
that $V_s (x_s)$ is well defined. Let us, for a change, show that
this computation works also for the case $u=0$:
\begin{align*}
\langle V_s(x)^* V_s(y) \hat{V}_t \delta_t \cdot x_t \otimes h,&
 \delta_0 \cdot g \rangle =\\
&= \langle  \hat{V}_{t+s} \delta_{t+s} \cdot y \otimes x_t \otimes h, \hat{V}_{s} \delta_{s} \cdot x \otimes g \rangle \\
&= \langle  \hat{V}_{t} \delta_{t+s} \cdot y \otimes x_t \otimes h,  \delta_{s} \cdot x \otimes g \rangle \\
&= \langle  \hat{T}_{t} \delta_{t+s} \cdot y \otimes x_t \otimes h,  \delta_{s} \cdot x \otimes g \rangle \\
&= \langle  \delta_{s} \cdot y \otimes T_t(x_t) \otimes h,  \delta_{s} \cdot x \otimes g \rangle \\
&= \langle  T_t(x_t) \otimes h, \sigma (\langle y, x\rangle) g \rangle \\
&= \langle  \hat{T}_t \delta_t \cdot x_t \otimes h, V_0 (\langle y, x\rangle) \delta_0 \cdot g \rangle \\
&= \langle  \hat{V}_t \delta_t \cdot x_t \otimes h, V_0 (\langle y, x\rangle) \delta_0 \cdot g \rangle \\
&= \langle V_0(\langle x,y \rangle) \hat{V}_t \delta_t \cdot x_t
\otimes h, \delta_0 \cdot g \rangle .
\end{align*}

We have constructed a family $V = \{V_s \}_{s \in \cS}$ of maps
such that $(V_0, V_s)$ is an isometric covariant representation of
$X(s)$ on $K$. To show that $V$ is a product system representation
of $X$, we need to show that the ``semigroup property" holds.

Let $h \in H$, $s,t,u \in \cS$, and let $x_s, x_t, x_u$ be in
$X(s), X(t), X(u)$, respectively. Then
\begin{align*}
V_{s+t} (x_s \otimes x_t) \hat{V}_u (\delta_u \cdot x_u \otimes h)
& = \hat{V}_{s+t+u}(\delta_{s+t+u} \cdot x_s \otimes x_t \otimes x_u \otimes h) \\
& = V_s (x_s) \hat{V}_{t+u}(\delta_{t+u} \cdot x_t \otimes x_u \otimes h) \\
& = V_s (x_s) V_t (x_t) \hat{V}_{u}(\delta_{u} \cdot x_u \otimes
h),
\end{align*}
so the semigroup property holds.

We have yet to show that $V$ is a minimal, regular dilation of
$T$. To see that it is a regular dilation, let $s \in \cS - \cS$,
$x_+ \in X(s_+), x_- \in X(s_-)$ and $h = \delta_0 \cdot h, g =
\delta_0 \cdot g \in H$. Using the fact that $\hat{V}$ is a
regular dilation of $\hat{T}$, we compute:
\begin{align*}
\langle \widetilde{V}_{s_-}^* \widetilde{V}_{s_+} (x_+ \otimes \delta_0
\cdot h),  (x_- \otimes \delta_0 \cdot g) \rangle &= \langle
\hat{V}_{s_+} (\delta_{s_+} x_+ \otimes h),
\hat{V}_{s_-} (\delta_{s_-} x_- \otimes g) \rangle \\
&= \langle \hat{V}_{s_-}^* \hat{V}_{s_+} (\delta_{s_+} x_+ \otimes
h),
\delta_{s_-} x_- \otimes g \rangle \\
&= \langle \hat{T}_{s_-}^* \hat{T}_{s_+} (\delta_{s_+} x_+ \otimes
h),
 \delta_{s_-} x_- \otimes g \rangle \\
&= \langle \widetilde{T}_{s_+} (x_+ \otimes h),
\widetilde{T}_{s_-} (x_- \otimes g) \rangle \\
&= \langle \widetilde{T}_{s_-}^* \widetilde{T}_{s+} (x_+ \otimes h),
 x_- \otimes g \rangle .
\end{align*}

$V$ is a minimal dilation of $T$, because
\begin{align*}
K &= \bigvee \{\hat{V}_s(\delta_s \cdot(x \otimes h)) : s \in \cS,
x \in X(s),
h \in H \} \\
&= \bigvee \{V_s(x) (\delta_0 \cdot h) : s \in \cS, x \in X(s), h
\in H \} .
\end{align*}

Finally, let us note that item \ref{it:V*1} from the statement of the proposition is true for any
minimal isometric dilation (of any c.c. representation of a
product system over any semigroup). Indeed, let $V$ be a minimal
isometric dilation of $T$ on $K$. Let $x_s \in X(s), x_t \in X(t)$
and $h \in H$. Then
\begin{align*}
P_H V_s(x_s) V_t(x_t) h & = P_H V_{s+t} (x_s \otimes x_t)h \\
& = T_{s+t} (x_s \otimes x_t)h = T_s(x_s) T_t(x_t) h \\
& = P_H V_s(x_s) P_H V_t(x_t) h.
\end{align*}
But $K = \bigvee \{V_s(x)h : s \in \cS, x \in X(s), h \in H \}$,
so $P_H V_s(x_s) P_H = P_H V_s(x_s)$, from which item
\ref{it:V*1} follows.
\end{proof}

It is worth noting that, as commensurable semigroups are
countable, if $\cS = \sum_{i=1}^\infty \cS_i$, then, using the
notation of the above proposition, separability of $H$ implies
that $K$ is separable.

\begin{corollary}\label{cor:normcondregdil}
Let $X = \{X(n)\}_{n \in \mb{N}^k}$ be a product system over $\mb{N}^k$, and let $T$ be a representation of $X$ such that
\be\label{eq:norm_cond}
\sum_{j=1}^k \|\widetilde{T}_{{\bf e_j}(1)}\widetilde{T}_{{\bf e_j}(1)}^*\| \leq 1.
\ee
Then $T$ has a minimal regular isometric dilation.
\end{corollary}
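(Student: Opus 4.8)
The plan is to reduce the statement, via Proposition~\ref{prop:mainprop}, to a classical fact about regular dilations of commuting tuples of contractions. First note that, since $\|\widetilde{T}_{{\bf e_j}(1)}\|^2 = \|\widetilde{T}_{{\bf e_j}(1)}\widetilde{T}_{{\bf e_j}(1)}^*\| \leq 1$ for each $j$ by (\ref{eq:norm_cond}), and every $\widetilde{T}_n$ with $n \in \mb{N}^k$ is a composition of maps of the form $I \otimes \widetilde{T}_{{\bf e_j}(1)}$, the representation $T$ is automatically completely contractive, so the constructions of Propositions~\ref{prop:technology} and~\ref{prop:mainprop} apply. By Proposition~\ref{prop:mainprop}, it suffices to produce a regular isometric dilation of the contractive semigroup $\hat{T}$ on $\cH$ associated to $T$ in Proposition~\ref{prop:technology}; the minimality of the resulting dilation of $T$ is then automatic from item~(3) of Proposition~\ref{prop:mainprop}. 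Since $\cS = \mb{N}^k$, the semigroup $\hat{T}$ is determined by the commuting $k$-tuple of contractions $A_j := \hat{T}_{{\bf e_j}(1)}$ ($j = 1,\dots,k$) on $\cH$, and a regular isometric dilation of $\hat{T}$ is precisely a regular isometric dilation of this tuple in the classical (Brehmer) sense.

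The first step is a direct computation of $A_j A_j^*$ from the formulas defining $\hat{T}$. Writing $\cH = H \oplus \bigoplus_{0 \neq t \in \mb{N}^k} X(t)\otimes_\sigma H$ and using the definition of $\hat{T}^*_{{\bf e_j}(1)}$ together with (\ref{eq:def:hat}) (and the identification $X(t)\otimes X({\bf e_j}(1)) \cong X(t + {\bf e_j}(1))$), one checks that $A_j A_j^*$ leaves each summand invariant and acts there as $I \otimes (\widetilde{T}_{{\bf e_j}(1)}\widetilde{T}_{{\bf e_j}(1)}^*)$. Hence $\sum_{j=1}^k A_j A_j^*$ acts on every summand as $I \otimes \big(\sum_{j=1}^k \widetilde{T}_{{\bf e_j}(1)}\widetilde{T}_{{\bf e_j}(1)}^*\big)$, so that
$$\Big\| \sum_{j=1}^k A_j A_j^* \Big\| = \Big\| \sum_{j=1}^k \widetilde{T}_{{\bf e_j}(1)}\widetilde{T}_{{\bf e_j}(1)}^* \Big\| \leq \sum_{j=1}^k \big\| \widetilde{T}_{{\bf e_j}(1)}\widetilde{T}_{{\bf e_j}(1)}^* \big\| \leq 1$$
by (\ref{eq:norm_cond}); in particular $\sum_{j=1}^k \|A_j\|^2 \leq 1$.

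The second step is to invoke Brehmer's dilation theorem (see, e.g., \cite{SzNF70}): a commuting tuple of contractions $(A_1,\dots,A_k)$ has a regular unitary dilation --- in particular a regular isometric one --- provided $P_W := \sum_{V \subseteq W} (-1)^{|V|} A_V^* A_V \geq 0$ for every $W \subseteq \{1,\dots,k\}$, where $A_V := \prod_{i\in V} A_i$. The one genuinely computational point is to deduce this positivity from $\sum_j \|A_j\|^2 \leq 1$, which I would do by induction on $|W|$, proving simultaneously that $0 \leq P_W \leq I$ and $P_W \geq \big(1 - \sum_{j \in W}\|A_j\|^2\big) I$: for $W = W' \sqcup \{i\}$ one has the recursion $P_W = P_{W'} - A_i^* P_{W'} A_i$, whence $P_W \leq P_{W'} \leq I$ (using $P_{W'} \geq 0$) and $P_W \geq P_{W'} - A_i^* A_i \geq P_{W'} - \|A_i\|^2 I \geq \big(1 - \sum_{j \in W}\|A_j\|^2\big) I$ (using $P_{W'} \leq I$), so that the full sum of the $\|A_j\|^2$ being $\leq 1$ makes every $P_W$ positive. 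Applying Brehmer's theorem to $(A_1,\dots,A_k)$ thus yields a regular isometric dilation of $\hat{T}$, and Proposition~\ref{prop:mainprop} upgrades it to the desired minimal regular isometric dilation of $T$. The main obstacle is really only bookkeeping: getting the correspondence identifications straight in the computation of $A_j A_j^*$, and being careful about which form of Brehmer's theorem (unitary versus isometric, minimal versus not) is fed into Proposition~\ref{prop:mainprop} --- all routine, but it must be done with care.
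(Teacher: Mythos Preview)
Your proof is correct and follows exactly the paper's approach: pass to the contractive semigroup $\hat{T}$ via Proposition~\ref{prop:technology}, verify $\sum_j \|\hat{T}_{{\bf e_j}(1)}\|^2 \leq 1$, and apply the classical Brehmer/Sz.-Nagy--Foia\c{s} criterion (which the paper simply cites as \cite[Proposition~9.2]{SzNF70}) before invoking Proposition~\ref{prop:mainprop}. One cosmetic remark: the ``in particular'' after your displayed estimate does not follow from $\|\sum_j A_jA_j^*\|\leq 1$ alone --- rather, $\sum_j\|A_j\|^2 = \sum_j\|\widetilde{T}_{{\bf e_j}(1)}\widetilde{T}_{{\bf e_j}(1)}^*\|\leq 1$ comes directly from your computation $A_jA_j^* = I\otimes \widetilde{T}_{{\bf e_j}(1)}\widetilde{T}_{{\bf e_j}(1)}^*$, so the detour through $\|\sum_j A_jA_j^*\|$ is unnecessary.
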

\begin{proof}
From (\ref{eq:def:hat}) together with Proposition \ref{prop:technology}, it follows that
\bes
\|\hat{T}_{{\bf e_j}(1)}\| = \|\widetilde{T}_{{\bf e_j}(1)}\|.
\ees
By (\ref{eq:norm_cond}), this means that
\bes
\sum_{j=1}^k \|\hat{T}_{{\bf e_j}(1)}\|^2 = \sum_{j=1}^k \|\hat{T}_{{\bf e_j}(1)} \hat{T}_{{\bf e_j}(1)}^*\| \leq 1.
\ees
By \cite[Proposition 9.2]{SzNF70}, $\hat{T}$ has a regular isometric dilation.
The proof is completed by invoking Proposition \ref{prop:mainprop}.
\end{proof}

\section{Regular isometric dilations of doubly commuting representations}\label{sec:dbly}
It is well known that in order that a $k$-tuple $(T_1, T_2,
\ldots, T_k)$ of contractions have a commuting isometric dilation,
it is not enough to assume that the contractions commute. One of
the simplest sufficient conditions that one can impose on $(T_1,
T_2, \ldots, T_k)$ is that it \emph{doubly commute}, that is
$$T_j T_k = T_k T_j \quad {\rm and} \quad T_j^* T_k= T_k T_j^* $$
for all $j \neq k$. Under this assumption, the $k$-tuple $(T_1,
T_2, \ldots, T_k)$ actually has \emph{regular} unitary dilation. In fact,
if the $k$-tuple $(T_1, T_2, \ldots, T_k)$ doubly commutes then it
also has a minimal \emph{doubly commuting} regular \emph{isometric}
dilation (see \cite[Proposition 3.5]{ShalitNotes} for the simple
explanation). This fruitful notion of double commutation can be
generalized to representations as follows.
\begin{definition}
A representation $(\sigma,T)$ of a product system $X$ over $\cS$
is said to \emph{doubly commute} if \bes (I_{{\bf e_k}(s_k)}
\otimes \widetilde{T}_{{\bf e_j}(s_j)})
  (t \otimes I_H) (I_{{\bf e_j}(s_j)} \otimes \widetilde{T}_{{\bf e_k}(s_k)}^*) = \widetilde{T}_{{\bf e_k}(s_k)}^* \widetilde{T}_{{\bf e_j}(s_j)}
\ees
for all $j \neq k$ and all nonzero $s_j\in \cS_j, s_k\in \cS_k$, where
$t$ stands for the isomorphism between $X({\bf e_j}(s_j)) \otimes
X({\bf e_k}(s_k))$ and $X({\bf e_k}(s_k)) \otimes X({\bf
e_j}(s_j))$, and $I_{s}$ is shorthand for $I_{X(s)}$.
\end{definition}
The following theorem appeared already as \cite[Theorem 3.10]{S08} (for the case $\cS = \mb{N}^k$). We give here a new proof. $\cS$ is assumed to be as specified in Section \ref{subsec:notation}.
\begin{theorem}\label{thm:dbly}
Let $A$ be a $C^*$-algebra, let $X = \{X(s)\}_{s \in \cS}$ be a
product system of $A$-correspondences over $\cS$, and let
$(\sigma,T)$ be doubly commuting c.c. representation of $X$ on a
Hilbert space $H$. There exists a Hilbert space $K \supseteq H$
and a minimal, doubly commuting, regular isometric representation
$V$ of $X$ on $K$.
\end{theorem}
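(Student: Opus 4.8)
The plan is to push the doubly‑commuting structure through the representing construction of Proposition \ref{prop:technology} and then fall back on classical dilation theory. Concretely, I would first build the Hilbert space $\cH$ and the contractive semigroup $\hat{T}=\{\hat{T}_s\}_{s\in\cS}$ attached to $(\sigma,T)$ as in Proposition \ref{prop:technology}, show that \emph{$\hat{T}$ is itself a doubly commuting semigroup of contractions}, dilate it (classically) to a doubly commuting regular isometric semigroup $\hat{V}$, invoke Proposition \ref{prop:mainprop} to pull this back to a minimal regular isometric representation $V$ of $X$, and finally check that $V$ itself doubly commutes.

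For the first main point, note that since $\cS$ is abelian, $\hat{T}_s\hat{T}_u=\hat{T}_{s+u}=\hat{T}_u\hat{T}_s$ holds for free, so the only content is the adjoint relation $\hat{T}_s^{*}\hat{T}_u=\hat{T}_u\hat{T}_s^{*}$ when $s$ and $u$ have disjoint support, and it is enough to take $s={\bf e_j}(s_j)$, $u={\bf e_k}(s_k)$ with $j\neq k$. I would evaluate both $\hat{T}_k\hat{T}_j^{*}$ and $\hat{T}_j^{*}\hat{T}_k$ on a generator $\delta_t\cdot(x_t\otimes h)$ of $\cH$, using the explicit formulas for $\hat{T}_\bullet$ and $\hat{T}_\bullet^{*}$ (in particular (\ref{eq:def:hat})); after cancelling the common $X(t-k)$-leg the two outputs coincide exactly when
\bes
(I_{{\bf e_k}(s_k)} \otimes \widetilde{T}_{{\bf e_j}(s_j)})\,(t \otimes I_H)\,(I_{{\bf e_j}(s_j)} \otimes \widetilde{T}_{{\bf e_k}(s_k)}^{*}) = \widetilde{T}_{{\bf e_k}(s_k)}^{*}\, \widetilde{T}_{{\bf e_j}(s_j)},
\ees
which is precisely the hypothesis that $(\sigma,T)$ doubly commutes. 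The flip $t\colon X({\bf e_j}(s_j))\otimes X({\bf e_k}(s_k))\to X({\bf e_k}(s_k))\otimes X({\bf e_j}(s_j))$ enters exactly where one must reorder the $X(k)$-leg past the $X(j)$-leg inside $X(t+j)$ before $\widetilde{T}_k$ can be applied; carefully threading this reordering is the crux of the argument.

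Granting that $\hat{T}$ doubly commutes, the second point is classical: a doubly commuting tuple of contractions has a minimal doubly commuting regular isometric dilation (Brehmer‑type dilation theory; cf.\ \cite[Proposition 3.5]{ShalitNotes} and \cite{SzNF70}). Since our $\cS=\sum_{i\in\Omega}\cS_i$ has all $\cS_i$ commensurable and unital, any finitely many elements of $\cS$ are integer multiples of a common element, so each finite subfamily of $\{\hat{T}_s\}$ is (a rescaling of) a doubly commuting $\mb{N}^k$-tuple; passing to the inductive limit of the dilating spaces over the directed set of such finite refinements then yields a minimal, doubly commuting, regular isometric dilation $\hat{V}=\{\hat{V}_s\}_{s\in\cS}$ on some $\cK\supseteq\cH$. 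Feeding $\hat{V}$ into Proposition \ref{prop:mainprop} produces $K\supseteq H$ and a minimal regular isometric representation $V$ of $X$ on $K$, defined from $\hat{V}$ by (\ref{eq:V_0 definition}) and (\ref{eq:definition V_s1}).

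It then remains to check that $V$ doubly commutes. Commutation of the $\widetilde{V}_s$'s is already contained in $V$ being a product system representation; for the adjoint relation I would run the computation of the second paragraph in reverse, evaluating the two sides of the desired identity on the spanning vectors $\hat{V}_t(\delta_t\cdot x_t\otimes h)$ of $K$ and using the regular‑dilation identities of Proposition \ref{prop:mainprop} together with the formulas for $V$ to reduce everything to $\hat{V}_j^{*}\hat{V}_k=\hat{V}_k\hat{V}_j^{*}$, which holds because $\hat{V}$ was built doubly commuting. This delivers the asserted minimal, doubly commuting, regular isometric representation. The one genuinely nontrivial step — and the place where I expect the calculation to be delicate — is the transfer of double commutation to $\hat{T}$ (and its mirror for $V$), since it requires keeping track of the flip isomorphisms $t$ inside the definitions of $\hat{T}$ and $\hat{T}^{*}$; everything else is either classical input or routine verification.
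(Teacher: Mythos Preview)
Your plan matches the paper's proof essentially step for step: build $\hat{T}$ via Proposition \ref{prop:technology}, verify that $\hat{T}$ doubly commutes from the hypothesis on $T$, dilate classically to a minimal doubly commuting regular isometric $\hat{V}$, apply Proposition \ref{prop:mainprop}, and then check that $V$ doubly commutes. The paper confirms your instinct that this last verification is the delicate part --- it remarks that the computation runs to about four handwritten pages and that \emph{all five} properties of $\hat{V}$ (minimal, doubly commuting, regular, isometric, dilation) are used, so be prepared in particular for minimality to enter when you compute $\widetilde{V}_k^{*}$ on the spanning vectors of $K$.
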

\begin{proof}
Construct $\cH$ and $\hat{T}$ as in Proposition \ref{prop:technology}.

We now show that $\hat{T}_{{\bf e_j}(s_j)}$ and $\hat{T}_{{\bf
e_k}(s_k)}$ doubly commute for all $j \neq k$, and all $s_j\in
\cS_j, s_k\in \cS_k$. Let $t \in \cS$, $x \in X(t), y \in X({\bf
e_j}(s_j))$ and $h \in H$. Using the assumption that $T$ is a
doubly commuting representation,
\begin{align*}
\hat{T}_{{\bf e_k}(s_k)}^* \hat{T}_{{\bf e_j}(s_j)} (\delta_{t+{\bf e_j}(s_j)} & \cdot x \otimes y \otimes h)
= \\
&= \hat{T}_{{\bf e_k}(s_k)}^* \left(\delta_{t} \cdot x \otimes \widetilde{T}_{{\bf e_j}(s_j)} (y \otimes h) \right) \\
&= \delta_{t+{\bf e_k}(s_k)} \cdot x \otimes \widetilde{T}_{{\bf e_k}(s_k)}^* \widetilde{T}_{{\bf e_j}(s_j)} (y \otimes h) \\
&= \delta_{t+{\bf e_k}(s_k)} \cdot x \otimes \left( (I_{{\bf e_k}(s_k)} \otimes \widetilde{T}_{{\bf e_j}(s_j)})
  (t \otimes I_H) (I_{{\bf e_j}(s_j)} \otimes \widetilde{T}_{{\bf e_k}(s_k)}^*)
  (y \otimes h) \right) \\
&= \hat{T}_{{\bf e_j}(s_j)} \hat{T}_{{\bf e_k}(s_j)}^* (\delta_{t+{\bf e_j}(s_j)} \cdot x \otimes y \otimes h) ,
\end{align*}
where we have written $t$ for the
isomorphism between $X({\bf e_j}(s_j)) \otimes X({\bf e_k}(s_k))$ and
$X({\bf e_k}(s_k)) \otimes X({\bf e_j}(s_j))$, and we haven't written the
isomorphisms between $X(s) \otimes X(t)$ and $X(s+t)$.

By a straightforward extension of \cite[Proposition 9.2]{SzNF70},
there exists a minimal, regular isometric dilation $\hat{V} =
\{\hat{V}_s \}_{s \in \cS}$ of $\hat{T}$ on some Hilbert space
$\cK$, such that $\hat{V}_{{\bf e_j}(s_j)}$ and $\hat{V}_{{\bf
e_k}(s_k)}$ doubly commute for all $j\neq k,s_j\in \cS_j, s_k \in
\cS_k$.
Proposition \ref{prop:mainprop} gives
a minimal, regular isometric dilation $V$ of $T$ on some
Hilbert space $K$.

To see that $V$ is doubly commuting, one computes what one should
using the fact that $\hat{V}$ is a minimal, doubly commuting,
regular isometric dilation of $\hat{T}$ (all the five adjectives
attached to $\hat{V}$ play a part). This takes about 4 pages of
handwritten computations, so is omitted. Let us indicate how it is
done. For any $i\in\Omega$, $s_i \in \cS_i$, write $\widetilde{V}_i$
for $\widetilde{V}_{X({\bf e_i}(s_i))}$, $I_i$ for $I_{X({\bf
e_i}(s_i))}$, and so on. Taking $j\neq k$, $s_j \in \cS_j, s_k \in
\cS_k$, operate with
$$\widetilde{V}_k (I_k\otimes \widetilde{V}_j)(t_{j,k}\otimes I_J)(I_j \otimes \widetilde{V}_k^*)$$
and with
$$\widetilde{V}_k \widetilde{V}_k^* \widetilde{V}_j $$
on a typical element of $X({\bf e_j}(s_j)) \otimes  K$ of the
form: \be\label{eq:element} x \otimes \hat{V}_s (\delta_s \cdot
x_s \otimes h) , \ee to see that what you get is the same. One has
to separate the cases where ${\bf e_k}(s_k) \leq s$ and ${\bf
e_k}(s_k) \nleq s$ (this is the case where the fact that $\hat{V}$
is a doubly commuting semigroup comes in). Because $\widetilde{V}_k$
is an isometry, and the elements (\ref{eq:element}) span $X({\bf
e_j}(s_j)) \otimes  K$, one has
$$\widetilde{V}_k^* \widetilde{V}_j =  (I_k\otimes \widetilde{V}_j)(t_{j,k}\otimes I_J)(I_j \otimes \widetilde{V}_k^*) .$$
That will conclude the proof.
\end{proof}

\section{A sufficient condition for the existence of a regular isometric dilation}\label{sec:further}
Using the above methods, one can, quite easily, arrive at the
following result, which is, for the case $\cS = \mathbb{N}^k$, one half of Theorem 3.5 of \cite{S08}.
We prove it for $\cS$ satisfying the conditions described in Section \ref{subsec:notation}.
\begin{theorem}\label{thm:reg}
Let $X$ be a product system over $\cS$, and let
$T$ be a c.c. representation of $X$. If
\be\label{eq:NS}
\sum_{u\subseteq v}(-1)^{|u|}\left(I_{s[v]-s[u]} \otimes \widetilde{T}^*_{s[u]}\widetilde{T}_{s[u]} \right) \geq 0
\ee
for all finite subsets $v\subseteq \Omega$ and all $s\in\cS$, then $T$ has a regular isometric dilation.
\end{theorem}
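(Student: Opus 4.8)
The strategy is to reduce the statement to the analogous classical result for semigroups of contractions on a Hilbert space, using the machinery of Proposition \ref{prop:technology} and Proposition \ref{prop:mainprop}, exactly as was done for the double-commutation condition in Theorem \ref{thm:dbly}. First I would construct $\cH$ and the contractive semigroup $\hat{T} = \{\hat{T}_s\}_{s\in\cS}$ on $\cH$ as in Proposition \ref{prop:technology}. The key point to establish is that if the operator-inequality (\ref{eq:NS}) holds for the representation $T$, then the corresponding classical condition holds for the semigroup $\hat{T}$: namely, for every finite $v\subseteq\Omega$ and every $s\in\cS$,
\be
\sum_{u\subseteq v}(-1)^{|u|}\, \hat{T}^*_{s[u]}\hat{T}_{s[u]} \geq 0 .
\ee
Once this is known, the classical theorem (this is the criterion for regular isometric dilations of commuting contractions, cf. \cite[Proposition I.9.2 and its $k$-variable refinements]{SzNF70}, adapted as in Section \ref{subsec:notation} to the commensurable semigroup $\cS$) produces a regular isometric dilation $\hat{V}$ of $\hat{T}$, and then Proposition \ref{prop:mainprop} immediately yields a regular isometric dilation $V$ of $T$. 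So the real content is the translation of (\ref{eq:NS}) into a condition on $\hat{T}$.

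To carry out that translation, I would compute the action of $\sum_{u\subseteq v}(-1)^{|u|}\hat{T}^*_{s[u]}\hat{T}_{s[u]}$ on a typical basis vector $\delta_t\cdot(x\otimes h)$ with $t\in\cS$, $x\in X(t)$, $h\in H$, and check positivity fiber-by-fiber over $t$. Fix $t$. For each $u\subseteq v$ one has, by the formula (\ref{eq:def:hat}) for $\hat{T}$ and its adjoint, that $\hat{T}^*_{s[u]}\hat{T}_{s[u]}$ acts on the $\delta_t$-fiber $X(t)\otimes_\sigma H$ (when $s[u]\le t$) essentially as $I_{X(t-s[u])}\otimes \big(\widetilde{T}^*_{s[u]}\widetilde{T}_{s[u]}\big)$ after identifying $X(t)=X(t-s[u])\otimes X(s[u])$, and as $0$ otherwise; moreover $\hat{T}_{s[u]}$ moves the fiber index down by $s[u]$, so $\hat{T}^*_{s[u]}\hat{T}_{s[u]}$ preserves the $\delta_t$-fiber. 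Summing over $u\subseteq v$ and separating the sum according to which coordinates $j\in v$ satisfy $s(j)\le t(j)$, I expect the sum on the $\delta_t$-fiber to collapse to a ``telescoped'' expression that is exactly $I_{X(t-s[v'])}\otimes\big(\sum_{u\subseteq v'}(-1)^{|u|}\,I_{s[v']-s[u]}\otimes\widetilde{T}^*_{s[u]}\widetilde{T}_{s[u]}\big)$ for the appropriate subset $v'\subseteq v$ of ``active'' coordinates (those with $s(j)\le t(j)$), tensored with the identity on the remaining part of $X(t)$; the inactive coordinates contribute matched pairs of terms that cancel in the alternating sum. Since the bracketed operator is $\geq 0$ by hypothesis (\ref{eq:NS}) applied with $v'$ in place of $v$, its ampliation by an identity is also $\geq 0$, and positivity on each $\delta_t$-fiber gives positivity of the whole operator on $\cH$.

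The main obstacle I anticipate is the bookkeeping in that cancellation argument: one must carefully track, for a fixed $t$, exactly which subsets $u\subseteq v$ act nontrivially and organize the alternating sum so that the contributions of coordinates $j$ with $s(j)\nleq t(j)$ cancel in pairs $u \leftrightarrow u\cup\{j\}$, leaving a clean sum over subsets of the ``active'' set $v'$ — and then to verify that this remaining sum is literally (an ampliation of) the left-hand side of (\ref{eq:NS}) for $v'$, including getting the $I_{s[v']-s[u]}$ factors in the right tensor slots after the repeated identifications $X(t)\cong X(t-s[u])\otimes X(s[u])$. This is the same kind of multi-index inclusion–exclusion computation that underlies the classical Brehmer-type positivity conditions, so it should go through, but it is the part that genuinely requires care rather than invocation of earlier results. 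Everything else is a direct appeal to Proposition \ref{prop:technology}, Proposition \ref{prop:mainprop}, and the classical dilation theorem for semigroups of contractions over $\cS$.
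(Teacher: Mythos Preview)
Your proposal is correct and follows exactly the same route as the paper's proof: build $\hat{T}$ via Proposition~\ref{prop:technology}, deduce the classical Brehmer-type positivity for $\hat{T}$ from (\ref{eq:NS}), invoke the (extended) Sz.-Nagy--Foia\c{s} criterion for a regular isometric dilation of $\hat{T}$, and finish with Proposition~\ref{prop:mainprop}. One small clarification on your fiberwise computation: the terms $\hat{T}^*_{s[u]}\hat{T}_{s[u]}$ with $u\not\subseteq v'$ do not ``cancel in matched pairs'' --- they are simply zero on the $\delta_t$-fiber because $\hat{T}_{s[u]}$ annihilates that fiber when $t\ngeq s[u]$; the surviving sum $\sum_{u\subseteq v'}$ is then, as you say, an ampliation by $I_{X(t-s[v'])}$ of the left-hand side of (\ref{eq:NS}) with $v'$ in place of $v$.
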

\begin{proof}
Here are the main lines of the proof. Construct $\hat{T}$ as in Proposition \ref{prop:technology}.
From (\ref{eq:NS}), it follows that
$\hat{T}$ satisfies
$$\sum_{u\subseteq v}(-1)^{|u|}\hat{T}^*_{s[u]}\hat{T}_{s[u]} \geq 0 ,$$
for all finite subsets $v\subseteq \Omega$ and all $s\in\cS$,
which, by a not-very-difficult extension of \cite[Theorem 9.1]{SzNF70}, is
a necessary and sufficient condition for the existence of a
regular isometric dilation $\hat{V}$ of $\hat{T}$. The result now
follows from Proposition \ref{prop:mainprop}.
\end{proof}

Among other reasons, this example has been put forward to illustrate the limitations of our method. By \cite[Theorem
3.5]{S08}, when $\cS = \mathbb{N}^k$, equation (\ref{eq:NS}) is a \emph{necessary}, as well as a sufficient, condition that $T$ has a regular isometric dilation. But our construction ``works only in one direction", so we are able to prove only sufficient conditions (roughly speaking). We believe that, using the methods of \cite{S08} combined with commensurability considerations, one would be able to show that (\ref{eq:NS}) is indeed a necessary condition for the existence of a regular isometric dilation (over $\cS$). Whether or not the constructions of Section \ref{sec:rep} can be modified to give the other direction remains to be answered.

\section{Isometric dilation of a fully coisometric product system representation}\label{sec:iso_dil}

For any $r = (r_1, \ldots, r_k) \in \mathbb{R}^k$, we denote $r_+
:= (\max\{r_1,0\}, \ldots, \max\{r_k,0\})$ and $r_- := r_+ - r$.
Throughout this section, $\cS$ will be a subsemigroup of $\Rp^k$
such that for all $s \in \cS - \cS$, both $s_+$ and $s_-$ are in
$\cS$. The semigroup that we are most interested in, namely
$\Rp^k$, satisfies this condition. For applications in Part \ref{part:II} of this thesis
we will need the
following theorem for $\cS = \mathbb{N}^k$, which also satisfies this
condition.
\begin{theorem}\label{thm:isoDilFC}
Let $\cS$ be as above, let $X = \{X(s)\}_{s \in \cS}$ be a product
system of unital $\cA$-correspondences over $\cS$, and let
$(\sigma,T)$ be a fully coisometric representation of $X$ on $H$,
with $\sigma$ unital. Then there exists a Hilbert space $K
\supseteq H$ and a minimal, fully coisometric and isometric
representation $(\rho,V)$ of $X$ on $K$, with $\rho$ unital, such
that
\begin{enumerate}
    \item $P_H$ commutes with $\rho (\cA)$, and $\rho(a) P_H = \sigma(a) P_H$, for all $a \in \cA$.
    \item\label{it:dilation1} $P_H V_s(x)\big|_H = T_s(x)$ for all $s \in \cS$, $x \in X(s)$.
    \item\label{it:V*2} $P_H V_s(x)\big|_{K \ominus H} = 0$ for all $s \in \cS$, $x \in X(s)$.
\end{enumerate}
If $\sigma$ is nondegenerate and $X$ is essential (that is, $\cA
X(s)$ is dense in $X(s)$ for all $s \in \cS$) then $\rho$ is also
nondegenerate. If $\cA$ is a $W^*$-algebra, $X$ is a product
system of $W^*$-correspondences and $(\sigma,T)$ is a
representation of $W^*$-correspondences, then $(\rho,V)$ is also a
representation of $W^*$-correspondences.
\end{theorem}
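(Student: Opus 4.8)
The plan is to reduce the existence of the dilation $(\rho, V)$ for the fully coisometric representation $(\sigma, T)$ of the product system $X$ to a \emph{classical} dilation statement, exactly as in Proposition \ref{prop:mainprop}, but for the special setting of fully coisometric representations over $\cS \subseteq \Rp^k$. First I would invoke Proposition \ref{prop:technology} to pass from $(\sigma, T)$ to the contractive semigroup $\hat{T} = \{\hat{T}_s\}_{s\in\cS}$ acting on $\cH = H \oplus \big(\bigoplus_{0\neq s\in\cS} X(s)\otimes_\sigma H\big)$. Since $(\sigma, T)$ is fully coisometric, Proposition \ref{prop:technology} already tells us that $\hat{T}$ is a semigroup of \emph{coisometries} on $\cH$. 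So the problem becomes: does a semigroup of coisometries over $\cS$ (with $\cS$ having the property that $s_+, s_- \in \cS$ for all $s \in \cS - \cS$) admit a minimal isometric dilation that is also a semigroup of coisometries, i.e.\ a semigroup of unitaries?

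The heart of the argument is therefore the classical fact that a semigroup of coisometries over such an $\cS$ dilates to a semigroup of unitaries. This is obtained by taking adjoints: $\{\hat{T}_s^*\}$ is a semigroup of isometries, and one forms the minimal unitary (equivalently, regular) extension of this isometric semigroup on a larger Hilbert space $\cK \supseteq \cH$ --- this is a standard construction for semigroups of isometries over positive cones (the Ore/commutative case), and the hypothesis on $\cS$ guarantees that the cocycle/compatibility relations needed to build the inductive limit are available. Taking adjoints back, we obtain a semigroup $\hat{V} = \{\hat{V}_s\}$ of unitaries on $\cK$ dilating $\hat{T}$; being unitary, $\hat{V}$ is automatically a regular isometric dilation of $\hat{T}$ (and simultaneously a regular coisometric dilation). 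Now Proposition \ref{prop:mainprop} applies: it produces a Hilbert space $K \supseteq H$ and an isometric representation $V$ of $X$ on $K$ satisfying items (1)--(4) of that proposition, in particular $P_H$ commutes with $V_0(\cA)$ with $V_0(a)P_H = \sigma(a) P_H$, the regular-dilation identity on each fibre, minimality $K = \bigvee\{V(x)h\}$, and $P_H V_s(x)|_{K\ominus H} = 0$. The last item gives precisely item \ref{it:V*2}, and combining the regular-dilation identity at $s_+ = s$, $s_- = 0$ with item \ref{it:V*2} yields item \ref{it:dilation1}, namely $P_H V_s(x)|_H = T_s(x)$.

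It remains to check the \emph{fully coisometric} and \emph{unital} conclusions, and this is where I expect the only real work. For unitality of $\rho = V_0$: $\sigma$ is unital and $X$ consists of unital correspondences, so the left action of $\cA$ on each $X(s)$ is unital; then the formula defining the left $\cA$-action on $\cH$ (and the corresponding $V_0$ on $K$) sends $1_\cA$ to the identity, giving $\rho(1) = I_K$. For full coisometry of $(\rho, V)$: one must show each $\widetilde{V}_s : X(s) \otimes_\rho K \to K$ is a coisometry. The natural route is to trace full coisometry of $(\sigma,T)$ through the construction --- $\hat{T}_s$ coisometric (Proposition \ref{prop:technology}), hence $\hat{V}_s$ unitary, hence in the inductive limit / in $K$ the relevant $\widetilde{V}_s$ is surjective onto $K$ because $K$ is generated by the ranges of the $\hat{V}_s$ and the range projections behave correctly under the unital left action; I would verify $\widetilde{V}_s \widetilde{V}_s^* = I_K$ directly on the spanning vectors $\hat{V}_t(\delta_t \cdot x_t \otimes h)$ using that $\hat{V}_s$ is unitary and the left action $V_0$ is unital. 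Finally, in the $W^*$-setting, normality of $\rho$ follows from normality of $\sigma$ together with the self-dual extension conventions for $W^*$-correspondences used throughout the preliminaries, and nondegeneracy of $\rho$ under the stated hypotheses is already recorded in Proposition \ref{prop:mainprop}. The main obstacle, then, is not the abstract reduction but the bookkeeping needed to confirm that ``coisometric'' is preserved at every stage and that the unital left action on $\cH$ transports correctly to a unital, fully coisometric representation on $K$.
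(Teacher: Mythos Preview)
Your overall strategy---pass to the semigroup $\hat{T}$ on $\cH$ via Proposition~\ref{prop:technology}, dilate classically, then lift back to a product-system representation---is exactly the paper's approach. But there is a genuine gap at the pivot point.

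You assert that since $\hat{V}$ is a unitary dilation of $\hat{T}$, it is ``automatically a regular isometric dilation of $\hat{T}$'', and therefore Proposition~\ref{prop:mainprop} applies. This is false. A regular isometric dilation requires
\[
P_\cH \hat{V}_{s_-}^* \hat{V}_{s_+}\big|_\cH = \hat{T}_{s_-}^* \hat{T}_{s_+},
\]
but what you actually get from dilating the \emph{isometric} semigroup $\{\hat{T}_s^*\}$ to unitaries $\{W_s\}$ and setting $\hat{V}_s = W_s^*$ is the \emph{flipped} identity
\[
P_\cH \hat{V}_{s_-}^* \hat{V}_{s_+}\big|_\cH = \hat{T}_{s_+} \hat{T}_{s_-}^* ,
\]
because $W_s$ is an \emph{extension} of $\hat{T}_s^*$ (so $\hat{V}_s^*\big|_\cH = \hat{T}_s^*$) and the unitaries commute with their adjoints. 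These two expressions do not agree in general: take $\cS = \mathbb{N}^2$ and $\hat{T}_{(1,0)} = \hat{T}_{(0,1)} = S^*$ the backward shift on $\ell^2(\mathbb{N})$; then $I - 2S S^* + S^2 (S^*)^2 = P_0 - P_1$ is not positive, so by the Sz.-Nagy--Foia\c{s} criterion this coisometric pair has \emph{no} regular isometric dilation at all. Hence Proposition~\ref{prop:mainprop} cannot be invoked as a black box.

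What the paper does is isolate precisely the flipped identity above (equation~(\ref{eq:reg_dil})) and then \emph{rerun} the entire construction of Proposition~\ref{prop:mainprop}---well-definedness of $\rho$, well-definedness of $V_s(x)$, the verification $V_s(x)^*V_s(y) = \rho(\langle x,y\rangle)$---using $\hat{T}_{s_+}\hat{T}_{s_-}^*$ in place of $\hat{T}_{s_-}^*\hat{T}_{s_+}$ at every step. The algebra is structurally parallel but the intermediate manipulations are genuinely different (the paper includes the isometry computation specifically to exhibit this). So the ``bookkeeping'' you anticipate is not merely bookkeeping: it is the content of the proof, and it does not reduce to the regular-dilation case.

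For full coisometry your sketch is too vague. The paper's argument is short but specific: since $\hat{V}_s^*$ \emph{extends} $\hat{T}_s^*$, for any spanning vector $\hat{V}_t(\delta_t\cdot x_t\otimes h)\in K$ one has $\hat{V}_s^*(\delta_t\cdot x_t\otimes h) = \hat{T}_s^*(\delta_t\cdot x_t\otimes h) = \delta_{t+s}\cdot x_t\otimes \widetilde{T}_s^*h$, and hence
\[
\hat{V}_t(\delta_t\cdot x_t\otimes h) = \hat{V}_{t+s}(\delta_{t+s}\cdot x_t\otimes \widetilde{T}_s^*h) \in \mathrm{Im}\,\widetilde{V}_s,
\]
showing $\widetilde{V}_s$ is onto $K$. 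The normality of $\rho$ in the $W^*$-case is argued directly from the normality of the left actions, not from Proposition~\ref{prop:mainprop} (which makes no $W^*$ claim).
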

\begin{proof}
The proof is very similar to the proof of Proposition \ref{prop:mainprop}, so we will not go into all the details whenever they were taken care of in that proof.

Let $\cH = \oplus_{s \in \cS} X(s)\otimes_\sigma H$, and let
$\hat{T}$ be the semigroup of coisometries constructed in the
discussion preceding Proposition \ref{prop:technology}.

Since $\hat{T}$ is a semigroup of coisometries, there exists a
minimal, \emph{regular} unitary dilation $W = \{W_s\}_{s\in \cS}$
of the semigroup $\{\hat{T}_s^*\}_{s\in \cS}$ on a Hilbert space
$\cK \supseteq \cH$ (this follows from \cite[Proposition 9.2]{SzNF70}). We denote $\hat{V}_s = W_s^*$. We
have for all $s \in \cS-\cS$
\be P_\cH \hat{V}_{s_+} \hat{V}_{s_-}^*
P_\cH = \hat{T}_{s_+} \hat{T}_{s_-}^* ,
\ee
Since the semigroup
$\hat{V}$ consists of commuting unitaries, and since commuting
unitaries doubly commute, we also have
\be\label{eq:reg_dil} P_\cH
\hat{V}_{s_-}^* \hat{V}_{s_+} P_\cH = \hat{T}_{s_+}
\hat{T}_{s_-}^* .
\ee
This triviality turns out to be crucial: it
will allow us to compute the inner products in $\cK$.

Introduce the Hilbert space $K$,
\bes K = \bigvee
\{\hat{V}_s(\delta_s \cdot(x \otimes h)) : s \in \cS, x \in X(s),
h \in H \} .
\ees
We consider $H$ as embedded in $K$ (or in $\cH$ or in $\cK$)
by the identification
\bes h \leftrightarrow \delta_0 \cdot(1
\otimes h) .
\ees
(This is where we use the fact that $\sigma$ is
unital). We turn to the definition of the representation $V$ of
$X$ on $K$. First, note that $\sigma(a)h$ is identified with
$\delta_0 \cdot 1 \otimes_{\sigma}\sigma(a)h = \delta_0 \cdot a
\otimes_{\sigma}h$. Next, we define a left action of $\cA$ on $\cH$ by
\bes
a \cdot (\delta_s \cdot x \otimes h) = \delta_s \cdot ax
\otimes h ,
\ees
for all $a \in \cA, s \in \cS, x \in X(s)$ and $h
\in H$. As we have explained in the proof of Proposition \ref{prop:mainprop}, this
gives rise to a well defined $*$-representation
that commutes
with $\hat{T}_s$ and $\hat{T}_s^*$ for all $s \in \cS$.

We now define a representation $(\rho,V)$ of $X$ on $K$, exactly as in the proof of Proposition \ref{prop:mainprop}. First, we define
$\rho$ by the rule
\be \label{eq:V_e definition}
\rho(a) \hat{V}_s (\delta_s \cdot x_s \otimes h) = \hat{V}_s (\delta_s \cdot a x_s \otimes h) .
\ee
Using (\ref{eq:reg_dil}),
one shows that $\rho(a)$ extends to a bounded map on $K$. It then
follows by direct computation that $\rho$ is a $*$-representation. When $(\sigma,T)$ is a representation of $W^*$-correspondences, we
also have to show that $\rho$ is a \emph{normal} representation.
Let $\{a_\gamma\} \subseteq {\rm ball}_1(\cA)$ be a net converging
in the weak operator topology to $a \in {\rm ball}_1(\cA)$. It is
known (for an outline of a proof, see \cite{MS03}) that the
mapping taking $b\in \cA$ to $b \otimes I_H \in B(X(s)
\otimes_\sigma H)$ is continuous in the ($\sigma$-)weak
topologies. Thus, for all $s \in \cS, x \in X(s)$ and $h \in H$,
$$a_\gamma  x \otimes h \longrightarrow a  x \otimes h $$
in the weak topology of $X(s) \otimes_\sigma H$. It follows that
$$\delta_s \cdot a_\gamma  x \otimes h \longrightarrow \delta_s \cdot a  x \otimes h $$
in the weak topology of $K$, so
$$\hat{V}_s \delta_s \cdot a_\gamma  x \otimes h \longrightarrow \hat{V}_s \delta_s \cdot a  x \otimes h $$
weakly. This implies that $\rho(a_\gamma) \rightarrow \rho(a)$ in the weak operator topology of $B(K)$, so $\rho$ is normal.

Note that $H$ reduces $\rho (A)$, and that $\rho(a) \big|_H =
\sigma(a) \big|_H$ (under the appropriate identifications).
Indeed, putting $t = 0$ in equation (\ref{eq:V_e definition})
gives \bes \rho(a) (\delta_0 \cdot 1 \otimes h) = \delta_0 \cdot a
\otimes h = \delta_0 \cdot 1  \otimes \sigma(a)h .
\ees
The assertions regarding the unitality and nondegeneracy of $\rho$ are
clear from the definitions.

We have completed the construction of $\rho$, and we proceed to
define the representation $V$ of $X$ on $K$. For $s
> 0$, we define $V_s$
by the rule
\be \label{eq:definition V_s}
V_s(x_s)
\hat{V}_t(\delta_t \cdot x_t \otimes h) = \hat{V}_{s+t}
(\delta_{s+t} \cdot x_s \otimes x_t \otimes h) .
\ee

One has to use (\ref{eq:reg_dil}) to show that $V_s(x_s)$ can be extended to a well defined operator on $K$, but once
that is done, it is
easy to see that for all $s \in \cS$, $(\rho, V_s)$ is a covariant representation of $X(s)$ on
K. We now show that it is isometric. This computation is included so the reader has an opportunity to appreciate the role
played by equation (\ref{eq:reg_dil}). Let $s,t,u \in \cS$, $x, y \in X(s)$, $x_t \in X(t)$,
$x_u \in X(u)$ and $h,g \in H$. Then
\begin{align*}
& \langle V_s(x)^* V_s(y) \hat{V}_t \delta_t \cdot x_t \otimes h, \hat{V}_u \delta_u \cdot x_u \otimes g \rangle \\
&= \langle  \hat{V}_{t+s} \delta_{t+s} \cdot y \otimes x_t \otimes h, \hat{V}_{u+s} \delta_{u+s} \cdot x \otimes x_u \otimes g \rangle \\
&= \langle  \hat{V}_{(t-u)_-}^* \hat{V}_{(t-u)_+} \delta_{t+s} \cdot y \otimes x_t \otimes h,  \delta_{u+s} \cdot x \otimes x_u \otimes g \rangle \\
(*)&= \langle  \hat{T}_{(t-u)_+} \hat{T}_{(t-u)_-}^* \delta_{t+s} \cdot y \otimes x_t \otimes h,  \delta_{u+s} \cdot x \otimes x_u \otimes g \rangle \\
&= \langle  \delta_{u+s} \cdot y \otimes \left(I \otimes \widetilde{T}_{(t-u)_+}\right)\left(I \otimes \widetilde{T}_{(t-u)_-}^* \right)  (x_t \otimes h),
\delta_{u+s} \cdot x \otimes x_u \otimes g \rangle \\
&= \langle  \delta_{u} \cdot \left(I \otimes \widetilde{T}_{(t-u)_+}\right)\left(I \otimes \widetilde{T}_{(t-u)_-}^* \right)
  (x_t \otimes h),  \delta_{u} \cdot \langle y, x\rangle x_u \otimes g \rangle \\
&= \langle  \hat{T}_{(t-u)_+} \hat{T}_{(t-u)_-}^* \delta_{t} \cdot x_t \otimes h ,  \delta_{u} \cdot \langle y, x\rangle x_u \otimes g \rangle \\
&= \langle  \hat{T}_{(t-u)_+} \hat{T}_{(t-u)_-}^* \delta_{t} \cdot \langle x, y\rangle x_t \otimes h ,  \delta_{u} \cdot x_u \otimes g \rangle \\
(*)&= \langle  \hat{V}_{(t-u)_-}^* \hat{V}_{(t-u)_+} \delta_{t} \cdot \langle x, y\rangle x_t \otimes h,  \delta_{u} \cdot  x_u \otimes g \rangle \\
&= \langle  \hat{V}_{t} \delta_{t} \cdot \langle x, y\rangle x_t \otimes h, \hat{V}_{u} \delta_{u} \cdot  x_u \otimes g \rangle \\
&= \langle \rho(\langle x,y \rangle) \hat{V}_t \delta_t \cdot x_t \otimes h, \hat{V}_u \delta_u \cdot x_u \otimes g \rangle .
\end{align*}
(The equations marked by (*) are where we use (\ref{eq:reg_dil}).)
This shows that $V_s(x)^* V_s(y) = \rho(\langle x,y \rangle)$, so
$(\rho,V)$ is indeed an isometric representation. To see that it
is fully coisometric, is enough to show that for all $s\in\cS$,
$\widetilde{V}_s$ is onto. It is clear that
$${\rm Im}(\widetilde{V}_s) = \bigvee\{\hat{V}_{t+s} (\delta_{t+s} \cdot x_s \otimes x_t \otimes h) : t\in \cS, x_s \in X(s), x_t \in X(t), h\in H \} .$$
But if $t\in \cS$, $x_t \in X(t)$ and $h\in H$, then
\begin{align*}
\hat{V}_t (\delta_t \cdot x_t \otimes h)
&= \hat{V}_t \hat{V}_s \hat{V}_s^* (\delta_t \cdot x_t \otimes h) \\
(*)&= \hat{V}_t \hat{V}_s \hat{T}_s^* (\delta_t \cdot x_t \otimes h) \\
&= \hat{V}_{t+s}  (\delta_{t+s} \cdot x_t \otimes \widetilde{T}_s^* h) \in {\rm Im}(\widetilde{V}_s) ,
\end{align*}
where (*) is justified because $\hat{V}^*_s$ is an extension of
$\hat{T}^*_s$ (as is any unitary dilation of an isometry). This
shows that $\widetilde{V}_s$ is onto, so it is a unitary, hence $V$ is
fully coisometric.

Let $s \in \cS, x \in X(s)$ and
$h = \delta_0 \cdot 1 \otimes h \in H$. We compute:
\begin{align*}
P_H V_s(x) \big|_H h &= P_H V_s(x) \delta_0 \cdot 1 \otimes h \\
&= P_H \hat{V}_s (\delta_s \cdot x \otimes h) \\
&= P_H P_{\cH} \hat{V}_s \big|_\cH (\delta_s \cdot x \otimes h) \\
&= P_H \hat{T}_s (\delta_s \cdot x \otimes h) \\
&= P_H (\delta_0 \cdot 1 \otimes T_s(x)h) = T_s(x)h .
\end{align*}
Item \ref{it:V*2} in the statement of the theorem, the ``semigroup property" of $V$, as well as minimality, follow as in the proof of Proposition \ref{prop:mainprop}.
\end{proof}

\chapter{Strong commutativity}\label{chap:strong_commutativity}

In this chapter we define the main technical condition that we need in order to simultaneously dilate
a pair of commuting CP-semigroups to a pair of commuting E-semigroups. This condition is \emph{strong commutativity}. After studying the definition and its consequences in the first three sections, we turn in the fourth section to discuss examples. Most of the material here is from \cite{ShalitCP0Dil}. Section \ref{sec:GNS} is from \cite{ShalitCPDil}. The example of quantized convolution semigroups did not appear elsewhere.

\section{Strongly commuting CP maps}
Let $\Theta$ and $\Phi$ be CP maps on $\cM$. We define the Hilbert
space $\cM \otimes_\Phi \cM \otimes_\Theta H$ to be the Hausdorff
completion of the algebraic tensor product $\cM
\otimes_\textrm{alg} \cM \otimes_\textrm{alg} H$ with respect to
the inner product
$$\langle a \otimes b \otimes h, c \otimes d \otimes k \rangle = \langle h, \Theta(b^* \Phi(a^* c) d) k \rangle .$$
\begin{definition}\label{def:SC}
Let $\Theta$ and $\Phi$ be CP maps on $\cM$. We say that they
\emph{commute strongly} if there is a unitary $u: \cM \otimes_\Phi
\cM \otimes_\Theta H \rightarrow \cM \otimes_\Theta \cM
\otimes_\Phi H$ such that:
\begin{itemize}
\item[(i)] $u(a \otimes_\Phi I \otimes_\Theta h) = a \otimes_\Theta I \otimes_\Phi h$ for all $a \in \cM$ and $h \in H$.
\item[(ii)] $u(ca\otimes_\Phi b \otimes_\Theta h) = (c \otimes I_M \otimes I_H)u(a\otimes_\Phi b \otimes_\Theta h)$ for $a,b,c \in \cM$ and $h \in H$.
\item[(iii)] $u(a\otimes_\Phi b \otimes_\Theta dh) = (I_M \otimes I_M \otimes d)u(a\otimes_\Phi b \otimes_\Theta h)$ for $a,b \in \cM$, $d \in \cM'$ and $h \in H$.
\end{itemize}
\end{definition}
The notion of strong commutation was introduced by Solel in
\cite{S06}. Note that if two CP maps commute strongly, then they
commute. The converse is false (for concrete examples see
Section \ref{sec:examples}). In
Section \ref{sec:examples} we shall give many examples of strongly
commuting pairs of CP maps, and for some von Neumann algebras we shall
give a complete characterization of strong commutativity. For the time being
let us just state the fact that if $H$ is a finite dimensional Hilbert space,
then any two commuting CP maps on $B(H)$ strongly commute (see Section \ref{subsec:H_finite}).
The ``true" significance of strong
commutation comes from a bijection between pairs of strongly
commuting CP maps and product systems over $\mathbb{N}^2$ with
c.c. representations (\cite{S06}, Propositions 5.6 and 5.7, and
the discussion between them). It is this bijection that enables one
to characterize all pairs of strongly commuting CP maps on $B(H)$
\cite[Proposition 5.8]{S06}.

In the next chapter we will work with the spaces $\cM
\otimes_{P_1} \cM  \cdots  \cM \otimes_{P_n} H$, where $P_1,
\ldots, P_n$ are CP maps. These spaces are defined in a way
analogous to the way that the spaces $\cM \otimes_\Theta \cM
\otimes_\Phi H$ were defined in the beginning of this section. The
following results are important for dealing with such spaces.

\begin{lemma}\label{lem:SC1}
Assume that $P_{n-1}$ and $P_n$ commute strongly. Then there exists a unitary
$$v : \cM \otimes_{P_1} \cM \otimes_{P_2} \cdots \otimes_{P_{n-1}} \cM \otimes_{P_n} H
\rightarrow
\cM \otimes_{P_1} \cM \otimes_{P_2} \cdots \otimes_{P_n} \cM \otimes_{P_{n-1}} H
$$
such that
\begin{enumerate}
    \item $v(I \otimes_{P_1} \cdots \otimes_{P_{n-1}} I \otimes_{P_n} h) = I \otimes_{P_1} \cdots \otimes_{P_n} I \otimes_{P_{n-1}} h$, for all $h \in H$,
    \item For all $X \in \cM$,
    $$v \circ (X \otimes I \cdots I \otimes I)
    = (X \otimes I \cdots  I \otimes I)\circ v ,$$
    \item For all $X \in \cM'$,
    $$v \circ (I \otimes I \cdots I \otimes X)
    = (I \otimes I \cdots  I \otimes X)\circ v .$$
\end{enumerate}
\end{lemma}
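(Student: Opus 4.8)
The plan is to produce $v$ by applying the strong-commutation unitary of the pair $(P_{n-1},P_n)$ to the last two legs of the iterated tensor product while leaving the first $n-2$ legs untouched. Applying Definition~\ref{def:SC} with $\Phi=P_{n-1}$, $\Theta=P_n$ (strong commutation is symmetric, as $u^{*}$ witnesses the reversed pair) gives a unitary $u:\cK\to\cK'$, where $\cK:=\cM\otimes_{P_{n-1}}\cM\otimes_{P_n}H$ and $\cK':=\cM\otimes_{P_n}\cM\otimes_{P_{n-1}}H$, satisfying (i)--(iii). The first step I would record is that both $\cK$ and $\cK'$ carry a bounded $*$-representation of $\cM$ by left multiplication on the leftmost leg, $c\cdot(a\otimes b\otimes h)=(ca)\otimes b\otimes h$ (boundedness by Lemma~4.2 of \cite{cL94}; the $*$-property is a one-line computation using that $P_{n-1}$, resp.\ $P_n$, maps into $\cM$), together with a commuting action of $\cM'$ on the $H$-leg, $(a\otimes b\otimes h)\cdot d=a\otimes b\otimes dh$, which is well defined for the same reason. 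Conditions (ii) and (iii) of Definition~\ref{def:SC} say precisely that $u$ intertwines these left $\cM$-actions and these $\cM'$-actions, and condition (i) that $u(I\otimes_{P_{n-1}}I\otimes_{P_n}h)=I\otimes_{P_n}I\otimes_{P_{n-1}}h$.

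Next I would establish the key ``factorization of the inner product'' identity: for $a_1,\dots,a_{n-2},a_1',\dots,a_{n-2}'\in\cM$ and all $\zeta,\zeta'\in\cK$, with $c:=P_{n-2}(a_{n-2}^{*}P_{n-3}(\cdots P_1(a_1^{*}a_1')\cdots)a_{n-2}')\in\cM$,
$$\bigl\langle a_1\otimes\cdots\otimes a_{n-2}\otimes\zeta,\ a_1'\otimes\cdots\otimes a_{n-2}'\otimes\zeta'\bigr\rangle=\langle\zeta,\ c\cdot\zeta'\rangle_{\cK},$$
which is immediate from the defining formula for the inner product on the iterated tensor product (check it on elementary $\zeta,\zeta'$, then extend by continuity); the same formula with the same $c$ holds on the target space with $\cK$ replaced by $\cK'$. (For $n=2$ there is no prefix and one simply takes $v=u$, so assume $n\ge 3$.) Using this, I would define $v$ on elementary tensors by
$$v\bigl(a_1\otimes\cdots\otimes a_{n-2}\otimes a_{n-1}\otimes a_n\otimes h\bigr)=a_1\otimes\cdots\otimes a_{n-2}\otimes u\bigl(a_{n-1}\otimes a_n\otimes h\bigr),$$
the right-hand side read inside the target. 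Combining the two factorization identities with the facts that $u$ is unitary and intertwines the left $\cM$-actions gives, for elementary $\eta,\eta'$ (hence, by sesquilinearity, for all of the algebraic tensor product),
$$\langle v\eta,v\eta'\rangle=\langle u\zeta,\ c\cdot u\zeta'\rangle_{\cK'}=\langle u\zeta,\ u(c\cdot\zeta')\rangle_{\cK'}=\langle\zeta,\ c\cdot\zeta'\rangle_{\cK}=\langle\eta,\eta'\rangle,$$
so $v$ preserves the (semi-)inner product and therefore descends to a well-defined isometry of the Hausdorff completions. Since $u$ is onto $\cK'$, the image of $v$ has dense linear span in the target, so $v$ is unitary.

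Finally, properties 1--3 of the lemma are read off directly: property 1 is condition (i) of Definition~\ref{def:SC} placed in the last two legs; property 2 holds because $v$ acts as the identity on the first leg; property 3 follows because $u$ carries the $\cM'$-action on the $H$-leg of $\cK$ to that on $\cK'$ (condition (iii)). The point that needs genuine care — and the main obstacle — is the well-definedness and isometry of $v$, i.e.\ checking that $v$ respects the balancing over $\cM$ between the $(n-2)$nd and $(n-1)$st legs; this is exactly what the intertwining property (ii) of $u$, funnelled through the factorization identity, supplies, and everything else is bookkeeping.
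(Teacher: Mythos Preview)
Your proof is correct and follows the same approach as the paper: define $v$ by acting as the identity on the first $n-2$ tensor legs and as $u$ on the last two legs together with $H$, then verify the isometry property via the intertwining condition (ii) of Definition~\ref{def:SC}. The paper packages the first $n-2$ legs into a single $W^*$-correspondence $E=\cM\otimes_{P_1}\cdots\otimes_{P_{n-3}}\cM$ with $\cM$-valued inner product and writes $v=I_E\otimes u$, so that your ``factorization of the inner product'' identity is precisely the definition of the inner product on $E\otimes_{P_{n-2}}\cK$; otherwise the computations are identical.
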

\begin{proof}
Let $u : \cM \otimes_{P_{n-1}} \cM \otimes_{P_n} H \rightarrow \cM
\otimes_{P_n} \cM \otimes_{P_{n-1}} H$ be the unitary that makes
$P_{n-1}$ and $P_n$ commute strongly. Define
$$v = I_E \otimes u ,$$
where $E$ denotes the $W^*$-correspondence (over $\cM$) $\cM
\otimes_{P_1} \cM \otimes_{P_2} \cdots \otimes_{P_{n-3}} \cM$
equipped with the inner product
$$\langle a_1 \otimes \cdots \otimes a_{n-3}, b_1 \otimes \cdots \otimes b_{n-3} \rangle
= P_{n-3}\left(a_{n-3}^*\cdots P_1(a_1^* b_1) \cdots
b_{n-3}\right) .$$

The fact that $v$ commutes with $\cM \otimes I \otimes \cdots
\otimes I$ and $I \otimes I \cdots I \otimes \cM'$ and satisfies
the three conditions listed above are clear from the definition
and from the properties of $u$. The fact that $u$ is surjective
implies that $v$ is, too. It is left to show that $v$ is an
isometry (and this will also show that it is well defined). Let
$\sum a_i \otimes_{P_{n-2}} b_i \otimes_{P_{n-1}} c_i
\otimes_{P_n} h_i$ be an element of $E \otimes_{P_{n-2}} \cM
\otimes_{P_{n-1}} \cM \otimes_{P_n} H$.
\begin{align*}
& \|v (\sum a_i \otimes_{P_{n-2}} b_i \otimes_{P_{n-1}} c_i \otimes_{P_n} h_i) \|^2
 = \\
& = \lel \sum a_i \otimes_{P_{n-2}} u(b_i \otimes_{P_{n-1}} c_i \otimes_{P_n} h_i), \sum a_j \otimes_{P_{n-2}} u(b_j \otimes_{P_{n-1}} c_j \otimes_{P_n} h_j) \rir
 = \\
& = \sum_{i,j} \lel u(b_i \otimes_{P_{n-1}} c_i \otimes_{P_n} h_i), P_{n-2}\left(\langle a_i, a_j \rangle\right) u(b_j \otimes_{P_{n-1}} c_j \otimes_{P_n} h_j) \rir
 = (*) \\
& = \sum_{i,j} \lel u(b_i \otimes_{P_{n-1}} c_i \otimes_{P_n} h_i), u\left(P_{n-2}\left(\langle a_i, a_j \rangle\right) b_j \otimes_{P_{n-1}} c_j \otimes_{P_n} h_j\right) \rir
 = (**) \\
& = \sum_{i,j} \lel b_i \otimes_{P_{n-1}} c_i \otimes_{P_n} h_i, P_{n-2}(\langle a_i, a_j \rangle) b_j \otimes_{P_{n-1}} c_j \otimes_{P_n} h_j \rir
 = \\
& =\|\sum a_i \otimes_{P_{n-2}} b_i \otimes_{P_{n-1}} c_i \otimes_{P_n} h_i \|^2
\end{align*}
the equality marked by (*) follows from the fact that $u$
intertwines the actions of $\cM$ on $\cM \otimes_{P_{n-1}} \cM
\otimes_{P_{n}} H$ and $\cM \otimes_{P_{n}} \cM \otimes_{P_{n-1}}
H$, and the one marked by (**) is true because $u$ is unitary.
\end{proof}

\begin{lemma}\label{lem:SC2}
Assume that $P$ and $Q$ are strongly commuting CP maps on $\cM$.
Then there exists an isomorphism $v = v_{P,Q}$ of $\cM$-correspondences
$$v : \cM \otimes_P \cM \otimes_Q \cM \rightarrow \cM \otimes_Q \cM \otimes_P \cM $$
such that
$$v(I \otimes_P I \otimes_Q I) = I \otimes_Q I \otimes_P I .$$
\end{lemma}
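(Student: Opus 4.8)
The plan is to push the whole statement down to the level of Hilbert spaces, where Definition \ref{def:SC} applies verbatim, and then to lift the resulting unitary back up to a morphism of $W^*$-correspondences. Write $F = \cM\otimes_P\cM\otimes_Q\cM$ and $G = \cM\otimes_Q\cM\otimes_P\cM$, and let $\iota$ denote the defining (normal, faithful, nondegenerate) representation of $\cM$ on $H$.

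The first step is to identify $F\otimes_\iota H$ with $\cM\otimes_P\cM\otimes_Q H$ and $G\otimes_\iota H$ with $\cM\otimes_Q\cM\otimes_P H$. I would check that $(a\otimes b\otimes c)\otimes h\mapsto a\otimes b\otimes ch$ extends to a unitary $J_F\colon F\otimes_\iota H\to \cM\otimes_P\cM\otimes_Q H$ --- well-definedness and isometry are a one-line computation of inner products, and surjectivity uses nondegeneracy of $\iota$ --- and likewise a unitary $J_G\colon G\otimes_\iota H\to\cM\otimes_Q\cM\otimes_P H$. Moreover $J_F$ intertwines the left $\cM$-action $\varphi_F(\cdot)\otimes I$ with left multiplication on the first leg, and intertwines the natural right action of $\cM'=\iota(\cM)'$ on $F\otimes_\iota H$, namely $\xi\otimes h\mapsto \xi\otimes dh$, with the action $a\otimes b\otimes k\mapsto a\otimes b\otimes dk$ on $\cM\otimes_P\cM\otimes_Q H$ (here one uses that $d\in\cM'$ commutes with the third leg $c\in\cM$); the same holds for $J_G$. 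Since $P$ and $Q$ commute strongly, Definition \ref{def:SC} with $\Phi=P$ and $\Theta=Q$ supplies a unitary $u\colon \cM\otimes_P\cM\otimes_Q H\to\cM\otimes_Q\cM\otimes_P H$ satisfying (i)--(iii); properties (ii) and (iii) say precisely that $u$ intertwines the left $\cM$-actions and the right $\cM'$-actions. Hence $W:=J_G^{-1}uJ_F\colon F\otimes_\iota H\to G\otimes_\iota H$ is a unitary with $W(\varphi_F(a)\otimes I)=(\varphi_G(a)\otimes I)W$ for $a\in\cM$ and $W(I\otimes d)=(I\otimes d)W$ for $d\in\cM'$.

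The second step recovers $v$ from $W$ using the intertwiner picture of $W^*$-correspondences. For a $W^*$-correspondence $E$ over $\cM$, the map $x\mapsto L_x$, $L_x h=x\otimes h$, identifies $E$ isometrically with the space $\{T\in B(H,E\otimes_\iota H)\colon Td=(I_E\otimes d)T\text{ for all }d\in\cM'\}$ of $\cM'$-intertwiners, and under this identification $L_x^*L_y=\langle x,y\rangle_E$, $L_{xa}=L_x\iota(a)$ and $L_{\varphi_E(a)x}=(\varphi_E(a)\otimes I)L_x$. I would then define $v\colon F\to G$ by declaring $L_{v(x)}=WL_x$: since $W$ is $\cM'$-linear, $WL_x$ is again an $\cM'$-intertwiner, hence equals $L_{v(x)}$ for a unique $v(x)\in G$. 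From $W^*W=I$ one gets $\langle v(x),v(y)\rangle_G=L_x^*W^*WL_y=\langle x,y\rangle_F$, so $v$ is a well-defined isometry; the module identities for $L$ together with the intertwining properties of $W$ give $v(\varphi_F(a)x)=\varphi_G(a)v(x)$ and $v(xa)=v(x)a$; and running the same construction with $W^*$ shows $v$ is onto, so $v$ is an isomorphism of $\cM$-correspondences. Finally, chasing $I\otimes I\otimes I\in F$ through the definitions --- $J_F$ sends $(I\otimes I\otimes I)\otimes h$ to $I\otimes_P I\otimes_Q h$, property (i) of $u$ sends this to $I\otimes_Q I\otimes_P h$, and $J_G^{-1}$ sends that back to $(I\otimes I\otimes I)\otimes h$ --- yields $WL_{I\otimes I\otimes I}=L_{I\otimes I\otimes I}$, hence $v(I\otimes I\otimes I)=I\otimes I\otimes I$.

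The step I expect to be the main obstacle is the claim that every $\cM'$-intertwiner $H\to E\otimes_\iota H$ is of the form $L_x$: injectivity and the algebraic identities are routine, but surjectivity onto the intertwiner space genuinely requires the $W^*$-correspondence (self-dual) structure, and I would invoke it by citing \cite{Pas} (and the way it is used in \cite{MS02}). Everything else is bookkeeping --- a couple of inner-product computations in the first step and a few lines of diagram-chasing in the second.
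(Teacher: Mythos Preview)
Your proof is correct and follows essentially the same route as the paper. In the paper's notation your $J_F,J_G$ are the maps $W_{P,Q},W_{Q,P}$ and your $W$ is its $T=W_{Q,P}^*\,u\,W_{P,Q}$; both arguments then lift the Hilbert-space unitary to a correspondence isomorphism via the identity $L_{v(z)}=T L_z$ (equivalently $v\otimes I_H=T$), with the paper carrying out the self-duality step by hand (defining $\varphi(x)h=L_x^*T(z\otimes h)$, checking it is a bounded $\cM$-module map into $\cM$, and applying self-duality of $G$) rather than citing the intertwiner picture as a black box.
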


\begin{proof}
For any two CP maps $\Theta, \Phi$ let $W_{\Theta,\Phi}$ be the
Hilbert space isomorphism
$$W_{\Theta, \Phi} : \cM \otimes_\Theta \cM \otimes_\Phi \cM \otimes_I H \rightarrow  \cM \otimes_\Theta \cM \otimes_\Phi H$$
given by $W_{\Theta, \Phi} (a\otimes_\Theta b \otimes_\Phi c
\otimes_I h) = a \otimes_\Theta b \otimes_\Phi ch$. By a
straightforward computation $W_{\Theta, \Phi}^*$ is given by
$W_{\Theta, \Phi}^* (a \otimes_\Theta b \otimes_\Phi h) = a
\otimes_\Theta b \otimes_\Phi I \otimes_I h$, and by even shorter
computations $W_{\Theta, \Phi}W_{\Theta, \Phi}^*$ and $W_{\Theta,
\Phi}^*W_{\Theta, \Phi}$ are identity maps. For all $a,b,c,x \in
\cM$ and all $y \in \cM'$ we have
\begin{align*}\label{eq:W_properties}
W_{\Theta, \Phi}(xa\otimes_\Theta b \otimes_\Phi c \otimes_I yh) & =
xa\otimes_\Theta b \otimes_\Phi cyh \\ & =
x a\otimes_\Theta b \otimes_\Phi ych \\ & =
(x \otimes I \otimes y)W_{\Theta, \Phi}(a\otimes_\Theta b \otimes_\Phi c \otimes_I h) .
\end{align*}
From this, it also follows that
$$W_{\Theta, \Phi}^*(x \otimes I \otimes y) = (x \otimes I \otimes I \otimes y)W_{\Theta, \Phi}^* \quad \,\, (x \in \cM, y \in \cM') .$$

We now define a map $T : \cM \otimes_P \cM \otimes_Q \cM \otimes_I
H \rightarrow \cM \otimes_Q \cM \otimes_P \cM \otimes_I H$ by
$$T = W_{Q,P}^* \circ u \circ W_{P,Q} ,$$
where $u$ is the map that makes $P$ and $Q$ commute strongly. As a
product of such maps, $T$ is a unitary intertwining the left
actions of $\cM$ and $\cM'$. The $v$ that we are looking for is a
map $v : \cM \otimes_P \cM \otimes_Q \cM \rightarrow \cM \otimes_Q
\cM \otimes_P \cM$ that satisfies $T = v \otimes I_H$. We will
find this $v$ using a standard technique exploiting the self
duality of $\cM \otimes_Q \cM \otimes_P \cM$.

For any $x \in \cM \otimes_Q \cM \otimes_P \cM$ we define a map $
L_x : H \rightarrow \cM \otimes_Q \cM \otimes_P \cM \otimes_I H$
by
$$L_x (h) = x \otimes h \,\, , \,\,(h \in H) .$$
The adjoint is given on simple tensors by $L_x^*(y \otimes h) = \langle x, y\rangle h$.

Now, if there is a $v$ such that $T = v \otimes I_H$, then for all $z\in \cM \otimes_P \cM \otimes_Q \cM$ and $x\in \cM \otimes_Q \cM \otimes_P \cM$ we must have
$$
\langle x, v(z) \rangle h = L_x^* (v(z) \otimes h)  = L_x^* T(z \otimes h) .
$$
This leads us to define, fixing $z \in \cM \otimes_P \cM \otimes_Q
\cM$, a mapping $\varphi$ from $\cM \otimes_Q \cM \otimes_P \cM$
into $\cM$:
$$\varphi (x) h := L_x^*T (z \otimes h) .$$
We now prove that $x \mapsto \varphi(x)^*$ is a bounded, $\cM$-module mapping into $\cM$.

{\bf Into $\cM$:} For all $x \in \cM \otimes_Q \cM \otimes_P \cM$,
$\varphi(x)$ is linear. $\|L_x^* T (z \otimes h) \| \leq \|L_x^*
\|\|T \|\|z \|\|h \| $, so $\varphi(x) \in B(H)$. So
$\varphi(x)^*$ exists and is also a bounded, linear operator on
$H$. Now take $d \in \cM'$. Then
$$\varphi(x)dh = L_x^*T (z \otimes dh) = L_x^*T (I \otimes d)(z \otimes h) = L_x^* (I \otimes d) T (z \otimes h) = d \varphi(x)h$$
($L_x^*$ intertwines $\cM'$ from its very definition) whence $\varphi(x) \in \cM'' = \cM$. Thus, $\varphi(x)^* \in \cM$.

{\bf $\cM$-module mapping:} This is because for all $x,y \in \cM
\otimes_Q \cM \otimes_P \cM$ and all $a \in \cM$ $L_{x+y} = L_x +
L_y$ and $L_{ax} = a L_x$ (and also $L_{xa} = L_x a$).

{\bf Bounded mapping:} From the inequalities $\|L_x^* T (z \otimes
h) \| \leq \|L_x^* \|\|T \|\|z \|\|h \| $ and $\|L_x^*\| \leq
\|x\|$ it follows that $\|\varphi(x)^*\|=\|\varphi(x)\| \leq \|z\|
\|x\|$.

It now follows from the self-duality of $\cM \otimes_Q \cM
\otimes_P \cM$ that for all $ z \in \cM \otimes_P \cM \otimes_Q
\cM$ there exists a $v(z) \in \cM \otimes_Q \cM \otimes_P \cM$
such that \be\label{eq:existv} \langle x,v(z)\rangle h = L_x^* T(z
\otimes h) \ee for all $x \in \cM \otimes_Q \cM \otimes_P \cM, h
\in H$. It is easy to see from (\ref{eq:existv}) that $v(z)$ is a
right $\cM$-module mapping. (\ref{eq:existv}) can be re-written as
$$L_x^* (v(z) \otimes h) = L_x^* T(z \otimes h) ,$$
and, since this holds for all $x$, this means that $(v(z) \otimes
h) = T(z \otimes h)$ (because $\cap_x {\rm Ker}(L_x^*) =
\left(\vee_x {\rm Im}(L_x)\right)^\perp = \{0\}$),
or, in other words, $v
\otimes I = T$. This last equality implies that $v$ is unitary,
and that it has all the properties required. For example, if
$a,b,c,X \in \cM$ and $h \in H$, then
\begin{align*}
v(Xa \otimes b \otimes c) \otimes h & = T(Xa \otimes b \otimes c \otimes h) \\
& = (X \otimes I \otimes I \otimes I)T(a \otimes b \otimes c \otimes h) \\
& = (X \otimes I \otimes I \otimes I)(v(a \otimes b \otimes c)\otimes h) \\
& = \big((X \otimes I \otimes I)v(a \otimes b \otimes c)\big) \otimes h .
\end{align*}
Putting $v_1 = v(Xa \otimes b \otimes c)$ and $v_2=(X \otimes I \otimes I)(v(a \otimes b \otimes c)$ we have that for all $h \in H$
$$0 = \|v_1 \otimes h - v_2 \otimes h \|^2= \|(v_1 - v_2)\otimes h\|^2 = \langle h, \langle v_1 - v_2,v_1 - v_2 \rangle h \rangle ,$$
which implies that $\langle v_1 - v_2,v_1 - v_2 \rangle = 0$, or $v(Xa \otimes b \otimes c) =(X \otimes I \otimes I)(v(a \otimes b \otimes c))$.
\end{proof}

\begin{remark}\label{rem:SC}
\emph{The converse of Lemma \ref{lem:SC2} is also true: if there is an isometry of $\cM$-correspondences $v : \cM \otimes_P \cM \otimes_Q \cM \rightarrow \cM \otimes_Q \cM \otimes_P \cM$ such that $v(I \otimes I \otimes I) = I \otimes I \otimes I$ then $P$ and $Q$ strongly commute. Indeed, to obtain $u : \cM \otimes_P \cM \otimes_Q H \rightarrow \cM \otimes_Q \cM \otimes_P H$ with the desired properties, we simply reverse the construction above. That is, we define $T = v \otimes I$, and}
$$u = W_{Q,P} \circ T \circ W_{P,Q}^* .$$
\end{remark}

\begin{lemma}\label{lem:SC3}
Assume that $P_j$ and $P_{j+1}$ commute strongly, for some $j\leq n-2$. Then there exists a unitary
$$u : \cM \otimes_{P_1} \cdots \otimes_{P_j} \cM \otimes_{P_{j+1}} \cdots \cM \otimes_{P_n} H
\rightarrow
\cM \otimes_{P_1} \cdots \otimes_{P_{j+1}} \cM \otimes_{P_{j}} \cdots \cM \otimes_{P_n} H
$$
such that
\begin{enumerate}
    \item $u(I \otimes_{P_1} \cdots I \otimes_{P_j} I \otimes_{P_{j+1}}I \cdots I \otimes_{P_n} h) = I \otimes_{P_1} \cdots I \otimes_{P_{j+1}} I \otimes_{P_j}I \cdots I \otimes_{P_n} h$,
    \item For all $X \in \cM$,
    $$u \circ (X \otimes I \cdots I \otimes I)
    = (X \otimes I \cdots  I \otimes I)\circ u ,$$
    \item For all $X \in \cM'$,
    $$u \circ (I \otimes I \cdots I \otimes X)
    = (I \otimes I \cdots  I \otimes X)\circ u .$$
\end{enumerate}
\end{lemma}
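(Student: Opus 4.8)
The plan is to imitate the construction in the proof of Lemma~\ref{lem:SC1}, but this time the three–fold factor $\cM \otimes_{P_j} \cM \otimes_{P_{j+1}} \cM$ to be rearranged sits somewhere in the \emph{middle} of the iterated tensor product rather than at its right end (adjacent to $H$). Because $j \leq n-2$, the pair $P_j,P_{j+1}$ is not adjacent to $H$, so the unitary furnished directly by Definition~\ref{def:SC} (which has an $H$ built in) is of no use here; instead I would feed the construction with the correspondence–level form of strong commutativity, namely the isomorphism of $\cM$-correspondences $v_{P_j,P_{j+1}}: \cM \otimes_{P_j} \cM \otimes_{P_{j+1}} \cM \to \cM \otimes_{P_{j+1}} \cM \otimes_{P_j} \cM$ supplied by Lemma~\ref{lem:SC2}, which fixes $I \otimes_{P_j} I \otimes_{P_{j+1}} I$ and intertwines the left and right actions of $\cM$.

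Concretely, assume first $2 \leq j \leq n-2$ (when $j=1$ the left block below is simply absent, and one sets $u := v_{P_1,P_2} \otimes I_F$). Using associativity of the iterated tensor–product construction, I would decompose the source Hilbert space as
\be
E \otimes_{P_{j-1}} \big(\cM \otimes_{P_j} \cM \otimes_{P_{j+1}} \cM\big) \otimes_{P_{j+2}} F ,
\ee
where $E := \cM \otimes_{P_1} \cdots \otimes_{P_{j-2}} \cM$ is the $W^*$-correspondence over $\cM$ carrying the left $\cM$-action on its first leg (exactly as in the proof of Lemma~\ref{lem:SC1}), and $F := \cM \otimes_{P_{j+3}} \cdots \otimes_{P_n} H$ (with $F := H$ when $j = n-2$) is the Hilbert space with a left $\cM$-action on which $\cM'$ acts through its $H$-leg. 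Then I would define
\be
u := I_E \otimes v_{P_j,P_{j+1}} \otimes I_F ,
\ee
acting as the identity on $E$ and on $F$ and as $v_{P_j,P_{j+1}}$ on the middle factor; its range is visibly $\cM \otimes_{P_1} \cdots \otimes_{P_{j-1}} \cM \otimes_{P_{j+1}} \cM \otimes_{P_j} \cM \otimes_{P_{j+2}} \cdots \otimes_{P_n} H$, which is the target space demanded in the statement.

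It then remains to verify that $u$ is a well-defined unitary satisfying (1)--(3), and here every step is line-for-line the one in the proof of Lemma~\ref{lem:SC1}. Well-definedness and the isometry property come from the same inner-product computation performed there, now using that $v_{P_j,P_{j+1}}$ intertwines the left $\cM$-action and preserves the $\cM$-valued inner product on $\cM \otimes_{P_j} \cM \otimes_{P_{j+1}} \cM$ (these two facts play the roles of the equalities $(*)$ and $(**)$ in that proof); surjectivity of $u$ follows from surjectivity of $v_{P_j,P_{j+1}}$. Property (1) is immediate from $v_{P_j,P_{j+1}}(I \otimes_{P_j} I \otimes_{P_{j+1}} I) = I \otimes_{P_{j+1}} I \otimes_{P_j} I$; property (2) holds because $u$ is the identity on the left factor $E$, which is precisely where the action $X \otimes I \otimes \cdots$ of $\cM$ lives; and property (3) holds because $u$ is the identity on $F$, where the action $I \otimes \cdots \otimes X$ of $\cM'$ lives, while the middle map $v_{P_j,P_{j+1}}$, being a morphism of $\cM$-correspondences, commutes with the $\cM'$-action it ignores. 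The only point that demands any care — and hence the only (minor) obstacle — is the tensor-product bookkeeping needed to justify the decomposition above and to make sense of $I_E \otimes v_{P_j,P_{j+1}} \otimes I_F$ as a genuine unitary; there is no new idea beyond Lemma~\ref{lem:SC1} and Lemma~\ref{lem:SC2}.
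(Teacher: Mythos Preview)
Your proposal is correct and follows essentially the same approach as the paper: the paper also takes the correspondence-level isomorphism $v$ from Lemma~\ref{lem:SC2}, introduces $E = \cM \otimes_{P_1}\cdots\otimes_{P_{j-2}}\cM$ (with the convention $E=\mb{C}$ if $j=1$, $E=\cM$ if $j=2$) and $F = \cM \otimes_{P_{j+3}}\cdots\otimes_{P_n} H$ (with $F=H$ if $j=n-2$), and sets $u := I_E \otimes v \otimes I_F$. The only cosmetic difference is that the paper handles the edge case $j=1$ via the convention $E=\mb{C}$ rather than treating it separately as you do.
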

\begin{proof}
Let $v : \cM \otimes_{P_{j}} \cM \otimes_{P_{j+1}} \cM \rightarrow
\cM \otimes_{P_{j+1}} \cM \otimes_{P_{j}} \cM$ be the unitary that
is described in lemma \ref{lem:SC2}. Introduce the notation
$$E = \cM \otimes_{P_{1}} \cdots \otimes_{P_{j-2}} \cM$$
(understood to be $\mathbb{C}$ if $j=1$ and $\cM$ if $j=2$) and
$$F = \cM \otimes_{P_{j+3}} \cdots \cM \otimes_{P_{n}} H$$
(understood to be $H$ if $j=n-2$). Define
$$u : E \otimes_{P_{j-1}} \cM \otimes_{P_{j}} \cM \otimes_{P_{j+1}} \cM \otimes_{P_{j+2}} F
\rightarrow
E \otimes_{P_{j-1}} \cM \otimes_{P_{j+1}} \cM \otimes_{P_{j}} \cM \otimes_{P_{j+2}} F$$
by
$$u := I_E \otimes v \otimes I_F .$$
$u$ is a well-defined, unitary mapping, possessing the properties asserted.
\end{proof}

Putting together Lemmas \ref{lem:SC1}, \ref{lem:SC2} and
\ref{lem:SC3}, we obtain the following
\begin{proposition}\label{prop:SC}
Let $R_1, R_2, \ldots R_m$, and $S_1, S_2, \ldots, S_n$ be CP maps
such that for all $1 \leq i \leq m$, $1 \leq j \leq n$, $R_i$
commutes strongly with $S_j$. Then there exists a unitary
$$u : \cM \otimes_{R_1} \cdots \otimes_{R_m} \cM \otimes_{S_1} \cdots \otimes_{S_n} H
\rightarrow
\cM \otimes_{S_1} \cdots \otimes_{S_n} \cM \otimes_{R_1} \cdots \otimes_{R_m} H$$
such that
\begin{enumerate}
    \item $u(I \otimes_{R_1} I \cdots  I \otimes_{S_n} h) = I \otimes_{S_1} I \cdots I \otimes_{R_{m}} h$, for all $h \in H$,
    \item For all $X \in \cM$,
    $$u \circ (X \otimes I \cdots I \otimes I)
    = (X \otimes I \cdots  I \otimes I)\circ u ,$$
    \item For all $X \in \cM'$,
    $$u \circ (I \otimes I \cdots I \otimes X)
    = (I \otimes I \cdots  I \otimes X)\circ u .$$
\end{enumerate}
Furthermore, there exists an isomorphism of $\cM$ correspondences
\bes
v : \cM \otimes_{R_1} \cdots \otimes_{R_m} \cM \otimes_{S_1} \cdots \otimes_{S_n} \cM
\rightarrow
\cM \otimes_{S_1} \cdots \otimes_{S_n} \cM \otimes_{R_1} \cdots \otimes_{R_m} \cM
\ees
such that
\bes
v(I \otimes \cdots \otimes I) = I \otimes \cdots \otimes I.
\ees
\end{proposition}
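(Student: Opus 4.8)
The plan is to reduce this ``block transposition'' to a composition of the adjacent transpositions already provided by Lemmas \ref{lem:SC1}, \ref{lem:SC2} and \ref{lem:SC3}, in exactly the way an arbitrary permutation is written as a product of simple transpositions (a ``bubble sort''). Starting from the chain $\cM \otimes_{R_1} \cdots \otimes_{R_m} \cM \otimes_{S_1} \cdots \otimes_{S_n} H$, I would first move $S_1$ to the front: apply the adjacent swap exchanging the factors labelled $R_m$ and $S_1$, then the one exchanging $R_{m-1}$ and $S_1$, and so on, until $S_1$ sits immediately after the initial $\cM$. Each of these elementary swaps exchanges some $R_i$ with $S_1$, and these commute strongly by hypothesis, so Lemma \ref{lem:SC3} (or Lemma \ref{lem:SC1}, in the case where the two factors being exchanged happen to be the last two before $H$) supplies a unitary enjoying properties (1)--(3). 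After $m$ such swaps $S_1$ has been brought past all of the $R_i$; one then repeats the procedure with $S_2$, then $S_3$, and so on. After $mn$ elementary swaps the chain has been rearranged into $\cM \otimes_{S_1} \cdots \otimes_{S_n} \cM \otimes_{R_1} \cdots \otimes_{R_m} H$, and I would define $u$ to be the composition of the $mn$ elementary unitaries.

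Next I would check that $u$ inherits the three listed properties. Throughout the process the first factor of the chain remains $\cM$ and the last slot remains $H$, so properties (2) and (3) --- intertwining the far-left action of $\cM$ and the far-right action of $\cM'$ --- hold for each elementary unitary and are obviously preserved under composition. Property (1) is verified by tracking the vector $I \otimes \cdots \otimes I \otimes h$: each elementary swap, by its own property (1), carries the all-$I$ vector of one chain to the all-$I$ vector of the next, so the composite does the same. As a composition of unitaries, $u$ is unitary, and surjectivity is automatic.

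For the correspondence statement I would run the identical argument with the trailing $H$ replaced by the trailing factor $\cM$, now using Lemma \ref{lem:SC2} as the elementary building block for each adjacent swap of an $R_i$ with an $S_j$; composing these isomorphisms of $\cM$-correspondences yields $v$, and the identity $v(I \otimes \cdots \otimes I) = I \otimes \cdots \otimes I$ follows from the corresponding identity for each elementary swap. (Alternatively, one could recover $v$ from $u$ by the $W_{\Theta,\Phi}$ device used in the proof of Lemma \ref{lem:SC2} and Remark \ref{rem:SC}, exploiting that $u$ intertwines the left $\cM$- and right $\cM'$-actions; but the direct route is cleaner.)

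The only genuine work here is bookkeeping: at each elementary step one must check that the two adjacent factors being exchanged are indeed one $R_i$ and one $S_j$ --- immediate from the order in which the swaps are scheduled --- and that the ``ambient'' correspondences flanking the exchanged pair are exactly those allowed in Lemmas \ref{lem:SC1}--\ref{lem:SC3}. I expect the main obstacle to be merely organizing the induction cleanly (an outer induction on $n$, with inner step ``move $S_1$ past $R_1,\dots,R_m$ via Lemma \ref{lem:SC3}/\ref{lem:SC1}''), rather than any new mathematical difficulty.
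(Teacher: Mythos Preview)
Your proposal is correct and is exactly the approach the paper takes: the paper's proof simply says ``apply the isomorphisms from the previous lemmas one by one,'' and your bubble-sort description is a faithful (and more detailed) unpacking of precisely that. The paper even adds the same afterthought you implicitly raise---that different orderings of the elementary swaps might in principle give different $u$'s---deferring the resolution to Proposition~\ref{prop:SCSG}.
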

The existence of $u$ and $v$ as above is clear: simply apply the
isomorphisms from the previous lemmas one by one. One might think
that applying these isomorphisms in different orders might lead to
different $u$'s or $v$'s. In the next section we will see, however, that
the order of application does not influence the total outcome (see
Proposition \ref{prop:SCSG}).

\section{Strong commutativity in terms of the GNS representation}\label{sec:GNS}

We now characterize strong commutativity using the GNS representation \cite{Pas}.
This characterization may be interesting for two reasons. First, it shows that
the notion of strong-commutativity is representation free. Second, it provides the connection
between the work of Bhat and Skeide on dilation of one-parameter CP-semigroups originating in \cite{BS00}, and
the constructions made in this thesis.

Let $\Theta$, $\Phi$ and $\cM$ be as in the previous subsection. We will denote by $(E,\xi)$ and $(F,\eta)$ the GNS representations of $\Theta$ and $\Phi$, respectively. That is $E = \cM \otimes_\Theta \cM$, the correspondence formed with the inner product
$$\langle a \otimes b, a' \otimes b' \rangle = b^* \Theta(a^* a') b',$$
$\xi = 1\otimes 1$, and one checks that $\langle \xi, a \xi \rangle = \Theta(a)$ for all $a \in \cM$. $(F , \eta)$ is defined
similarly. See section 2.14 in \cite{BS00}.

\begin{proposition}\label{prop:SCBS}
$\Theta$ and $\Phi$ commute strongly if and only if there exists a unitary (bimodule map)
$$w:E \otimes F \rightarrow F \otimes E$$
sending $\xi \otimes \eta$ to $\eta \otimes \xi$.
\end{proposition}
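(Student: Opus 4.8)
The plan is to reduce the statement to Lemma \ref{lem:SC2} together with Remark \ref{rem:SC}, by identifying the GNS tensor products $E \otimes F$ and $F \otimes E$ with the triple correspondences $\cM \otimes_\Theta \cM \otimes_\Phi \cM$ and $\cM \otimes_\Phi \cM \otimes_\Theta \cM$ in a way that carries $\xi \otimes \eta$ and $\eta \otimes \xi$ to the distinguished vectors $1 \otimes 1 \otimes 1$. Once that is done, a unitary bimodule map $w : E \otimes F \to F \otimes E$ with $w(\xi \otimes \eta) = \eta \otimes \xi$ corresponds exactly to the correspondence isomorphism $v : \cM \otimes_\Theta \cM \otimes_\Phi \cM \to \cM \otimes_\Phi \cM \otimes_\Theta \cM$ fixing $1 \otimes 1 \otimes 1$ whose existence, by Lemma \ref{lem:SC2} and Remark \ref{rem:SC}, is equivalent to the strong commutation of $\Theta$ and $\Phi$.

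First I would construct the identification. Recall $E = \cM \otimes_\Theta \cM$, $F = \cM \otimes_\Phi \cM$, with the right action of $\cM$ on $E$ and the left action on $F$ given by $(a \otimes_\Theta b)\cdot x = a \otimes_\Theta bx$ and $x \cdot (a' \otimes_\Phi b') = (xa') \otimes_\Phi b'$. On algebraic tensors define
\[
\Psi\big((a \otimes_\Theta b) \otimes (a' \otimes_\Phi b')\big) = a \otimes_\Theta ba' \otimes_\Phi b' \in \cM \otimes_\Theta \cM \otimes_\Phi \cM .
\]
A short computation shows that $\Psi$ respects the balancing over $\cM$ (both $(a \otimes_\Theta b)\cdot x \otimes (a' \otimes_\Phi b')$ and $(a \otimes_\Theta b) \otimes x \cdot (a' \otimes_\Phi b')$ are sent to $a \otimes_\Theta bxa' \otimes_\Phi b'$), that it intertwines the left action on $E$ and the right action on $F$ with the left and right actions on the triple correspondence, and that it preserves inner products: the inner product of $(a \otimes_\Theta b)\otimes(a' \otimes_\Phi b')$ and $(c \otimes_\Theta d)\otimes(c' \otimes_\Phi d')$ in $E \otimes_\cM F$ equals $b'^* \Phi\big(a'^* b^* \Theta(a^* c) d c'\big) d'$, which is precisely the inner product of $a \otimes_\Theta ba' \otimes_\Phi b'$ and $c \otimes_\Theta dc' \otimes_\Phi d'$ in $\cM \otimes_\Theta \cM \otimes_\Phi \cM$. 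Since $a \otimes_\Theta b \otimes_\Phi c = \Psi\big((a \otimes_\Theta 1) \otimes (b \otimes_\Phi c)\big)$, the map $\Psi$ is isometric and surjective on the algebraic level, hence extends uniquely to an isomorphism of $W^*$-correspondences between the self-dual completions, with $\Psi(\xi \otimes \eta) = 1 \otimes_\Theta 1 \otimes_\Phi 1$. Interchanging the roles of $\Theta$ and $\Phi$ yields an analogous isomorphism $\Psi' : F \otimes_\cM E \to \cM \otimes_\Phi \cM \otimes_\Theta \cM$ with $\Psi'(\eta \otimes \xi) = 1 \otimes_\Phi 1 \otimes_\Theta 1$.

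Finally I would conclude. By Lemma \ref{lem:SC2} (applied with $P = \Theta$, $Q = \Phi$) and Remark \ref{rem:SC}, $\Theta$ and $\Phi$ commute strongly if and only if there is an isomorphism of $\cM$-correspondences $v : \cM \otimes_\Theta \cM \otimes_\Phi \cM \to \cM \otimes_\Phi \cM \otimes_\Theta \cM$ with $v(1 \otimes 1 \otimes 1) = 1 \otimes 1 \otimes 1$. Setting $w := \Psi'^{-1} \circ v \circ \Psi$ then gives a unitary bimodule map $E \otimes F \to F \otimes E$ with $w(\xi \otimes \eta) = \Psi'^{-1}(1 \otimes 1 \otimes 1) = \eta \otimes \xi$; conversely, from any unitary bimodule $w$ sending $\xi \otimes \eta$ to $\eta \otimes \xi$ one recovers $v := \Psi' \circ w \circ \Psi^{-1}$ fixing $1 \otimes 1 \otimes 1$, and Remark \ref{rem:SC} gives strong commutation. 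The only non-formal part of the argument is the verification in the middle paragraph that $\Psi$ is well defined on the balanced tensor product, isometric, and passes correctly to the self-dual completions; this is a bookkeeping computation in which one must keep careful track of the left/right $\cM$-actions on $E$, $F$, and on the two triple correspondences, but it presents no real difficulty. Everything else is a formal transport of Lemma \ref{lem:SC2} and Remark \ref{rem:SC} across the identifications $\Psi$ and $\Psi'$.
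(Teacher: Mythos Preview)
Your proof is correct and follows essentially the same approach as the paper: reduce to Lemma \ref{lem:SC2} and Remark \ref{rem:SC} via an identification of $E \otimes F$ with $\cM \otimes_\Theta \cM \otimes_\Phi \cM$ that carries $\xi \otimes \eta$ to $1 \otimes 1 \otimes 1$. The only cosmetic difference is that the paper writes the identification in the inverse direction, $a \otimes_\Theta b \otimes_\Phi c \mapsto (a \otimes_\Theta b) \otimes (1 \otimes_\Phi c)$, whereas you write down its inverse $\Psi$ directly.
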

\begin{proof}
By Lemma \ref{lem:SC2} and the remarks right after, $\Theta$ and $\Phi$ strongly commute, i.e. there exists a unitary $u$
as in Definition \ref{def:SC}, if and only there exists an isomorphism of $\cM-\cM$-correspondences (a unitary bimodule map)
$$v : \cM \otimes_\Theta \cM \otimes_\Phi \cM \rightarrow \cM \otimes_\Phi \cM \otimes_\Theta \cM $$
such that
$$v(1 \otimes_\Theta 1 \otimes_\Phi 1) = 1 \otimes_\Phi 1 \otimes_\Theta 1 .$$
But
$$\cM \otimes_\Theta \cM \otimes_\Phi \cM \cong \left(\cM \otimes_\Theta \cM \right) \otimes \left(\cM \otimes_\Phi \cM \right) = E \otimes F$$
as $W^*$-correspondences via the correspondence isomorphism
$$a \otimes_\Theta b \otimes_\Phi c \mapsto \left(a \otimes_\Theta b \right) \otimes \left(1 \otimes_\Phi c \right).$$
Indeed, this is a well defined isometry because it preserves inner products:
\begin{align*}
\langle \left(a \otimes_\Theta b \right) \otimes \left(1 \otimes_\Phi c \right), \left(a' \otimes_\Theta b' \right) \otimes \left(1 \otimes_\Phi c' \right) \rangle
&= \langle 1 \otimes_\Phi c , b^* \Theta(a^* a') b' \left(1 \otimes_\Phi c' \right)\rangle \\
&= c^* \Phi(b^* \Theta(a^* a') b')c' \\
&= \langle a \otimes b \otimes c, a' \otimes b' \otimes c'\rangle.
\end{align*}
It is onto because the tensor product $E \otimes F$ is balanced. It is clear that this map is a bimodule map. Moreover,
this maps sends $1 \otimes 1 \otimes 1$ to $\xi \otimes \eta$. Thus, the existence of an isomorphism $v$ as above is equivalent to the existence of a an isomorphism
$$w:E \otimes F \rightarrow F \otimes E$$
sending $\xi \otimes \eta$ to $\eta \otimes \xi$.
\end{proof}

\section{Strongly commuting CP-semigroups}
\subsection{The definition}
How should strong commutativity be defined for semigroups of CP maps? A natural guess is the following:

\noindent {\bf NOT the definition: } \emph{Two semigroups of CP maps $\{R_t\}_{t\geq0}$ and
$\{S_t\}_{t\geq0}$ are said to \emph{commute strongly} if for all
$(s,t) \in \Rpt$ the CP maps $R_s$ and $S_t$ commute strongly.}

In \cite{ShalitCP0Dil}, the above was used as the definition of strong commutativity of CP-semigroups. However, we recently discovered that \cite{ShalitCP0Dil} contains a serious gap, and that the above definition is too weak to obtain the full results of \cite{ShalitCP0Dil} without making considerable modifications to the methods used. We therefore give here a more stringent definition, that works.

We need some notation. Let $\{R_t\}_{t\geq0}$ and $\{S_t\}_{t\geq0}$ be two semigroups of CP maps such that $R_s$ strongly commutes with $S_t$ for all $s,t \geq 0$.
Let $\{u_{s,t}\}_{(s,t) \in \Rpt}$ be a family of unitaries $u_{s,t}: \cM \otimes_{R_s} \cM \otimes_{S_t} H \rightarrow \cM \otimes_{S_t} \cM \otimes_{R_s} H$, making the $R_s$ and $S_t$ commute strongly. By Lemma \ref{lem:SC2}, there is a family $\{v_{s,t}\}_{(s,t)\in\Rpt}$ of isomorphisms between $\cM$-correspondences $v_{s,t}: \cM \otimes_{R_s} \cM \otimes_{S_t} \cM \rightarrow \cM \otimes_{S_t} \cM \otimes_{R_s} \cM$ sending $I \otimes I \otimes I$ to $I \otimes I \otimes I$. By Remark \ref{rem:SC}, the existence of such a family $\{v_{s,t}\}_{(s,t)\in\Rpt}$ implies that $R_s$ strongly commutes with $S_t$ for all $s,t \geq 0$, and gives rise to a family $\{u_{s,t}\}_{(s,t) \in \Rpt}$.

Fix $s,s',t \geq 0$. We define an isometry
\bes
\cM \otimes_{R_{s+s'}} \cM \otimes_{S_{t}} \cM \rightarrow  \cM \otimes_{R_{s}} \cM \otimes_{R_{s'}} \cM \otimes_{S_{t}} \cM
\ees
by
\bes
a \otimes_{R_{s+s'}} b \otimes_{S_{t}} c \mapsto a \otimes_{R_{s}} I \otimes_{R_{s'}} b \otimes_{S_{t}} c.
\ees
We also define an isometry
\bes
\cM \otimes_{S_{t}} \cM \otimes_{R_{s+s'}} \cM \rightarrow  \cM \otimes_{S_{t}} \cM \otimes_{R_{s}} \cM \otimes_{R_{s'}} \cM
\ees
by
\bes
a \otimes_{S_{t}} b \otimes_{R_{s+s'}} c \mapsto a \otimes_{S_{t}} b \otimes_{R_{s}} I \otimes_{R_{s'}} c.
\ees
We make similar definitions with the roles of $R$ and $S$ reversed.

\begin{definition}\label{def:SCSG}
Two semigroups of CP maps $\{R_t\}_{t\geq0}$ and
$\{S_t\}_{t\geq0}$ are said to \emph{commute strongly} if for all
$(s,t) \in \Rpt$ the CP maps $R_s$ and $S_t$ commute strongly, and if there is a family $\{v_{s,t}\}_{(s,t) \in \Rpt}$ of isomorphisms of $\cM$-correspondences $v_{s,t}: \cM \otimes_{R_s} \cM \otimes_{S_t} \cM \rightarrow \cM \otimes_{S_t} \cM \otimes_{R_s} \cM$ (making the $R_s$ and $S_t$ commute strongly) such that for all $s,s',t,t'\geq 0$ the following diagrams commute
\bes
\begin{CD}
\cM \otimes_{R_{s+s'}} \cM \otimes_{S_{t}} \cM @>v_{s+s',t}>> \cM \otimes_{S_{t}} \cM \otimes_{R_{s+s'}} \cM\\
@VVV @VVV\\
\cM \otimes_{R_{s}} \cM \otimes_{R_{s'}} \cM \otimes_{S_{t}} \cM @>(v_{s,t} \otimes I)(I \otimes v_{s',t})>> \cM \otimes_{S_{t}} \cM \otimes_{R_{s}} \cM \otimes_{R_{s'}} \cM
\end{CD}
\ees
and
\bes
\begin{CD}
\cM \otimes_{R_{s}} \cM \otimes_{S_{t+t'}} \cM @>v_{s,t+t'}>> \cM \otimes_{S_{t+t'}} \cM \otimes_{R_{s}} \cM\\
@VVV @VVV\\
\cM \otimes_{R_{s}} \cM \otimes_{S_{t}} \cM \otimes_{S_{t'}} \cM @>(I \otimes v_{s,t'})(v_{s,t} \otimes I)>> \cM \otimes_{S_{t}} \cM \otimes_{S_{t'}} \cM \otimes_{R_{s}} \cM
\end{CD}
\ees
where the vertical maps are the isometries from the above discussion.
\end{definition}

\begin{remark}
\emph{Because the maps $v_{s,t}$ above are unitaries, it follows that strong commutation is a symmetric relation.}
\end{remark}

The above definition means that not only do $R_s$ and $S_t$ strongly commute for all $s,t$, the way in which they strongly commute must be compatible with the semigroup structures of $R$ and $S$.

\subsection{More technicalities regarding strongly commuting semigroups}
\begin{proposition}\label{prop:SCSG}
If the CP-semigroups $\{R_t\}_{t\geq0}$ and $\{S_t\}_{t\geq0}$
commute strongly, then, for all $(s,t),(s',t') \in \Rpt$, the
associated maps
$$v_{R_s,S_t}: \cM \otimes_{R_s} \cM \otimes_{S_t} \cM \rightarrow \cM \otimes_{S_t} \cM \otimes_{R_s} \cM ,$$
and
$$v_{R_{s'},S_{t'}}: \cM \otimes_{R_{s'}} \cM \otimes_{S_{t'}} \cM \rightarrow \cM \otimes_{S_{t'}} \cM \otimes_{R_{s'}} \cM ,$$
(see lemma \ref{lem:SC2}) satisfy the following identity :
\be\label{eq:SCSG} (I \otimes I \otimes
v_{R_{s'},S_{t'}})(v_{R_s,S_t} \otimes I \otimes I) = (v_{R_s,S_t}
\otimes I \otimes I)(I \otimes I \otimes v_{R_{s'},S_{t'}}). \ee
\end{proposition}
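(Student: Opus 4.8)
The plan is to show that both sides of (\ref{eq:SCSG}) are well-defined isometries on the five–fold correspondence
\[
Z := \cM \otimes_{R_s} \cM \otimes_{S_t} \cM \otimes_{R_{s'}} \cM \otimes_{S_{t'}} \cM,
\]
and then to check directly, on elementary tensors, that they agree. First I would record the only facts needed: by Lemma \ref{lem:SC2}, $v_{R_s,S_t}$ and $v_{R_{s'},S_{t'}}$ are isomorphisms of $\cM$-correspondences, so in particular $v_{R_s,S_t}$ is a right $\cM$-module map and $v_{R_{s'},S_{t'}}$ is a left $\cM$-module map. This is exactly what makes $v_{R_s,S_t}\otimes I\otimes I$ (meaning, as in the diagrams of Definition \ref{def:SCSG}, $v_{R_s,S_t}$ tensored on the right with two copies of $\mathrm{id}_\cM$ along the bonds $R_{s'},S_{t'}$) and $I\otimes I\otimes v_{R_{s'},S_{t'}}$ (tensored on the left with two copies of $\mathrm{id}_\cM$ along the bonds $R_s,S_t$) well-defined isometries on $Z$; tracking the bonds, one sees that $v_{R_s,S_t}$ acts on the first three legs, $v_{R_{s'},S_{t'}}$ on the last three legs, they share only the middle leg, and both composites $(I\otimes I\otimes v_{R_{s'},S_{t'}})(v_{R_s,S_t}\otimes I\otimes I)$ and $(v_{R_s,S_t}\otimes I\otimes I)(I\otimes I\otimes v_{R_{s'},S_{t'}})$ map $Z$ to $\cM \otimes_{S_t} \cM \otimes_{R_s} \cM \otimes_{S_{t'}} \cM \otimes_{R_{s'}} \cM$.

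For the computation I would fix $a,b\in\cM$ and write $v_{R_s,S_t}(a\otimes b\otimes I)=\sum_k x_k\otimes y_k\otimes z_k$; the right-module property then gives $v_{R_s,S_t}(a\otimes b\otimes c)=\sum_k x_k\otimes y_k\otimes (z_k c)$ for all $c\in\cM$. Similarly, fixing $d,e\in\cM$ and writing $v_{R_{s'},S_{t'}}(I\otimes d\otimes e)=\sum_l p_l\otimes q_l\otimes r_l$, the left-module property gives $v_{R_{s'},S_{t'}}(c\otimes d\otimes e)=\sum_l (c p_l)\otimes q_l\otimes r_l$. Applying the left-hand side of (\ref{eq:SCSG}) to an elementary tensor $a\otimes b\otimes c\otimes d\otimes e$ of $Z$: first $v_{R_s,S_t}\otimes I\otimes I$ produces $\sum_k x_k\otimes y_k\otimes (z_k c)\otimes d\otimes e$, and then $I\otimes I\otimes v_{R_{s'},S_{t'}}$ rewrites the last three legs as $v_{R_{s'},S_{t'}}\big((z_kc)\otimes d\otimes e\big)=\sum_l (z_k c p_l)\otimes q_l\otimes r_l$, giving $\sum_{k,l} x_k\otimes y_k\otimes (z_k c p_l)\otimes q_l\otimes r_l$. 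Going the other way, $I\otimes I\otimes v_{R_{s'},S_{t'}}$ first produces $\sum_l a\otimes b\otimes (c p_l)\otimes q_l\otimes r_l$, and then $v_{R_s,S_t}\otimes I\otimes I$ rewrites the first three legs as $v_{R_s,S_t}\big(a\otimes b\otimes (c p_l)\big)=\sum_k x_k\otimes y_k\otimes (z_k c p_l)$, giving $\sum_{k,l} x_k\otimes y_k\otimes (z_k c p_l)\otimes q_l\otimes r_l$ as well. The two expressions agree: the only difference between the routes is the bracketing of the product $z_k c p_l$ in $\cM$, i.e. $(z_k c)p_l$ versus $z_k(c p_l)$, and associativity of multiplication settles it. Since elementary tensors span a dense subspace of $Z$ and both sides of (\ref{eq:SCSG}) are bounded (in fact unitary, being compositions of the unitaries from Lemma \ref{lem:SC2} tensored with identities), the identity holds on all of $Z$.

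I do not expect a genuine obstacle: the real content is that $v_{R_s,S_t}$ affects the shared middle leg by left multiplication (by quantities built from the first two legs) while $v_{R_{s'},S_{t'}}$ affects it by right multiplication (by quantities built from the last two legs), and left and right multiplication on $\cM$ commute. An equivalent, more conceptual route would use the isomorphism $\cM\otimes_P\cM\otimes_Q\cM\cong E_P\otimes E_Q$ from the proof of Proposition \ref{prop:SCBS} to identify $Z$ with $E_{R_s}\otimes E_{S_t}\otimes E_{R_{s'}}\otimes E_{S_{t'}}$; under this identification $v_{R_s,S_t}\otimes I\otimes I$ becomes $w_{R_s,S_t}\otimes\mathrm{id}\otimes\mathrm{id}$ and $I\otimes I\otimes v_{R_{s'},S_{t'}}$ becomes $\mathrm{id}\otimes\mathrm{id}\otimes w_{R_{s'},S_{t'}}$ (with $w$ the braiding of Proposition \ref{prop:SCBS}), and these commute simply because they act on disjoint tensor legs. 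The only mildly delicate point in either presentation is the bookkeeping of the balanced tensor products and of which copy of $\mathrm{id}_\cM$ each symbol $I$ stands for; note in particular that Definition \ref{def:SCSG} is not used here — the argument needs nothing beyond the fact that the maps $v_{\cdot,\cdot}$ are homomorphisms of $\cM$-correspondences.
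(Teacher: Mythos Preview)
Your proof is correct and follows essentially the same line as the paper's: both arguments verify the identity on elementary tensors $a\otimes b\otimes c\otimes d\otimes e$ by expanding $v_{R_s,S_t}$ and $v_{R_{s'},S_{t'}}$ as sums and invoking the right $\cM$-module property of the former and the left $\cM$-module property of the latter, arriving at the common expression $\sum_{k,l} x_k\otimes y_k\otimes (z_k c p_l)\otimes q_l\otimes r_l$ (in your notation). Your explicit remark that only the bimodule property of the $v$ maps is used---not the extra compatibility diagrams of Definition~\ref{def:SCSG}---and your alternative phrasing via Proposition~\ref{prop:SCBS} are nice touches, but the core computation is the same as in the paper.
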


\begin{proof}
Let $a,b,c,d,e \in \cM$. Assume that $v_{R_s,S_t} (a \otimes_{R_s}
b \otimes_{S_t} c) = \sum_{i=1}^m A_i \otimes_{S_t} B_i
\otimes_{R_s} C_i$, and that $v_{R_{s'},S_{t'}} (I
\otimes_{R_{s'}} d \otimes_{S_{t'}} e) = \sum_{j=1}^n \gamma_i
\otimes_{S_{t'}} \delta_j \otimes_{R_{s'}} \epsilon_j$. Operating
on $a \otimes_{R_s} b \otimes_{S_t} c \otimes_{R_{s'}} d
\otimes_{S_{t'}} e$ with the operator on the left-hand side of equation
(\ref{eq:SCSG}), we obtain
\begin{align*}
& (I \otimes I \otimes v_{R_{s'},S_{t'}})(v_{R_s,S_t} \otimes I \otimes I)(a \otimes b \otimes c \otimes d \otimes e) = \\
& (I \otimes I \otimes v_{R_{s'},S_{t'}}) \sum_{i=1}^m A_i \otimes B_i \otimes C_i \otimes d \otimes e = (*)\\
& \sum_{i=1}^m A_i \otimes B_i \otimes C_i \cdot v_{R_{s'},S_{t'}} (I \otimes d \otimes e) = \\
& \sum_{i=1}^m \sum_{j=1}^n A_i \otimes B_i \otimes C_i \gamma_j \otimes \delta_j \otimes \epsilon_j ,
\end{align*}
where the equality marked by (*) is justified because
$v_{R_{s'},S_{t'}}$ is a left $\cM$-module map. Operating on $a
\otimes_{R_s} b \otimes_{S_t} c \otimes_{R_{s'}} d
\otimes_{S_{t'}} e$ with the operator on the right-hand side of equation
(\ref{eq:SCSG}), we obtain
\begin{align*}
& (v_{R_s,S_t} \otimes I \otimes I)(I \otimes I \otimes v_{R_{s'},S_{t'}}) (a \otimes b \otimes c \otimes d \otimes e) = (*) \\
& (v_{R_s,S_t} \otimes I \otimes I)(a \otimes b \otimes c \cdot v_{R_{s'},S_{t'}} (I \otimes d \otimes e)) = \\
& \sum_{j=1}^n (v_{R_s,S_t} \otimes I \otimes I) (a \otimes b \otimes c \gamma_j \otimes \delta_j \otimes \epsilon_j) = \\
& \sum_{j=1}^n v_{R_s,S_t} (a \otimes b \otimes c \gamma_j) \otimes \delta_j \otimes \epsilon_j = (**)\\
& \sum_{j=1}^n v_{R_s,S_t} (a \otimes b \otimes c ) \cdot \gamma_j \otimes \delta_j \otimes \epsilon_j = \\
& \sum_{i=1}^m \sum_{j=1}^n A_i \otimes B_i \otimes C_i \gamma_j \otimes \delta_j \otimes \epsilon_j ,
\end{align*}
where the equality marked by (*) is justified because
$v_{R_{s'},S_{t'}}$ is a left $\cM$-module map, and the one marked
by (**) is OK because $v_{R_{s},S_{t}}$ is a right $\cM$-module
map. So equation (\ref{eq:SCSG}) holds for all $s,s',t,t' \geq$,
and this proof is complete.
\end{proof}

Let $\{R_t\}_{t\geq0}$ and $\{S_t\}_{t\geq0}$ be CP-semigroups such that for all $(s,t) \in \Rpt$ the CP maps $R_s$ and $S_t$ commute strongly.
Let $\mathfrak{p} = \{0 = s_0 < s_1
< \ldots < s_m = s\}$ be a partition of $[0,s]$. We define
$$H_{\mathfrak{p}}^R = \cM \otimes_{R_{s_1}} \cM \otimes_{R_{s_2-s_1}} \cdots \cM \otimes_{R_{s_m-s_{m-1}}} H$$
and we define (for a partition $\mathfrak{q}$) $H_{\mathfrak{q}}^S$ in a similar manner. If $q = \{0 = t_0 < t_1 < \ldots < t_n = t\}$, we also define
$$H_{\mathfrak{p},\mathfrak{q}}^{R,S} = \cM \otimes_{R_{s_1}} \cdots \otimes_{R_{s_m-s_{m-1}}} \cM \otimes_{S_1}
\cdots \otimes_{S_{t_n-t_{n-1}}} H .$$
$H_{\mathfrak{q},\mathfrak{p}}^{S,R}$ is defined similarly.

Now for any two such partitions $\mathfrak{p},\mathfrak{q}$, Proposition \ref{prop:SC} gives
a unitary
$$
U_{\mathfrak{p},\mathfrak{q}} : H_{\mathfrak{p},\mathfrak{q}}^{R,S} \rightarrow H_{\mathfrak{q},\mathfrak{p}}^{S,R}
$$
with desirable properties. Our definition of strong commutativity is such that forces these unitaries to be consistent one with the other.

Let us describe precisely in what sense these unitaries are consistent. We fix our attentions to two intervals, $[0,s]$ and $[0,t]$. Let us order the set of all partitions of an interval by refinement. If $\mathfrak{p} \leq \mathfrak{p'}$  are partitions of $[0,s]$, and $\mathfrak{q} \leq \mathfrak{q'}$ are partitions of $[0,t]$ we define an isometry
\bes
V^{R,S}_{(\mfp, \mfq),(\mfp' , \mfq')}: H_{\mathfrak{p},\mathfrak{q}}^{R,S} \rightarrow H_{\mathfrak{p'},\mathfrak{q'}}^{R,S}
\ees
as follows.
In the particular case where $\mfp = \mfp'$ and $\mfq = \{0 = t_0 < \cdots < t_k < t_{k+1}< \cdots < t_n = t\}$ and $\mfq' = \{0 = t_0 < \cdots < t_k < \tau < t_{k+1}< \cdots < t_n = t\}$, we define
\begin{align*}
& V^{R,S}_{(\mfp, \mfq),(\mfp' , \mfq')} (\xi \otimes_{R_{s_m-s_{m-1}}} T_1 \otimes_{S_{t_1}} \cdots \otimes_{S_{t_k - t_{k-1}}} T_{k+1} \otimes_{S_{t_{k+1} - t_{k}}} T_{k+2} \otimes  \cdots T_n \otimes_{S_{t_n-t_{n-1}}} \otimes h) = \\
& \xi \otimes_{R_{s_m-s_{m-1}}} T_1 \otimes_{S_{t_1}} \cdots \otimes_{S_{t_k - t_{k-1}}} T_{k+1} \otimes_{S_{\tau - t_k}} I \otimes_{S_{t_{k+1} - \tau}} T_{k+2} \otimes  \cdots  T_n \otimes_{S_{t_n-t_{n-1}}} \otimes h,
\end{align*}
(we ``inserted an $I$ at time $\tau$") where $\xi \in \cM \otimes_{R_{s_1}} \cdots \otimes_{R_{s_{m-1} - s_{m-2}}} \cM$, $T_1, \ldots, T_n$ are in $\cM$ and $h \in H$.
If $\mfp'$ is a one-point-refinement of $\mfp$ and $\mfq = \mfq'$ then we define $V^{R,S}_{(\mfp, \mfq),(\mfp' , \mfq')}$ in a similar manner. Finally, for arbitrary $\mfp \leq \mfp'$ and $\mfq \leq \mfq'$ we define $V^{R,S}_{(\mfp, \mfq),(\mfp' , \mfq')}$ by composing the one-point-refinement maps.

In the same way, we define maps
\bes
V^{S,R}_{(\mfq, \mfp),(\mfq' , \mfp')}: H_{\mfq,\mfp}^{S,R} \rightarrow H_{\mfq',\mfp'}^{S,R}.
\ees

One can see that Definition \ref{def:SCSG} is equivalent to the following definition.
\begin{definition}\label{def:SCSG2}
Two semigroups of CP maps $\{R_t\}_{t\geq0}$ and
$\{S_t\}_{t\geq0}$ are said to \emph{commute strongly} if for all
$(s,t) \in \Rpt$ the CP maps $R_s$ and $S_t$ commute strongly, and if there is a family $\{u_{s,t}\}_{(s,t) \in \Rpt}$ of unitaries $u_{s,t}: \cM \otimes_{R_s} \cM \otimes_{S_t} H \rightarrow \cM \otimes_{S_t} \cM \otimes_{R_s} H$ (making the $R_s$ and $S_t$ commute strongly), such that the maps $U_{\mfp,\mfq}$ obtained from the family $\{u_{s,t}\}_{(s,t) \in \Rpt}$ as in Proposition \ref{prop:SC} satisfies for all $s,t \geq 0$ and all partitions $\mfp \leq \mfp'$ of $[0,s]$ and $\mfq \leq \mfq'$ of $[0,t]$,
\be\label{eq:superSCSG}
V^{S,R}_{(\mfq, \mfp),(\mfq' , \mfp')} \circ U_{\mfp,\mfq} = U_{\mfp',\mfq'} \circ V^{R,S}_{(\mfp, \mfq),(\mfp' , \mfq')}.
\ee
\end{definition}

\begin{remark}\emph{
To prove that the above definition implies Definition \ref{def:SCSG} one may imitate the proof of Lemma \ref{lem:SC2}, and it suffices to assume that equation (\ref{eq:superSCSG}) holds only for partitions of the form $\mfq = \mfq' = \{0,t\}$, $\mfp = \{0,s_1 + s_2\}$, $\mfp' = \{0,s_1,s_1 + s_2\}$, and $\mfp = \mfp' = \{0,s\}$, $\mfq = \{0,t_1 + t_2\}$, $\mfq' = \{0,t_1,t_1 + t_2\}$.}
\end{remark}

\section{Examples of strongly commuting maps and semigroups}\label{sec:examples}
In this section we give some examples of strongly commuting CP maps and of strongly commuting
CP-semigroups. In special cases we are able to state necessary and sufficient conditions for strong commutativity of CP maps.

\subsection{Endomorphisms, automorphisms, and composition with automorphisms}
By \cite[Lemma 5.4]{S06}, there
are plenty of examples of CP maps $\Theta$,$\Phi$ that commute
strongly:
\begin{enumerate}
\item\label{it:end} If $\Theta$ and $\Phi$ are endomorphisms that commute then they commute strongly.
\item\label{it:aut} If $\Theta$ and $\Phi$ commute, and if either one of them is an automorphism, then they commute strongly.
\item If $\alpha$ is a normal automorphism that commutes with $\Theta$, and $\Phi = \Theta \circ \alpha$, then $\Theta$ and $\Phi$ commute strongly.
\end{enumerate}

We note that item \ref{it:aut} does
not remain true if \emph{automorphism} is replaced by
\emph{endomorphism}. Here is an example.
\begin{example}
\emph{Take $\cM = B(\ell^2(\mathbb{N}))$, and identify every operator with its matrix representation with respect to the 
standard basis. Let $\Theta$ be the map that takes a matrix to its diagonal, and let $\Phi$ be given by 
conjugation with the right shift. 
$\Theta$ is a (unital) CP map, $\Phi$ is a (non-unital) $*$-endomorphism, 
these two maps commute, but not strongly. See \cite[Example 2.13]{ShalitCPDil} for details.}
\end{example}

\begin{proposition}\label{prop:SCendo}
Let $\alpha = \{\alpha_t\}_{t\geq 0}$, $\beta = \{\beta_t\}_{t\geq 0}$ be two E-semigroups acting on $\cM$, and assume that for all $s,t \geq 0$, $\alpha_s \circ \beta_t = \beta_t \circ \alpha_s$. Then $\alpha$ and $\beta$ strongly commute.
\end{proposition}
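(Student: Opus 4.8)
The plan is to construct, explicitly, the family of $\cM$-correspondence isomorphisms required by Definition \ref{def:SCSG}, exploiting the fact that the GNS correspondence of a normal $*$-endomorphism is essentially a weak-$*$ closed right ideal.

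First I would record an elementary fact about endomorphisms. If $\Theta$ is a normal $*$-endomorphism of $\cM$ then, in $E_\Theta := \cM\otimes_\Theta\cM$, $\lel a\otimes b, a'\otimes b'\rir = b^*\Theta(a)^*\Theta(a')b' = (\Theta(a)b)^*(\Theta(a')b')$, so $a\otimes b\mapsto\Theta(a)b$ is an isomorphism of $W^*$-correspondences from $E_\Theta$ onto the weak-$*$ closed right ideal $\Theta(1)\cM$ --- with right multiplication, the inner product $\lel w,w'\rir = w^*w'$, and left action given by left multiplication by $\Theta(c)$ --- carrying the vacuum $1\otimes 1$ to $\Theta(1)$. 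Feeding this into the identification $\cM\otimes_\Phi\cM\otimes_\Theta\cM\cong E_\Phi\otimes E_\Theta$ from the proof of Proposition \ref{prop:SCBS}, a short computation yields, for any two normal $*$-endomorphisms $\Phi,\Theta$, a canonical isomorphism of $W^*$-correspondences $E_\Phi\otimes E_\Theta\cong E_{\Theta\circ\Phi}$, namely $a\otimes_\Phi b\otimes_\Theta d\mapsto\Theta(\Phi(a)b)d$, which sends $\xi_\Phi\otimes\xi_\Theta$ to $\xi_{\Theta\circ\Phi}$ and under which both sides become $\Theta\Phi(1)\cM$ with left action by left multiplication by $\Theta\Phi(c)$.

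Given this, for each $(s,t)$ I would apply the fact twice: $\cM\otimes_{\alpha_s}\cM\otimes_{\beta_t}\cM\cong E_{\alpha_s}\otimes E_{\beta_t}\cong E_{\beta_t\circ\alpha_s}$ and $\cM\otimes_{\beta_t}\cM\otimes_{\alpha_s}\cM\cong E_{\beta_t}\otimes E_{\alpha_s}\cong E_{\alpha_s\circ\beta_t}$. Since the hypothesis gives $\beta_t\circ\alpha_s = \alpha_s\circ\beta_t$ exactly, the two targets coincide, and composing defines $v_{s,t}\colon\cM\otimes_{\alpha_s}\cM\otimes_{\beta_t}\cM\to\cM\otimes_{\beta_t}\cM\otimes_{\alpha_s}\cM$; all the identifications preserve vacua, so $v_{s,t}(I\otimes I\otimes I) = I\otimes I\otimes I$, and by Remark \ref{rem:SC} (the converse of Lemma \ref{lem:SC2}) each $\alpha_s$ and $\beta_t$ already commute strongly, with the associated unitary $u_{s,t}$ extracted from $v_{s,t}$ as in the proof of Lemma \ref{lem:SC2}. (This re-derives item \ref{it:end} of the list from \cite[Lemma 5.4]{S06}.) What then remains is to check the two compatibility squares of Definition \ref{def:SCSG}. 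The strategy is to observe that under the identifications above \emph{every} arrow in the first square --- the two ``insert an $I$'' vertical isometries, the top map $v_{s+s',t}$, and the bottom map $(v_{s,t}\otimes I)(I\otimes v_{s',t})$ --- becomes the identity of the single correspondence $\beta_t\alpha_{s+s'}(1)\cM = \alpha_{s+s'}\beta_t(1)\cM$ (the equality holding because $\beta_t\circ\alpha_{s+s'} = \alpha_{s+s'}\circ\beta_t$); concretely, along either path a representative $a\otimes b\otimes c$ is sent to $\beta_t(\alpha_{s+s'}(a)b)c$. This uses, besides the above, that the identification $E_\Phi\otimes E_\Theta\cong E_{\Theta\Phi}$ is compatible with the canonical isomorphisms $\cM\otimes_{\alpha_s}\cM\otimes_{\alpha_{s'}}\cM\cong E_{\alpha_{s+s'}}$ and $\cM\otimes_{\beta_t}\cM\otimes_{\beta_{t'}}\cM\cong E_{\beta_{t+t'}}$. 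The second square (with $t+t'$) is symmetric. Then both squares read $\mathrm{id} = \mathrm{id}\circ\mathrm{id}$ and the proof is finished.

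The step I expect to be the main obstacle is the last one: turning ``every arrow is the identity'' into an actual proof requires verifying that the whole web of identifications --- GNS correspondence versus closed right ideal, associativity of the balanced $W^*$-tensor product, the multiplications $\cM\otimes_{\alpha_s}\cM\otimes_{\alpha_{s'}}\cM\cong E_{\alpha_{s+s'}}$ of the two $E$-semigroups, and the insertion-of-$I$ isometries of Definition \ref{def:SCSG} --- commutes strictly. This is routine but genuinely needs care. A secondary nuisance is the bookkeeping when the $\alpha_s,\beta_t$ are not unital, so that $\Theta(1)$ is a proper projection and $\cM\otimes_\Theta\cM$ is identified with $\Theta(1)\cM$ rather than $\cM$; nothing essential changes but every module structure must be tracked accordingly. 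One could equally run the argument on the Hilbert-space side with the unitaries $u_{s,t}$ and verify Definition \ref{def:SCSG2}, with computations of the same flavour.
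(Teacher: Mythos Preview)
Your proposal is correct and amounts to the same argument as the paper's, packaged differently. The paper simply observes that in $\cM\otimes_{\alpha_s}\cM\otimes_{\beta_t}\cM$ one has $a\otimes b\otimes c = a\otimes I\otimes\beta_t(b)c$, so the map $a\otimes_{\alpha_s} I\otimes_{\beta_t} c\mapsto a\otimes_{\beta_t} I\otimes_{\alpha_s} c$ is a well-defined correspondence isomorphism; the compatibility squares are then checked by a two-line computation on such elements. Your route through the identification $E_\Theta\cong\Theta(1)\cM$ and $E_\Phi\otimes E_\Theta\cong E_{\Theta\Phi}$ produces exactly the same $v_{s,t}$ (indeed $a\otimes b\otimes c\mapsto a\otimes I\otimes\beta_t(b)c$ under your chain of identifications), and your observation that ``every arrow becomes the identity on $\beta_t\alpha_{s+s'}(1)\cM$'' is precisely the paper's elementwise calculation seen through this lens. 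The paper's version is shorter because it avoids naming the intermediate identifications; yours is more conceptual but, as you correctly anticipate, requires tracking that the web of isomorphisms commutes, which is the same work in disguise.
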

\begin{proof}
We construct a family $\{v_{s,t}\}_{(s,t)\in\Rpt}$ as required by Definition \ref{def:SCSG}.
Note that in $\cM \otimes_{\alpha_s} \cM \otimes_{\beta_t} \cM$ we have the equality $a \otimes_{\alpha_s} b \otimes_{\beta_t} c = a \otimes_{\alpha_s} I \otimes_{\beta_t} \beta_t(b) c$. Thus, there is an isomorphism $v_{s,t} : \cM \otimes_{\alpha_s} \cM \otimes_{\beta_t} \cM \rightarrow \cM \otimes_{\beta_t} \cM \otimes_{\alpha_s} \cM$ completely determined by the mapping
\bes
a \otimes_{\alpha_s} I \otimes_{\beta_t} c \mapsto a \otimes_{\beta_t} I \otimes_{\alpha_s} c.
\ees
Clearly, $v_{s,t}(I \otimes I \otimes I) = I \otimes I \otimes I$. We have yet to show that the family $\{v_{s,t}\}_{(s,t)\in\Rpt}$ participates in commutative diagrams as in Definition \ref{def:SCSG}. Denote by $V$ the isometry $\cM \otimes_{\alpha_{s_1+s_2}} \cM \otimes_{\beta_t} \cM \rightarrow \cM \otimes_{\alpha_{s_1}} \cM \otimes_{\alpha_{s_2}} \cM \otimes_{\beta_t} \cM$ sending $a \otimes b \otimes c$ to $a \otimes I \otimes b \otimes c$, and denote by $W$ the similar isometry $\cM \otimes_{\beta_t} \cM \otimes_{\alpha_{s_1+s_2}} \cM \rightarrow \cM \otimes_{\beta_t} \cM \otimes_{\alpha_{s_1}} \cM \otimes_{\alpha_{s_2}} \cM$. For all $a,c \in \cM$, we have
\begin{align*}
W (v_{s_1+s_2,t} (a \otimes_{\alpha_{s_1+s_2}} I \otimes_{\beta_t} c)) &= W (a \otimes_{\beta_t} I \otimes_{\alpha_{s_1+s_2}} c) \\
&= a \otimes_{\beta_t} I \otimes_{\alpha_{s_1}} I \otimes_{\alpha_{s_2}} c,
\end{align*}
while, on the other hand,
\begin{align*}
(v_{s_1,t} \otimes I)(I \otimes v_{s_2,t}) (V (a \otimes_{\alpha_{s_1+s_2}} I \otimes_{\beta_t} c ))
&= (v_{s_1,t} \otimes I)(I \otimes v_{s_2,t}) (a \otimes_{\alpha_{s_1}} I \otimes_{\alpha_{s_2}} I \otimes_{\beta_t} c) \\
&= (v_{s_1,t} \otimes I) (a \otimes_{\alpha_{s_1}} I \otimes_{\beta_t} I \otimes_{\alpha_{s_2}} c) \\
&= a \otimes_{\beta_t} I \otimes_{\alpha_{s_1}} I \otimes_{\alpha_{s_2}} c .
\end{align*}
That establishes one commutative diagram. The other is similar.
\end{proof}

At a first glance, this proposition might not seem very interesting in the
context of dilating CP-semigroups to endomorphism semigroups. However, we find this item \emph{very}
interesting, because one expects a good dilation theorem not to complicate the situation in any sense.
For example, in Theorem \ref{thm:scudil}, in order to prove the existence of an E-dilation, we will have to assume that the CP-semigroups $\{R_t\}_{t\geq0}$ and $\{S_t\}_{t\geq0}$ are unital, but the E-dilation that we will construct will also be unital.
Another example, again from Theorem \ref{thm:scudil}: if the CP-semigroups act on a type $I$ factor, then so does the minimal E$_0$-dilation that we shall construct. The importance of the above proposition is that it ensures that if $\{\alpha_t\}_{t\geq0}$ and $\{\beta_t\}_{t\geq0}$ are an E-dilation of $\{R_t\}_{t\geq0}$ and $\{S_t\}_{t\geq0}$ (a pair of strongly commuting CP-semigroups), then $\alpha$ and $\beta$ commute strongly.

\begin{proposition}
Let $\alpha = \{\alpha_t\}_{t\geq 0}$ be a semigroup of normal $*$-automorphisms on $\cM$, and let $\theta = \{\theta_t\}_{t\geq 0}$ be a CP-semigroup on $\cM$, and assume that for all $s,t \geq 0$, $\alpha_s \circ \theta_t = \theta_t \circ \alpha_s$. Then $\alpha$ and $\theta$ strongly commute.
\end{proposition}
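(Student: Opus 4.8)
The statement to prove is that if $\alpha = \{\alpha_t\}_{t\geq 0}$ is a semigroup of normal $*$-automorphisms on $\cM$ commuting with a CP-semigroup $\theta = \{\theta_t\}_{t\geq 0}$, then $\alpha$ and $\theta$ commute strongly in the sense of Definition \ref{def:SCSG}. The plan is to produce, for each $(s,t)\in\Rpt$, an explicit $\cM$-correspondence isomorphism $v_{s,t}: \cM \otimes_{\alpha_s} \cM \otimes_{\theta_t} \cM \rightarrow \cM \otimes_{\theta_t} \cM \otimes_{\alpha_s} \cM$ sending $I\otimes I\otimes I$ to $I\otimes I\otimes I$, and then verify that this family makes the two diagrams of Definition \ref{def:SCSG} commute. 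This is essentially the mirror image of the proof of Proposition \ref{prop:SCendo}; the only new feature is that here the automorphisms sit on the \emph{left} factor rather than providing both semigroups.

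First I would write down $v_{s,t}$. Since $\alpha_s$ is a $*$-automorphism, in $\cM \otimes_{\alpha_s} \cM \otimes_{\theta_t} \cM$ we can slide elements through the first leg: $a \otimes_{\alpha_s} b \otimes_{\theta_t} c = \alpha_s^{-1}(a)\cdot(I \otimes_{\alpha_s} b \otimes_{\theta_t} c)$, and more usefully $a \otimes_{\alpha_s} b \otimes_{\theta_t} c = a \otimes_{\alpha_s} I \otimes_{\theta_t} \theta_t(b)c$ is \emph{not} available (that used $\theta_t$ being an endomorphism), so instead I use the left-slide: $a \otimes_{\alpha_s} b \otimes_{\theta_t} c$, with $\alpha_s(x)\cdot(\xi) $ acting on the left leg, lets one reduce to elements of the form $a \otimes_{\alpha_s} I \otimes_{\theta_t} c$ only after absorbing $b$ — but $b$ cannot be absorbed in general. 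The cleaner route: observe that the map
\bes
a \otimes_{\alpha_s} b \otimes_{\theta_t} c \longmapsto \alpha_s(b)\cdot\big(a \otimes_{\theta_t} I \otimes_{\alpha_s} c\big) = \alpha_s(b) a \otimes_{\theta_t} I \otimes_{\alpha_s} c
\ees
is the wrong shape too; the correct definition uses the commutation $\alpha_s\circ\theta_t = \theta_t\circ\alpha_s$ directly. I would define $v_{s,t}$ on the (dense) algebraic tensor product by
\bes
v_{s,t}(a \otimes_{\alpha_s} b \otimes_{\theta_t} c) = a \otimes_{\theta_t} \alpha_s(b) \otimes_{\alpha_s} c,
\ees
and check it preserves the $\cM$-valued inner product: on the left side one computes $\langle a\otimes b\otimes c, a'\otimes b'\otimes c'\rangle = c^*\theta_t\big(b^*\,\alpha_s(a^*a')\,b'\big)c'$, while on the right side $\langle a\otimes\alpha_s(b)\otimes c, a'\otimes\alpha_s(b')\otimes c'\rangle = c^*\alpha_s\big(\alpha_s(b)^*\,\theta_t(a^*a')\,\alpha_s(b')\big)c' = c^*\alpha_s\big(\alpha_s(b^*)\,\theta_t(a^*a')\,\alpha_s(b')\big)c'$; using that $\alpha_s$ is a $*$-homomorphism this equals $c^*\big(\alpha_s\circ\alpha_s(b^*)\cdot\alpha_s\theta_t(a^*a')\cdot\alpha_s\alpha_s(b')\big)c'$ — so I actually need the definition to involve $\alpha_s^{-1}$ in one place to make the two $\alpha_s$'s cancel correctly against the commutation $\alpha_s\theta_t=\theta_t\alpha_s$. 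The right definition, which I would confirm by this inner-product computation, is $v_{s,t}(a\otimes_{\alpha_s} b\otimes_{\theta_t} c) = a\otimes_{\theta_t} \alpha_s^{-1}(b)\otimes_{\alpha_s} c$; then $\langle\cdot,\cdot\rangle$ on the right becomes $c^*\alpha_s(\alpha_s^{-1}(b)^*\theta_t(a^*a')\alpha_s^{-1}(b'))c' = c^*\big(b^*\,\alpha_s\theta_t(a^*a')\,b'\big)c' = c^*\theta_t\big(b^*\alpha_s(a^*a')b'\big)c'$ using the commutation relation, matching the left side. Surjectivity and bimodularity are then immediate, as is $v_{s,t}(I\otimes I\otimes I)=I\otimes I\otimes I$; normality of $\alpha_s$ ensures this extends to the self-dual $W^*$-tensor products.

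Finally I would verify the two coherence diagrams of Definition \ref{def:SCSG}. For the $R$-additivity diagram (with $R=\alpha$, $S=\theta$), chase $a\otimes_{\alpha_{s+s'}} b\otimes_{\theta_t} c$: going down-then-across gives $(v_{s,t}\otimes I)(I\otimes v_{s',t})(a\otimes_{\alpha_s} I\otimes_{\alpha_{s'}} b\otimes_{\theta_t} c)$, which unwinds to $a\otimes_{\theta_t} \alpha_s^{-1}(\alpha_{s'}^{-1}(b))\otimes_{\alpha_s} I\otimes_{\alpha_{s'}} c = a\otimes_{\theta_t}\alpha_{s+s'}^{-1}(b)\otimes_{\alpha_s} I\otimes_{\alpha_{s'}} c$ using $\alpha_s\alpha_{s'}=\alpha_{s+s'}$; going across-then-down gives $v_{s+s',t}(a\otimes b\otimes c)$ followed by insertion, namely $a\otimes_{\theta_t}\alpha_{s+s'}^{-1}(b)\otimes_{\alpha_{s+s'}} c$ then split as $a\otimes_{\theta_t}\alpha_{s+s'}^{-1}(b)\otimes_{\alpha_s} I\otimes_{\alpha_{s'}} c$ — the two agree. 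The $S$-additivity diagram is checked the same way, using only that $\theta_{t+t'}=\theta_t\circ\theta_{t'}$ and that $\alpha_s^{-1}$ does not interact with the $\theta$-legs. I expect the main obstacle to be purely bookkeeping: getting the placement of $\alpha_s^{-1}$ versus $\alpha_s$ exactly right in the definition of $v_{s,t}$ so that the inner-product identity closes using $\alpha_s\circ\theta_t=\theta_t\circ\alpha_s$, and then carrying that bookkeeping faithfully through the two diagram chases; there is no conceptual difficulty once the correct $v_{s,t}$ is pinned down, and the argument parallels Proposition \ref{prop:SCendo} closely.
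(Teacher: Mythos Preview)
There is a genuine error in your inner-product computation. With your map $v_{s,t}(a\otimes_{\alpha_s} b\otimes_{\theta_t} c)=a\otimes_{\theta_t}\alpha_s^{-1}(b)\otimes_{\alpha_s} c$, the right-hand side gives
\[
c^*\,\alpha_s\big(\alpha_s^{-1}(b^*)\,\theta_t(a^*a')\,\alpha_s^{-1}(b')\big)\,c'
= c^*\,b^*\,\alpha_s\theta_t(a^*a')\,b'\,c'
= c^*\,b^*\,\theta_t(\alpha_s(a^*a'))\,b'\,c',
\]
while the left-hand side is $c^*\,\theta_t\big(b^*\,\alpha_s(a^*a')\,b'\big)\,c'$. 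Your last step, equating these two expressions, would require $\theta_t(b^*Xb')=b^*\theta_t(X)b'$, i.e.\ that $\theta_t$ be multiplicative, which is precisely what is \emph{not} assumed. So your $v_{s,t}$ is not an isometry.

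The fix --- and this is what ``similar to the proof of Proposition \ref{prop:SCendo}'' means in the paper --- is to exploit the automorphism property to collapse the $\alpha_s$ leg \emph{before} crossing over. Since $\alpha_s$ is a $*$-automorphism, in $\cM\otimes_{\alpha_s}\cM$ one has $a\otimes_{\alpha_s}b = a\,\alpha_s^{-1}(b)\otimes_{\alpha_s} I$, so every element of $\cM\otimes_{\alpha_s}\cM\otimes_{\theta_t}\cM$ has the form $a\otimes_{\alpha_s} I\otimes_{\theta_t} c$. Now define $v_{s,t}$ by
\[
a\otimes_{\alpha_s} I\otimes_{\theta_t} c \;\longmapsto\; a\otimes_{\theta_t} I\otimes_{\alpha_s} c,
\]
or equivalently $v_{s,t}(a\otimes_{\alpha_s} b\otimes_{\theta_t} c)=a\,\alpha_s^{-1}(b)\otimes_{\theta_t} I\otimes_{\alpha_s} c$. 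The inner-product check now closes: the left side is $c^*\theta_t(\alpha_s(a^*a'))c'$ and the right side is $c^*\alpha_s(\theta_t(a^*a'))c'$, and these agree by commutation. Bimodularity and $v_{s,t}(I\otimes I\otimes I)=I\otimes I\otimes I$ are immediate, and the two coherence diagrams go through exactly as in your sketch once you use the correct formula (the $\alpha$-additivity diagram uses $\alpha_s^{-1}\alpha_{s'}^{-1}=\alpha_{s+s'}^{-1}$, the $\theta$-additivity diagram is trivial since $b$ has already been absorbed into $a$).
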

\begin{proof}
The proof is similar to the proof of Proposition \ref{prop:SCendo}, and is omitted.
\end{proof}

\subsection{Semigroups on $B(H)$}
It is a well known fact that if $\Theta$ and $\Phi$ are
CP-semigroups on $B(H)$, then for each $t$ there are two
($\ell^2$-independent) row contractions $\{T_{t,i}\}_{i=1}^{m(t)}$ and
$\{S_{t,j}\}_{j=1}^{n(t)}$ ($m(t),n(t)$ may be equal to $\infty$) such that for all $a \in B(H)$
\be\label{eq:conj1}
\Theta_t(a) = \sum_{i}T_{t,i}aT_{t,i}^* ,
\ee
and
\be\label{eq:conj2}
\Phi_t(a) = \sum_{j}S_{t,j}aS_{t,j}^* \,.
\ee
We
shall call such semigroups \emph{conjugation semigroups}, as they
are given by conjugating an element with a row contraction. It now
follows from \cite[Proposition 5.8]{S06}, that for all $(s,t)\in\Rpt$, $\Theta_t$ and $\Phi_s$
commute strongly if and only if there is an $m(t)n(s)\times m(t)n(s)$
unitary matrix
$$u(s,t) = \left(u(s,t)_{(i,j)}^{(k,l)}\right)_{(i,j),(k,l)}$$
such that for all $i,j$,
\be\label{eq:SCunitary}T_{t,i}S_{s,j} =
\sum_{(k,l)}u(s,t)_{(i,j)}^{(k,l)}S_{s,l}T_{t,k} .
\ee
As a simple example, if $\Theta_s$ and $\Phi_t$ are given by (\ref{eq:conj1}) and (\ref{eq:conj2}), and $S_{t,j}$ commutes with $T_{s,i}$ for all $i,j$, then $\Phi_t$ and $\Theta_s$ strongly commute.

\subsection{Maps on $B(H)$, $H$ finite dimensional}\label{subsec:H_finite}
\begin{proposition}\label{prop:SC_H_finite}
Let $\Phi$ and $\Psi$ be two commuting CP maps on $B(H)$, with $H$ a finite
dimensional Hilbert space. Then $\Phi$ and $\Psi$ strongly commute.
\end{proposition}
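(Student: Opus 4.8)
The plan is to reduce the statement to the row–contraction criterion for strong commutativity recorded earlier, namely \cite[Proposition 5.8]{S06} in the form of equation (\ref{eq:SCunitary}), and to produce the required unitary matrix from the uniqueness-up-to-a-unitary of Kraus decompositions — a tool that is available precisely because $H$ is finite dimensional. First I would fix Kraus decompositions: since $\dim H < \infty$, both maps have \emph{finite} operator-sum representations, say $\Phi(a) = \sum_{i=1}^m A_i a A_i^*$ and $\Psi(a) = \sum_{j=1}^p B_j a B_j^*$ for all $a \in B(H)$ (one may take them minimal, with linearly independent Kraus operators, but minimality will not be needed). Composing gives $(\Phi\circ\Psi)(a) = \sum_{i,j}(A_i B_j)\,a\,(A_i B_j)^*$ and $(\Psi\circ\Phi)(a) = \sum_{k,l}(B_l A_k)\,a\,(B_l A_k)^*$. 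The hypothesis $\Phi\circ\Psi = \Psi\circ\Phi$ thus says that $\{A_i B_j\}_{(i,j)}$ and $\{B_l A_k\}_{(k,l)}$ — two families indexed by sets of the \emph{same} cardinality $mp$ — are operator-sum families for one and the same completely positive map on $B(H)$.

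Next I would invoke the unitary freedom in operator-sum representations: two Kraus decompositions of the same CP map on a finite-dimensional space, having the same number $N$ of terms, are intertwined by an $N\times N$ unitary matrix. Applied with $N = mp$, this produces a unitary $u = \big(u_{(i,j)}^{(k,l)}\big)$ with $A_i B_j = \sum_{(k,l)} u_{(i,j)}^{(k,l)}\, B_l A_k$ for all $i,j$. This is exactly relation (\ref{eq:SCunitary}) for the pair $(\Phi,\Psi)$, so by \cite[Proposition 5.8]{S06} the maps $\Phi$ and $\Psi$ commute strongly, which is the assertion. (If one prefers to avoid that characterisation, the same $u$ can instead be used to build the $\cM$-correspondence isomorphism $v:\cM\otimes_\Phi\cM\otimes_\Psi\cM\to\cM\otimes_\Psi\cM\otimes_\Phi\cM$ fixing $I\otimes I\otimes I$, and then Remark \ref{rem:SC} finishes the job.)

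The one delicate point — and the place where finite-dimensionality does more than just guarantee finite Kraus families — is upgrading "intertwined by a partial isometry" to "intertwined by a unitary". Writing the identity $\sum_\gamma X_\gamma a X_\gamma^* = \sum_\delta Y_\delta a Y_\delta^*$ ($a\in B(H)$) in terms of the assembling maps $T_X,T_Y\colon H\to H\otimes\mathbb{C}^{N}$ yields an operator $I_H\otimes w$, with $w\in M_N$ a partial isometry, such that $T_Y=(I_H\otimes w)T_X$; since $N$ is finite, $w$ extends within $M_N$ to a genuine unitary without disturbing the relation on $\operatorname{ran}T_X$. In infinite dimensions this last step fails (a partial isometry like the unilateral shift has no unitary extension), which is exactly why there exist commuting CP maps on $B(\ell^2)$ that do not commute strongly. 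I expect that in a full write-up the only genuine work is making this partial-isometry-to-unitary extension and the index bookkeeping in (\ref{eq:SCunitary}) precise; the rest is a direct appeal to the cited results.
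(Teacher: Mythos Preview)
Your proposal is correct and follows essentially the same route as the paper: fix finite Kraus decompositions, observe that commutativity gives two equal-length Kraus families for the same CP map, invoke the unitary freedom to produce the matrix $u$ in (\ref{eq:SCunitary}), and conclude via \cite[Proposition 5.8]{S06}. The only cosmetic difference is that the paper cites a specific lemma from \cite{G04} (page 153) for the unitary-freedom step, whereas you sketch its proof via the partial-isometry-to-unitary extension; your diagnosis of where finite-dimensionality enters matches the paper's remark exactly.
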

\begin{proof}
Assume that $\Phi$ is given by
$$\Phi(a) = \sum_{i=1}^m S_i a S_i^* $$
and that $\Psi$ is given by
$$\Psi(a) = \sum_{j=1}^n T_j a T_j^* ,$$
where $\{S_1, \ldots , S_m\}$ and $\{T_1, \ldots , T_n\}$ are row contractions
and $m,n \in \mathbb{N}$.
Because $\Phi$ and $\Psi$ commute, we have that
$$\sum_{i,j=1}^{mn} S_i T_j a T_j^* S_i^* = \sum_{i,j=1}^{mn} T_j S_i a S_i^* T_j^* $$
for all $a \in B(H)$. By the lemma on page 153 of \cite{G04} this implies that there exists an $mn \times mn$ unitary matrix $u$ such that
$$ S_i T_j = \sum_{(k,l)}u_{(i,j)}^{(k,l)}T_l S_k ,$$
and this means precisely that $\Phi$ and $\Psi$ strongly commute.
\end{proof}

We note here that the lemma cited above is stated in \cite{G04} for unital CP maps, but the proof works for the non-unital case as well. The reason that the assertion of the proposition fails for $B(H)$ with $H$ infinite dimensional is that in that case we may have $mn=\infty$, and the lemma is only true for a CP maps given by finite sums.

\subsection{Conjugation semigroups on general von Neumann algebras}
Let $\cM$ be a von Neumann algebra acting on a Hilbert space $H$.
If $\Theta$ and $\Phi$ are CP maps
given by
\bes
\Theta(a) = \sum_{i=1}^m T_{i}aT_{i}^* ,
\ees
and
\bes
\Phi(a) = \sum_{j=1}^n S_{j}aS_{j}^* ,
\ees
where $T_{i},S_{j}$ are all in $\cM$, then a sufficient condition for strong commutation of $\Theta$ and $\Phi$ is the existence of a unitary matrix
$$u = \left(u_{(i,j)}^{(k,l)}\right)_{(i,j),(k,l)}$$
such that for all $i,j$,
\bes
T_{i}S_{j} = \sum_{(k,l)}u_{(i,j)}^{(k,l)}S_{l}T_{k} .
\ees
Indeed, by \cite[Proposition 5.6]{S06}, it is enough to show that there are
are two $\cM'$ correspondences $E$ and $F$, together with an
$\cM'$-correspondence isomorphism
$$t:E \otimes_{\cM'} F  \rightarrow F \otimes_{\cM'} E $$
and two c.c. representations $(\sigma,T)$ and $(\sigma,S)$ of $E$
and $F$, respectively, on $H$, such that:
\begin{enumerate}
\item\label{it:rep1} for all $a\in \cM$, $\widetilde{T}(I_E \otimes
a)\widetilde{T}^* = \Theta(a)$,
\item\label{it:rep2} for all $a\in \cM$, $\widetilde{S}(I_F \otimes
a)\widetilde{S}^* = \Phi(a)$,
\item\label{it:commute} $\widetilde{T}(I_E \otimes \widetilde{S}) = \widetilde{S}(I_F \otimes
\widetilde{T})\circ(t \otimes I_H)$.
\end{enumerate}
We construct these correspondences as follows. Let
$$E = \oplus_{i=1}^m \cM' \,\,\,\, {\rm and } \,\,\,\, F = \oplus_{j=1}^n \cM',$$
with the natural inner product and the natural actions of $\cM'$.
If we denote by $\{e_i \}_{i=1}^m$ and $\{f_j \}_{j=1}^n$ the
natural ``bases" of these spaces, then we can define
$$t(e_{i}\otimes f_{j}) =
\sum_{(k,l)}u_{(i,j)}^{(k,l)}f_{l}\otimes e_{k}.$$ We define
$\sigma$ to be the identity representation. Now $E \otimes_\sigma
H \cong \oplus_{i=1}^m H$, and $F \otimes_\sigma H \cong
\oplus_{j=1}^n H$, and on these spaces we define $\widetilde{T}$ and
$\widetilde{S}$ to be the row contractions given by $(T_1, \ldots ,
T_m)$ and $(S_1, \ldots, S_n)$. Some straightforward calculations
shows that items (\ref{it:rep1})-(\ref{it:commute}) are fulfilled.

\subsection{Maps on $\mathbb{C}^n$ or $\ell^\infty$}\label{subsec:SC_comm}
Let $\cM = \mathbb{C}^n$ or
$\ell^\infty(\mathbb{N})$, considered as the algebra of diagonal
matrices acting on the Hilbert space $H = \mathbb{C}^n$ or
$\ell^2(\mathbb{N})$. In this context, a unital CP map is just a
stochastic matrix, that is, a matrix $P$ such that $p_{ij}\geq 0$
for all $i,j$ and such that for all $i$,
$$\sum_{j}p_{ij} = 1.$$
Indeed, it is straightforward to check that such a matrix gives rise to a
normal, unital, completely positive map. On the other hand, for all $i$, the composition
of a normal, unital, completely positive map with the normal state projecting onto the $i$th element
must be a normal state, so it has to be given by a nonnegative element in $\ell^1$ with norm $1$. Thus, every CP map is given by such a matrix.

Given two such matrices $P$ and $Q$, we ask when do they strongly
commute. To answer this question, we first find orthonormal bases
for $\cM \otimes_P \cM \otimes_Q H$ and $\cM \otimes_Q \cM
\otimes_P H$. If $\{e_i\}$ is the column vector with $1$ in the $i$th
place and $0$'s elsewhere, it is easy to see that the set $\{e_i
\otimes_P e_j \otimes_Q e_k\}_{i,j,k}$ spans $\cM \otimes_P \cM
\otimes_Q H$, and $\{e_i \otimes_Q e_j \otimes_P e_k\}_{i,j,k}$
spans $\cM \otimes_Q \cM \otimes_P H$. We compute
\begin{align*}
\langle e_i \otimes_P e_j \otimes_Q e_k, e_m \otimes_P e_p
\otimes_Q e_q \rangle &= \langle e_k, Q(e_j^*P(e_i^*
e_m)e_p)e_q\rangle \\
&=  \delta_{i,m} \delta_{j,p} \delta_{k,q}q_{kj}p_{ji}.
\end{align*}
Thus,
$$\{(q_{kj}p_{ji})^{-1/2} \cdot e_i \otimes_P e_j \otimes_Q e_k
: i,j,k \,\,{\rm such \,\, that}\,\, q_{kj}p_{ji} \neq 0\}$$ is an
orthonormal basis for $\cM \otimes_P \cM \otimes_Q H$, and
similarly for $\cM \otimes_Q \cM \otimes_P H$. If $u: \cM
\otimes_P \cM \otimes_Q H \rightarrow \cM \otimes_Q \cM \otimes_P
H$ is a unitary that makes $P$ and $Q$ commute strongly, then for
all $i,k$ we must have
$$u(e_i\otimes_P a \otimes_Q e_k) = (e_i\otimes 1 \otimes e_k)u(e_i\otimes_P a \otimes_Q e_k)
= e_i \otimes_Q b \otimes_P e_k ,$$ thus for all $i,j$, the spaces
$V_{i,j}:=\{e_i\otimes_P a \otimes_Q e_k : a \in \cM\}$ and $W_{i,j}:=\{e_i\otimes_Q
a \otimes_P e_k : a \in \cM\}$ must be isomorphic. Thus, a
necessary condition for strong commutativity is that for all
$i,k$,
\be\label{eq:card}
|\{j :  q_{kj}p_{ji} \neq 0\} | = |\{j :
p_{kj}q_{ji} \neq 0\} | ,
\ee
where $| \cdot |$ denotes
cardinality. This condition is also sufficient, because we may define a
unitary between each pair $V_{i,j}$ and $W_{i,j}$, sending $e_i\otimes_P 1 \otimes_Q e_k$ to $e_i\otimes_Q 1 \otimes_P e_k$ and doing whatever on the complement. %
By the way, this example shows that when two CP maps commute strongly, there may be a great many unitaries
that implement the strong commutation.

One can impose certain block structures on $P$ and $Q$ that will
guarantee that (\ref{eq:card}) is satisfied. Since we are in
particularly interested in semigroups, we shall be content with
the following observation. Let $P$ and $Q$ be two commuting, \emph{irreducible}, stochastic matrices.
Then $\cP_t := e^{-t}e^{tP}$ and $\cQ_t := e^{-t}e^{tQ}$ are two commuting, stochastic semigroups with strictly positive elements, and thus for all $s,t$, $\cP_s$ and $\cQ_t$ commute strongly.

To illustrate, let
\[
P = \frac{1}{3}\left[
\begin{array}
[c]{ccc}%
1 & 1 & 1 \\
1 & 1 & 1 \\
1 & 1 & 1
\end{array}
\right]  \,\, , \,\,
Q = \left[
\begin{array}
[c]{ccc}%
{1}/{2} & 0 & {1}/{2} \\
{1}/{4} & {1}/{2} & {1}/{4} \\
{1}/{4} & {1}/{2} & {1}/{4}
\end{array}
\right]  .
\]
One may check that $P$ and $Q$ commute, but do not satisfy (\ref{eq:card}), hence they do not commute strongly. So we see that strong commutativity may fail even in the simplest cases. However, $P$ and $Q$ are \emph{irreducible}, thus for all $s,t$, $\cP_s$ and $\cQ_t$ commute strongly.

\begin{remark}
\emph{We wish to call to attention a curious fact: \emph{the above CP map $Q$ does not strongly commute with $Q^2$!}
(this is readily verified using the necessary and sufficient condition (\ref{eq:card})). This phenomenon suggests that strong commutativity may not be a healthy concept.
(However, note that $Q$ is not embeddable in a CP-semigroup. Indeed, if it were it would have to be invertible).}
\end{remark}

\subsection{Quantized convolution semigroups}\label{subsec:QCS}

In \cite{Markiewicz}, Daniel Markiewicz introduced and studied a class of CP$_0$-semigroups, called \emph{quantized convolution semigroups}. Below we show that every quantized convolution semigroup strongly commutes with any other.

Let $\{W_z: z\in \mb{C}\}$ denote the system of Weyl unitaries on $L^2(\mb{R})$. That is,
\bes
W_{a+ib} = e^{i\frac{ab}{2}}e^{iaQ}e^{ibP},
\ees
where
\bes
P = \frac{1}{i}\frac{d}{dx} \,\, , \,\, Q = M_x .
\ees

The Weyl unitaries span a strong operator dense subspace of $B(L^2(\mb{R}))$, and they satisfy the following relations:
\bes
W_z W_{w} = e^{i\Im(z\overline{w})/2}W_{z+w}
\ees
and therefore also
\be\label{eq:W2}
W_z W_{w} = e^{i\Im(z\overline{w})}W_{w}W_z .
\ee
($\Im(z)$ denotes the imaginary part of a complex number $z$).

\begin{lemma}\label{lem:daniel}{\bf (Claim 5.2, \cite{Markiewicz})}
Let $\rho$ be a finite variation Borel complex measure on $\mb{C}$. If
\bes
\int_\mb{C} W_z d\rho(z) = 0,
\ees
then $\rho = 0$.
\end{lemma}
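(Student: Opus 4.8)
The plan is to test the operator identity against a rich family of vector states and then invoke injectivity of the Fourier transform on finite complex measures. First I would note that $A:=\int_{\mb{C}} W_z\,d\rho(z)$ is a well-defined bounded operator on $L^2(\mb{R})$ (since $\|W_z\|=1$ and $\rho$ has finite total variation), and that for all $\xi,\eta\in L^2(\mb{R})$ one has $\langle A\xi,\eta\rangle=\int_{\mb{C}}\langle W_z\xi,\eta\rangle\,d\rho(z)$, the integrand being a bounded continuous function of $z$ because $z\mapsto W_z$ is strongly continuous. Hence $A=0$ is equivalent to the vanishing of all these matrix coefficients, and it suffices to find vectors $\xi,\eta$ for which the functions $z\mapsto\langle W_z\xi,\eta\rangle$ separate finite measures on $\mb{C}$. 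The natural candidates are the \emph{coherent states}: let $\Omega\in L^2(\mb{R})$ be the vacuum (Gaussian) vector, normalized so that $\langle W_\zeta\Omega,\Omega\rangle=e^{-|\zeta|^2/4}$ for all $\zeta\in\mb{C}$ (a one-line Gaussian integral), and set $\Omega_w:=W_w\Omega$ for $w\in\mb{C}$.

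The main step is to compute $\langle W_z\Omega_w,\Omega_{w'}\rangle$ explicitly. Applying the Weyl relation $W_zW_w=e^{i\Im(z\bar w)/2}W_{z+w}$ (and its consequence \eqref{eq:W2}) twice, together with $W_{w'}^{*}=W_{-w'}$ and the vacuum expectation above, one obtains an identity of the form
\[
\langle W_z\Omega_w,\Omega_{w'}\rangle \;=\; c(w,w')\,e^{\frac{i}{2}\,\ell_{w,w'}(z)}\,e^{-|z-(w'-w)|^{2}/4},
\]
where $c(w,w')$ is a nonzero scalar independent of $z$, and $\ell_{w,w'}$ is an $\mb{R}$-linear functional on $\mb{C}\cong\mb{R}^2$ whose coefficients are, in suitable real coordinates, $\pm(\Re(w+w'),\Im(w+w'))$. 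The point I would verify by this (routine) bookkeeping of phases is that as the pair $(w,w')$ ranges over $\mb{C}^2$, the pair $\bigl(\ell_{w,w'},\,w'-w\bigr)$ ranges over the full product (all $\mb{R}$-linear functionals on $\mb{R}^2$)$\times\mb{C}$, the two parameters decoupling because $w+w'$ and $w-w'$ are independent.

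To conclude, fix $c\in\mb{C}$ and let $\rho_c$ be the finite complex measure $d\rho_c(z)=e^{-|z-c|^{2}/4}\,d\rho(z)$. Specializing $A=0$ (equivalently $\langle A\Omega_w,\Omega_{w'}\rangle=0$) to pairs with $w'-w=c$ and dividing by the scalar $c(w,w')$ gives $\int_{\mb{C}}e^{\frac{i}{2}\ell(z)}\,d\rho_c(z)=0$ for every $\mb{R}$-linear functional $\ell$ on $\mb{R}^2$; that is, the Fourier transform of $\rho_c$ vanishes identically, so $\rho_c=0$ by uniqueness of the Fourier transform for finite measures. Since $e^{-|z-c|^{2}/4}>0$ everywhere, $\rho_c=0$ forces $\rho=0$. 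The only real obstacle is computational, namely tracking the phase factors to confirm the displayed identity and the decoupling of the two parameter families; conceptually the lemma is just the assertion that the Weyl (Fourier--Wigner) transform is injective on finite measures, and one who is willing to quote that fact can pass directly to the conclusion.
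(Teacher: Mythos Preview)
Your argument is correct. The computation of the matrix coefficient $\langle W_z\Omega_w,\Omega_{w'}\rangle$ comes out exactly as you describe, and the injectivity of the Fourier transform on finite measures finishes the job. One small simplification: you do not actually need the decoupling of $w'-w$ and $w+w'$. Taking $w'=w$ gives $c=0$ and lets $w+w'=2w$ range over all of $\mb{C}$, so already the family $\langle A\Omega_w,\Omega_w\rangle=0$ forces the Fourier transform of the single finite measure $e^{-|z|^2/4}\,d\rho(z)$ to vanish identically, and hence $\rho=0$.

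As for comparison with the paper: there is nothing to compare. The paper does not prove this lemma at all; it merely quotes it as Claim~5.2 of \cite{Markiewicz}. Your coherent-state argument is essentially the standard proof that the Fourier--Wigner transform is injective on finite measures, and is the natural way to establish the result.
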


Every infinitely divisible Borel probability measure $\mu$ on $\mb{C}$ gives rise to a unique one-parameter family $\{\mu_t\}_{t\geq 0}$ of Borel probability measures on $\mb{C}$, satisfying $\mu_0 = \delta_0$, $\mu_s \ast \mu_t = \mu_{s+t}$, and $\mu_1 = \mu$.

\begin{definition}
Given an infinitely divisible Borel probability measure on $\mb{C}$, the \emph{quantized convolution semigroup associated with $\mu$} is the CP$_0$-semigroup $\{\phi_t^\mu\}_{t\geq 0}$ on $B(L^2(\mb{R}))$ given by
\bes
\phi_t^\mu(a) = \int_{\mb{C}} W_z a W_z^* d\mu_t(z) .
\ees
\end{definition}

\begin{proposition}\label{prop:main}
Let $\mu$ and $\nu$ be two infinitely divisible Borel probability measures on $\mb{C}$. Then for all $s,t \geq 0$, $\phi^\mu_s$ strongly commutes with $\phi^\nu_t$.
\end{proposition}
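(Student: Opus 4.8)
The plan is to reduce strong commutation of $\phi^\mu_s$ and $\phi^\nu_t$ to the single Weyl relation (\ref{eq:W2}) by realising both maps through concrete $L^2$-representations and writing down an explicit ``twisted flip'' intertwiner, in the spirit of the construction for conjugation semigroups on general von Neumann algebras given earlier in this section. Since $\cM = B(L^2(\mb{R}))$ has commutant $\cM' = \mb{C}\cdot 1$, condition (iii) of Definition~\ref{def:SC} is vacuous, and by Lemma~\ref{lem:SC2} and Remark~\ref{rem:SC} it is equivalent, and cleanest, to use the reformulation of strong commutativity from \cite[Proposition~5.6]{S06}: I must exhibit $\cM'$-correspondences $E$ and $F$, completely contractive representations of $E$ and $F$ on $H$ implementing $\phi^\mu_s$ and $\phi^\nu_t$, and a unitary $\cM'$-correspondence isomorphism $t\colon E\otimes F\to F\otimes E$ such that $\widetilde T(I_E\otimes\widetilde S) = \widetilde S(I_F\otimes\widetilde T)(t\otimes I_H)$. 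The heuristic guiding the choice of $t$ is that the finite criterion (\ref{eq:SCunitary}) should hold with the ``continuous family of coefficients'' $\{W_z\}_{z\in\mb{C}}$, for which (\ref{eq:W2}) says that the role of the unitary matrix is played by the bicharacter $(z,w)\mapsto e^{i\Im(z\bar w)}$.

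Concretely I would take $E = L^2(\mb{C},\mu_s)$, $F = L^2(\mb{C},\nu_t)$ (separable Hilbert spaces, hence $\cM'$-correspondences), $\sigma$ the identity representation of $\cM$ on $H$, and define completely contractive (indeed fully coisometric, since $\mu_s,\nu_t$ are probability measures) representations by $\widetilde T(\xi\otimes h) = \int_{\mb{C}}\xi(z)\,W_z h\;d\mu_s(z)$ and $\widetilde S(\eta\otimes h) = \int_{\mb{C}}\eta(w)\,W_w h\;d\nu_t(w)$; a short computation gives $\widetilde T(I_E\otimes a)\widetilde T^* = \phi^\mu_s(a)$ and $\widetilde S(I_F\otimes a)\widetilde S^* = \phi^\nu_t(a)$, with $\widetilde T^* h = (z\mapsto W_z^* h)$. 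Taking $t\colon E\otimes F\to F\otimes E$ to be the coordinate flip twisted by that bicharacter, $(t\zeta)(w,z) = e^{i\Im(z\bar w)}\zeta(z,w)$, which is unitary, I would then unwind both sides of the intertwining identity and check that it collapses to $W_zW_w = e^{i\Im(z\bar w)}W_wW_z$, i.e.\ precisely (\ref{eq:W2}); by \cite[Proposition~5.6]{S06} this proves that $\phi^\mu_s$ and $\phi^\nu_t$ commute strongly. Moreover, letting $s,t$ vary, these correspondences and twisted flips assemble into a coherent family of isomorphisms $\{v_{s,t}\}$, and the compatibility diagrams of Definition~\ref{def:SCSG} reduce to multiplicativity of the twist in each argument, $e^{i\Im((z_1+z_2)\bar w)} = e^{i\Im(z_1\bar w)}e^{i\Im(z_2\bar w)}$ and its mirror, which is exactly what is needed to match the convolution identities $\mu_s*\mu_{s'} = \mu_{s+s'}$ and $\nu_t*\nu_{t'} = \nu_{t+t'}$; so in fact the two semigroups commute strongly in the stronger sense of Definition~\ref{def:SCSG}.

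The main obstacle, and essentially all of the work in a complete proof, is bookkeeping: $\phi^\mu_s$ is a genuine continuous mixture of conjugations, so the finite criterion (\ref{eq:SCunitary}) and \cite[Proposition~5.8]{S06} do not apply verbatim, and every step must be carried out with the measure $\mu_s$ in place of a finite index set --- in particular one must justify the $L^2$-descriptions of the relative tensor products and the well-definedness and uniqueness of the intertwiners. Here Lemma~\ref{lem:daniel} (a finite-variation complex measure $\rho$ on $\mb{C}$ with $\int_{\mb{C}}W_z\,d\rho(z) = 0$ vanishes) is the natural tool, as it lets one deduce equalities of operators from their pairings against the total family $\{W_z\}_{z\in\mb{C}}$, and it is what makes the phase computations coming from (\ref{eq:W2}) go through cleanly.
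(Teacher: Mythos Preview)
Your approach is correct and is a genuinely different, more economical route than the paper's. The paper proves Proposition~\ref{prop:main} by working directly with Definition~\ref{def:SC}: it builds explicit unitaries $V_{\phi,\theta}\colon B(H)\otimes_\phi B(H)\otimes_\theta H \to L^2(\mu\times\nu)\otimes H$ sending $a\otimes b\otimes h$ to the function $(z,w)\mapsto aW_z^* bW_w^* h$, proves these are onto using a density lemma (Lemma~\ref{lem:fisdense}, which in turn rests on Lemma~\ref{lem:daniel}), then constructs a twisted flip $U_{\mu,\nu}$ on the $L^2$ side whose phase depends on the middle variable $y$ in $f_{a,W_y,h}$, and finally sets $u_{\phi,\theta}=V_{\theta,\phi}^*\circ U_{\mu,\nu}\circ V_{\phi,\theta}$. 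You instead invoke the sufficient criterion of \cite[Proposition~5.6]{S06} with the simplest possible data: $E=L^2(\mu_s)$, $F=L^2(\nu_t)$ as Hilbert spaces (since $\cM'=\mb{C}$), the obvious ``integrated Weyl'' representations, and the twisted flip $(t\zeta)(w,z)=e^{i\Im(z\bar w)}\zeta(z,w)$; the intertwining identity then collapses line-by-line to (\ref{eq:W2}).

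Two remarks on what each approach buys. First, your route does \emph{not} actually need Lemma~\ref{lem:daniel}: because $\cM'=\mb{C}$, the relative tensor products $E\otimes_{\cM'}F$ are ordinary Hilbert tensor products, your $t$ is manifestly unitary (unimodular multiplier composed with coordinate swap), and the intertwining check is a direct computation on simple tensors---so the ``main obstacle'' you flag is not really there. The paper's route needs the density lemma only because it must show that the map $V_{\phi,\theta}$ is \emph{surjective}, a step your formulation sidesteps entirely. Second, your final paragraph about the compatibility diagrams of Definition~\ref{def:SCSG} is really the content of Theorem~\ref{thm:QCSG}, not Proposition~\ref{prop:main}; the paper treats that separately with a more elaborate diagram chase, whereas your observation that the bicharacter is multiplicative in each slot is exactly the right heuristic and, fleshed out, gives a shorter proof of that theorem too.
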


\begin{proof}
Write $H = L^2(\mb{R})$. It suffices to show that if $\mu$ and $\nu$ are two Borel probability measures on $\mb{C}$, then the CP maps $\phi$ and $\theta$ acting on $B(H)$, given by
\bes
\phi(a) = \int_{\mb{C}} W_z a W_z^* d\mu(z) ,
\ees
and
\bes
\theta(a) = \int_{\mb{C}} W_z a W_z^* d\nu(z) ,
\ees
strongly commute with each other.

For every $a,b \in B(H)$ and $h\in H$, we define $f_{a,b,h} \in L^2(\mu \times \nu) \otimes H$ by
\bes
f_{a,b,h}(z,w) = aW_z^*bW_w^*h.
\ees
Note that $f_{a,b,h} \in L^2(\nu \times \mu) \otimes H$, too.
\begin{lemma}\label{lem:fisdense}
$\{f_{a,b,h}:a,b \in B(H), h \in H\}$ is total in $L^2(\mu \times \nu) \otimes H$.
\end{lemma}
\begin{proof}
Denote $G = \overline{\textrm{span}}\{f_{a,b,h}: a,b \in B(H), h \in H \}$. In \cite[Lemma 5.1]{Markiewicz} it is shown that the set of functions $\{w \mapsto bW^*_w h : b \in B(H), h \in H\}$ is total in $L^2(\nu) \otimes H$. It follows that $G$ is invariant under $I \otimes B(L^2(\nu) \otimes H)$, thus the orthogonal projection  onto $G$ is in $B(L^2(\mu)) \otimes I \otimes I$. Thus, $G = M \otimes L^2(\nu) \otimes H$, with $M$ a closed subspace of $L^2(\mu)$. Let $F \in M^\perp$.
The proof will be completed by showing that $F = 0$.

For every $g,h \in H$,
\bes
\int_\mb{C}\int_\mb{C} \lel F(z) g, W_z^* W_w^* h \rir d\mu(z) d\nu(w) = \langle F \otimes 1 \otimes g, f_{I,I,h}\rangle = 0,
\ees
so $\int_\mb{C} W_w \left(\int_\mb{C} F(z) W_z g d\mu(z) \right) d \nu(w) =0$.
By two repeated applications of Lemma \ref{lem:daniel}, we find that $F = 0$.
\end{proof}
\begin{lemma}\label{lem:L2}
The map sending $a\otimes_\phi b \otimes_\theta h$ to the function $f_{a,b,h}$
extends to a unitary $V_{\phi,\theta} : B(H) \otimes_\phi
B(H) \otimes_\theta H \rightarrow L^2(\mu \times \nu) \otimes H$. For all $a,b,c \in B(H)$, $h \in H$
\bes
V_{\phi,\theta}(I \otimes_\phi I \otimes_\theta h) = f_{I,I,h},
\ees
and
\bes
V_{\phi,\theta}(ca \otimes_\phi b \otimes_\theta h) = (I \otimes I \otimes c) f_{a,b,h}.
\ees
\end{lemma}
\begin{proof}
\begin{align*}
\lel f_{a,b,h}, f_{a',b',h'} \rir &= \int_\mb{C} \int_\mb{C} \lel a W_z^* b W_w^* h, a' W_z^* b' W_w^* h' \rir d\mu(z) d\nu(w) \\
&= \int_\mb{C} \int_\mb{C} \lel h, W_w b^* W_z a^* a' W_z^* b' W_w^* h' \rir d\mu(z) d\nu(w) \\
&= \lel h, \theta(b^* \phi(a^* a')b')h'\rir \\
&= \lel a \otimes_\phi b \otimes_\theta h, a' \otimes_\phi b' \otimes_\theta h' \rir,
\end{align*}
thus, $V_{\phi,\theta}$ extends to an isometry. By Lemma \ref{lem:fisdense}, $V_{\phi,\theta}$ is surjective. The other stated properties of $V_{\phi,\theta}$ are obvious.
\end{proof}

\begin{lemma}
There exists a unitary
\bes
U_{\mu,\nu}: L^2(\mu \times \nu) \otimes H \rightarrow L^2(\nu \times \mu) \otimes H,
\ees
such that
\bes
U_{\mu,\nu} f_{I,I,h} = f_{I,I,h},
\ees
and
\bes
U_{\mu,\nu} (I \otimes I \otimes c) f_{a,b,h} = (I \otimes I \otimes c) U_{\mu,\nu} f_{a,b,h} .
\ees
\end{lemma}
\begin{proof}
For every continuous and bounded function $F$ on $\mb{C}^2$ and every $\varphi \in L^2(\mu \times \nu) \otimes H$ (or in $L^2(\nu \times \mu) \otimes H$), define $F\cdot \varphi$ to be the function
\bes
F\cdot \varphi (z,w) = F(z,w) \varphi(z,w).
\ees
We define $U_{\mu,\nu}$ by
\be\label{eq:U}
\left(U_{\mu,\nu} (F \cdot f_{a,W_y,h})\right) (z,w) = F(w,z)e^{i\Im(y\overline{w} + z\overline{y})}f_{a,W_y,h} (z,w).
\ee
It is easy to see that $\{a \otimes_\phi W_y \otimes_\theta h: a \in B(H), y \in \mb{C}, h \in H\}$ is total in $B(H) \otimes_\phi B(H) \otimes_\theta H$, thus $\{f_{a,W_y,h}:a \in B(H), y \in \mb{C}, h \in H\}$ is total in $L^2(\mu \times \nu) \otimes H$.
If we show that the mapping given by (\ref{eq:U}) preserves inner products, then it can be extended to the required unitary. Let $a,b \in B(H)$, $x,y \in \mb{C}$, $g,h \in H$ and let $G,F$ be bounded continuous functions on $\mb{C}^2$.
Then, using (\ref{eq:W2}) together with $W_z^*  = W_{-z}$,
\begin{align*}
&\lel U_{\mu,\nu} (G \cdot f_{a,W_x,g}), U_{\mu,\nu} (F \cdot f_{b,W_y,h}) \rir_{L^2(\nu \times \mu) \otimes H} \\
&= \int_\mb{C} \int_\mb{C} G(w,z)e^{i\Im(x\overline{w} + z\overline{x})-i\Im(y\overline{w} + z\overline{y})} \overline{F}(w,z) \lel a W_z^* W_x W_w^* g, b W_z^* W_y W_w^* h \rir d\nu(z) d\mu(w) \\
&= \int_\mb{C} \int_\mb{C} Ge^{i\Im(x\overline{w} + z\overline{x})-i\Im(y\overline{w} + z\overline{y})} \overline{F}e^{-i\Im(x\overline{w} + z\overline{x})+i\Im(y\overline{w} + z\overline{y})} \lel a W_w^* W_x W_z^* g, b W_w^* W_y W_z^* h \rir d\nu(z) d\mu(w) \\
&= \int_\mb{C} \int_\mb{C} G(w,z) \overline{F}(w,z) \lel a W_w^* W_x W_z^* g, b W_w^* W_y W_z^* h \rir d\mu(w) d\nu(z) \\
&= \lel G \cdot f_{a,W_x,g}, F \cdot f_{b,W_y,h} \rir_{L^2(\mu \times \nu) \otimes H}.
\end{align*}
\end{proof}

\emph{Completion of the proof of Proposition \ref{prop:main}.} The unitary $u_{\phi,\theta}: B(H) \otimes_\phi
B(H) \otimes_\theta H \rightarrow B(H) \otimes_\theta B(H)
\otimes_\phi H$ as in Definition \ref{def:SC} is given by
\bes
u_{\phi,\theta} = V^*_{\theta,\phi}\circ U_{\mu,\nu} \circ V_{\phi,\theta}.
\ees
\end{proof}

\begin{theorem}\label{thm:QCSG}
Let $\mu$ and $\nu$ be two infinitely divisible Borel probability measures on $\mb{C}$. Then  $\phi^\mu$ strongly commutes with $\phi^\nu$.
\end{theorem}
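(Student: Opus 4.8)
By Proposition \ref{prop:main} we already know that $\phi^\mu_s$ strongly commutes with $\phi^\nu_t$ for every pair $s,t\ge 0$; by Definition \ref{def:SCSG} (equivalently Definition \ref{def:SCSG2}) the only thing left to prove is that the implementing unitaries $\{u_{s,t}\}$ can be chosen compatibly with the semigroup laws $\mu_s\ast\mu_{s'}=\mu_{s+s'}$ and $\nu_t\ast\nu_{t'}=\nu_{t+t'}$. The plan is to take the explicit family built in the proof of Proposition \ref{prop:main}, namely $u_{s,t}=V^*_{\phi^\nu_t,\phi^\mu_s}\circ U_{\mu_s,\nu_t}\circ V_{\phi^\mu_s,\phi^\nu_t}$, and to check that it satisfies the compatibility condition (\ref{eq:superSCSG}); by the Remark after Definition \ref{def:SCSG2} it is enough to treat the two elementary cases there ($\mfq=\mfq'=\{0,t\}$, $\mfp=\{0,s_1+s_2\}\subset\mfp'=\{0,s_1,s_1+s_2\}$, and its mirror image). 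The reason to expect this to work — and the reason the choice of family is not innocuous — is that this particular family is \emph{uniform}: both $V_{\phi,\theta}$ and $U_{\mu,\nu}$ are given by Weyl formulas whose dependence on $s,t$ is only through the variables of integration.

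First I would record the multifold analogue of Lemmas \ref{lem:fisdense} and \ref{lem:L2}: for positive reals $s_1,\dots,s_m,t_1,\dots,t_n$ the Hilbert space $\cM\otimes_{\phi^\mu_{s_1}}\cdots\otimes_{\phi^\mu_{s_m}}\cM\otimes_{\phi^\nu_{t_1}}\cdots\otimes_{\phi^\nu_{t_n}}H$ is unitarily $L^2(\mu_{s_1}\times\cdots\times\mu_{s_m}\times\nu_{t_1}\times\cdots\times\nu_{t_n})\otimes H$ via
\[
a_1\otimes\cdots\otimes a_m\otimes b_1\otimes\cdots\otimes b_n\otimes h\ \longmapsto\ \bigl(\,(z_\bullet,w_\bullet)\mapsto a_1W_{z_1}^*\cdots a_mW_{z_m}^*\,b_1W_{w_1}^*\cdots b_nW_{w_n}^*h\,\bigr);
\]
the inner-product computation is the Fubini-and-Weyl computation of Lemma \ref{lem:L2}, and totality follows by induction from \cite[Lemma 5.1]{Markiewicz} and Lemma \ref{lem:daniel}, exactly as in Lemma \ref{lem:fisdense}. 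Under these identifications: the refinement isometries $V^{R,S}$ and $V^{S,R}$ of Definition \ref{def:SCSG2} become ``split one coordinate into two via the convolution identity, inserting a Weyl phase'' — inserting an $I$ corresponds, via $W_{u}^*W_{v}^*=e^{-i\Im(v\overline{u})/2}W_{u+v}^*$ and $\mu_{s+s'}=\mu_s\ast\mu_{s'}$ (resp.\ for $\nu$), to $g\mapsto e^{-i\Im(v\overline{u})/2}g(\dots,u+v,\dots)$; and the strong-commutation unitaries $U_{\mfp,\mfq}$ provided by Proposition \ref{prop:SC} become the ``transpose the block of $\mu$-coordinates past the block of $\nu$-coordinates, inserting the Weyl phases of formula (\ref{eq:U})'' maps.

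With every map in the diagram now realized on the nose, each of the two diagrams to be verified becomes an equality between two $H$-valued functions on a product of supports of the measures; on both sides one has the \emph{same} ordered string of operators $a_1W_{\zeta_1}^*\cdots b_nW_{\omega_n}^*h$, so the diagram commutes iff the two accumulated scalar phases $e^{i\Im(\cdot\,\overline{\cdot})}$ coincide. Checking that — matching the phase picked up by the insertion maps against the phase in (\ref{eq:U}), using the Weyl relation (\ref{eq:W2}) and the bi-additivity of $(u,v)\mapsto\Im(u\overline{v})$ — is the single genuinely computational step, and it is where all the care lies; this is the main (essentially the only) obstacle. Once the two elementary diagrams commute, Definition \ref{def:SCSG2}, hence Definition \ref{def:SCSG}, holds, so $\phi^\mu$ strongly commutes with $\phi^\nu$, which is Theorem \ref{thm:QCSG}.
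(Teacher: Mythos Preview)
Your proposal is correct and follows essentially the same route as the paper: pass to the $L^2(\mu_{s_1}\times\cdots\times\nu_{t_n})\otimes H$ picture via the Weyl-function unitaries (the multifold version of Lemma~\ref{lem:L2}), identify the refinement maps with convolution-splitting isometries carrying a Weyl phase, identify the $U_{\mfp,\mfq}$ with the coordinate-transposition maps of (\ref{eq:U}), and then verify the commutativity of the relevant diagram by matching the accumulated $e^{i\Im(\cdot\,\overline{\cdot})}$ phases. The paper works directly with Definition~\ref{def:SCSG} rather than invoking the equivalent Definition~\ref{def:SCSG2}, and spells out the phase computation explicitly (your final paragraph), but the argument is the same.
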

\begin{proof}
It is enough to prove that if $\mu$, $\nu$ and $\sigma$ are three Borel probability measures on $\mb{C}$, then the CP maps $\phi$, $\theta$ and $\psi$ acting on $B(H)$, given by
\bes
\phi(a) = \int_{\mb{C}} W_z a W_z^* d\mu(z) ,
\ees
\bes
\theta(a) = \int_{\mb{C}} W_z a W_z^* d\nu(z) ,
\ees
and
\bes
\psi(a) = \int_{\mb{C}} W_z a W_z^* d\sigma(z) ,
\ees
then the following diagram is commutative,
\bes
\begin{CD}
B(H) \otimes_{\theta} B(H) \otimes_{\phi \circ \psi} H @>{u_{\theta,\phi \circ \psi}}>> B(H) \otimes_{\phi \circ \psi} B(H) \otimes_{\theta} H\\
@VVV @VVV\\
B(H) \otimes_{\theta} B(H) \otimes_{\phi} B(H) \otimes_{\psi} H @>{(I \otimes u_{\theta,\psi})(v_{\theta,\phi} \otimes I)}>> B(H) \otimes_{\phi} B(H) \otimes_{\psi} B(H) \otimes_{\theta} H
\end{CD}
\ees
where the maps $u_{\theta,\phi},\ldots$ are the maps constructed in Proposition \ref{prop:main}, $v_{\theta,\phi}$ is the map associated with $u_{\theta,\phi}$ as in Lemma \ref{lem:SC2}, and the vertical maps are the same as in Definition \ref{def:SCSG}. First, some lemmas.

For every $a,b,c \in B(H)$ and $h\in H$, we define $f_{a,b,c,h} \in L^2(\mu \times \nu \times \sigma) \otimes H$ by
\bes
f_{a,b,c,h}(z,w,t) = aW_z^*bW_w^*cW_t^*h.
\ees

\begin{lemma}
The map sending $a\otimes_\phi b \otimes_\theta c \otimes_\psi h$ to the function $f_{a,b,c,h}$
extends to a unitary $V_{\phi,\theta,\psi} : B(H) \otimes_\phi
B(H) \otimes_\theta B(H) \otimes_\psi  H \rightarrow L^2(\mu \times \nu \times \sigma) \otimes H$. For all $a,b,c,d \in B(H)$, $h \in H$
\bes
V_{\phi,\theta,\psi}(I \otimes_\phi I \otimes_\theta I \otimes_\psi h) = f_{I,I,I,h},
\ees
and
\bes
V_{\phi,\theta,\psi}(da \otimes_\phi b \otimes_\theta c \otimes_\psi h) = (I \otimes I \otimes I \otimes d) f_{a,b,c,h}.
\ees
\end{lemma}
\begin{proof}
Just like Lemma \ref{lem:L2}.
\end{proof}

For $F$ a bounded continuous function on $\mb{C}\times \mb{C}$, we define a bounded continuous function $\hat{F}$ on $\mb{C}\times \mb{C} \times \mb{C}$ by
\bes
\hat{F}(z,w,t) = F(z,w+t).
\ees
We define a map $W: L^2(\mu \times \nu \ast \sigma) \otimes H \rightarrow L^2(\mu \times \nu \times \sigma) \otimes H$ by
\bes
F \cdot f_{a,b,h} \mapsto \hat{F} \cdot f_{a,b,I,h}.
\ees
We define a similar map $L^2(\nu \ast \sigma \times \mu) \rightarrow L^2(\nu \times \sigma \times \mu)$.
\begin{lemma}
$W$ is an isometry, intertwining the left actions of $B(H)$, that makes the following diagram commute:
\bes
\begin{CD}
B(H) \otimes_{\theta} B(H) \otimes_{\phi \circ \psi} H @>{V_{\theta,\phi \circ \psi}}>> L^2(\mu \times \nu \ast \sigma) \otimes H\\
@VVV @VV{W}V\\
B(H) \otimes_{\theta} B(H) \otimes_{\phi} B(H) \otimes_\psi H @>V_{\theta,\phi,\psi}>> L^2(\mu \times \nu \times \sigma) \otimes H
\end{CD}
\ees
\end{lemma}
\begin{proof}
The lemma follows from straightforward computations/observations. Let us check, for example, that $W$ is an isometry.
\begin{align*}
&\langle \hat{F} \cdot f_{a,b,I,h}, \hat{F'}\cdot f_{a',b',I,h'} \rangle =\\
&= \int_\mb{C} \int_\mb{C} \int_\mb{C} F(z,w+t)F'(z,w+t) \langle h, W_tW_w b^* W_z a^* a' W_z^* b' W_w^* W_t^* \rangle d \mu(z) d \nu(w) d \sigma(t) \\
&= \int_\mb{C} \int_\mb{C} \int_\mb{C}  F(z,w+t)F'(z,w+t)  \langle h, W_{t+w} b^* W_z a^* a' W_z^* b' W_{w+t}^* \rangle d \mu(z) d \nu(w) d \sigma(t) \\
&= \int_\mb{C} \int_\mb{C}   F(z,u)F'(z,u)  \langle h, W_{u} b^* W_z a^* a' W_z^* b' W_{u}^* \rangle d \mu(z) d \nu \ast \sigma (u)\\
&= \langle F \cdot f_{a,b,h}, F' \cdot f_{a',b',h'} \rangle .
\end{align*}
\end{proof}

\emph{Completion of the proof of Theorem \ref{thm:QCSG}.} The above lemmas reduce the task to showing that the following diagram commutes:
\bes
\begin{CD}
L^2(\mu \times \nu \ast \sigma) \otimes H @>>> L^2(\nu \ast \sigma \times \mu) \otimes H\\
@VVV @VVV\\
L^2(\mu \times \nu \times \sigma) \otimes H  @>>> L^2(\nu \times \sigma \times \mu) \otimes H
\end{CD}
\ees
where the maps are the natural ones arising all along this section.

Let $f_{a,W_y,h} \in L^2(\mu \times \nu \ast \sigma) \otimes H$. The arrow $L^2(\mu \times \nu \ast \sigma) \otimes H \rightarrow L^2(\nu \ast \sigma \times \mu) \otimes H$ takes $f_{a,W_y,h}$ to the function $g \in L^2(\nu \ast \sigma \times \mu) \otimes H$ given by
\bes
g(z,w) = e^{i\Im(y\overline{w} + z\overline{y})}f_{a,W_y,h} (z,w).
\ees
The map $L^2(\nu \ast \sigma \times \mu) \otimes H \rightarrow L^2(\nu \times \sigma \times \mu) \otimes H$ takes $g$ to a function $h \in L^2(\nu \times \sigma \times \mu) \otimes H$ given by
\bes
h(z,w,t) = e^{i\Im(y\overline{t} + (z+w)\overline{y})}f_{a,I,W_y,h} (z,w,t).
\ees
On the other hand, the arrow $L^2(\mu \times \nu \ast \sigma) \otimes H \rightarrow L^2(\mu \times \nu \times \sigma) \otimes H$ takes $f_{a,W_y,h}$ to the function $f_{a,W_y,I,h} \in L^2(\mu \times \nu \times \sigma) \otimes H$. The map $L^2(\mu \times \nu \times \sigma) \otimes H \rightarrow L^2(\nu \times \sigma \times \mu) \otimes H$ takes $f_{a,W_y,I,h}$ to a function $k \in L^2(\nu \times \sigma \times \mu) \otimes H$ given by
\bes
k(z,w,t) = e^{i\Im(y\overline{t} + z\overline{y})}f_{a,W_y,I,h}(z,w,t) = e^{i\Im(y\overline{t} + z\overline{y})} a W_z^* W_y W_w^* W_t^* h ,
\ees
(first, $f_{a,W_y,I,h}(z,w,t)$ is multiplied by the factor $e^{i\Im(y\overline{w} + z\overline{y})}$, and then $w$ and $t$ are swapped).
But,
\begin{align*}
e^{i\Im(y\overline{t} + z\overline{y})} a W_z^* W_y W_w^* W_t^* h &=
e^{i\Im(y\overline{t} + z\overline{y})} e^{i\Im(-y\overline{w})} a W_z^* W_w^* W_y W_t^* h \\
&= e^{i\Im(y\overline{t} + (z+w)\overline{y})}  a W_z^* W_w^* W_y W_t^* h \\
&= h(z,w,t).
\end{align*}
That completes the proof.
\end{proof}

\chapter{E$_0$-dilation of strongly commuting CP$_0$-semigroups}\label{chap:unital}

In this chapter we prove one of the main results of this thesis:
\emph{every pair of strongly commuting CP$_0$-semigroups has an
E$_0$-dilation}. This is accomplished in the first three sections, which are borrowed from
\cite{ShalitCP0Dil}. In Section \ref{sec:type}, most of which is taken from \cite{ShalitWhatType}, we study the dilation that is constructed in terms of cocycle conjugacy. This analysis (together with results of Markiewicz) allows us to close the chapter with a class of concrete examples of strongly commuting CP$_0$-semigroups for which the E$_0$-dilation that we constructed is cocycle conjugate to the familiar CCR flow.

\section{Overview of the Muhly--Solel approach to dilation}\label{sec:MS}
In this section we describe the approach of Muhly and Solel to
dilation of CP-semigroups on von Neumann algebras. This approach
was used by Muhly and Solel to dilate CP-semigroups over
$\mathbb{N}$ and $\mathbb{R}_+$ \cite{MS02}, and later by Solel
for semigroups over $\mathbb{N}^2$ \cite{S06}. Our program is to
adapt this approach for semigroups over $\cS = \Rpt$.

\subsection{The basic strategy}\label{subsec:strategy}
Let $\Theta$ be a CP-semigroup over the semigroup $\cS$, usually
acting on a von Neumann algebra $\cM$ of operators in $B(H)$. The
dilation is carried out in two main steps. In the first step, a
product system of $\cM'$-correspondences $X$ over $\cS$
is constructed, together with a c.c. representation $(\sigma,T)$
of $X$ on $H$, such that for all $a \in \cM, s \in \cS$,
\be\label{eq:rep_rep}
\Theta_s (a) = \widetilde{T_s} \left(I_{X(s)}
\otimes a \right) \widetilde{T_s}^* ,
\ee
where $T_s$ is the
restriction of $T$ to $X(s)$. In \cite[Proposition 2.21]{MS02}, it
is proved that for any c.c. representation $(\sigma, T)$ of a
$W^*$-correspondence $\cE$ over a $W^*$-algebra $\cN$, the mapping
$a \mapsto \widetilde{T_s} \left(I_{X(s)} \otimes a \right)
\widetilde{T_s}^*$ is a normal, completely positive map on
$\sigma(\cN)'$ (for all $s$). It is also shown that if $T$ is
isometric then this map is multiplicative. Having this in mind,
one sees that a natural way to continue the process of dilation
will be to ``dilate" $(\sigma, T)$ to an isometric representation.
\begin{definition}
Let $\cA$ be a $C^*$-algebra, $X$ be a product system of
$\cA$-correspondences over the semigroup $\cS$, and $(\sigma, T)$
a c.c. representation of $X$ on a Hilbert space $H$. An
\emph{isometric dilation} of $(\sigma, T)$ is an isometric
representation $(\rho, V)$ of $X$ on a Hilbert space $K \supseteq
H$, such that
\begin{itemize}
    \item[(i)] $H$ reduces $\rho$ and $\rho(a)\big|_H = P_H \rho(a)\big|_H = \sigma (a)$, for all $a\in\cA$;
    \item[(ii)] for all $s \in \cS, x \in X_s$, one has   $P_H V_s (x)\big|_{K \ominus H} = 0$;
    \item[(iii)] for all $s \in \cS, x \in X_s$, one has  $P_H V_s (x)\big|_{H} = T_s (x)$.
\end{itemize}
\end{definition}
Such a dilation is called \emph{minimal} in case the smallest
subspace of $K$ containing $H$ and invariant under every $V_s
(x)$, $x \in X, s \in \cS$, is all of $K$.

It will be convenient at times to regard an isometric dilation as
a quadruple $(K,u,V,\rho)$, where $(\rho, V)$ are as above and
$u:H \rightarrow K$ is an isometry.

Constructing a minimal isometric dilation $(K,u,V,\rho)$ of the
representation $(\sigma, T)$ appearing in equation
(\ref{eq:rep_rep}) constitutes the second step of the dilation
process. Then one has to show that if $\cR = \rho(\cM')'$, and
$\alpha$ is defined by \bes \alpha_s (a) := \widetilde{V_s}
\left(I_{X(s)} \otimes a \right) \widetilde{V_s}^* \,\,,\,\, a \in \cR
, \ees then the quadruple $(K,u,\cR,\alpha)$ is an E-dilation for
$(\Theta, \cM)$. In \cite{MS98}, \cite{MS02} and \cite{S06}, it is
proved that any c.c. representation of a product system over
$\mathbb{N}$, $\mathbb{R}_+$ or $\mathbb{N}^2$ (in the latter two,
$X$ is assumed to be a product system of $W^*$-correspondences, and
$\sigma$ is assumed to be normal), has a minimal isometric
dilation. Moreover, it is shown that if $X$ is a product system of
$W^*$-correspondences and $\sigma$ is assumed to be normal then
$\rho$ is also normal. When the product system is over
$\mathbb{N}$ or $\mathbb{R}_+$, the minimal isometric dilation is
also unique. From these results, the authors deduce the existence
of an E-dilation of a CP-semigroup $\Theta$ acting on a von
Neumann algebra $\cM$. When $\Theta$ is a CP-semigroup over $\cS = \mathbb{R}_+$ and $H$
is separable, then $\alpha$ is shown to be an
E-semigroup that is a minimal dilation.

\subsection{Description of the construction of the product system and representation for one parameter semigroups}\label{subsec:des_MS}
In this subsection we give a detailed description of Muhly and
Solel's construction of the product system and c.c. representation
associated with a one-parameter CP-semigroup \cite{MS02}. This construction lies on the foundations set by Arveson in \cite{Arv97b}. We
shall use this construction in Section \ref{sec:rep_SC}. We note that
the original construction in \cite{MS02} was carried out for CP$_0$-semigroups,
but it works just as well for CP-semigroups, and that no use is made of the continuity with respect to $t$.

Let $\Theta = \{\Theta_t \}_{t \geq 0}$ be a CP-semigroup
acting on a von Neumann algebra $\cM$ of operators in $B(H)$. Let $\mfB(t)$ denote the collection of partitions of the
closed unit interval $[0,t]$, ordered by refinement. For $\mfp \in
\mfB(t)$, we define a Hilbert space $H_{\mfp,t}$ by
$$H_{\mfp,t} : = \cM \otimes_{\Theta_{t_1}} \cM \otimes_{\Theta_{t_2 - t_1}} \cM \otimes \cdots \otimes_{\Theta_{t-t_{n-1}}} H ,$$
where $\mfp = \{0 = t_0 < t_1 < t_2 < \cdots < t_n = t \}$, and the right-hand side of the above equation is the Hausdorff completion of the algebraic tensor product $\cM \otimes \cM \otimes \cdots \otimes H$ with respect to the inner product
\begin{align*}\langle T_1 \otimes \cdots \otimes T_n \otimes h, & S_1 \otimes \cdots \otimes S_n \otimes k \rangle = \\
& \langle h, \Theta_{t-t_{n-1}} (T_n^* \Theta_{t_{n-1}-t_{n-2}} (T_{n-1}^* \cdots \Theta_{t_1} (T_1^* S_1) \cdots S_{n-1}) S_n) k \rangle .
\end{align*}
$H_{\mfp,t}$ is a left $\cM$-module via the action $S \cdot (T_1 \otimes \cdots \otimes T_n \otimes h) = ST_1 \otimes \cdots \otimes T_n \otimes h$. We now define the intertwining spaces
$$\cL_{\cM} (H,H_{\mfp,t}) = \{X\in B(H,H_{\mfp,t}): \forall S \in \cM . X S = S \cdot X \} .$$
The inner product
$$\langle X_1, X_2 \rangle := X_1^* X_2 ,$$
for $X_i \in \cL_{\cM} (H,H_{\mfp,t})$, together with the right and left actions
$$(XR)h: = X(Rh) ,$$
and
$$(RX)h := (I \otimes \cdots \otimes I \otimes R) Xh ,$$
for $R \in \cM', X \in \cL_{\cM} (H,H_{\mfp,t})$, make $\cL_{\cM} (H,H_{\mfp,t})$ into a $W^*$-correspondence over $\cM'$.

The Hilbert spaces $H_{\mfp,t}$ and $W^*$-correspondences $\cL_{\cM} (H,H_{\mfp,t})$ form inductive systems as follows. Let $\mfp,\mfp' \in \mfB(t)$, $\mfp \leq \mfp'$. In the particular case where $\mfp = \{0 = t_0 < \cdots < t_k < t_{k+1}< \cdots < t_n = t\}$ and $\mfp' = \{0 = t_0 < \cdots < t_k < \tau < t_{k+1}< \cdots < t_n = t\}$, we can define a Hilbert space isometry $v_0 : H_{\mfp,t} \rightarrow H_{\mfp',t}$ by
\begin{align*}
v_0 (T_1 \otimes \cdots \otimes T_{k+1} \otimes T_{k+2} \otimes & \cdots \otimes T_n \otimes h) = \\
& T_1 \otimes \cdots \otimes T_{k+1} \otimes I \otimes T_{k+2} \otimes \cdots \otimes T_n \otimes h.
\end{align*}
This map gives rise to an isometry of $W^*$-correspondences $v : \cL_{\cM} (H,H_{\mfp,t}) \rightarrow \cL_{\cM} (H,H_{\mfp',t})$ by $v(X) = v_0 \circ X$.

Now, if $\mfp \leq \mfp'$ are any partitions in $\mfB(t)$, then we can define $v_{0,\mfp,\mfp'} : H_{\mfp,t} \rightarrow H_{\mfp',t}$ and $v_{\mfp,\mfp'} : \cL_{\cM} (H,H_{\mfp,t}) \rightarrow \cL_{\cM} (H,H_{\mfp',t})$ by composing a finite number of maps such as $v_0$ and $v$ constructed in the previous paragraph, and we get legitimate arrow maps. Now one can form two different direct limits:
$$H_t := \underrightarrow{\lim }(H_{\mfp,t},v_{0,\mfp,\mfp'}) $$
and
$$E(t) := \underrightarrow{\lim }(\cL_{\cM} (H,H_{\mfp,t}),v_{\mfp,\mfp'}) .$$
The inductive limit also supplies us with embeddings of the blocks $v_{0,\mfp,\infty} : H_{\mfp,t} \rightarrow H_{t}$ and $v_{\mfp,\infty} : \cL_{\cM} (H,H_{\mfp,t}) \rightarrow E(t)$. One can also define intertwining spaces $\cL_{\cM} (H,H_{t})$, each of which has the structure of an $\cM'$-correspondence, and these spaces are isomorphic as $W^*$-correspondences to the spaces $E(t)$. $\{E(t)\}_{t \geq 0}$ is the product system of $\cM'$-correspondences that we are
looking for. We have yet to describe the c.c. representation $(\sigma, T)$ that will ``represent" $\Theta$ as in
equation (\ref{eq:rep_rep}) (with $X(s)$ replaced by $E(s)$).

The sought after representation is the so called ``identity representation", which we now describe. First, we set $\sigma = T_0 = {\bf id}_{\cM'}$. Next, let $t > 0$. For $\mfp = \{0 = t_0 < \cdots < t_n = t\}$, the formula
$$\iota_\mfp (h) = I \otimes \cdots \otimes I \otimes h$$
defines an isometry $\iota_\mfp : H \rightarrow H_{\mfp,t}$, with adjoint given by the formula
$$\iota_{\mfp}^*(X_1 \otimes \cdots \otimes X_n \otimes h) = \Theta_{t-t_{n-1}}(\Theta_{t_{n-1}-t_{n-2}}(\cdots (\Theta_{t_1}(X_1)X_2) \cdots X_{n-1})X_n)h .$$
For $\mfp'$ a refinement of $\mfp$, one computes $\iota_{\mfp}^* = \iota_{\mfp'}^* \circ v_{0,\mfp,\mfp'}$. This induces a unique map $\iota_t^* : H_t \rightarrow H$ that satisfies $\iota_t^* \circ v_{0,\mfp,\infty} = \iota_{\mfp}^*$. The c.c. representation $T_t$ on $E(t)$ is given by
$$T_t (X) = \iota_t^* \circ X ,$$
where we have identified $E(t)$ with $\cL_{\cM} (H,H_{t})$.

\section{Representing strongly commuting CP-semigroups}\label{sec:rep_SC}
As we mentioned in the previous section, our
program is to prove that every two strongly commuting CP$_0$-semigroups have an E$_0$-dilation using the Muhly-Solel approach,
which consists of two main steps. In this section we concentrate
on the first step: the representation of a pair of strongly
commuting CP-semigroups using a product system representation
via a formula such as equation (\ref{eq:rep_rep}) above.

Throughout this section and the following one, $\cM$ will be a von Neumann algebra
acting on a Hilbert space $H$. There is a natural correspondence
between two parameter semigroups of maps and pairs of commuting
one parameter semigroups. Indeed, if $\{R_t\}_{t\geq0}$ and
$\{S_t\}_{t\geq0}$ are two semigroups that commute (that is, for
all $t,s\geq0$, $R_s S_t = S_t R_s$) then we can define a two
parameter semigroup $P_{(s,t)} = R_s S_t$. And if we begin with a
semigroup $\{P_{(t,s)}\}_{(t,s)\in \Rpt}$, then we can define a
commuting pair of semigroups by $R_t = P_{(t,0)}$ and $S_t =
P_{(0,t)}$. It is not trivial that $P$ is continuous (in the relevant sense)
 if and only if
$R$ and $S$ are -- it follows from the fact that $(s,X) \mapsto
R_s(X)$ is jointly continuous in the weak topology (we shall make
this argument precise in Lemma \ref{lem:continuity1}). From now on
we fix the notation in the preceding paragraph, and we shall use
either $\{P_{(t,s)}\}_{(t,s)\in \Rpt}$ or the pair
$\{R_t\}_{t\geq0}$ and $\{S_t\}_{t\geq0}$ to denote a fixed
two-parameter CP-semigroup, and we shall assume in addition that $R$ and $S$ strongly commute. Note also that if
$\{\alpha_t\}_{t\geq0}$ and $\{\beta_t\}_{t\geq0}$ are
\emph{commuting} E-dilations of $\{R_t\}_{t\geq0}$ and
$\{S_t\}_{t\geq0}$ acting on the same von Neumann algebra, then
$\{\alpha_t \beta_s\}_{t,s\geq0}$ is an E-dilation of
$\{P_{(t,s)}\}_{(t,s)\in \Rpt}$, and vice versa.

\subsection{Representing a pair of strongly commuting CP-semigroups
 via the identity representation}\label{subsec:repvia}

By the discussion in Section \ref{subsec:SC_comm}, two CP maps that commute strongly may do so in more than one way.
Once and for all we fix a family $\{u_{s,t}\}_{(s,t) \in\Rpt}$ of unitaries that makes $R$ and $S$ commute strongly
(as in Definition \ref{def:SCSG}), and we also fix
the family of corresponding associated maps $\{v_{R_s,S_t}\}$ (as in Lemma \ref{lem:SC2}).

Let $\{E(t)\}_{t\geq0}$,
$\{F(t)\}_{t\geq0}$ denote the product systems (of
$W^*$-correspondences over $\cM '$) associated with
$\{R_t\}_{t\geq0}$ and $\{S_t\}_{t\geq0}$, respectively, and let
$T^E$, $T^F$ be the corresponding identity representations (as
described in Section \ref{subsec:des_MS}). For $s,t \geq 0$, we
denote by $\theta_{s,t}^E$ and $\theta_{s,t}^F$ the unitaries
$$\theta_{s,t}^E : E(s)\otimes_{\cM'}E(t) \rightarrow E(s+t) ,$$
and
$$\theta_{s,t}^F : F(s)\otimes_{\cM'}F(t) \rightarrow F(s+t) .$$

\begin{proposition}\label{prop:tech}
For all $s,t \geq 0$ there is an isomorphism of
$W^*$-correspondences \be \varphi_{s,t}:E(s) \otimes_{\cM'} F(t)
\rightarrow F(t) \otimes_{\cM'} E(s). \ee The isomorphisms
$\{\varphi_{s,t}\}_{s,t\geq0}$, together with the identity
representations $T^E$, $T^F$, satisfy
the ``commutation" relation: \be\label{eq:commutation_relation}
\tT_s^E (I_{E(s)} \otimes \tT_t^F) = \tT_t^F (I_{F(t)} \otimes
\tT_s^E) \circ (\varphi_{s,t} \otimes I_H) \quad , t,s \geq0 . \ee
\end{proposition}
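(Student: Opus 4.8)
The plan is to follow the pattern of the proof of Lemma~\ref{lem:SC2}: first build a Hilbert-space unitary at the level ``tensored with $H$'', and then descend to an isomorphism of $W^*$-correspondences by exploiting self-duality. The genuinely new feature here, compared with Lemma~\ref{lem:SC2}, is the inductive-limit structure of the Muhly--Solel product systems $E$ and $F$, and handling it is exactly the place where the full strength of Definition~\ref{def:SCSG} (as opposed to mere pairwise strong commutation of the $R_s$ and $S_t$) is used.

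Fix $s,t\geq0$. For partitions $\mfp=\{0=s_0<\dots<s_m=s\}$ of $[0,s]$ and $\mfq=\{0=t_0<\dots<t_n=t\}$ of $[0,t]$, recall the spaces $H^{R,S}_{\mfp,\mfq}$ and $H^{S,R}_{\mfq,\mfp}$ and the unitaries $U_{\mfp,\mfq}\colon H^{R,S}_{\mfp,\mfq}\to H^{S,R}_{\mfq,\mfp}$ produced by Proposition~\ref{prop:SC} from the fixed family $\{v_{R_\cdot,S_\cdot}\}$; each $U_{\mfp,\mfq}$ intertwines the left $\cM$-action on the first leg and the right $\cM'$-action on the last ($H$) leg and fixes $I\otimes\dots\otimes I\otimes h$. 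The content of Definition~\ref{def:SCSG}, in the form of Definition~\ref{def:SCSG2}, is precisely that the $U_{\mfp,\mfq}$ are compatible with the insertion maps $V^{R,S}_{(\mfp,\mfq),(\mfp',\mfq')}$ and $V^{S,R}_{(\mfq,\mfp),(\mfq',\mfp')}$, so they pass to a single unitary $U_{s,t}$ between the inductive limits $\varinjlim_{\mfp,\mfq}H^{R,S}_{\mfp,\mfq}$ and $\varinjlim_{\mfq,\mfp}H^{S,R}_{\mfq,\mfp}$.

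Next I would identify these inductive limits with the tensor products in the statement. Since $F(t)\otimes_\sigma H$ is, as a left $\cM'$-module, the inductive limit of the spaces $H^S_\mfq$ (with $\cM'$ acting on the $H$-leg), since $E(s)=\varinjlim_\mfp\cL_\cM(H,H^R_\mfp)$ by construction, and since forming $E(s)\otimes_{\cM'}(-)$ is compatible with the relevant inductive limits and with the identifications $\cL_\cM(H,H^R_\mfp)\otimes_{\cM'}H^S_\mfq\cong H^{R,S}_{\mfp,\mfq}$, one obtains natural unitaries
\[
E(s)\otimes_{\cM'}F(t)\otimes_\sigma H\;\cong\;\varinjlim_{\mfp,\mfq}H^{R,S}_{\mfp,\mfq},\qquad
F(t)\otimes_{\cM'}E(s)\otimes_\sigma H\;\cong\;\varinjlim_{\mfq,\mfp}H^{S,R}_{\mfq,\mfp}.
\]
Transported through these, $U_{s,t}$ becomes a Hilbert-space unitary $E(s)\otimes_{\cM'}F(t)\otimes_\sigma H\to F(t)\otimes_{\cM'}E(s)\otimes_\sigma H$ that intertwines the left $\cM'$-correspondence actions (coming from $\varphi_{E(s)}$ and $\varphi_{F(t)}$) with the $\cM'$-action on $H$ and fixes the vacuum vectors. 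The descent to a correspondence isomorphism $\varphi_{s,t}\colon E(s)\otimes_{\cM'}F(t)\to F(t)\otimes_{\cM'}E(s)$ with $\varphi_{s,t}\otimes I_H=U_{s,t}$ is then verbatim the self-duality argument of Lemma~\ref{lem:SC2}: for $x\in F(t)\otimes_{\cM'}E(s)$ put $L_xh=x\otimes h$, show that $z\mapsto L_x^*U_{s,t}(z\otimes\cdot)$ is a bounded right-$\cM'$-module map into $\cM'$, and invoke self-duality of $F(t)\otimes_{\cM'}E(s)$.

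It remains to verify the commutation relation~(\ref{eq:commutation_relation}). Under $E(s)\otimes_\sigma H\cong H^R_s$ the operator $\tT_s^E$ is the adjoint of the vacuum embedding $h\mapsto I\otimes\dots\otimes I\otimes h$ (the map $\iota_t^*$ of Section~\ref{subsec:des_MS}, built from $R$), and likewise for $\tT_t^F$. Tracing through the identifications above, both $\tT_s^E(I_{E(s)}\otimes\tT_t^F)$ and $\tT_t^F(I_{F(t)}\otimes\tT_s^E)(\varphi_{s,t}\otimes I_H)$ become maps from $\varinjlim_{\mfp,\mfq}H^{R,S}_{\mfp,\mfq}$ to $H$: the first collapses the $S$-block and then the $R$-block, while the second first applies $U_{s,t}$ and then collapses the $R$-block followed by the $S$-block. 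That these coincide is a computation parallel to the one carried out for $\mathbb{N}^2$ in \cite{S06}; it uses only that $U_{s,t}$ is assembled from the $v_{R_\cdot,S_\cdot}$, which fix the vacuum and intertwine the one-sided $\cM$- and $\cM'$-actions, together with the defining formula for the collapse maps recalled in Section~\ref{subsec:des_MS}. The step I expect to be the real obstacle is the third one --- transporting the left $\cM'$-action, the right $\cM'$-action, and the collapse maps correctly through the identifications of the tensor products with the inductive limits of the $H^{R,S}_{\mfp,\mfq}$; once that is set up, the self-duality descent and the verification of~(\ref{eq:commutation_relation}) are routine bookkeeping of the kind already done in Lemma~\ref{lem:SC2} and Section~\ref{subsec:des_MS}, and the entire conceptual burden has been front-loaded into Definition~\ref{def:SCSG}.
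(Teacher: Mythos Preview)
Your approach is correct and uses the same ingredients as the paper: the unitaries from Proposition~\ref{prop:SC}, the compatibility furnished by Definition~\ref{def:SCSG}, and the passage to inductive limits. The organization differs, however. The paper stays at the correspondence level throughout: it uses the isomorphism $\Gamma_{\mfp,\mfq}\colon \cL_\cM(H, H^R_\mfp) \otimes_{\cM'} \cL_\cM(H, H^S_\mfq) \to \cL_\cM(H, H^{S,R}_{\mfq,\mfp})$, $X \otimes Y \mapsto (I \otimes X)Y$ (from \cite[Lemma 3.2]{MS02}) to translate the Hilbert-space unitary $u$ of Proposition~\ref{prop:SC} directly into a correspondence isomorphism $t_{\mfp,\mfq} = \Gamma_{\mfq,\mfp}^{-1}\circ(u\circ\cdot)\circ\Gamma_{\mfp,\mfq}$ between the finite-stage tensor products, verifies the commutation relation~(\ref{eq:commutation_relation}) already at each finite stage, and only then passes to the limit, first in $\mfq$ and then in $\mfp$. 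Your route---take the Hilbert-space limit first, then descend via self-duality as in Lemma~\ref{lem:SC2}---replaces the $\Gamma$ maps by that extra descent step; the paper's route avoids the descent but must carry the commutation identity through two successive limits. Both correctly locate Definition~\ref{def:SCSG} as the point where the whole construction is made coherent.

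One slip worth flagging: with the paper's conventions, the natural map $X \otimes \eta \mapsto (I \otimes X)\eta$ identifies $\cL_\cM(H,H^R_\mfp)\otimes_{\cM'}H^S_\mfq$ with $H^{S,R}_{\mfq,\mfp}$, not $H^{R,S}_{\mfp,\mfq}$ (applying $X$ to the $H$-leg of $\eta\in H^S_\mfq$ puts the $R$-block innermost). So $E(s)\otimes_{\cM'}F(t)\otimes_\sigma H \cong \varinjlim H^{S,R}_{\mfq,\mfp}$, and your $U_{s,t}$ should go the other way. This is harmless---just replace $U_{\mfp,\mfq}$ by its inverse---but it is exactly the kind of bookkeeping you yourself identified as the delicate part.
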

\begin{proof}
We shall adopt the notation used in subsection \ref{subsec:des_MS}
(with a few changes), and follow the proof of
\cite[Proposition 5.6]{S06}. Fix $s,t \geq 0$. Let $\mathfrak{p} = \{0 = s_0 < s_1
< \ldots < s_m = s\}$ be a partition of $[0,s]$. We define
$$H_{\mathfrak{p}}^R = \cM \otimes_{R_{s_1}} \cM \otimes_{R_{s_2-s_1}} \cdots \cM \otimes_{R_{s_m-s_{m-1}}} H$$
and we define (for a partition $\mathfrak{q}$) $H_{\mathfrak{q}}^S$ in a similar manner. If $q = \{0 = t_0 < t_1 < \ldots < t_n = t\}$, we also define
$$H_{\mathfrak{p},\mathfrak{q}}^{R,S} = \cM \otimes_{R_{s_1}} \cdots \otimes_{R_{s_m-s_{m-1}}} \cM \otimes_{S_1}
\cdots \otimes_{S_{t_n-t_{n-1}}} H .$$
$H_{\mathfrak{q},\mathfrak{p}}^{S,R}$ is defined similarly. We can
go on to define
$H_{\mathfrak{q},\mathfrak{p},\mathfrak{p}'}^{S,R,S}$,
$H_{\mathfrak{q},\mathfrak{p},\mathfrak{q}',\mathfrak{p}'}^{S,R,S,R}$,
etc.

Recall that $E(s)$ is the direct limit of the directed system
$(\cL_\cM (H,H_{\mathfrak{p}}^R), v_{\mathfrak{p,p'}})$.
Similarly, we shall write $(\cL_\cM (H,H_{\mathfrak{q}}^S),
u_{\mathfrak{q,q'}})$ for the directed system that has $F(t)$ as
its limit. We write $v_{\mathfrak{p},\infty}$,
$u_{\mathfrak{q},\infty}$ for the limit isometric embeddings.

We proceed to construct an isomorphism
$$\varphi_{s,t} : E(s)\otimes F(t) \rightarrow F(t) \otimes E(s)$$
that has the desired property. Let $\mathfrak{p}  = \{0 = s_0 <
s_1 < \ldots < s_m = s\}$ and $\mathfrak{q}  = \{0 = t_0 < t_1 <
\ldots < t_n = t\}$ be partitions of $[0,s]$ and $[0,t]$,
respectively. Denote by $\Gamma_{\mfp,\mfq}$ the map from $\cL_\cM
(H,H_{\mathfrak{p}}^R) \otimes \cL_\cM (H,H_{\mathfrak{q}}^S)$
into $\cL_\cM (H,H_{\mathfrak{q},\mathfrak{p}}^{S,R})$ given by $X
\otimes Y \mapsto (I \otimes I \cdots I \otimes X)Y$. As explained
in \cite[Lemma 3.2]{MS02}, $\Gamma_{\mfp,\mfq}$ is an
isomorphism. We define $\Gamma_{\mfq,\mfp}$ to be the
corresponding map from $\cL_\cM (H,H_{\mathfrak{q}}^S) \otimes
\cL_\cM (H,H_{\mathfrak{p}}^R)$ into $\cL_\cM
(H,H_{\mathfrak{p},\mathfrak{q}}^{R,S})$. Let $u :
H_{\mathfrak{q},\mathfrak{p}}^{S,R} \rightarrow
H_{\mathfrak{p},\mathfrak{q}}^{R,S}$ be the isomorphism from
Proposition \ref{prop:SC}, and define $\Psi : \cL_\cM
(H,H_{\mathfrak{q},\mathfrak{p}}^{S,R}) \rightarrow \cL_\cM
(H,H_{\mathfrak{p},\mathfrak{q}}^{R,S})$ by $\Psi(Z) = u \circ Z$.
The argument from \cite[Proposition 5.6]{S06} can be repeated
here to show that $\Psi$ is an isomorphism of
$W^*$-correspondences. Define $t_{\mathfrak{p},\mathfrak{q}} :
\cL_\cM (H,H_{\mathfrak{p}}^R) \otimes \cL_\cM
(H,H_{\mathfrak{q}}^S) \rightarrow \cL_\cM (H,H_{\mathfrak{q}}^S)
\otimes \cL_\cM (H,H_{\mathfrak{p}}^R)$ by
$$t_{\mathfrak{p},\mathfrak{q}} = \Gamma_{\mfq,\mfp}^{-1}\circ \Psi \circ \Gamma_{\mfp,\mfq} .$$
Define maps $W_1 : H \rightarrow H_{\mathfrak{p}}^R$ and $W_2 : H
\rightarrow H_{\mathfrak{q}}^S$ by $W_1 h = I \otimes_{R_1} \cdots
I \otimes_{R_{s_m-s_{m-1}}}  h$ and $W_2 h = I \otimes_{S_1}
\cdots I \otimes_{S_{t_n-t_{n-1}}} h$. Also, let $U_1 :
H_{\mathfrak{p}}^R \rightarrow
H_{\mathfrak{q},\mathfrak{p}}^{S,R}$ and $U_2 : H_{\mathfrak{q}}^S
\rightarrow H_{\mathfrak{p},\mathfrak{q}}^{R,S}$ be the maps $U_1
\xi = I \otimes_{S_1} I \cdots I \otimes_{S_{t_n-t_{n-1}}} \xi$
and $U_2 \eta = I \otimes_{R_1} I \cdots I
\otimes_{R_{s_m-s_{m-1}}} \eta$. Just as in \cite{S06}, we have
that
\begin{equation}\label{eq:WU}
W_1^* U_1^* = W_2^* U_2^* u ,
\end{equation}
and that, for $X \in \cL_\cM (H,H_{\mathfrak{p}}^R)$, we have
$U_1^* (I \otimes \cdots I \otimes X) = X W_2^*$. Now, for $X \in
\cL_\cM (H,H_{\mathfrak{p}}^R)$ and $Y \in \cL_\cM
(H,H_{\mathfrak{q}}^S)$,
\begin{equation}\label{eq:UGamma}
U_1^* \Gamma_{\mfp,\mfq} (X \otimes Y) = U_1^* (I \otimes I \cdots
I \otimes X)Y = X W_2^* Y.
\end{equation}
If we define $(T_\mathfrak{p}^R, id)$ \footnote{Watch out - we
have here a little problem with notation - this resembles $T_t^E,
T_t^F$ that we defined above, but both the subscript and the superscript have here a different meaning.} to be the identity representation
of $\cL_\cM (H,H_{\mathfrak{p}}^R)$, and $(T_\mathfrak{q}^S, id)$
to be the identity representation of $\cL_\cM
(H,H_{\mathfrak{q}}^S)$, (see the closing paragraph in subsection
\ref{subsec:des_MS}), then (\ref{eq:UGamma}) implies that, for $h
\in H$, \be\label{eq:REP} W_1^* U_1^* (\Gamma_{\mfp,\mfq}(X
\otimes Y))h = T_\mathfrak{p}^R (X) T_\mathfrak{q}^S (Y)h =
\widetilde{T}_\mathfrak{p}^R (I \otimes \widetilde{T}_\mathfrak{q}^S)
(X\otimes Y\otimes h). \ee On the other hand, using (\ref{eq:WU})
and an analog of (\ref{eq:REP}),
\begin{align*}
W_1^* U_1^* (\Gamma_{\mfp,\mfq}(X \otimes Y))h & =
W_1^* U_1^* (\Psi^{-1} \Gamma_{\mfq,\mfp} \circ t_{\mathfrak{p},\mathfrak{q}} (X \otimes Y))h \\
& = W_1^* U_1^* u^*(\Gamma_{\mfq,\mfp} \circ t_{\mathfrak{p},\mathfrak{q}} (X \otimes Y))h \\
& = W_2^* U_2^* (\Gamma_{\mfq,\mfp} \circ t_{\mathfrak{p},\mathfrak{q}} (X \otimes Y))h \\
& = \widetilde{T}_\mathfrak{q}^S (I \otimes \widetilde{T}_\mathfrak{p}^R)(t_{\mathfrak{p},\mathfrak{q}} (X \otimes Y) \otimes h) .
\end{align*}

Let us summarize what we have accumulated up to this point. For
fixed $s,t \geq 0$, and any two partitions $\mathfrak{p}$,
$\mathfrak{q}$ of $[0,s]$ and $[0,t]$, respectively, we have a
Hilbert space isomorphism
$$t_{\mathfrak{p},\mathfrak{q}} : \cL_\cM (H,H_{\mathfrak{p}}^R) \otimes \cL_\cM (H,H_{\mathfrak{q}}^S) \rightarrow \cL_\cM (H,H_{\mathfrak{q}}^S) \otimes \cL_\cM (H,H_{\mathfrak{p}}^R)$$
satisfying \be\label{eq:com} \widetilde{T}_\mathfrak{p}^R (I \otimes
\widetilde{T}_\mathfrak{q}^S) = \widetilde{T}_\mathfrak{q}^S (I \otimes
\widetilde{T}_\mathfrak{p}^R)(t_{\mathfrak{p},\mathfrak{q}} \otimes
I_H) . \ee These maps induce an isomorphism
$t_{\mathfrak{p},\infty}: \cL_\cM (H,H_{\mathfrak{p}}^R) \otimes
F(t) \rightarrow F(t) \otimes \cL_\cM (H,H_{\mathfrak{p}}^R)$ that
satisfies
\be\label{eq:tpinfty} t_{\mathfrak{p},\infty} (I \otimes
u_{\mathfrak{q},\infty}) = (u_{\mathfrak{q},\infty} \otimes I)
t_{\mathfrak{p},\mathfrak{q}} .
\ee
(The definition we gave for strong commutativity of semigroups is tailor-made to make the previous sentence true).
Plugging (\ref{eq:tpinfty}) in
(\ref{eq:com}) we obtain
$$
\widetilde{T}_\mathfrak{p}^R (I \otimes \widetilde{T}_\mathfrak{q}^S) = \widetilde{T}_\mathfrak{q}^S (u_{\mathfrak{q},\infty}^* \otimes \widetilde{T}_\mathfrak{p}^R)(t_{\mathfrak{p},\infty} \otimes I_H) (I \otimes u_{\mathfrak{q},\infty} \otimes I_H) .
$$
The discussion before Theorem 3.9 in \cite{MS02} imply that
$\widetilde{T}_t^F (u_{\mathfrak{q},\infty} \otimes I) =
\widetilde{T}_{\mathfrak{q}}^S$, or, letting $p_\mathfrak{q}$ denote
the projection in $F(t)$ onto $u_{\mathfrak{q},\infty} (\cL_\cM
(H,H_{\mathfrak{q}}^S))$,
$$\widetilde{T}_t^F (p_{\mathfrak{q}} \otimes I) = \widetilde{T}_{\mathfrak{q}}^S  (u_{\mathfrak{q},\infty} ^* \otimes I_H).$$
The last two equations sum up to
$$
\widetilde{T}_\mathfrak{p}^R (I \otimes \widetilde{T}_t^F)(I \otimes p_{\mathfrak{q}} \otimes I_H) = \widetilde{T}_t^F (p_{\mathfrak{q}} \otimes \widetilde{T}_\mathfrak{p}^R)(t_{\mathfrak{p},\infty} \otimes I_H) (I \otimes p_{\mathfrak{q}} \otimes I_H) ,
$$
which implies, in the limit,
$$
\widetilde{T}_\mathfrak{p}^R (I \otimes \widetilde{T}_t^F) = \widetilde{T}_t^F (I_{F(t)} \otimes \widetilde{T}_\mathfrak{p}^R)(t_{\mathfrak{p},\infty} \otimes I_H)  .
$$
Repeating this ``limiting process" in the argument $\mathfrak{p}$,
we obtain a map $t_{\infty,\infty} : E(s) \otimes F(t) \rightarrow
F(t) \otimes E(s)$, which we re-label as $\varphi_{s,t}$, that
satisfies (\ref{eq:commutation_relation}). The above procedure can
be done for all $s,t \geq 0$, giving isomorphisms
$\{\varphi_{s,t}\}$ satisfying the commutation relation
(\ref{eq:commutation_relation}).
\end{proof}

Our aim now is to construct a product system $X$ over $\Rpt$ and a
c.c. representation $T$ of $X$ that will lead to a representation
of $\{P_{(s,t)}\}_{(s,t)\in \Rpt}$ as in equation
(\ref{eq:rep_rep}). Proposition \ref{prop:tech} is a key
ingredient in the proof that the representation that we define
below gives rise to such a representation. But before going into
that we need to carefully construct the product system $X$.

We define
$$X(s,t) := E(s) \otimes F(t) ,$$
and
$$\theta_{(s,t),(s',t')}: X(s,t) \otimes X(s',t') \rightarrow X(s+s',t+t') ,$$
by
$$\theta_{(s,t),(s',t')} =  (\theta_{s,s'}^E \otimes \theta_{t,t'}^F)\circ (I \otimes \varphi_{s',t}^{-1} \otimes I) .$$
To show that $\{ X(s,t) \}_{t,s\geq0}$ is a product system, we
shall need to show that ``the $\theta$'s make the tensor product
into an associative multiplication", or simply: \be\label{eq:ass}
\theta_{(s,t),(s'+s'',t'+t'')} \circ (I \otimes
\theta_{(s',t'),(s'',t'')}) = \theta_{(s+s',t+t'),(s'',t'')} \circ
(\theta_{(s,t),(s',t')} \otimes I) , \ee for
$s,s',s'',t,t',t''\geq 0$.

\begin{proposition}\label{prop:assoc}
$X = \{ X(s,t) \}_{t,s\geq0}$ is a product system. That is,
equation (\ref{eq:ass}) holds.
\end{proposition}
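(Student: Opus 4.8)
The plan is to deduce the associativity identity \eqref{eq:ass} from three facts: the associativity of the multiplication maps $\theta^E_{s,s'}$ of $E$ and $\theta^F_{t,t'}$ of $F$ (which hold because $E$ and $F$ are product systems), together with two ``hexagon'' coherence identities for the shuffle isomorphisms $\{\varphi_{s,t}\}$ built in Proposition \ref{prop:tech}, namely
$$\varphi_{s+s',t}\circ(\theta_{s,s'}^E\otimes I_{F(t)}) = (I_{F(t)}\otimes\theta_{s,s'}^E)\circ(\varphi_{s,t}\otimes I_{E(s')})\circ(I_{E(s)}\otimes\varphi_{s',t})$$
and
$$\varphi_{s,t+t'}\circ(I_{E(s)}\otimes\theta_{t,t'}^F) = (\theta_{t,t'}^F\otimes I_{E(s)})\circ(I_{F(t)}\otimes\varphi_{s,t'})\circ(\varphi_{s,t}\otimes I_{F(t')}).$$
Granting these, the verification of \eqref{eq:ass} is a diagram chase. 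Both sides are isomorphisms from $X(s,t)\otimes X(s',t')\otimes X(s'',t'')$ onto $X(s+s'+s'',t+t'+t'')$; inserting the definition $\theta_{(s,t),(s',t')}=(\theta_{s,s'}^E\otimes\theta_{t,t'}^F)\circ(I\otimes\varphi_{s',t}^{-1}\otimes I)$ on each side, using associativity of $\theta^E$ and of $\theta^F$ to collect all $E$-multiplications into a single iterated map $E(s)\otimes E(s')\otimes E(s'')\to E(s+s'+s'')$ and likewise for $F$, and using the first (resp. second) hexagon identity to split $\varphi_{s'+s'',t}^{-1}$ (resp. $\varphi_{s'',t+t'}^{-1}$), one finds that each side equals this pair of iterated multiplications precomposed with a shuffle carrying $F(t)\otimes F(t')\otimes F(t'')$ to the right of $E(s)\otimes E(s')\otimes E(s'')$. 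The two shuffles are composites of the $\varphi$'s taken in two different orders, and they coincide by the hexagon identities together with the fact that flips acting on disjoint tensor legs commute (Proposition \ref{prop:SCSG}); that is exactly the coherence needed to conclude that any two such shuffles with the same source and target agree.

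It then remains to prove the two hexagon identities, and for this I would return to the construction in the proof of Proposition \ref{prop:tech}, where $\varphi_{s,t}$ is realized as a direct limit over partitions $\mfp$ of $[0,s]$ and $\mfq$ of $[0,t]$ of the maps $t_{\mfp,\mfq}=\Gamma_{\mfq,\mfp}^{-1}\circ\Psi\circ\Gamma_{\mfp,\mfq}$, with $\Psi$ being conjugation by the unitary assembled, via Proposition \ref{prop:SC}, from the family $\{u_{s,t}\}$ (equivalently $\{v_{R_s,S_t}\}$) that makes $R$ and $S$ commute strongly. At the partition level, passing from $\varphi_{s+s',t}$ to the right-hand side of the first hexagon corresponds to replacing a partition of $[0,s+s']$ by the concatenation of a partition of $[0,s]$ with one of $[0,s']$; transported through the $\Gamma$'s this is precisely the left vertical arrow in the first commuting square of Definition \ref{def:SCSG}, so the first hexagon for $t_{\mfp,\mfq}$ is that square. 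The second hexagon comes in the same manner from the second square of Definition \ref{def:SCSG} — this is the ``tailor-made'' property the author built into the definition, and it is the analogue, one level up, of the identity \eqref{eq:tpinfty} already used inside the proof of Proposition \ref{prop:tech}. Passing to the direct limit over all partitions, and using that the limit embeddings are compatible with the $\Gamma$'s, the $\Psi$'s, and the multiplications $\theta^E$, $\theta^F$, then promotes the hexagon identities from the $t_{\mfp,\mfq}$ to the $\varphi_{s,t}$.

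I expect the main obstacle to be organizational rather than conceptual: carrying the hexagon relations coherently through the two nested direct limits (over partitions of the $s$-interval and of the $t$-interval) and correctly handling the mixed cases of the shuffle where moves on partially overlapping tensor legs interact, so that the general six-parameter identity \eqref{eq:ass} really does reduce to the two hexagons plus Proposition \ref{prop:SCSG}. No ingredient beyond Definition \ref{def:SCSG} is needed; the content of the proposition is precisely that strong commutativity of the semigroups was defined so as to make all of this coherence automatic.
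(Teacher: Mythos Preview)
Your approach is correct and conceptually cleaner than the paper's, though the underlying content is the same. The paper chooses to work entirely at the partition level: it fixes partitions $\mfp,\mfp',\mfp'',\mfq,\mfq',\mfq''$ and an element $\zeta = X\otimes Y\otimes X'\otimes Y'\otimes X''\otimes Y''$ in the corresponding intertwining spaces, then explicitly applies each side of \eqref{eq:ass} to $\zeta$. The crucial step there is the factorization $U_{\mfp''\leftrightarrow\mfq'\vee\mfq+t'}=(I\otimes U_{\mfp''\leftrightarrow\mfq})(U_{\mfp''\leftrightarrow\mfq'}\otimes I)$ (and its companion), which is exactly your second hexagon identity read at the partition level and which the paper obtains from Proposition \ref{prop:SCSG}; the rest is bookkeeping of the $\Gamma$-type isomorphisms until both expressions collapse to the same sum $\sum_{i,j,k}(\ldots)A_{i,j,k}\otimes(\ldots)B_{i,j,k}$.

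What you do differently is to first promote the hexagon identities to the limit maps $\varphi_{s,t}$ themselves (this is legitimate and, as you note, is precisely what Definition \ref{def:SCSG} was designed to give), and then argue categorically: both sides of \eqref{eq:ass} become the triple $E$-multiplication tensored with the triple $F$-multiplication, precomposed with a shuffle $E(s)F(t)E(s')F(t')E(s'')F(t'')\to E(s)E(s')E(s'')F(t)F(t')F(t'')$ built from three elementary $\varphi^{-1}$'s. The two resulting shuffles differ only by the order of $\varphi_{s',t}^{-1}$ (acting on legs $2,3$) and $\varphi_{s'',t'}^{-1}$ (acting on legs $4,5$), and these commute because they act on disjoint factors. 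One small point: Proposition \ref{prop:SCSG} states this disjoint-legs commutation for the $v_{R_s,S_t}$, not for the limit $\varphi$'s, so you do need to carry it through the direct limit just as you do the hexagons --- but you have flagged this. Your route has the advantage of isolating the coherence conditions and making the proof reusable; the paper's has the advantage of avoiding any statement about $\varphi$ at the limit level, at the cost of a longer element-by-element computation.
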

\begin{proof}
The proof is nothing but a straightforward and tedious computation, using Proposition
\ref{prop:SCSG}.

Let $s,s',s'',t,t',t'' \geq 0$, and let $\mfp, \mfp', \mfp'',
\mfq, \mfq', \mfq''$ be partitions of the corresponding intervals.
It is enough to show that the maps on both sides of equation
(\ref{eq:ass}) give the same result when applied to an element of
the form
$$\zeta = X \otimes Y \otimes X' \otimes Y' \otimes X'' \otimes Y'',$$
where $X \in \cL_\cM (H, H^R_\mfp)$, $Y \in \cL_\cM (H,
H^S_\mfq)$, etc. Let us operate first on $\zeta$ with the right-hand side of
(\ref{eq:ass}).

Now,
$$\theta_{(s,t),(s',t')} (X \otimes Y \otimes X' \otimes Y') =
\left( \theta_{\mfp,\mfp'}^E \otimes \theta_{\mfq,\mfq'}^F \right)
\left(X \otimes t_{\mfp',\mfq}^{-1}(Y \otimes X') \otimes Y'
\right) ,$$
where $\theta_{\mfp,\mfp'}^E$ is the restriction of
$\theta_{s,s'}^E$ to $\cL_\cM (H,H_\mfp^R) \otimes \cL_\cM
(H,H_{\mfp'}^R)$, $\theta_{\mfq,\mfq'}^F$ is defined similarly,
and $t_{\mfp',\mfq}$ is the map defined in Proposition
\ref{prop:tech}. Looking at the definition of $t_{\mfp',\mfq}$, we see
that $t_{\mfp',\mfq}^{-1} (Y \otimes X') =
\Gamma_{\mfp',\mfq}^{-1}\left(U_{\mfp' \leftrightarrow \mfq} \circ
(I\otimes Y) X' \right)$. Here $U_{\mfp' \leftrightarrow \mfq}$
denotes the unitary $H_{\mfp',\mfq}^{R,S} \rightarrow
H_{\mfq,\mfp'}^{S,R}$ given by Proposition \ref{prop:SC}. Assume
that
$$U_{\mfp' \leftrightarrow \mfq} \circ
(I\otimes Y) X' = \sum_{i}(I \otimes x_i)y_i .$$ Then
$$\Gamma_{\mfp',\mfq}^{-1}\left(U_{\mfp' \leftrightarrow \mfq} \circ
(I\otimes Y) X' \right) = \sum_i x_i \otimes y_i ,$$ therefore,
$$\theta_{(s,t),(s',t')} (X \otimes Y \otimes X' \otimes Y') =
\sum_i (I \otimes X)x_i \otimes (I \otimes y_i)Y'.$$
So,
\begin{align*}
& \theta_{(s+s',t+t'),(s'',t'')} \circ (\theta_{(s,t),(s',t')}
\otimes I) \zeta = \\
& \sum_i \left(\theta_{\mfp' \vee \mfp + s', \mfp''}^E \otimes
\theta_{\mfq' \vee \mfq + t', \mfq''}^F \right) \Big[(I \otimes
X)x_i \otimes \Gamma_{\mfp'', \mfq' \vee q + t'}^{-1} \left(
U_{\mfp'' \leftrightarrow \mfq' \vee q + t'} \circ (I \otimes (I \otimes
y_i)Y')X'' \right) \otimes Y'' \Big] .
\end{align*}
Repeated application of Proposition \ref{prop:SCSG} shows that,
and this is a crucial point, $U_{\mfp'' \leftrightarrow \mfq' \vee q + t'} = (I
\otimes U_{\mfp'' \leftrightarrow \mfq}) (U_{\mfp'' \leftrightarrow \mfq'} \otimes I)$. Thus
$$U_{\mfp'' \leftrightarrow \mfq' \vee q + t'} \circ (I \otimes (I \otimes
y_i)Y')X'' = (I \otimes U_{\mfp'' \leftrightarrow \mfq})(I \otimes I \otimes y_i)
\left(U_{\mfp'' \leftrightarrow \mfq'} (I \otimes Y')X'' \right).$$
Write $U_{\mfp'' \leftrightarrow \mfq'} \circ (I \otimes Y')X''$ as $\sum_j (I \otimes a_j)b_j$.
Then we have
\begin{align*}
U_{\mfp'' \leftrightarrow \mfq' \vee q + t'} \circ (I \otimes (I \otimes
y_i)Y')X''
& = \sum_j (I \otimes U_{\mfp'' \leftrightarrow \mfq})(I \otimes I \otimes y_i)
(I \otimes a_j)b_j \\
& = \sum_j \left(I \otimes \big[U_{\mfp'' \leftrightarrow \mfq} \circ (I \otimes y_i)a_j \big]
\right) b_j .
\end{align*}
We now write $U_{\mfp'' \leftrightarrow \mfq} \circ (I \otimes y_i)a_j$ as $\sum_k (I \otimes A_{i,j,k})B_{i,j,k}$.
With this notation, we get
\begin{align*}
& \theta_{(s+s',t+t'),(s'',t'')} \circ (\theta_{(s,t),(s',t')}
\otimes I) \zeta = \\
& \sum_{i,j,k} \left((I \otimes I \otimes X)(I \otimes x_i)A_{i,j,k} \right)
\otimes \left((I \otimes I \otimes B_{i,j,k})(I \otimes b_j)Y'' \right) .
\end{align*}

Now let us operate first on $\zeta$ with the left-hand side of
(\ref{eq:ass}), repeating all the steps that we have made above:
\begin{align*}
\theta_{(s',t'),(s'',t'')} (X' \otimes Y' \otimes X'' \otimes Y'')
& = \left( \theta_{\mfp',\mfp''}^E \otimes \theta_{\mfq',\mfq''}^F \right)
\left(X' \otimes t_{\mfp'',\mfq'}^{-1}(Y' \otimes X'') \otimes Y''
\right) \\
& = \sum_j (I \otimes X')a_j \otimes (I \otimes b_j)Y'',
\end{align*}
thus,
\begin{align*}
& \theta_{(s,t),(s'+s'',t'+t'')} \circ (I \otimes \theta_{(s',t'),(s'',t'')}) \zeta = \\
& \sum_j \left(\theta_{\mfp,\mfp'' \vee \mfp' + s''}^E \otimes
\theta_{\mfq,\mfq'' \vee \mfq' + t''}^F \right)
\Big[ X  \otimes \Gamma_{\mfp'' \vee \mfp' + s'', \mfq}^{-1} \left(
U_{\mfp'' \vee \mfp' + s'' \leftrightarrow \mfq} \circ (I \otimes (I \otimes
Y)X')a_j \right) \otimes (I \otimes b_j) Y'' \Big] .
\end{align*}
As above, we factor $U_{\mfp'' \vee \mfp' + s'' \leftrightarrow \mfq}$ as
$(U_{\mfp'' \leftrightarrow \mfq} \otimes I) (I \otimes U_{\mfp' \leftrightarrow \mfq})$, to obtain

\begin{align*}
 U_{\mfp'' \vee \mfp' + s'' \leftrightarrow \mfq} \circ (I \otimes (I \otimes
Y)X')a_j  & =
\sum_i (U_{\mfp'' \leftrightarrow \mfq} \otimes I) \circ (I \otimes (I \otimes x_i)y_i)a_j \\
& = \sum_i (I \otimes I \otimes x_i) (U_{\mfp'' \leftrightarrow \mfq} \otimes I) \circ (I \otimes y_i)a_j \\
& = \sum_{i,k} (I \otimes I \otimes x_i) (I \otimes A_{i,j,k})B_{i,j,k} .
\end{align*}
So we get
\begin{align*}
& \theta_{(s,t),(s'+s'',t'+t'')} \circ (I \otimes \theta_{(s',t'),(s'',t'')}) \zeta = \\
& \sum_{i,j,k} \left((I \otimes I \otimes X)(I \otimes x_i)A_{i,j,k} \right)
\otimes \left((I \otimes I \otimes B_{i,j,k})(I \otimes b_j)Y'' \right) ,
\end{align*}
and this is exactly the same expression as we obtained for $\theta_{(s+s',t+t'),(s'',t'')} (\theta_{(s,t),(s',t')} \otimes I) \zeta$.
\end{proof}

\begin{theorem}\label{thm:rep_SC}
There exists a two parameter product system of
$\cM'$-correspondences $X$, and a completely contractive,
covariant representation $T$ of $X$ into
$B(H)$, such that for all $(s,t) \in \Rpt$ and all $a \in \cM$,
the following identity holds: \be\label{eq:rep_SC}
\tT_{(s,t)}(I_{X(s,t)} \otimes a)\tT_{(s,t)}^* = P_{(s,t)}(a) .
\ee
Furthermore, if $P$ is unital, then $T$ is fully coisometric.
\end{theorem}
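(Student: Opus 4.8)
The plan is to assemble $T$ out of the identity representations $T^E$ and $T^F$ attached to $R$ and $S$ in Section~\ref{subsec:des_MS}, and then to deduce everything from the two structural facts already in hand: the commutation relation of Proposition~\ref{prop:tech} and the fact (Proposition~\ref{prop:assoc}) that $X=\{X(s,t)\}$, with $X(s,t)=E(s)\otimes F(t)$ and the multiplication maps $\theta_{(s,t),(s',t')}$, is a product system of $\cM'$-correspondences over $\Rpt$. Using the standard identification $X(s,t)\otimes_\sigma H = E(s)\otimes_{\cM'}\big(F(t)\otimes_\sigma H\big)$, I would take $\sigma=\mathrm{id}_{\cM'}$ and define
\be
\tT_{(s,t)} = \tT^E_s\circ\big(I_{E(s)}\otimes\tT^F_t\big),
\ee
equivalently $T_{(s,t)}(\xi\otimes\eta)=T^E_s(\xi)\,T^F_t(\eta)$ for $\xi\in E(s)$, $\eta\in F(t)$.

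First I would verify that $(\sigma,T)$ is a completely contractive covariant representation of $X$. Complete contractivity on each fibre is immediate from $\|\tT_{(s,t)}\|\le\|\tT^E_s\|\,\|\tT^F_t\|\le 1$; $\sigma$ is a normal nondegenerate $*$-representation; and the bimodule identities $T_{(s,t)}(a\cdot x)=\sigma(a)T_{(s,t)}(x)$, $T_{(s,t)}(xa)=T_{(s,t)}(x)\sigma(a)$ for $a\in\cM'$ follow from the corresponding identities for $T^E$, $T^F$ and the definition of the $\cM'$-actions on $E(s)\otimes F(t)$. The one point that needs real work is multiplicativity $T(xy)=T(x)T(y)$, i.e.
\be
\tT_{(s+s',t+t')}\circ\big(\theta_{(s,t),(s',t')}\otimes I_H\big) = \tT_{(s,t)}\circ\big(I_{X(s,t)}\otimes\tT_{(s',t')}\big).
\ee
Here one expands $\theta_{(s,t),(s',t')}=(\theta^E_{s,s'}\otimes\theta^F_{t,t'})\circ(I\otimes\varphi^{-1}_{s',t}\otimes I)$, uses that $T^E$ and $T^F$ are themselves product-system representations to collapse the $\theta^E$, $\theta^F$ factors, and then invokes $\tT^E_{s'}(I\otimes\tT^F_t)\circ(\varphi^{-1}_{s',t}\otimes I_H)=\tT^F_t(I\otimes\tT^E_{s'})$, which is precisely the relation (\ref{eq:commutation_relation}) of Proposition~\ref{prop:tech}; both sides then reduce to $T^E_s(\xi)T^F_t(\eta)T^E_{s'}(\xi')T^F_{t'}(\eta')h$. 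This is the step I expect to be the only genuine computation, and it is exactly where Proposition~\ref{prop:tech} is consumed.

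The representation formula (\ref{eq:rep_SC}) is then a short manipulation. Writing $\tT_{(s,t)}^*=\big(I_{E(s)}\otimes(\tT^F_t)^*\big)(\tT^E_s)^*$ and $I_{X(s,t)}\otimes a=I_{E(s)}\otimes(I_{F(t)}\otimes a)$, one obtains
\bes
\tT_{(s,t)}(I_{X(s,t)}\otimes a)\tT_{(s,t)}^* = \tT^E_s\Big(I_{E(s)}\otimes\big[\tT^F_t(I_{F(t)}\otimes a)(\tT^F_t)^*\big]\Big)(\tT^E_s)^* = \tT^E_s\big(I_{E(s)}\otimes S_t(a)\big)(\tT^E_s)^*,
\ees
by the identity (\ref{eq:rep_rep}) for the pair $(F,T^F)$ and $S$; since $S_t(a)\in\cM$, one more application of (\ref{eq:rep_rep}), this time for $(E,T^E)$ and $R$, gives $R_s\big(S_t(a)\big)=P_{(s,t)}(a)$. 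For the fully coisometric statement, if $P$ is unital then every $R_t=P_{(t,0)}$ and $S_t=P_{(0,t)}$ is unital, so taking $a=1$ in (\ref{eq:rep_rep}) (where $I_{E(t)}\otimes 1=I$) shows $\tT^E_t(\tT^E_t)^*=I_H$ and $\tT^F_t(\tT^F_t)^*=I_H$; hence
\bes
\tT_{(s,t)}(\tT_{(s,t)})^* = \tT^E_s\big(I_{E(s)}\otimes\tT^F_t(\tT^F_t)^*\big)(\tT^E_s)^* = \tT^E_s(\tT^E_s)^* = I_H,
\ees
so $T$ is fully coisometric. In short, modulo Propositions~\ref{prop:tech} and~\ref{prop:assoc} the proof is an assembly job, with the multiplicativity check being the only place that requires the strong-commutativity input.
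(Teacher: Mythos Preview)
Your proposal is correct and essentially identical to the paper's own proof: the same definition $T_{(s,t)}(\xi\otimes\eta)=T^E_s(\xi)T^F_t(\eta)$, the same reduction of multiplicativity to the commutation relation~(\ref{eq:commutation_relation}), and the same two-step application of~(\ref{eq:rep_rep}) to obtain~(\ref{eq:rep_SC}). The only cosmetic difference is that the paper isolates the special case $T_{(s,t)}(\varphi_{s,t}^{-1}(\eta\otimes\xi))=T_{(0,t)}(\eta)T_{(s,0)}(\xi)$ before verifying it, whereas you expand the full product-system multiplication directly; the content is the same.
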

\begin{proof}
As above, define
$$X(s,t) := E(s) \otimes F(t) .$$
By Proposition \ref{prop:assoc}, $X$ is a product system. For $s,t
\geq 0$, $\xi \in E(s)$ and $\eta \in F(t)$, we define a representation $T$ of $X$ by
$$T_{(s,t)}(\xi \otimes \eta) := T_s^E (\xi) T_t^F (\eta) .$$
It is clear that for fixed $s,t \geq 0$, $T_{(s,t)}$, together
with $\sigma = {\bf id}_{\cM'}$, extends to a covariant
representation of $X(s,t)$ on $H$. In addition, \be\label{eq:coi}
\tT_{(s,t)} = \tT_s^E (I_{E(s)}\otimes \tT_t^F)  , \ee so
$\|\tT_{(s,t)}\| \leq 1$. By \cite[Lemma 3.5]{MS98}, $T_{(s,t)}$
is completely contractive. Also, if $P$ is unital, so are $R$
and $S$, thus $T^E$ and $T^F$ are fully coisometric, whence $T$ is
fully coisometric. We turn to show that for $x_1 \in X(s_1,t_1),
x_2 \in X(s_2,t_2)$, $T_{(s_1 +s_2,t_1 + t_2)} (x_1 \otimes x_2) =
T_{(s_1,t_1)}(x_1) T_{(s_2,t_2)}(x_2)$.

Let $\xi_i \in E(s_i), \eta_i \in F(t_i), \,\, i=1,2$. Put $\Phi =
I_{E(s_1)} \otimes \varphi_{s_2,t_1} \otimes I_{F(t_2)}$. Treating
the maps $\theta_{s_1,s_2}^E , \theta_{t_1,t_2}^F$ as identity
maps, we have that $\Phi : X(s_1 +s_2, t_1 + t_2) \rightarrow
X(s_1,t_1) \otimes X(s_2,t_2)$. We need to show that
$$T_{(s_1 + s_2,t_1 + t_2)}\left(\Phi^{-1}(\xi_1 \otimes \eta_1 \otimes \xi_2 \otimes \eta_2) \right)
= T_{(s_1,t_1)}(\xi_1 \otimes \eta_1) T_{(s_2,t_2)}(\xi_2 \otimes
\eta_2) .$$ But for this it suffices to show that
$$T_{(s,t)} \left( \varphi_{s,t}^{-1}(\eta \otimes \xi) \right) = T_{(0,t)}(\eta) T_{(s,0)}(\xi) \quad ,\quad \xi \in E(s), \eta \in F(t) .$$
Let $h \in H$. Now, on the one hand, recalling
(\ref{eq:commutation_relation}), we have
$$\tT_{(s,0)}(I_{E(s)} \otimes \tT_{(0,t)}) (\varphi_{s,t}^{-1}(\eta \otimes \xi) \otimes h) = \tT_{(0,t)}(I_{F(t)} \otimes \tT_{(s,0)})(\eta \otimes \xi \otimes h ) = T_{(0,t)}(\eta) T_{(s,0)}(\xi) h .$$
On the other hand, writing $\sum \xi_i \otimes \eta_i$ for
$\varphi_{s,t}^{-1}(\eta \otimes \xi)$, we have
\begin{align*}
\tT_{(s,0)}(I_{E(s)} \otimes \tT_{(0,t)}) (\varphi_{s,t}^{-1}(\eta
\otimes \xi) \otimes h)
& = \sum \tT_{(s,0)}(\xi_i \otimes T_{(0,t)}(\eta_i) h) \\
& = \sum T_{(s,0)}(\xi_i) T_{(0,t)}(\eta_i) h \\
& = T_{(s,t)}(\sum \xi_i \otimes \eta_i) h\\
& = T_{(s,t)}(\varphi_{s,t}^{-1}(\eta \otimes \xi))h
\end{align*}
so we conclude that $T_{(0,t)}(\xi) T_{(s,0)}(\eta) =
T_{(s,t)}(\varphi_{s,t}^{-1}(\eta \otimes \xi))$, as required.

Finally, using \cite[Theorem 3.9]{MS02} (which is equation \ref{eq:rep_rep} for one parameter CP-semigroups), we easily compute for
$a \in \cM$:
\begin{align*}
\tT_{(s,t)}(I_{X(s,t)} \otimes a)\tT_{(s,t)}^* & =
\tT_{(s,0)}(I_{E(s)} \otimes \tT_{(0,t)})(I_{E(s)} \otimes
I_{F(t)} \otimes a)
(I_{E(s)} \otimes \tT_{(0,t)}^*)\tT_{(s,0)}^* \\
& = \tT_{(s,0)}(I_{E(s)} \otimes S_t (a) ) \tT_{(s,0)}^* \\
& = R_s (S_t (a)) = P_{(s,t)} (a) .
\end{align*}
This concludes the proof.
\end{proof}

\section{E$_0$-dilation of a strongly commuting pair of CP$_0$-semigroups}\label{sec:E0dilation}

In the last two sections we worked out the two
main steps in the Muhly-Solel approach to dilation. In this
section we will put together these two steps and take care of the
remaining technicalities. It is convenient to begin by proving a
few technical lemmas.

\subsection{CP-semigroups and some of their continuity properties}
\begin{lemma}\label{lem:semigroup}
Let $\cN$ be a von Neumann algebra, let $\cS$ be an abelian, cancellative semigroup with unit $0$, and let $X$ be a product system of $\cN$-correspondences over $\cS$. Let $W$ be completely contractive covariant representation of $X$ on a Hilbert space $G$, such that $W_0$ is unital. Then the family of maps
\bes
\Theta_s : a \mapsto \widetilde{W}_s (I_{X(s)} \otimes a) \widetilde{W}_s^* \,\, , \,\, a \in W_0 (\cN)',
\ees
is a semigroup of CP maps (indexed by $\cS$) on $W_0 (\cN)'$. Moreover, if $W$ is an isometric (a fully coisometric) representation, then $\Theta_s$ is a $*$-endomorphism (a unital map) for all $s\in\cS$.
\end{lemma}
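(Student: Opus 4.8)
The plan is to reduce the whole statement to the single-map properties of $\Theta_s$, which are already available. For each fixed $s\in\cS$ the restriction $(W_0,W_s)$ is a c.c.\ representation of the $W^*$-correspondence $X(s)$ with $W_0$ normal, so by \cite[Proposition 2.21]{MS02} (quoted in Section~\ref{subsec:strategy}) the map $\Theta_s(a)=\widetilde{W}_s(I_{X(s)}\otimes a)\widetilde{W}_s^*$ is a well-defined, normal, completely positive map of $W_0(\cN)'$ into itself; contractivity follows from $\|\widetilde{W}_s\|\le 1$, and $\Theta_s$ is multiplicative when $\widetilde{W}_s$ is isometric. (That $\Theta_s(a)$ again commutes with $W_0(\cN)$ uses the left-module identity $W_0(b)\widetilde{W}_s=\widetilde{W}_s(\varphi_{X(s)}(b)\otimes I_G)$ together with its adjoint $\widetilde{W}_s^*W_0(b)=(\varphi_{X(s)}(b)\otimes I_G)\widetilde{W}_s^*$, which I would recall, using that $\varphi_{X(s)}$ is a $*$-homomorphism.) So the genuinely new content is: $\Theta_0=\mathrm{id}$, the semigroup law, and unitality in the fully coisometric case.

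For $\Theta_0$: since $X(0)=\cN$ and $W_0$ is unital, $b\otimes h\mapsto W_0(b)h$ is a unitary $\cN\otimes_{W_0}G\to G$, i.e.\ $\widetilde{W}_0$ is unitary with $\widetilde{W}_0^*h=1\otimes h$; hence $\Theta_0(a)h=\widetilde{W}_0(1\otimes ah)=ah$.

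For the semigroup law, the engine is the identity obtained by unwinding $W(xy)=W(x)W(y)$:
\be
\widetilde{W}_{s+t}\,(U_{s,t}\otimes I_G)=\widetilde{W}_s\,(I_{X(s)}\otimes\widetilde{W}_t),
\ee
as maps $X(s)\otimes_{\cN}X(t)\otimes_{W_0}G\to G$, where $U_{s,t}$ is the product-system isomorphism. Since $U_{s,t}$ is a (unitary) bimodule map, $U_{s,t}\otimes I_G$ intertwines $I_{X(s)}\otimes I_{X(t)}\otimes a$ with $I_{X(s+t)}\otimes a$ for every $a\in W_0(\cN)'$. Writing $\Theta_{s+t}(a)=\widetilde{W}_{s+t}(I_{X(s+t)}\otimes a)\widetilde{W}_{s+t}^*$, inserting $(U_{s,t}\otimes I_G)(U_{s,t}\otimes I_G)^*=I$, and applying the displayed identity (and its adjoint) gives
\be
\Theta_{s+t}(a)=\widetilde{W}_s\bigl(I_{X(s)}\otimes\widetilde{W}_t(I_{X(t)}\otimes a)\widetilde{W}_t^*\bigr)\widetilde{W}_s^*=\widetilde{W}_s(I_{X(s)}\otimes\Theta_t(a))\widetilde{W}_s^*=\Theta_s(\Theta_t(a)),
\ee
which is meaningful precisely because $\Theta_t(a)\in W_0(\cN)'$. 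The only point requiring care is the bookkeeping of the balanced tensor products over $\cN$ and of which operators are permitted in the slot $I_{X(s)}\otimes(-)$; I do not anticipate a real obstacle, only some care.

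Finally, if $W$ is isometric then $\widetilde{W}_s^*\widetilde{W}_s=I$, so $\Theta_s(ab)=\widetilde{W}_s(I\otimes a)\widetilde{W}_s^*\widetilde{W}_s(I\otimes b)\widetilde{W}_s^*=\Theta_s(a)\Theta_s(b)$, and since $\Theta_s$ is obviously $*$-preserving it is a $*$-endomorphism (this is also part of \cite[Proposition 2.21]{MS02}). If $W$ is fully coisometric then $\widetilde{W}_s\widetilde{W}_s^*=I_G$ and $I_{X(s)}\otimes 1=I$, so $\Theta_s(1)=\widetilde{W}_s\widetilde{W}_s^*=I_G$, i.e.\ $\Theta_s$ is unital. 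That completes the plan.
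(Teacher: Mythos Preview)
Your proposal is correct and follows essentially the same approach as the paper: both cite \cite[Proposition~2.21]{MS02} for the single-fiber properties (normal CP, contractive, multiplicative in the isometric case, unital in the fully coisometric case), both verify $\Theta_0=\mathrm{id}$ using unitality of $W_0$, and both derive the semigroup law from the identity $\widetilde{W}_{s+t}(U_{s,t}\otimes I_G)=\widetilde{W}_s(I_{X(s)}\otimes\widetilde{W}_t)$ together with the fact that $U_{s,t}\otimes I_G$ intertwines $I_{X(s)}\otimes I_{X(t)}\otimes a$ with $I_{X(s+t)}\otimes a$. The only cosmetic difference is that the paper runs the semigroup computation as $\Theta_s(\Theta_t(a))\to\Theta_{s+t}(a)$ while you run it as $\Theta_{s+t}(a)\to\Theta_s(\Theta_t(a))$; the algebra is identical.
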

\begin{proof}
By Proposition 2.21 in \cite{MS02}, $\{\Theta_s\}_{s\in\cS}$ is a family of contractive, normal, completely positive maps on $W_0(\cN)'$. Moreover, these maps are unital if $W$ is a fully coisometric representation, and they are $*$-endomorphisms if $W$ is an isometric representation. All that remains is to check that $\Theta = \{\Theta_s \}_{s\in\cS}$ satisfies the semigroup condition $\Theta_s \circ \Theta_t = \Theta_{s + t}$. Fix $a \in W_0 (\cN)'$. For all $s,t\in\cS$,
\begin{align*}
\Theta_s (\Theta_t (a))
&= \widetilde{W}_s \left(I_{X(s)} \otimes \left(\widetilde{W}_t (I_{X(t)} \otimes a) \widetilde{W}_t^*\right)\right) \widetilde{W}_s^* \\
&= \widetilde{W}_s (I_{X(s)} \otimes \widetilde{W}_t) (I_{X(s)}\otimes I_{X(t)} \otimes a)(I_{X(s)} \otimes \widetilde{W}_t^*) \widetilde{W}_s^* \\
&= \widetilde{W}_{s + t} (U_{s,t}\otimes I_G)(I_{X(s)}\otimes I_{X(t)}\otimes a)(U_{s,t}^{-1}\otimes I_G)\widetilde{W}_{s + t}^* \\
&= \widetilde{W}_{s + t} (I_{X(s\cdot t)}\otimes a)\widetilde{W}_{s + t}^* \\
&= \Theta_{s + t}(a) .
\end{align*}
Using the fact that $W_0$ is unital, we have
\begin{align*}
\Theta_0(a) h
&= \widetilde{W_0} (I_N \otimes a) \widetilde{W_0}^* h \\
&= \widetilde{W_0} (I_N \otimes a) (I \otimes h) \\
&= W_0(I_N)ah \\
&= ah ,
\end{align*}
thus $\Theta_0(a) = a$ for all $a\in \cN$.
\end{proof}
\begin{lemma}\label{lem:continuity1}
Let $\{R_t\}_{t\geq0}$ and $\{S_t\}_{t\geq0}$ be two commuting CP-semigroups
on $\cM \subseteq B(H)$, where $H$ is a separable Hilbert space.
Then the two parameter CP-semigroup $P$ defined by
$$P_{(s,t)} := R_s S_t$$
is a CP-semigroup, that is, for all $a \in \cM$, the map $\Rpt \ni
(s,t) \mapsto P_{(s,t)}(a)$ is weakly continuous. Moreover, $P$ is
\emph{jointly} continuous on $\Rpt \times \cM$, endowed with the
standard$\times$weak-operator topology.
\end{lemma}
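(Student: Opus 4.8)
The plan is to dispatch the algebraic content in one line and then concentrate entirely on continuity, which I would handle in two stages: first upgrade $P$ to a \emph{point-strongly} continuous CP-semigroup (this already contains the assertion that $P$ is a CP-semigroup, since point-strong implies point-weak continuity), and then bootstrap from there to the joint continuity. The semigroup identity $P_{(s,t)}\circ P_{(s',t')}=R_sS_tR_{s'}S_{t'}=R_{s+s'}S_{t+t'}=P_{(s+s',t+t')}$ is immediate from the commutation $R_sS_t=S_tR_s$, $P_{(0,0)}=\mathbf{id}$, and each $P_{(s,t)}$ is a CP map as a composition of CP maps; so only continuity is at stake.

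For point-strong continuity, fix $a\in\cM$, $h\in H$ and $(s_n,t_n)\to(s_0,t_0)$, and split
$P_{(s_n,t_n)}(a)h-P_{(s_0,t_0)}(a)h=R_{s_n}\!\big(S_{t_n}(a)-S_{t_0}(a)\big)h+\big(R_{s_n}-R_{s_0}\big)\!\big(S_{t_0}(a)\big)h$.
The second term tends to $0$ by point-strong continuity of the one-parameter semigroup $R$ (Theorem \ref{thm:pointstrongcont}), because its argument $S_{t_0}(a)$ is fixed. For the first term, put $b_n:=S_{t_n}(a)-S_{t_0}(a)$; these are uniformly bounded ($\|b_n\|\le 2\|a\|$), and $b_n\to 0$ in the $\sigma$-strong$^*$ topology, since $b_n\to 0$ and $b_n^*=S_{t_n}(a^*)-S_{t_0}(a^*)\to 0$ strongly by Theorem \ref{thm:pointstrongcont} and on bounded sets the strong and $\sigma$-strong topologies coincide. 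Then I would apply Lemma \ref{lemma:uniformity} to the CP-semigroup $R$ with $A_n:=b_n$, $A:=0$, and the sequence $s_n\to s_0$: for each $\epsilon>0$ there is $N$ with $\|R_{s_k}(b_n)h\|<\epsilon$ for all $k$ once $n\ge N$, and specializing $k=n$ gives $\|R_{s_n}(b_n)h\|\to 0$. (Under the hood this is just the Schwartz inequality for completely positive maps, exactly as in the proof of Lemma \ref{lemma:uniformity}.) Hence $P_{(s_n,t_n)}(a)h\to P_{(s_0,t_0)}(a)h$, so $P$ is point-strongly, a fortiori point-weakly, continuous.

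For the joint continuity on $\Rpt\times\cM$ with the weak operator topology, I would use separability of $H$: the weak operator topology is metrizable on each bounded ball $r\cM_1$ (and on bounded balls of $B(H)$), so it suffices to check sequential continuity on $\Rpt\times r\cM_1$ for every $r$. Given $(s_n,t_n,a_n)\to(s_0,t_0,a_0)$ with $\|a_n\|\le r$ and $a_n\to a_0$ in WOT, write
$P_{(s_n,t_n)}(a_n)-P_{(s_0,t_0)}(a_0)=P_{(s_n,t_n)}(a_n-a_0)+\big(P_{(s_n,t_n)}-P_{(s_0,t_0)}\big)(a_0)$;
the second term tends to $0$ (even strongly) by the previous paragraph. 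For the first, set $c_n:=a_n-a_0\to 0$ in WOT with $\|c_n\|\le 2r$, and for $h,g\in H$ write $\langle P_{(s_n,t_n)}(c_n)h,g\rangle=\big((P_{(s_n,t_n)})_*\omega_{h,g}\big)(c_n)$, where $\omega_{h,g}$ is the normal vector functional and $(P_{(s,t)})_*=(S_t)_*(R_s)_*$ is the pre-adjoint on $\cM_*$. Provided $s\mapsto(R_s)_*\rho$ and $t\mapsto(S_t)_*\rho$ are \emph{norm}-continuous maps $\Rp\to\cM_*$, we get $(P_{(s_n,t_n)})_*\omega_{h,g}\to(P_{(s_0,t_0)})_*\omega_{h,g}$ in norm, and then, since $c_n\to 0$ $\sigma$-weakly and is bounded, $\big((P_{(s_n,t_n)})_*\omega_{h,g}\big)(c_n)=\big[(P_{(s_n,t_n)})_*-(P_{(s_0,t_0)})_*\big]\omega_{h,g}(c_n)+(P_{(s_0,t_0)})_*\omega_{h,g}(c_n)\to 0$, finishing the proof.

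The one input in the last step that is not routine, and which I expect to be the main obstacle, is the norm-continuity of the pre-adjoint semigroups $t\mapsto(R_t)_*\rho$. By the semigroup law this reduces to continuity at $0$, and by splitting $\rho$ into four positive normal functionals it reduces to the assertion that a positive normal functional which converges $\sigma$-weakly to $\rho$ and whose value at $1$ converges to $\rho(1)$ in fact converges in norm --- precisely the "$\sigma$-weak $+$ unital $\Rightarrow$ norm" passage of the SeLegue--Arveson type argument recorded in Lemma \ref{lem:arveson} and Theorem \ref{thm:SeLegue}. Since that lemma is phrased there for $\cM$ a direct sum of type $I$ factors while here $\cM$ is arbitrary, one must either reprove this passage in the needed generality or else argue directly for the (bounded) joint continuity actually used downstream, where separability of $H$ together with a direct $\epsilon$-argument modeled on Lemma \ref{lemma:uniformity} should suffice.
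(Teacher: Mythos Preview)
Your proof is correct in spirit but takes a much longer route than the paper, and the obstacle you flag at the end is real for your chosen approach though easily avoidable.

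The paper's proof is two lines: it invokes \cite[Proposition 4.1]{MS02}, which asserts that any one-parameter CP-semigroup is jointly continuous on $\Rp\times\cM$ in the standard$\times$weak-operator topology. Applying this once to $S$ gives $S_{t_n}(a_n)\to S_t(a)$ in WOT whenever $(t_n,a_n)\to(t,a)$; feeding that into a second application for $R$ gives $R_{s_n}(S_{t_n}(a_n))\to R_s(S_t(a))$. That is the whole argument. In particular, the one-parameter joint continuity is treated as a black box, so the question of norm-continuity of the pre-adjoint semigroup on a general $\cM_*$ never arises.

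Your approach, by contrast, re-derives large parts of that black box from scratch. The first stage (point-strong continuity of $P$ via Theorem~\ref{thm:pointstrongcont} and Lemma~\ref{lemma:uniformity}) is correct and in fact proves something stronger than what this lemma asserts; the paper establishes point-strong continuity of $P$ separately and later, in Proposition~\ref{prop:continuity4}, by a different method (dilate to an $e$-semigroup, where joint strong continuity is easy, then compress). Your second stage runs into exactly the difficulty you identify: Lemma~\ref{lem:arveson} is stated only for direct sums of type $I$ factors, so your norm-continuity of $(R_s)_*$ on a general $\cM_*$ is not available from the paper's toolbox. This gap is fillable (it is essentially the content of \cite[Proposition 4.1]{MS02}), but the paper sidesteps it entirely by citing that result directly.
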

\begin{proof}
Let $(s_n,t_n) \rightarrow (s,t)\in \Rpt$, and let $a_n
\rightarrow a \in \cM$. By \cite[Proposition 4.1]{MS02}, CP-semigroups are jointly continuous in the
standard$\times$weak-operator topology, so
$S_{t_n}(a_n) \rightarrow S_t(a)$ in the weak operator topology.
By the same proposition used once more,
$$P_{(s_n,t_n)}(a_n) = R_{s_n}(S_{t_n}(a_n)) \rightarrow R_s(S_t(a)) = P_{(s,t)}(a)$$
where convergence is in the weak operator topology.
\end{proof}

The above lemma shows that, given two commuting CP$_0$-semigroups $\{R_t\}_{t\geq0}$ and
$\{S_t\}_{t\geq0}$, we can form a two-parameter CP$_0$-semigroup $\{P_{(s,t)}\}= \{R_s S_t\}_{s,t\geq0}$ which  satisfies
the natural continuity conditions. For the theorem below, we will need $P$ to satisfy a stronger type of continuity. This is the subject of the next two lemmas.

\begin{lemma}\label{lem:continuity2}
Let $\cS$ be a topological semigroup with unit $0$, and let $\{W_s\}_{s\in\cS}$ be a semigroup over $\cS$ of CP maps on a von Neumann algebra $\cR \subseteq B(H)$. Let $\cA\subseteq \cR$ be a sub $C^*$-algebra of $\cR$ such that for all $a \in \cA$,
$$W_s(a) \stackrel{WOT}{\longrightarrow} a$$
as $s \rightarrow 0$. Then  for all $a \in \cA$,
$$W_{t+s}(a) \stackrel{SOT}{\longrightarrow} W_t (a)$$
as $s \rightarrow 0$.
\end{lemma}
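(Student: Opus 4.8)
The plan is to follow the pattern of Lemma~\ref{lemma:right-cts}, the one new point being that the hypothesis supplies only weak-operator convergence $W_s(a)\to a$ on $\cA$, so one must first bootstrap this to strong-operator convergence on $\cA$ and then carry it through $W_t$ using normality. The only tool needed throughout is the Kadison--Schwarz inequality $W_s(b)^*W_s(b)\le W_s(b^*b)$, which is available because each $W_s$ is completely positive and contractive.

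\emph{Step 1: strong convergence at $0$ on $\cA$.} Fix $b\in\cA$ and $\xi\in H$. Expanding
\[
\|W_s(b)\xi-b\xi\|^2=\langle W_s(b)^*W_s(b)\xi,\xi\rangle-2\,\Re\langle W_s(b)\xi,b\xi\rangle+\|b\xi\|^2
\]
and bounding the first summand above by $\langle W_s(b^*b)\xi,\xi\rangle$ via Kadison--Schwarz, one lets $s\to0$. Since $\cA$ is a $C^*$-algebra, $b^*b\in\cA$, so the hypothesis gives $W_s(b^*b)\to b^*b$ and $W_s(b)\to b$ in the weak operator topology; hence $\langle W_s(b^*b)\xi,\xi\rangle\to\|b\xi\|^2$ and $\langle W_s(b)\xi,b\xi\rangle\to\|b\xi\|^2$. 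Therefore $\limsup_{s\to0}\|W_s(b)\xi-b\xi\|^2\le\|b\xi\|^2-2\|b\xi\|^2+\|b\xi\|^2=0$, i.e. $W_s(b)\to b$ strongly for every $b\in\cA$.

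\emph{Step 2: normality yields strong continuity on balls, and conclusion.} I claim that the normal, positive, contractive map $W_t$ is strong-operator continuous on norm-bounded sets. Indeed, if $c_\alpha\to0$ strongly with $\sup_\alpha\|c_\alpha\|<\infty$, then $c_\alpha^*c_\alpha\ge0$ and $\langle c_\alpha^*c_\alpha\eta,\eta\rangle=\|c_\alpha\eta\|^2\to0$ for every $\eta\in H$, so $c_\alpha^*c_\alpha\to0$ weakly and, being bounded, $\sigma$-weakly; normality of $W_t$ then gives $W_t(c_\alpha^*c_\alpha)\to0$ $\sigma$-weakly, whence $\|W_t(c_\alpha)\xi\|^2\le\langle W_t(c_\alpha^*c_\alpha)\xi,\xi\rangle\to0$ by Kadison--Schwarz, so $W_t(c_\alpha)\to0$ strongly. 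Now for $a\in\cA$ and $t\in\cS$, write $W_{t+s}=W_t\circ W_s$ (the semigroup law, together with continuity of the semigroup operation so that $t+s\to t$). By Step 1 the net $\{W_s(a)\}_s$ converges to $a$ strongly and is bounded by $\|a\|$, so Step 2 applied to $W_t$ gives $W_{t+s}(a)=W_t(W_s(a))\to W_t(a)$ strongly as $s\to0$, which is the assertion.

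There is no genuine obstacle; the one point worth flagging is why one cannot copy the $\Rp$-argument of Lemma~\ref{lemma:right-cts} verbatim. There one applies Kadison--Schwarz to $W_{t+s}$ with argument $a$ and needs elements of the form $W_t(a)^*W_t(a)$ (more generally elements of $W_t(\cA)$) to lie in the set on which the semigroup converges weakly to the identity; here that set is only $\cA$, and $W_t(\cA)\not\subseteq\cA$ in general. Factoring the limit as $W_t\circ W_s$ and invoking normality of $W_t$ circumvents this.
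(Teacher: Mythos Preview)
Your proof is correct and follows the same Kadison--Schwarz bootstrap that the paper invokes by citing the first half of \cite[Proposition 4.1]{MS02}. The paper gives no details beyond that reference, so your write-up is in fact more complete than what appears there.

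The one point worth noting is your final paragraph: you correctly identify that the literal argument of Lemma~\ref{lemma:right-cts} (applying Kadison--Schwarz to $W_s$ with argument $W_t(a)$) would require $W_t(a)^*W_t(a)\in\cA$, which is not assumed. Your fix---factoring as $W_t\circ W_s$ and using normality of $W_t$ to transfer strong convergence---is the right way around this, and is presumably what the paper's ``almost word for word'' is gesturing at. (In the paper's actual application in Step~3 of Theorem~\ref{thm:scudil}, the algebra $\cA$ happens to be invariant under the semigroup, so either route would work there; but your argument covers the lemma as stated.) One minor remark: the parenthetical about ``continuity of the semigroup operation so that $t+s\to t$'' is unnecessary---you only use the algebraic identity $W_{t+s}=W_t\circ W_s$, and the limit is taken in $s$ alone.
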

\begin{proof}
One can repeat, almost word for word, the proof of the first half of Proposition 4.1 in \cite{MS02},
which addresses the case $\cS=\Rp$.
\end{proof}

\begin{lemma}\label{lem:continuity3}
Let $\Theta = \{\Theta_t\}_{t\geq 0}$ be a CP-semigroup on $\cM \subseteq B(H)$, where $H$ is a separable Hilbert space. Then $\Theta$ is jointly strongly continuous, that is, for all $h\in H$,
the map
$$(t,a) \mapsto \Theta_t(a)h$$
is continuous in the standard$\times$strong-operator topology.
\end{lemma}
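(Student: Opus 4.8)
The plan is to reduce the asserted joint continuity to two one-variable facts that are already in hand: the point-strong continuity of $\Theta$ in the time parameter (Theorem~\ref{thm:pointstrongcont}), and the uniform estimate of Lemma~\ref{lemma:uniformity}. Since $H$ is separable, the strong operator topology is metrizable on bounded subsets of $\cM$, so on such subsets — which is the only situation needed in the sequel — it suffices to argue sequentially. So fix $h \in H$, let $t_n \to t_0$ in $\Rp$ and let $\{a_n\}$ be a bounded sequence in $\cM$ with $a_n \to a_0$ in the strong operator topology; I must show $\Theta_{t_n}(a_n)h \to \Theta_{t_0}(a_0)h$ in norm.

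First I would split
\bes
\|\Theta_{t_n}(a_n)h - \Theta_{t_0}(a_0)h\| \le \|\Theta_{t_n}(a_n - a_0)h\| + \|\Theta_{t_n}(a_0)h - \Theta_{t_0}(a_0)h\| .
\ees
The second summand tends to $0$ by Theorem~\ref{thm:pointstrongcont}. For the first summand I would apply the Schwarz inequality for the contractive CP map $\Theta_{t_n}$, exactly as in the proof of Lemma~\ref{lemma:uniformity}:
\bes
\|\Theta_{t_n}(a_n - a_0)h\|^2 = \langle \Theta_{t_n}(a_n - a_0)^*\Theta_{t_n}(a_n - a_0)h,\, h\rangle \le \langle \Theta_{t_n}(b_n)h,\, h\rangle \le \|\Theta_{t_n}(b_n)h\|\,\|h\| ,
\ees
where $b_n := (a_n - a_0)^*(a_n - a_0) \ge 0$.

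It remains to see that $\|\Theta_{t_n}(b_n)h\| \to 0$, and this is where Lemma~\ref{lemma:uniformity} does the work. One observes that $\langle b_n h',h'\rangle = \|(a_n - a_0)h'\|^2 \to 0$ for every $h' \in H$, so $b_n^{1/2} \to 0$ strongly, hence (using that $\{b_n\}$ is bounded) $b_n \to 0$ in the strong operator topology; being self-adjoint and bounded, $b_n \to 0$ in the $\sigma$-strong* topology. Applying Lemma~\ref{lemma:uniformity} with $A_n = b_n$, $A = 0$, the sequence $\{t_n\}$ and the vector $h$, one obtains, for each $\epsilon > 0$, an $N$ such that $\|\Theta_{t_k}(b_n)h\| < \epsilon$ for all $k$ whenever $n \ge N$; specializing $k = n$ gives $\|\Theta_{t_n}(b_n)h\| < \epsilon$ for $n \ge N$. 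Thus the first summand also tends to $0$, completing the argument.

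The one genuine obstacle is the double limit hidden in the term $\Theta_{t_n}(a_n - a_0)h$: a naive estimate does not control it, and it is precisely the uniformity over the time parameter supplied by Lemma~\ref{lemma:uniformity} (ultimately Takesaki's Lemma~III.5.5) that closes the gap. The passage from $a_n - a_0$ to the positive operator $b_n$ is the other point to be careful about: strong convergence of $a_n$ does not yield $\sigma$-strong* convergence, whereas for the self-adjoint $b_n$ the two notions coincide, which is exactly what makes Lemma~\ref{lemma:uniformity} applicable.
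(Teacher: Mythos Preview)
Your proof is correct but follows a genuinely different route from the paper's. The paper first handles the case where $\Theta$ is an E-semigroup directly: multiplicativity gives $\|\Theta_{t_n}(a_n)h\|^2=\langle\Theta_{t_n}(a_n^*a_n)h,h\rangle$, and then joint continuity in the standard$\times$weak-operator topology (cited from \cite{MS02}) together with the elementary identity $\|x_n-x\|^2=\|x_n\|^2-2\Re\langle x_n,x\rangle+\|x\|^2$ finishes that case. For a general CP-semigroup, the paper invokes Bhat's theorem and pulls back the joint continuity from the E-dilation via $\Theta_t(a)=u^*\alpha_t(uau^*)u$. Your argument instead remains entirely at the CP level, combining Theorem~\ref{thm:pointstrongcont} with the uniform estimate of Lemma~\ref{lemma:uniformity}, and never touches dilation theory. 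This is more self-contained and arguably cleaner in its logical dependencies, since the continuous form of Bhat's theorem itself rests on the results of Section~\ref{sec:point_strong}; the paper's approach is valid, but yours shortens the chain of implications and uses only the tools already developed in that section.
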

\begin{proof}
First, assume that $\Theta$ is an E-semigroup. Let $(t_n,a_n)
\rightarrow (t,a)$ in the standard$\times$strong-operator topology
in $\Rp \times \cM$, and $h\in H$. \bes \|\Theta_{t_n} (a_n) h -
\Theta_t(a)h \|^2 = \|\Theta_{t_n} (a_n) h\|^2 - 2 {\rm Re}\langle
\Theta_{t_n} (a_n) h,\Theta_{t} (a) h \rangle + \|\Theta_{t} (a)
h\|^2 , \ees since $\Theta$ is continuous in the
standard$\times$weak-operator topology, it is enough to show that
$\|\Theta_{t_n} (a_n) h\|^2 \rightarrow \|\Theta_{t} (a) h\|^2$.
But \bes \| \Theta_{t_n} (a_n) h\|^2 = \langle \Theta_{t_n} (a_n^*
a_n) h, h \rangle \rightarrow \langle \Theta_{t} (a^* a) h, h
\rangle = \| \Theta_{t} (a) h\|^2 , \ees because $a_n^* a_n
\rightarrow a^* a$ in the weak-operator topology, and $\Theta$ is
jointly continuous with respect to this topology. Thus $\Theta$ is
also jointly continuous with respect to the strong-operator
topology.

Now let $\Theta$ be an arbitrary CP-semigroup, and let $(K,u,\cR,\alpha)$ be an E-dilation of $\Theta$. Then
for all $a \in \cM, t \in \Rp$,
$$\Theta_t(a) = u^* \alpha_t(u a u^* ) u ,$$
whence $\Theta$ inherits the required type of joint continuity from $\alpha$.
\end{proof}

From the above lemma one immediately obtains:

\begin{proposition}\label{prop:continuity4}
Let $\{R_t\}_{t\geq0}$ and
$\{S_t\}_{t\geq0}$ be two CP-semigroups on $\cM \subseteq B(H)$, where $H$ is a separable Hilbert space. Then the
two parameter CP-semigroup $P$ defined by
$$P_{(s,t)} := R_s S_t$$
is strongly continuous, that is, for all $a \in \cM$, the map $\Rpt \ni (s,t) \mapsto P_{(s,t)}(a)$ is strongly continuous. Moreover, $P$ is \emph{jointly} continuous on $\Rpt \times \cM$, endowed with the standard$\times$strong-operator topology.
\end{proposition}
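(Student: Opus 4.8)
The plan is to deduce this immediately from Lemma \ref{lem:continuity3} by feeding one CP-semigroup into the other. Recall first that, since $R$ and $S$ commute (as in Lemma \ref{lem:continuity1}), the family $P_{(s,t)} := R_s S_t$ is a genuine two-parameter CP-semigroup on $\cM$; commutativity is needed only for the semigroup law, not for the continuity statement, so from here on it plays no role.

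First I would fix $h \in H$ and a net (a sequence suffices on the $\Rpt$ side, since that factor is metrizable) $(s_\lambda, t_\lambda) \to (s,t)$ in $\Rpt$ together with $a_\lambda \to a$ in $\cM$ in the strong operator topology. Applying Lemma \ref{lem:continuity3} to the CP-semigroup $S$ (using that $H$ is separable), the map $(t,b) \mapsto S_t(b)h'$ is jointly continuous from $\Rp \times (\cM, \mathrm{SOT})$ into $H$ for every $h' \in H$; since both $t_\lambda$ and $a_\lambda$ vary, this gives $S_{t_\lambda}(a_\lambda) \to S_t(a)$ in the strong operator topology.

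Next, set $b_\lambda := S_{t_\lambda}(a_\lambda)$ and $b := S_t(a)$, so that $(s_\lambda, b_\lambda) \to (s,b)$ in $\Rp \times (\cM, \mathrm{SOT})$. Applying Lemma \ref{lem:continuity3} a second time, now to the CP-semigroup $R$, yields
\bes
\|P_{(s_\lambda,t_\lambda)}(a_\lambda)h - P_{(s,t)}(a)h\| = \|R_{s_\lambda}(b_\lambda)h - R_s(b)h\| \longrightarrow 0 .
\ees
As $h \in H$ was arbitrary, this is exactly joint continuity of $P$ on $\Rpt \times \cM$ in the standard$\times$strong-operator topology; specializing to the constant net $a_\lambda \equiv a$ recovers the statement that $(s,t) \mapsto P_{(s,t)}(a)$ is strongly continuous for each fixed $a$.

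I do not expect a real obstacle here: all the substance is in Lemma \ref{lem:continuity3}, whose proof reduces the general CP case to the E-semigroup case via Bhat's dilation theorem and then disposes of E-semigroups by the Schwartz-inequality square-norm computation. The one point I would be careful to state correctly is that Lemma \ref{lem:continuity3} must be invoked in its \emph{joint} form — continuity in $(t,a)$ simultaneously — rather than merely variable-by-variable, precisely because in the second application the SOT-convergent net $b_\lambda = S_{t_\lambda}(a_\lambda)$ must be inserted into $R$ while $s_\lambda$ also moves; that joint continuity is exactly what the lemma supplies.
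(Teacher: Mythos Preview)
Your proposal is correct and is exactly the argument the paper has in mind: the paper's proof consists solely of the line ``From the above lemma one immediately obtains,'' and your two-fold application of Lemma~\ref{lem:continuity3}---first to $S$ to upgrade $(t_\lambda,a_\lambda)\to(t,a)$ to $S_{t_\lambda}(a_\lambda)\to S_t(a)$ in SOT, then to $R$ with the moving argument $b_\lambda=S_{t_\lambda}(a_\lambda)$---is precisely how one unpacks that sentence. Your emphasis on needing the \emph{joint} form of the lemma in the second step is the only nontrivial point, and you handle it correctly.
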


\subsection{The existence of an E$_0$-dilation}
We have now gathered enough tools to prove our main
result.
\begin{theorem}\label{thm:scudil}
Let $\{R_t\}_{t\geq0}$ and $\{S_t\}_{t\geq0}$ be two strongly
commuting CP$_0$-semigroups on a von Neumann algebra $\cM\subseteq
B(H)$, where $H$ is a separable Hilbert space. Then the two
parameter CP$_0$-semigroup $P$ defined by
$$P_{(s,t)} := R_s S_t$$
has a minimal E$_0$-dilation $(K,u,\cR,\alpha)$. Moreover, $K$ is separable.
\end{theorem}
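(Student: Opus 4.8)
The plan is to run the two-step Muhly--Solel program of Section \ref{sec:MS}, whose two heavy ingredients --- the construction of a product system over $\Rpt$ that represents the semigroup, and the existence of an isometric dilation of the resulting representation --- are already available. Write $\{R_t\}_{t\geq0}$, $\{S_t\}_{t\geq0}$ for the two strongly commuting CP$_0$-semigroups and $P_{(s,t)}=R_s\circ S_t$; by Lemma \ref{lem:continuity1} this is a two-parameter CP$_0$-semigroup, and by Proposition \ref{prop:continuity4} it is jointly point-strongly continuous on $\Rpt\times\cM$. Applying Theorem \ref{thm:rep_SC} to the pair $R,S$ yields a product system $X=\{X(s,t)\}$ of (unital) $W^*$-correspondences over $\cM'$ and a completely contractive covariant representation $(\sigma,T)$ of $X$ on $H$ with $\sigma={\bf id}_{\cM'}$, such that $\tT_{(s,t)}(I_{X(s,t)}\otimes a)\tT_{(s,t)}^*=P_{(s,t)}(a)$ for all $a\in\cM$; since $P$ is unital, $T$ is fully coisometric.

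Next I would feed $(\sigma,T)$ into Theorem \ref{thm:isoDilFC}, observing that $\cS=\Rpt$ meets its hypothesis (both $s_+$ and $s_-$ lie in $\Rpt$): this produces a Hilbert space $K\supseteq H$ and a minimal, fully coisometric and isometric representation $(\rho,V)$ of $X$ on $K$, with $\rho$ unital and normal, such that $P_H$ commutes with $\rho(\cM')$ with $\rho(d)P_H=\sigma(d)P_H$, and $P_HV_{(s,t)}(x)|_H=T_{(s,t)}(x)$, $P_HV_{(s,t)}(x)|_{K\ominus H}=0$. Put $\cR=\rho(\cM')'$, let $u:H\to K$ be the inclusion, and define $\alpha_{(s,t)}(b)=\tV_{(s,t)}(I_{X(s,t)}\otimes b)\tV_{(s,t)}^*$ for $b\in\cR$. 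By Lemma \ref{lem:semigroup}, applied to the isometric, fully coisometric, unital, normal representation $(\rho,V)$, the family $\alpha=\{\alpha_{(s,t)}\}$ is a semigroup of unital normal $*$-endomorphisms of $\cR$. The reducing property of $H$ under $\rho(\cM')$ gives $u^*\cR u=\cM$ (as in \cite{MS02}, using minimality of $(\rho,V)$), and the intertwining identity $P_{(s,t)}(u^*bu)=u^*\alpha_{(s,t)}(b)u$ follows from properties (i)--(iii) of the isometric dilation together with Theorem \ref{thm:rep_SC}. Hence $(K,u,\cR,\alpha)$ is an $e_0$-dilation of $P$.

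The remaining, and I expect hardest, point is to upgrade this $e_0$-dilation to a genuine E$_0$-dilation, i.e. to show $(s,t)\mapsto\langle\alpha_{(s,t)}(b)\xi,\eta\rangle$ is continuous for all $b\in\cR$, $\xi,\eta\in K$. Since each $\alpha_{(s,t)}$ is a unital $*$-endomorphism (hence a contraction) and $\alpha$ is a semigroup, a routine approximation reduces this to $b$ in a $\sigma$-weakly dense $*$-subalgebra of $\cR$ and $\xi,\eta$ in a total subset of $K$. By minimality, $\cR=W^*\big(\bigcup_{(s,t)}\alpha_{(s,t)}(u\cM u^*)\big)$ and $K=\bigvee\{V_{(s,t)}(x)h\}$, so it suffices to treat $b$ a finite product of elements $\alpha_{(s_i,t_i)}(ua_iu^*)$ and $\xi,\eta$ of the form $V_{(s_0,t_0)}(x)h$; pushing the outer $\alpha_{(s,t)}$ through by the semigroup law and using coinvariance of $P_H$, each such matrix coefficient becomes a matrix coefficient of $P$ at translated parameters paired against the fixed vectors $x$, which vary continuously because $P$ is jointly point-strongly continuous (Proposition \ref{prop:continuity4}) and because $X$ carries the continuous structure inherited from the one-parameter Muhly--Solel product systems underlying $E$ and $F$. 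Once point-weak continuity of $\alpha$ is in hand, Lemma \ref{lem:continuity3} gives joint point-strong continuity automatically. This transfer of continuity from the compression back to the whole dilation is the delicate step historically (it is exactly where point-strong continuity of CP-semigroups enters in \cite{MS02}), and I expect it to be the main obstacle here too.

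Finally, minimality and separability. Replacing $\cR$ by $W^*\big(\bigcup_{(s,t)}\alpha_{(s,t)}(u\cM u^*)\big)$ and then removing any multiplicative coinvariant projection dominating $uu^*$ yields a minimal dilation in the sense of Definition \ref{def:min_dil_Arv}; these reductions manifestly preserve the E$_0$-dilation property. For separability of $K$: each Hilbert space entering the Muhly--Solel construction of $E$ and $F$ is separable (an iterated minimal Stinespring space of normal CP maps built over the separable $H$), and the partitions with rational division points are cofinal, so every fibre Hilbert space in the construction is separable; using joint point-strong continuity of $P$ one may further restrict, when generating $K=\bigvee\{V_{(s,t)}(x)h\}$, the parameters to $\Rpt\cap\mathbb{Q}^2$, the vectors $x$ to a countable subset, and $h$ to a countable dense subset of $H$, whence $K$ is separable. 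The one-parameter analogue of this separability argument is in \cite{MS02}.
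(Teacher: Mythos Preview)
Your overall architecture matches the paper exactly: Theorem \ref{thm:rep_SC} for the product system and fully coisometric representation, Theorem \ref{thm:isoDilFC} for the minimal isometric dilation, Lemma \ref{lem:semigroup} for the endomorphism semigroup, and the verification of (\ref{eq:CPdef_dil1}) via items (ii)--(iii) of the dilation. The paper in fact first works in $\widetilde{\cR}=\rho(\cM')'$ and only then restricts to $\cR=W^*\big(\bigcup\widetilde{\alpha}_t(u\cM u^*)\big)$; your claim that these coincide ``by minimality'' is not justified, but since you later replace $\cR$ anyway this is harmless.

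The real gap is in your continuity paragraph. The sentence ``a routine approximation reduces this to $b$ in a $\sigma$-weakly dense $*$-subalgebra of $\cR$'' is precisely the step that is \emph{not} routine: if $b_n\to b$ only $\sigma$-weakly, normality of each $\alpha_{(s,t)}$ gives $\alpha_{(s,t)}(b_n)\to\alpha_{(s,t)}(b)$ pointwise in $(s,t)$, not uniformly, so continuity does not pass to the limit. The paper handles this as follows. First (Step~2) it proves the structural identity
\[
K=\bigvee\alpha_{(s_m,t_n)}(\cM)\cdots\alpha_{(s_m,t_1)}(\cM)\alpha_{(s_m,0)}(\cM)\cdots\alpha_{(s_1,0)}(\cM)H
\]
via a detailed computation with the Muhly--Solel fibers $\cE_r,\cF_r$; this formula is what drives everything that follows and is absent from your sketch. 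Using it, the paper reduces to the one-parameter pieces $\beta_t=\alpha_{(t,0)}$ and $\gamma_t=\alpha_{(0,t)}$ (via Lemma \ref{lem:continuity1}), shows directly from the displayed identity and the joint point-strong continuity of $P$ that $\beta_t(a)\to a$ weakly for $a\in\cM$, and upgrades to right strong continuity on the norm-closed algebra $\cA=C^*\big(\bigcup\alpha_t(\cM)\big)$ via Lemma \ref{lem:continuity2}. Separability of $K$ is then proved \emph{before} full continuity (using the displayed identity together with right strong continuity on $\cA$ to restrict parameters to $\mathbb{Q}_+$), and only then does one pass from $\cA$ to its weak closure $\cR$: weak approximation gives that $t\mapsto\langle\beta_t(b)k_1,k_2\rangle$ is a pointwise limit of right-continuous functions, hence measurable, and separability of $K$ allows the invocation of \cite[Proposition 2.3.1]{Arv03}. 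Your appeal to ``the continuous structure inherited from the one-parameter Muhly--Solel product systems'' plays no role in the paper's argument; continuity comes entirely from $P$. Likewise your separability argument via separability of individual fibers is not what the paper does and would still need the continuity-in-parameters input to restrict to countably many $(s,t)$.
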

\begin{proof}
We split the proof into the following steps:
\begin{enumerate}
\item Existence of a $*$-endomorphic dilation $(K,u,\cR,\alpha)$ for $(\cM,P)$.
\item Minimality of the dilation.
\item Continuity of $\alpha$ on $\cM$.
\item Separability of $K$.
\item Continuity of $\alpha$.
\end{enumerate}

{\bf Step 1: Existence of a $*$-endomorphic dilation}

Let $X$ and $T$ be the product system (of $\cM'$-correspondences) and the fully coisometric
product system representation given by Theorem \ref{thm:rep_SC}. By Theorem \ref{thm:isoDilFC}, there is
a covariant isometric and fully coisometric representation $(\rho,V)$ of $X$ on some Hilbert space $K \supseteq H$, with $\rho$ unital.
Put $\widetilde{\cR} = \rho(M')'$, and let $u$ be the isometric inclusion $H \rightarrow K$. Note that, since $u H$ reduces $\rho$, $p:= u u^* \in \widetilde{\cR}$. We define a semigroup
$\widetilde{\alpha}=\{\widetilde{\alpha}_s\}_{s\in\Rpt}$ by
\bes
\widetilde{\alpha}_s(b) = \widetilde{V}_s(I \otimes b)\widetilde{V}_s^* \,\, , \,\, s\in\Rpt ,
b\in \widetilde{\cR}.
\ees
By Lemma \ref{lem:semigroup} above, $\widetilde{\alpha}$ is a semigroup of unital, normal
$*$-endomorphisms of $\widetilde{\cR}$. The (first part of the) proof of Theorem
2.24 in \cite{MS02} works in this situation as well, and shows
that
\be\label{eq:M=pRp}
\cM = u^* \widetilde{\cR} u
\ee
and that
\be\label{eq:dilationeq}
P_s (u^* b u) = u^* \widetilde{\alpha}_s(b) u
\ee
for all
$b\in \widetilde{\cR}$, $s \in \Rpt$. Note that we \emph{cannot} use that theorem directly, because
for fixed $s\in\cS$, $X(s)$ is not necessarily the identity representation of $P_s$. For the sake of completeness, we repeat the argument (with some changes).

By Theorem \ref{thm:isoDilFC},
for all $a \in \cM'$, $u^* \rho(a) u = \sigma (a)$, and by definition, $\sigma(a) = a$, thus
\bes\label{eq:corner}
u^* \widetilde{\cR} u = u^*\rho(\cM')' u = (u^*\rho(\cM')u)' = (\cM')' = \cM,
\ees
where the second equality follows from the fact that $uH$ reduces $\rho(\cM')$. This establishes (\ref{eq:M=pRp}), which allows us to make the identification $\cM = p \widetilde{\cR} p \subseteq \widetilde{\cR}$. To obtain (\ref{eq:dilationeq}), we fix $s \in \Rpt$ and $b \in \widetilde{\cR}$, and we compute
\begin{align*}
P_s(u^* b u)
&= \tT_s(I \otimes u^* bu) \tT_s^*  \\
(*)&= u^*\tV_s(I \otimes u)(I \otimes u^* bu)(I \otimes u^*) \tV_s^* u  \\
(**)&= u^*\tV_s(I \otimes  b) \tV_s^* u  \\
&= u^* \widetilde{\alpha}_s (b) u .
\end{align*}
The equalities marked by (*) and (**) are justified by items \ref{it:dilation1} and \ref{it:V*2} of Theorem \ref{thm:isoDilFC}, respectively. Equation (\ref{eq:dilationeq}) implies that $p$ is a coinvariant
projection. Since $\widetilde{\alpha}$ is unital, we have $\widetilde{\alpha}_t(p) \geq p$ for all $t \in \Rpt$, that is, $p$  is an increasing projection.

Even though we started out with a minimal isometric representation
$V$ of $T$, we cannot show that $\widetilde{\alpha}$ is a
minimal dilation of $P$. We define
\be\label{eq:redefine R}
\cR =
W^* \left(\bigcup_{t\in\Rpt}\widetilde{\alpha}_t(\cM) \right) .
\ee
This von Neumann algebra is invariant under $\widetilde{\alpha}$, and we denote $\alpha = \widetilde{\alpha}|_{\cR}$.
Now it is immediate that $(p,\cR,\alpha)$ is a $*$-endomorphic dilation of $(\cM,P)$. Indeed, for all $b\in\cR$ and all $t \in\Rpt$,
$$p \alpha_t (b) p = p \widetilde{\alpha}_t(b) p = P_t (pbp) ,$$
because $(p,\widetilde{\cR},\widetilde{\alpha})$ is a dilation of $(\cM,P)$. It is also clear that $\cM = p\cR p$.

The only issue left to handle is the continuity of $\alpha$. We now define two one-parameter semigroups on $\cR$: $\beta =
\{\beta_t\}_{t\geq 0}$ and $\gamma = \{\gamma_t\}_{t\geq 0}$ by $\beta_t = {\alpha}_{(t,0)}$ and $\gamma_t =
{\alpha}_{(0,t)}$. Clearly, $\beta$ and $\gamma$ are semigroups of normal, unital $*$-endomorphisms of $\cR$. If we show that $K$ is separable, then by Lemma \ref{lem:continuity1}, once we show that $\beta$ and $\gamma$ are E$_0$-semigroups -- that is, possess the required weak continuity -- then we have shown that $\alpha$ is an E$_0$-semigroup. The rest of the proof is dedicated to showing that $\beta$ and $\gamma$ are E$_0$-semigroups and that $K$ is separable. But before we do that, we must show that the dilation is minimal, and, in fact, a bit more.

{\bf Step 2: Minimality of the dilation}

What we really need to prove is that
\be\label{eq:partition1}
K = \bigvee \alpha_{(s_m,t_n)}(\cM) \alpha_{(s_m,t_{n-1})}(\cM) \cdots \alpha_{(s_m,t_1)}(\cM) \alpha_{(s_{m},0)}(\cM) \alpha_{(s_{m-1},0)}(\cM) \cdots \alpha_{(s_1,0)}(\cM) H
\ee
where in the right-hand side of the above expression we run over all strictly positive pairs $(s,t)\in\Rpt$ and all partitions $\{0=s_0 < \ldots < s_m=s\}$ and $\{0=t_0 < \ldots < t_n = t \}$ of $[0,s]$ and $[0,t]$. We shall also need an analog of (\ref{eq:partition1}) with the roles of the first and second ``time variables" of $\alpha$ replaced, but since the proof is very similar we shall not prove it.

Recall that
$$K = \bigvee \left\{V_{(s,t)}(X(s,t))H : (s,t)\in\Rpt \right\} .$$
Thus, it suffices to show that for a fixed $(s,t)\in\Rpt$,
\be\label{eq:partition2}
V_{(s,t)}(X(s,t))H = \bigvee \alpha_{(s_m,t_n)}(\cM) \cdots \alpha_{(s_m,t_1)}(\cM) \alpha_{(s_{m},0)}(\cM) \alpha_{(s_{m-1},0)}(\cM) \cdots \alpha_{(s_1,0)}(\cM) H
\ee
where in the right-hand side of the above expression we run over all  partitions $\{0=s_0 < \ldots < s_m=s\}$ and $\{0=t_0 < \ldots < t_n = t \}$ of $[0,s]$ and $[0,t]$.

To show that we can consider only $s$ and $t$ strictly positive, we note that if $u,v \in\Rpt$, then
\begin{align*}
V_u(X(u))H
&= \widetilde{V}_u(I_{X(u)}\otimes \widetilde{V}_v)(I_{X(u)}\otimes \widetilde{V}_v^*)(X(u) \otimes H) \\
&= \widetilde{V}_u(I_{X(u)}\otimes \widetilde{V}_v)(I_{X(u)}\otimes \widetilde{T}_v^*)(X(u) \otimes H) \\
&= \widetilde{V}_{u+v}(X(u)\otimes \widetilde{T}_v^* H) \\
&\subseteq V_{u+v}(X(u+v))H .
\end{align*}

We now turn to establish (\ref{eq:partition2}). Recall the notation and constructions of Sections \ref{subsec:des_MS} and \ref{subsec:repvia}:
$$X(s,t) := E(s) \otimes F(t) ,$$
and
$$T_{(s,t)}(\xi \otimes \eta) := T_s^E (\xi) T_t^F (\eta) ,$$
where $(E,T^E)$ and $(F,T^F)$ are the product systems and representations  representing $R$ and $S$ via Muhly and Solel's construction as described in \ref{subsec:des_MS}. By Lemma 4.3 (2) of \cite{MS02}, for all $r > 0$,
$$\bigvee \{(I_{E(r)} \otimes a ) (\widetilde{T}_r^E)^* h : a \in \cM, h \in H \} = \cE_r \otimes_{\cM'} H,$$
where $\cE_r = \cL_M (H,H^R_\mfp)$ with the partition $\mfp = \{0=r_0 < r_1 = r\}$. Similarly,
$$\bigvee \{(I_{F(r)} \otimes a ) (\widetilde{T}_r^F)^* h : a \in \cM, h \in H \} = \cF_r \otimes_{\cM'} H .$$
Fix $s,t>0$. Under the obvious identifications,  if we go over all the partitions $\{0=s_0 < \ldots < s_m=s\}$ and $\{0=t_0 < \ldots < t_n = t \}$ of $[0,s]$ and $[0,t]$, the collection of correspondences
$$\cE_{s_1} \otimes \cE_{s_2-s_1} \otimes \cdots \otimes \cE_{s_m-s_{m-1}} \otimes \cF_{t_1} \otimes \cdots \otimes \cF_{t_n - t_{n-1}}$$
is dense in $X(s,t)$. Using Lemma 4.3 (2) of \cite{MS02} repeatedly, we obtain
\begin{align*}
& \alpha_{(s_m,t_n)}(\cM) \cdots \alpha_{(s_m,t_1)}(\cM) \alpha_{(s_{m},0)}(\cM) \alpha_{(s_{m-1},0)}(\cM) \cdots \alpha_{(s_1,0)}(\cM) H \\
&= \alpha_{(s_m,t_n)}(\cM) \cdots \widetilde{V}_{(s_1,0)}(I_{(s_1,0)} \otimes \cM)\widetilde{V}_{(s_1,0)}^* H \\
&= \alpha_{(s_m,t_n)}(\cM) \cdots \widetilde{V}_{(s_1,0)} (I_{(s_1,0)} \otimes \cM)(\widetilde{T}^E_{s_1})^* H \\
&= \alpha_{(s_m,t_n)}(\cM) \cdots \widetilde{V}_{(s_2,0)}(I_{(s_2,0)} \otimes \cM)\widetilde{V}_{(s_2,0)}^* \widetilde{V}_{(s_1,0)} (\cE_{s_1} \otimes H).
\end{align*}
But
$$\widetilde{V}_{(s_2,0)}^* \widetilde{V}_{(s_1,0)} = (I_{(s_1,0)}\otimes \widetilde{V}_{(s_2-s_1,0)}^*)\widetilde{V}_{(s_1,0)}^*\widetilde{V}_{(s_1,0)} = (I_{(s_1,0)}\otimes \widetilde{V}_{(s_2-s_1,0)}^*) ,$$
so we get
\begin{align*}
& \alpha_{(s_m,t_n)}(\cM) \cdots \alpha_{(s_m,t_1)}(\cM) \alpha_{(s_{m},0)}(\cM) \alpha_{(s_{m-1},0)}(\cM) \cdots \alpha_{(s_1,0)}(\cM) H \\
&= \alpha_{(s_m,t_n)}(\cM) \cdots \widetilde{V}_{(s_2,0)}(I_{(s_2,0)} \otimes \cM)(I_{(s_1,0)}\otimes \widetilde{V}_{(s_2-s_1,0)}^*) (\cE_{s_1} \otimes H) \\
&= \alpha_{(s_m,t_n)}(\cM) \cdots \widetilde{V}_{(s_2,0)}(\cE_{s_1} \otimes \cE_{s_2-s_1} \otimes H).
\end{align*}
Continuing this way, we see that
\begin{align*}
& \alpha_{(s_m,t_n)}(\cM) \cdots \alpha_{(s_m,t_1)}(\cM) \alpha_{(s_{m},0)}(\cM) \alpha_{(s_{m-1},0)}(\cM) \cdots \alpha_{(s_1,0)}(\cM) H \\
&= {V}_{(s,t)} (\cE_{s_1} \otimes \cE_{s_2-s_1} \otimes \cdots \otimes \cE_{s_m-s_{m-1}} \otimes \cF_{t_1} \otimes \cdots \otimes \cF_{t_n - t_{n-1}})H .
\end{align*}
Since this computation works for any partition of $[0,s]$ and $[0,t]$, we have (\ref{eq:partition2}). This, in turn, implies (\ref{eq:partition1}), which is what we have been after.

Now it is a simple matter to show that $(p,\cR,\alpha)$ is a minimal dilation of $(\cM,P)$. First, note that by (\ref{eq:partition1})
$$K = \left[\cR p K \right].$$
In light of (\ref{eq:redefine R}), Definitions \ref{def:min_dil} and \ref{def:min_dil_Arv} and Proposition
\ref{prop:equiv_def_min}, we have to show that the central support of $p$ in $\cR$ is $I_K$. But this follows by a standard (and short) argument, which we omit.

{\bf Step 3: Continuity of $\beta$ and $\gamma$ on $\cM$}

We shall now show that function $\Rp \ni t \mapsto \beta_t(a)$ is strongly continuous from the right for each $a \in \cA := C^* \left(\bigcup_{t\in\Rpt} {\alpha}_t(M)\right)$. Of course, the same is true for $\gamma$ as well.

Let $k_1 = \sum_i {\alpha}_{s_i}(m_i)h_i$ and $k_2 = \sum_j
{\alpha}_{t_j}(n_j)g_j$  be in $K$, where $s_i = (s_i^1, s_i^2) , t_j = (t_j^1, t_j^2) \in \Rpt$,
$m_i,n_j \in \cR$ and $h_i,g_j \in H$. By (\ref{eq:partition1}), we may consider only $s_i^1, t_j^1 >0$. Take $a \in \cM$ and $t > 0$. For the following computations, we may assume that $k_1$ and $k_2$ are
given by finite sums, and we take $t <
\min\{t_j^1,s_i^1\}_{i,j}$. We will abuse notation a bit by denoting $(t,0)$ by $t$. Now compute:
\begin{align*}
\langle{\beta}_t(a)k_1,k_2\rangle
&= \sum_{i,j}\langle{\alpha}_t(a){\alpha}_{s_i}(m_i)h_i,{\alpha}_{t_j}(n_j)g_j\rangle \\
&= \sum_{i,j}\langle {\alpha}_{t_j}(n_j^*) {\alpha}_{t}(a){\alpha}_{s_i}(m_i)h_i,g_j\rangle \\
&= \sum_{i,j}\langle {\alpha}_{t}\left({\alpha}_{t_j-t}(n_j^*) a {\alpha}_{s_i-t}(m_i)\right)h_i,g_j\rangle \\
&= \sum_{i,j}\langle P_{t}\left(p{\alpha}_{t_j-t}(n_j^*) pap{\alpha}_{s_i-t}(m_i)p\right)h_i,g_j\rangle \\
&= \sum_{i,j}\langle P_{t}\left(P_{t_j-t}(p n_j^* p) a P_{s_i-t}(p m_i p)\right)h_i,g_j\rangle \\
&\stackrel{t\rightarrow 0}{\longrightarrow} \sum_{i,j}\langle P_{t_j}(p n_j^* p) a P_{s_i}(p m_i p) h_i,g_j\rangle \\
&= \sum_{i,j}\langle a {\alpha}_{s_i}(m_i)h_i,{\alpha}_{t_j}(n_j) g_j\rangle \\
&= \langle a k_1,k_2\rangle ,
\end{align*}
where we have made use of the joint strong continuity of $P$ (Proposition \ref{prop:continuity4}). This implies that for all $a\in \cM$,  ${\alpha}_t(a) \rightarrow a$ weakly as $t
\rightarrow 0$.
It follows from Lemma \ref{lem:continuity2} that $\beta$ is strongly right continuous on $\bigcup_{t\in\Rpt} {\alpha}_t(\cM)$, whence it is also strongly right continuous on $\cA := C^* \left(\bigcup_{t\in\Rpt} {\alpha}_t(M)\right)$.

{\bf Step 4: Separability of $K$}

As we have already noted in Step 2, from (\ref{eq:partition1}) it follows that
$$K = \bigvee\{\alpha_{u_1}(a_1) \cdots \alpha_{u_k}(a_k) h: u_i \in \Rpt, a_i \in \cM, h \in H \}.$$
We define
$$K_0 = \bigvee\{\gamma_{t_1}(\beta_{s_1}(a_1)) \cdots \gamma_{t_k}(\beta_{s_k}(a_k)) h: s_i,t_i \in \mathbb{Q}_+, a_i \in \cM, h \in H \},$$
and
$$K_1 = \bigvee\{\gamma_{t_1}(\beta_{s_1}(a_1)) \cdots \gamma_{t_k}(\beta_{s_k}(a_k)) h: s_i \in \Rp,t_i \in \mathbb{Q}_+, a_i \in \cM, h \in H \}.$$
$K_0$ is clearly separable. Because of the normality of $\gamma$, the strong right continuity of $\beta$ on $\cM$ and the fact that multiplication is strongly continuous on bounded subsets of $\cR$, we can assert that $K_0 = K_1$, thus $K_1$ is separable. Now from the strong right continuity of $\gamma$ on $\cA$ and the continuity of multiplication, we see that $K = K_1$, whence it is separable.

{\bf Step 5: Continuity of $\alpha$}

Recall that all that we have left to show is that $\beta$ and $\gamma$ possess the desired weak continuity. We shall concentrate on $\beta$.

A short summary of the situation: we have a semigroup $\beta$ of normal, unital $*$-endomorphisms defined on a von Neumann algebra $\cR$ (which acts on a \emph{separable} Hilbert space $K$), and there is a weakly dense $C^*$-algebra $\cA \subseteq \cR$ such that for all $a\in \cA, k\in K$, the function $\Rp \ni \tau \mapsto \beta_\tau (a) k  \in K$ is right continuous.
From this, we want to conclude that for all $b \in \cR$, and all $k_1, k_2 \in K$, the map
$$ \tau \mapsto \langle \beta_\tau (b) k_1, k_2 \rangle$$
is continuous. This problem was already handled by Arveson in
\cite{Arv03} and by Muhly and Solel in \cite{MS02}. For
completeness, we give some shortened variant of their arguments.

For every $b \in \cR$, there is a sequence $\{a_n\}$ in $\cA$
weakly converging to $b$. Thus, for every $b\in \cR$ and every
$k_1, k_2, \in K$, the function $ \tau \mapsto \langle \beta_\tau
(b) k_1, k_2 \rangle$ is the pointwise limit of the sequence of
right continuous functions $ \tau \mapsto \langle \beta_\tau (a_n)
k_1, k_2 \rangle$,  so it is measurable. It now follows from
\cite[Proposition 2.3.1]{Arv03} (which, in turn, follows from
well known results in the theory of operator semigroups) that
$\beta$ has the continuity that makes it into an E$_0$-semigroup.
\end{proof}


Loosely speaking, the whole point of dilation theory is to present a certain object as part of a simpler, better understood object. Theorem \ref{thm:scudil} tells us that under the strong-commutativity assumption, a two-parameter CP$_0$-semigroup can be dilated to a two parameter E$_0$-semigroup. Certainly, E$_0$-semigroups are a very special case of CP$_0$-semigroups, so we have indeed made the situation simpler. But did we really? Perhaps $P$ (the CP$_0$-semigroup) was acting on a very simple kind of von Neumann algebra, but now $\alpha$ (the dilation) is acting on a very complicated one? Actually, we did not say much about the structure of $\cR$ (the dilating algebra). In this context, we have the following partial, but quite satisfying, result.
\begin{proposition}
If $\cM = B(H)$, then $\cR = B(K)$.
\end{proposition}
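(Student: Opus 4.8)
The plan is to exploit the structure of $\cR$ established in the proof of Theorem~\ref{thm:scudil}: writing $p := uu^* \in \cR$, we have $p\cR p = \cM$, and by minimality (Step 2 of that proof) the central support $c_\cR(p)$ of $p$ in $\cR$ equals $I_K$. Under the hypothesis $\cM = B(H)$, and identifying $pK$ with $H$ via $u$, this says precisely that $p\cR p = B(pK)$ while $c_\cR(p) = I_K$; the goal is to deduce $\cR = B(K)$. I would reduce the whole argument to two elementary observations about this corner situation, followed by one standard von Neumann algebra fact. (We may assume $H \neq \{0\}$, the remaining case being trivial.)

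First I would check that $\cR$ is a factor. Let $z \in \cR$ be a central projection. Then $zp = pzp$ lies in $p\cR p = B(pK)$ and is central there: for $pxp \in p\cR p$ one has $(zp)(pxp) = zpxp = pzxp = pxzp = (pxp)(zp)$ since $z$ is central in $\cR$. As $B(pK)$ is a factor, $zp \in \{0,p\}$. If $zp = p$ then $p \le z$, so $I_K = c_\cR(p) \le z$ and $z = I_K$; if $zp = 0$ then $p \le I_K - z$, so $I_K = c_\cR(p) \le I_K - z$ and $z = 0$. Hence $\cR$ has trivial centre. Next I would exhibit a rank-one projection of $B(K)$ inside $\cR$: since $p\cR p = B(pK)$, choose a rank-one projection $e$ of $B(pK)$ and regard it as an element of $\cR$ with $e \le p$. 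From $e \le p$ we get $eK = e(pK)$, which is one-dimensional, so $e$ is a rank-one projection of $B(K)$ lying in $\cR$.

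Finally I would invoke the fact that a \emph{factor} containing a rank-one projection of $B(K)$ must be all of $B(K)$. Concretely: since $\cR$ is a factor and $e \neq 0$, the central support of $e$ in $\cR$ is $I_K$; the map $\cR' \to eB(K)e = \mathbb{C}e$, $x \mapsto ex = xe$, is a $*$-homomorphism, and it is injective, because if $xe = 0$ then $e$ is orthogonal to the right support projection $r(x) \in \cR'$ of $x$, whence $c_\cR(e)\,r(x) = 0$, i.e. $r(x) = 0$, i.e. $x = 0$. Therefore $\cR' = \mathbb{C}I_K$ and $\cR = \cR'' = B(K)$. I do not anticipate any genuine obstacle here; the only point to state with some care is the reduction identity together with this injectivity argument, which is what upgrades "$\cR$ is a factor with a rank-one projection of $B(K)$" to "$\cR = B(K)$".
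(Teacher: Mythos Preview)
Your argument is correct. Both you and the paper start from the same ingredient established in Step~2 of the proof of Theorem~\ref{thm:scudil}: with $p = uu^*$ one has $p\cR p = B(pK)$ and the central support of $p$ in $\cR$ is $I_K$ (equivalently, $K = [\cR H]$). From there, however, the two proofs diverge. The paper works directly with $\cR'$: it takes a projection $q\in\cR'$, observes that $qp = pqp$ is a projection in $B(H)$ commuting with all of $B(H)$, hence $qp\in\{0,p\}$, and then uses the explicit spanning set $K = \bigvee \alpha_{t_1}(m_1)\cdots\alpha_{t_k}(m_k)H$ from equation~(\ref{eq:partition1}) to conclude immediately that $q\in\{0,I_K\}$. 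Your proof instead first establishes that $\cR$ is a factor (via the same dichotomy, but applied to central projections), then produces a rank-one projection of $B(K)$ inside $\cR$, and finally invokes the general principle that a factor containing a rank-one projection of the ambient $B(K)$ equals $B(K)$.

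The paper's route is shorter and entirely elementary, needing only the spanning property and no structure theory. Your route is more abstract and has the virtue of isolating a reusable fact (factor with a minimal projection of $B(K)$ is $B(K)$), but it requires the auxiliary step with the rank-one projection and the standard---though not quite one-line---lemma relating $pq=0$ for $p\in\cR$, $q\in\cR'$ to $c_\cR(p)q=0$. Both are perfectly valid; the paper's argument simply bypasses the detour through Step~2 and Step~3 by handling $\cR'$ in one shot.
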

\begin{proof}
As before, denote the orthogonal projection of $K$ onto $H$ by $p$. Let $q\in B(K)$ be a projection in $\cR'$. In particular, $pq = qp = pqp$, so $qp$ is a projection
in $B(H)$ which commutes with $B(H)$, thus $qp$ is either $0$ or $I_H$.

If it is $0$ then for all $t_i\in\Rpt, m_i\in \cM, h \in H$,
$$q \alpha_{t_1}(m_1)\cdots\alpha_{t_k}(m_k)h = \alpha_{t_1}(m_1)\cdots\alpha_{t_k}(m_k) qp h = 0 ,$$
so $qK = 0$ and $q=0$.

If $qp = I_H$ then for all $0<t_i\in\Rpt, m_i\in \cM, h \in H$,
\begin{align*}
q \alpha_{t_1}(m_1)\cdots\alpha_{t_k}(m_k)h
&= \alpha_{t_1}(m_1)\cdots\alpha_{t_k}(m_k) qp h \\
&= \alpha_{t_1}(m_1)\cdots\alpha_{t_k}(m_k)h ,
\end{align*}
so $qK = K$ and $q=I_K$. We see that the only projections in $\cR'$ are $0$ and $I_K$, so
$\cR' = \mathbb{C}\cdot I_K$, hence $\cR = \cR'' = B(K)$.
\end{proof}

\section{The type of the dynamics arising in the E$_0$-dilation of a two-parameter CP$_0$-semigroup}\label{sec:type}

\begin{definition}
Let $\alpha = \{\alpha_t\}_{t\geq 0}$ and $\beta = \{\beta_t\}_{t \geq 0}$ be two E$_0$-semigroups acting on $B(H)$. An \emph{$\alpha$-cocycle} is a strongly continuous family of unitaries $\{U_t \}_{t\geq 0}$ such that for all $s,t \geq 0$,
\bes
U_{s+t} = U_s \alpha_t(U_t).
\ees
$\beta$ is said to be \emph{cocycle conjugate to $\alpha$} if there is an $\alpha$-cocycle $\{U_t \}_{t\geq 0}$ such that $\beta_t(a) = U_t \alpha_t (a) U_t^*$ for all $a \in B(H), t \geq 0$.
If $\beta$ acts on $B(K)$, where $K$ is a different Hilbert space, then $\alpha$ is said to be cocycle conjugate to $\beta$ if there exists a $*$-automorphism $\theta: B(H) \rightarrow B(K)$ such that $\theta^{-1} \circ \beta \circ \theta$ is cocycle conjugate to $\alpha$.
\end{definition}
Cocycle conjugacy is an equivalence relation, and is sometimes referred to as \emph{cocycle equivalence}.

E$_0$-semigroups can be classified - up to cocycle conjugacy - into 3 ``types": type I, type II and type III. Type I E$_0$-semigroups are the best understood, and include automorphism semigroups. There is a complete classification of type I E$_0$-semigroups, and it is known that if $\alpha$ is a type I E$_0$-semigroup that is not a semigroup of automorphisms, then there is a $d \in \{1,2,\ldots, \infty\}$ such that $\alpha$ is cocycle conjugate to the CCR flow of index $d$. See \cite{Arv03} for the whole story.

In the mid 1990's Bhat proved the following result, known today as ``Bhat's Theorem" \cite{Bhat96}:
\begin{theorem}
\emph{{\bf (Bhat).}} Every CP$_0$-semigroup has a unique minimal
E$_0$-dilation.
\end{theorem}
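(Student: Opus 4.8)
The plan is to run, for a single one-parameter semigroup, the Muhly--Solel strategy already outlined in Section~\ref{sec:MS}, which reduces the construction of an E$_0$-dilation of a CP$_0$-semigroup to the construction of an isometric dilation of a product system representation. Let $\Theta = \{\Theta_t\}_{t\ge 0}$ be a CP$_0$-semigroup on a von Neumann algebra $\cM\subseteq B(H)$. First I would carry out the inductive-limit construction of Section~\ref{subsec:des_MS} to obtain the Arveson--Muhly--Solel product system $X = \{E(t)\}_{t\ge 0}$ of $W^*$-correspondences over $\cM'$ and its identity representation $(\sigma,T)$, with $\sigma = {\bf id}_{\cM'}$ and $T_t(X)=\iota_t^*\circ X$; by \cite[Theorem~3.9]{MS02} this satisfies $\Theta_t(a)=\widetilde{T}_t(I_{E(t)}\otimes a)\widetilde{T}_t^*$ for all $a\in\cM$, and since each $\Theta_t$ is unital, $(\sigma,T)$ is a fully coisometric representation of $W^*$-correspondences with $\sigma$ unital and nondegenerate.

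The second step is to dilate $(\sigma,T)$ to an isometric representation, and for $\cS=\Rp$ this is exactly what Theorem~\ref{thm:isoDilFC} provides: there are a Hilbert space $K\supseteq H$, an isometry $u\colon H\to K$, and a minimal, fully coisometric and isometric representation $(\rho,V)$ of $X$ on $K$ with $\rho$ unital and normal, such that $uH$ reduces $\rho$, $\rho(a)|_{H}=\sigma(a)$, $P_H V_t(x)|_H = T_t(x)$, and $P_H V_t(x)|_{K\ominus H}=0$. Set $\widetilde{\cR}=\rho(\cM')'$ and $\alpha_t(b)=\widetilde{V}_t(I_{E(t)}\otimes b)\widetilde{V}_t^*$ for $b\in\widetilde{\cR}$; by Lemma~\ref{lem:semigroup}, $\{\alpha_t\}$ is a semigroup of unital normal $*$-endomorphisms of $\widetilde{\cR}$. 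Now, exactly as in Step~1 of the proof of Theorem~\ref{thm:scudil}, the reducing property of $uH$ gives $\cM = u^*\widetilde{\cR}u$, and items~\ref{it:dilation1} and~\ref{it:V*2} of Theorem~\ref{thm:isoDilFC} give $\Theta_t(u^*bu)=u^*\alpha_t(b)u$ for all $b\in\widetilde{\cR}$ and $t\ge 0$; passing to $\cR=W^*\big(\bigcup_{t\ge 0}\alpha_t(\cM)\big)$ and restricting $\alpha$ there yields a $*$-endomorphic dilation $(K,u,\cR,\alpha)$ of $(\cM,\Theta)$. Its minimality follows as in Step~2 there: the minimality of $(\rho,V)$ gives $K=[\,\cR\,uH\,]$, which forces the central support of $uu^*$ in $\cR$ to be $1$.

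The one genuinely delicate point --- and what I expect to be the main obstacle --- is the continuity of $\alpha$, i.e.\ promoting it to an E$_0$-semigroup. A computation parallel to Step~3 of the proof of Theorem~\ref{thm:scudil} reduces, for $a\in\cM$ and vectors $k_1,k_2$ of the dense type $\sum_i\alpha_{r_i}(m_i)h_i$ (with $r_i>0$) and $t$ smaller than all the $r_i$, the quantity $\langle\alpha_t(a)k_1,k_2\rangle$ to a finite sum of terms of the form $\langle\Theta_t\big(\Theta_{r-t}(b)\,a\,\Theta_{r'-t}(b')\big)h,g\rangle$ with $b,b'\in\cM$; letting $t\to0^+$, the role played in the two-parameter setting by the joint strong continuity of $P$ is now played directly by the point-strong continuity of $\Theta$, i.e.\ Theorem~\ref{thm:pointstrongcont} (this is the input whose absence is the gap in the older arguments), and together with joint strong continuity of multiplication on bounded sets one obtains $\alpha_t(a)\to a$ weakly, for $a$ in the weakly dense $C^*$-algebra $\cA=C^*\big(\bigcup_{t}\alpha_t(\cM)\big)$. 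Lemma~\ref{lem:continuity2} then upgrades this to $\alpha_{t+s}(a)\to\alpha_t(a)$ strongly on $\cA$. Finally, for arbitrary $b\in\cR$ one writes $b$ as a weak limit of a net in $\cA$, so $t\mapsto\langle\alpha_t(b)k_1,k_2\rangle$ is a pointwise limit of continuous functions and hence measurable, and the measurability-implies-weak-continuity machinery for operator semigroups recorded in \cite[Proposition~2.3.1]{Arv03} completes the verification that $\alpha$ is an E$_0$-semigroup; if $H$ is separable, so is $K$, by the density description of $K$.

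For uniqueness I would invoke the uniqueness --- up to unitary equivalence fixing $H$ --- of the minimal isometric dilation of a completely contractive representation of a product system over $\Rp$, which is part of the Muhly--Solel package \cite{MS02}, together with the dictionary developed in Chapter~\ref{chap:dil} (Proposition~\ref{prop:dil_rep_dil_CP} and Corollary~\ref{cor:edil_repdil}) that matches minimal $e$-dilations of $(\cM,\Theta)$ with minimal isometric dilations of the associated representation. Two minimal E$_0$-dilations of $\Theta$ then arise from unitarily equivalent minimal isometric dilations and are therefore conjugate via a unitary that is the identity on $H$, which is the asserted uniqueness. To summarize: the construction and the minimality are essentially a transcription of the two-parameter argument already carried out in the excerpt, uniqueness reduces to the uniqueness of the minimal isometric dilation, and the only essential new ingredient --- the reason the theorem resisted a fully complete proof for a while --- is the point-strong continuity of $\Theta$ (Theorem~\ref{thm:pointstrongcont}, from Section~\ref{sec:point_strong}), which is exactly what makes the dilation continuous.
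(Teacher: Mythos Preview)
The paper does not give its own proof of this statement: it is quoted as Bhat's theorem and attributed to the literature (\cite{Bhat96,SeLegue,BS00,MS02,Arv03}). The paper's only direct contribution to the result is Theorem~\ref{thm:pointstrongcont}, which closes the point-strong continuity gap that the earlier arguments implicitly relied upon. So there is no ``paper's own proof'' to compare against in a strict sense.

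That said, your proposal is precisely the one-parameter specialization of the proof of Theorem~\ref{thm:scudil}, which is the Muhly--Solel route among the five mentioned. The existence part is correct and the identification of Theorem~\ref{thm:pointstrongcont} as the crucial missing ingredient is exactly the point of Section~\ref{sec:point_strong}. Two places deserve more care. First, the measurability step (``write $b$ as a weak limit of a net in $\cA$, hence $t\mapsto\langle\alpha_t(b)k_1,k_2\rangle$ is measurable'') really needs a \emph{sequence}, and the invocation of \cite[Proposition~2.3.1]{Arv03} requires $K$ to be separable; you should assume $H$ separable and argue, as in Step~4 of Theorem~\ref{thm:scudil}, that $K$ is then separable. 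Second, your uniqueness argument is a bit loose: Corollary~\ref{cor:edil_repdil} gives that $\Xi(\alpha)$ is an isometric dilation of $\Xi(\Theta)$, but you still need that it is a \emph{minimal} isometric dilation of the identity representation of the \emph{same} product system $E$ (not merely an isometric dilation living on the possibly different product system associated with $\alpha$), in order to invoke the uniqueness of minimal isometric dilations over $\mathbb{R}_+$ from \cite{MS02}. This is true, but it is not automatic from the statements you cite; the cleanest way is to appeal directly to the uniqueness argument in \cite{MS02} or \cite{Arv03} rather than route through Chapter~\ref{chap:dil}.
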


Bhat's Theorem aroused much interest, and one of the reasons was because it opened up a new way of constructing
E$_0$-semigroups. A possible approach could have been this: construct explicitly a tractable CP$_0$-semigroup, (for example a CP$_0$-semigroup on the algebra of $n \times n$ matrices or more generally a CP$_0$-semigroup with a bounded generator), and look at its minimal E$_0$-dilation. It was hoped at the time that the resulting E$_0$-semigroup would turn out to be an E$_0$-semigroup that has not been seen before.

These hopes were soon extinguished by results of Arveson and Powers.
\begin{theorem}\label{thm:Arv}
\emph{{\bf (Arveson, Theorem 4.8, \cite{Arv99})}} Let $\phi$ be a CP$_0$-semigroup with a bounded generator. The minimal E$_0$-dilation of $\phi$ is of type I.
\end{theorem}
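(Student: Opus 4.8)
\textbf{Proof proposal for Theorem \ref{thm:Arv}.}

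The plan is to pass from $\phi$ to the product system/Fock-space picture furnished by Bhat's construction of the minimal $E_0$-dilation, and then to exhibit explicitly a unit and enough structure to force the product system of the dilation to be the exponential (time-ordered Fock) product system, which is the defining feature of type I. More concretely: since $\phi = \{\phi_t\}_{t\geq0}$ is a $cp_0$-semigroup on $B(H)$ with bounded generator $\cL$, one can write $\cL$ in the Christensen--Evans form $\cL(a) = \psi(a) + ka + ak^*$ where $\psi$ is completely positive and bounded and $k \in B(H)$. First I would use this bounded-generator normal form to produce, directly and by hand, a unit for the Arveson--Stinespring subproduct system (equivalently, the Bhat--Skeide product system) of $\phi$: the one-parameter family of Stinespring data integrates, because the generator is bounded, to a genuine cocycle, and this yields a unit $\{u_t\}$ in the associated product system $X = \{X(t)\}$.

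The key steps, in order, would be: (1) Reduce to the product-system description of the minimal $E_0$-dilation — by Bhat's theorem (Theorem \ref{thm:Bhat}) combined with Arveson's classification machinery in \cite{Arv03}, the minimal $E_0$-dilation $\alpha$ of $\phi$ has an associated Arveson product system $X^\alpha$, and $\alpha$ is type I iff $X^\alpha$ is an exponential product system. (2) Show $X^\alpha$ has a unit — this is where the bounded generator is used: write $\cL$ in Christensen--Evans form and integrate the resulting families of operators to obtain a normalized unit $\{u_t\}_{t\geq 0}$ of $X^\alpha$. (3) Show this unit generates $X^\alpha$ — i.e. that $X^\alpha$ is \emph{spatial} and in fact that the span of iterated products $u_{t_1}\otimes\cdots\otimes u_{t_n}$ over all partitions is dense; here one leans on the minimality of the dilation, which says precisely that $K$ is generated by $H$ together with the images $\alpha_{t_1}(u\cM u^*)\cdots$, translating on the product-system side to the statement that $X^\alpha$ is generated by the unit coming from $\phi$. (4) Invoke Arveson's theorem that a product system generated by a single unit is isomorphic to an exponential product system $E^{\otimes}$ with index equal to the dimension of the ``covariance'' / Cuntz--Krieger-type deficiency space of the unit, and conclude type I with that index $d$; when $\phi$ is not itself an $e_0$-semigroup the index is nonzero, giving the CCR flow of some index $d \in \{1,2,\dots,\infty\}$.

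The main obstacle I expect is step (3): producing a unit is cheap once the generator is bounded, but proving that this unit \emph{generates} the product system of the \emph{minimal} dilation — rather than just sitting inside it — requires carefully matching Bhat's minimality condition (no proper multiplicative coinvariant projection above $uu^*$, and the $W^*$-generation property \eqref{eq:W^*-generator}) against the generation property of units on the product-system side. One has to check that the subspace of $K$ built from the unit is invariant and coinvariant in the right sense and therefore, by minimality, all of $K$; this is essentially the content of Arveson's argument in \cite[Theorem 4.8]{Arv99}, and the honest way to carry it out is to set up the isomorphism between $\cH = \bigoplus_{s} X(s)\otimes_\sigma H$ (or its Fock-space avatar) and a symmetric Fock space over $L^2$, verify it intertwines the dilation with the CCR flow twisted by a cocycle, and then absorb the cocycle via cocycle conjugacy. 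The remaining steps (1), (2), (4) are either quotations of known structure theory or routine integration of a bounded generator, so I would keep them brief and concentrate the write-up on the generation claim.
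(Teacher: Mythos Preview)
The paper does not contain a proof of this theorem. It is stated as a citation of Arveson's result \cite[Theorem 4.8]{Arv99} and is used as a black box (together with Powers' independent result) in the discussion of Section \ref{sec:type}; no argument is supplied in the thesis itself. So there is nothing in the paper to compare your proposal against.

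That said, your sketch is broadly in the spirit of Arveson's original argument: pass to the product system of the minimal dilation, produce a unit from the Christensen--Evans form of the bounded generator, show that minimality forces the unit to generate, and conclude type I. Your identification of step (3) as the delicate point is accurate. But since the thesis treats the theorem as an external input, the appropriate thing here is simply to cite \cite{Arv99} rather than to reproduce a proof.
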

Independently, Powers proved that the minimal E$_0$-dilation of a CP$_0$-semigroup acting on the algebra $M_n(\mathbb{C})$ of $n \times n$ matrices is of type I \cite[Theorem 3.10]{Powers}.

Theorems \ref{thm:scudil} and \ref{thm:Arv} face us against two immediate problems:
\begin{enumerate}
\item Figure out the structure of the E$_0$-dilation of a given two-parameter CP$_0$-semigroup, especially
in the simplest case when the CP$_0$-semigroup acts on $M_n(\mathbb{C})$.
\item Try to see whether new E$_0$-semigroups (necessarily not of type I) can arise as ``parts" of the E$_0$-dilation
of a two-parameter CP$_0$-semigroup which is ``simple" in some sense (e.g. - acts on $M_n(\mathbb{C})$).
\end{enumerate}

In this section, we obtain a partial positive result related to the first problem and a partial negative result related to the second one.
We refer to the notation of Theorem \ref{thm:scudil}. Let $\{\beta_t := \alpha_{(t,0)}\}_{t \geq 0}$ and $\{\gamma_t := \alpha_{(0,t)}\}_{t \geq 0}$ be \emph{the particular commuting E$_0$-semigroups
constructed in the course of the proof of Theorem \ref{thm:scudil}} that dilate $\{R_t\}_{t\geq 0}$ and $\{S_t\}_{t\geq 0}$. The main result of this
section is that if $R$ is not an automorphism semigroup then $\beta$ is cocycle conjugate to the minimal E$_0$-dilation of $R$, and that if $R$ is an automorphism semigroup then $\beta$ is also an automorphism semigroup. In particular, we conclude that if $R$ is not an automorphism semigroup and has a bounded generator (in particular, if $H$ is finite dimensional) then $\beta$ is a type I E$_0$-semigroup.

We are still far from solving the two problems mentioned above. The first problem is not solved because it is not clear whether the cocycle conjugacy classes of $\beta$ and $\gamma$ determine in any reasonable way the two-dimensional dynamic behavior of the E$_0$-semigroup $\{\beta_s \circ \gamma_t \}_{s,t \geq 0}$. Let us be a little more concrete in what we mean by this. One may attempt to define the notion of \emph{cocycle equivalence} of two-parameter E$_0$-semigroups exactly as it was defined for one-parameter semigroups, the only difference being that cocycles are now
\emph{two-parameter} families of unitaries. Now assume that $\beta, \gamma$ and $\beta', \gamma'$ are two pairs of commuting E$_0$-semigroups such that $\beta$ and $\gamma$ are cocycle conjugate to $\beta'$ and $\gamma'$, respectively.
In this situation, it is not clear whether the two-parameter semigroups $\{\beta_s \circ \gamma_t\}_{s,t\geq}$
and $\{\beta'_s \circ \gamma'_t\}_{s,t\geq}$ are cocycle conjugate.

The second problem is not solved because we have not ruled out the possibility that for some $a,b > 0$, the one-parameter E$_0$-semigroup $\alpha = \{\alpha_t\}_{t \geq 0}$ given by
$$\alpha_t := \beta_{at} \circ \gamma_{bt}$$
is one that has not been seen before.

\subsection{Restricting an isometric dilation to a minimal isometric dilation}\label{subsec:restricting}

Let $X = \{X(s)\}_{s \in \cS}$ be a product system, and let $T$ be a c.c. representation of $X$ on a Hilbert space $H$. Let $V$ be an isometric dilation of $T$ on a Hilbert space $K \supseteq H$. Define
$$L = \bigvee_{s\in\cS} V_s (X(s)) H .$$
\begin{definition}
$V$ is called a \emph{minimal dilation} of $T$ if $L = K$.
\end{definition}
For all $s\in \cS$ and $x \in X(s)$, $L$ is invariant under $V_s(x)$. As $T_0$ is always assumed to be a nondegenerate representation, $H \subseteq L$.
We define a map $W_s: X(s) \rightarrow B(L)$ by
$$W_s(x) = V_s(x) \big|_L .$$
$W = \{W_s\}_{s\in\cS}$ is a representation of $X$ on $L$, and one easily checks that it is isometric.
Most importantly for us, $W$ is also a dilation of $T$: if $s \in \cS$, $x \in X(s)$ and $h \in H$, then
\begin{align*}
P_H W_s(x) h &= P_H V_s(x)\big|_{L} h \\
&= T_s(x) h .
\end{align*}
It is obvious that $W$ is a minimal dilation of $T$.
\begin{definition}
$W$ is called the \emph{restriction of $V$  to a minimal isometric dilation of $T$}.
\end{definition}

The discussion establishes the following theorem:
\begin{theorem}\label{thm:restrict0}
Let $X = \{X(s)\}_{s\in\cS}$ be a product system over $\cS$ and let $T$ be a c.c. representation of $X$. Every isometric dilation of $T$ can be restricted to a minimal
isometric dilation of $T$.
\end{theorem}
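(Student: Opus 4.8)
The plan is to assemble the ingredients laid out in the paragraph preceding the statement into a clean argument. Fix an isometric dilation $(\rho,V)$ of $T$ on a Hilbert space $K\supseteq H$, and set $L=\bigvee_{s\in\cS}V_s(X(s))H$. First I would record the two stability properties of $L$ that make everything work: (i) $L$ is invariant under every operator $V_s(x)$, $x\in X(s)$; and (ii) $L$ is invariant under $\rho(\cA)$, where $\cA=X(0)$. Property (i) is immediate from the multiplicativity $V_s(x)V_t(y)=V_{s+t}(xy)$ of the representation together with the fact that $V_s(x)h\in L$ for $h\in H$ by definition of $L$; property (ii) follows from the covariance relation $\rho(a)V_s(x)=V_s(ax)$ together with $\rho(a)h=\sigma(a)h\in H\subseteq L$. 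Along the way I would also verify $H\subseteq L$: since $V_0(X(0))H=\rho(\cA)H\supseteq\sigma(\cA)H$ and $\sigma=T_0$ is nondegenerate, $H\subseteq\overline{\rho(\cA)H}\subseteq L$.

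Next I would put $\rho_W=\rho|_L$ and $W_s(x)=V_s(x)|_L\in B(L)$ for $x\in X(s)$, which makes sense by stability property (i), and check that $(\rho_W,W)$ is a covariant representation of $X$ on $L$: linearity, the module identities, and the multiplicativity $W_s(x)W_t(y)=W_{s+t}(xy)$ are all inherited by restriction, and $\rho_W$ is nondegenerate because $H\subseteq L$ (and, if needed, because $X$ is essential, in line with the conventions used earlier). The one point that genuinely uses more than ``restriction inherits everything'' is that $W$ is again an \emph{isometric} representation: for $x,y\in X(s)$ one computes $W_s(x)^*W_s(y)=P_L V_s(x)^*V_s(y)\big|_L=P_L\,\rho(\langle x,y\rangle)\big|_L$, and this collapses to $\rho(\langle x,y\rangle)|_L=\rho_W(\langle x,y\rangle)$ precisely because of stability property (ii): $L$ being $\rho(\cA)$-invariant lets one drop the projection $P_L$. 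This is the step I expect to be the crux — everything else is bookkeeping, but the isometry property could fail under a careless choice of subspace, and it is exactly the $\rho(\cA)$-invariance of $L$ that rescues it.

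Finally I would check that $W$ dilates $T$ and is minimal. The compression identities are inherited from $V$: for $h\in H$, $P_H W_s(x)h=P_H V_s(x)h=T_s(x)h$ by property (iii) of the dilation $V$; $P_H W_s(x)\big|_{L\ominus H}=0$ since $L\ominus H\subseteq K\ominus H$ and $P_H V_s(x)\big|_{K\ominus H}=0$; and $H$ reduces $\rho_W$ with $\rho_W(a)|_H=\sigma(a)$ because $H$ reduces $\rho$. Minimality is then tautological, $\bigvee_{s\in\cS}W_s(X(s))H=\bigvee_{s\in\cS}V_s(X(s))H=L$, so $W$ is a minimal isometric dilation of $T$ — this $W$ is the restriction of $V$ to a minimal isometric dilation, which is exactly what the theorem asserts.
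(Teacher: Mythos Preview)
Your proposal is correct and follows exactly the same approach as the paper, which simply declares that the discussion preceding the statement establishes the theorem. In fact, you supply more detail than the paper does: where the paper writes only that ``one easily checks that [$W$] is isometric,'' you actually carry out the check via the $\rho(\cA)$-invariance of $L$, which is the right way to see it.
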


For our purposes below, we need a specialization of the above theorem:
\begin{proposition}\label{prop:restrict}
Let $X = \{X(t)\}_{t\geq0}$ be a product system over $\mathbb{R}_+$ and let $T$ be a fully-coisometric c.c. representation of $X$ on $H$. Every isometric dilation of $T$ can be restricted to a minimal
isometric and fully-coisometric dilation of $T$.
\end{proposition}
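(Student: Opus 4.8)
The plan is to start from Theorem~\ref{thm:restrict0}, which already gives that an arbitrary isometric dilation $V$ of $T$ on $K\supseteq H$ restricts to a \emph{minimal} isometric dilation $W=\{W_s\}_{s\ge 0}$, $W_s(x)=V_s(x)\big|_L$, on the space $L=\bigvee_{s\ge 0}V_s(X(s))H$; all of this, together with the identities $P_HW_s(x)h=T_s(x)h$ and $W_s(X(s))H=V_s(X(s))H$, is recorded in the discussion preceding that theorem. So the only genuinely new point to prove is that this $W$ is \emph{fully coisometric}, i.e.\ that each $\widetilde W_s\colon X(s)\otimes_\rho L\to L$ (with $\rho=W_0$) is a coisometry. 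Since $W$ is isometric, $\widetilde W_s$ is already an isometry, so it will suffice to show that each $\widetilde W_s$ is \emph{surjective}: a surjective isometry is unitary, hence a coisometry.

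First I would rephrase surjectivity of $\widetilde W_s$ as an equality of closed subspaces. Expanding a general $l\in L$ over the generators $W_t(y)h$ ($t\ge 0$, $y\in X(t)$, $h\in H$) and using the product-system relation $W_s(x)W_t(y)=W_{s+t}(xy)$, one checks that
\be
{\rm Im}\,\widetilde W_s \;=\; \bigvee\{\,W_s(x)l : x\in X(s),\ l\in L\,\} \;=\; M_s,
\ee
where $M_s:=\bigvee\{\,W_u(z)h : u\ge s,\ z\in X(u),\ h\in H\,\}$ (the inclusion $M_s\subseteq{\rm Im}\,\widetilde W_s$ uses $X(u)\cong X(s)\otimes X(u-s)$). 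Thus the goal becomes $M_s=L$, and two observations close this out. (i) $M_s$ is $W$-invariant: for $t\ge 0$, $z'\in X(t)$, $u\ge s$, one has $W_t(z')W_u(z)h=W_{t+u}(z'z)h\in M_s$ since $t+u\ge s$. (ii) $H\subseteq M_s$: since $M_s\supseteq[W_s(X(s))H]$, it is enough to show $H\subseteq[W_s(X(s))H]$, and for this suppose $\xi\in H$ is orthogonal to every $W_s(x)h$. Writing, in the decomposition $L=H\oplus(L\ominus H)$, $W_s(x)h=T_s(x)h+(I-P_H)W_s(x)h$ with $T_s(x)h=P_HV_s(x)h\in H$ and $(I-P_H)W_s(x)h\in L\ominus H$, we get $\langle\xi,W_s(x)h\rangle=\langle\xi,T_s(x)h\rangle$ for all $x,h$; hence $\xi$ is orthogonal to $[\{T_s(x)h:x\in X(s),h\in H\}]$. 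But $T$ is fully coisometric, so $\widetilde T_s$ is a surjection onto $H$, i.e.\ $\{T_s(x)h\}$ is total in $H$; therefore $\xi=0$.

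Combining (i) and (ii): $W_t(X(t))H\subseteq M_s$ for every $t\ge 0$, so $L=\bigvee_{t\ge 0}W_t(X(t))H\subseteq M_s\subseteq L$, i.e.\ $M_s=L={\rm Im}\,\widetilde W_s$. Hence each $\widetilde W_s$ is a surjective isometry, so $W$ is an isometric \emph{and} fully coisometric minimal dilation of $T$, which is exactly the assertion of the Proposition.

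The argument is purely algebraic once the minimal dilation $W$ is in hand — no continuity in the parameter $t$ is used, and the hypotheses enter only through the dilation identity $P_HV_s(x)\big|_H=T_s(x)$ and through full coisometry of $T$ (which is what makes $\{T_s(x)h\}$ total in $H$). Accordingly I do not expect a serious obstacle; the one thing to be careful with is the closed-span bookkeeping, namely the identification ${\rm Im}\,\widetilde W_s=M_s$ (density of simple tensors plus continuity of $\widetilde W_s$ and of the product-system maps) and the verification that $(I-P_H)W_s(x)h$ genuinely lies in $L\ominus H$, which is immediate from $h\in H\subseteq L$ and $W_s(x)=V_s(x)\big|_L$.
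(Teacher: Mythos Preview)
Your overall strategy---restrict to the minimal dilation $W$ on $L$ via Theorem~\ref{thm:restrict0}, then show directly that each $\widetilde W_s$ is surjective---is sound, but there is a genuine gap in step~(ii). To prove $H\subseteq N_s:=[W_s(X(s))H]$ you take $\xi\in H$ orthogonal to every $W_s(x)h$ and deduce $\xi=0$. That only establishes $H\cap N_s^\perp=\{0\}$ in $L$, which does \emph{not} imply $H\subseteq N_s$: for closed subspaces $A,B$ of a Hilbert space, $A\cap B^\perp=\{0\}$ does not force $A\subseteq B$ (take two distinct one-dimensional subspaces of $\mb{C}^2$). So the inference is invalid as written.

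The claim $H\subseteq M_s$ is nonetheless true, and the fix uses the dilation identities more fully. From $P_HW_s(x)=T_s(x)P_H$ on $L$ (this is items (ii) and (iii) of the definition of isometric dilation combined) one gets, upon taking adjoints, $W_s(x)^*\big|_H=T_s(x)^*$ with range in $H$; hence $\widetilde W_s^*\big|_H=\widetilde T_s^*$, landing in $X(s)\otimes H$. Since $\widetilde W_s$ is an isometry, $P_{M_s}=\widetilde W_s\widetilde W_s^*$, so for $h\in H$,
\bes
P_H P_{M_s}h \;=\; P_H\widetilde W_s\widetilde T_s^*h \;=\; \widetilde T_s\widetilde T_s^*h \;=\; h
\ees
by full coisometry of $T$. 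But $\|P_{M_s}h\|\le\|h\|$, so $P_{M_s}h=h$, i.e.\ $h\in M_s$. With this correction your direct argument goes through.

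For comparison, the paper takes a different and much shorter route: it invokes uniqueness (up to unitary equivalence) of the minimal isometric dilation of a c.c.\ representation of a product system over $\mb{R}_+$, and then cites \cite[Theorem~3.7]{MS02}, which exhibits one such minimal dilation that is fully coisometric; uniqueness then forces \emph{every} minimal isometric dilation to be fully coisometric. Your approach is more self-contained but requires the extra computation above; the paper's is a two-line appeal to existing literature.
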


\begin{proof}
All we have to do is to show that the restriction of any isometric dilation of $T$ to a minimal one is fully-coisometric. By a standard computation the minimal isometric dilation of $T$ is unique, up to unitary equivalence. Theorem 3.7 in \cite{MS02} exhibits a minimal isometric dilation of $T$ that is fully-coisometric. Thus any minimal isometric dilation of $T$ is fully-coisometric.
\end{proof}

\subsection{The main results}\label{subsec:main}

Let $R=\{R_t\}_{t\geq 0}$ and $S=\{S_t\}_{t\geq 0}$ be two strongly commuting CP$_0$-semigroups on $B(H)$, with $H$ a separable Hilbert space. Let $\{\beta_t := \alpha_{(t,0)}\}_{t \geq 0}$ and $\{\gamma_t := \alpha_{(0,t)}\}_{t \geq 0}$ be the commuting E$_0$-semigroups
constructed in the course of the proof of Theorem \ref{thm:scudil} that dilate $R$ and $S$.
The semigroup $P_{(s,t)} = R_s \circ S_t$ is given by
\bes
P_{(s,t)}(a) = \widetilde{T}_{(s,t)}(I_{X(s,t)} \otimes a)\widetilde{T}_{(s,t)}^* ,
\ees
where
\be\label{eq:X}
X(s,t) = E(s) \otimes F(t),
\ee
\begin{equation}
T_{(s,t)}(x \otimes y) = T_s^E(x) T_t^F(y) \,\, , \,\, x \in E(s), y \in F(t) ,
\end{equation}
and $E$ and $T^E$, $F$ and $T^F$, are the product systems and representations associated with $R$  and $S$ by the construction from Section \ref{subsec:des_MS}.

Let $V$ be the minimal isometric and fully coisometric dilation of $T$. The dilating E$_0$-semigroups $\beta$ and $\gamma$ are given by
\begin{equation}\label{eq:beta}
\beta_t(A) = \widetilde{V^E_t}(I \otimes A)\widetilde{V^E_t}^* \,\, , \,\, A \in B(K)
\end{equation}
and
$$\gamma_t(A) = \widetilde{V^F_t}(I \otimes A)\widetilde{V^F_t}^* \,\, , \,\, A \in B(K) ,$$
where $V^E$ is the representation of $E$ given by
\begin{equation}\label{eq:V}
V^E_t(x) = V_{(t,0)}(x \otimes 1) \,\, , \,\, x \in E(t) ,
\end{equation}
and $V^F$ is the representation of $F$ given by
$$V^F_t(y) = V_{(0,t)}(1 \otimes y) \,\, , \,\, y \in F(t) .$$

\begin{theorem}\label{thm:aut}
If $R$ is a semigroup of automorphisms, then so is $\beta$.
\end{theorem}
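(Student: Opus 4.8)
The plan is to realize each $\beta_t$ explicitly as conjugation by a single unitary of $K$. Since Theorem \ref{thm:scudil} already guarantees that $\beta=\{\beta_t\}_{t\geq0}$ is an E$_0$-semigroup, i.e.\ a semigroup of unital normal $*$-endomorphisms of $B(K)$, it suffices to prove that each $\beta_t$ is surjective. By the formula (\ref{eq:beta}), $\beta_t(A)=\widetilde{V^E_t}(I_{E(t)}\otimes A)\widetilde{V^E_t}^*$, and this map is a $*$-automorphism as soon as we know (i) that $\widetilde{V^E_t}$ is a unitary, and (ii) that the fibre $E(t)$ is one-dimensional. Note that $V^E$ is automatically an isometric representation (its fibre inner products are scalar, being computed in $E(t)\otimes F(0)$ with $F(0)=\cM'=\mathbb{C}$), so $\widetilde{V^E_t}$ is always at least an isometry.

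\textbf{Step 1: $R$ an automorphism semigroup forces $E$ to be the rank-one product system.} Every automorphism $R_s$ of $B(H)$ is inner, $R_s=\mathrm{Ad}(u_s)$ for a unitary $u_s\in B(H)$, and then $a\otimes h\mapsto au_sh$ is a unitary $\cM\otimes_{R_s}H\to H$: it preserves inner products (since $R_s(a^*b)=u_s^*a^*bu_s$) and its range equals $\cM H=H$. Iterating this identification along any partition $\mfp=\{0=t_0<\dots<t_n=t\}$ of $[0,t]$ shows that $a_1\otimes\cdots\otimes a_n\otimes h\mapsto a_1u_{t_1}a_2u_{t_2-t_1}\cdots a_nu_{t-t_{n-1}}h$ is an isomorphism of left $\cM$-modules $H_{\mfp,t}\to H$. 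Hence the intertwiner space satisfies $\cL_\cM(H,H_{\mfp,t})\cong\cL_\cM(H,H)=\cM'=\mathbb{C}I_H$, and passing to the inductive limit of Section \ref{subsec:des_MS} gives $E(t)\cong\mathbb{C}$ for every $t\geq0$.

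\textbf{Step 2: $\widetilde{V^E_t}$ is a unitary.} Here I would use that $V$ is fully coisometric, which is built into how $V$ is chosen: $T$ is fully coisometric by Theorem \ref{thm:rep_SC} (as $P$ is unital), and a minimal isometric \emph{and fully coisometric} dilation $V$ of $T$ is provided by Theorem \ref{thm:isoDilFC}. Thus $\widetilde{V}_{(t,0)}$ is a unitary for every $t$. Since $F(0)=\cM'=\mathbb{C}$ we have $X(t,0)=E(t)\otimes F(0)\cong E(t)$, and by (\ref{eq:V}),
\[
\widetilde{V^E_t}(x\otimes k)=V_{(t,0)}(x\otimes 1)\,k=\widetilde{V}_{(t,0)}\big((x\otimes 1)\otimes k\big),\qquad x\in E(t),\ k\in K,
\]
so $\widetilde{V^E_t}$ coincides with $\widetilde{V}_{(t,0)}$ under the identification $E(t)\otimes K\cong X(t,0)\otimes K$, hence is a unitary of $E(t)\otimes K$ onto $K$.

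\textbf{Step 3: conclusion.} Fix a unit vector $e_t\in E(t)$ (possible since $\dim E(t)=1$) and set $w_t:=V^E_t(e_t)=\widetilde{V^E_t}(e_t\otimes\,\cdot\,)$. As $k\mapsto e_t\otimes k$ is a unitary $K\to E(t)\otimes K$ and $\widetilde{V^E_t}$ is unitary, $w_t$ is a unitary of $K$ and $\widetilde{V^E_t}^*k=e_t\otimes w_t^*k$; therefore, using (\ref{eq:beta}),
\[
\beta_t(A)k=\widetilde{V^E_t}(I_{E(t)}\otimes A)(e_t\otimes w_t^*k)=\widetilde{V^E_t}(e_t\otimes Aw_t^*k)=w_tAw_t^*k
\]
for all $A\in B(K)$ and $k\in K$. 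Thus $\beta_t=\mathrm{Ad}(w_t)$ is a $*$-automorphism of $B(K)$, and since $t$ was arbitrary, $\beta$ is a semigroup of automorphisms. I expect Step 1 to be the only substantive point — checking that passing to an automorphism semigroup genuinely collapses the whole product system $E$ to the trivial one-dimensional one; Steps 2 and 3 are routine bookkeeping with the identifications (\ref{eq:X})–(\ref{eq:V}). If one prefers, Step 1 may instead be quoted as the known fact that the correspondence associated to a $*$-automorphism of $B(H)$ is one-dimensional, applied to each $R_t$.
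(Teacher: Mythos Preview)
Your proof is correct and follows essentially the same route as the paper: show that $E(t)$ is one-dimensional when $R$ consists of automorphisms, observe that $V^E$ is isometric and fully coisometric (hence each $\widetilde{V^E_t}$ is unitary), and conclude that $\beta_t$ is conjugation by the unitary $V^E_t(e_t)$. The only cosmetic difference is in Step~1: the paper invokes the concrete identification of $E(t)$ with the Arveson intertwiner space $\{x\in B(H):R_t(a)x=xa\}$, which equals $\mathbb{C}I$ for an automorphism, whereas you compute directly with the inductive-limit construction of $E(t)$ via the inner realization $R_s=\mathrm{Ad}(u_s)$; both arguments yield the same conclusion.
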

\begin{proof}
When $R$ is an endomorphism semigroup, $E$ is simply the Arveson
product system associated with $R$
\begin{displaymath}
E(t) = \{x \in B(H) : \forall a \in B(H)\, . \, R_t(a)x = x a \} ,
\end{displaymath}
and when $R_t$ is an automorphism $E(t) = \mathbb{C} I$. $T^E_t$ is then simply the identity representation of $E(t)$.

Now $E$ is a one dimensional product system, and $V^E$ is a fully-coisometric and isometric representation of $E$
on a Hilbert space $K$. Then $V^E_s(I) V^E_t(I) = V^E_{s+t}(I \otimes I) = V^E_{s+t}(I)$, so $\{V^E_t(I)\}$ is a semigroup of unitaries. Identifying $K$ with $\mathbb{C} I \otimes K$ and $V^E_t(I)$ with $\widetilde{V_t^E}$ we consider $\{\widetilde{V_t^E}\}$ as a semigroup unitaries. As the formula (\ref{eq:beta}) shows, $\beta$ is given by conjugation with a unitary semigroup, thus $\beta$ is an automorphism semigroup.
\end{proof}

Before proceeding, we write down three (probably well known) facts that we shall need.
\begin{proposition}\label{prop:infdil1}
Let $E$ be a product system of Hilbert spaces over $\mathbb{R}_+$, and let $T$ be a c.c. representation of $T$ on $H$.
Let $V$ be the minimal isometric dilation of $T$, representing $E$ on a Hilbert space $G\supseteq H$. If $T$ is not isometric, then $G$ is infinite dimensional.
\end{proposition}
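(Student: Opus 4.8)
The plan is to prove the contrapositive: assuming $\dim G<\infty$, I will show that $T$ is isometric. So suppose $\dim G<\infty$ (hence $H\subseteq G$ is finite dimensional as well), and, discarding the trivial degenerate case, assume that $E(t)\neq\{0\}$ for every $t$.

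The first step is to see that every fibre of $E$ is one dimensional. For each $t$ the map $\tV_t\colon E(t)\otimes_\rho G\to G$ is an isometry, and since $\rho=V_0$ is the (unital) identity representation of $E(0)=\mathbb{C}$ the space $E(t)\otimes_\rho G$ is just the Hilbert space tensor product, of dimension $\dim E(t)\cdot\dim G$. An isometry into $G$ forces $\dim E(t)\cdot\dim G\le\dim G$, and as $\dim G$ is finite and positive this gives $\dim E(t)=1$. Fix a unit vector $e_t\in E(t)$ and put $U_t:=V_t(e_t)\in B(G)$. Then $U_t$ is an isometry (because $\|e_t\|=1$ and $V$ is an isometric representation), and its range equals $\tV_t(e_t\otimes G)=\tV_t(E(t)\otimes_\rho G)$; being the range of an isometry between spaces of the same finite dimension $\dim G$, this is all of $G$. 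Hence each $U_t$ is \emph{unitary}.

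Next I would use the two remaining properties of an isometric dilation to locate $H$ inside $G$. Property (ii) of the definition, $P_HV_t(x)\big|_{G\ominus H}=0$ for all $x\in E(t)$, applied to $x=e_t$, says $U_t(G\ominus H)\perp H$; equivalently $U_t^{*}H\subseteq H$. Applying the unitary $U_t$ gives $H\subseteq U_tH$, and since $\dim U_tH=\dim H<\infty$ this inclusion is an equality, $U_tH=H$. In particular $U_t|_H$ is a unitary of $H$ onto $H$, and property (iii), $P_HV_t(x)|_H=T_t(x)$, yields $T_t(e_t)=P_HU_t|_H=U_t|_H$. Therefore $T_t(e_t)^{*}T_t(e_t)=I_H=\langle e_t,e_t\rangle I_H$.

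Finally, because $E(t)=\mathbb{C}e_t$, the last identity is precisely the assertion that $\tT_t$ is an isometry: for $h\in H$ one has $\|\tT_t(e_t\otimes h)\|^{2}=\|T_t(e_t)h\|^{2}=\langle e_t,e_t\rangle\|h\|^{2}=\|e_t\otimes h\|^{2}$, and such simple tensors span $E(t)\otimes_\sigma H$. This holds for every $t$ (the case $t=0$ being trivial), so $T$ is isometric, contrary to hypothesis; hence $\dim G=\infty$. The one place where the hypothesis is genuinely used is the passage from $H\subseteq U_tH$ to $U_tH=H$, and this --- together with the dimension count forcing $\dim E(t)=1$ --- is the only point requiring any care; the remainder is bookkeeping with the dilation axioms, and I anticipate no real obstacle. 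Note that minimality of the dilation is not needed for this argument.
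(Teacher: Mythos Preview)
Your argument is correct and genuinely different from the paper's. The paper reduces to the single-correspondence case: any isometric dilation of the full representation restricts at each fibre $E(t)$ to an isometric dilation of $T_t$, hence contains the \emph{minimal} isometric dilation of $T_t$; the latter, by the explicit construction of \cite[Theorem~2.18]{MS02}, is an infinite direct sum with nonzero defect summands whenever $T_t$ is not isometric, forcing $G$ to be infinite dimensional.

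Your route, by contrast, is a self-contained contrapositive via finite-dimensional linear algebra: the dimension inequality $\dim E(t)\cdot\dim G\le\dim G$ forces each fibre to be one dimensional, the isometries $V_t(e_t)$ are automatically unitary on a finite-dimensional space, and the dilation axioms (ii)--(iii) then pin $H$ as a reducing subspace for each $U_t$, making $T_t(e_t)=U_t|_H$ unitary. This avoids any appeal to the internal structure of the Muhly--Solel dilation and, as you observe, does not use minimality. It also yields the bonus information that in the finite-dimensional case the product system must have one-dimensional fibres. The trade-off is that the argument relies on the Hilbert-space setting (so that $E(t)\otimes_\rho G$ obeys the dimension formula), whereas the paper's approach via \cite{MS02} would extend to $W^*$-correspondences. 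One small point: your dismissal of the degenerate case $E(t)=\{0\}$ is legitimate, since in a product system of Hilbert spaces over $\mathbb{R}_+$ the vanishing of one positive fibre forces all positive fibres to vanish (iterate $E(t_0/2)\otimes E(t_0/2)\cong E(t_0)=0$), and then $T$ is vacuously isometric.
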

\begin{proof}
Any dilation of the product system representation $T$ contains the minimal dilation of the single c.c. representation $T_t$ of the correspondence $E(t)$, for all $t$. Thus it is enough to show that the minimal isometric dilation of a
single completely contractive representation that is not isometric represents the correspondence on an
infinite dimensional space. This can be dug out of the proof of \cite[Theorem 2.18]{MS02}.
\end{proof}

\begin{proposition}\label{prop:infdil2}
Assume that $R$'s minimal E$_0$-dilation
acts on $B(G)$, where $G\supseteq H$ is a Hilbert space. If $R$ is not an E$_0$-semigroup itself, then
$G$ is infinite dimensional.
\end{proposition}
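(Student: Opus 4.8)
The plan is to argue by contradiction. Suppose $\dim G < \infty$, so that $B(G) = M_n(\mb{C})$ with $n = \dim G$, and write $(G,u,B(G),\alpha)$ for the (unique) minimal E$_0$-dilation of $R$. The first step is to note that any unital normal $*$-endomorphism of the simple algebra $M_n(\mb{C})$ is automatically injective, hence surjective by a dimension count, hence a $*$-automorphism. Thus $\alpha = \{\alpha_t\}_{t\geq 0}$ is a semigroup of $*$-automorphisms of $B(G)$.

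Next I would exploit the coinvariance of $p := uu^*$. Since $\alpha$ is an E$_0$-semigroup, coinvariance of $p$ is equivalent to $\alpha_t(p)\geq p$ for all $t$ (as recorded in the preliminaries). But $\alpha_t$ is an automorphism of $M_n(\mb{C})$, so it preserves the rank (equivalently, the normalized trace) of a projection; combined with $\alpha_t(p)\geq p$ this forces $\alpha_t(p) = p$ for every $t$. Consequently, identifying $uB(H)u^*$ with $pB(G)p$ and using that $\alpha_t$ is a surjective $*$-automorphism fixing $p$, we get $\alpha_t(uB(H)u^*) = \alpha_t(p)\,\alpha_t(B(G))\,\alpha_t(p) = pB(G)p = uB(H)u^*$ for all $t$.

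Now I would invoke minimality (Definition \ref{def:min_dil_Arv} together with equation (\ref{eq:W^*-generator})): $B(G) = W^*\big(\bigcup_{t\geq 0}\alpha_t(uB(H)u^*)\big) = uB(H)u^* = pB(G)p$. This forces $p$ to be the unit of $B(G)$, i.e. $uH = G$ and $u$ is unitary. But then $R_t(a) = u^*\alpha_t(uau^*)u$ exhibits each $R_t$ as conjugate to the automorphism $\alpha_t$, so $R$ is itself an (automorphism, hence) E$_0$-semigroup, contradicting the hypothesis. Therefore $G$ must be infinite dimensional.

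The only point that really needs care is the final collapsing step: one must make sure the dilation in hand is the minimal one so that (\ref{eq:W^*-generator}) is available, and that the implication ``$pB(G)p = B(G) \Rightarrow p = 1_G$'' is applied with respect to the correct unit. An alternative, essentially equivalent route would identify $R$'s minimal E$_0$-dilation with the minimal isometric dilation of the identity representation of the Arveson product system of $R$ on $G$; since $R$ is not an E$_0$-semigroup, that representation fails to be isometric, and Proposition \ref{prop:infdil1} then gives $\dim G = \infty$ directly. I would mention this but present the self-contained argument above as the main proof.
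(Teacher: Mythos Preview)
Your main argument is correct and self-contained: finite dimensionality forces each $\alpha_t$ to be an automorphism of $M_n(\mb{C})$, trace preservation combined with $\alpha_t(p)\geq p$ gives $\alpha_t(p)=p$, and then the minimality condition (\ref{eq:W^*-generator}) collapses $B(G)$ to $pB(G)p$, forcing $p=1_G$ and hence $R$ to be (conjugate to) $\alpha$.

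The paper, however, takes precisely the ``alternative route'' you mention at the end: it invokes the Muhly--Solel construction of the minimal E$_0$-dilation via the minimal isometric dilation of the identity representation of $R$'s product system, notes that $R$ not being an E$_0$-semigroup means the identity representation is not isometric, and then applies Proposition~\ref{prop:infdil1} directly. Your direct argument avoids relying on the identification of the minimal E$_0$-dilation with the Muhly--Solel dilation (which requires the uniqueness of minimal dilations), and uses only the raw definition of minimality; the paper's route is shorter given the surrounding machinery but leans on that identification. Either approach is fine here.
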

\begin{proof}
This follows from the previous proposition and from the uniqueness of the minimal E$_0$-dilation, together
with Muhly and Solel's construction of the minimal E$_0$-dilation in terms of product system representations and
isometric dilations.
\end{proof}
\begin{proposition}\label{prop:Arv}
Let $\gamma$ be an E$_0$-semigroup acting on a separable Hilbert space $G$. Let $P$ be an infinite dimensional
projection in $B(G)$ such that $\gamma_t(P) = P$ for all $t\geq 0$. Let $\sigma$ denote the restriction of $\gamma$ to the invariant corner $PB(G)P = B(PG)$. Then $\sigma$ and $\gamma$ are cocycle conjugate.
\end{proposition}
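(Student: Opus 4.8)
The plan is to compute the Arveson (metric operator) product systems of $\gamma$ and of $\sigma$ and show they are isomorphic, and then invoke the fundamental fact that the product system of an $E_0$-semigroup on $B(\cdot)$ is a complete cocycle conjugacy invariant (Arveson, \cite{Arv03}). First I would note that $\gamma_t(P)=P$ genuinely forces the corner to be invariant: $\gamma_t(PaP)=\gamma_t(P)\gamma_t(a)\gamma_t(P)=P\gamma_t(a)P$, so $\sigma_t:=\gamma_t|_{B(PG)}$ is a unital normal $*$-endomorphism of $B(PG)$, the semigroup law is inherited, and the continuity of $t\mapsto\sigma_t(a)$ follows from that of $\gamma$ by compressing with $P$. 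Thus $\sigma$ really is an $E_0$-semigroup on the separable Hilbert space $PG$, which is infinite dimensional by hypothesis (so that $B(PG)$ and $B(G)$ even admit a $*$-isomorphism, as cocycle conjugacy requires; the case $P=1$ is trivial, so I assume $1-P\neq 0$).

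The first substantive observation is that every $x$ in the Arveson fiber $\mathcal{E}^\gamma(t)=\{x\in B(G):\gamma_t(a)x=xa\ \forall a\in B(G)\}$ commutes with $P$: put $a=P$ and use $\gamma_t(P)=P$. Fixing $t$ and writing $\gamma_t(a)=\sum_i S_i a S_i^*$ for an orthonormal basis $\{S_i\}$ of $\mathcal{E}^\gamma(t)$ (so $S_i^*S_j=\delta_{ij}1$, $\sum_iS_iS_i^*=1$), each $S_i$ decomposes along $G=PG\oplus(1-P)G$ as $S_i=S_i'\oplus S_i''$, and injectivity of $S_i$ together with $PG\neq 0$ makes $S_i'=PS_iP$ a nonzero isometry of $PG$. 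A direct computation then yields $\sigma_t(b)=\sum_iS_i'\,b\,S_i'^{\,*}$ for $b\in B(PG)$, with $\{S_i'\}$ orthonormal and $\sum_iS_i'S_i'^{\,*}=P$, so $\{S_i'\}$ is an orthonormal basis of $\mathcal{E}^\sigma(t)$.

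The key step is to check that the compression $J_t:\mathcal{E}^\gamma(t)\to\mathcal{E}^\sigma(t)$, $x\mapsto PxP$, is an isomorphism of Hilbert spaces compatible with the product structure. Well-definedness ($PxP\in\mathcal{E}^\sigma(t)$) is a short manipulation of $\gamma_t(b)x=xb$ using $Px=xP$. It is isometric because $S_i'^{\,*}S_j'=\delta_{ij}P$ (equivalently, because for $x,y\in\mathcal{E}^\gamma(t)$ one has $x^*y\in\mathbb{C}1$ and the same holds in $\mathcal{E}^\sigma(t)$); it is injective because $PxP=0$ with $Px=xP$ gives $x=QxQ$ with $Q=1-P$, whence $x^*x\in\mathbb{C}1$ is an operator vanishing on $PG$, hence zero; and it is surjective because its image is closed (being isometric) and contains the total set $\{S_i'\}$. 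Finally $J$ respects multiplication, since $P(S^{(s)}S^{(t)})P=(PS^{(s)}P)(PS^{(t)}P)$ once the $S$'s commute with $P$, and $J$ is $\sigma$-weakly continuous, hence compatible with the Borel/measurable structures. So $J$ is an isomorphism of product systems $\mathcal{E}^\gamma\cong\mathcal{E}^\sigma$, and by Arveson's classification $\gamma$ and $\sigma$ are cocycle conjugate.

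I expect the only delicate point to be the verification that $J_t$ is a bona fide isomorphism of product systems — specifically injectivity, i.e. that $\mathcal{E}^\gamma(t)$ contains no nonzero operator supported off $PG$, which is exactly where one uses $P\neq 0$ (and one uses $\dim P=\infty$ only to have $\sigma$ act on an infinite dimensional space, as needed for the classification theorem to apply). Everything else is routine. As an alternative to citing the classification, one could instead produce a $\sigma$-cocycle of unitaries $\{U_t\}\subseteq B(PG)$ and a unitary $W:PG\to G$ implementing the identification above and verify the cocycle identity by hand, but passing through the product system invariant is cleaner and is the route I would take.
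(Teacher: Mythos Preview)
The paper does not actually prove this proposition --- its entire ``proof'' is a one-line citation to Proposition~2.2.3 of Arveson's monograph \cite{Arv03}. So there is no in-paper argument to compare your proposal against.

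That said, your proof is correct. The key observation that every $x\in\mathcal{E}^\gamma(t)$ commutes with $P$ (set $a=P$ in the intertwining relation and use $\gamma_t(P)=P$) is exactly right, and from it the compression $J_t:x\mapsto PxP$ is easily seen to be a well-defined isometry into $\mathcal{E}^\sigma(t)$ that respects the product. Your injectivity argument is sound (a nonzero $x\in\mathcal{E}^\gamma(t)$ has $x^*x$ a nonzero scalar multiple of $1_G$, so $x$ cannot be supported entirely on $(1-P)G$ when $P\neq 0$), and surjectivity follows since $J_t$ hits the orthonormal basis $\{S_i'\}$ you exhibited for $\mathcal{E}^\sigma(t)$. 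The appeal to Arveson's classification theorem then finishes the job. The measurability of $J$ is, as you note, routine.

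One remark on the route you chose: Arveson's own treatment in \cite{Arv03} places this result early in Chapter~2 and establishes it by a more direct construction, not by invoking the classification theorem. Your argument is conceptually cleaner but uses heavier machinery. In the context of this thesis either is fine --- indeed, the author sketches at the very end of Section~\ref{sec:type} an alternative proof of the main Theorem~\ref{thm:cocycle} that proceeds exactly as you do here, by showing two $E_0$-semigroups share a product system and then citing the classification theorem. So your approach to the present proposition is entirely in keeping with the spirit of the surrounding material.
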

\begin{proof}
Proposition 2.2.3, \cite{Arv03}.
\end{proof}

\begin{theorem}\label{thm:cocycle}
If $R$ is not a semigroup of automorphisms, then there is an infinite dimensional projection $P \in B(K)$ such that
$\beta_t(P) = P$ for all $t \geq 0$, and such that the restriction of $\beta$ to the invariant corner $PB(K)P = B(PK)$ is conjugate to $R$'s minimal dilation. In particular, $\beta$ is cocycle conjugate to $R$'s minimal
dilation.
\end{theorem}
\begin{proof}
As in Proposition \ref{prop:restrict}, let $W$ denote the restriction of $V^E$ to the minimal isometric (and fully-coisometric) dilation of $T^E$, and denote by $L$ the space
on which it represents $E$. By Proposition \ref{prop:infdil1}, ${\rm dim} L = \infty$.
We compute:
\begin{align*}
\beta_t(P_L) &= \widetilde{V^E_t}(I \otimes P_L)\widetilde{V^E_t}^* \\
&= \widetilde{W}_t(I \otimes P_L)\widetilde{W}_t^* P_L = P_L.
\end{align*}
Let $\sigma$ denote the restriction of $\beta$ to $B(P_L K)$. By Proposition \ref{prop:Arv}, $\beta$ and $\sigma$ are cocycle conjugate. It remains to show that $\sigma$ is the minimal dilation of $R$. But for all $A \in B(L), t\geq 0$,
\begin{align*}
\sigma_t(A) &= \sigma_t(P_L A P_L) \\
&= \beta_t(P_L A P_L) \\
&= \widetilde{V^E_t}(I \otimes P_L A P_L)\widetilde{V^E_t}^* \\
&= \widetilde{W}_t(I \otimes A)\widetilde{W}_t^* .
\end{align*}
On the other hand, $W$ is $T^E$'s minimal isometric dilation. The results of \cite{MS02} show that the minimal E$_0$-dilation of $R$ is given by
\begin{displaymath}
A \mapsto \widetilde{W}_t(I \otimes A)\widetilde{W}_t^* \,\, , \,\, A \in B(L).
\end{displaymath}
The uniqueness of the minimal E$_0$-dilation now implies that $\sigma$ must be the minimal E$_0$-dilation of $R$ (note that uniqueness of the minimal E$_0$-dilation of a CP$_0$-semigroup is up to conjugacy, and not merely up to cocycle conjugacy).
\end{proof}

\begin{corollary}\label{cor:main}
$\beta$ is cocycle conjugate to the minimal dilation of $R$ in all cases except the case where
$R$ is an automorphism semigroup, $S$ is not an automorphism semigroup and $H$ is finite dimensional.
\end{corollary}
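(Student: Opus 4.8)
The plan is to dispose of the case where $R$ is not a semigroup of automorphisms by citing Theorem \ref{thm:cocycle} directly, and then to handle the case where $R$ \emph{is} a semigroup of automorphisms by reducing everything to a dimension count. So suppose $R$ is a semigroup of automorphisms of $B(H)$. Then $R$ is itself an E$_0$-semigroup, so by Definition \ref{def:min_dil_Arv} together with the uniqueness of the minimal E$_0$-dilation the minimal dilation of $R$ is $R$ acting on $B(H)$, and by Theorem \ref{thm:aut} the semigroup $\beta$ constructed in Theorem \ref{thm:scudil} is a semigroup of automorphisms of $B(K)$. The key observation I would record is that \emph{any} E$_0$-semigroup of automorphisms of $B(G)$ is cocycle conjugate to the identity E$_0$-semigroup on $B(G)$: writing $\alpha_t = \mathrm{Ad}(W_t)$ for a strongly continuous implementing semigroup of unitaries $\{W_t\}_{t\geq0}$ (such a choice being possible, see \cite{Arv03}), the family $\{W_t\}$ is an $\iota$-cocycle for the identity semigroup $\iota$ and conjugates $\iota$ to $\alpha$. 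Since cocycle conjugacy is an equivalence relation, and the identity semigroups on $B(G_1)$ and $B(G_2)$ are conjugate precisely when $\dim G_1 = \dim G_2$, it follows that two automorphism E$_0$-semigroups on $B(G_1)$ and $B(G_2)$ are cocycle conjugate if and only if $\dim G_1 = \dim G_2$ (the ``only if'' being forced by the definition of cocycle conjugacy across different algebras, which requires a $*$-isomorphism of the ambient von Neumann algebras). Thus, once $R$ is an automorphism semigroup, the claim reduces to deciding whether $\dim K = \dim H$.

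Now I would split into three sub-cases. First, if $S$ is also a semigroup of automorphisms, then both product systems $E$ and $F$ of Section \ref{subsec:des_MS} have one-dimensional fibres, so by (\ref{eq:X}) each $X(s,t)$ is a one-dimensional $\cM'$-correspondence; since $P_{(s,t)} = R_s \circ S_t$ is an automorphism for every $(s,t)$, the representation $T$ of $X$ is isometric and fully coisometric, hence is its own minimal isometric dilation, so $K = H$ and we are done by the previous paragraph. Second, if $S$ is not a semigroup of automorphisms but $\dim H = \infty$, then $K$ is separable (Theorem \ref{thm:scudil}) and I claim $\dim K = \infty$, so that $\dim K = \aleph_0 = \dim H$ and $\beta$ is again cocycle conjugate to $R$'s minimal dilation. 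Third, if $S$ is not a semigroup of automorphisms and $\dim H < \infty$, say $\cM = B(H) = M_n(\mb{C})$, then every unital $*$-endomorphism of $\cM$ is automatically an automorphism, so $S$ is not an E$_0$-semigroup; by the same claim $\dim K = \infty > \dim H$, so there is no $*$-isomorphism $B(H) \to B(K)$ and $\beta$ is \emph{not} cocycle conjugate to $R$'s minimal dilation. This is exactly the excluded case, and it finishes the analysis.

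The two points needing genuine care are: (a) that the implementing unitaries of an automorphism E$_0$-semigroup can be chosen to form a strongly continuous semigroup (standard, but worth pinning down via \cite{Arv03}); and, more importantly, (b) the claim that $\dim K = \infty$ whenever $S$ fails to be an E$_0$-semigroup, which is the main obstacle. For (b) I would argue as in the proof of Theorem \ref{thm:cocycle} and Proposition \ref{prop:infdil1}: the minimal isometric dilation $V$ on $K$ of the two-parameter representation $T$ of $X$ contains, for each $(s,t)$, the minimal isometric dilation of the single completely contractive representation $T_{(s,t)}$ of the $\cM'$-correspondence $X(s,t)$; and if $S$ is not an E$_0$-semigroup then $\widetilde{T}_{(0,t)}$ cannot be an isometry for some $t$ (otherwise, being also a coisometry since $T$ is fully coisometric, $\widetilde{T}_{(0,t)}$ would be unitary and $S_t = \widetilde{T}_{(0,t)}(I \otimes \cdot)\widetilde{T}_{(0,t)}^{\,*}$ a unital $*$-endomorphism). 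A minimal isometric dilation of a non-isometric completely contractive representation of a $W^*$-correspondence lives on an infinite-dimensional Hilbert space — this is the content extracted from the proof of \cite[Theorem 2.18]{MS02} that underlies Proposition \ref{prop:infdil1} — so $\dim K = \infty$, as needed. Everything else is bookkeeping.
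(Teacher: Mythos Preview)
Your proof is correct and follows essentially the same strategy as the paper: invoke Theorem~\ref{thm:cocycle} when $R$ is not an automorphism semigroup, and otherwise reduce to a dimension comparison between $H$ and $K$ using Theorem~\ref{thm:aut} and the fact that automorphism E$_0$-semigroups are cocycle conjugate precisely when the underlying Hilbert spaces have equal dimension. The paper organizes the remaining case-split by $\dim H$ first (citing \cite[Remark~2.2.4]{Arv03} directly when $\dim H = \infty$, and Proposition~\ref{prop:infdil2} when $\dim H < \infty$), whereas you split on whether $S$ is an automorphism semigroup first; but the substance is identical. One small redundancy: in your second sub-case ($S$ not an automorphism semigroup, $\dim H = \infty$) you flag ``$\dim K = \infty$'' as a claim needing your argument~(b), but since $K \supseteq H$ this is immediate --- argument~(b) is really only needed for the third sub-case.
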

\begin{proof}
Assume that $R$ is a semigroup of automorphisms. In this case it is, of course, its own minimal dilation. We know by Theorem \ref{thm:aut} that $\beta$ is also a semigroup of automorphisms. If $H$ is infinite dimensional, then $\beta$ and $R$ are cocycle conjugate (this is the content of \cite[Remark 2.2.4]{Arv03}).

Assume further that $H$ is finite dimensional. If $S$ is also an automorphism semigroup, then $\beta = R$ (and $\gamma = S$). Finally, if $S$ is not a semigroup of automorphisms, then, by Proposition \ref{prop:infdil2}, $K$ must be infinite dimensional, so $\beta$ cannot be cocycle conjugate to $R$.
\end{proof}

\begin{corollary}
Assume that $R$ is not an automorphism semigroup on a finite dimensional space and has a bounded generator. Then $\beta$ is a type I E$_0$-semigroup.
\end{corollary}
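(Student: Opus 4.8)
The plan is to combine Corollary~\ref{cor:main} with Arveson's Theorem~\ref{thm:Arv}. Recall that Theorem~\ref{thm:Arv} says that a CP$_0$-semigroup with a bounded generator has a type~I minimal E$_0$-dilation, and that every automorphism E$_0$-semigroup is, by definition, of type~I. So the whole argument reduces to showing that $\beta$ is cocycle conjugate either to the minimal E$_0$-dilation of $R$ or to an automorphism semigroup, and then invoking the cocycle-conjugacy-invariance of the classification into types~I,~II,~III.

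First I would dispose of the case where $R$ is not an automorphism semigroup. Here Corollary~\ref{cor:main} (equivalently Theorem~\ref{thm:cocycle}) applies directly: since $R$ is not an automorphism semigroup, $\beta$ is cocycle conjugate to the minimal E$_0$-dilation of $R$. Because $R$ has a bounded generator, Theorem~\ref{thm:Arv} tells us that its minimal E$_0$-dilation is of type~I. Since type~I is a cocycle-conjugacy invariant, $\beta$ is of type~I as well. (If $H$ is finite-dimensional then any CP$_0$-semigroup on $B(H)=M_n(\mathbb C)$ automatically has a bounded generator, so this hypothesis is subsumed; but I would state it in the generality of ``$R$ not an automorphism semigroup on a finite-dimensional space and $R$ has a bounded generator'' to match the corollary.) Note also that the hypothesis rules out exactly the one exceptional case in Corollary~\ref{cor:main}, namely ``$R$ an automorphism semigroup on a finite-dimensional space'', so whenever $R$ is not an automorphism semigroup we are in the good range of the corollary.

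The only remaining possibility consistent with the hypothesis is that $R$ \emph{is} an automorphism semigroup but acting on an infinite-dimensional space $H$. In that case, by Theorem~\ref{thm:aut}, $\beta$ is itself an automorphism semigroup, hence trivially of type~I. (Indeed by \cite[Remark 2.2.4]{Arv03}, $\beta$ and $R$ are then cocycle conjugate, but we do not even need this: automorphism semigroups are type~I by fiat.) This exhausts all cases, so $\beta$ is a type~I E$_0$-semigroup.

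The argument has essentially no obstacle of its own; all the work has been done in Theorem~\ref{thm:cocycle}, Theorem~\ref{thm:aut}, and Corollary~\ref{cor:main}. The one point that needs a word of care is the bookkeeping of cases: one must check that the hypothesis ``$R$ is not an automorphism semigroup on a finite-dimensional space, and $R$ has a bounded generator'' is precisely what is needed so that \emph{either} Theorem~\ref{thm:cocycle} applies and gives cocycle conjugacy to a type~I semigroup (the bounded generator feeding into Theorem~\ref{thm:Arv}), \emph{or} $R$ is an automorphism semigroup on an infinite-dimensional space and Theorem~\ref{thm:aut} gives that $\beta$ is an automorphism semigroup. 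Writing this out is a two-line case split; I would present it exactly as above.
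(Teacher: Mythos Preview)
Your proof is correct and follows essentially the same approach as the paper. The paper's proof is a one-liner: by Corollary~\ref{cor:main}, $\beta$ is cocycle conjugate to $R$'s minimal dilation, which by Theorem~\ref{thm:Arv} is type~I. Your case split is unnecessary because Corollary~\ref{cor:main} already covers both of your cases (the hypothesis rules out exactly the one exceptional case there), but your more detailed argument is certainly not wrong.
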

\begin{proof}
By Corollary \ref{cor:main}, $\beta$ is cocycle conjugate to $R$'s minimal dilation, which, by Theorem \ref{thm:Arv}, is a type I E$_0$-semigroup.
\end{proof}

\begin{remark}
\emph{By the results in \cite{Arv99}, one may also effectively compute the index of $\alpha$ in terms of natural structures associated with the generator of $\phi$.}
\end{remark}

To conclude this section, let us outline a different proof for part of Theorem \ref{thm:cocycle}, which might be enlightening.
This argument uses the fact that product systems are classifying invariants of E$_0$-semigroups, a fact that we did not use above. To make the following proof precise, one would need to take into account the Borel structure of product systems and measurability of representations.

Recall that to each E$_0$-semigroup Arveson associates a product system, and that two E$_0$-semigroups are
cocycle conjugate if and only if their associated product systems are isomorphic (see Section 2.4, \cite{Arv03}).
By equations (\ref{eq:X}), (\ref{eq:beta}) and (\ref{eq:V}), $\beta$ is an E$_0$-semigroup given by an isometric and fully-coisometric representation of the product system $E$. Proposition 3.2.2 of \cite{Arv03} implies that $E$
must therefore be (isomorphic to) the product system associated with $\beta$. On the other hand, the results of \cite{MS02} show that the minimal E$_0$-dilation of $R$ is also an E$_0$-semigroup that is given by an isometric and fully-coisometric representation of the product system $E$, the same $E$ that Muhly and Solel associate with $R$. Thus, being associated with the same product system, $\beta$ and the minimal dilation of $R$ are cocycle conjugate.

Combining \cite[Corollary 8.7]{Markiewicz} (see Section \ref{subsec:QCS}) with Theorem \ref{thm:scudil} and Corollary \ref{cor:main}, we obtain:
\begin{corollary}\label{cor:quantized}
Let $\mu$ and $\nu$ be two infinitely divisible Borel probability measures on $\mb{C}$. Assume that $\{\phi^\mu_t\}_{t\geq 0}$ has index $m$ and that $\{\phi^\nu_t\}_{t\geq 0}$ has index $n$. Then there exists a Hilbert space $K \supset H := L^2(\mb{R})$, a type \emph{I}$_m$ E$_0$-semigroup $\alpha$ and a type \emph{I}$_n$ E$_0$-semigroup $\beta$ acting on $B(K)$, such that for all $s,t \geq 0$ and all $T \in B(K)$, $\alpha_s (\beta_t (T)) = \beta_t (\alpha_s(T))$
and
\bes
\phi^\mu_s (\phi^\nu_t(P_H T P_H)) = P_H \alpha_s (\beta_t (T)) P_H.
\ees
\end{corollary}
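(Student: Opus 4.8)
The plan is to assemble the statement from ingredients already established: the strong commutativity of quantized convolution semigroups (Theorem~\ref{thm:QCSG}), the E$_0$-dilation theorem for strongly commuting CP$_0$-semigroups (Theorem~\ref{thm:scudil}), the analysis of the cocycle conjugacy type of the dilating semigroups (Corollary~\ref{cor:main}), and Markiewicz's computation of the minimal E$_0$-dilation of a quantized convolution semigroup (\cite[Corollary~8.7]{Markiewicz}). This is precisely the combination indicated in the line preceding the statement.

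First I would note that $\phi^\mu = \{\phi^\mu_t\}_{t\geq 0}$ and $\phi^\nu = \{\phi^\nu_t\}_{t\geq 0}$ are genuine CP$_0$-semigroups on $B(H)$ with $H = L^2(\mb{R})$ separable: each $\phi^\mu_t$ is completely positive, normal and contractive, it is unital because $\mu_t$ is a probability measure and conjugation by the Weyl unitaries $W_z$ fixes the identity, the semigroup law follows from $\mu_s \ast \mu_t = \mu_{s+t}$, and the required weak continuity in $t$ follows from weak continuity of $t \mapsto \mu_t$. By Theorem~\ref{thm:QCSG}, $\phi^\mu$ and $\phi^\nu$ strongly commute.

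Next I would apply Theorem~\ref{thm:scudil} to the strongly commuting pair $\phi^\mu,\phi^\nu$, obtaining a separable Hilbert space $K \supseteq H$ and a minimal E$_0$-dilation $(K,u,\cR,\tilde\alpha)$ of the two-parameter semigroup $(s,t)\mapsto \phi^\mu_s\circ\phi^\nu_t$; since $\cM = B(H)$, the proposition following Theorem~\ref{thm:scudil} gives $\cR = B(K)$. Setting $\alpha_s := \tilde\alpha_{(s,0)}$ and $\beta_t := \tilde\alpha_{(0,t)}$ (the two commuting E$_0$-semigroups produced in the proof of Theorem~\ref{thm:scudil}), one has $\alpha_s\circ\beta_t = \beta_t\circ\alpha_s$ on $B(K)$, $\alpha$ dilates $\phi^\mu$, $\beta$ dilates $\phi^\nu$, and, after identifying $H$ with $uH$ so that $P_H = uu^*$,
\bes
\phi^\mu_s\bigl(\phi^\nu_t(P_H T P_H)\bigr) = P_H\,\tilde\alpha_{(s,t)}(T)\,P_H = P_H\,\alpha_s\bigl(\beta_t(T)\bigr)\,P_H
\ees
for all $s,t\geq 0$ and $T\in B(K)$. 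This already yields every assertion except the identification of the types.

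Finally I would pin down the types. Since $H = L^2(\mb{R})$ is infinite dimensional, the exceptional case in Corollary~\ref{cor:main} (which requires $H$ finite dimensional) does not occur; hence $\alpha$ is cocycle conjugate to the minimal E$_0$-dilation of $\phi^\mu$, and, by the symmetric version of that corollary, $\beta$ is cocycle conjugate to the minimal E$_0$-dilation of $\phi^\nu$. By \cite[Corollary~8.7]{Markiewicz} the minimal E$_0$-dilation of $\phi^\mu$ is a type I$_m$ E$_0$-semigroup and that of $\phi^\nu$ is type I$_n$; since the type (and the index) of an E$_0$-semigroup is a cocycle conjugacy invariant, $\alpha$ is type I$_m$ and $\beta$ is type I$_n$, which finishes the proof. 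There is no real obstacle here: the argument is a direct assembly of the cited theorems, and the only points deserving a word of care are the verification that the exceptional case of Corollary~\ref{cor:main} is excluded (immediate, by infinite-dimensionality) and the bookkeeping identifying the ``index'' appearing in \cite{Markiewicz} with the I$_m$ classification.
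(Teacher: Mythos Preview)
Your proof is correct and follows exactly the route the paper indicates: the paper's own argument is a one-line ``combine Theorem~\ref{thm:scudil}, Corollary~\ref{cor:main}, and \cite[Corollary~8.7]{Markiewicz} (together with the strong commutativity from Section~\ref{subsec:QCS})'', and you have simply spelled out that combination in detail. The extra care you take (checking that $\cR=B(K)$ via the proposition following Theorem~\ref{thm:scudil}, and ruling out the exceptional case of Corollary~\ref{cor:main} using $\dim H=\infty$) is appropriate and consistent with the paper's implicit reasoning.
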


\chapter{E-dilation of two-parameter CP-semigroups - the nonunital case}\label{chap:nonunital}


In this chapter we treat the problem of constructing an E-dilation to a pair of strongly commuting CP-semigroups that do not necessarily preserve the unit. We will use the same basic strategy as in the previous chapter, but several new difficulties will arise along the way. Theorem \ref{thm:rep_SC} applies to show that for every pair of strongly commuting CP-semigroups $R$ and $S$, there exists a product system $X = \{X(s,t)\}_{s,t\geq 0}$ and  representation $T$ of $X$ such that
\bes
R_{s}\circ S_t(a) = \tT_{(s,t)}(I_{X(s,t)} \otimes a)\tT_{(s,t)}^* .
\ees
However, when $R$ and $S$ are not unit preserving $T$ will not be fully coisometric, and Theorem \ref{thm:isoDilFC} cannot be used to obtain an isometric dilation for $T$. It seems that the way to go is to prove the existence of an arbitrary isometric dilation of a product system representation over $\mb{R}_+^2$, but this was not achieved. In the next section, we will prove the existence of an isometric dilation of a product system representation over $\diadp$ (the dyadic rationals in $\mb{R}_+^2$), which seems to be just enough in order to construct a two parameter E-dilation to a pair of strongly commuting CP-semigroups acting on $B(H)$.
The construction of the E-dilation is carried out in Section \ref{sec:Edilation}.

The material in this chapter is taken from \cite{ShalitCPDil}.

\section{Isometric dilation of a product system representation over $\diadp$}\label{sec:iso_dil_d}

Unital CP-semigroups correspond to fully coisometric product system representations. Fully coisometric product system representations are analogous to semigroups of \emph{coisometries} on a Hilbert space. In the context of classical dilation theory of contraction semigroups on a Hilbert space \cite{SzNF70}, the problem of finding an isometric dilation to a semigroup of coisometries is relatively easy (see also \cite{Douglas}). In this section, as we will treat general c.c. representations, we will only be able to construct an isometric dilation for a completely contractive product system representation over the set $\diadp$ of positive dyadic pairs, where
$$\diad := \left\{\left(\frac{k}{2^{n}},\frac{m}{2^{n}}\right) : (k,m) \in \mathbb{Z}^2, n \in \mathbb{N} \right\} $$
is the set of dyadic fractions.
This will be sufficient to lead us to our present goal of dilating a CP-semigroup over $\Rpt$, due to the extendability properties of CP-semigroups discussed in Section \ref{sec:extension}.

Let $\cM$ be a von Neumann algebra, let $X$ be a product system of $\cM$-correspondences over $\diadp$, and let $H$ be a Hilbert space. Assume that $\sigma$ is a normal representation of $\cM$ on $H$. We denote by $\cH_0$ the space of all finitely supported functions $f$ on $\diadp$ such that $f(n) \in X(n) \otimes_\sigma H$, for all $n \in \diadp$. For any $n = (n_1, n_2) \in \diad$, we denote by $n_+$ the element in $\diadp$ having $\max\{n_i,0 \}$ in its $i$-th entry, and we denote $n_- = n_+ - n$.
\begin{definition}\label{def:positive}
Let $\Phi$ be a function on $\diad$ such that $\Phi(n) \in B(X(n_+) \otimes_\sigma H, X(n_-) \otimes_\sigma H)$, $n \in \diad$. We say that $\Phi$ is \emph{positive definite} if $\Phi (0) = I_{M \otimes_\sigma H}$ and if
\begin{enumerate}
\item \label{it:positive} For all $h \in \cH_0$ we have
\bes
\sum_{m,n \in \diadp} \left\langle \left[ I_{X(m-(m-n)_+)} \otimes \Phi(m-n)\right ] h(m), h(n) \right\rangle \geq 0 .
\ees
\item \label{it:self-adjoint}$\Phi(n) = \Phi(-n)^*$, for all $n \in \diadp$.
\item \label{it:covariance} For all $n\in \diadp$, $a \in \cM$,
\bes
 \Phi(n)\left(a \otimes I_H \right) = \left(a \otimes I_H \right) \Phi(n) .
\ees
\end{enumerate}
\end{definition}
In item (\ref{it:covariance}) above $(a \otimes I_H) \xi$ should be interpreted as
$\varphi_{X(m)}(a) x \otimes h$ if $\xi = x \otimes h$, $h \in H, x \in X(n)$, for some $n \neq 0$, and as $ab \otimes h$ if $\xi = b \otimes h$ for $b \in \cM, h \in H$. This will remain our convention throughout.

We note that the proof of Theorem 3.5 in \cite{S08} implies the following fact: \emph{if $\Phi$ is a positive definite function on $X$ as above, then there exists a covariant isometric representation $V$ of $X$ on some Hilbert space $K \supseteq H$, such that $H$ is a reducing subspace for $V_0$ with $V_0(\cdot) \big|_H = \sigma(\cdot)$, and such that}
\be\label{eq:dil}
P_{X(n_-) \otimes H} V(n) \big|_{X(n_+) \otimes H} = \Phi(n) \,\, , \,\, n \in \diad,
\ee
\emph{where $V(n) := \widetilde{V}_{n_-}^*\widetilde{V}_{n_+}$}. This fact is the basis of the proof of the following theorem, so we point out that the definition of $V$ in the above mentioned proof has to be modified in an obvious manner and that straightforward calculations (some almost identical and some different from what appeared in the proof) show that $V$ has all the required properties. The main difference is that one has to show that $V$ has the ``semigroup" property.

\begin{theorem}\label{thm:IsoDilDiad}
Let $\cM$ be a $C^*$-algebra, and let $X=\{X(s,t)\}_{(s,t) \in \diadp}$ be a product system of $\cM$-correspondences. Let $T$ be a c.c. representation of $X$ on a Hilbert space $H$, with $\sigma = T(0,0)$.
Assume that for all $(s,t) \in \diadp$, the Hilbert space $X(s,t) \otimes_{\sigma} H$ is separable.
Then there exists an isometric representation $V$ of $X$ on Hilbert space $K$ containing $H$ such that:
\begin{enumerate}
    \item $P_H$ commutes with $V_{(0,0)} (\cM)$, and $V_{(0,0)}(a) P_H = T_{(0,0)}(a) P_H$, for all $a \in \cM$.
    \item\label{it:dilation} $P_H V_{(s,t)}(x)\big|_H = T_{(s,t)}(x)$ for all $(s,t) \in \diadp$, $x \in X(s,t)$.
    \item\label{it:min} $K = \bigvee \{V(x)h : x \in X, h \in H \}$.
    \item\label{it:V*} $P_H V_{(s,t)}(x)\big|_{K \ominus H} = 0$ for all $(s,t) \in \diadp$, $x \in X(s,t)$.
\end{enumerate}
If $\cM$ is a $W^*$-algebra, $X$ a product system of $W^*$-correspondences and $T_0$ is normal, then
$V_0$ is also normal, that is, $V$ is a representation of $W^*$-product systems.
\end{theorem}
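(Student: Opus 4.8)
\textbf{Proof plan for Theorem \ref{thm:IsoDilDiad}.}

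The plan is to reduce the existence of an isometric dilation $V$ of the c.c. representation $T$ to the construction of a suitable positive definite function $\Phi$ on the product system $X$ over $\diadp$, in the sense of Definition \ref{def:positive}. Once such a $\Phi$ is found, the fact quoted right after Definition \ref{def:positive} (the adaptation of the proof of \cite[Theorem 3.5]{S08} to the semigroup $\diadp$) will produce the desired isometric representation $V$ on a Hilbert space $K \supseteq H$ satisfying items 1--4, with $V(n) = \widetilde{V}_{n_-}^* \widetilde{V}_{n_+}$ and $P_{X(n_-)\otimes H} V(n)\big|_{X(n_+)\otimes H} = \Phi(n)$; setting $n = (s,t) \in \diadp$ (so $n_- = 0$, $n_+ = (s,t)$) and compressing to $H$ then gives item \ref{it:dilation}. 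So the entire content of the theorem is: find the right $\Phi$.

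First I would produce $\Phi$. The natural candidate on $\diadp$ is $\Phi(s,t) = \widetilde{T}_{(s,t)}$ and on $-\diadp$ its adjoint $\widetilde{T}_{(s,t)}^*$, with $\Phi(n) = \widetilde{T}_{n_-}^* \widetilde{T}_{n_+}$ for general $n \in \diad$; here one must first check this is well defined, i.e.\ that $\widetilde{T}_{m}^* \widetilde{T}_{m} $-type reductions are consistent when $n$ is written as $n_+ - n_-$ in the various ways allowed by the product system structure, which follows from associativity of the multiplication maps $U_{s,t}$ and the covariance relations of $T$. Properties \ref{it:self-adjoint} and \ref{it:covariance} of Definition \ref{def:positive} are then immediate from the definition and from the fact that each $T_{(s,t)}$ is a covariant representation (so $\widetilde{T}_{(s,t)}(\varphi_{X(s,t)}(a)\otimes I_H) = \sigma(a)\widetilde{T}_{(s,t)}$, which passes to $\Phi$). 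The substance is property \ref{it:positive}, positive definiteness: for a finitely supported $h \in \cH_0$ one must show $\sum_{m,n} \langle [I \otimes \Phi(m-n)] h(m), h(n)\rangle \geq 0$. I would prove this by the standard Kolmogorov/GNS argument used for contractive semigroups: build the Hilbert space $\cH$ of $\cH_0$ completed in the (a priori possibly degenerate) inner product $\langle h, k \rangle_\Phi := \sum_{m,n}\langle [I\otimes \Phi(m-n)]h(m), k(n)\rangle$, and exhibit it as (a corner of) an honest Hilbert space by realizing the $\widetilde{T}_{(s,t)}$ as genuine contractions --- concretely, using the construction of Chapter \ref{chap:representing_representations} (Proposition \ref{prop:technology}) one gets a contractive semigroup $\hat{T} = \{\hat{T}_{(s,t)}\}$ on $\cH = H \oplus \bigoplus_{0\neq s} X(s)\otimes_\sigma H$ whose matrix coefficients are exactly the $\Phi(m-n)$; positive definiteness of $\Phi$ is then literally the statement that the Gram matrix of a contractive semigroup (over the \emph{abelian} semigroup $\diadp$) is positive, which one can verify directly from $\|\hat{T}_{(s,t)}\| \leq 1$ and commutativity, or by citing the appropriate extension of \cite[Ch.\ I]{SzNF70} to $\diadp$. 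The separability hypothesis enters only to guarantee the Hilbert space $K$ built from $\cH_0$ is separable (each fiber $X(s,t)\otimes_\sigma H$ is separable and $\diadp$ is countable), which is needed for the continuity/measurability bookkeeping inside the $S08$ construction; it is otherwise harmless.

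The main obstacle I anticipate is purely the semigroup-structure verification inside the adapted $S08$ construction: showing that the isometric representation $V$ produced from $\Phi$ actually satisfies $V_s(x) V_t(y) = V_{s+t}(xy)$ for all $s,t \in \diadp$, rather than just on each fiber being an isometric covariant representation. The difficulty is that $\diadp$ is two-dimensional, so one cannot simply march along $\mb{R}_+$ as in \cite{MS02}; one has to exploit that $\diadp$ is directed and that any two elements have a common dyadic refinement, and then check that the inductive-limit maps gluing the $X(s)\otimes_\sigma H$ together are compatible with multiplication --- this is where the explicit formula $\hat{T}_s(\delta_t \cdot x_{t-s}\otimes x_s \otimes h) = \delta_{t-s}\cdot(x_{t-s}\otimes \widetilde{T}_s(x_s\otimes h))$ of Proposition \ref{prop:technology}, together with the associativity identity for the $U_{s,t}$, does the work. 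Finally, the last sentence of the theorem (normality of $V_0$ when $\cM$ is a $W^*$-algebra) follows exactly as in the proof of Theorem \ref{thm:isoDilFC}: the left action $a \mapsto a \otimes I_H$ on each $X(s,t)\otimes_\sigma H$ is $\sigma$-weakly continuous on the unit ball (cf.\ \cite{MS03}), hence so is the induced action defining $V_0$ on $K$, giving normality; items \ref{it:min} and \ref{it:V*} are the general minimality facts established for any minimal isometric dilation, proved verbatim as in Proposition \ref{prop:mainprop}.
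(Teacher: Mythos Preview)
Your plan has a genuine gap at the key step: the function $\Phi(n) = \widetilde{T}_{n_-}^* \widetilde{T}_{n_+}$ is \emph{not} in general positive definite. Positive definiteness of this particular $\Phi$ (equivalently, of the corresponding kernel built from the semigroup $\hat{T}$ of Proposition~\ref{prop:technology}) is precisely the condition that $T$ (equivalently $\hat{T}$) admit a \emph{regular} isometric dilation --- see Theorem~\ref{thm:reg} and \cite[Theorem~3.5]{S08}. But over a two-parameter semigroup this condition can fail even for a single pair of commuting contractions; And\^o's theorem guarantees a commuting isometric dilation, not a regular one. So ``Gram matrix of a contractive semigroup over $\diadp$ is positive'' is simply false without extra hypotheses (double commutation, full coisometry, the norm condition~(\ref{eq:norm_cond}), etc.), and there is no ``appropriate extension of \cite[Ch.~I]{SzNF70}'' to cite here.

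The paper's proof avoids this by never writing down the naive $\Phi$. Instead it exploits the fact that for each fixed $n$ the restriction of $T$ to the product system $X^{(n)}=\{X(m/2^n,k/2^n)\}_{m,k\in\mb{N}}$ over $\mb{N}^2$ \emph{does} have an isometric dilation $(\rho_n,V_{1,n},V_{2,n})$, by Solel's theorem \cite[Theorem~4.4]{S06} (the product-system analogue of And\^o). From each such dilation one forms operators
\[
a_n(s,t) = P_{X(s_-,t_-)\otimes H}\,\overline{V}_n(m_{s,n},k_{t,n})\big|_{X(s_+,t_+)\otimes H},
\]
and these sit in the unit balls of $B(X(s_+,t_+)\otimes H,\,X(s_-,t_-)\otimes H)$. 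The separability hypothesis enters \emph{here}, not in the $S08$ bookkeeping: it makes these unit balls metrizable in the weak operator topology, so a diagonal argument over the countable set $\diad$ produces a single subsequence $\{n_k\}$ along which $a_{n_k}(s,t)$ converges weakly for every $(s,t)$. The limit $\Phi(s,t)$ is then positive definite essentially because, for any finite configuration in the sum~(\ref{it:positive}), one can choose $k$ large enough that all relevant dyads lie in $2^{-n_k}\mb{Z}^2$, and on that grid the positivity is inherited from the genuine isometric dilation $\overline{V}_{n_k}$. On $\diadp$ one does recover $\Phi(s,t)=\widetilde{T}_{(s,t)}$, but for mixed-sign $(s,t)$ the resulting $\Phi$ need not equal $\widetilde{T}_{(s,t)_-}^*\widetilde{T}_{(s,t)_+}$; that discrepancy is exactly why your direct candidate fails and why the And\^o-plus-compactness detour (borrowed from Pt\'ak's proof of S\l oci\'nski's theorem) is needed.

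Your remarks on minimality, item~\ref{it:V*}, and normality of $V_0$ in the $W^*$ case are fine and match the paper.
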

Recall that a dilation satisfying item (\ref{it:min}) above is called a \emph{minimal} dilation.

\begin{proof}
For any $n \in \mathbb{N}$, the triple $(\sigma, T(2^{-n},0), T(0,2^{-n}))$ defines a c.c. representation of the product system $X^{(n)} = \{X(m/{2^n},k/{2^n})\}_{m,k}$. We will denote $X^{(n)}(m,k) = X(m/{2^n},k/{2^n})$. By Theorem 4.4 in \cite{S06}, this representation has a covariant isometric dilation $(\rho_n,V_{1,n},V_{2,n})$ on some Hilbert space which we need not refer to. As $n$ increases we get isometric dilations to the restriction of $T$ to fatter and fatter product systems, but the problem is that we do not know exactly how (and if) they sit one inside the other. Our immediate goal is to define a positive definite function $\Phi$ on $\diad$. The heart of the following idea is taken from Ptak's paper \cite{Ptak}, where the existence of a unitary dilation to a two-parameter semigroup of contractions on a Hilbert space is obtained (the latter result was obtained earlier by S\l oci\'nski in \cite{Slocinski}, using a different method).

First we define, for all $n \in \mathbb{N}, (s,t) \in \diad$ an operator $a_n (s,t)$ in $B(X(s_+,t_+) \otimes H, X(s_-,t_-) \otimes H)$. This is done in the following manner. Fixing $(s,t) \in \diad$, there is some $n_{s,t} \in \mathbb{N}$ such that for all $n \geq n_{s,t}$ there are two integers $m_{s,n}$ and $k_{t,n}$ satisfying $(s,t) = (m_{s,n}\cdot 2^{-n},k_{t,n}\cdot 2^{-n})$, and such that $n_{s,t}$ is the minimal natural number with this property. For $n < n_{s,t}$ we define $a_n(s,t) = 0$. For $n \geq n_{s,t}$ we define
\bes
a_n(s,t) = P_{X(s_-,t_-)\otimes H}\overline{V}_n (m_{s,n},k_{t,n}) \big|_{X(s_+,t_+)\otimes H}
\ees
where $\overline{V}_n (m,k):= \left(I_{X^{(n)}(0,k_-)} \otimes (\widetilde{V}_{1,n}^{m_-})^*\right) (\widetilde{V}_{2,n}^{k_-})^* \widetilde{V}_{1,n}^{m_+}(I_{X^{(n)}(m_+,0)} \otimes \widetilde{V}_{2,n}^{k_+})$ (to be precise, one should multiply the right-hand side by $U_{(0,k_-),(m_-,0)}\otimes I_H$ on the left and $U_{(m_+,0),(0,k_+)}^{-1} \otimes I_H$ on the right, where $U_{\cdot,\cdot}$ are the multiplication maps of $X^{(n)}$).

For fixed $(s,t) \in \diadp$, and for large enough $n$, we have
$$a_n(s,t) = T(s,t):= \widetilde{T}^*_{(s_-,t_-)} \widetilde{T}_{(s_+,t_+)} = \widetilde{T}_{(s,t)} .$$
Fixing $(s,t) \in \diad$, we have  for all large enough $n$
\begin{align*}
a_n(-s,-t)^*
&=\left(P_{X(s_+,t_+) \otimes H} \left(I_{0,k_+} \otimes (\widetilde{V}_{1,n}^{m_+})^*\right) (\widetilde{V}_{2,n}^{k_+})^* \widetilde{V}_{1,n}^{m_-}(I_{m_-,0} \otimes \widetilde{V}_{2,n}^{k_-}) \right)^* \big|_{X(s_-,t_-)\otimes H} \\
&= P_{X(s_-,t_-) \otimes H} \left(I_{m_-,0} \otimes (\widetilde{V}_{2,n}^{k_-})^*\right) (\widetilde{V}_{1,n}^{m_-})^* \widetilde{V}_{2,n}^{k_+}(I_{0,k_+} \otimes \widetilde{V}_{1,n}^{m_+})\big|_{X(s_+,t_+)\otimes H} \\
(*)&=a_n(s,t)
\end{align*}
where we used the shorthand notations $I_{p,q} = I_{X^{(n)}(p,q)}$, $m=m_{s,n},k=k_{t,n}$, and the equality in marked by (*) is true up to multiplication by the product system multiplication maps $U_{\cdot,\cdot}$. Also, it follows immediately from the covariance properties of $(\rho_n,V_{1,n},V_{2,n})$ that $a_n(s,t)$ intertwines the various interpretations of $(a \otimes I_H), a \in \cM$.

Now that $a_n(s,t)$ is defined, we construct a positive definite function $\Phi$ on $\diad$. For every $(s,t) \in \diad$, $\{a_n(s,t)\}_n$
is a sequence of operators in $B(X(s_+,t_+) \otimes H, X(s_-,t_-) \otimes H)$ with norm less than or equal $1$.
As the unit ball of $B(X(s_+,t_+) \otimes H, X(s_-,t_-) \otimes H)$ is weak operator compact, there is a subsequence
$\{n_k\}_{k=1}^\infty$ of $\mathbb{N}$ such that $a_{n_k}(s,t)$ converges in the weak operator topology (our separability assumptions imply that the unit ball of $B(X(s_+,t_+) \otimes H, X(s_-,t_-) \otimes H)$ is metrizable, hence sequentially compact). In fact, since $\diad$ is countable,
a standard diagonalization procedure will produce $\{n_k\}_{k=1}^\infty$ of $\mathbb{N}$ such that $a_{n_k}(s,t)$ converges weakly for \emph{all} $(s,t) \in \diad$. We define
$$\Phi(s,t) = \textrm{wot --}\lim_{k \rightarrow \infty} a_{n_k}(s,t).$$

By the properties that $a_n(s,t)$ possesses, it follows that for $(s,t) \in \diadp$,
$$\Phi(s,t) = T(s,t).$$
Also, $\Phi$ satisfies items (\ref{it:self-adjoint}) and (\ref{it:covariance}) of Definition \ref{def:positive}. For example, for (\ref{it:covariance}) it is enough to check
\begin{align*}
\langle \Phi(s,t) (a \otimes I) e_i, f_j \rangle
&= \lim_{k \rightarrow \infty }\langle a_{n_k}(s,t) (a \otimes I) e_i, f_j \rangle \\
&= \lim_{k \rightarrow \infty }\langle a_{n_k}(s,t)  e_i, (a \otimes I)^* f_j \rangle \\
&= \langle (a \otimes I) \Phi(s,t)  e_i, f_j \rangle .
\end{align*}
(\ref{it:self-adjoint}) follows similarly. Let us prove that it also satisfies item (\ref{it:positive}). Let $h \in \cH_0$, and consider the sum
\be\label{eq:sum}
\sum_{m,n \in \diadp} \left\langle \left[I_{X(m-(m-n)_+)} \otimes \Phi(m-n)\right] h(m), h(n) \right\rangle .
\ee
We are going to show that this sum is greater than $-\epsilon$, for any $\epsilon >0$. The sum in (\ref{eq:sum}) contains only a finite number, say $N$, of non-zero summands. We may take $k$ large enough to satisfy
\bes
\left|\left\langle \left[I_{m-(m-n)_+} \otimes \Phi(m-n)\right] h(m), h(n) \right\rangle - \left\langle \left[I_{m-(m-n)_+} \otimes a_{n_k}(m-n)\right] h(m), h(n) \right\rangle\right| < \frac{\epsilon}{N} ,
\ees
for all $m,n \in \diadp$. If needed, we take $k$ even larger, so that
\bes
a_{n_k}(d) = P_{X(d_-)\otimes H}\overline{V}_{n_k} (m_{d_1,n},k_{d_2,n}) \big|_{X(d_+)\otimes H}
\ees
for all $d = (d_1,d_2) \in \diad$ that appears in a non-zero inner product in (\ref{eq:sum}). In other words, we assume that all dyads appearing non-trivially in (\ref{eq:sum}) have the form $(p\cdot2^{-n_k},q\cdot2^{-n_k})$, $p,q \in \mathbb{Z}$. But then
\begin{align*}
& \sum_{m,n \in \diadp} \langle I_{m-(m-n)_+} \otimes a_{n_k}(m-n) h(m), h(n) \rangle \\
&= \sum_{m,n \in \diadp} \langle I_{m-(m-n)_+} \otimes P_{X((m-n)_-)\otimes H} \widetilde{U}_{(m-n)_-}^* \widetilde{U}_{(m-n)_+} \big|_{X(m-n)_+} h(m), h(n) \rangle \\
(*)&= \sum_{m,n \in \diadp} \langle I_{m-(m-n)_+} \otimes \widetilde{U}_{(m-n)_-}^* \widetilde{U}_{(m-n)_+} h(m), h(n) \rangle \\
(**)&= \sum_{m,n \in \diadp} \langle \widetilde{U}_{n}^* \widetilde{U}_{m} h(m), h(n) \rangle \\
&= \sum_{m,n \in \diadp} \langle  \widetilde{U}_{m} h(m),\widetilde{U}_{n} h(n) \rangle \geq 0.
\end{align*}
The equality marked by (*) follows from identifying $X(d) \otimes H$ with a subspace of $X(d) \otimes G$, where $G$ is the dilation Hilbert space associated with $U$, and $U$ is the isometric dilation of the restriction of $T$ to $X^{(n_k)}$. The equality marked by (**) follows from
\begin{align*}
\widetilde{U}_{n}^* \widetilde{U}_{m}
&= (I_{n-(n-m)_+} \otimes \widetilde{U}^*_{(n-m)_+})\widetilde{U}^*_{n-(n-m)_+}\widetilde{U}_{m-(m-n)_+} (I_{m-(m-n)_+} \otimes
\widetilde{U}_{(m-n)_+})\\
&= I_{m-(m-n)_+} \otimes \widetilde{U}^*_{(m-n)_-} \widetilde{U}_{(m-n)_+},
\end{align*}
because $n-(n-m)_+ = m-(m-n)_+$ and $(n-m)_+ = (m-n)_-$.
Thus
\bes
\sum_{m,n \in \diadp} \left\langle \left[I_{X(m-(m-n)_+)} \otimes \Phi(m-n)\right] h(m), h(n) \right\rangle \geq -\epsilon
\ees
for all $\epsilon$, thus $\sum_{m,n \in \diadp} \left\langle \left[I_{X(m-(m-n)_+)} \otimes \Phi(m-n)\right] h(m), h(n) \right\rangle \geq 0$, for all $h \in \cH_0$.

We have shown that $\Phi$ is a positive definite function on $\diad$. It follows from the remarks before the statement of the theorem that there is a covariant isometric representation $V$ of $X$ on a Hilbert space $K \supseteq H$ satisfying items (1) and (2) in the statement of the theorem. Given an isometric dilation $V$ on $K$, using Theorem \ref{thm:restrict0} we may restrict it to a minimal one. If we take $V$ to be a minimal dilation, (\ref{it:V*}) follows as well.

The proof of the final assertion of the theorem is not different from the proof of the analogous
part in Theorem \ref{thm:isoDilFC}, and we do not wish to repeat it here. We just note that one uses the structure of the isometric dilation given by the proof of \cite[Theorem 3.5]{S08}.
\end{proof}

\section{E-dilation of a strongly commuting pair of CP-maps}\label{sec:Edilation}

Now we are ready to prove the main result of this chapter:
\begin{theorem}\label{thm:main}
Let $\{R_t\}_{t\geq0}$ and $\{S_t\}_{t\geq0}$ be two strongly
commuting CP-semigroups on $B(H)$, where $H$ is a separable Hilbert space. Then the two
parameter CP-semigroup $P$ defined by
$$P_{(s,t)} := R_s S_t$$
has a minimal E-dilation $(K,u,B(K),\alpha)$, where $K$ is separable.
\end{theorem}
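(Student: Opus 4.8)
The plan is to follow the same two-step Muhly--Solel strategy used in the unital case (Theorem \ref{thm:scudil}), but to replace the use of Theorem \ref{thm:isoDilFC} --- which is unavailable now because the representation $T$ need not be fully coisometric --- with the new isometric dilation result Theorem \ref{thm:IsoDilDiad} for product system representations over $\diadp$, and then to use the extendability results of Section \ref{sec:extension} to pass from a densely-parameterized E-semigroup back to one over all of $\Rpt$.

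First I would invoke Theorem \ref{thm:rep_SC}: since $R$ and $S$ strongly commute, there is a product system $X = \{X(s,t)\}_{(s,t)\in\Rpt}$ of $B(H)'$-correspondences (here $B(H)' = \mb{C}I$, so $X$ is really a product system of Hilbert spaces) and a c.c.\ representation $T$ of $X$ with $\sigma = T_{(0,0)} = \mathbf{id}$, such that $P_{(s,t)}(a) = \tT_{(s,t)}(I_{X(s,t)}\otimes a)\tT_{(s,t)}^*$ for all $a\in B(H)$. Because $H$ is separable and $B(H)' = \mb{C}I$, each Hilbert space $X(s,t)\otimes_\sigma H$ is separable, so Theorem \ref{thm:IsoDilDiad} applies to the \emph{restriction} of $X$ and $T$ to $\diadp$: there is a Hilbert space $K\supseteq H$, separable by the separability remarks, and an isometric, minimal representation $V$ of $\{X(s,t)\}_{(s,t)\in\diadp}$ on $K$ satisfying $P_H V_{(s,t)}(x)|_H = T_{(s,t)}(x)$ and $P_H V_{(s,t)}(x)|_{K\ominus H}=0$ for all $(s,t)\in\diadp$. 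Set $\cR = \rho(B(H)')' = V_0(\mb{C}I)' = B(K)$ (one checks $V_0$ is a unital normal representation of $\mb{C}I$, so $\cR = B(K)$ automatically, which is why the statement asserts the dilating algebra is $B(K)$), let $u:H\to K$ be the inclusion, and define $\tilde\alpha_{(s,t)}(b) = \tV_{(s,t)}(I\otimes b)\tV_{(s,t)}^*$ for $b\in B(K)$ and $(s,t)\in\diadp$. By Lemma \ref{lem:semigroup}, $\tilde\alpha$ is a semigroup of normal $*$-endomorphisms over $\diadp$, and the argument reproduced in the proof of Theorem \ref{thm:scudil} (using items (2) and (4) of Theorem \ref{thm:IsoDilDiad}) shows $P_{(s,t)}(u^*bu) = u^*\tilde\alpha_{(s,t)}(b)u$ for $(s,t)\in\diadp$; restricting to $\cR = W^*(\bigcup_{(s,t)\in\diadp}\tilde\alpha_{(s,t)}(B(H)))$ and re-running the minimality computation of Step 2 of Theorem \ref{thm:scudil} (adapted to the dense index set $\diadp$) gives a minimal E-dilation over $\diadp$.

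The remaining --- and I expect principal --- obstacle is continuity: I must promote the $\diadp$-indexed E-semigroup $\alpha$ to a genuine CP-semigroup over all of $\Rpt$. Writing $\beta_t = \alpha_{(t,0)}$ and $\gamma_t = \alpha_{(0,t)}$ for the two one-parameter semigroups of $*$-endomorphisms parameterized by the positive dyadic rationals $\mb{D}_+$, I would first establish, exactly as in Step 3 of the proof of Theorem \ref{thm:scudil}, that $\alpha_{(s,t)}(a)\to a$ weakly as $(s,t)\to 0$ in $\diadp$ for $a\in B(H)$ (this uses the joint strong continuity of $P$ from Proposition \ref{prop:continuity4}), hence by Lemma \ref{lem:continuity2} that $\beta$ and $\gamma$ are strongly right-continuous on the dense $C^*$-algebra $\cA = C^*(\bigcup\tilde\alpha_t(B(H)))$, and then on all of $B(K)$ via weak approximation. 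Since $\cM = B(H)$ is a direct sum of type $I$ factors, Theorem \ref{thm:SeLegue} applies: $\beta$ (and likewise $\gamma$), being a densely-parameterized semigroup of normal $*$-endomorphisms satisfying the requisite convergence at $0$, extends uniquely to a continuous E-semigroup $\hat\beta$ (resp.\ $\hat\gamma$) over $\Rp$ acting on $B(K)$. The extended semigroups still commute (by continuity, since they commute on the dense set $\mb{D}_+\times\mb{D}_+$ and multiplication is jointly strongly continuous on bounded sets), so $\hat\alpha_{(s,t)} := \hat\beta_s\circ\hat\gamma_t$ is a two-parameter E-semigroup over $\Rpt$. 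Finally I would check that $(K,u,B(K),\hat\alpha)$ dilates $P$: the identity $P_{(s,t)}(u^*bu) = u^*\hat\alpha_{(s,t)}(b)u$ holds for $(s,t)\in\diadp$ and both sides are continuous in $(s,t)$ --- the left by Proposition \ref{prop:continuity4}, the right because $\hat\alpha$ is an E-semigroup and hence jointly strongly continuous (Lemma \ref{lem:continuity3}) --- so it holds on all of $\Rpt$ by density; separability of $K$ is already in hand; and minimality over $\Rpt$ follows from minimality over $\diadp$ together with the continuity of $\hat\alpha$, so that $W^*(\bigcup_{(s,t)\in\Rpt}\hat\alpha_{(s,t)}(uB(H)u^*)) = W^*(\bigcup_{(s,t)\in\diadp}\alpha_{(s,t)}(uB(H)u^*)) = B(K)$ and the central support of $uu^*$ is $I_K$ as before.
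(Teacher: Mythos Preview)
Your overall strategy is exactly the paper's, and most of the steps are right. The gap is in the continuity paragraph, and it is precisely the point where the non-unital case diverges from Theorem \ref{thm:scudil}.

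You write that you will show $\alpha_{(s,t)}(a)\to a$ weakly for $a\in B(H)$ ``exactly as in Step 3 of the proof of Theorem \ref{thm:scudil}'' and then apply Theorem \ref{thm:SeLegue}. Two problems. First, the Step 3 computation tests $\langle\alpha_t(a)k_1,k_2\rangle$ against vectors $k_j=\sum\alpha_{s_i}(m_i)h_i$ with \emph{strictly positive} time indices; in the unital case such vectors are total in $K$ by (\ref{eq:partitionA}), which in turn used that $p$ is increasing. In the non-unital case $p$ is only coinvariant, so the analogue --- that $K$ equals the span $K_0$ of products $\alpha_{t_1}(m_1)\cdots\alpha_{t_k}(m_k)h$ with all $t_i\in\diadpp$ --- requires a separate approximation argument (showing first $H\subseteq K_0$, then that each spanning vector of $K$ can be approximated in norm by vectors in $K_0$). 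Without $K=K_0$, the Step 3 computation only yields $p_0\alpha_t(a)p_0\to a$ where $p_0=P_{K_0}$, not weak convergence on all of $K$. Second, and more seriously, Theorem \ref{thm:SeLegue} has the hypothesis $\lim_{s\to 0}\rho(\phi_s(1))=\rho(1)$, i.e.\ $\alpha_t(1)\to 1$ weakly. In Theorem \ref{thm:scudil} this was free because $\alpha$ was unital; here $\alpha_t(1)$ is a proper projection and the convergence must be proved. The paper does this by observing that $\alpha_t(1)\leq\alpha_s(1)$ for $t\leq s$, so $\alpha_t(1)\alpha_s(b)h=\alpha_s(b)h$ once $t\leq s$ with $s\in\diadpp$ --- and again this only suffices because such vectors are total, i.e.\ because $K=K_0$. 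So the $K=K_0$ step is the crux of the non-unital argument, and your proposal omits it.

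One minor correction: when you invoke Theorem \ref{thm:SeLegue} you write ``since $\cM=B(H)$ is a direct sum of type I factors'', but the relevant algebra is $B(K)$, on which $\beta$ and $\gamma$ act. This is still a type I factor, so the conclusion stands; and the paper additionally checks that the $C^*$-algebra $\cA$ generated by $\bigcup\alpha_s(B(H))$ contains all compacts on $K$ (via irreducibility and Kadison--Ringrose), which is what actually verifies the compact-operator hypothesis of Theorem \ref{thm:SeLegue}.
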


\begin{proof}
Let $X$ and $T$ be the product system of Hilbert spaces and the
product system representation given by Theorem \ref{thm:rep_SC}.
We consider the product system $\check{X}=\{X(s)\}_{s\in\diadp}$
of $X$ represented on $H$ by the representation $\check{T}=\{T_s\}_{s\in\diadp}$
of $T$. The proof of \cite[Proposition 4.2]{MS02} shows that $X(t_1,t_2) = E(t_1) \otimes F(t_2)$ is a separable Hilbert space for all $t_1,t_2 \geq 0$, and it follows that for all $s \in \diadp$ the Hilbert space
$X(s) \otimes_{T_0} H$ is separable. By Theorem
\ref{thm:IsoDilDiad}, there is a minimal isometric
dilation ${V} = \{{V}_s\}_{s\in\diadp}$ of $\check{T}$, representing $\check{X}$ on a Hilbert space ${K} \supseteq H$.
We define a semigroup ${\alpha}=\{{\alpha}_s\}_{s\in\diadp}$ on $B({K})$ by
\bes
{\alpha}_s(a) =
\widetilde{{V}}_s(I \otimes a)\widetilde{{V}}_s^* \,\, , \,\, s\in\diadp ,
a\in B({K}).
\ees
By Lemma \ref{lem:semigroup}, ${\alpha}$ is a semigroup of normal
$*$-endomorphisms on $B({K})$. Denote by $p$ the orthogonal projection
of ${K}$ onto $H$. It is clear that $B(H)$ is the corner $B(H) = pB({K})p \subseteq B({K})$.
As in the proof of Theorem \ref{thm:scudil}, we see that ${\alpha}$ is a dilation of $\{P_{s}\}_{s \in \diadp}$.

We define two (``one-parameter") semigroups $\beta =
\{\beta_t\}_{t\in \mathbb{D}_+}$ and $\gamma = \{\gamma_t\}_{t\in
\mathbb{D}_+}$ on $B(K)$ by
\be\label{eq:betagamma}
\beta_t = \alpha_{(t,0)} \quad  \text{\rm and} \quad \gamma_t =
\alpha_{(0,t)} .
\ee
By Proposition \ref{prop:continuity4}, if we will be able to extend $\beta$ and $\gamma$
to continuous E-semigroups $\hat{\beta}$ and $\hat{\gamma}$ over $\Rp$, then the semigroup $\hat{\alpha} = \{\hat{\alpha}_{(s,t)}\}_{(s,t)\in \Rpt}$ given by
$$\hat{\alpha}_{(s,t)} = \hat{\beta}_s \circ \hat{\gamma}_t $$
will be the sought after E-dilation of $P$. The rest of the proof is mostly
dedicated to showing that $\beta$ and $\gamma$ can be continuously extended. As we demonstrate the
extendability of $\beta$ and $\gamma$, we show that $(p, B(K), \alpha)$ is a minimal dilation of $(B(H),\{P_s\}_{s\in\diadp})$, and this will complete the proof of Theorem \ref{thm:main}.

Because ${V}$ is a minimal dilation of $\check{T}$, we have
$$K := \bigvee_{s \in \diadp}{V}_s (X(s))H .$$
In particular, $K$ is separable.

An important observation is this:
\be\label{eq:K}
K = \bigvee \left\{\alpha_{t_1}(m_1)\cdots\alpha_{t_k}(m_k)h:
k \in \mathbb{N}, t_i\in\diadp, m_i\in B(H), h \in H \right\}.
\ee
(When $k=0$, we interpret the product $\alpha_{t_1}(m_1)\cdots\alpha_{t_k}(m_k)h$ as $h$). In Step 2 of the proof of Theorem \ref{thm:scudil} this equality is proved (there the situation was a little simpler and one did not have to consider products $\alpha_{t_1}(m_1)\cdots\alpha_{t_k}(m_k)h$ with $k=0$. The proof, however, holds in this case as well, as long as one does take such products).
In fact, In Step 2 of the proof of Theorem \ref{thm:scudil}, we saw that
\be\label{eq:partitionA}
K = \bigvee \alpha_{(s_m,t_n)}(B(H)) \alpha_{(s_m,t_{n-1})}(B(H)) \cdots \alpha_{(s_m,t_1)}(B(H)) \alpha_{(s_{m},0)}(B(H))  \cdots \alpha_{(s_1,0)}(B(H)) H
\ee
where in the right-hand side of the above expression we run over all pairs $(s,t)\in\diadp$ and all partitions $\{0=s_0 < \ldots < s_m=s\}$ and $\{0=t_0 < \ldots < t_n = t \}$ of $[0,s]$ and $[0,t]$.

Using (\ref{eq:K}), we can show that $(p, B(K), \alpha)$ is a minimal dilation.
Define
\be\label{eq:define R}
\cR =
W^*\left(\bigcup_{s\in\diadp}\alpha_s(B(H)) \right) .
\ee
Note that $K = [\cR H]$.
But the central projection
of $p$ in $\cR$ is the projection on $[\cR p K] = [\cR H] = K$, that is $I_K$. We will now show that $\cR = B(K)$, and this will prove that the central projection of $p$ in $B(K)$ is $I_K$, so by both definitions \ref{def:min_dil} and \ref{def:min_dil_Arv} $(p, B(K), \alpha)$ is a minimal dilation.

To see that $\cR = B(K)$, let $q\in B(K)$ be a projection in $\cR'$. In particular, $pq = qp = pqp$, so $qp$ is a projection
$B(H)$ which commutes with $B(H)$, thus $qp$ is either $0$ or $I_H$.

If it is $0$ then for all $t_i\in\diadp, m_i\in B(H), h \in H$,
$$q \alpha_{t_1}(m_1)\cdots\alpha_{t_k}(m_k)h = \alpha_{t_1}(m_1)\cdots\alpha_{t_k}(m_k) qp h = 0 ,$$
so $qK = 0$ and $q=0$.

If $qp = I_H$ then for all $0<t_i\in\diadp, m_i\in B(H), h \in H$,
\begin{align*}
q \alpha_{t_1}(m_1)\cdots\alpha_{t_k}(m_k)h
&= \alpha_{t_1}(m_1)\cdots\alpha_{t_k}(m_k) qp h \\
&= \alpha_{t_1}(m_1)\cdots\alpha_{t_k}(m_k)h ,
\end{align*}
so $qK = K$ and $q=I_K$. We see that the only projections in $\cR'$ are $0$ and $I_K$, so
$\cR' = \mathbb{C}\cdot I_K$, hence $\cR = \cR'' = B(K)$. This completes the proof of minimality.

We return to showing that $\beta$ and $\gamma$ can be continuously extended to $\Rp$. Let
$$\dpp = \left\{\frac{m}{2^n} : 0<m,n\in \mathbb{N} \right\},$$
put $\diadpp = \dpp \times \dpp$, and define
\be\label{eq:KS}
K_0 = \bigvee \left\{\alpha_{t_1}(m_1)\cdots\alpha_{t_k}(m_k)h:
0<k \in \mathbb{N}, t_i\in\diadpp, m_i\in B(H), h \in H \right\}.
\ee
We shall use (\ref{eq:partitionA}) to prove that $K = K_0$. First, let us show that $H \subseteq K_0$.
Let
$$G_0 = \bigvee_{t\in\diadpp}\alpha_{t}(B(H))H $$
and $G = H \vee G_0$.
For $t \leq s \in \diadpp$, $a \in B(H)$ and $h,g \in H$, we find that
\begin{align*}
\langle \alpha_{t}(p)h,\alpha_{s}(a)g \rangle &=
\langle \alpha_{s}(a^*) \alpha_{t}(p)h, g \rangle \\
&= \langle \alpha_{t}( \alpha_{s-t}(a^*) p)h, g \rangle \\
&= \langle P_{t}( P_{s-t}(pa^* p) p)h, g \rangle \\
&= \langle P_{s}(pa^* p) h, g \rangle \\
&= \langle h,\alpha_{s}(a)g \rangle ,
\end{align*}
so (in a rather trivial way)
\bes
\langle \alpha_{t}(p)h,\alpha_{s}(a)g \rangle \stackrel{t\rightarrow 0}{\longrightarrow} \langle h,\alpha_{s}(a)g \rangle.
\ees
Similarly,
$$\langle \alpha_{t}(p)h,g \rangle  \stackrel{t\rightarrow 0}{\longrightarrow} \langle h,g \rangle .$$
We see that in $G$, $\alpha_{t}(p)h \rightarrow h$ weakly, thus $H$ is in the weak closure of $G_0$ in $G$.
The weak closure being equal to the strong closure, we have $H \subseteq G_0 \subseteq K_0$.

The set
$$\{\alpha_{s}(b) h : s\in \diadpp, b\in \bigcup_{t\in\diadp}\alpha_t(B(K)), h\in H \}$$
is total in $K_0$. To see this, note that every element of the form
$$\alpha_{t_1}(m_1)\cdots\alpha_{t_k}(m_k)h ,$$
with $t_i\in\diadpp, m_i\in B(H)$ and $h \in H$,
can be written as
$$\alpha_s (\alpha_{t_1-s}(m_1)\cdots\alpha_{t_k-s}(m_k)) h ,$$
where $s \in \diadpp$ is smaller than $t_i$, $i=1, 2, \ldots, k$.

Let $\alpha_{s_1}(a_1)h_1$ and $\alpha_{s_2}(a_2)h_2$
be in $K_0$, where $s_1, s_2 \in \diadpp$,
$a_1,a_2 \in B(K)$ and $h_1,h_2 \in H$. Take $a \in B(H)$ and $t \in
\diadp$ such that $t < s_1, s_2$. Now compute:
\begin{align*}
\langle\alpha_t(a)\alpha_{s_1}(a_1)h_1,\alpha_{s_2}(a_2)h_2\rangle
&= \langle \alpha_{s_2}(a_2^*) \alpha_{t}(a)\alpha_{s_1}(a_1)h_1,h_2\rangle \\
&= \langle \alpha_{t}\left(\alpha_{s_2-t}(a_2^*) a \alpha_{s_1-t}(a_1)\right)h_1,h_2\rangle \\
&= \langle P_{t}\left(p\alpha_{s_2-t}(a_2^*) pap\alpha_{s_1-t}(a_1)\right)h_1,h_2\rangle \\
&= \langle P_{t}\left(P_{s_2-t}(p a_2^* p) a P_{s_1-t}(p a_1 p)\right)h_1,h_2\rangle \\
(*)&\stackrel{t\rightarrow 0}{\longrightarrow} \langle P_{s_2}(p a_2^* p) a P_{s_1}(p a_1 p) h_1,h_2\rangle \\
&= \langle a \alpha_{s_1}(a_1)h_1,\alpha_{s_2}(a_2) h_2\rangle .
\end{align*}
If we let $p_0$ denote the orthogonal projection of $K$ on $K_0$, we see that
$p_0 \alpha_{t}(a) p_0 \rightarrow p_0 a p_0$ in the weak operator topology as $t
\rightarrow 0$, for all $a \in B(H)$ (the convergence in $(*)$ is due to Proposition \ref{prop:continuity4}).
Since $H \subseteq K_0$, one has $p \leq p_0$, and $p_0 a p_0 = a$ for all $a \in B(H)$, thus
\be\label{eq:p_0}
\forall a \in B(H). p_0 \alpha_{t}(a) p_0 \rightarrow a \,\, \text{as} \,\, t \rightarrow 0,
\ee
where convergence is in the weak operator topology.

By (\ref{eq:partitionA}), $K$ is spanned by elements of the form
\be\label{eq:g}
g = \alpha_{s_1}(\alpha_{s_2}(\cdots (\alpha_{s_m}(a_m) a_{m-1})\cdots )a_1)h ,
\ee
for $a_i \in B(H), s_i \in \diadp, i=1,\ldots m$ and $h\in H$. Let us show how such an element can be
approximated in norm arbitrarily well by elements of the same form with all $s_i$'s in $\diadpp$ (it is clear that if
all $s_i$'s are in $\diadpp$, then this element is in $K_0$).

Assume that we wish to approximate a fixed element
$g$ as in (\ref{eq:g})
by elements
of the from $\alpha_{t_1}(\cdots(\alpha_{t_m}(b_m) b_{m-1}\cdots)b_1)h'$, where $t_m \in \diadpp$.
We consider
$$g_t = \alpha_{s_1}(\alpha_{s_2}(\cdots(\alpha_{s_m+t}(a_m) a_{m-1})\cdots a_2)a_1)h$$
with $t \in \diadpp$, and compute
\begin{align*}
\langle g_t,g_t \rangle
&=\big\langle \alpha_{s_1}(\cdots(\alpha_{s_m+t}(a_m) a_{m-1}\cdots)a_1)h, \alpha_{s_1}(\cdots(\alpha_{s_m+t}(a_m) a_{m-1}\cdots)a_1)h \big\rangle \\
&= \big\langle\alpha_{s_1}(a_1^* \cdots a_{m-1}^* \alpha_{s_m+t}(a_m^*)\cdots) \alpha_{s_1}(\cdots(\alpha_{s_m+t}(a_m) a_{m-1}\cdots )a_1)h, h \big\rangle \\
&= \big\langle\alpha_{s_1}(a_1^* \alpha_{s_2}(a_2^* \cdots a_{m-1}^* \alpha_{s_m+t}(a_m^* a_m)a_{m-1} \cdots a_{2}))a_1)h, h \big\rangle \\
&= \big\langle\alpha_{s_1}(a_1^* \alpha_{s_2}(a_2^* \cdots a_{m-1}^* P_{s_m+t}(p a^*_m a_m p)a_{m-1} \cdots a_{2}))a_1)h, h \big\rangle \\
&\stackrel{t\rightarrow 0}{\longrightarrow}\big\langle\alpha_{s_1}(a_1^* \alpha_{s_2}(a_2^* \cdots a_{m-1}^* P_{s_m}(p a^*_m a_m p)a_{m-1} \cdots a_{2}))a_1)h, h \big\rangle \\
&= \big\langle\alpha_{s_1}(a_1^* \alpha_{s_2}(a_2^* \cdots a_{m-1}^* \alpha_{s_m}(a_m^* a_m) a_{m-1} \cdots a_{2}))a_1)h, h \big\rangle \\
&= \langle g,g \rangle .
\end{align*}
In addition , we have
\begin{align*}
\langle g,g_t \rangle
&=\big\langle \alpha_{s_1}(\cdots \alpha_{s_{m-1}}(\alpha_{s_m}(a_m) a_{m-1}) \cdots )a_1)h, \alpha_{s_1}(\cdots \alpha_{s_{m-1}}(\alpha_{s_m + t}(a_m) a_{m-1}) \cdots )a_1)h \big\rangle \\
&= \big\langle\alpha_{s_1}(a_1^* \cdots \alpha_{s_{m-1}}(a_{m-1}^* \alpha_{s_m+t}(a^*_m))\cdots) \alpha_{s_1}(\cdots \alpha_{s_{m-1}}(\alpha_{s_m}(a_m) a_{m-1}) \cdots )a_1)h, h \big\rangle \\
&= \big\langle\alpha_{s_1}(a_1^* \cdots \alpha_{s_{m-1}}(a_{m-1}^* \alpha_{s_m}(\alpha_{t}(a_m^*) a_m) a_{m-1})\cdots )a_1)h, h \big\rangle.
\end{align*}
But $a_m = p a_m = p_0 p a_m$, and $p_0$ commutes with $\alpha_t(a_m^*)$, thus
$$\alpha_{t}(a^*_m)a_m = p_0 \alpha_{t}(a^*_m)p_0 a_m \rightarrow a^*_m a_m $$
in the weak operator topology as $t \rightarrow 0$, by (\ref{eq:p_0}). Since $\alpha_{s_i}$ is normal for all $i$, we obtain
$$\langle g,g_t \rangle \stackrel{t\rightarrow 0}{\longrightarrow} \langle g, g \rangle .$$
This allows us to conclude that
$$\|g_t - g\|^2 = \langle g_t , g_t \rangle - 2 \Re \langle g_t, g \rangle + \langle g, g \rangle \stackrel{t\rightarrow 0}{\longrightarrow} 0 ,$$
which shows that $K$ is spanned by elements as $g$ with positive $s_m$. An inductive argument, using the same reasoning
and (\ref{eq:p_0}), shows that $K$ is spanned by elements as $g$ with positive $s_i$ for all $i$, thus $K = K_0$.

Since $p_0 = I_K$, equation (\ref{eq:p_0}) now translates to the weak operator convergence
$$\alpha_t(a) \rightarrow a \,\, \text{as} \,\, t \rightarrow 0 ,$$
for all $a\in B(H)$.
For all $k \in K$, we have
\begin{align*}
\|\alpha_t(a)k - a k \|^2 &=
\langle \alpha_t(a)^* \alpha_t(a) k, k\rangle - 2\Re\langle \alpha_t(a) k , ak \rangle + \|ak\|^2 \\
&= \langle \alpha_t(a^* a) k, k\rangle - 2\Re\langle \alpha_t(a) k , ak \rangle + \|ak\|^2 \\
&\stackrel{s\rightarrow 0}{\longrightarrow} 0,
\end{align*}
thus $\alpha_t(a) \rightarrow a$ in the strong operator topology as $t
\rightarrow 0$, for all $a \in B(H)$.
For all $s\in\diadp$, $\alpha_s$ is normal, thus
$\alpha_t(\alpha_s(a)) = \alpha_s(\alpha_t(a)) \rightarrow
\alpha_s(a)$ in the strong operator topology as $\diadp \ni t \rightarrow 0$.
Denote by $\cA$ the $C^*$-algebra generated by $\bigcup_{s\in\diadp} \alpha_s(B(H))$.
Then we conclude that for all $a \in \cA$, both $\beta_t(a)$ and $\gamma_t(a)$ (recall equation (\ref{eq:betagamma})) converge in the strong operator topology
to $a$ as $\dpp \ni t \rightarrow 0$.

As we have seen above, $W^*(\cA) = \cR = B(K)$, that is, $\cA$ is a $C^*$-algebra strong operator dense in $B(K)$. In particular, $\cA$ acts irreducibly on $K$.
Since $\cA$ contains $B(H)$, it contains nonzero compact operators, and now by
\cite[Proposition 10.4.10]{KRII}, we conclude that
$\cA$ contains \emph{all} compact operators in $B(K)$. In order to use
Theorem \ref{thm:SeLegue} we must show that $\beta_t(1)$ and $\gamma_t(1)$ converge weakly to $1$ as $\dpp \ni t \rightarrow 0$. By a polarization argument, it is enough to show that
\be\label{eq:weakconverge}
\langle \alpha_t(1) k, k \rangle \rightarrow \|k\|^2 \,\,\,\, \text{\rm as}  \,\,\,\, \diadp \ni t \rightarrow 0
\ee
for all $k$ in a total subset of $K$. But taking $h \in H$, $b \in B(K)$ and $s \in \diadpp$, we have that
for $t \leq s$,
$$\alpha_t(1) \alpha_s (b) h = \alpha_t(1) \alpha_s (1) \alpha_s (b) h = \alpha_s(b) h ,$$
because $\alpha_t(1) \leq \alpha_s(1)$. Equation (\ref{eq:weakconverge}) follows.
This means that we may use Theorem \ref{thm:SeLegue} to deduce that $\beta$ and $\gamma$ extend to E-semigroups $\hat{\beta}$ and $\hat{\gamma}$ over $\Rp$. By Proposition \ref{prop:continuity4}, $\{\hat{\beta}_s \circ \hat{\gamma}_t\}_{(s,t)\in \Rpt}$ is a two-parameter E-semigroup dilating $P$.
\end{proof}


\part{Subproduct systems and the dilation theory of $cp$-semigroups}\label{part:II}

\chapter{Subproduct systems of Hilbert $W^*$-correspondences}\label{chap:subproduct}

\begin{definition}\label{def:subproduct_system}
Let $\cN$ be a von Neumann algebra. A \emph{subproduct system of Hilbert $W^*$-correspondences} over $\cN$ is a family $X = \{X(s)\}_{s\in\cS}$ of Hilbert $W^*$-correspondences over $\cN$ such that
\begin{enumerate}
\item $X(0) = \cN$,
\item For every $s,t \in \cS$ there is a coisometric mapping of $\cN$-correspondences
$$U_{s,t}: X(s) \otimes X(t) \rightarrow X(s+t),$$
\item The maps $U_{s,0}$ and $U_{0,s}$ are given by the left and right actions of $\cN$ on $X(s)$,
\item The maps $U_{s,t}$ satisfy the following associativity condition:
\begin{equation}\label{eq:assoc_prod}
U_{s+t,r} \left(U_{s,t} \otimes I_{X(r)} \right) = U_{s,t+r} \left(I_{X(s)} \otimes U_{t,r} \right).
\end{equation}
\end{enumerate}
\end{definition}
The difference between a subproduct system and a product system is that in a subproduct system the maps $U_{s,t}$ are only required to be coisometric, while in a product system these maps are required to be unitaries. Thus, given the image $U_{s,t}(x \otimes y)$ of $x \otimes y$ in $X(s + t)$, one cannot recover $x$ and $y$. Thus, subproduct systems may be thought of as \emph{irreversible} product systems. The terminology is, admittedly, a bit awkward. It may be more sensible -- however, impossible at present -- to use the term \emph{product system} for the objects described above and to use the term \emph{full product system} for product system. We will sometimes add the adjective \emph{full} before the noun \emph{product system} to emphasize that it is a product system and not merely a subproduct system.

\begin{example}\label{expl:full}
\emph{
The simplest example of a subproduct system $F = F_E = \{F(n)\}_{n \in \mb{N}}$ is given by
\bes
F(n) = E^{\otimes n},
\ees
where $E$ is some W$^*$-correspondence. $F$ is actually a product system. We shall call this subproduct system \emph{the full product system (over $E$)}.}
\end{example}

\begin{example}\label{expl:symm}\emph{
Let $E$ be a fixed Hilbert space. We define a subproduct system (of Hilbert spaces) $SSP = SSP_E$ over $\mb{N}$ using the familiar symmetric tensor products (one can obtain a subproduct system from the anti-symmetric tensor products as well). Define
$$E^{\otimes n} = E \otimes \cdots \otimes E ,$$
($n$ times). Let $p_n$ be the projection of $E^{\otimes n}$ onto the symmetric subspace of $E^{\otimes n}$, given by
$$p_n k_1 \otimes \cdots \otimes k_n = \frac{1}{n!}\sum_{\sigma \in S_n} k_{\sigma^{-1}(1)} \otimes \cdots \otimes k_{\sigma^{-1}(n)}.$$
We define
$$SSP(n) = E^{\circledS n} : = p_n  E^{\otimes n},$$
the symmetric tensor product of $E$ with itself $n$ times ($SSP(0) = \mb{C}$).
We define a map $U_{m,n}: SSP(m) \otimes SSP(n) \rightarrow SSP(m+n)$ by
$$U_{m,n} (x \otimes y) = p_{m+n} (x \otimes y).$$
The $U$'s are coisometric maps because every projection, when considered as a map from its domain onto its range, is coisometric. A straightforward calculation shows that (\ref{eq:assoc_prod}) holds (see \cite[Corollary 17.2]{Parthasarathy}). In these notes we shall refer to $SSP$ (or $SSP_E$ to be precise) as the \emph{symmetric subproduct system (over $E$)}. }
\end{example}
\begin{definition}
Let $X$ and $Y$ be two subproduct systems over the same semigroup $\cS$ (with families of coisometries $\{U_{s,t}^X\}_{s,t \in \cS}$ and $\{U_{s,t}^Y\}_{s,t \in \cS}$).  A family $V=\{V_s\}_{s\in \cS}$ of maps $V_{s}: X(s) \rightarrow Y(s)$ is called \emph{a morphism} of subproduct systems if $V_0$ is a unital $*$-isomorphism, if for all $s \in \cS \setminus \{0\}$ the map $V_s$ is a coisometric correspondence map, and if for all $s,t \in \cS$ the following identity holds:
\be\label{eq:iso}
V_{s+t}\circ U_{s,t}^X = U_{s,t}^Y \circ (V_s \otimes V_t) .
\ee
$V$ is said to be an \emph{isomorphism} if $V_s$ is a unitary for all $s \in \cS \setminus \{0\}$.
$X$ is said to be isomorphic to $Y$ if there exists an isomorphism $V: X \rightarrow Y$.
\end{definition}
The above notion of \emph{morphism} is not optimized in any way. It is simply precisely what we need in order to develop dilation theory for $cp$-semigroups.

\begin{definition}\label{def:rep}
Let $\cN$ be a von Neumann algebra, let $H$ be a Hilbert space, and let $X$ be a
subproduct system of Hilbert $\cN$-correspondences over the semigroup
$\cS$. Assume that $T : X \rightarrow B(H)$, and write $T_s$ for
the restriction of $T$ to $X(s)$, $s \in \cS$, and $\sigma$ for
$T_0$. $T$ (or $(\sigma, T)$) is said to be a \emph{completely
contractive covariant representation} of $X$ if
\begin{enumerate}
    \item For each $s \in \cS$, $(\sigma, T_s)$ is a c.c. representation of $X(s)$; and
    \item\label{it:sg} $T_{s+t}(U_{s,t}(x\otimes y)) = T_s(x)T_t(y)$ for all $s,t \in \cS$ and all $x \in X(s), y \in X(t)$.
\end{enumerate}
$T$ is said to be an isometric (fully coisometric) representation if it is an isometric (fully coisometric) representation on every fiber $X(s)$.
\end{definition}
Since we shall not be concerned with any other kind of representation, we shall call a completely contractive covariant representation of a subproduct system simply a \emph{representation}.

\begin{remark}\label{rem:reptilde}
\emph{Item 2 in the above definition of product system can be rewritten as follows:
\bes
\tT_{s+t} (U_{s,t} \otimes I_H) = \tT_s (I_{X(s)} \otimes \tT_t).
\ees
Here $\tT_s: X(s) \otimes_\sigma H \rightarrow H$ is the map given by}
$$\tT_s (x \otimes h) = T_s(x)h .$$
\end{remark}

\begin{example}\label{expl:FockRep}\emph{
We now define a representation $T$ of the symmetric subproduct system $SSP$ from Example \ref{expl:symm} on the symmetric Fock space. Denote by $\mathfrak{F}_+$ the symmetric Fock space
$$\mathfrak{F}_+ = \bigoplus_{n \in \mb{N}} E^{\circledS n} .$$
For every $n \in \mb{N}$, the map $T_n : SSP(n) = E^{\circledS n} \rightarrow B(\mathfrak{F}_+)$  is defined on the $m$-particle space $E^{\circledS m}$ by putting
$$T_n(x) y = p_{n+m}(x \otimes y) $$
for all $x \in X(n), y \in E^{\circledS m}$. Then $T$ extends to a representation of the subproduct system $SSP$ on $\mathfrak{F}_+$ (to see that item \ref{it:sg} of Definition \ref{def:rep} is satisfied one may use again \cite[Corollary 17.2]{Parthasarathy}).}
\end{example}
\begin{definition}
Let $X = \{X(s)\}_{s \in \cS}$ be a subproduct
system of $\cN$-correspondences over $\cS$. A family $\xi = \{\xi_s \}_{s \in \cS}$
is called a \emph{unit} for $X$ if
\be\label{eq:unit}
\xi_s \otimes \xi_t = U_{s,t}^* \xi_{s+t}.
\ee
A unit $\xi = \{\xi_s \}_{s \in \cS}$ is called \emph{unital} if $\lel \xi_s , \xi_s \rir = 1_{\cN}$ for all $s \in \cS$,
it is called \emph{contractive} if $\lel \xi_s , \xi_s \rir \leq 1_{\cN}$ for all $s \in \cS$,
and it is called \emph{generating} if $X(s)$ is spanned by elements of the form
\be\label{eq:spanning}
U_{s_1 + \cdots + s_{n-1},s_n}(\cdots U_{s_1+s_2,s_3}( U_{s_1,s_2}(a_1 \xi_{s_1} \otimes a_2 \xi_{s_2}) \otimes a_3 \xi_{s_3}) \otimes \cdots \otimes a_n \xi_{s_n} a_{n+1} ),
\ee
where $s = s_1 + s_2 + \cdots + s_n$.
\end{definition}

From (\ref{eq:unit}) follows the perhaps more natural looking
\bes
U_{s,t}(\xi_s \otimes \xi_t) = \xi_{s+t}.
\ees
\begin{example}\label{expl:symmUnit}\emph{
A unital unit for the symmetric subproduct system $SSP$ from Example \ref{expl:symm} is given by
defining $\xi_0 = 1$ and
\bes
\xi_{n} = \underbrace{v \otimes v \otimes \cdots \otimes v}_{n \text{ times}}
\ees
for $n \geq 1$. This unit is generating only if $E$ is one dimensional.
}
\end{example}

\chapter{Subproduct system representations and $cp$-semigroups}\label{chap:subncp}

In this chapter, following Muhly and Solel's constructions from \cite{MS02}, we show that subproduct systems and their representations provide a framework for dealing with $cp$-semigroups, and allow us to obtain a generalization
of the classical result of Wigner that any strongly continuous one-parameter group of automorphisms of
$B(H) $ is given by $X\mapsto U_t X U_t^*$ for
some one-parameter unitary group $\{U_t\}_{t\in \mathbb{R}}$.

\section{All $cp$-semigroups come from subproduct system representations}

There is a small overlap of material between the material presented in this section and some of the material of Chapter \ref{chap:unital}. The following Proposition, for example, is very similar to Lemma \ref{lem:semigroup}.

\begin{proposition}\label{prop:semigroup}
Let $\cN$ be a von Neumann algebra and let $X$ be a subproduct system of $\cN$-correspondences over $\cS$, and let $R$ be completely contractive covariant representation of $X$ on a Hilbert space $H$, such that $R_0$ is unital. Then the family of maps
\be\label{eq:reprep}
\Theta_s : a \mapsto \widetilde{R}_s (I_{X(s)} \otimes a) \widetilde{R}_s^* \,\, , \,\, a \in R_0 (\cN)',
\ee
is a semigroup of CP maps on $R_0 (\cN)'$. Moreover, if $R$ is an isometric (a fully coisometric) representation, then $\Theta_s$ is a $*$-endomorphism (a unital map) for all $s\in\cS$.
\end{proposition}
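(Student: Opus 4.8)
The plan is to verify the three assertions of Proposition \ref{prop:semigroup} in turn: that each $\Theta_s$ is a well-defined CP map on $R_0(\cN)'$, that $\{\Theta_s\}_{s\in\cS}$ satisfies the semigroup law, and that the isometric/fully-coisometric hypotheses upgrade the $\Theta_s$ to $*$-endomorphisms/unital maps. Much of this mirrors Lemma \ref{lem:semigroup} and the one-parameter constructions of \cite{MS02}, so the real content is in checking that the weaker coisometric (rather than unitary) structure maps $U_{s,t}$ of a subproduct system still suffice.

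First I would record that for a fixed $s$, the pair $(\sigma, R_s) = (R_0, R_s)$ is a c.c.\ representation of the single $W^*$-correspondence $X(s)$, so by \cite[Proposition 2.21]{MS02} the map $a \mapsto \widetilde{R}_s(I_{X(s)} \otimes a)\widetilde{R}_s^*$ is a normal, contractive, completely positive map on $R_0(\cN)'$; it is unital exactly when $\widetilde{R}_s$ is a coisometry, and multiplicative exactly when $\widetilde{R}_s$ is an isometry. This gives everything fiberwise and reduces the proposition to the semigroup identity $\Theta_s \circ \Theta_t = \Theta_{s+t}$ together with $\Theta_0 = \mathrm{id}$.

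Next I would prove the semigroup law. Fix $a \in R_0(\cN)'$. Using the identity $\widetilde{R}_{s+t}(U_{s,t} \otimes I_H) = \widetilde{R}_s(I_{X(s)} \otimes \widetilde{R}_t)$ from Remark \ref{rem:reptilde}, I compute
\begin{align*}
\Theta_s(\Theta_t(a))
&= \widetilde{R}_s\bigl(I_{X(s)} \otimes \widetilde{R}_t(I_{X(t)} \otimes a)\widetilde{R}_t^*\bigr)\widetilde{R}_s^* \\
&= \widetilde{R}_s(I_{X(s)} \otimes \widetilde{R}_t)(I_{X(s)} \otimes I_{X(t)} \otimes a)(I_{X(s)} \otimes \widetilde{R}_t^*)\widetilde{R}_s^* \\
&= \widetilde{R}_{s+t}(U_{s,t} \otimes I_H)(I_{X(s) \otimes X(t)} \otimes a)(U_{s,t}^* \otimes I_H)\widetilde{R}_{s+t}^*.
\end{align*}
Here the crucial point — and the one place where coisometry of $U_{s,t}$ matters — is that $(U_{s,t} \otimes I_H)(I_{X(s)\otimes X(t)} \otimes a)(U_{s,t}^* \otimes I_H) = I_{X(s+t)} \otimes a$. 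This holds because $U_{s,t}$ is a coisometric \emph{correspondence} map: $U_{s,t}$ intertwines the left $\cN$-actions, hence $U_{s,t}\otimes I_H$ intertwines $I \otimes a$ with $I \otimes a$ for $a \in R_0(\cN)' = \sigma(\cN)'$ (strictly, one uses that $a$ commutes with $\sigma(\cN)$ so that $I_{X(s)\otimes X(t)}\otimes a$ and $I_{X(s+t)}\otimes a$ are defined on the balanced tensor products and are intertwined by $U_{s,t}\otimes I_H$), and $U_{s,t}U_{s,t}^* = I_{X(s+t)}$. Substituting this gives $\Theta_s(\Theta_t(a)) = \widetilde{R}_{s+t}(I_{X(s+t)} \otimes a)\widetilde{R}_{s+t}^* = \Theta_{s+t}(a)$. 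The identity $\Theta_0 = \mathrm{id}$ follows from $X(0) = \cN$, $R_0 = \sigma$ unital, and $\widetilde{R}_0(I_\cN \otimes a) = \sigma(I_\cN)a = a$ on the identification $\cN \otimes_\sigma H \cong H$, exactly as in Lemma \ref{lem:semigroup}.

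Finally, for the last sentence: if $R$ is isometric then each $\widetilde{R}_s$ is an isometry, so by the cited \cite[Proposition 2.21]{MS02} each $\Theta_s$ is multiplicative, and being already CP and unital-on-$\cN$... more precisely $\Theta_s(ab) = \widetilde{R}_s(I\otimes ab)\widetilde{R}_s^* = \widetilde{R}_s(I\otimes a)\widetilde{R}_s^*\widetilde{R}_s(I\otimes b)\widetilde{R}_s^* = \Theta_s(a)\Theta_s(b)$ using $\widetilde{R}_s^*\widetilde{R}_s = I$; combined with normality and $*$-preservation this makes $\Theta_s$ a normal $*$-endomorphism. If $R$ is fully coisometric then each $\widetilde{R}_s$ is a coisometry, so $\widetilde{R}_s\widetilde{R}_s^* = I_H$ and $\Theta_s(I) = \widetilde{R}_s(I\otimes I)\widetilde{R}_s^* = \widetilde{R}_s\widetilde{R}_s^* = I$, i.e.\ $\Theta_s$ is unital. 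I expect the main (and essentially only) obstacle to be the careful bookkeeping in the displayed computation above — making sure the balanced tensor products $X(s)\otimes X(t)\otimes_\sigma H$ and $X(s+t)\otimes_\sigma H$ are correctly identified via $U_{s,t}$ and that $I\otimes a$ transports correctly along a merely coisometric correspondence map; everything else is a direct transcription of the one-parameter arguments.
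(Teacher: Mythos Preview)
Your proof is correct and follows essentially the same approach as the paper: both invoke \cite[Proposition 2.21]{MS02} for the fiberwise CP/endomorphism/unital properties and then verify the semigroup law via the same chain of equalities using $\widetilde{R}_{s+t}(U_{s,t}\otimes I_H) = \widetilde{R}_s(I_{X(s)}\otimes \widetilde{R}_t)$ together with the coisometry of $U_{s,t}$. If anything, you are slightly more explicit about why the coisometric correspondence map $U_{s,t}$ correctly intertwines $I\otimes a$, which is the one subtlety beyond the full product system case.
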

\begin{proof}
By Proposition 2.21 in \cite{MS02}, $\{\Theta_s\}_{s\in\cS}$ is a family of contractive, normal, completely positive maps on $R_0(\cN)'$. Moreover, these maps are unital if $R$ is a fully coisometric representation, and they are $*$-endomorphisms if $R$ is an isometric representation. It remains is to check that $\Theta = \{\Theta_s \}_{s\in\cS}$ satisfies the semigroup condition $\Theta_s \circ \Theta_t = \Theta_{s + t}$. Fix $a \in R_0 (\cN)'$. For all $s,t\in\cS$,
\begin{align*}
\Theta_s (\Theta_t (a))
&= \widetilde{R}_s \left(I_{X(s)} \otimes \left(\widetilde{R}_t (I_{X(t)} \otimes a) \widetilde{R}_t^*\right)\right) \widetilde{R}_s^* \\
&= \widetilde{R}_s (I_{X(s)} \otimes \widetilde{R}_t) (I_{X(s)}\otimes I_{X(t)} \otimes a)(I_{X(s)} \otimes \widetilde{R}_t^*) \widetilde{R}_s^* \\
&= \widetilde{R}_{s + t} (U_{s,t}\otimes I_G)(I_{X(s)}\otimes I_{X(t)}\otimes a)(U_{s,t}^{*}\otimes I_G)\widetilde{R}_{s + t}^* \\
&= \widetilde{R}_{s + t} (I_{X(s\cdot t)}\otimes a)\widetilde{R}_{s + t}^* \\
&= \Theta_{s + t}(a) .
\end{align*}
Using the fact that $R_0$ is unital, we have
\begin{align*}
\Theta_0(a) h
&= \widetilde{R_0} (I_{\cN} \otimes a) \widetilde{R_0}^* h \\
&= \widetilde{R_0} (I_{\cN} \otimes a) (1_{\cN} \otimes h) \\
&= R_0(1_{\cN})ah \\
&= ah ,
\end{align*}
thus $\Theta_0(a) = a$ for all $a\in \cN$.
\end{proof}

We will now show that \emph{every} $cp$-semigroup is given by a subproduct representation
as in (\ref{eq:reprep}) above. We recall some constructions from \cite{MS02} (building on the foundations  set in \cite{Arv97b}). The following material will look familiar to the reader who remembers Section \ref{sec:MS} above, as it is the basis of the constructions made there.

Fix a CP map $\Theta$ on von Neumann algebra $\cM \subseteq B(H)$. We define $\cM \otimes_\Theta H$ to be the Hausdorff completion of the algebraic tensor product $\cM \otimes H$ with respect to the sesquilinear positive semidefinite form
\bes
\lel T_1 \otimes h_1 , T_2 \otimes h_2 \rir = \lel h_1, \Theta(T_1^* T_2) h_2 \rir .
\ees
We define a representation $\pi_\Theta$ of $\cM$ on $\cM \otimes_\Theta H$ by
\bes
\pi_\Theta (S) (T \otimes h) = ST \otimes h,
\ees
and we define a (contractive) linear map $W_\Theta : H \rightarrow \cM \otimes H$ by
\bes
W_\Theta (h) = I \otimes h.
\ees
If $\Theta$ is unital then $W_\Theta$ is an isometry, and if $\Theta$ is an endomorphism then $W_\Theta$ is
a coisometry. The adjoint of $W_\Theta$ is given by
\bes
W_\Theta^* (T \otimes h) = \Theta(T)h .
\ees

For a given $cp$-semigroup $\Theta$ on $\cM$, Muhly and Solel defined in \cite{MS02} a $W^*$-correspondence $E_\Theta$ over $\cM'$ and a c.c.
representation $(\sigma,T_\Theta)$ of $E_\Theta$ on $H$ such that for all $a \in \cM$
\be\label{eq:reprep1}
\Theta(a) = \tT_\Theta \left(I_{E_\Theta} \otimes a \right)\tT_\Theta^* .
\ee
The $W^*$-correspondence $E_\Theta$ is defined as the intertwining space
\bes
E_\Theta = \cL_\cM (H, \cM \otimes_\Theta H),
\ees
where
\bes
\cL_\cM (H, \cM \otimes_\Theta H):= \{X \in B(H,\cM \otimes_\Theta H) \big| \forall T \in \cM. XT = \pi_\Theta(T)X \}.
\ees
The left and right actions of $\cM'$ are given by
\bes
S \cdot X = (I \otimes S)X \quad , \quad X \cdot S = XS
\ees
for all $X \in E_\Theta$ and $S \in \cM'$. The $\cM'$-valued inner product on $E_\Theta$ is defined by
$\lel X, Y \rir = X^* Y$. $E_\Theta$ is called \emph{the Arveson-Stinespring correspondence} (associated with $\Theta$).

The representation $(\sigma,T_\Theta)$ is defined by letting $\sigma = {\bf id}_{\cM'}$, the identity representation
of $\cM'$ on $H$, and by defining
\bes
T_\Theta(X) = W_\Theta^* X .
\ees
$({\bf id}_{\cM'}, T_\Theta)$ is called \emph{the identity representation} (associated with $\Theta$). We remark that the paper \cite{MS02}
focused on unital CP maps, but the results we cite are true for nonunital CP maps, with the proofs unchanged.

The case where $\cM = B(H)$ in the following theorem appears, in essence at least, in \cite{Arv97b}.

\begin{theorem}\label{thm:reprep}
Let $\Theta = \{\Theta_s \}_{s\in\cS}$ be a $cp$-semigroup on a von Neumann algebra $\cM \subseteq B(H)$, and for all $s\in \cS$ let $E(s) := E_{\Theta_s}$ be the Arveson-Stinespring correspondence
associated with $\Theta_s$, and let $T_s := T_{\Theta_s}$ denote the identity representation for $\Theta_s$. Then
$E = \{E(s)\}_{s\in\cS}$ is a subproduct system of $\cM'$-correspondences, and $({\bf id}_{\cM'},T)$ is a representation of $E$ on $H$ that satisfies
\be\label{eq:RepRep}
\Theta_s(a) = \tT_s\left(I_{E(s)} \otimes a \right) \tT_s^*
\ee
for all $a \in \cM$ and all $s \in \cS$. $T_s$ is injective for all $s \in \cS$.
If $\Theta$ is an $e$-semigroup (cp$_0$-semigroup), then $({\bf id}_{\cM'},T)$ is isometric (fully coisometric).
\end{theorem}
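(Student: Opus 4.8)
The plan is to build the subproduct system $E$ and its representation $T$ piece by piece, relying on the single-CP-map constructions from \cite{MS02} recalled just above, and then to verify each axiom in Definition \ref{def:subproduct_system} and Definition \ref{def:rep}. First I would set $E(s) = E_{\Theta_s} = \cL_\cM(H, \cM \otimes_{\Theta_s} H)$ for every $s \in \cS$, so that $E(0) = \cL_\cM(H, \cM \otimes_{\mathbf{id}} H) \cong \cM'$ as a $W^*$-correspondence over $\cM'$ (this identification is exactly where $\Theta_0 = \mathbf{id}_\cM$ is used), and put $T_s = T_{\Theta_s}$, $\sigma = \mathbf{id}_{\cM'}$. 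Equation (\ref{eq:reprep1}), quoted from \cite{MS02}, gives (\ref{eq:RepRep}) fiberwise for free, and the same reference gives that $(\sigma, T_s)$ is a c.c. representation of $E(s)$ for each $s$; injectivity of $T_s = W_{\Theta_s}^* \cdot$ on the intertwiner space is also recorded there (or follows from $X^*X = \langle X, X\rangle$ and faithfulness of the inner product, combined with the fact that $W_{\Theta_s}^*$ is nonzero on the range of any nonzero intertwiner).

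The main construction is the family of coisometries $U_{s,t} : E(s) \otimes_{\cM'} E(t) \to E(s+t)$ witnessing the subproduct structure. Here I would use the semigroup identity $\Theta_{s+t} = \Theta_s \circ \Theta_t$. There is a natural map $\cM \otimes_{\Theta_{s+t}} H \to \cM \otimes_{\Theta_s} (\cM \otimes_{\Theta_t} H)$, $a \otimes h \mapsto a \otimes (I \otimes h)$, which is an isometry of Hilbert spaces intertwining the $\pi$-representations, because $\langle a \otimes h, b \otimes k\rangle_{\Theta_{s+t}} = \langle h, \Theta_s(\Theta_t(a^*b))k\rangle = \langle a\otimes(I\otimes h), b\otimes(I\otimes k)\rangle$. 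Composing an intertwiner $X \in E(s)$ with an intertwiner $Y \in E(t)$ via $X \mapsto (I_{E(s)} \otimes Y)$-type bookkeeping — precisely, sending $X \otimes Y$ to the intertwiner $H \xrightarrow{Y} \cM\otimes_{\Theta_t} H \xrightarrow{X \otimes I} \cM \otimes_{\Theta_s}(\cM\otimes_{\Theta_t}H) \xleftarrow{\cong} \cM\otimes_{\Theta_{s+t}}H$ when the last map is unitary, and in general composing with its adjoint (a coisometry) — produces an element of $E(s+t)$. I would define $U_{s,t}$ this way and check: (i) it is a correspondence bimodule map over $\cM'$, which is routine from the definitions of the $\cM'$-actions; (ii) it is coisometric — this is the crux and follows because the comparison map $\cM\otimes_{\Theta_{s+t}}H \hookrightarrow \cM\otimes_{\Theta_s}(\cM\otimes_{\Theta_t}H)$ is an isometry with coisometric adjoint, and one checks $U_{s,t}U_{s,t}^* = I$ by a computation on simple tensors using $W_{\Theta_{s+t}}^* = W_{\Theta_s}^*(I\otimes W_{\Theta_t}^*)$ restricted appropriately; (iii) $U_{s,0}$ and $U_{0,s}$ reduce to the module actions, immediate from $E(0)\cong\cM'$ and $\cM\otimes_{\mathbf{id}}H \cong H$; and (iv) the associativity pentagon (\ref{eq:assoc_prod}), which comes down to the evident coherence of the iterated isometries $\cM\otimes_{\Theta_{r+s+t}}H \hookrightarrow \cM\otimes_{\Theta_r}(\cM\otimes_{\Theta_s}(\cM\otimes_{\Theta_t}H))$ built in the two possible orders agreeing.

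Next I would verify that $T$ is a representation of the subproduct system, i.e. condition \ref{it:sg} of Definition \ref{def:rep}: $T_{s+t}(U_{s,t}(X\otimes Y)) = T_s(X)T_t(Y)$, equivalently $\widetilde{T}_{s+t}(U_{s,t}\otimes I_H) = \widetilde{T}_s(I_{E(s)}\otimes\widetilde{T}_t)$ as in Remark \ref{rem:reptilde}. Unwinding $T_r(Z) = W_{\Theta_r}^* Z$ and the definition of $U_{s,t}$, this identity is exactly the factorization $W_{\Theta_{s+t}}^* = W_{\Theta_s}^*(I_\cM \otimes W_{\Theta_t}^*)$ composed with the comparison isometry, so it should fall out of the same simple-tensor computation used for coisometry of $U_{s,t}$; the bookkeeping with the adjoints of the non-unitary comparison maps is where care is needed but no genuine obstacle should arise.

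Finally, the isometric/fully-coisometric assertions: if $\Theta$ is an $e$-semigroup then each $\Theta_s$ is a $*$-endomorphism, so each $W_{\Theta_s}$ is a coisometry, hence $\widetilde{T}_s = W_{\Theta_s}^* \cdot$ composed with the appropriate unitary is an isometry on $E(s)\otimes_\sigma H$ — that is, $(\sigma, T_s)$ is an isometric representation; and since this holds for all $s$, $T$ is isometric by Definition \ref{def:rep}. Dually, if $\Theta$ is a $cp_0$-semigroup then each $\Theta_s$ is unital, each $W_{\Theta_s}$ is an isometry, so $\widetilde{T}_s$ is a coisometry and $T$ is fully coisometric. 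I expect the genuine work — and the place a careful reader should slow down — to be step (ii)/(iii) above: pinning down $U_{s,t}$ precisely enough that ``coisometric'' and the representation identity \ref{it:sg} are simultaneously transparent, given that the comparison map between $\cM\otimes_{\Theta_{s+t}}H$ and $\cM\otimes_{\Theta_s}(\cM\otimes_{\Theta_t}H)$ is in general only an isometry, not a unitary (this is precisely the phenomenon that makes $E$ a subproduct and not a full product system). Everything else is bookkeeping with the definitions from \cite{MS02}.
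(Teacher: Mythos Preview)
Your proposal is correct and follows essentially the same route as the paper: define $U_{s,t}$ by composing the ``multiplication'' map $X\otimes Y\mapsto (I\otimes X)Y$ into $\cL_\cM(H,\cM\otimes_{\Theta_t}\cM\otimes_{\Theta_s}H)$ with the adjoint of the isometric embedding of $E(s+t)$ coming from the Stinespring comparison isometry $\Gamma_{s,t}$, then verify associativity and the representation identity by unwinding the $W_{\Theta}$ maps. Two small points worth tightening: your map ``$X\otimes I$'' should be $I\otimes X$ (and correspondingly your target space has the $s,t$ indices swapped relative to the paper---harmless since $\cS$ is commutative), and coisometry of $U_{s,t}$ comes not from the $W$-factorization but from the fact that $X\otimes Y\mapsto (I\otimes X)Y$ is a correspondence \emph{isomorphism} (this is \cite[Proposition 2.12]{MS02}) composed with the adjoint of the isometry $Z\mapsto \Gamma_{s,t}\circ Z$.
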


\begin{proof}
We begin by defining the subproduct system maps $U_{s,t}:E(s) \otimes E(t) \rightarrow E(s+t)$. We use the constructions made in \cite[Proposition 2.12]{MS02} and the surrounding discussion. We define
\bes
U_{s,t} = V_{s,t}^*\Psi_{s,t} \,\,,
\ees
where the map
\bes
\Psi_{s,t}: \cL_\cM(H,\cM \otimes_{\Theta_s} H) \otimes \cL_\cM(H,\cM \otimes_{\Theta_t} H) \rightarrow \cL_\cM(H,\cM \otimes_{\Theta_t} \cM \otimes_{\Theta_s} H)
\ees
is given by $\Psi_{s,t}(X \otimes Y) = (I \otimes X)Y$, and
\bes
V_{s,t}: \cL_\cM(H,\cM \otimes_{\Theta_{s+t}}H) \rightarrow \cL_\cM(H,\cM \otimes_{\Theta_t} \cM \otimes_{\Theta_s} H)
\ees
is given by $V_{s,t}(X) = \Gamma_{s,t} \circ X$, where $\Gamma_{s,t}: \cM \otimes_{\Theta_{s+t}} H \rightarrow \cM \otimes_{\Theta_t} \cM \otimes_{\Theta_s} H$ is the isometry
\bes
\Gamma_{s,t} : S \otimes_{\Theta_{s+t}} h \mapsto S \otimes_{\Theta_t} I \otimes_{\Theta_s} h .
\ees
By  \cite[Proposition 2.12]{MS02}, $U_{s,t}$ is a coisometric bimodule map for all $s,t \in \cS$. To see that the $U$'s compose associatively as in (\ref{eq:assoc_prod}), take $s,t,u \in \cS$, $X \in E(s), Y \in E(t), Z \in E(u)$, and compute:
\begin{align*}
U_{s,t+u}(I_{E(s)} \otimes U_{t,u})(X \otimes Y \otimes Z)
&= U_{s,t+u}(X \otimes V_{t,u}^*(I \otimes Y)Z) \\
&= V_{s,t+u}^*\left((I \otimes X) V_{t,u}^*(I \otimes Y)Z \right) \\
&= \Gamma_{s,t+u}^*(I \otimes X) \Gamma_{t,u}^*(I \otimes Y)Z
\end{align*}
and
\begin{align*}
U_{s+t,u}(U_{s,t} \otimes I_{E(u)})(X \otimes Y \otimes Z)
&= U_{s+t,u}( V_{s,t}^*(I \otimes X)Y \otimes Z) \\
&= V_{s+t,u}^*\left((I \otimes V_{s,t}^*(I \otimes X)Y)Z\right) \\
&= \Gamma_{s+t,u}^*(I \otimes \Gamma_{s,t}^*)(I \otimes I \otimes X)(I \otimes Y)Z \,\, .
\end{align*}
So it suffices to show that
\bes
\Gamma_{s,t+u}^*(I \otimes X) \Gamma_{t,u}^* = \Gamma_{s+t,u}^*(I \otimes \Gamma_{s,t}^*)(I \otimes I \otimes X)
\ees
It is easier to show that their adjoints are equal. Let $a \otimes h$ be a typical element of $\cM \otimes_{\Theta_{s+t+u}} h$.
\begin{align*}
\Gamma_{t,u}(I \otimes X^*)\Gamma_{s,t+u} (a \otimes_{\Theta_{s+t+u}} h)
&= \Gamma_{t,u}(I \otimes X^*)(a \otimes_{\Theta_{t+u}} I \otimes_{\Theta_s} h) \\
&= \Gamma_{t,u}(a \otimes_{\Theta_{t+u}} X^*(I\otimes_{\Theta_s} h)) \\
&= a \otimes_{\Theta_{u}} I \otimes_{\Theta_{t}}X^*(I\otimes_{\Theta_s} h).
\end{align*}
On the other hand
\begin{align*}
(I \otimes I \otimes X^*)(I \otimes \Gamma_{s,t})\Gamma_{s+t,u} (a \otimes_{\Theta_{s+t+u}} h)
&= (I \otimes I \otimes X^*)(I \otimes \Gamma_{s,t}) (a \otimes_{\Theta_{u}} I \otimes_{\Theta_{s+t}} h)  \\
&= (I \otimes I \otimes X^*) (a \otimes_{\Theta_{u}} I \otimes_{\Theta_{t}} I \otimes_{\Theta_{s}} h)\\
&= a \otimes_{\Theta_{u}} I \otimes_{\Theta_{t}}X^*(I\otimes_{\Theta_s} h).
\end{align*}
This shows that the maps $\{U_{s,t}\}$ make $E$ into a subproduct system.

Let us now verify that $T$ is a representation of subproduct systems. That $({\bf id}_{\cM'},T_s)$ is a c.c. representation
of $E(s)$ is explained in \cite[page 878]{MS02}. Let $X \in E(s), Y \in E(t)$.
\bes
T_{s+t}(U_{s,t}(X \otimes Y)) = W_{\Theta_{s+t}}^* \Gamma_{s,t}^*(I \otimes X)Y,
\ees
while
\bes
T_{s}(X) T_t(Y) = W_{\Theta_{s}}^*XW_{\Theta_{t}}^*Y.
\ees
But for all $h \in H$,
\begin{align*}
W_{\Theta_{t}} X^* W_{\Theta_{s}} h &= W_{\Theta_{t}} X^* (I \otimes_{\Theta_s} h) \\
&= I \otimes_{\Theta_t} X^* (I \otimes_{\Theta_s} h) \\
&= (I \otimes X^*) (I \otimes_{\Theta_t} I \otimes_{\Theta_s} h) \\
&= (I \otimes X^*) \Gamma_{s,t}(I \otimes_{\Theta_{s+t}} h) \\
&= (I \otimes X^*) \Gamma_{s,t}W_{\Theta_{s+t}} h ,
\end{align*}
which implies
$W_{\Theta_{s}}^*XW_{\Theta_{t}}^*Y = W_{\Theta_{s+t}}^* \Gamma_{s,t}^*(I \otimes X)Y$, so
we have the desired equality
\bes
T_{s+t}(U_{s,t}(X \otimes Y)) = T_{s}(X) T_t(Y) .
\ees
Equation (\ref{eq:RepRep}) is a consequence of (\ref{eq:reprep1}).
The injectivity of the identity representation has already been noted by Solel in \cite{S06} (for all $h, g \in H$ and $a \in M$, $\langle W^*_\Theta X a^* h, g \rangle =  \langle Xa^* h, I \otimes g \rangle = \langle (a^* \otimes I)Xh, I \otimes g \rangle = \langle Xh, a \otimes g \rangle$ ).
The final assertion of the theorem is trivial (if $\Theta_s$ is a $*$-endomorphism, then $W_{\Theta_s}$ is a coisometry).
\end{proof}

\begin{definition}\label{def:identityrep}
Given a $cp$-semigroup $\Theta$ on a $W^*$ algebra $\cM$, the pair $(E,T)$ constructed in Theorem \ref{thm:reprep} is called \emph{the identity representation} of $\Theta$, and $E$ is called \emph{the Arveson-Stinespring subproduct system} associated with $\Theta$.
\end{definition}

\begin{remark}\label{rem:identityproduct}
\emph{If follows from \cite[Proposition 2.14]{MS02}, if $\Theta$ is an $e$-semigroup, then the Arveson-Stinespring
subproduct system is, in fact, a (full) product system.}
\end{remark}

\begin{remark}\emph{
In \cite{Markiewicz}, Daniel Markiewicz has studied the Arveson-Stinespring subproduct system of a CP$_0$-semigroup over $\mb{R}_+$ acting on $B(H)$, and has also shown that it carries a structure of a \emph{measurable Hilbert bundle}. }
\end{remark}

\section{Essentially all injective subproduct system representations come from $cp$-semigroups}\label{sec:essentially}

The following generalizes and is motivated by \cite[Proposition 5.7]{S06}. We shall also repeat some arguments from  \cite[Theorem 2.1]{MS07}.

By Theorem \ref{thm:reprep}, with every $cp$-semigroup $\Theta = \{\Theta_s \}_{s \in \cS}$ on $\cM \subseteq B(H)$ we
can associate a pair $(E, T)$ - the identity representation of $\Theta$ - consisting of a subproduct system $E$ (of correspondences over $\cM'$) and an injective subproduct system c.c. representation $T$. Let us write $(E,T) = \Xi(\Theta)$.
Conversely, given a pair $(X,R)$ consisting of a subproduct system $X$ of correspondences over $\cM'$ and a c.c. representation $R$ of $X$ such that $R_0$ is unital, one may define by equation (\ref{eq:reprep}) a $cp$-semigroup $\Theta$ acting on $\cM$, which we denote as $\Theta = \Sigma(X,R)$. The meaning of equation (\ref{eq:RepRep}) is that $\Sigma \circ \Xi$ is the identity map on the set of $cp$-semigroups of $\cM$.
We will show below that $\Xi \circ \Sigma$, when restricted to pairs $(X,R)$ such that $R$ is injective, is also, essentially, the identity. When $(X,R)$ is not injective, we will show that $\Xi \circ \Sigma(X,R)$ ``sits inside" $(X,R)$.

\begin{theorem}\label{thm:essentially_inverse}
Let $\cN$ be a W$^*$-algebra, let $X = \{X(s) \}_{s \in \cS}$ be a subproduct system of $\cN$-correspondences, and let $R$ be a c.c. representation of $X$ on $H$, such that $\sigma := R_0$ is faithful and nondegenerate. Let $\cM \subseteq B(H)$ be the commutant $\sigma(\cN)'$ of $\sigma(\cN)$. Let $\Theta = \Sigma(X,R)$, and let $(E,T) = \Xi(\Theta)$. Then there is a morphism of subproduct systems $W: X \rightarrow E$ such that
\be\label{eq:isosubrep}
R_s = T_s \circ W_s \,\, ,\,\, s \in \cS.
\ee
$W_s^* W_s = I_{X(s)} - q_s$, where $q_s$ is the orthogonal projection of $X(s)$ onto $\textrm{Ker}R_s$.
In particular, $W$ is an isomorphism if and only if $R_s$ is injective for all $s \in \cS$.
\end{theorem}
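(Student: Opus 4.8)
The goal is to build, for each $s \in \cS$, a coisometric correspondence map $W_s : X(s) \to E(s) = E_{\Theta_s}$ intertwining $R$ and the identity representation $T$, and to check these $W_s$ assemble into a morphism of subproduct systems. The natural candidate is already implicit in Muhly--Solel's construction: given a c.c. representation $(\sigma, R_s)$ of the single correspondence $X(s)$, one has the induced map $\tR_s : X(s) \otimes_\sigma H \to H$, and $\Theta_s(a) = \tR_s(I \otimes a)\tR_s^*$ for $a \in \cM$. First I would recall (this is \cite[Theorem 2.1]{MS07} or the discussion around \cite[Prop.~2.12]{MS02}) that for a fixed CP map $\Theta_s$ with Arveson--Stinespring data $(E(s), T_s)$, any c.c. representation inducing $\Theta_s$ via (\ref{eq:reprep}) factors through $(E(s),T_s)$: concretely, define $W_s(x) \in B(H, \cM\otimes_{\Theta_s}H) = \cL_\cM(H, \cM\otimes_{\Theta_s}H) = E(s)$ by the formula built from $x \in X(s)$, namely the operator $h \mapsto$ (the image under the appropriate map of $x \otimes h$), or dually by the requirement $\tT_s(W_s \otimes I_H) = \tR_s$ on $X(s) \otimes_\sigma H$. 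One checks $W_s$ is a bimodule map over $\cM' = \sigma(\cN)$ and that $\langle W_s(x), W_s(y)\rangle = W_s(x)^*W_s(y)$ computes to $\langle x,y\rangle$ minus the part killed by $R_s$; this gives both $W_s^*W_s = I_{X(s)} - q_s$ (with $q_s$ the projection onto $\mathrm{Ker}\,R_s$) and, in particular, coisometry of $W_s$ onto $E(s)$ once one knows $W_s$ has dense range, which follows because $E(s)$ is generated (as an intertwiner space) exactly by the data defining $\Theta_s$ and $R_s$ induces $\Theta_s$. Equation (\ref{eq:isosubrep}), $R_s = T_s \circ W_s$, is then immediate from $T_s(W_s(x)) = W_{\Theta_s}^* W_s(x)$ and the defining relation $\tT_s(W_s \otimes I) = \tR_s$ evaluated appropriately.

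The second, more structural, step is to verify that $W = \{W_s\}$ is a \emph{morphism of subproduct systems}, i.e.\ that $W_0$ is a unital $*$-isomorphism (here $W_0 = \mathrm{id}$ since $R_0 = \sigma$ and the Arveson--Stinespring correspondence in degree $0$ is $\cN$ itself, so this is automatic or a triviality), and that the compatibility (\ref{eq:iso}) holds:
\[
W_{s+t} \circ U_{s,t}^X = U_{s,t}^E \circ (W_s \otimes W_t).
\]
To see this I would exploit the fact that both sides are correspondence maps $X(s)\otimes X(t) \to E(s+t)$, so it suffices to check they induce the same map after tensoring with $H$ and composing with the injective map $\tT_{s+t}$ (injectivity of $T_{s+t}$ is guaranteed by Theorem \ref{thm:reprep}, and this is precisely where faithfulness of $\sigma$ enters, since it makes $T_{s+t}$ injective, letting us cancel it). On one side, $\tT_{s+t}(W_{s+t} U^X_{s,t} \otimes I) = \tR_{s+t}(U^X_{s,t} \otimes I)$, which by Definition \ref{def:rep}(2) (in the form of Remark \ref{rem:reptilde}) equals $\tR_s(I \otimes \tR_t)$. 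On the other side, using the subproduct relation for $E$ (Remark \ref{rem:reptilde} again, now for $T$) and the single-degree factorizations $\tR_s = \tT_s(W_s \otimes I)$, $\tR_t = \tT_t(W_t \otimes I)$, one computes $\tT_{s+t}(U^E_{s,t}(W_s\otimes W_t)\otimes I) = \tT_s(I \otimes \tT_t)(W_s \otimes W_t \otimes I) = \tR_s(I \otimes \tR_t)$. Since $\tT_{s+t}$ is injective, (\ref{eq:iso}) follows.

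Finally, the "in particular" clause: $W$ is an isomorphism iff each $W_s$ is unitary iff $I_{X(s)} - q_s = W_s^*W_s$ together with $W_sW_s^* = I_{E(s)}$, and since $W_s$ is already coisometric, this reduces to $q_s = 0$ for all $s$, i.e.\ $\mathrm{Ker}\,R_s = 0$ for all $s$, i.e.\ $R$ is injective. I expect the main obstacle to be the careful bookkeeping in identifying $W_s$ explicitly and verifying $W_s^*W_s = I - q_s$ — this requires unwinding the Hausdorff-completion inner products on $\cM \otimes_{\Theta_s} H$ and matching them against $X(s)\otimes_\sigma H$, exactly as in \cite[Theorem 2.1]{MS07}; once that single-degree statement is in hand, the subproduct-compatibility is a clean cancellation argument driven by injectivity of the identity representation. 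One should also take a moment to confirm that $W_s$ genuinely lands in the intertwiner space $\cL_\cM(H, \cM\otimes_{\Theta_s}H)$, i.e.\ that it is $\cM$-linear in the appropriate sense, which follows from the bimodule property of the representation $R_s$ over $\sigma(\cN)$ and the definition of the left action of $\cM = \sigma(\cN)'$ on $\cM \otimes_{\Theta_s} H$ via $\pi_{\Theta_s}$.
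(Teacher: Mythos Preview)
Your plan is correct and matches the paper's approach: both construct $W_s$ fiberwise via the single-correspondence factorization from \cite{S06,MS07} and then verify the morphism identity (\ref{eq:iso}) by composing with the injective $T_{s+t}$ and reducing both sides to $R_s(x)R_t(y)$. The paper's only extra wrinkle is that it first builds the isometric adjoint $V_s: E(s)\to X(s)$ explicitly via self-duality of $X(s)$ (this is the rigorous realization of your heuristic ``$h\mapsto$ the image of $x\otimes h$'' formula) and then sets $W_s=V_s^*$, which immediately gives coisometry and $W_s^*W_s=V_sV_s^*=I-q_s$.
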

\begin{remark}
\emph{The construction of the morphism $W$ below basically comes from \cite{S06,MS07}, and it remains only to show that it respects the subproduct system structure. The details are carried out for completeness.}
\end{remark}
\begin{proof}
We may construct a subproduct system $X'$ of $\cM'$-correspondences (recall that $\cM' = \sigma(\cN)$), and a representation $T'$ of $X'$ on $H$ such that $T'_0$ is the identity, in such a way that $(X,T)$ may be naturally identified with $(X',T')$. Indeed, put
\bes
X'(0) = \cM' \,\,,\,\, X'(s) = X(s) \,\, \textrm{for } s \neq 0,
\ees
where the inner product is defined by
\bes
\langle x, y \rangle_{X'} = \sigma ( \langle x, y \rangle_{X} ),
\ees
and the left and right actions are defined by
\bes
a \cdot x \cdot b := \sigma^{-1}(a) x \sigma^{-1}(b),
\ees
for $a,b \in \cM'$ and $x,y \in X'(s)$, $s \in \cS \setminus \{0\}$. Defining $T'_0 = id$ and $W_0 = \sigma$; and $T'_s = T_s$ for and $W_s = id$ for $s \in \cS \setminus \{0\}$, we have that $W$ is a morphism $X \rightarrow X'$ that sends $T$ to $T'$.

Assume, therefore, that $\cN = \cM'$ and that $\sigma$ is the identity representation.

We begin by defining for every $s \neq 0$
\bes
v_s: \cM \otimes_{\Theta_s} H \rightarrow X(s) \otimes H
\ees
by
\bes
v_s(a \otimes h) = (I_{X(s)} \otimes a)\widetilde{R}_s^*h.
\ees
We claim that for all $s \in \cS$ the map $v_s$ is a well-defined isometry. To see that, let $a,b \in \cM$ and $h,g \in H$, and compute:
\begin{align*}
\langle a \otimes_{\Theta_s} h,  b \otimes_{\Theta_s} g \rangle
&= \langle h , \Theta_s(a^* b)g \rangle \\
&= \langle h , \widetilde{R}_s(I_{X(s)} \otimes a^* b)\widetilde{R}_s^* g \rangle \\
&= \langle (I_{X(s)} \otimes a)\widetilde{R}_s^* h , (I_{X(s)} \otimes b)\widetilde{R}_s^* g \rangle.
\end{align*}
$[(I_{X(s)} \otimes \cM)\widetilde{R}_s^*H]$ is invariant under $I_{X(s)} \otimes \cM$, thus the projection onto the orthocomplement of this subspace is in $(I_{X(s)} \otimes \cM)' = \cL (X(s)) \otimes I_H$, so it has the form $q_s \otimes I_H$ for some projection $q_s \in \cL(X(s))$. In fact, $q_s$ is the orthogonal projection of $X(s)$ onto $\textrm{Ker} R_s$:
for all $g,h \in H$, $a \in \cM$,
\begin{align*}
\langle \xi \otimes h, (I \otimes a)\widetilde{R}_s^* g \rangle
&= \langle \widetilde{R}_s (\xi \otimes a^*h), g \rangle \\
&= \langle R_s (\xi) a^*h, g \rangle ,
\end{align*}
thus, $R_s(\xi) = 0$ if and only if $\xi \otimes h \in \left(\textrm{Im} v_s\right)^\perp$ for all $h \in H$, that is, $\xi \in q_s X(s)$.

By the definition of $v_s$ and by the covariance properties of $T$, we have for all $a \in \cM$ and $b \in \cM'$,
\bes
v_s(a \otimes I) = (I \otimes a)v_s \,\, , \,\, v_s (I \otimes b) = (b \otimes I)v_s .
\ees

Fix $s \in \cS$ and $x \in E(s)$. For all $\xi \in X(s), h \in H$, write
\bes
\psi(\xi)h = x^* v_s^*(\xi \otimes h) .
\ees
For $a \in \cM$ we have
\begin{align*}
\psi(\xi)a h &= x^*v_s^* (\xi \otimes ah) \\
&= x^*v_s^*(I \otimes a)(\xi \otimes h) \\
&= x^*(a \otimes I)v_s^*(\xi \otimes h) \\
&= a x^* v_s^*(\xi \otimes h) = a \psi(\xi)h.
\end{align*}
Thus the linear map $\xi \mapsto \psi(\xi)$ maps from $X(s)$ into $\cM'$ and it is apparent that this map is bounded. $\psi$
is also a right module map: for all $b \in \cM'$, $\psi(\xi b) h = x^* v^*(\xi b \otimes h) = x^* v^*(\xi  \otimes bh) = \psi(\xi) b h$. From the self duality of $X(s)$ it follows that there is a unique element in $X(s)$, which we denote by $V_s(x)$, such that for all $\xi \in X(s), h \in H$,
\be\label{eq:V_s}
\langle V_s(x), \xi \rangle h = x^* v_s^*(\xi \otimes h).
\ee
For $a,b \in \cM'$, $\xi \in X(s)$ and $h \in H$, we have
\begin{align*}
\langle V_s(a  x b), \xi \rangle h &= \langle V_s((I \otimes a) x b), \xi  \rangle h \\
&= b^* x^* (I \otimes a^*)v_s^* (\xi \otimes h) \\
&= b^* x^* v_s^*  (a^* \xi \otimes h) \\
&= b^* \langle V_s(x), a^* \xi \rangle h \\
&= \langle a V_s(x) b , \xi \rangle h .
\end{align*}
That is, $V_s(a  x b) = a V_s(x) b$. In a similar way one proves that $V_s : E(s) \rightarrow X(s)$ is linear.

Let us now show that $V_s$ preserves inner products. For all $\xi \in X(s)$, write $L_\xi$ for the operator $L_\xi : H \rightarrow X(s) \otimes H$ that maps $h$ to $\xi \otimes h$. One checks that $L_\xi^* (\eta \otimes h) = \langle \xi, \eta \rangle h$, so equation (\ref{eq:V_s}) becomes
\bes
L_{V_s(x)}^* L_\xi = x^* v_s^* L_\xi \,\, , \,\, \xi \in X(s),
\ees
or $L_{V_s(x)} = v_s x$, for all $x \in E(s)$. But since $v_s$ preserves inner products, we have for all $x,y \in E(s)$:
\bes
\langle x, y \rangle = x^* y = x^* v_s^* v_s y = L_{V_s(x)}^* L_{V_s(y)} = \langle V_s(x), V_s(y) \rangle.
\ees

We now prove that $V_s V_s^* = I_{X(s)} - q_s$.
$\xi \in \textrm{Im}V_s ^\perp$ if and only if $L_\xi^* v_s E(s) H = 0$. But by \cite[Lemma 2.10]{MS02},  $E(s)H = \cM \otimes_{\Theta_s} H$, thus $L_\xi^* v_s E(s) H = 0$ if and only if $\langle \xi , \eta \rangle = 0$ for all $ \eta \in (I_{X(s)} - q_s)X(s)$, which is the same as $\xi \in q_s X(s)$.

Fix $h,k \in H$. For $x \in E(s)$, we compute:
\begin{align*}
\langle T_s(x) h, k \rangle &= \langle W_{\Theta_s}^* x h, k \rangle \\
&= \langle x h, I \otimes_{\Theta_s} k \rangle \\
&= \langle v_s x h, v_s ( I \otimes_{\Theta_s} k) \rangle \\
&= \langle V_s(x) \otimes h, \widetilde{R}_s^* k \rangle \\
&= \langle R_s (V_s(x)) h, k \rangle ,
\end{align*}
thus $T_s = R_s \circ V_s$ for all $s\in \cS$. Define $W_s = V_s^*$. Then $T_s = R_s \circ W_s^*$. Multiplying both sides by $W_s$ we obtain $T_s \circ W_s = R_s \circ W_s^* W_s$. But $W_s^* W_s = I - q_s$ is the orthogonal projection onto $\left(\textrm{Ker}R_s\right)^\perp$, thus we obtain (\ref{eq:isosubrep}).

Finally, we need to show that $W = \{W_s \}$ respects the subproduct system structure: for all $s,t \in \cS$, $x \in X(s)$ and $y \in X(t)$, we must show that
\bes
W_{s+t}(U^X_{s,t}(x \otimes y)) = U^E_{s,t}(W_s(x) \otimes W_t(y)).
\ees
Since $T_{s+t}$ is injective, it is enough to show that after applying $T_{s+t}$ to both sides of the above equation we get the same thing. But $T_{s+t}$ applied to the left hand side gives
\bes
T_{s+t}W_{s+t}(U^X_{s,t}(x \otimes y)) = R_{s+t}(U^X_{s,t}(x \otimes y)) = R_s(x) R_t(y),
\ees
and $T_{s+t}$ applied to the right hand side gives
\bes
T_{s+t}(U^E_{s,t}(W_s(x) \otimes W_t(y))) = T_s(W_s(x)) T_t(W_t(y)) = R_s(x) R_t(y).
\ees
\end{proof}

\begin{corollary}\label{cor:onlyproductisorep}
Let $X$ be a subproduct system that has an isometric representation $V$ such that $V_0$ is faithful and nondegenerate. Then $X$ is a (full) product system.
\end{corollary}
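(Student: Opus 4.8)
The plan is to deduce this from Theorem \ref{thm:essentially_inverse} combined with Proposition \ref{prop:semigroup} and Remark \ref{rem:identityproduct}. First I would record that, under the hypotheses, every $V_s$ is injective: since $V$ is isometric we have $V_s(x)^* V_s(x) = \sigma(\langle x, x \rangle)$ for $x \in X(s)$, where $\sigma = V_0$; so $V_s(x) = 0$ forces $\sigma(\langle x, x \rangle) = 0$, and faithfulness of $\sigma$ together with the definiteness of the inner product gives $\langle x, x \rangle = 0$, i.e. $x = 0$. (Note also that a nondegenerate normal $*$-representation of a $W^*$-algebra is automatically unital, so $\sigma$ is unital and the constructions $\Sigma$, $\Xi$ apply.)

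Next I would set $\Theta = \Sigma(X,V)$, the $cp$-semigroup on $\cM := \sigma(\cN)'$ produced by formula (\ref{eq:reprep}), and let $(E,T) = \Xi(\Theta)$ be its identity representation. By Proposition \ref{prop:semigroup}, since $V$ is isometric, each $\Theta_s$ is a normal $*$-endomorphism, so $\Theta$ is an $e$-semigroup; hence by Remark \ref{rem:identityproduct} the Arveson--Stinespring subproduct system $E$ of $\Theta$ is in fact a \emph{full} product system, i.e. all the maps $U^E_{s,t}$ are unitaries. On the other hand, Theorem \ref{thm:essentially_inverse} supplies a morphism of subproduct systems $W : X \to E$ with $R_s = T_s \circ W_s$ and $W_s^* W_s = I_{X(s)} - q_s$, where $q_s$ is the projection onto $\mathrm{Ker}\, V_s = \{0\}$; thus, as recorded in the statement of that theorem, $W$ is an \emph{isomorphism} of subproduct systems.

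Finally I would transport the product-system structure of $E$ back along $W$. From the compatibility relation $W_{s+t} \circ U^X_{s,t} = U^E_{s,t} \circ (W_s \otimes W_t)$ and the fact that $W_s$, $W_t$, $W_{s+t}$ are unitaries ($W_0$ a $*$-isomorphism, the rest unitary correspondence maps), we get
\[
U^X_{s,t} = W_{s+t}^{*} \circ U^E_{s,t} \circ (W_s \otimes W_t),
\]
a composition of unitaries, hence $U^X_{s,t}$ is unitary for all $s,t \in \cS$. Therefore $X$ is a full product system. I do not anticipate a genuine obstacle here: the argument is essentially bookkeeping on top of Theorem \ref{thm:essentially_inverse}; the only mildly delicate points are verifying injectivity of the $V_s$ from isometry plus faithfulness of $\sigma$, and invoking Remark \ref{rem:identityproduct} to know that the Arveson--Stinespring subproduct system of an $e$-semigroup is a full product system.
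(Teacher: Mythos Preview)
Your proof is correct and follows essentially the same approach as the paper's: form $\Theta = \Sigma(X,V)$, observe it is an $e$-semigroup so that its Arveson--Stinespring subproduct system $E$ is a full product system (Remark \ref{rem:identityproduct}), and then use Theorem \ref{thm:essentially_inverse} together with the injectivity of $V$ to conclude $X \cong E$. The paper's proof is terser, leaving the verification of injectivity and the transport of the product-system structure implicit, but the argument is the same.
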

\begin{proof}
Let $\Theta = \Sigma(X,V)$. Then $\Theta$ is an $e$-semigroup. Thus, if $(E,T) = \Xi(\Theta)$ is the identity representation of $\Theta$, then, by Remark \ref{rem:identityproduct}, $E$ is a (full) product system. But if $V_0$ is faithful and $V$ is isometric then $V$ is injective. By the above theorem, $X$ is isomorphic to $E$, so it is a product system.
\end{proof}

\section{Subproduct systems arise from $cp$-semigroups. The shift representation}\label{sec:shift}

A question rises: \emph{does every subproduct system arise as the Arveson-Stinespring subproduct system associated with a $cp$-semigroup?} By Theorem \ref{thm:essentially_inverse}, this is equivalent to the question \emph{does every subproduct system have an injective representation?} We shall answer this question in the affirmative by constructing for every such subproduct system a canonical injective representation.

The following constructs will be of most interest when $\cS$ is a countable semigroup, such as $\mb{N}^k$.
\begin{definition}\label{def:shiftrep}
Let $X = \{X(s)\}_{s\in\cS}$ be a subproduct system. The \emph{$X$-Fock space} $\mathfrak{F}_X$ is defined as
\bes
\mathfrak{F}_X = \bigoplus_{s \in \cS} X(s).
\ees
The vector $\Omega := 1 \in \cN = X(0)$ is called \emph{the vacuum vector of $\mathfrak{F}_X$}.
The \emph{$X$-shift} representation of $X$ on $\mathfrak{F}_X$ is the representation
\bes
S^X: X \rightarrow B(\mathfrak{F}_X),
\ees
given by $S^X(x) y = U^X_{s,t}(x\otimes y)$, for all $x \in X(s), y \in X(t)$ and all $s,t \in \cS$.
\end{definition}
Strictly speaking, $S^X$ as defined above is not a representation because it represents $X$ on a C$^*$-correspondence rather
than on a Hilbert space. However, since for any C$^*$-correspondence $E$, $\cL(E)$ is a C$^*$-algebra, one can
compose a faithful representation $\pi : \cL(E) \rightarrow B(H)$ with $S^X$ to obtain a representation on a Hilbert space.

A direct computation shows that $\widetilde{S}^X_s : X(s) \otimes \mathfrak{F}_X \rightarrow \mathfrak{F}_X$ is a contraction, and also that $S^X(x)S^X(y) = S^X(U^X_{s,t}(x\otimes y))$ so $S^X$ is a completely contractive representation of $X$. $S^X$ is also injective because $S^X(x)\Omega = x$ for all $x \in X$. Thus,
\begin{corollary}
Every subproduct system is the Arveson-Stinespring subproduct system of a $cp$-semigroup.
\end{corollary}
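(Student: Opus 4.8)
The plan is to read off the result from the $X$-shift representation $S^X$ on the Fock space $\mathfrak{F}_X$, just constructed, together with Theorem \ref{thm:essentially_inverse}. Let $X = \{X(s)\}_{s\in\cS}$ be a subproduct system of $W^*$-correspondences over $\cN$. First I would pass from $S^X$, which a priori represents $X$ on the $W^*$-correspondence $\mathfrak{F}_X$, to an honest Hilbert-space representation: since $\cL(\mathfrak{F}_X)$ is a von Neumann algebra, choose a faithful \emph{normal} nondegenerate representation $\pi : \cL(\mathfrak{F}_X) \to B(H)$ on some Hilbert space $H$ and set $R_s = \pi \circ S^X_s$. Then $R$ is a completely contractive representation of $X$ on $H$: this is immediate from the facts noted right before the corollary ($\|\widetilde{S}^X_s\| \le 1$ and $S^X(x)S^X(y) = S^X(U_{s,t}(x\otimes y))$), since $\pi$ is a $*$-homomorphism. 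The representation $\sigma := R_0$ is faithful, because $\pi$ is injective and the left action of $\cN$ on $\mathfrak{F}_X$ is already injective on its summand $X(0) = \cN$ (where it is left multiplication); assuming as usual that the correspondences are essential, $\sigma$ is also unital and nondegenerate. Moreover each $R_s$ is injective: $S^X_s(x)\Omega = x$ shows $S^X_s$ is injective, and $\pi$ is faithful.

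Next I would form the $cp$-semigroup $\Theta = \Sigma(X,R)$ on $\cM := \sigma(\cN)'$ given by Proposition \ref{prop:semigroup}, namely $\Theta_s(a) = \widetilde{R}_s(I_{X(s)}\otimes a)\widetilde{R}_s^*$ for $a \in \cM$; this is legitimate since $R_0$ is unital. Let $(E,T) = \Xi(\Theta)$ be the identity representation of $\Theta$, so that $E$ is by definition the Arveson--Stinespring subproduct system associated with $\Theta$ (Theorem \ref{thm:reprep}, Definition \ref{def:identityrep}). Applying Theorem \ref{thm:essentially_inverse} to the pair $(X,R)$ — whose hypotheses hold since $\sigma = R_0$ is faithful and nondegenerate — we obtain a morphism of subproduct systems $W : X \to E$ with $W_s^* W_s = I_{X(s)} - q_s$, where $q_s$ is the projection of $X(s)$ onto $\mathrm{Ker}\,R_s$. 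Since every $R_s$ is injective, each $q_s = 0$, so $W$ is an isomorphism. Hence $X$ is isomorphic to $E$, the Arveson--Stinespring subproduct system of $\Theta$, which is precisely the assertion.

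I do not expect a real obstacle: the substance has already been carried out in Theorem \ref{thm:reprep} and Theorem \ref{thm:essentially_inverse}, and the Fock space and shift were built immediately above. The only points needing a little care are the passage to a Hilbert-space representation (taking $\pi$ normal, so that the resulting $E$ is genuinely a subproduct system of $W^*$-correspondences and Theorem \ref{thm:essentially_inverse} applies verbatim) and verifying that $R_0$ is simultaneously faithful, nondegenerate and unital, so that both $\Sigma$ and Theorem \ref{thm:essentially_inverse} are available.
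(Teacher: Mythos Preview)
Your proposal is correct and follows exactly the paper's intended argument: use the injective $X$-shift (composed with a faithful representation of $\cL(\mathfrak{F}_X)$ to land on a Hilbert space), form $\Theta = \Sigma(X,R)$, and invoke Theorem~\ref{thm:essentially_inverse} to identify $X$ with the Arveson--Stinespring subproduct system of $\Theta$. Your extra care in taking $\pi$ normal and in verifying that $R_0$ is faithful, unital and nondegenerate is appropriate; the paper treats these points more casually but relies on them in the same way.
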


\chapter{Subproduct system units and $cp$-semigroups}\label{chap:subunitsncp}

In this section, following Bhat and Skeide's constructions from \cite{BS00}, we show that subproduct systems and their units may also serve as a tool for studying $cp$-semigroups.

\begin{proposition}
Let $\cN$ be a von Neumann algebra and let $X$ be a subproduct system of $\cN$-correspondences over $\cS$, and let
$\xi = \{\xi_s\}_{s\in\cS}$ be a contractive unit of $X$, such that $\xi_0 = 1_\cN$. Then the family of maps
\be\label{eq:unitrep}
\Theta_s : a \mapsto \lel \xi_s, a \xi_s \rir,
\ee
is a semigroup of CP maps on $\cN$. Moreover, if $\xi$ is unital, then $\Theta_s$ is a unital map for all $s\in\cS$.
\end{proposition}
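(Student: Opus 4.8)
The plan is to check the four conditions defining a $cp$-semigroup for $\Theta=\{\Theta_s\}_{s\in\cS}$: that each $\Theta_s$ is a normal, completely positive, contractive map on $\cN$; that $\Theta_0={\bf id}_{\cN}$; and that $\Theta_s\circ\Theta_t=\Theta_{s+t}$. Writing $\varphi_s$ for the left action of $\cN$ on $X(s)$, we have $\Theta_s(a)=\langle\xi_s,\varphi_s(a)\xi_s\rangle$. Complete positivity is the standard Gram-matrix argument: if $(a_{ij})\ge 0$ in $M_n(\cN)$, write $a_{ij}=\sum_k b_{ki}^*b_{kj}$; then $\big(\Theta_s(a_{ij})\big)_{i,j}=\sum_k\big(\langle\varphi_s(b_{ki})\xi_s,\varphi_s(b_{kj})\xi_s\rangle\big)_{i,j}$ is a sum of Gram matrices of vectors in the correspondence $X(s)$, hence positive in $M_n(\cN)$. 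Normality of $\Theta_s$ follows from normality of $\varphi_s$ (since $X(s)$ is a $W^*$-correspondence) together with normality of the map $T\mapsto\langle\xi_s,T\xi_s\rangle$ on $\cL(X(s))$. Since a positive map on a unital C$^*$-algebra attains its norm at the unit, $\|\Theta_s\|=\|\Theta_s(1_\cN)\|=\|\langle\xi_s,\xi_s\rangle\|\le 1$ by contractivity of the unit; thus each $\Theta_s$ is a CP map. For $s=0$, recall that $X(0)=\cN$ carries the inner product $\langle b,c\rangle=b^*c$ and left action by multiplication, so $\Theta_0(a)=\langle 1_\cN,a\cdot 1_\cN\rangle=a$.

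The substantive step is the semigroup law, and the only delicate point is that the connecting maps $U_{s,t}$ are coisometric rather than unitary, so $p_{s,t}:=U_{s,t}^*U_{s,t}$ is merely the projection onto $(\ker U_{s,t})^\perp$. The saving observation is that the unit relation $\xi_s\otimes\xi_t=U_{s,t}^*\xi_{s+t}$ exhibits $\xi_s\otimes\xi_t$ in the range of $U_{s,t}^*$, equivalently in the range of $p_{s,t}$, so $p_{s,t}(\xi_s\otimes\xi_t)=\xi_s\otimes\xi_t$; moreover $U_{s,t}(\xi_s\otimes\xi_t)=\xi_{s+t}$ since $U_{s,t}U_{s,t}^*=I$. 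Using in addition that $U_{s,t}$ is a correspondence morphism, so that $a\cdot U_{s,t}(\xi_s\otimes\xi_t)=U_{s,t}\big((a\xi_s)\otimes\xi_t\big)$, I would compute
\begin{align*}
\Theta_{s+t}(a)
&=\langle U_{s,t}(\xi_s\otimes\xi_t),\,U_{s,t}\big((a\xi_s)\otimes\xi_t\big)\rangle
 =\langle \xi_s\otimes\xi_t,\,p_{s,t}\big((a\xi_s)\otimes\xi_t\big)\rangle\\
&=\langle p_{s,t}(\xi_s\otimes\xi_t),\,(a\xi_s)\otimes\xi_t\rangle
 =\langle \xi_s\otimes\xi_t,\,(a\xi_s)\otimes\xi_t\rangle\\
&=\langle\xi_t,\,\langle\xi_s,a\xi_s\rangle\cdot\xi_t\rangle
 =\langle\xi_t,\,\Theta_s(a)\cdot\xi_t\rangle=\Theta_t\big(\Theta_s(a)\big),
\end{align*}
where the fifth equality is the definition of the balanced inner product on $X(s)\otimes X(t)$. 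Since $\cS$ is abelian, $\Theta_t\circ\Theta_s=\Theta_s\circ\Theta_t$, giving $\Theta_{s+t}=\Theta_s\circ\Theta_t$. The main (rather mild) obstacle is exactly this bookkeeping with the projection $p_{s,t}$; once it is cleared, the rest is automatic.

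Finally, if the unit is unital then $\langle\xi_s,\xi_s\rangle=1_\cN$ for every $s$, so $\Theta_s(1_\cN)=\langle\xi_s,1_\cN\cdot\xi_s\rangle=\langle\xi_s,\xi_s\rangle=1_\cN$; that is, each $\Theta_s$ is unital. This completes the plan.
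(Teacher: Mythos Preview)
Your proof is correct and follows essentially the same approach as the paper: both verify the semigroup law by passing through the inner product on $X(s)\otimes X(t)$ and invoking the unit relation $\xi_s\otimes\xi_t=U_{s,t}^*\xi_{s+t}$ together with the bimodule property of $U_{s,t}$. The paper's computation is slightly more economical in that it works directly with the isometry $U_{t,s}^*$ (writing $\langle U_{t,s}^*\xi_{s+t},\,aU_{t,s}^*\xi_{s+t}\rangle=\langle\xi_{s+t},a\xi_{s+t}\rangle$), which sidesteps your projection bookkeeping with $p_{s,t}$ entirely; but the content is the same.
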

\begin{proof}
It is standard that $\Theta_s$ given by (\ref{eq:unitrep}) is a contractive completely positive map on $\cN$, which is unital if and only if $\xi$ is unital. The fact that $\Theta_s$ is normal goes a little bit deeper, but is also known (one may use \cite[Remark 2.4(i)]{MS02}).

We show that $\{\Theta_s\}_{s \in \cS}$ is a semigroup. It is clear that $\Theta_0(a) = a$ for all $a \in \cN$. For all $s,t \in \cS$,
\begin{align*}
\Theta_s(\Theta_t(a)) &= \lel \xi_s, \lel \xi_t, a \xi_t \rir \xi_s \rir \\
&= \lel \xi_t \otimes \xi_s, , a \xi_t \otimes \xi_s \rir \\
&= \lel U_{t,s}^* \xi_{s+t}, , a U_{t,s}^* \xi_{s+t} \rir \\
&= \lel \xi_{s+t} , a \xi_{s+t} \rir \\
&= \Theta_{s+t}(a).
\end{align*}
\end{proof}

We recall a central construction in Bhat and Skeide's approach to dilation of $cp$-semigroup \cite{BS00}, that goes back to Paschke \cite{Pas}.
Let $\cM$ be a $W^*$-algebra, and let $\Theta$ be a normal completely positive map on $\cM$ \footnote{The construction works also for completely positive maps on C$^*$-algebras, but in Theorem \ref{thm:GNS} below we will need to work with normal maps on W$^*$-algebras.}. \emph{The GNS representation
of $\Theta$} is a pair $(F_\Theta,\xi_\Theta)$ consisting of a Hilbert $W^*$-correspondence $F_\Theta$ and
a vector $\xi_\Theta \in F_\Theta$ such that
\bes
\Theta(a) = \lel \xi_\Theta, a \xi_\Theta \rir \,\, \textrm{ for all } a \in \cM.
\ees
$F_\Theta$ is defined to be the correspondence $\cM \otimes_\Theta \cM$ - which is the self-dual extension of the Hausdorff completion of the algebraic tensor product $\cM \otimes \cM$ with respect to inner product
\bes
\lel a \otimes b, c \otimes d \rir = b^* \Theta(a^* c)d .
\ees
$\xi_\Theta$ is defined to be $\xi_\Theta = 1 \otimes 1$. Note that
$\xi_\Theta$ is a unit vector, that is - $\lel \xi_\Theta,\xi_\Theta \rir = 1$, if and only if $\Theta$ is unital.

\begin{theorem}\label{thm:GNS}
Let $\Theta = \{\Theta_s \}_{s\in\cS}$ be a $cp$-semigroup on a $W^*$-algebra $\cM$. For every $s \in \cS$ let
$(F(s),\xi_{s})$ be the GNS representation of $\Theta_s$. Then
$F = \{F(s)\}_{s\in\cS}$ is a subproduct system of $\cM$-correspondences, and $\xi = \{\xi_s\}_{s \in \cS}$ is a generating contractive unit for $F$ that gives back $\Theta$ by the formula
\be\label{eq:unitRep}
\Theta_s(a) = \lel \xi_s, a \xi_s \rir \,\, \textrm{ for all } a \in \cM.
\ee
$\Theta$ is a cp$_0$-semigroup if and only if $\xi$ is a unital unit.
\end{theorem}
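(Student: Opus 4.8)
The plan is to imitate, in the subproduct-system setting, the construction that Bhat and Skeide give for (full) product systems, paying attention to the fact that now the multiplication maps are only coisometric. The statement has two parts: first, that $F = \{F(s)\}_{s\in\cS}$ is a subproduct system with a generating contractive unit $\xi$ satisfying \eqref{eq:unitRep}; second, the equivalence ``$\Theta$ is a cp$_0$-semigroup iff $\xi$ is unital''. The second part is the easy one and I would dispose of it first: by the discussion after the GNS construction, $\lel \xi_s, \xi_s \rir = 1$ if and only if $\Theta_s$ is unital, so $\xi$ is a unital unit exactly when every $\Theta_s$ is unital, which is the definition of a cp$_0$-semigroup.

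For the main part, I would proceed in the following steps. First, recall from the GNS construction that $F(s) = \cM \otimes_{\Theta_s} \cM$, $\xi_s = 1 \otimes 1$, and $\Theta_s(a) = \lel \xi_s, a\xi_s\rir$; this already gives \eqref{eq:unitRep} for free. Next I would construct the coisometric maps $U_{s,t}: F(s) \otimes F(t) \to F(s+t)$. The natural candidate, mimicking the product-system case, sends $\xi_s \otimes \xi_t$ to $\xi_{s+t}$; more precisely, since $F(s) \otimes F(t)$ is generated as a correspondence by vectors of the form $a\xi_s b \otimes c \xi_t d$, one defines $U_{s,t}(a\xi_s b \otimes c\xi_t d) = a \xi_{s+t}'$ where one must track how $b, c, d$ act. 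The cleanest route is: there is a correspondence map $\cM\otimes_{\Theta_s}\cM\otimes_{\Theta_t}\cM \to \cM\otimes_{\Theta_{s+t}}\cM$ induced by $a\otimes b\otimes c \mapsto a\otimes \Theta_t$-collapse; one checks it is well defined and contractive using the Schwarz inequality $\Theta_t(b^*\Theta_s(\cdot)b) \le \|b\|^2\Theta_t(\Theta_s(\cdot)) = \|b\|^2 \Theta_{s+t}(\cdot)$, wait — more carefully, using the semigroup property $\Theta_s\circ\Theta_t = \Theta_{s+t}$ and complete positivity. Identifying $F(s)\otimes F(t)$ with $\cM\otimes_{\Theta_s}\cM\otimes_{\Theta_t}\cM$ via $(a\otimes b)\otimes(c\otimes d) \mapsto a\otimes bc\otimes d$ (balancing over $\cM$), I would define $U_{s,t}$ as this induced map and verify it is coisometric by exhibiting its adjoint $a\otimes d \mapsto a\otimes 1\otimes d$, which is an isometry onto a submodule. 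Then I would check the normalization conditions $U_{s,0}, U_{0,s}$ are the module actions, the associativity identity \eqref{eq:assoc_prod} (a direct diagram chase, exactly as in the proof of Theorem \ref{thm:reprep}), the unit equation $\xi_s\otimes\xi_t = U_{s,t}^*\xi_{s+t}$ (which amounts to $U_{s,t}^*(1\otimes 1) = 1\otimes 1\otimes 1$, immediate from the formula for $U_{s,t}^*$), and finally that $\xi$ is generating, which follows because $F(s) = \cM\otimes_{\Theta_s}\cM$ is the closed span of $a\xi_s b = a\otimes b$.

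I expect the main obstacle to be verifying that the collapsing map $\cM \otimes_{\Theta_s} \cM \otimes_{\Theta_t} \cM \to \cM \otimes_{\Theta_{s+t}} \cM$ is well defined, contractive, and coisometric — that is, getting the inner-product bookkeeping right so that the Schwarz-type inequality for the normal CP maps $\Theta_s, \Theta_t$ combined with the semigroup law yields exactly a coisometry rather than a mere contraction. One has to be careful that the middle $\cM$-factor is ``absorbed'' correctly: the relevant computation is $\lel a\otimes b\otimes c, a'\otimes b'\otimes c'\rir_{F\otimes F\otimes F} = c^*\Theta_t(b^*\Theta_s(a^*a')b')c'$ versus $\lel a\otimes bc, a'\otimes b'c'\rir$ after collapse, and one must show the collapse does not decrease norms in a way incompatible with being coisometric onto its range. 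This is essentially the content of \cite[Proposition 2.12]{MS02} translated to the GNS picture, and I would either cite that or reproduce the short argument. Everything else (associativity, unit property, generation, the $\Theta_0 = \mathrm{id}$ and normalization checks, and the cp$_0$ equivalence) is routine once $U_{s,t}$ is in hand.
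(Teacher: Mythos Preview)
Your proposal is essentially correct and follows the same approach as the paper, with one expository difference that actually dissolves what you flag as the ``main obstacle.'' The paper defines the isometry $V_{s,t}: F(s+t) \to F(s) \otimes F(t)$, $a\xi_{s+t}b \mapsto a\xi_s \otimes \xi_t b$, \emph{first}: a direct inner-product computation using only the semigroup identity $\Theta_t \circ \Theta_s = \Theta_{t+s}$ shows
\[
\lel a\xi_s \otimes \xi_t b,\, c\xi_s \otimes \xi_t d\rir = b^*\Theta_t(\Theta_s(a^*c))d = b^*\Theta_{t+s}(a^*c)d = \lel a\xi_{s+t}b,\, c\xi_{s+t}d\rir,
\]
an exact equality, no Schwarz inequality needed. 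Then $U_{s,t} := V_{s,t}^*$ exists by self-duality (\cite[Proposition 3.4]{Pas}) and is automatically coisometric. Your attempt to define $U_{s,t}$ directly as a collapsing map is what creates the bookkeeping worry; reversing the order makes the argument clean and avoids any appeal to \cite[Proposition 2.12]{MS02}. Associativity is then checked on the $V$'s and transferred by taking adjoints, exactly as you outline. Everything else you say matches the paper.
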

\begin{proof}
For all $s,t \in \cS$ define a map $V_{s,t}:F(s+t) \rightarrow F(s) \otimes F(t)$
by sending $\xi_{s+t}$ to $\xi_s \otimes \xi_t$ and extending to a bimodule map. Because
\begin{align*}
\lel a \xi_s \otimes \xi_t b, c \xi_s \otimes \xi_t d \rir
&= \lel  \xi_t b, \lel a \xi_s , c \xi_s \rir \xi_t d \rir \\
&= \lel  \xi_t b, \Theta_s(a^* c) \xi_t d \rir \\
&= b^* \lel  \xi_t, \Theta_s(a^* c) \xi_t \rir d \\
&= b^* \Theta_{t+s}(a^* c) d \\
&= \lel a \xi_{t+s} b, c \xi_{t+s} d \rir ,
\end{align*}
$V_{s,t}$ extends to a well defined isometric bimodule map from $F(s+t)$ into $F(s) \otimes F(t)$. We define the map $U_{s,t}$ to be the adjoint of $V_{s,t}$ (here it is important that we are working with $W^*$ algebras - in general, an isometry from one
Hilbert $C^*$-module into another need not be adjointable, but bounded module maps between \emph{self-dual} Hilbert modules are always adjointable, \cite[Proposition 3.4]{Pas}). The collection $\{U_{s,t}\}_{s,t \in \cS}$ makes $F$ into a subproduct system. Indeed, these maps are coisometric by definition, and they compose in an associative manner. To see the latter, we check that
$(I_{F(r)} \otimes V_{s,t})V_{r,s+t} = (V_{r,s} \otimes I_{F(t)})V_{r+s,t}$ and take adjoints.
\begin{align*}
(I_{F(r)} \otimes V_{s,t})V_{r,s+t} (a \xi_{r+s+t} b)
&= (I_{F(r)} \otimes V_{s,t}) (a \xi_r \otimes \xi_{s+t} b) \\
&= a \xi_r \otimes \xi_{s} \otimes \xi_t b .
\end{align*}
Similarly, $(V_{r,s} \otimes I_{F(t)})V_{r+s,t} (a \xi_{r+s+t} b) = a \xi_r \otimes \xi_{s} \otimes \xi_t b$. Since
$F(r+s+t)$ is spanned by linear combinations of elements of the form $a \xi_{r+s+t} b$, the $U$'s make $F$
into a subproduct system, and $\xi$ is certainly a unit for $F$.
Equation (\ref{eq:unitRep}) follows by definition of the GNS representation. Now,
\bes
\lel \xi_s, \xi_s \rir = \Theta_s(1) \,\, , \,\, s \in \cS,
\ees
so $\xi$ is a contractive unit because $\Theta_s(1) \leq 1$, and $\xi$ it is unital if and only if $\Theta_s$ is unital
for all $s$.
$\xi$ is in fact more then just a generating unit,
as $F(s)$ is spanned by elements with the form described in equation (\ref{eq:spanning}) with $(s_1, \ldots, s_n)$ a \emph{fixed} $n$-tuple such that
$s_1 + \cdots + s_n = s$.
\end{proof}

\begin{definition}
Given a $cp$-semigroup $\Theta$ on a $W^*$ algebra $\cM$, the subproduct system $F$ and the unit $\xi$
constructed in Theorem \ref{thm:GNS} are called, respectively, \emph{the GNS subproduct system} and \emph{the GNS unit} of $\Theta$. The pair $(F,\xi)$ is called \emph{the GNS representation} of $\Theta$.
\end{definition}

\begin{remark}
\emph{There is a precise relationship between the identity representation (Definition \ref{def:identityrep}) and the GNS representation of a $cp$-semigroup. The GNS
representation of a CP map is the \emph{dual} of the identity representation in a sense that is briefly
described in
\cite{MS05}. This notion of duality has been used
to move from the product-system-and-representation picture to the product-system-with-unit picture, and vice versa. See for example
\cite{Skeide06} and the references therein. It is more-or-less straightforward to use this duality to get Theorem \ref{thm:GNS}
from Theorem \ref{thm:reprep} (or the other way around)}.
\end{remark}

\chapter{$*$-automorphic dilation of an e$_0$-semigroup}\label{chap:aut_dil}

We now apply some of the tools developed above to dilate an e$_0$-semigroup to a semigroup of $*$-automorphisms.
We shall need
the following proposition, which is a modification (suited for \emph{sub}product systems)
of the method introduced in Chapter \ref{chap:representing_representations} for representing a product
system representation as a semigroup of
contractive operators on a Hilbert space.

\begin{proposition}\label{prop:technology1}
Let $\cN$ be a von Neumann algebra
and let $X$ be a subproduct system of unital \footnote{An $\cN$-correspondence
is said to be \emph{unital} if the left action of $\cN$ is
unital. The right action of every unital $C^*$-algebra
on every Hilbert $C^*$-correspondence is unital.}
$W^*$-correspondences over $\cS$. Let $(\sigma,T)$ be a fully coisometric covariant
representation of $X$ on the Hilbert space $H$, and assume that
$\sigma$ is unital. Denote
$$H_s := \big(X(s) \otimes_{\sigma} H \big) \Big/ {\rm Ker}\widetilde{T}_s $$
and
$$\cH = \bigoplus_{s \in \cS} H_s.$$
Then there exists a semigroup of coisometries $\hat{T} = \{\hat{T}_s\}_{s\in\cS}$ on $\cH$ such that for all
$s\in\cS$, $x \in X(s)$ and $h\in H$,
\bes
\hat{T}_s \left(\delta_s \cdot x \otimes h \right) = T_s(x)h .
\ees
$\hat{T}$ also has the property that for all $s \in \cS$ and all $t \geq s$
\be\label{eq:almostIs}
\hat{T}^*_s\hat{T}_s\big|_{H_t} = I_{H_t} \quad , \quad (t \geq s).
\ee
\end{proposition}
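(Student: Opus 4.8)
The plan is to follow the construction of Proposition \ref{prop:technology} closely, the essential new feature being that the structure maps $U_{s,t}$ of a subproduct system are only coisometric, so that all the relevant identities will hold only \emph{after} passing to the quotients $H_s = (X(s)\otimes_\sigma H)/\mathrm{Ker}\widetilde{T}_s$, and their verification will rely on the full coisometry of $T$. First I would fix notation: I identify $H_s$ with the closed subspace $(\mathrm{Ker}\widetilde{T}_s)^\perp$ of $X(s)\otimes_\sigma H$ and write $\delta_s\cdot\xi$ for the class of $\xi\in X(s)\otimes_\sigma H$ in $\cH=\bigoplus_{t\in\cS}H_t$. Since $T$ is fully coisometric, $\widetilde{T}_s$ is a coisometry and hence restricts to a unitary $H_s\to H$; because $\sigma$ is unital and $X(0)=\cN$, the maps $U_{0,s}$, $U_{s,0}$ and $\widetilde{T}_0$ all reduce to the obvious identifications, and $H_0$ is identified with $H$ via $\delta_0\cdot h\leftrightarrow h$.

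Next I would define $\hat{T}_s$ on $\cH$ componentwise: it is $0$ on $H_t$ unless $t=s+r$ for some $r\in\cS$ (such $r$ being unique by cancellativity of $\cS$), in which case
$$\hat{T}_s(\delta_{s+r}\cdot\xi)=\delta_r\cdot(I_{X(r)}\otimes\widetilde{T}_s)(U_{r,s}^*\otimes I_H)\xi .$$
This is well defined on the quotient: using Remark \ref{rem:reptilde} and $U_{r,s}U_{r,s}^*=I_{X(r+s)}$ one computes $\widetilde{T}_r(I_{X(r)}\otimes\widetilde{T}_s)(U_{r,s}^*\otimes I_H)=\widetilde{T}_{r+s}(U_{r,s}U_{r,s}^*\otimes I_H)=\widetilde{T}_{r+s}$, so $\mathrm{Ker}\widetilde{T}_{s+r}$ is carried into $\mathrm{Ker}\widetilde{T}_r$. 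Taking $r=0$ and using that $\sigma$ is unital gives $\hat{T}_s(\delta_s\cdot x\otimes h)=\widetilde{T}_s(x\otimes h)=T_s(x)h$, the required formula. A symmetric computation shows that the adjoint is given on $H_r$ by $\hat{T}_s^*(\delta_r\cdot\zeta)=\delta_{s+r}\cdot(U_{r,s}\otimes I_H)(I_{X(r)}\otimes\widetilde{T}_s^*)\zeta$, which is well defined because $\widetilde{T}_s\widetilde{T}_s^*=I_H$.

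It then remains to verify three facts, in each case the mechanism being that a projection factor $U_{r,s}^*U_{r,s}$ or $\widetilde{T}_s^*\widetilde{T}_s$ appears which is not the identity, but which can be absorbed modulo the appropriate kernel using the identities $U_{r,s}U_{r,s}^*U_{r,s}=U_{r,s}$ and $\widetilde{T}_s\widetilde{T}_s^*\widetilde{T}_s=\widetilde{T}_s$ together with Remark \ref{rem:reptilde}. These are: (i) $\hat{T}_s\hat{T}_s^*=I_\cH$, so that $\hat{T}_s$ is a coisometry; (ii) $\hat{T}_t^*\hat{T}_s^*=\hat{T}_{s+t}^*$, equivalently $\hat{T}_s\hat{T}_t=\hat{T}_{s+t}$, which in addition uses the associativity (\ref{eq:assoc_prod}) of the $U_{s,t}$ (and $\hat{T}_0=I_\cH$, which is immediate); and (iii) for $t=s+r$ with $t\geq s$, $\hat{T}_s^*\hat{T}_s\big|_{H_{s+r}}=(U_{r,s}\otimes I_H)(I_{X(r)}\otimes\widetilde{T}_s^*\widetilde{T}_s)(U_{r,s}^*\otimes I_H)\big|_{H_{s+r}}=I_{H_{s+r}}$, which is exactly (\ref{eq:almostIs}).

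I expect the main obstacle to be the bookkeeping in step (ii): in the product-system case of Proposition \ref{prop:technology} the semigroup law drops out immediately from the unitarity of the $U_{s,t}$, whereas here the two sides of $\hat{T}_s\hat{T}_t=\hat{T}_{s+t}$ need not coincide as elements of $X(\cdot)\otimes_\sigma H$ before quotienting, and one must carefully keep track of which vectors are annihilated by the various $\widetilde{T}$'s, invoking the full coisometry of $T$ and the coisometry of the $U_{s,t}$'s to conclude that the two sides agree in $\cH$. The remaining verifications (i) and (iii), and the well-definedness claims, are of the same flavour but shorter, and I would carry them out after (ii) so as to reuse the same algebraic identities.
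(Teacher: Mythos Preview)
Your proposal is correct and follows essentially the same approach as the paper: define $\hat{T}_s$ via $(I_{X(r)}\otimes\widetilde{T}_s)(U_{r,s}^*\otimes I_H)$, check well-definedness on the quotients by composing with $\widetilde{T}_r$ and using $U_{r,s}U_{r,s}^*=I$, give the explicit formula for $\hat{T}_s^*$, and then verify coisometry, the semigroup law, and (\ref{eq:almostIs}) by the mechanism you describe. The only cosmetic difference is that the paper checks $\hat{T}_s\hat{T}_t=\hat{T}_{s+t}$ directly on total elements of the form $\delta_u\cdot U_{u-t,t}(U_{u-t-s,s}\otimes I)(x_{u-s-t}\otimes x_s\otimes x_t)\otimes h$, whereas your suggestion of passing to adjoints (where one gets $(U_{s,t}\otimes I_H)(I_{X(s)}\otimes\widetilde{T}_t^*)\widetilde{T}_s^*=(U_{s,t}U_{s,t}^*\otimes I_H)\widetilde{T}_{s+t}^*=\widetilde{T}_{s+t}^*$ after using associativity) is slightly cleaner.
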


\begin{proof}
First, we note that the assumptions on $\sigma$ and on the left action of $\cN$ imply that $H_0 \cong H$ via the identification $a \otimes h
\leftrightarrow \sigma(a)h$. This identification will be made repeatedly
below.

Define $\cH_0$ to be the space of all finitely supported functions
$f$ on $\cS$ such that for all $s \in \cS$,
$$f(s) \in H_s.$$
We equip $\cH_0$
with the inner product
$$\langle \delta_s \cdot \xi, \delta_t \cdot \eta \rangle = \delta_{s,t} \langle \xi, \eta \rangle  ,$$
for all $s,t \in \cS, \xi \in H_s, \eta \in
H_t$. Let $\cH$ be the completion of $\cH_0$ with
respect to this inner product. We have
$$\cH \cong \bigoplus_{s \in \cS} H_s .$$
It will sometimes be convenient to identify the subspace $\delta_s \cdot H_s \subseteq \cH$ with $H_s$, and for $s = 0$
this gives us an inclusion $H \subseteq \cH$.
We
define a family $\hat{T} = \{\hat{T}_s\}_{s \in \cS}$ of operators
on $\cH_0$ as follows. First, we define
$\hat{T}_0$ to be the identity. Now assume that $s>0$. If $t\in \cS$ and $t \ngeq s$, then we define $\hat{T}_s (\delta_t \cdot \xi ) = 0$ for all
$\xi \in H_t$. If $t
\geq s > 0$ we would like to define (as we did in Chapter \ref{chap:representing_representations})
\be\label{eq:That}
\hat{T}_s \left(\delta_t \cdot (x_{t-s}
\otimes x_s \otimes h) \right) = \delta_{t-s} \cdot
\left(x_{t-s}\otimes \widetilde{T}_s (x_s \otimes h) \right),
\ee
but since $X$ is not a true product system, we cannot identify $X(t-s) \otimes X(s)$ with $X(t)$.
For a fixed $t>0$, we define for all $s\leq t$, $\xi \in X(t)$ and $h \in H$
\bes
\check{T}_s \left(\delta_t \cdot (\xi \otimes h) \right) = \delta_{t-s} \cdot \left((I_{X(t-s)} \otimes \tT_s)(U^*_{t-s,s}\xi \otimes h) \right) .
\ees
$\check{T}_s$ can be extended to a well defined contraction from $X(t) \otimes H$ to $X(t-s) \otimes H$, for all $t\geq s$, and has an adjoint given by
\be\label{eq:Tstar}
\check{T}^*_s \delta_{t-s} \cdot \eta \otimes h = \delta_{t}\cdot \left((U_{t-s,s} \otimes I_H) (\eta \otimes \widetilde{T}^*_s h)\right) .
\ee
We are going to obtain $\hat{T}_s$ as the map $H_t \rightarrow H_{t-s}$ induced by $\check{T}_s$. Let $Y  \in H_t$ satisfy $\tT_t (Y) = 0$. We shall show
that $\check{T}_s \delta_t \cdot Y = 0$ in $\delta_{t-s} \cdot H_{t-s}$. But
\bes
\check{T}_s \delta_t \cdot Y = \delta_{t-s} \cdot \left((I_{X(t-s)} \otimes \tT_s)(U^*_{t-s,s}\otimes I_H)Y \right),
\ees
and
\begin{align*}
\tT_{t-s}\left((I_{X(t-s)} \otimes \tT_s)(U^*_{t-s,s}\otimes I_H)Y \right)
(*)&= \tT_{t}(U_{t-s,s} \otimes I_H)(U^*_{t-s,s}\otimes I_H)Y \\
(**)&= \tT_t(Y) = 0,
\end{align*}
where the equation marked by (*) follows from the fact that $T$ is a representation of subproduct systems, and the
one marked by (**) follows from the fact that $U_{t-s,s}$ is a coisometry. Thus, for all $s,t \in \cS$,
$$\check{T}_s \left(\delta_t \cdot {\rm Ker}\tT_t\right) \subseteq \delta_{t-s}\cdot {\rm Ker}\tT_{t-s} ,$$
thus $\check{T}_s$ induces a well defined contraction $\hat{T}_s$ on $\cH$ given by
\be\label{eq:defThat}
\hat{T}_s \left(\delta_t \cdot (\xi \otimes h) \right) = \delta_{t-s} \cdot \left((I_{X(t-s)} \otimes \tT_s)(U^*_{t-s,s}\xi \otimes h) \right) ,
\ee
where $\xi \otimes h$ and $(I_{X(t-s)} \otimes \tT_s)(U^*_{t-s,s}\xi \otimes h)$ stand for these elements' equivalence classes in $\big(X(t) \otimes H \big) \big/{\rm Ker}\widetilde{T}_{t}$ and
$\big(X(t-s) \otimes H \big) \big/{\rm Ker}\widetilde{T}_{t-s}$, respectively.
It follows that we have the following, more precise, variant of (\ref{eq:That}):
\bes
\hat{T}_s \left(\delta_t \cdot \left(U_{t-s,s}(x_{t-s}
\otimes x_s) \otimes h\right) \right) = \delta_{t-s} \cdot
\left(x_{t-s}\otimes \widetilde{T}_s (x_s \otimes h) \right) .
\ees
In particular,
$$\hat{T}_s \left(\delta_s \cdot x_s \otimes h \right) = T_s(x_s)h ,$$
for all $s\in \cS, x_s \in X(s), h \in H $.

It will be very helpful to have a formula for $\hat{T}_s^*$ as well. Assume that $\sum_i \xi_i \otimes h_i \in {\rm Ker}\tT_t$.
\bes
\check{T}^*_s\left(\delta_{t} \cdot \sum_i \xi_i \otimes h_i \right)
= \delta_{s+t} \cdot \left((U_{t,s} \otimes I_H) (\sum_i \xi_i \otimes \tT_s^* h_i) \right) ,
\ees
and applying $\tT_{s+t}$ to the right hand side (without the $\delta$) we get
\begin{align*}
\tT_{s+t}\left((U_{t,s} \otimes I_H) (\sum_i \xi_i \otimes \tT_s^* h_i) \right)
&= \tT_{t}(I_{X(t)} \otimes \tT_{s})(\sum_i \xi_i \otimes \tT_s^* h_i) \\
&= \tT_t (\sum_i \xi_i \otimes \tT_s \tT_s^* h_i) \\
&= \tT_t (\sum_i \xi_i \otimes h_i) = 0,
\end{align*}
because $T$ is a fully coisometric representation. So
$$\check{T}^*_s \left(\delta_t \cdot {\rm Ker}\tT_t\right) \subseteq \delta_{s+t} \cdot {\rm Ker}\tT_{s+t} ,$$
and this means that $\check{T}_s^*$ induces on $\cH$ a well defined contraction which is equal to $\hat{T}^*_s$, and is given by the formula (\ref{eq:Tstar}).

We now show that $\hat{T}$ is a semigroup. Let $s,t,u \in \cS$. If
either $s = 0$ or $t = 0$ then it is clear that the semigroup
property $\hat{T}_s \hat{T}_t = \hat{T}_{s+t}$ holds. Assume that
$s,t >0$. If $u \ngeq s+t$, then both $\hat{T}_{s} \hat{T}_{t}$
and $\hat{T}_{s + t}$ annihilate $\delta_u \cdot \xi$, for all
$\xi \in H_u$. Assuming $u \geq s+t$, we shall show that $\hat{T}_s \hat{T}_t$ and
$\hat{T}_{s+t}$ agree on elements of the form
$$Z = \delta_u \cdot \big(U_{u-t,t}(U_{u-t-s,s} \otimes I)(x_{u-s-t} \otimes x_s \otimes x_t)\big) \otimes h,$$
and since the set of all such elements is total in $H_u$, this will establish the semigroup property.
\begin{align*}
\hat{T}_{s} \hat{T}_{t} Z
&=  \hat{T}_{s} \left(\delta_{u-t}
\left( U_{u-t-s,s}(x_{u-s-t} \otimes x_s) \otimes \widetilde{T}_t(x_t \otimes h)\right) \right)
\\
&= \delta_{u-s-t} \left(x_{u-s-t} \otimes \widetilde{T}_s(x_s \otimes \widetilde{T}_t( x_t \otimes h)) \right) \\
&= \delta_{u-s-t} \left(x_{u-s-t} \otimes \widetilde{T}_s(I
\otimes \widetilde{T}_t )(x_s \otimes x_t \otimes h) \right) \\
&=  \delta_{u-s-t} \left(x_{u-s-t} \otimes \widetilde{T}_{s+t} \left(U_{s,t} (x_s
\otimes
x_t) \otimes h\right) \right) \\
&=  \hat{T}_{t+s} \delta_u \cdot \left(U_{u-t-s,t+s}\left(x_{u-s-t} \otimes U_{s,t} (x_s
\otimes
x_t)\right) \otimes h  \right) \\
&= \hat{T}_{t+s} Z.
\end{align*}
The final equality follows from the associativity condition (\ref{eq:assoc_prod}).

To see that $\hat{T}$ is a semigroup of coisometries, we take $\xi \in X(t), h \in H$, and compute
\begin{align*}
\tT_t \left(\hat{T}_s \hat{T}^*_s \delta_t \cdot (\xi \otimes h) \right)
&= \tT_t \left((I_{X(t)} \otimes \tT_s)(U_{t,s}^* \otimes I_H) (U_{t,s} \otimes I_H)(I_{X(t)} \otimes \tT_s^*) (\xi \otimes h) \right) \\
&= \tT_{s+t} (U_{t,s} \otimes I_H)(I_{X(t)} \otimes \tT_s^*) (\xi \otimes h) \\
&= \tT_t (\xi \otimes \tT_s \tT_s^* h) = \tT_t (\xi \otimes h) ,
\end{align*}
so $\hat{T}_s \hat{T}^*_s$ is the identity on $H_t$ for all $t \in \cS$, thus $\hat{T}_s \hat{T}^*_s = I_{\cH}$. Equation (\ref{eq:almostIs}) follows by a similar computation, which is a omitted.
\end{proof}

We can now obtain a $*$-automorphic dilation for any e$_0$-semigroup over any subsemigroup of $\Rpk$. 
The following result  should be compared with similar-looking results of Arveson-Kishimoto \cite{ArvKish}, Laca \cite{Laca}, Skeide \cite{Skeide07},
and Arveson-Courtney \cite{ArvCourt} (none of these cited results is strictly stronger or weaker than the result we obtain for the case of $e_0$-semigroups).

\begin{theorem}\label{thm:aut_dil}
Let $\Theta$ be a $e_0$-semigroup acting on a von Neumann algebra $\cM$. Then $\Theta$ can be dilated to a semigroup
of $*$-automorphisms in the
following sense: there is a Hilbert space $\cK$, an orthogonal projection $p$ of $\cK$ onto a subspace $\cH$ of $\cK$, a normal, faithful representation $\varphi:\cM \rightarrow B(\cK)$ such that $\varphi(1) = p$, and a semigroup $\alpha = \{\alpha_s\}_{s\in\cS}$
of $*$-automorphisms on $B(\cK)$ such that for all $a \in \cM$ and all $s \in \cS$
\be\label{eq:aut_ext}
\alpha_s(\varphi(a)) \big|_\cH = \varphi(\Theta_s(a)) ,
\ee
so, in particular,
\be\label{eq:aut_dil}
p \alpha_s(\varphi(a)) p = \varphi(\Theta_s(a)) .
\ee
The projection $p$ is increasing for $\alpha$, in the sense that for all $s \in \cS$,
\be\label{eq:increasing}
\alpha_s(p) \geq p .
\ee
\end{theorem}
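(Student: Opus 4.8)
The plan is to feed the $e_0$-semigroup $\Theta$ through the correspondence machinery of Chapter \ref{chap:subncp} together with the ``semigroup of coisometries'' construction of Proposition \ref{prop:technology1}, and then to dilate the resulting coisometric semigroup to a unitary one; conjugation by that unitary semigroup will provide the automorphic dilation. First I would apply Theorem \ref{thm:reprep} and Remark \ref{rem:identityproduct}: since $\Theta$ is an $e_0$-semigroup, its identity representation $(E,T)=\Xi(\Theta)$ (Definition \ref{def:identityrep}) consists of a genuine \emph{full} product system $E=\{E(s)\}_{s\in\cS}$ of $\cM'$-correspondences and a representation $T$ which is both isometric and fully coisometric, with $T_0=\sigma:=\mathrm{id}_{\cM'}$ unital and $\Theta_s(a)=\widetilde T_s(I_{E(s)}\otimes a)\widetilde T_s^*$ for $a\in\cM=\sigma(\cM')'$; in particular each $\widetilde T_s\colon E(s)\otimes_\sigma H\rightarrow H$ is unitary. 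Because $T$ is isometric, $\mathrm{Ker}\,\widetilde T_s=0$, so Proposition \ref{prop:technology1} applied to $E$ and $T$ yields a Hilbert space $\cH=\bigoplus_{s\in\cS}\bigl(E(s)\otimes_\sigma H\bigr)$ and a semigroup of coisometries $\hat T=\{\hat T_s\}_{s\in\cS}$ on $\cH$ with $\hat T_s(\delta_s\cdot x\otimes h)=T_s(x)h$, together with the relation (\ref{eq:almostIs}); I identify $H$ with the $s=0$ summand $\delta_0\cdot(\cM'\otimes_\sigma H)\cong H$ of $\cH$, and I record that, by the proof of that proposition, $\mathrm{Im}\,\hat T_s^{*}=\bigoplus_{t\geq s}\bigl(E(t)\otimes_\sigma H\bigr)$.

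Next I would dilate. The adjoint family $\{\hat T_s^{*}\}_{s\in\cS}$ is a commuting semigroup of isometries on $\cH$, so by a standard result on commuting isometries (Itô's theorem; see \cite{SzNF70}), together with a routine direct-limit argument when $\cS$ is not finitely generated, there is a Hilbert space $\cK\supseteq\cH$ and a semigroup of unitaries $\hat V=\{\hat V_s\}_{s\in\cS}$ on $\cK$ with $\hat V_s^{*}\big|_{\cH}=\hat T_s^{*}$. This forces $\hat V_s^{*}\cH\subseteq\cH$ (equivalently $\cH^{\perp}$ is $\hat V_s$-invariant), hence $P_{\cH}\hat V_s P_{\cH}=\hat T_s$ and $\cH\subseteq\hat V_s\cH$. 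I then set $\alpha_s:=\mathrm{Ad}(\hat V_s)$ on $B(\cK)$; these are $*$-automorphisms forming a semigroup, and $\alpha_s(P_{\cH})\geq P_{\cH}$, so taking $p=P_{\cH}$ already gives (\ref{eq:increasing}). Finally I define $\varphi\colon\cM\rightarrow B(\cK)$ by letting $\varphi(a)$ act as $\bigoplus_{s\in\cS}(I_{E(s)}\otimes a)$ on $\cH$ and as $0$ on $\cH^{\perp}$; since for each $s$ the map $a\mapsto I_{E(s)}\otimes a$ is a normal contractive $*$-homomorphism of $\sigma(\cM')'=\cM$ (as in \cite{MS02}), $\varphi$ is a faithful normal representation with $\varphi(1)=P_{\cH}=p$.

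The remaining point, which I expect to be the main obstacle, is the intertwining relation, i.e.\ (\ref{eq:aut_ext}) and (\ref{eq:aut_dil}). Using $\hat V_s^{*}|_{\cH}=\hat T_s^{*}$ and $P_{\cH}\hat V_s P_{\cH}=\hat T_s$, the compression $P_{\cH}\alpha_s(\varphi(a))P_{\cH}$ collapses to $\hat T_s\,\varphi(a)\,\hat T_s^{*}$ (as operators on $\cH$, extended by zero), so everything reduces to the single operator identity
\[
\hat T_s\,\varphi(a)\,\hat T_s^{*}=\varphi(\Theta_s(a))\quad\text{on }\cH,\qquad a\in\cM,\ s\in\cS .
\]
This is precisely where the hypothesis that $E$ is a \emph{full} product system (rather than a mere subproduct system) is used: evaluating both sides on a typical vector $\delta_t\cdot(x\otimes h)$ with $x\in E(t)$ and using the explicit formulas for $\hat T_s$ and $\hat T_s^{*}$ from Proposition \ref{prop:technology1}, one produces the factor $(U_{t,s}^{*}\otimes I_H)(U_{t,s}\otimes I_H)$, which equals the identity exactly because $U_{t,s}$ is \emph{unitary}; what survives is $\delta_t\cdot\bigl(x\otimes\widetilde T_s(I_{E(s)}\otimes a)\widetilde T_s^{*}h\bigr)=\delta_t\cdot(x\otimes\Theta_s(a)h)=\varphi(\Theta_s(a))(\delta_t\cdot x\otimes h)$, as needed. (For a subproduct system $U_{t,s}$ is only coisometric and this computation breaks down, which is why the argument is special to $e_0$-semigroups.)

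To upgrade the compression statement to the genuine restriction statement (\ref{eq:aut_ext}), I would observe that $\varphi(a)$ preserves each summand $E(t)\otimes_\sigma H$, hence carries $\mathrm{Im}\,\hat T_s^{*}=\bigoplus_{t\geq s}\bigl(E(t)\otimes_\sigma H\bigr)=\hat V_s^{*}\cH$ into itself; therefore for $\xi\in\cH$ we have $\alpha_s(\varphi(a))\xi=\hat V_s\varphi(a)\hat V_s^{*}\xi=\hat V_s\varphi(a)\hat T_s^{*}\xi\in\hat V_s\hat V_s^{*}\cH=\cH$, so this vector coincides with its own compression $\hat T_s\varphi(a)\hat T_s^{*}\xi=\varphi(\Theta_s(a))\xi$. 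This yields (\ref{eq:aut_ext}), and (\ref{eq:aut_dil}) follows by applying $p=P_{\cH}$. Beyond the identity displayed above, the only genuine subtleties I anticipate are bookkeeping with the identifications $E(t)\otimes_\sigma H\cong\cM\otimes_{\Theta_t}H$ used implicitly in Proposition \ref{prop:technology1}, verifying normality of the direct sum defining $\varphi$ over a possibly uncountable $\cS$ (which follows from boundedness of nets plus fibrewise normality), and confirming that the Itô-type unitary extension can be chosen compatibly with the semigroup law over an arbitrary subsemigroup $\cS\subseteq\Rpk$.
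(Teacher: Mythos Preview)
Your proposal is correct and follows essentially the same route as the paper: identity representation $(E,T)=\Xi(\Theta)$, Proposition~\ref{prop:technology1} to produce $\hat T$ on $\cH$, unitary extension of the isometric semigroup $\{\hat T_s^*\}$, and $\alpha_s=\mathrm{Ad}(\hat V_s)$ with $\varphi(a)=p(I\otimes a)p$. The only cosmetic differences are that the paper cites Douglas's theorem \cite{Douglas} (which directly handles commutative semigroups of isometries, so no direct-limit patching is needed) rather than It\^o, and that for the restriction statement (\ref{eq:aut_ext}) the paper argues via (\ref{eq:almostIs}) that $\hat V_s=\hat T_s$ on each $H_u$ with $u\geq s$ and then computes explicitly, whereas you observe more structurally that $\varphi(a)$ preserves $\hat V_s^{*}\cH=\mathrm{Im}\,\hat T_s^{*}$; both arguments are equivalent.
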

\begin{remark}
\emph{
Another way of phrasing the above theorem is by using the terminology of ``weak Markov flows", as used in \cite{BS00}. Denoting $\varphi$ by $j_0$, and defining $j_s:= \alpha_s \circ
j_0$, we have that $(B(\cK),j)$ is a weak Markov flow for $\Theta$ on $\cK$, which just means that for all $t \leq s \in \cS$ and all $a \in \cM$,
\be\label{eq:weak}
j_t(1)j_s(a)j_t(1) = j_t(\Theta_{s-t}(a)) .
\ee
Equation (\ref{eq:weak}) for $t=0$ is just (\ref{eq:aut_dil}), and the case
$t \geq 0$ follows from the case $t=0$.}
\end{remark}
\begin{remark}
\emph{
The assumption that $\Theta$ is a \emph{unital} semigroup is essential, since (\ref{eq:aut_dil}) and (\ref{eq:increasing})
imply that $\Theta(1) = 1$.}
\end{remark}
\begin{remark}
\emph{
It is impossible, in the generality we are working in, to hope for a semigroup of automorphisms that extends $\Theta$ in the sense that
\be\label{eq:aut_extension}
\alpha_s(\varphi(a)) = \varphi(\Theta_s(a)) ,
\ee
because that would imply that $\Theta$ is injective.}
\end{remark}
\begin{proof}
Let $(E,T)$ be the identity representation of $\Theta$. Since $\Theta$ preserves the unit, $T$ is a fully coisometric
representation. Let $\hat{T}$ and $\cH$ be the semigroup and Hilbert
space representing $T$ as described in Proposition \ref{prop:technology1}. $\{\hat{T}^*_s\}_{s\in\cS}$
is a commutative semigroup of isometries. By a theorem of Douglas \cite{Douglas}, $\{\hat{T}^*_s\}_{s\in\cS}$
can be extended to a semigroup $\{\hat{V}^*_s\}_{s\in\cS}$ of unitaries acting on a space $\cK \supseteq \cH$. We obtain a semigroup of unitaries
$V = \{\hat{V}_s\}_{s\in\cS}$ that is a dilation of $\hat{T}$, that is
\bes
P_\cH \hat{V}_s \big|_\cH = \hat{T}_s  \,\, , \,\, s \in \cS.
\ees

For any $b\in B(\cK)$, and any $s\in\cS$, we define
\bes
\alpha_s(b) = \hat{V}_s b \hat{V}_s^*.
\ees
Clearly, $\alpha = \{\alpha_s\}_{s\in\cS}$ is a semigroup of $*$-automorphisms.

Put $p = P_{\cH}$, the orthogonal projection of $\cK$ onto $\cH$.
Define $\varphi : \cM \rightarrow B(\cK)$ by $\varphi(a) = p (I \otimes a) p$, where $I \otimes a : \cH \rightarrow \cH$ is given by
\bes
(I \otimes a) \delta_t\cdot x \otimes h = \delta_t \cdot x \otimes a h  \quad , \quad x \otimes h \in E(t) \otimes H.
\ees

$\varphi$ is well defined because $T$ is an isometric representation (so ${\rm Ker}\widetilde{T}_{t}$ is always zero). We have that $\varphi$ is a faithful, normal $*$-representation (the fact that $T_0$ is the identity representation ensures that $\varphi$ is faithful). It is clear that $\varphi(1) = p$.

To see (\ref{eq:increasing}), we note that since $\hat{V}_s^*$ is an extension of $\hat{T}_s^*$, we have
$\hat{T}_s^* = \hat{V}_s^* p = p\hat{V}_s^* p$, thus
\begin{align*}
p \alpha_s(p) p &= p \hat{V}_s p \hat{V}_s^* p \\
&= p \hat{V}_s \hat{V}_s^* p \\
&= p,
\end{align*}
that is, $p \alpha_s(p) p = p$, which implies that $\alpha_s(p) \geq p$.

We now prove (\ref{eq:aut_dil}).
Let $\delta_t \cdot x \otimes h$ be a typical element of $\cH$. We compute
\begin{align*}
p \alpha_s (\varphi(a)) p \delta_t \cdot x \otimes h &= p \hat{V}_s p (I \otimes a) p \hat{V}_s^* p \delta_t \cdot x \otimes h \\
&= \hat{T}_s (I \otimes a) \hat{T}_s^* \delta_t \cdot x \otimes h \\
&= \hat{T}_s (I\otimes a) \delta_{s+t}\cdot (U_{t,s} \otimes I_H) \Big( x \otimes \widetilde{T}_s^*h \Big) \\
&= \hat{T}_s \delta_{s+t}\cdot (U_{t,s} \otimes I_H) \Big( x \otimes (I\otimes a) \widetilde{T}_s^*h \Big) \\
&= \delta_{t} \cdot x  \otimes \left(\widetilde{T}_s(I\otimes a) \widetilde{T}_s^*h\right) \\
&= \delta_{t} \cdot x \otimes (\Theta_s(a) h) \\
&= \varphi(\Theta_s(a)) \delta_t \cdot x \otimes h .
\end{align*}
Since both $p\alpha_s(\varphi(a)) p$ and $\varphi(\Theta_s(a))$ annihilate $\cK \ominus \cH$, we have
(\ref{eq:aut_dil}).

To prove (\ref{eq:aut_ext}), it just remains to show that
\bes
p \alpha_s (\varphi(a)) \big|_{\cH} =  \alpha_s (\varphi(a)) \big|_{\cH},
\ees
that is, that $\alpha_s (\varphi(a)) \cH \subseteq \cH$.
Now, $\hat{V}_s^*$ is an extension of $\hat{T}_s^*$. Moreover (\ref{eq:almostIs}) shows that
if $\xi \in H_u$ with $u \geq s$, then $\| \hat{T}_s(\xi) \| = \|\xi\|$. Thus
\bes
\|\xi\|^2 = \|\hat{V}_s \xi \|^2 = \|P_\cH \hat{V}_s \xi \|^2 + \|(I_\cK - P_\cH) \hat{V}_s \xi \|^2 = \|\hat{T}_s \xi \|^2 + \|(I_\cK - P_\cH) \hat{V}_s \xi \|^2.
\ees
So $\hat{V}_s \xi = \hat{T}_s \xi$ for $\xi \in H_u$ with $u \geq s$. Now, for a typical element $\delta_t \cdot x \otimes h$ in $H_t$, $t\in \cS$, we have
\begin{align*}
\alpha_s (\varphi(a)) \delta_t \cdot x \otimes h &=
\hat{V}_s (I \otimes a) \hat{V}_s^* \delta_t \cdot x \otimes h \\
&= \hat{V}_s (I \otimes a) \hat{T}_s^* \delta_t \cdot x \otimes h \\
&= \hat{V}_s \delta_{s+t} \cdot (U_{s,t} \otimes I_H) \Big( x \otimes (I \otimes a) \tT_s^* h \Big) \\
&= \hat{T}_s \delta_{s+t} \cdot (U_{s,t} \otimes I_H) \Big( x \otimes (I \otimes a) \tT_s^* h \Big) \in \cH,
\end{align*}
because $\delta_{s+t} \cdot x \otimes (I \otimes a) \tT_s^* h \in H_{s+t}$, and $s+t \geq s$.
\end{proof}

\chapter{Dilations and pieces of subproduct system representations}\label{chap:dil}

\section{Dilations and pieces of subproduct system representations}

\begin{definition}
Let $X$ and $Y$ be subproduct systems of $\cM$ correspondences ($\cM$ a W$^*$-algebra) over the same semigroup $\cS$. Denote by $U_{s,t}^X$ and $U_{s,t}^Y$ the coisometric maps that make $X$ and $Y$, respectively, into subproduct systems.
$X$ is said to be a \emph{subproduct subsystem of $Y$} (or simply a \emph{subsystem of $Y$} for short) if for all $s \in \cS$ the space $X(s)$ is a closed subspace of $Y(s)$, and if the orthogonal projections $p_s : Y(s) \rightarrow X(s)$ are bimodule maps that satisfy
\be\label{eq:pU}
p_{s+t} \circ U_{s,t}^Y = U_{s,t}^X \circ (p_s \otimes p_t) \,\, , \,\, s,t \in \cS.
\ee
\end{definition}

One checks that if $X$ is a subproduct subsystem of $Y$ then
\be\label{eq:p}
p_{s+t+u} \circ U^Y_{s,t+u}(I \otimes (p_{t+u}\circ U^Y_{t,u})) = p_{s+t+u} \circ U^Y_{s+t,u}((p_{s+t}\circ U^Y_{s,t})\otimes I) ,
\ee
for all $s,t,u \in \cS$. Conversely, given a subproduct system $Y$ and a family of orthogonal projections $\{p_s\}_{s \in \cS}$ that are bimodule maps satisfying (\ref{eq:p}), then by defining $X(s) = p_s Y(s)$ and
$U_{s,t}^X = p_{s+t} \circ U_{s,t}^Y$ one obtains a subproduct subsystem $X$ of $Y$ (with (\ref{eq:pU}) satisfied).

The following proposition is a consequence of the definitions.
\begin{proposition}
There exists a morphism $X \rightarrow Y$ if and only if $Y$ is isomorphic to a subproduct subsystem of $X$.
\end{proposition}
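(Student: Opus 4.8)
The plan is to prove both implications by unwinding the definitions; the only computational content is the identity $V_sV_s^*V_s=V_s$ (valid for any coisometry) combined with the morphism square (\ref{eq:iso}). I will treat the two directions separately.

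\textbf{($\Leftarrow$)} Assume $Y$ is isomorphic to a subproduct subsystem $Z$ of $X$. Write $q_s\colon X(s)\to Z(s)$ for the orthogonal (bimodule) projections, so that $U_{s,t}^Z=q_{s+t}\circ U_{s,t}^X$ and the compatibility relation $q_{s+t}\circ U_{s,t}^X=U_{s,t}^Z\circ(q_s\otimes q_t)$ holds, and write $W=\{W_s\}\colon Y\to Z$ for the isomorphism, with each $W_s$ unitary and $W_0$ a unital $*$-isomorphism. I would set $V_s:=W_s^{-1}\circ q_s\colon X(s)\to Y(s)$. Then $V_0=W_0^{-1}$ is a unital $*$-isomorphism, each $V_s$ with $s\neq0$ is coisometric (a coisometry followed by a unitary) and a correspondence map, and the morphism square is checked by substituting the relation $U_{s,t}^Z=W_{s+t}\circ U_{s,t}^Y\circ(W_s^{-1}\otimes W_t^{-1})$, which is (\ref{eq:iso}) for $W$, into $V_{s+t}\circ U_{s,t}^X=W_{s+t}^{-1}\circ q_{s+t}\circ U_{s,t}^X=W_{s+t}^{-1}\circ U_{s,t}^Z\circ(q_s\otimes q_t)$, collapsing $W_{s+t}^{-1}W_{s+t}$ and recognising $W_s^{-1}q_s=V_s$. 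Hence $V$ is a morphism $X\to Y$.

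\textbf{($\Rightarrow$)} Assume $V=\{V_s\}\colon X\to Y$ is a morphism. Each $V_s$ is coisometric, so $V_s^*\colon Y(s)\to X(s)$ is an isometry with adjointable range; I would set $Z(s):=V_s^*\bigl(Y(s)\bigr)$, a complemented (hence closed) submodule of $X(s)$ with orthogonal projection $q_s:=V_s^*V_s$, which is a bimodule map since it is built from the correspondence map $V_s$, and $q_0=\mathrm{id}$ because $V_0$ is a $*$-isomorphism. The corestriction $V_s^*\colon Y(s)\to Z(s)$ is unitary, with inverse $V_s|_{Z(s)}$ (because $V_sV_s^*=I$). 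I would endow $Z$ with the subproduct structure transported from $Y$, namely $U_{s,t}^Z:=V_{s+t}^*\circ U_{s,t}^Y\circ\bigl(V_s|_{Z(s)}\otimes V_t|_{Z(t)}\bigr)$; by construction $(Z,\{U_{s,t}^Z\})$ is a subproduct system and $\{V_s^*\}$ is an isomorphism $Y\to Z$. It then remains only to verify that $Z$ is a subproduct \emph{sub}system of $X$, i.e. that $q_{s+t}\circ U_{s,t}^X=U_{s,t}^Z\circ(q_s\otimes q_t)$. This is where the key identity enters: since $V_s|_{Z(s)}\circ q_s=V_sV_s^*V_s=V_s$, one gets $U_{s,t}^Z\circ(q_s\otimes q_t)=V_{s+t}^*\circ U_{s,t}^Y\circ(V_s\otimes V_t)=V_{s+t}^*\circ V_{s+t}\circ U_{s,t}^X=q_{s+t}\circ U_{s,t}^X$, using (\ref{eq:iso}) for $V$ in the middle step; restricting this to $Z(s)\otimes Z(t)$ also recovers $U_{s,t}^Z=q_{s+t}\circ U_{s,t}^X|_{Z(s)\otimes Z(t)}$, consistent with the definition of subproduct subsystem.

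\textbf{Main obstacle.} There is no serious obstacle — the statement is, as the authors indicate, a consequence of the definitions. The only points requiring mild care are: that the range of the isometry $V_s^*$ is orthogonally complemented (this uses the $W^*$-setting, and is in any case witnessed explicitly by $q_s=V_s^*V_s$); that the corestriction/restriction juggling in the $(\Rightarrow)$ direction is set up so that $V_s|_{Z(s)}$ really inverts $V_s^*|^{Z(s)}$; and the bookkeeping of the degree-zero $*$-isomorphism $V_0$, which, if one prefers, may be normalised to $\mathrm{id}_{\cN}$ at the outset so that all correspondence-map identities are over the fixed actions of $\cN$.
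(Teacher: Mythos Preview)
Your proof is correct and is precisely the natural unwinding of the definitions that the paper has in mind; the paper itself offers no argument beyond declaring the proposition ``a consequence of the definitions,'' and your two directions---composing the projection with the isomorphism for ($\Leftarrow$), and taking $Z(s)=V_s^*\bigl(Y(s)\bigr)$ with $q_s=V_s^*V_s$ for ($\Rightarrow$)---are exactly how one would fill that in. The key identity $V_s|_{Z(s)}\circ q_s = V_sV_s^*V_s = V_s$ is the right hinge, and your remarks on complementedness of $V_s^*(Y(s))$ and on normalising $V_0$ are the only points that needed comment.
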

\begin{remark}\label{rem:subsystem_iso}
\emph{In the notation of Theorem \ref{thm:essentially_inverse}, we may now say that
given a subproduct system $X$ and a representation $R$ of $X$, then the Arveson-Stinespring subproduct system $E$ of $\Theta = \Sigma(X,R)$ is isomorphic to a subproduct subsystem of $X$.}
\end{remark}

The following definitions are inspired by the work of Bhat, Bhattacharyya and Dey \cite{BBD03}.

\begin{definition}\label{def:repdilation}
Let $X$ and $Y$ be subproduct systems of W$^*$-correspondences (over the same W$^*$-algebra $\cM$) over $\cS$, and let $T$ be a representation of $Y$ on a Hilbert space $K$. Let $H$ be some fixed Hilbert space, and let $S=\{S_s\}_{s\in\cS}$ be a family of maps $S_s : X(s) \rightarrow B(H)$. $(Y,T,K)$ is called a \emph{dilation} of $(X,S,H)$ if
\begin{enumerate}
\item $X$ is a subsystem of $Y$,
\item $H$ is a subspace of $K$, and
\item for all $s \in \cS$,  $\widetilde{T}^*_s H \subseteq X(s) \otimes H$ and $\widetilde{T}^*_s \big|_H = \widetilde{S}^*_s$.
\end{enumerate}
In this case we say that $S$ is \emph{an $X$-piece of $T$}, or simply a \emph{piece of $T$}. $T$ is said to be an 
\emph{isometric} dilation of $S$ if $T$ is an isometric representation.
\end{definition}
The third item can be replaced by the three conditions
\begin{itemize}
\item[1'] $T_0(\cdot) P_H = P_H T_0(\cdot) P_H = S_0 (\cdot)$,
\item[2'] $P_H \tT_s \big|_{X(s) \otimes H} = \widetilde{S}_s$ for all $s\in \cS$, and
\item[3'] $P_H \tT_s \big|_{Y(s) \otimes K \ominus X(s) \otimes H} = 0$.
\end{itemize}
So our definition of dilation is identical to Muhly and Solel's definition of dilation of representations when $X = Y$ is a product system \cite[Theorem and Definition 3.7]{MS02}.

\begin{proposition}\label{prop:pieceisrep}
Let $T$ be a representation of $Y$, let $X$ be a subproduct subsystem of $Y$, and let $S$ an $X$-piece of $T$. Then $S$ is a representation of $X$.
\end{proposition}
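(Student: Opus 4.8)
The plan is to verify the two defining properties of a representation of the subproduct system $X$ (Definition \ref{def:rep}) directly from the three conditions $1',2',3'$ that characterise a dilation. The component $S_0$, which by $1'$ equals $T_0(\cdot)P_H = P_H T_0(\cdot)P_H$, causes no trouble: from $1'$ one gets $P_H T_0(a) = P_H T_0(a) P_H = T_0(a) P_H$ for every $a\in\cM$, so $H$ reduces $T_0$ and $S_0$ is just the normal $*$-homomorphic restriction $T_0|_H$. For a fixed $s$, the relation $\widetilde{S}_s = P_H \widetilde{T}_s|_{X(s)\otimes_{S_0}H}$ coming from $2'$ (the embedding $X(s)\otimes_{S_0}H\hookrightarrow Y(s)\otimes_{T_0}K$ being isometric because $X(s)$ carries the inner product inherited from $Y(s)$ and $S_0=T_0|_H$) exhibits $\widetilde{S}_s$ as a compression of the contraction $\widetilde{T}_s$, hence $\|\widetilde{S}_s\|\le 1$ and $S_s$ is completely contractive; the module identities $S_s(xa)=S_s(x)S_0(a)$ and $S_s(a\cdot x)=S_0(a)S_s(x)$ follow because in $Y(s)\otimes_{T_0}K$ one has $xa\otimes h = x\otimes T_0(a)h$, because $X(s)$ is invariant under the left action of $\cM$ (the projection $p_s$ being a module map, hence commuting with $\varphi_{Y(s)}(\cM)$, so the left action on $X(s)$ is the restriction of that on $Y(s)$), and because $(T_0,T_s)$ is covariant and $P_H$ commutes with $T_0$. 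So $(\sigma,S_s)$ is a c.c. representation of $X(s)$ for each $s$, and the whole content of the proposition is the ``semigroup'' identity $\widetilde{S}_{s+t}(U^X_{s,t}\otimes I_H)=\widetilde{S}_s(I_{X(s)}\otimes\widetilde{S}_t)$.

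The key observation I would isolate first is a pair of orthogonality statements about the subspace $Y(s)\otimes_{T_0}K \ominus X(s)\otimes_{S_0}H$ on which $P_H\widetilde{T}_s$ vanishes by $3'$. First, if $x\in X(s)$ and $k\in K\ominus H$, then $x\otimes k$ lies in that subspace: for $x'\in X(s)$, $h'\in H$ one has $\langle x\otimes k, x'\otimes h'\rangle = \langle k, T_0(\langle x,x'\rangle)h'\rangle$, and $T_0(\langle x,x'\rangle)h'\in H$ since $H$ reduces $T_0$, so this is $0$. Second, if $z\in Y(s+t)\ominus X(s+t)$ and $h\in H$, then $z\otimes h$ lies in $Y(s+t)\otimes K\ominus X(s+t)\otimes H$: for $x'\in X(s+t)$ we have $\langle z, x'\rangle = \langle z, p_{s+t}x'\rangle = \langle p_{s+t}z, x'\rangle = 0$ because $p_{s+t}$ is a self-adjoint module projection onto $X(s+t)$. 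Consequently $P_H\widetilde{T}_s(x\otimes k)=0$ and $P_H\widetilde{T}_{s+t}(z\otimes h)=0$ in these two situations.

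With these in hand the computation is short. Fix $x\in X(s)$, $y\in X(t)$, $h\in H$. Writing $\widetilde{T}_t(y\otimes h) = \widetilde{S}_t(y\otimes h) + k$ with $k=(I-P_H)\widetilde{T}_t(y\otimes h)\in K\ominus H$ and using the first orthogonality statement, $P_H\widetilde{T}_s\bigl(x\otimes \widetilde{S}_t(y\otimes h)\bigr) = P_H\widetilde{T}_s\bigl(x\otimes\widetilde{T}_t(y\otimes h)\bigr)$, and the left-hand side is $\widetilde{S}_s(I_{X(s)}\otimes\widetilde{S}_t)(x\otimes y\otimes h)$. Now use that $T$ is a representation of $Y$: $\widetilde{T}_s\bigl(x\otimes\widetilde{T}_t(y\otimes h)\bigr) = \widetilde{T}_{s+t}\bigl(U^Y_{s,t}(x\otimes y)\otimes h\bigr)$. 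Decompose $U^Y_{s,t}(x\otimes y) = p_{s+t}U^Y_{s,t}(x\otimes y) + z = U^X_{s,t}(x\otimes y) + z$ with $z\in Y(s+t)\ominus X(s+t)$; the second orthogonality statement kills the $z$-term after applying $P_H$, leaving $P_H\widetilde{T}_{s+t}\bigl(U^X_{s,t}(x\otimes y)\otimes h\bigr) = \widetilde{S}_{s+t}\bigl(U^X_{s,t}(x\otimes y)\otimes h\bigr)$, which is exactly the right-hand side. Since the elements $x\otimes y\otimes h$ span $X(s)\otimes X(t)\otimes H$, the identity follows.

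I expect the only real care to be needed in the bookkeeping of the orthogonality claims — in particular in checking that the left action of $\cM$ on $X(s)$ is the restriction of its action on $Y(s)$ (so that $p_{s+t}$ being a module map really yields $\langle z, x'\rangle = 0$ above) and that the tensor-product embeddings $X(s)\otimes_{S_0}H\hookrightarrow Y(s)\otimes_{T_0}K$ are isometric; these are precisely the points where the hypothesis that $X$ is a subproduct \emph{subsystem} of $Y$, and not merely that each $X(s)$ sits inside $Y(s)$, is used.
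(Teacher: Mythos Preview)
Your proof is correct and follows essentially the same route as the paper's. The paper's argument is more compressed but rests on exactly the two orthogonality observations you isolate: the step $P_H T_s(x)T_t(y)h = P_H T_s(x)P_H T_t(y)h$ is your observation that $x\otimes k\perp X(s)\otimes H$ for $k\in K\ominus H$, and the step $\widetilde{S}_{s+t}(p_{s+t}U^Y_{s,t}(x\otimes y)\otimes h)=P_H\widetilde{T}_{s+t}(U^Y_{s,t}(x\otimes y)\otimes h)$ is your observation about $z\otimes h$ for $z\in Y(s+t)\ominus X(s+t)$.
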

\begin{proof}
$S$ is a completely contractive linear map as the compression of a completely contractive linear map. Item 1' above together with the coinvariance of $T$ imply that $S$ is coinvariant:
if $a,b \in \cM$ and $x \in X(s)$, then
\begin{align*}
S_s(axb) = P_H T_s(axb) P_H
&= P_H T_0(a) T_s(x) T_0(b) P_H \\
&= P_H T_0(a) P_H T_s(x) P_H T_0(b) P_H \\
&= S_0(a) S_s(x) S_0(b).
\end{align*}
Finally, (using Item 3' above),
\begin{align*}
S_{s+t} (U^X_{s,t}(x \otimes y)) h &= S_{s+t} (p_{s+t} U^Y_{s,t}(x \otimes y)) h \\
&= \widetilde{S}_{s+t} (p_{s+t} U^Y_{s,t}(x \otimes y) \otimes h) \\
&= P_H \widetilde{T}_{s+t} (U^Y_{s,t}(x \otimes y) \otimes h) \\
&= P_H T_s (x) T_t(y) h \\
&= P_H T_s (x) P_H T_t(y) h \\
&= S_s (x) S_t(y) h .
\end{align*}
\end{proof}

\begin{example}\emph{
Let $E$ be a Hilbert space of dimension $d$, and let $X$ be the symmetric subproduct system constructed in Example \ref{expl:symm}. Fix an orthonormal basis $\{e_1, \ldots, e_n\}$ of $E$. There is a one-to-one correspondence between c.c. representations $S$ of $X$ (on some $H$) and commuting row contractions $(S_1, \ldots, S_d)$ (of operators on $H$), given by
\bes
S \leftrightarrow \underline{S} = (S(e_1), \ldots, S(e_d)).
\ees
If $Y$ is the full product system over $E$, then any dilation $(Y,T,K)$ gives rise to a tuple $\underline{T} = (T(e_1), \ldots, T(e_d))$ that is a dilation of $\underline{S}$ in the sense of \cite{BBD03}, and \emph{vice versa}.
Moreover, $\underline{S}$ is then a commuting piece of $\underline{T}$ in the sense of \cite{BBD03}.}
\end{example}

Consider a subproduct system $Y$ and a representation $T$ of $Y$ on $K$. Let $X$ be some subproduct subsystem of $Y$. Define the following set of subspaces of $K$:
\be\label{eq:cPXT}
\cP(X,T) = \{H \subseteq K: \widetilde{T}^*_s H \subseteq X(s) \otimes H\ \textrm{ for all }s \in \cS\}.
\ee
As in \cite{BBD03}, we observe that $\cP(X,T)$ is closed under closed linear spans (and intersections), thus we may define
\bes
K^X(T) = \bigvee_{H \in \cP(X,T)} H.
\ees
$K^X(T)$ is the maximal element of $\cP(X,T)$.
\begin{definition}\label{def:Xpiece}
The representation $T^X$ of $X$ on $K^X(T)$ given by
\bes
T^X(x) h = P_{K^X(T)} T(x) h,
\ees
for $x \in X(s)$ and $h \in K^X(T)$, is called
\emph{the maximal $X$-piece of $T$}.
\end{definition}
By Proposition \ref{prop:pieceisrep}, $T^X$ is indeed a representation of $X$.

\section{Consequences in dilation theory of $cp$-semigroups}

\begin{proposition}\label{prop:dil_rep_dil_CP}
Let $X$ and $Y$ be subproduct systems of W$^*$-correspondences (over the same W$^*$-algebra $\cM$) over $\cS$, and let $S$ and $T$ be representations of $X$ on $H$ and
of $Y$ on $K$, respectively.
Assume that $(Y,T,K)$ is a dilation of $(X,S,H)$. Then the $cp$-semigroup $\Theta$ acting on $T_0(\cM)'$, given by
\bes
\Theta_s(a) = \widetilde{T}_s(I_{Y(s)} \otimes a)\widetilde{T}_s^* \,\, ,\,\, a \in T_0(\cM)',
\ees
is a dilation of the $cp$-semigroup $\Phi$ acting on $S_0(\cM)'$ given by
\bes
\Phi_s(a) = \widetilde{S}_s(I_{X(s)} \otimes a)\widetilde{S}_s^* \,\, ,\,\, a \in S_0(\cM)',
\ees
in the sense that for all $b \in T_0(\cM)'$ and all $s \in \cS$,
\bes
\Phi_s(P_H b P_H) = P_H \Theta_s (b) P_H .
\ees
\end{proposition}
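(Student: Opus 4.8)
The plan is to reduce everything to the defining relations of a dilation of subproduct system representations, specifically conditions (1'), (2'), (3') from Definition~\ref{def:repdilation}, and then perform the standard compression computation. First I would fix $b \in T_0(\cM)'$ and $s \in \cS$, and write $P = P_H$ throughout. The key observation is that, since $(Y,T,K)$ is a dilation of $(X,S,H)$, we have $\widetilde{T}_s^*\big|_H = \widetilde{S}_s^*$ as maps $H \to X(s)\otimes H \subseteq Y(s)\otimes K$; equivalently $\widetilde{S}_s = P\,\widetilde{T}_s\big|_{X(s)\otimes H}$ and $\widetilde{T}_s^* H \subseteq X(s)\otimes H$. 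One small preliminary point to check is that $P H b P_H$ really lies in $S_0(\cM)'$, so that $\Phi_s(PbP)$ makes sense: for $a \in \cM$, since $T_0(a)$ commutes with $b$ and $H$ reduces $T_0$ (condition 1'), one gets $S_0(a)(PbP) = P T_0(a) b P = P b T_0(a) P = (PbP)S_0(a)$.

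The core computation proceeds as follows. For $h \in H$,
\[
\Phi_s(PbP)h = \widetilde{S}_s (I_{X(s)}\otimes PbP)\widetilde{S}_s^* h = \widetilde{S}_s (I_{X(s)}\otimes PbP)\widetilde{T}_s^* h,
\]
using $\widetilde{T}_s^* h = \widetilde{S}_s^* h \in X(s)\otimes H$. Now on $X(s)\otimes H$ the operator $I_{X(s)}\otimes PbP$ agrees with $(I_{Y(s)}\otimes b)$ followed by the projection onto $X(s)\otimes H$, but in fact I only need that $I_{X(s)} \otimes P$ applied to an element of $X(s)\otimes H$ is the identity, and that $I_{X(s)}\otimes b$ maps $X(s)\otimes H$ into $Y(s)\otimes K$; so $(I_{X(s)}\otimes PbP)\widetilde{T}_s^* h = (I_{X(s)}\otimes P)(I_{Y(s)}\otimes b)\widetilde{T}_s^* h$. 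Applying $\widetilde{S}_s = P\widetilde{T}_s\big|_{X(s)\otimes H}$ and then using condition (3') (namely $P\widetilde{T}_s$ kills $Y(s)\otimes K \ominus X(s)\otimes H$, so that $P\widetilde{T}_s(I_{Y(s)}\otimes b)\widetilde{T}_s^* = P\widetilde{T}_s(I_{X(s)}\otimes P)(I_{Y(s)}\otimes b)\widetilde{T}_s^*$ on $H$), I obtain
\[
\Phi_s(PbP)h = P\,\widetilde{T}_s(I_{Y(s)}\otimes b)\widetilde{T}_s^* h = P\,\Theta_s(b)\,P h,
\]
the last equality because $h \in H$ so $Ph = h$. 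Since this holds for all $h \in H$ and both sides vanish on $K\ominus H$, the identity $\Phi_s(P_H b P_H) = P_H\Theta_s(b)P_H$ follows.

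I expect the main obstacle to be purely bookkeeping: keeping straight the various ambient spaces ($X(s)\otimes H$ sitting inside $Y(s)\otimes H$ sitting inside $Y(s)\otimes K$) and verifying that the projection juggling in the step using condition (3') is legitimate — in particular that $(I_{Y(s)}\otimes b)$ does not move things out of the range where (3') applies, which is handled by inserting $I_{X(s)}\otimes P$ and using that $b$ commutes with nothing in particular but that we are projecting back down with $P\widetilde{T}_s$ anyway. A clean way to organize this is to first prove the auxiliary identity $P\,\widetilde{T}_s(I_{Y(s)}\otimes b) = \widetilde{S}_s(I_{X(s)}\otimes PbP)(\text{proj onto }X(s)\otimes H)$ restricted to the relevant domain, and then contract with $\widetilde{T}_s^*\big|_H = \widetilde{S}_s^*$; everything else is the routine observation that $\Theta$ and $\Phi$ are genuine $cp$-semigroups, which is already guaranteed by Proposition~\ref{prop:semigroup}.
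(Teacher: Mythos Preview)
Your proposal is correct and is precisely a fleshing-out of the paper's own proof, which reads in its entirety: ``This follows from the definitions.'' You have supplied exactly the compression computation the paper leaves implicit, using conditions (1'), (2'), (3') in the way intended.
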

\begin{proof}
This follows from the definitions.
\end{proof}

Although the above proposition follows immediately from the definitions,
we hope that it will prove to be important in the theory of dilations of $cp$-semigroups, because it points to
a conceptually new way of constructing dilations of $cp$-semigroups, as the following proposition and corollary illustrate.

\begin{proposition}\label{prop:exist_isodil}
Let $X=\{X(s)\}_{s\in\cS}$ be a subproduct system, and let $S$ be a fully coisometric representation of $X$ on $H$
such that $S_0$ is unital. If there exists a (full) \emph{product system} $Y = \{Y(s)\}_{s\in\cS}$ such that $X$ is a
subproduct subsystem of $Y$, then $S$ has an isometric and fully coisometric dilation.
\end{proposition}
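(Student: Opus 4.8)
The proof will proceed by combining two results already established in the excerpt: Proposition \ref{prop:technology1}, which represents a fully coisometric subproduct system representation as a commutative semigroup of coisometries on a Hilbert space, and Theorem \ref{thm:isoDilFC}, which produces isometric and fully coisometric dilations of fully coisometric representations of \emph{full} product systems. Since we are handed a full product system $Y$ containing $X$ as a subproduct subsystem, the natural plan is to push the representation $S$ of $X$ up to a representation of $Y$, dilate that using Theorem \ref{thm:isoDilFC}, and then check that the resulting isometric representation of $Y$, when viewed appropriately, is a dilation of $S$ in the sense of Definition \ref{def:repdilation}.

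First I would make precise how $S$ induces data on $Y$. Let $\{p_s : Y(s) \to X(s)\}_{s \in \cS}$ be the family of projections exhibiting $X$ as a subsystem of $Y$. Define $S'_s : Y(s) \to B(H)$ by $S'_s(y) = S_s(p_s y)$, with $S'_0 = S_0$. Using the intertwining relation \eqref{eq:pU} together with the fact that $S$ is a representation of $X$, one checks that $S'$ satisfies the covariance and semigroup identities of a representation of $Y$; the contractivity is immediate since $p_s$ is a contraction. The subtle point is whether $S'$ is \emph{fully coisometric}: we have $\widetilde{S'}_s = \widetilde{S}_s (p_s \otimes I_H)$, and its adjoint is $(\widetilde{S'}_s)^* = (p_s \otimes I_H)\widetilde{S}_s^*$. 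Since $\widetilde{S}_s$ is a coisometry, $\widetilde{S}_s \widetilde{S}_s^* = I_H$, and because $\operatorname{Im}\widetilde{S}_s^* \subseteq X(s) \otimes H$ (as $\widetilde S_s^* = \widetilde S_s^*$ lands in the $X(s)$ part), we get $(p_s \otimes I_H)\widetilde{S}_s^* = \widetilde{S}_s^*$, hence $\widetilde{S'}_s (\widetilde{S'}_s)^* = \widetilde{S}_s \widetilde{S}_s^* = I_H$. So $S'$ is indeed a fully coisometric representation of the product system $Y$ with $S'_0 = S_0$ unital.

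Next I would apply Theorem \ref{thm:isoDilFC} to $(S'_0, S')$ on $Y$ (noting $Y$ is a product system of unital correspondences, as $X$ was, and taking $\cS$ to be our semigroup in $\Rp^k$ of the required form — or, if $\cS$ is more general, invoking the version of Theorem \ref{thm:isoDilFC} that the excerpt states holds for $\cS = \mb{N}^k$ and the like; I would keep the hypotheses on $\cS$ matching those of Theorem \ref{thm:isoDilFC}). This yields a Hilbert space $K \supseteq H$ and a minimal, fully coisometric and isometric representation $(\rho, V)$ of $Y$ on $K$ with $\rho$ unital, satisfying $P_H V_s(y)|_H = S'_s(y)$ and $P_H V_s(y)|_{K \ominus H} = 0$ for all $s, y$. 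Now $V$ is a representation of $Y$, and $X$ is a subsystem of $Y$; I claim $(Y, V, K)$ is a dilation of $(X, S, H)$ in the sense of Definition \ref{def:repdilation}. Checking conditions 1'--3': condition 1' is the statement about $\rho|_H = \sigma$; for condition 2' we compute $P_H \widetilde{V}_s|_{X(s)\otimes H}$ and must see it equals $\widetilde{S}_s$ — using $P_H V_s(y)|_H = S'_s(y) = S_s(p_s y)$ and that on $X(s)$ we have $p_s y = y$, this gives $P_H \widetilde V_s|_{X(s) \otimes H} = \widetilde S_s$; condition 3' splits $Y(s) \otimes K \ominus X(s) \otimes H$ into the $Y(s) \otimes (K \ominus H)$ part (killed by $P_H V_s(\cdot)|_{K\ominus H} = 0$) and the $(Y(s) \ominus X(s)) \otimes H$ part (killed because $S'_s$ vanishes on $\ker p_s = Y(s) \ominus X(s)$). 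Hence $S$ is an $X$-piece of the isometric, fully coisometric representation $V$, which is exactly an isometric and fully coisometric dilation of $S$.

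The main obstacle, and the place I would be most careful, is the verification that $S'$ is genuinely fully coisometric and that condition 3' of the dilation definition holds — i.e., that the ``extra'' directions $Y(s) \ominus X(s)$ really are annihilated upon compression to $H$. This hinges on the compatibility \eqref{eq:pU} of the projections $p_s$ with the multiplication maps and on the precise claim that $\operatorname{Im}\widetilde S_s^* \subseteq X(s) \otimes H$, which is automatic from the construction of $\widetilde S_s$ but should be stated explicitly. A secondary technical point is matching the standing hypotheses on the index semigroup $\cS$ between Proposition \ref{prop:exist_isodil} and Theorem \ref{thm:isoDilFC}; if the excerpt's Proposition \ref{prop:exist_isodil} is meant for the same class of semigroups as Theorem \ref{thm:isoDilFC}, this is a non-issue, but otherwise one would need the appropriate isometric dilation theorem for $Y$. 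I do not expect to need Proposition \ref{prop:technology1} directly if Theorem \ref{thm:isoDilFC} is invoked as a black box, though an alternative route — represent $S$ via $\hat S$ as in Proposition \ref{prop:technology1}, dilate the semigroup of coisometries $\hat S^*$ to unitaries by Douglas's theorem as in Theorem \ref{thm:aut_dil}, and reassemble — would give a more self-contained argument at the cost of more bookkeeping.
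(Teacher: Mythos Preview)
Your proposal is correct and follows essentially the same approach as the paper's proof: define $T_s = S_s \circ p_s$ to get a fully coisometric representation of the full product system $Y$, apply Theorem \ref{thm:isoDilFC} to obtain a minimal isometric and fully coisometric dilation $(Y,V,K)$, and observe that $(Y,V,K)$ is a dilation of $(X,S,H)$. The paper dispatches the verifications with ``straightforward'' and ``clearly'', whereas you have (correctly) worked out the details; your opening mention of Proposition \ref{prop:technology1} is a red herring that you rightly abandon.
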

\begin{proof}
Define a representation $T$ of $Y$ on $H$ by
\be\label{eq:extend}
T_s = S_s \circ p_s,
\ee
where, as above, $p_s$ is the orthogonal projection $Y(s) \rightarrow X(s)$.
A straightforward verification shows that $T$ is indeed a fully coisometric representation of $Y$ on $H$.
By Theorem \ref{thm:isoDilFC}, $(Y,T,H)$ has a minimal isometric and fully coisometric dilation $(Y,V,K)$. $(Y,V,K)$
is also clearly a dilation of $(X,S,H)$.
\end{proof}
\begin{corollary}\label{cor:e0dilwhensub}
Let $\Theta = \{\Theta_s\}_{s\in\cS}$ be a cp$_0$-semigroup and let $(E,T) = \Xi(\Theta)$ be the Arveson-Stinespring
representation of $\Theta$. If there is a (full) \emph{product system} $Y$ such that $E$ is a subproduct subsystem of $Y$, then $\Theta$ has an $e_0$-dilation.
\end{corollary}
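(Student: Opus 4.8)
The plan is to chain together the machinery just developed. First I would recall the setup: by Theorem~\ref{thm:reprep}, the Arveson--Stinespring representation $(E,T)=\Xi(\Theta)$ consists of a subproduct system $E$ of $\cM'$-correspondences together with the injective identity representation $T$, and since $\Theta$ is a cp$_0$-semigroup (in particular each $\Theta_s$ is unital), the final assertion of Theorem~\ref{thm:reprep} tells us that $T$ is fully coisometric, with $T_0={\bf id}_{\cM'}$ unital. So the hypotheses of Proposition~\ref{prop:exist_isodil} are met by $(E,T)$ on $H$, provided $E$ embeds as a subproduct subsystem of a full product system $Y$ — which is exactly the standing assumption of the corollary.

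Next I would invoke Proposition~\ref{prop:exist_isodil} to produce a minimal isometric and fully coisometric dilation $(Y,V,K)$ of $(E,T)$, with $K\supseteq H$. Here $V$ is an isometric representation of the \emph{full product system} $Y$ on $K$, and $V_0$ is a normal unital representation of $\cM'$ (normality is preserved in Theorem~\ref{thm:isoDilFC}, since we are in the $W^*$ setting; one should note this explicitly). Now apply Proposition~\ref{prop:semigroup} to $V$: the maps $\alpha_s : a \mapsto \widetilde{V}_s(I_{Y(s)}\otimes a)\widetilde{V}_s^*$ form a semigroup of CP maps on $V_0(\cM')'$, and because $V$ is isometric \emph{and} fully coisometric, each $\alpha_s$ is a unital $*$-endomorphism — i.e.\ $\alpha=\{\alpha_s\}_{s\in\cS}$ is an $e_0$-semigroup on the von Neumann algebra $\cR := V_0(\cM')'$.

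Then I would verify the dilation relation. Let $u:H\to K$ be the inclusion isometry and $p=uu^*$ the projection onto $H$. Since $(Y,V,K)$ dilates $(E,T)$ in the sense of Definition~\ref{def:repdilation}, Proposition~\ref{prop:dil_rep_dil_CP} applies directly: the $e_0$-semigroup $\alpha$ on $V_0(\cM')' $ is a dilation of the $cp$-semigroup $\Phi_s(a)=\widetilde T_s(I_{E(s)}\otimes a)\widetilde T_s^*$ on $T_0(\cM')'$, meaning $\Phi_s(P_H b P_H)=P_H\alpha_s(b)P_H$ for all $b\in\cR$, $s\in\cS$. Finally, identify $\Phi$ with $\Theta$: by Theorem~\ref{thm:reprep}, $\Theta_s(a)=\widetilde T_s(I_{E(s)}\otimes a)\widetilde T_s^*$ for $a\in(\cM')'=\cM$, so $\Phi=\Theta$ on $\cM$; and $T_0(\cM')'=(\cM')'=\cM$, while $u^*\cR u = u^* V_0(\cM')' u = \cM$ by the same corner computation as in the proof of Theorem~\ref{thm:essentially_inverse} (using that $uH=H$ reduces $V_0(\cM')$ and $V_0(\cdot)|_H=\sigma(\cdot)=\mathrm{id}$). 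Hence $(K,u,\cR,\alpha)$ is an $e_0$-dilation of $\Theta$ in the sense of Definition~\ref{def:dilation}.

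The only genuinely delicate point — and the place where care is needed rather than where a real obstacle lies — is checking that the representation $T_s=S_s\circ p_s$ extending to $Y$ really is a bona fide representation of the full product system $Y$ (so that Theorem~\ref{thm:isoDilFC} applies), and that the resulting $V_0$ is normal; both are handled inside Proposition~\ref{prop:exist_isodil} and Theorem~\ref{thm:isoDilFC} respectively, so for the corollary itself the argument is essentially a bookkeeping assembly of prior results. I would also remark that one does not obtain \emph{minimality} of the $e_0$-dilation for free here, and the statement does not claim it; if desired one restricts $\alpha$ to $W^*\big(\bigcup_s\alpha_s(u\cM u^*)\big)$ exactly as in the proof of Theorem~\ref{thm:scudil}, but that is an optional addendum.
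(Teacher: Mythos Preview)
Your proposal is correct and follows exactly the paper's approach: the paper's own proof is the single line ``Combine Propositions~\ref{prop:semigroup}, \ref{prop:dil_rep_dil_CP} and \ref{prop:exist_isodil},'' and you have simply unpacked that combination, noting along the way (via Theorem~\ref{thm:reprep}) that $T$ is fully coisometric with $T_0$ unital so that Proposition~\ref{prop:exist_isodil} applies. Your additional remarks on normality of $V_0$, the corner identification $u^*\cR u=\cM$, and the optional minimality step are all accurate elaborations beyond what the paper spells out.
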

\begin{proof}
Combine Propositions \ref{prop:semigroup}, \ref{prop:dil_rep_dil_CP} and \ref{prop:exist_isodil}.
\end{proof}

Thus, the problem of constructing $e_0$-dilations to $cp_0$-semigroups is reduced to the problem of embedding a
subproduct system into a full product system. In the next section we give an example of a subproduct system that cannot be embedded into full product system. When this can be done in general is a challenging open question.

\begin{corollary}\label{cor:edilwhensub}
Let $\Theta = \{\Theta_s\}_{s\in\mb{N}^k}$ be a $cp$-semigroup generated by $k$ commuting CP maps
$\theta_1, \ldots, \theta_k$, and let $(E,T) = \Xi(\Theta)$ be the Arveson-Stinespring
representation of $\Theta$. Assume, in addition, that
\bes
\sum_{i=1}^k \|\theta_i\|\leq 1.
\ees
If there is a (full) \emph{product system} $Y$ such that $E$ is a subproduct subsystem of $Y$, then
$\Theta$ has an $e$-dilation.
\end{corollary}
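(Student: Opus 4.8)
The plan is to run essentially the same argument as in Corollary \ref{cor:e0dilwhensub}, with the norm hypothesis $\sum_{i=1}^k\|\theta_i\|\le 1$ playing the role that the fully-coisometric hypothesis played there: I would extend the identity representation $(E,T)$ to a representation of the ambient full product system $Y$, invoke Corollary \ref{cor:normcondregdil} to dilate that representation to an isometric one, and then transport this isometric dilation back to an $e$-dilation of $\Theta=\Sigma(E,T)$ by means of Propositions \ref{prop:semigroup} and \ref{prop:dil_rep_dil_CP} (equivalently, via the general correspondence between isometric dilations of subproduct-system representations and $e$-dilations of $cp$-semigroups, Theorem \ref{thm:edil_repdil}).

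Concretely, let $p_s:Y(s)\to E(s)$ be the orthogonal projections and set $T^Y_s:=T_s\circ p_s$. As in the proof of Proposition \ref{prop:exist_isodil}, $T^Y$ is a completely contractive representation of the product system $Y$ on $H$, and $\widetilde{T^Y}_s=\widetilde{T}_s(p_s\otimes I_H)$; since $p_s$ restricts to the identity on $E(s)\subseteq Y(s)$, we get $\widetilde{T^Y}_s\widetilde{T^Y}_s^{*}=\widetilde{T}_s(p_sp_s^{*}\otimes I_H)\widetilde{T}_s^{*}=\widetilde{T}_s\widetilde{T}_s^{*}$. Putting $a=I_H$ in the identity $\Theta_s(a)=\widetilde{T}_s(I_{E(s)}\otimes a)\widetilde{T}_s^{*}$ of Theorem \ref{thm:reprep}, and recalling that $\Theta_{{\bf e_j}(1)}=\theta_j$, we obtain $\widetilde{T^Y}_{{\bf e_j}(1)}\widetilde{T^Y}_{{\bf e_j}(1)}^{*}=\widetilde{T}_{{\bf e_j}(1)}\widetilde{T}_{{\bf e_j}(1)}^{*}=\theta_j(I_H)$. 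Since $\theta_j$ is a positive linear map of the unital C$^{*}$-algebra $\cM$, $\|\theta_j(I_H)\|=\|\theta_j\|$, so $\sum_{j=1}^k\|\widetilde{T^Y}_{{\bf e_j}(1)}\widetilde{T^Y}_{{\bf e_j}(1)}^{*}\|=\sum_{j=1}^k\|\theta_j\|\le 1$, which is exactly the hypothesis of Corollary \ref{cor:normcondregdil}.

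That corollary then supplies a minimal regular isometric dilation $(Y,V,K)$ of $T^Y$, and its proof (which passes through Proposition \ref{prop:mainprop}) delivers precisely the properties needed to see that $(Y,V,K)$ is a dilation of $(E,T,H)$ in the sense of Definition \ref{def:repdilation}: $P_H$ commutes with $V_0(\cM')$ with $V_0(\cdot)P_H=T_0(\cdot)P_H$ (condition $1'$), $P_H\widetilde{V}_s|_{Y(s)\otimes H}=\widetilde{T^Y}_s$ (condition $2'$), and $P_HV_s(x)|_{K\ominus H}=0$ (condition $3'$). In the W$^{*}$-setting $V_0$ is normal and nondegenerate, hence unital, so $\Psi_s(a):=\widetilde{V}_s(I_{Y(s)}\otimes a)\widetilde{V}_s^{*}$ defines, by Proposition \ref{prop:semigroup}, an $e$-semigroup on $\cR:=V_0(\cM')'$; and since $P_H\in\cR$ reduces $V_0(\cM')$ with $V_0|_H=T_0={\bf id}_{\cM'}$, one has $P_H\cR P_H=\cM$, so Proposition \ref{prop:dil_rep_dil_CP} yields $\Theta_s(P_HbP_H)=P_H\Psi_s(b)P_H$ for all $b\in\cR$, i.e. $\Psi$ is an $e$-dilation of $\Theta$.

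I expect the only genuinely delicate point to be the last, largely bookkeeping, step: verifying that the minimal regular isometric dilation really does satisfy Definition \ref{def:repdilation} and that the compression to the corner $P_H\cR P_H$ recovers $\Theta$ itself (rather than some proper compression of it), together with the harmless but necessary remark that $V_0$ can be taken normal so that $\Psi$ consists of normal maps. By contrast, the extension $T\mapsto T^Y$, the identity $\widetilde{T^Y}_s\widetilde{T^Y}_s^{*}=\widetilde{T}_s\widetilde{T}_s^{*}$, the norm estimate via positivity of the $\theta_j$, and the invocation of Corollary \ref{cor:normcondregdil} are all routine.
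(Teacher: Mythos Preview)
Your proposal is correct and follows essentially the same approach as the paper: extend $T$ to $Y$ via $T^Y_s=T_s\circ p_s$, verify the norm condition $\sum_j\|\widetilde{T^Y}_{{\bf e_j}}\widetilde{T^Y}_{{\bf e_j}}^{*}\|=\sum_j\|\theta_j\|\le 1$ (via $\widetilde{T}_{{\bf e_j}}\widetilde{T}_{{\bf e_j}}^{*}=\theta_j(I)$ and positivity), apply Corollary~\ref{cor:normcondregdil}, and read off the $e$-dilation. The paper's proof is terser and leaves the bookkeeping you spell out (the identity $\widetilde{T^Y}_s\widetilde{T^Y}_s^{*}=\widetilde{T}_s\widetilde{T}_s^{*}$, the use of positivity for $\|\theta_j(I)\|=\|\theta_j\|$, and the passage from the isometric dilation to the $e$-dilation via Propositions~\ref{prop:semigroup} and~\ref{prop:dil_rep_dil_CP}) implicit.
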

\begin{proof}
As in (\ref{eq:extend}), we may extend $T$ to a product system representation of $Y$ on $H$, which we also denote by $T$.
Denote by ${\bf e_i}$ the element of $\mb{N}^k$ with $1$ in the $i$th element and zeros elsewhere.
Then
\bes
\sum_{i=1}^k \|\widetilde{T}_{\bf e_i} \widetilde{T}_{\bf e_i}^*\| = \sum_{i=1}^k \|\theta_i\|\leq 1.
\ees
By Corollary \ref{cor:normcondregdil} $S$ has a minimal (regular) isometric dilation. This isometric dilation
provides the required $e$-dilation of $\Theta$.
\end{proof}

\begin{theorem}\label{thm:edil_repdil}
Let $\cM \subseteq B(H)$ be a von Neumann algebra, let $X$ be a subproduct system of $\cM'$-correspondences,
and let $R$ be an injective representation of $X$ on a Hilbert space $H$.
Let $\Theta = \Sigma(X,R)$ be the $cp$-semigroup acting on $R_0(\cM')'$ given by (\ref{eq:reprep}). Assume that
$(\alpha,K,\cR)$ is an $e$-dilation of $\Theta$, and let $(Y,V) = \Xi(\alpha)$ be the Arveson-Stinespring subproduct
system of $\alpha$ together with the identity representation. Assume, in addition, that the map
$\cR' \ni b \mapsto P_H b P_H$ is a
$*$-isomorphism of $\cR'$ onto $R_0(\cM')$. Then $(Y,V,K)$ is a dilation of $(X,R,H)$.
\end{theorem}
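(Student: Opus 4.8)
The plan is to reduce the statement to the analogous statement about the Arveson--Stinespring data of $\Theta$ and of $\alpha$, and then to produce from the dilation an explicit isometric embedding of $\cM\otimes_{\Theta_s}H$ into $\cR\otimes_{\alpha_s}K$, out of which both the subproduct-subsystem embedding $X\hookrightarrow Y$ and the compression relations of Definition~\ref{def:repdilation} follow. For the reductions: since $\Theta=\Sigma(X,R)$ is defined, $R_0$ is unital, hence nondegenerate; since $R$ is injective, $R_0$ is faithful; so, exactly as in the opening paragraph of the proof of Theorem~\ref{thm:essentially_inverse}, I may pass to an isomorphic copy of $(X,R)$ for which $R_0=\mathrm{id}$, i.e.\ $X$ is a subproduct system of $\cM'$-correspondences with $\cM'=R_0(\cM')\subseteq B(H)$ and $\cM=(\cM')'$. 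As $\alpha$ is an $e$-dilation we have $H\subseteq K$, $P_H\in\cR$, $\cM=P_H\cR P_H$, $\Theta_s(P_HaP_H)=P_H\alpha_s(a)P_H$ for $a\in\cR$, and $P_H$ coinvariant, $\alpha_s(1-P_H)\le1-P_H$. By Theorem~\ref{thm:reprep}, $(Y,V)=\Xi(\alpha)$ has $Y(s)=\cL_\cR(K,\cR\otimes_{\alpha_s}K)$ with $V$ the identity representation, and by Remark~\ref{rem:identityproduct} $Y$ is a full product system and $V$ is isometric. By Theorem~\ref{thm:essentially_inverse} there is an isomorphism $W\colon X\to E$ onto the Arveson--Stinespring subproduct system $E$ of $\Theta$ with $R=T\circ W$, $T$ the identity representation of $\Theta$; since ``being a dilation'' is invariant under isomorphism of the dilated object, it suffices to prove that $(Y,V,K)$ is a dilation of $(E,T,H)$. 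Throughout I use the standard identifications $[E(s)H]=\cM\otimes_{\Theta_s}H$ and $[Y(s)K]=\cR\otimes_{\alpha_s}K$, under which $\widetilde T_s^{\,*}=W_{\Theta_s}$ and $\widetilde V_s^{\,*}=W_{\alpha_s}$.

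Next I would build the key map. Using $\cM=P_H\cR P_H\subseteq\cR$ and $H\subseteq K$, define $j_s\colon\cM\otimes_{\Theta_s}H\to\cR\otimes_{\alpha_s}K$ by $j_s(a\otimes h)=a\otimes h$. Since $a^{*}b=P_Ha^{*}bP_H$ for $a,b\in\cM$, the dilation equation gives $\Theta_s(a^{*}b)=P_H\alpha_s(a^{*}b)P_H$, so $\langle a\otimes h,b\otimes g\rangle$ is the same computed in $\cM\otimes_{\Theta_s}H$ or in $\cR\otimes_{\alpha_s}K$; hence $j_s$ is a well-defined isometry, and it plainly intertwines the $\cM$-bimodule actions $\pi_{\Theta_s}$ and $\pi_{\alpha_s}$. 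Here the extra hypothesis is used: multiplicativity of $b\mapsto P_HbP_H$ on $\cR'$ forces $P_Hb(1-P_H)=0$ for all $b\in\cR'$ (because $P_Hb(1-P_H)b^{*}P_H=(P_Hb(1-P_H))(P_Hb(1-P_H))^{*}$ vanishes), i.e.\ $H$ is invariant under $\cR'$, and $\rho\colon\cR'\ni b\mapsto b|_H\in\cM'=R_0(\cM')$ is a normal $*$-isomorphism; consequently $j_s$ is semilinear over $\rho$ for the left and right $\cR'$-versus-$\cM'$-actions. Finally, by coinvariance of $P_H$, $\langle(1-P_H)\otimes h,(1-P_H)\otimes h\rangle=\langle h,\alpha_s(1-P_H)h\rangle=0$ for $h\in H$, so $1_\cR\otimes h=P_H\otimes h=j_s(1_\cM\otimes h)$; that is, $j_s\circ W_{\Theta_s}=W_{\alpha_s}|_H$, equivalently $j_s$ intertwines $\widetilde T_s^{\,*}$ with $\widetilde V_s^{\,*}|_H$.

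Then I would assemble the embedding and read off the conclusion. Define $\Psi_s\colon Y(s)\to E(s)$ by $\Psi_s(Z)=j_s^{*}\circ Z|_H$, and $\Psi_s^{\dagger}\colon E(s)\to Y(s)$ as the unique $\cR$-module map $K=[\cR H]\to\cR\otimes_{\alpha_s}K$ extending $j_s\circ(\cdot)|_H$ (well defined by the same inner-product computation). One verifies that each $\Psi_s$ is a $\rho$-semilinear correspondence map with $\Psi_s\Psi_s^{\dagger}=I_{E(s)}$ (so $\Psi_s$ is coisometric), and that $\{\Psi_s\}$ intertwines the subproduct maps $U^Y_{s,t}$ and $U^E_{s,t}$; the latter follows by unwinding the formulas $U=V^{*}\Psi$ from the proof of Theorem~\ref{thm:reprep} and using that $j$ is compatible with the structure isometries $\Gamma_{s,t}$ and with $W_{\Theta_s},W_{\alpha_s}$, exactly as $\widetilde T,\widetilde V$ are in Remark~\ref{rem:reptilde}. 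Hence $\Psi\colon Y\to E$ is a morphism of subproduct systems, so $E$ — and therefore $X$ — is a subproduct subsystem of $Y$, with $E(s)$ sitting in $Y(s)$ via $\Psi_s^{\dagger}$ and $E(s)\otimes H=\cM\otimes_{\Theta_s}H$ sitting in $Y(s)\otimes K=\cR\otimes_{\alpha_s}K$ via $j_s$. Under these identifications the previous paragraph gives $\widetilde V_s^{\,*}|_H=\widetilde T_s^{\,*}$ and $\widetilde V_s^{\,*}H=W_{\alpha_s}H\subseteq[\cM\otimes_{\alpha_s}H]=j_s(\cM\otimes_{\Theta_s}H)=E(s)\otimes H$, which, with $H\subseteq K$, are precisely the requirements of Definition~\ref{def:repdilation}. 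Thus $(Y,V,K)$ dilates $(E,T,H)$, hence $(Y,V,K)$ dilates $(X,R,H)$.

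The hard part will be the verification that $\{\Psi_s\}$ is compatible with the product-system multiplications $U^Y_{s,t},U^E_{s,t}$ (together with the well-definedness of $\Psi_s^{\dagger}$): this is a page of Hilbert-module bookkeeping, dual to the computations in the proof of Theorem~\ref{thm:essentially_inverse}, tracking how $j_s$ interacts with the maps $\Gamma_{s,t}$, $\Psi_{s,t}$ and $W_{\alpha_s}$ from which the $U$'s are built. The two hypotheses are each used exactly once and are genuinely needed: coinvariance of $P_H$ to place $1_\cR\otimes h$ inside $\cM\otimes_{\alpha_s}H$, and the $*$-isomorphism hypothesis to make $H$ invariant under $\cR'$, without which the left $\cR'$-action on $Y(s)$ does not descend to $E(s)$.
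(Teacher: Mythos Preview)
Your approach is sound and, at its core, parallel to the paper's: both reduce to the Arveson--Stinespring pair $(E,T)=\Xi(\Theta)$ and then produce a morphism $Y\to E$ making $E$ a subproduct subsystem of $Y$, from which the dilation conditions are read off. The paper, however, does this far more economically. It introduces the compressed representation $W_s(y)=P_HV_s(y)P_H$, checks directly (using coinvariance of $P_H$) that $W$ is a representation of $Y$ on $H$ with $\Sigma(Y,W)=\Theta$ and $W_0$ faithful (this is where the $*$-isomorphism hypothesis enters), and then simply \emph{invokes} Theorem~\ref{thm:essentially_inverse} applied to $(Y,W)$ to get the morphism $Y\to E$ with $T_s\circ p_s=W_s$. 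The conclusion then follows by composing two easy dilations: $(Y,V,K)$ dilates $(Y,W,H)$ and $(Y,W,H)$ dilates $(E,T,H)$. Your map $\Psi_s$ is in fact the \emph{same} morphism --- indeed $T_s(\Psi_s(Z))=W_{\Theta_s}^*j_s^*Z|_H=P_HW_{\alpha_s}^*Z|_H=P_HV_s(Z)|_H=W_s(Z)$ --- so you are re-deriving the content of Theorem~\ref{thm:essentially_inverse} by hand in this special case (hence the ``page of Hilbert-module bookkeeping'' you defer), rather than citing it.

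There is one genuine gap. Your construction of $\Psi_s^{\dagger}$ asserts $K=[\cR H]$, which is a minimality condition not among the hypotheses. Without it the $\cR$-linear extension of $j_s\circ X|_H$ from $H$ to all of $K$ need not exist, so $\Psi_s^{\dagger}(X)$ need not lie in $Y(s)=\cL_\cR(K,\cR\otimes_{\alpha_s}K)$; you then lose both the coisometry of $\Psi_s$ and the identification of $j_s$ with $\Psi_s^{\dagger}\otimes\iota_H$ needed in your last paragraph. This is fixable --- define $\Psi_s^{\dagger}$ via self-duality of the $W^*$-correspondence $Y(s)$, exactly as $V_s$ is constructed in the proof of Theorem~\ref{thm:essentially_inverse} --- but once you do that you are literally reproving that theorem, which is why the paper's route of just invoking it is cleaner.
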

\begin{proof}
For every $s \in \cS$, define $W_s : Y(s) \rightarrow B(H)$ by $W_s (y) = P_H V_s(y)P_H$.
We claim that $W = \{W_s\}_{s\in\cS}$ is a representation of $Y$ on $H$. First, note that
$P_H \alpha_s (I-P_H)P_H = \Theta_s(P_H(I-P_H)P_H) = 0$, thus
$P_H \widetilde{V}_s (I \otimes (I-P_H))\widetilde{V}^*_s P_H =0$, and consequently
$P_H \widetilde{V}_s (I \otimes P_H) = P_H \widetilde{V}_s$.
It follows that $W_s (y) = P_H V_s(y) P_H = P_H V_s(y) $. From this it follows that
\begin{align*}
W_s(y_1) W_t(y_2) &= P_HV_s(y_1)P_HV_t(y_2) = P_HV_s(y_1) V_t(y_2) \\
&= P_H V_{s+t}(U^Y_{s,t}(y_1 \otimes y_2)) = W_{s+t}(U^Y_{s,t}(y_1 \otimes y_2)).
\end{align*}

By Theorem \ref{thm:essentially_inverse}, we may assume that $(X,R) = (E,T) = \Xi(\Theta)$ is the
Arveson-Stinespring representation of $\Theta$.
Because $\alpha$ is a dilation of $\Theta$, we have
\bes
\widetilde{W}_s (I \otimes a)\widetilde{W}_s^* = P_H \widetilde{V}_s (I \otimes a)\widetilde{V}^*_s P_H = \Theta_s(a),
\ees
That is, $\Theta = \Sigma(Y,W)$.
Thus, by Theorem \ref{thm:essentially_inverse} and Remark \ref{rem:subsystem_iso}, we may assume that
$E$ is a subproduct subsystem of $Y$, and that $T_s \circ p_s = W_s$, $p_s$ being the projection
of $Y(s)$ onto $E(s)$. In other words, for all $y \in Y$,
\bes
\widetilde{T}_s (p_s \otimes I_H) = P_H \widetilde{W}_s .
\ees
Therefore, $\widetilde{W}_s^* H \subseteq E(s) \otimes H$, and $\widetilde{W}_s^*\big|_H = \widetilde{T}_s^*$.
That is, $(Y,W,H)$ is a dilation of $(E,T,H)$. But $(Y,V,K)$ is a dilation of $(Y,W,H)$,
so it is also a dilation of $(E,T,H)$.
\end{proof}

\begin{remark}
\emph{The assumption that $\cR' \ni b \mapsto P_H b P_H \in \cM'$ is a $*$-isomorphism is satisfied when $\cM = B(H)$ and
$\cR = B(K)$. More generally, it is satisfied whenever the central support of $P_H$ in $\cR$ is $I_K$
(see Propositions 5.5.5 and 5.5.6 in \cite{KRI}). When $\cM$ is a full corner in $\cR$, that is, when $\cR = \cR P_H \cR$,
then the central support of $P_H$ in $\cR$ is $I_K$.}
\end{remark}

Let $(\alpha,K,\cR)$ be an $e$-dilation of a semigroup $\Theta$ on $\cM \subseteq B(H)$. $(\alpha,K,\cR)$ is called a \emph{minimal dilation} if the central support of $P_H$ in $\cR$ is $I_K$ and if
\bes
\cR = W^*\left( \bigcup_{s\in\cS} \alpha_s(\cM) \right).
\ees
\begin{corollary}\label{cor:edil_repdil}
Let $\Theta$ be $cp$-semigroup on $\cM \subseteq B(H)$, and let $(\alpha,K,\cR)$ be a minimal dilation of $\Theta$. Then $\Xi(\alpha)$ is an isometric dilation of $\Xi(\Theta)$.
\end{corollary}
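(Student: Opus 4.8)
The plan is to obtain this as an immediate consequence of Theorem \ref{thm:edil_repdil}, so that the entire argument reduces to checking that the hypotheses of that theorem hold in the present, more concrete, setting. First I would take $(X,R) = (E,T) := \Xi(\Theta)$, the Arveson--Stinespring subproduct system of $\Theta$ together with its identity representation. By Theorem \ref{thm:reprep}, $T$ is injective, $T_0 = {\bf id}_{\cM'}$, and $\Theta = \Sigma(E,T)$ is a $cp$-semigroup acting on $T_0(\cM')' = (\cM')' = \cM$. Hence the standing assumptions of Theorem \ref{thm:edil_repdil} --- that $R$ be an injective representation of a subproduct system of $\cM'$-correspondences and that $\Theta = \Sigma(X,R)$ --- are satisfied verbatim, and $(\alpha,K,\cR)$, being a minimal dilation, is in particular an $e$-dilation of $\Theta$, as required there.

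Next I would verify the extra hypothesis of Theorem \ref{thm:edil_repdil}, namely that the map $\cR' \ni b \mapsto P_H b P_H$ is a $*$-isomorphism of $\cR'$ onto $R_0(\cM') = \cM'$. Here the minimality of the dilation enters: by the definition of a minimal dilation, the central support of $P_H$ in $\cR$ equals $I_K$. As recorded in the Remark immediately following Theorem \ref{thm:edil_repdil} (which invokes Propositions 5.5.5 and 5.5.6 of \cite{KRI}), the vanishing of this defect is precisely what makes $b \mapsto P_H b P_H$ a $*$-isomorphism of $\cR'$ onto $\cM'$. With this checked, Theorem \ref{thm:edil_repdil} applies and gives that $(Y,V,K) := \Xi(\alpha)$ is a dilation of $(E,T,H) = \Xi(\Theta)$ in the sense of Definition \ref{def:repdilation}.

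Finally I would note that, since $\alpha$ is an $e$-semigroup acting on $\cR$, Theorem \ref{thm:reprep} guarantees that the identity representation $V$ of its Arveson--Stinespring (sub)product system $Y$ is an isometric representation (and, by Remark \ref{rem:identityproduct}, $Y$ is in fact a full product system). Therefore the dilation $(Y,V,K) = \Xi(\alpha)$ is an \emph{isometric} dilation of $\Xi(\Theta)$, which is the assertion of the corollary.

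I expect the only real friction to be bookkeeping rather than mathematics: carefully matching the abstract pair $(X,R)$ of Theorem \ref{thm:edil_repdil} with the concrete identity representation $\Xi(\Theta)$, and confirming that $R_0(\cM')$ is literally $\cM'$ so that the cited von Neumann algebra fact about central supports delivers exactly the $*$-isomorphism onto $\cM'$ that the theorem demands. There is no new analytic input --- the corollary is essentially a repackaging of Theorem \ref{thm:edil_repdil} together with the definition of minimality and the isometry clause of Theorem \ref{thm:reprep}.
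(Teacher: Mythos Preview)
Your proposal is correct and matches the paper's intended argument exactly: the corollary is stated without proof because it follows immediately from Theorem~\ref{thm:edil_repdil} together with the preceding Remark (which uses minimality to guarantee the $*$-isomorphism $\cR' \cong \cM'$) and the isometry clause of Theorem~\ref{thm:reprep}. There is nothing to add.
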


\section{$cp$-semigroups with no $e$-dilations. Obstructions of a new nature}

By Parrot's famous example \cite{Parrot}, there exist $3$ commuting contractions that do not have a commuting isometric dilation. In 1998 Bhat asked whether $3$ commuting CP maps necessarily have a commuting $*$-endomorphic dilation \cite{Bhat98}. Note that it is not obvious that the non-existence of an isometric dilation for three commuting contractions would imply the non-existence of a $*$-endomorphic dilation for $3$ commuting CP maps. However, it turns out that this is the case.

\begin{theorem}\label{thm:parrot}
There exists a $cp$-semigroup $\Theta = \{\Theta_n\}_{n \in \mb{N}^3}$ acting on a $B(H)$ for which there is no $e$-dilation $(\alpha,K,B(K))$. In fact, $\Theta$ has no \emph{minimal} $e$-dilation $(\alpha,K,\cR)$ on any von Neumann algebra $\cR$.
\end{theorem}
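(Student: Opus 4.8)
The plan is to build $\Theta$ directly from Parrott's example of three commuting contractions $T_1, T_2, T_3$ on a Hilbert space $H_0$ that admit no commuting isometric dilation. First I would pass from Parrott's contractions to a $cp$-semigroup over $\mb{N}^3$. The natural candidate is obtained by scaling: set $\theta_i(a) = \lambda^2 T_i a T_i^*$ for a small $\lambda > 0$ (say on $B(H)$ for a suitable $H \supseteq H_0$), chosen so the $\theta_i$ are genuinely contractive, normal, completely positive, and commute (commutativity of the $\theta_i$ follows from commutativity of the $T_i$ together with the fact that a product of conjugations by commuting operators is insensitive to order only after checking the relevant intertwining — here I would instead work with the Arveson-Stinespring subproduct system associated to the resulting semigroup $\Theta_n := \theta_1^{n_1}\theta_2^{n_2}\theta_3^{n_3}$, which is automatically a genuine subproduct system over $\mb{N}^3$ by Theorem~\ref{thm:reprep}). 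Let $(E,T) = \Xi(\Theta)$ be this Arveson-Stinespring representation; $T$ is injective.

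Next I would invoke the machinery of the previous section to convert the dilation problem for $\Theta$ into an isometric-dilation problem for $T$. Suppose, for contradiction, that $\Theta$ has a minimal $e$-dilation $(\alpha, K, \cR)$. By minimality, the central support of $P_H$ in $\cR$ is $I_K$, so the hypothesis of Corollary~\ref{cor:edil_repdil} is satisfied, and $\Xi(\alpha)$ is an isometric dilation of $\Xi(\Theta) = (E,T)$ in the sense of Definition~\ref{def:repdilation}. Since $\alpha$ is an $e$-semigroup over $\mb{N}^3$, its Arveson-Stinespring subproduct system $Y$ is a genuine (full) product system over $\mb{N}^3$ by Remark~\ref{rem:identityproduct}, and $E$ is a subproduct subsystem of $Y$. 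In particular the three coisometries $U^Y_{{\bf e}_i, {\bf e}_j} : Y({\bf e}_i) \otimes Y({\bf e}_j) \to Y({\bf e}_i + {\bf e}_j)$ are unitaries, and the isometric representation $V := \Xi(\alpha)$ restricted to the generators $V_i := V_{{\bf e}_i}(\cdot)$ gives three commuting isometric representations of correspondences that dilate the three c.c. representations $T_i$ coming from $E$. Unwinding the identification $E({\bf e}_i) \cong \cM' $ (as in the scalar/Hilbert-space case, where each $E({\bf e}_i)$ has dimension tied to the Stinespring dilation of $\theta_i$), a commuting isometric dilation of the representation tuple produces, on the corner cut out by $P_H$, a commuting isometric dilation of the scaled tuple $(\lambda T_1, \lambda T_2, \lambda T_3)$, hence (by further compression/normalization, since isometric dilations of a scalar multiple of a tuple restrict to isometric dilations of the tuple up to a harmless scaling argument) of $(T_1, T_2, T_3)$ itself — contradicting Parrott.

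The main obstacle I anticipate is the bookkeeping in the last step: making precise that an isometric dilation of the \emph{subproduct system representation} $T$ really does force a \emph{commuting} isometric dilation of the three operators $T_i$ on a compression. One must check that the unitarity of the $U^Y_{{\bf e}_i,{\bf e}_j}$ (which holds because $Y$ is a full product system, not merely a subproduct system) is exactly what upgrades ``commuting operators'' to ``commuting operators with an isometric dilation'' — this is the point where the obstruction to embedding $E$ into a full product system becomes visible, and it is also where Parrott's example is genuinely needed rather than a soft argument. A secondary technical point is verifying that the scaled conjugations $\theta_i$ are legitimately normal completely positive contractions on a $B(H)$ and that their semigroup really is a $cp$-semigroup over $\mb{N}^3$ in the sense of the paper; this is routine but must be stated. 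Finally, I would remark (as the theorem does) that ruling out $e$-dilations on $B(K)$ specifically is then immediate, since any $e$-dilation on some $\cR$ can be cut down to a minimal one by the reductions in the Preliminaries, and an $e$-dilation on $B(K)$ is in particular minimal after passing to $W^*\big(\bigcup_n \alpha_n(\cM)\big)$ and using that the central support of $P_H$ in $B(K)$ is $I_K$.
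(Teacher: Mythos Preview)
Your approach is essentially the paper's, but it contains one genuine error that breaks the argument. The scaling by $\lambda$ is both unnecessary and fatal. It is unnecessary because $\|T_i\|\le 1$ already gives $\|T_i a T_i^*\|\le\|a\|$, so $\theta_i(a)=T_i a T_i^*$ is a CP map without any scaling. It is fatal because your claimed ``harmless scaling argument'' does not exist: a commuting isometric dilation of $(\lambda T_1,\lambda T_2,\lambda T_3)$ does \emph{not} produce one for $(T_1,T_2,T_3)$. Indeed, for $\lambda$ small enough that $\sum_i\|\lambda T_i\|^2\le 1$, Parrott's scaled tuple \emph{does} admit a commuting regular unitary dilation by \cite[Proposition~9.2]{SzNF70}, so your contradiction evaporates. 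Drop the $\lambda$ and the argument goes through.

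With $\lambda=1$ your route via $(E,T)=\Xi(\Theta)$ is correct but slightly roundabout compared to the paper. The paper observes directly that $\Theta=\Sigma(X,R)$ where $X$ is the trivial product system $X(n)=\mb{C}$ and $R$ sends $1\in X(n)$ to $T_1^{n_1}T_2^{n_2}T_3^{n_3}$; to guarantee $R$ is injective it replaces Parrott's $R_i$ by $T_i:=R_i\oplus 1$, so that no such product vanishes. Then Theorem~\ref{thm:edil_repdil} applies (for $\cR=B(K)$ the hypothesis on $\cR'\to\cM'$ is automatic), and $V_i:=V_{e_i}(1)$ are isometries since $\|1\|_{X(e_i)}=1$ and $V$ is isometric. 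Commutativity of the $V_i$---the point you correctly flag as the crux---follows because $U^X_{e_i,e_j}$ is unitary (it is scalar multiplication $\mb{C}\otimes\mb{C}\to\mb{C}$): since $\|U^Y_{e_i,e_j}(1_i\otimes 1_j)\|=1$ and its projection to $X(e_i+e_j)$ is $U^X_{e_i,e_j}(1_i\otimes 1_j)=1_{ij}$ of norm $1$, one gets $U^Y_{e_i,e_j}(1_i\otimes 1_j)=1_{ij}=U^Y_{e_j,e_i}(1_j\otimes 1_i)$, whence $V_iV_j=V_{e_i+e_j}(1_{ij})=V_jV_i$. Your version via $E(e_i)\cong\mb{C}$ recovers exactly this, once you verify that the unit generator $\xi_i\in E(e_i)$ satisfies $T_{e_i}(\xi_i)=T_i$. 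The paper then handles the ``any minimal $\cR$'' clause by the standard observation that minimality together with $\cM=B(H)$ forces $\cR=B(K)$.
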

\begin{proof}
Let $T_1,T_2,T_3 \in B(H)$ be three commuting contractions that have no isometric dilation and such that $T_1^{n_1} T_2^{n_2} T_3^{n_3} \neq 0$ for all $n  = (n_1, n_2, n_3) \in \mb{N}^3$ (one may take commuting contractions $R_1,R_2,R_3$ with no isometric dilation as in Parrot's example \cite{Parrot}, and define $T_i = R_i \oplus 1$).
For all $n  = (n_1, n_2, n_3) \in \mb{N}^3$, define
\bes
\Theta_n (a) = T_1^{n_1} T_2^{n_2} T_3^{n_3} a  (T_3^{n_3})^*(T_2^{n_2})^* (T_1^{n_1})^* \,\, , \,\, a \in B(H).
\ees
Note that $\Theta = \Sigma(X,R)$, where $X = \{X(n)\}_{n\in\mb{N}^3}$ is the subproduct system given by $X(n) = \mb{C}$ for all $n \in \mb{N}^3$, and $R$ is the (injective) representation that sends $1 \in X(n)$ to $T_1^{n_1} T_2^{n_2} T_3^{n_3}$ (the product in $X$ is simply multiplication of scalars).

Assume, for the sake of obtaining a contradiction, that $\Theta = \{\Theta_n\}_{n\in\mb{N}^3}$ has
an $e$-dilation $(\alpha,K,B(K))$. Let $(Y,V) = \Xi(\alpha)$ be the Arveson-Stinespring subproduct system of $\alpha$ together with the identity representation. By Theorem \ref{thm:edil_repdil}, $(Y,V,K)$ is a dilation of $(X,R,H)$. It follows that $V_1,V_2,V_3$ are a commuting isometric dilation of $T_1,T_2,T_3$ where $V_1 := V(1)$ with $1 \in X(1,0,0)$, $V_2 := V(1)$ with $1 \in X(0,1,0)$, and
$V_3 := V(1)$ with $1 \in X(0,0,1)$. This is a contradiction.

Finally, a standard argument shows that if $(\alpha,K,\cR)$ is a minimal dilation of $\Theta$, then $\cR = B(K)$.
\end{proof}

Until this point, all the results that we have seen in the dilation theory of $cp$-semigroups have been anticipated by the classical theory of isometric dilations. We shall now encounter a phenomena that has no counterpart in the classical theory.

By \cite[Proposition 9.2]{SzNF70}, if $T_1, \ldots, T_k$ is a commuting $k$-tuple of contractions such that
\be\label{eq:normconditionT}
\sum_{i=1}^k \|T_i\|^2 \leq 1 ,
\ee
then $T_1,\ldots, T_k$ has a commuting regular unitary dilation (and, in particular, an isometric dilation). One is tempted to conjecture that if $\theta_1, \ldots, \theta_k$ is a commuting $k$-tuple of CP maps such that
\be\label{eq:normconditiontheta}
\sum_{i=1}^k \|\theta_i\| \leq 1,
\ee
then the tuple $\theta_1, \ldots, \theta_k$ has an $e$-dilation. Indeed, if $\theta_i(a) = T_i a T_i^*$, where $T_1, \ldots, T_k$ is a commuting $k$-tuple satisfying (\ref{eq:normconditionT}), then
it is easy to construct an $e$-dilation of $\theta_1, \ldots, \theta_k$ from the isometric dilation of $T_1, \ldots, T_k$. However, it turns out that (\ref{eq:normconditiontheta}) is far from being sufficient for an $e$-dilation to exist. We need some preparations before exhibiting an example.

\begin{proposition}\label{prop:sprdctcntrexample}
There exists a subproduct system that is not a subsystem of any product system.
\end{proposition}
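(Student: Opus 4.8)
The plan is to produce the counterexample as a subproduct system over $\mb{N}^3$ carrying a representation, and to derive a contradiction from a hypothetical embedding into a full product system by invoking the norm criterion for regular isometric dilations (Corollary \ref{cor:normcondregdil}). First I would record the following purely formal reduction: it suffices to exhibit a subproduct system $X = \{X(n)\}_{n\in\mb{N}^3}$ and a completely contractive representation $R$ of $X$ on a Hilbert space $H$ such that (i) $\sum_{i=1}^3 \|\widetilde{R}_{{\bf e_i}}\widetilde{R}_{{\bf e_i}}^*\| \le 1$, and (ii) $R$ has no isometric dilation in the sense of Definition \ref{def:repdilation}. Indeed, if $X$ were a subproduct subsystem of a full product system $Y$, then, exactly as in the proof of Proposition \ref{prop:exist_isodil}, one could extend $R$ to a representation $T$ of $Y$ on $H$ by $T_s = R_s \circ p_s$, where $p_s : Y(s) \to X(s)$ is the projection; a short computation gives $\widetilde{T}_s^* = (p_s \otimes I_H)\widetilde{R}_s^*$, so $(Y,T,H)$ is a dilation of $(X,R,H)$ and $\|\widetilde{T}_{{\bf e_i}}\| = \|\widetilde{R}_{{\bf e_i}}\|$. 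Hence the hypothesis (\ref{eq:norm_cond}) of Corollary \ref{cor:normcondregdil} holds for $T$, so $T$ has a minimal regular isometric dilation $(Y,V,K)$; since dilations compose, $(Y,V,K)$ would then be an isometric dilation of $(X,R,H)$, contradicting (ii).

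It then remains to build $X$ and $R$ satisfying (i) and (ii), and this is where the genuine content lies. The starting point is Parrot's triple of commuting contractions $A_1, A_2, A_3$ on a Hilbert space $G$ admitting no common isometric dilation. One cannot simply feed these into the trivial subproduct system (all fibres equal to $\mb{C}$): to meet (i) one would have to shrink the $A_i$, and \cite[Proposition 9.2]{SzNF70} would then restore a common isometric dilation — so the obstruction would evaporate. (This is also why the example must be multivariable, since every subproduct system over $\mb{N}$ embeds in a full product system.) Instead the idea is to keep $X$ genuinely non-trivial: one fattens each fibre $X({\bf e_i})$ and prescribes the kernels of the coisometric maps $U_{{\bf e_i},{\bf e_j}}$ and $U_{{\bf e_i}+{\bf e_j},{\bf e_k}}$ so that, on the one hand, $R$ can be taken completely contractive with the norms $\|\widetilde{R}_{{\bf e_i}}\|$ as small as one wishes (so (i) holds), while on the other hand the relations $\widetilde{V}_{m+n}(U_{m,n}\otimes I_K) = \widetilde{V}_m(I_{X(m)} \otimes \widetilde{V}_n)$ forced on any isometric dilation $V$ of $R$ by Definition \ref{def:rep} compel the isometries $V({\bf e_i})$ to reproduce the Parrot data $A_i$ on a reducing subspace, hence to constitute a common isometric dilation of $A_1, A_2, A_3$ — the desired contradiction.

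The main obstacle is precisely this construction together with the verification of (ii): one must arrange the fibres and structure maps so that the failure of dilation is \emph{intrinsic to the subproduct system} and is not undone by the rescaling used for (i) — in contrast to Theorem \ref{thm:parrot}, where the subproduct system was the (dilatable) trivial one and the obstruction resided entirely in the representation. Concretely, the points requiring care are: (a) that the proposed maps $U_{s,t}$ are indeed coisometric correspondence maps obeying the associativity identity (\ref{eq:assoc_prod}), so that $X$ really is a subproduct system; (b) that the proposed $R$ is a legitimate representation with the prescribed small norms; and (c) the algebraic step compressing an arbitrary isometric dilation $V$ of $R$ to an isometric dilation of the Parrot triple. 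Granting (a)–(c), the reduction of the first paragraph completes the proof.
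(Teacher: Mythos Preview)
Your reduction in the first paragraph is sound: if one had a subproduct system $X$ over $\mb{N}^3$ and a representation $R$ satisfying (i) and (ii), then $X$ could not embed in a product system. But the proposal ends where the proof should begin --- you have not constructed $X$ or $R$, and you explicitly defer items (a)--(c), which together constitute the entire content of the argument, not peripheral checks.

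There is moreover a structural obstacle to the route you sketch. Condition (ii) asks that $R$ admit no isometric dilation $(Y',V,K)$ for \emph{any} product system $Y' \supseteq X$. Your plan is to encode Parrot's contractions $A_i$ into $R$ so that any isometric $V$ would compress to commuting isometries dilating the $A_i$. But for $\xi \in X({\bf e_i})$ and $\eta \in X({\bf e_j})$ the operators $V_{{\bf e_i}}(\xi)$ and $V_{{\bf e_j}}(\eta)$ need not commute in $B(K)$: their product is $V_{{\bf e_i}+{\bf e_j}}\big(U^{Y'}_{{\bf e_i},{\bf e_j}}(\xi \otimes \eta)\big)$, and the subsystem relation $p_{s+t}\, U^{Y'}_{s,t} = U^X_{s,t}(p_s \otimes p_t)$ controls only the $X({\bf e_i}+{\bf e_j})$-component of $U^{Y'}_{{\bf e_i},{\bf e_j}}(\xi \otimes \eta)$, not its component in $Y'({\bf e_i}+{\bf e_j}) \ominus X({\bf e_i}+{\bf e_j})$, over which you have no control. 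Without commutativity, Parrot's obstruction does not apply, and I do not see how fattening the fibres of $X$ remedies this. Note also that attempting to \emph{verify} (ii) for a given $X$ by showing that $X$ embeds in no product system is precisely the statement you are trying to prove --- so that route is circular.

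The paper's proof is completely different and avoids representations altogether. It writes down an explicit $X$ over $\mb{N}^3$: $X({\bf e_i}) = \mb{C}^2$, $X({\bf e_i}+{\bf e_j}) = \mb{C}^2 \otimes \mb{C}^2$, and $X(n) = \{0\}$ for $|n|>2$; all $U^X_{{\bf e_i},{\bf e_j}}$ are the identity except $U^X_{{\bf e_3},{\bf e_2}}$, which is the tensor flip. Associativity in $X$ is vacuous since triple products vanish. In a putative ambient product system $Y$ one iterates the associativity identity (\ref{eq:assoc_prod}) to obtain two expressions for $U^Y_{{\bf e_3},{\bf e_1}+{\bf e_2}}$; equating them and using that the $U^X_{{\bf e_i},{\bf e_j}}$ are already unitary (hence coincide with the restrictions of the $U^Y_{{\bf e_i},{\bf e_j}}$ to the $X$-fibres) yields a relation among these six maps that reduces to $I \otimes (\text{flip}) = (\text{flip}) \otimes I$ on $(\mb{C}^2)^{\otimes 3}$ --- absurd. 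This is a pure Yang--Baxter-type obstruction with no dilation theory involved. In fact the paper then \emph{uses} this proposition, via the shift representation $S^X$, to derive Theorem~\ref{thm:strongparrot}; the implication you are attempting runs in the opposite direction.
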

\begin{proof}
We construct a counter example over $\mb{N}^3$. Let $e_1,e_2,e_3$ be the standard basis of $\mb{N}^3$. We let $X(e_1) = X(e_2) = X(e_3) = \mb{C}^2$. Let $X(e_i + e_j) = \mb{C}^2 \otimes \mb{C}^2$ for all $i,j=1,2,3$. Put $X(n) = \{0\}$ for all $n\in\mb{N}^k$ such that $|n|>2$. To complete the construction of $X$ we need to define the product maps $U^X_{m,n}$.
Let $U^X_{e_i,e_j}$ be the identity on $\mb{C}^2 \otimes \mb{C}^2$ for all $i,j$ except for $i=3,j=2$, and let $U^X_{e_3,e_2}$ be the flip. Define the rest of the products to be zero maps (except the maps $U^X_{0,n}, U^X_{m,0}$ which are identities). This product is evidently coisometric, and it is also associative, because the product of any three nontrivial elements vanishes.

Let $Y$ be a product system ``dilating" $X$. Then for all $k,m,n \in \mb{N}^k$ we have
\bes
U^Y_{k+m,n}(U^Y_{k,m}\otimes I) =  U^Y_{k,m+n}(I \otimes U^Y_{m,n}),
\ees
or
\bes
U^Y_{k+m,n} =  U^Y_{k,m+n}(I \otimes U^Y_{m,n})(U^Y_{k,m}\otimes I)^*,
\ees
and
\bes
U^Y_{k,m+n} = U^Y_{k+m,n}(U^Y_{k,m}\otimes I)(I \otimes U^Y_{m,n})^*.
\ees
Iterating these identities, we have, on the one hand,
\begin{align*}
U_{e_3,e_1+e_2} &= U^Y_{e_3+e_2,e_1}(U^Y_{e_3,e_2}\otimes I)(I \otimes U^Y_{e_2,e_1})^* \\
&= U^Y_{e_2,e_3+e_1}(I \otimes U^Y_{e_3,e_1})(U^Y_{e_2,e_3}\otimes I)^*(U^Y_{e_3,e_2}\otimes I)(I \otimes U^Y_{e_2,e_1})^* \\
&= U^Y_{e_1+e_2,e_3}(U^Y_{e_2,e_1}\otimes I)(I \otimes U^Y_{e_1,e_3})^*(I \otimes U^Y_{e_3,e_1})(U^Y_{e_2,e_3}\otimes I)^*(U^Y_{e_3,e_2}\otimes I)(I \otimes U^Y_{e_2,e_1})^*, \\
\end{align*}
and on the other hand
\begin{align*}
U_{e_3,e_1+e_2} &= U^Y_{e_3+e_1,e_2}(U^Y_{e_3,e_1}\otimes I)(I \otimes U^Y_{e_1,e_2})^* \\
&= U^Y_{e_1,e_3+e_2}(I \otimes U^Y_{e_3,e_2})(U^Y_{e_1,e_3}\otimes I)^*(U^Y_{e_3,e_1}\otimes I)(I \otimes U^Y_{e_1,e_2})^* \\
&= U^Y_{e_1+e_2,e_3}(U^Y_{e_1,e_2}\otimes I)(I \otimes U^Y_{e_2,e_3})^*(I \otimes U^Y_{e_3,e_2})(U^Y_{e_1,e_3}\otimes I)^*(U^Y_{e_3,e_1}\otimes I)(I \otimes U^Y_{e_1,e_2})^*. \\
\end{align*}
Canceling $U^Y_{e_1+e_2,e_3}$, we must have
\begin{align*}
(U^Y_{e_1,e_2}\otimes I)(I \otimes U^Y_{e_2,e_3})^* & (I \otimes U^Y_{e_3,e_2})(U^Y_{e_1,e_3}\otimes I)^*(U^Y_{e_3,e_1}\otimes I)(I \otimes U^Y_{e_1,e_2})^* \\
&= (U^Y_{e_2,e_1}\otimes I)(I \otimes U^Y_{e_1,e_3})^*(I \otimes U^Y_{e_3,e_1})(U^Y_{e_2,e_3}\otimes I)^*(U^Y_{e_3,e_2}\otimes I)(I \otimes U^Y_{e_2,e_1})^*.
\end{align*}
Now, $U^X_{e_i,e_j}$ were unitary to begin with, so the above identity implies
\begin{align*}
(U^X_{e_1,e_2}\otimes I)(I \otimes U^X_{e_2,e_3})^* & (I \otimes U^X_{e_3,e_2})(U^X_{e_1,e_3}\otimes I)^*(U^X_{e_3,e_1}\otimes I)(I \otimes U^X_{e_1,e_2})^* \\
&= (U^X_{e_2,e_1}\otimes I)(I \otimes U^X_{e_1,e_3})^*(I \otimes U^X_{e_3,e_1})(U^X_{e_2,e_3}\otimes I)^*(U^X_{e_3,e_2}\otimes I)(I \otimes U^X_{e_2,e_1})^*.
\end{align*}
Recalling the definition of the product in $X$ (the product is usually the identity), this reduces to
\begin{align*}
I \otimes U^X_{e_3,e_2} = U^X_{e_3,e_2}\otimes I.
\end{align*}
This is absurd. Thus, $X$ cannot be dilated to a product system.
\end{proof}

We can now strengthen Theorem \ref{thm:parrot}:
\begin{theorem}\label{thm:strongparrot}
There exists a $cp$-semigroup $\Theta = \{\Theta_n\}_{n \in \mb{N}^3}$ acting on a $B(H)$, 
such that for all $\lambda > 0$, the semigroup 
$\lambda \Theta := \{\lambda^{|n|} \Theta_n\}_{n \in \mb{N}^3}$ has no $e$-dilation $(\alpha,K,B(K))$, and no \emph{minimal} $e$-dilation $(\alpha,K,\cR)$ on any von Neumann algebra $\cR$.
\end{theorem}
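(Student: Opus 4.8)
The plan is to realise $\Theta$ concretely from the ``rigid'' subproduct system of Proposition \ref{prop:sprdctcntrexample} --- the one that embeds into no full product system --- via an injective representation, and then to invoke the reduction of Theorem \ref{thm:edil_repdil}: an $e$-dilation of any scaling $\lambda\Theta$ would force that subproduct system to sit inside a full product system (namely the Arveson--Stinespring system of the dilating $e$-semigroup, which is full by Remark \ref{rem:identityproduct}), contradicting Proposition \ref{prop:sprdctcntrexample}. The same scheme underlies Theorem \ref{thm:parrot}; the new point is that the obstruction is insensitive to rescaling, so it survives even when the norm condition (\ref{eq:normconditiontheta}) is satisfied.

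In detail, let $X = \{X(n)\}_{n\in\mb{N}^3}$ be the subproduct system of Hilbert spaces built in the proof of Proposition \ref{prop:sprdctcntrexample}, so $X(0) = \mb{C}$, $\dim X(e_i) = 2$, $\dim X(e_i+e_j) = 4$, $X(n) = 0$ for $|n| > 2$, and $X$ is a subsystem of no full product system. Take $R = S^X$, the shift representation of $X$ on the Fock space $H := \mathfrak{F}_X$ (Definition \ref{def:shiftrep}): by Section \ref{sec:shift} it is completely contractive and injective, and $S^X_0$ is the faithful nondegenerate representation $c \mapsto cI_H$ of $X(0) = \mb{C}$. Put $\Theta = \Sigma(X, S^X)$, a $cp$-semigroup on $S^X_0(\mb{C})' = B(H)$ with $\Theta_n(a) = \widetilde{S}^X_n (I_{X(n)}\otimes a)(\widetilde{S}^X_n)^*$; its generators $\Theta_{e_1}, \Theta_{e_2}, \Theta_{e_3}$ are the three commuting CP maps of the statement. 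For $0 < \lambda \le 1$ define $R^\lambda_n := \lambda^{|n|/2} S^X_n$. Since $n \mapsto |n|$ is additive on $\mb{N}^3$, $R^\lambda$ is again a representation of $X$, it is completely contractive because $\|\widetilde{R^\lambda_n}\| = \lambda^{|n|/2}\|\widetilde{S}^X_n\| \le 1$, it is injective because $\lambda > 0$, and $R^\lambda_0 = S^X_0$. A one-line computation with $\widetilde{R^\lambda_n} = \lambda^{|n|/2}\widetilde{S}^X_n$ gives $\Sigma(X, R^\lambda) = \{\lambda^{|n|}\Theta_n\}_n = \lambda\Theta$. Thus, for every admissible $\lambda$, $\lambda\Theta$ is a $cp$-semigroup on $B(H)$ of the form $\Sigma(X, R^\lambda)$ with $R^\lambda$ injective.

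Now fix $\lambda$ and suppose $\lambda\Theta$ has an $e$-dilation $(\alpha, K, \cR)$ with $\cR = B(K)$, or a minimal $e$-dilation on an arbitrary von Neumann algebra $\cR$. In the minimal case the standard argument at the end of the proof of Theorem \ref{thm:parrot} (using $\cM = B(H)$) shows $\cR = B(K)$, so we may assume $\cR = B(K)$. Then the hypothesis of Theorem \ref{thm:edil_repdil} that $\cR' \ni b \mapsto P_H b P_H$ is a $*$-isomorphism onto $R^\lambda_0(\mb{C})$ holds automatically (remark following Theorem \ref{thm:edil_repdil}), so Theorem \ref{thm:edil_repdil} applies: writing $(Y,V) = \Xi(\alpha)$, the triple $(Y,V,K)$ is a dilation of $(X, R^\lambda, H)$ in the sense of Definition \ref{def:repdilation}, and in particular $X$ is a subproduct subsystem of $Y$. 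But $\alpha$ is an $e$-semigroup, so $Y$ is a full product system by Remark \ref{rem:identityproduct}. Hence $X$ embeds into a full product system, contradicting Proposition \ref{prop:sprdctcntrexample}. Therefore no such $e$-dilation exists, for every $\lambda > 0$ for which $\lambda\Theta$ is defined.

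Granting Proposition \ref{prop:sprdctcntrexample} and Theorem \ref{thm:edil_repdil}, this is mostly bookkeeping. The two places that need care are: (a) verifying that the rescaled family $R^\lambda$ is a genuine injective completely contractive representation of the \emph{same} $X$ for all relevant $\lambda$, which is exactly what makes the norm condition (\ref{eq:normconditiontheta}) simultaneously available and powerless; and (b) discharging the corner-map hypothesis of Theorem \ref{thm:edil_repdil} in the minimal-dilation case, where one must rerun the central-support argument from the proof of Theorem \ref{thm:parrot}. I expect (b) to be the main --- though still routine --- obstacle, as it is the only step that touches the internal structure of $\cR$.
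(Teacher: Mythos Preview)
Your proof is correct and follows essentially the same route as the paper's: realise $\Theta$ via the $X$-shift for the non-embeddable subproduct system of Proposition \ref{prop:sprdctcntrexample}, then use Theorem \ref{thm:edil_repdil} (with the $\cR = B(K)$ reduction for minimal dilations) to derive a contradiction from any $e$-dilation of $\lambda\Theta$. The only stylistic difference is in handling the scaling: you explicitly build the injective representation $R^\lambda = \lambda^{|n|/2}S^X$ and verify $\Sigma(X,R^\lambda)=\lambda\Theta$, whereas the paper simply observes via Theorem \ref{thm:essentially_inverse} that scaling $\Theta$ leaves its Arveson--Stinespring subproduct system unchanged --- these are two phrasings of the same fact.
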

\begin{proof}
Let $X$ be as in Proposition \ref{prop:sprdctcntrexample}. Let $\Theta$ be the $cp$-semigroup generated by the $X$-shift, as in Section \ref{sec:shift}. Of course, $\Theta$, as a semigroup over $\mb{N}^3$, can be generated by three commuting CP maps $\theta_1,\theta_2,\theta_3$. $X$ cannot be embedded into a full product system, so by Theorem \ref{thm:edil_repdil}, $\Theta$ has no minimal $e$-dilation, nor does it have an $e$-dilation acting on a $B(K)$. Note that if $\Theta$ is scaled \emph{its product system is left unchanged} (this follows from Theorem \ref{thm:essentially_inverse}: if you take $X$ and scale the representation $S^X$ you get a scaled version of $\Theta$). So no matter how small you take $\lambda  > 0$, $\lambda \theta_1,\lambda \theta_2,\lambda \theta_3$ cannot be dilated to three commuting $*$-endomorphisms on $B(K)$, nor to a minimal three-tuple on any von Neumann algebra.
\end{proof}

Note that the obstruction here seems to be of a completely different nature from the one in the example given in Theorem \ref{thm:parrot}. The subproduct system arising there is already a product system, and, indeed, the $cp$-semigroup arising there can be dilated once it is multiplied by a small enough scalar.

\part{Subproduct systems over $\mb{N}$ and operator algebras}\label{part:III}

\chapter{Subproduct systems of Hilbert spaces over $\mb{N}$}\label{chap:subproductN}

We now specialize to subproduct systems of Hilbert W$^*$-correspondences over the semigroup $\mb{N}$, so from now on any subproduct system is to be understood as such (soon we will specialize even further to subproduct systems of Hilbert spaces).

\section{Standard and maximal subproduct systems}

If $X$ is a subproduct system over $\mb{N}$, then $X(0) = \cM$ (some von Neumann algebra), $X(1)$ equals some W$^*$-correspondence $E$, and $X(n)$ can be regarded as a subspace of $E^{\otimes n}$. The following lemma allows us to consider $X(m+n)$ as a subspace of $X(m) \otimes X(n)$.

\begin{lemma}\label{lem:projection_subproduct}
Let $X = \{X(n)\}_{n \in \mb{N}}$ be a subproduct system. $X$ is isomorphic to a subproduct system $Y = \{Y(n)\}_{n\in\mb{N}}$ with coisometries $\{U_{m,n}^Y\}_{m,n \in \mb{N}}$ that satisfies
\bes
Y(1) = X(1)
\ees
and
\be\label{eq:p_subset}
Y(m+n) \subseteq Y(m) \otimes Y(n).
\ee
Moreover, if $p_{m+n}$ is the orthogonal projection of $Y(1)^{\otimes (m + n)}$ onto $Y(m+n)$, then
\be\label{eq:p_maps}
U_{m,n}^Y = p_{m+n}\Big|_{Y(m) \otimes Y(n)}
\ee
and the projections $\{p_n\}_{n \in \mb{N}}$ satisfy
\be\label{eq:p_assoc}
p_{k+m+n} = p_{k+m+n}(I_{E^{\otimes k}} \otimes p_{m+n}) = p_{k+m+n}(p_{k+m} \otimes I_{E^{\otimes n}}).
\ee
\end{lemma}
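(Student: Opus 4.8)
The plan is to realize $Y$ by identifying each $X(n)$ isometrically with a subspace of $E^{\otimes n}$, where $E:=X(1)$, via the iterated product map. Set $V_0=\mathrm{id}_{\cM}$, $V_1=\mathrm{id}_E$, and inductively $V_{n+1}:=U_{n,1}(V_n\otimes I_E):E^{\otimes(n+1)}\to X(n+1)$. The first, and key, step is to prove by induction on $n$, using the associativity axiom \eqref{eq:assoc_prod}, that
\bes
V_{m+n}=U_{m,n}(V_m\otimes V_n)\qquad\text{for all }m,n\in\mb{N};
\ees
in the inductive step one rewrites $U_{m,n+1}(I_{X(m)}\otimes U_{n,1})$ as $U_{m+n,1}(U_{m,n}\otimes I_E)$ by \eqref{eq:assoc_prod} and then applies the induction hypothesis and the definition of $V_{m+n+1}$. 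Since tensoring a coisometry with an identity yields a coisometry, a composition of coisometries is a coisometry, and $U_{n,1}$ is a coisometric bimodule map, each $V_n$ is a coisometric bimodule map onto $X(n)$; in particular $V_nV_n^*=I_{X(n)}$, so $V_n^*$ is an isometry, $p_n:=V_n^*V_n$ is the orthogonal projection of $E^{\otimes n}$ onto $Y(n):=\mathrm{Im}(V_n^*)=(\ker V_n)^\perp$, and $V_n^*$ is a unitary of $X(n)$ onto $Y(n)$.

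Next I would put $W_n:=V_n^*$, viewed as a map $X(n)\to Y(n)$, and define $Y=\{Y(n)\}$ with product maps $U^Y_{m,n}:=W_{m+n}\,U_{m,n}\,(W_m^*\otimes W_n^*)$. Because $W_0=\mathrm{id}_{\cM}$ is a unital $*$-isomorphism, each $W_n$ ($n\ge1$) is unitary, and $W_m^*W_m=I_{X(m)}$ gives at once $W_{m+n}U_{m,n}=U^Y_{m,n}(W_m\otimes W_n)$, i.e. the morphism identity \eqref{eq:iso}, the family $W=\{W_n\}$ is an isomorphism of subproduct systems from $X$ onto $Y$; transporting the subproduct-system axioms across $W$ shows $Y$ is a subproduct system with $Y(0)=\cM$ and $Y(1)=E=X(1)$. (The only points that need a line of checking are that $U^Y_{0,m},U^Y_{m,0}$ are the left and right actions on $Y(m)\subseteq E^{\otimes m}$, which follow from the corresponding facts for $X$ together with the bimodule property of $W_m$.)

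The remaining assertions then come out by bookkeeping with adjoints. For \eqref{eq:p_subset}: from $V_{m+n}=U_{m,n}(V_m\otimes V_n)$ we get $V_{m+n}^*=(V_m^*\otimes V_n^*)U_{m,n}^*$; since $U_{m,n}^*$ maps into $X(m)\otimes X(n)$ and $V_m^*\otimes V_n^*$ is an isometry with image exactly $Y(m)\otimes Y(n)$, it follows that $Y(m+n)=V_{m+n}^*\bigl(X(m+n)\bigr)\subseteq Y(m)\otimes Y(n)$. For \eqref{eq:p_maps}: using $(V_m\otimes V_n)(V_m^*\otimes V_n^*)=I$, for $\xi\in Y(m)\otimes Y(n)$ one computes
\bes
U^Y_{m,n}\xi=(V_m^*\otimes V_n^*)U_{m,n}^*U_{m,n}(V_m\otimes V_n)\xi=V_{m+n}^*V_{m+n}\,\xi=p_{m+n}\xi,
\ees
so $U^Y_{m,n}=p_{m+n}\big|_{Y(m)\otimes Y(n)}$, which also re-confirms that $U^Y_{m,n}$ is coisometric once \eqref{eq:p_subset} is in hand. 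For \eqref{eq:p_assoc}: applying \eqref{eq:p_subset} to the pairs $(k,m+n)$ and $(k+m,n)$ gives $Y(k+m+n)\subseteq Y(k)\otimes Y(m+n)\subseteq E^{\otimes k}\otimes Y(m+n)$ and $Y(k+m+n)\subseteq Y(k+m)\otimes Y(n)\subseteq E^{\otimes(k+m)}\otimes Y(n)$; since $I_{E^{\otimes k}}\otimes p_{m+n}$ and $p_{k+m}\otimes I_{E^{\otimes n}}$ are the projections onto those two larger subspaces, each acts as the identity on $Y(k+m+n)$, and taking adjoints of $(I_{E^{\otimes k}}\otimes p_{m+n})p_{k+m+n}=p_{k+m+n}=(p_{k+m}\otimes I_{E^{\otimes n}})p_{k+m+n}$ yields \eqref{eq:p_assoc}.

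I expect the coherence identity $V_{m+n}=U_{m,n}(V_m\otimes V_n)$, together with the verification that the $V_n$ are coisometric bimodule maps, to be the main obstacle: it is essentially the only place where the subproduct-system axioms (as opposed to formal properties of projections and isometries) are genuinely used, and everything after it is routine manipulation of adjoints of coisometries and isometries.
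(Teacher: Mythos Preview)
Your proof is correct and follows essentially the same approach as the paper. Your maps $V_n:E^{\otimes n}\to X(n)$ are exactly the paper's iterated product maps $U_n$, your $Y(n)=(\ker V_n)^\perp$ and $p_n=V_n^*V_n$ match the paper's definitions verbatim, and your isomorphism $W_n=V_n^*$ is the paper's $V_n$. The only notable difference is that you are more explicit in proving the coherence identity $V_{m+n}=U_{m,n}(V_m\otimes V_n)$ by induction from \eqref{eq:assoc_prod}, whereas the paper simply asserts that the iterated product is well defined ``by composing in any way''; your version is slightly more careful here.
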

\begin{proof}
Denote by $U^X_{m,n}$ the subproduct system maps $X(s) \otimes X(t) \rightarrow X(s+t)$. Denote $E = X(1)$. We first note that for every $n$ there is a well defined coisometry $U_n : E^{\otimes n} \rightarrow X(n)$ given by composing in any way a sequence of maps $U^X_{k,m}$ (for example, one can take $U_3 = U^X_{2,1}(U^X_{1,1} \otimes I_{E})$ and so on). We define $Y(n) = {\rm Ker}(U_n)^\perp$, and we let $p_n$ be the orthogonal projection from $E^{\otimes n}$ onto $Y(n)$.
$p_n = U_n^* U_n$, so, in particular, $p_n$ is a bimodule map. For all $m,n \in \mb{N}$ we have that
\bes
E^{\otimes (m)} \otimes {\rm Ker}(U_n) \subseteq {\rm Ker}(U_{m+n}) .
\ees
Thus $E^{\otimes (m)} \otimes {\rm Ker}(U_n)^\perp \supseteq {\rm Ker}(U_{m+n})^\perp$, so
$p_{m+n} \leq I_{E^{\otimes m}} \otimes p_n$. This means that (\ref{eq:p_assoc}) holds. In addition, defining $U_{m,n}^Y$ to be $p_{m+n}$ restricted to $Y(m) \otimes Y(n) \subseteq E^{\otimes (m+n)}$ gives $Y$ the associative multiplication of a subproduct system.

It remains to show that $X$ is isomorphic to $Y$. For all $n$, $X(n)$ is spanned by elements of the form $U_n(x_1 \otimes \cdots \otimes x_n)$, with $x_1, \ldots, x_n \in E$. We define a map $V_n : X(n) \rightarrow Y(n)$ by
\bes
V_n \big( U_n(x_1 \otimes \cdots \otimes x_n) \big) = p_n (x_1 \otimes \cdots \otimes x_n).
\ees
It is immediate that $V_n$ preserves inner products (thus it is well defined) and that it maps $X(n)$ onto $Y(n)$. Finally, for all $m,n \in \mb{N}$ and $x \in E^{\otimes m}, y \in E^{\otimes n}$,
\begin{align*}
V_{m+n}\big(U_{m,n}^X (U_m(x) \otimes U_n(y)) \big)
&= V_{m+n}\big(U_{m+n}(x\otimes y) \big) \\
&= p_{m+n} (x \otimes y) \\
&= p_{m+n} (p_m x \otimes p_n y) \\
&= p_{m+n} \big((V_m U_m (x)) \otimes (V_n U_n (y))\big) \\
&= U^Y_{m+n} \big((V_m U_m (x)) \otimes (V_n U_n (y))\big) ,
\end{align*}
and (\ref{eq:iso}) holds.
\end{proof}

\begin{definition}
A subproduct system $Y$ satisfying (\ref{eq:p_subset}), (\ref{eq:p_maps}) and (\ref{eq:p_assoc}) above will be called a \emph{standard} subproduct system.
\end{definition}

Note that a standard subproduct system is a subproduct subsystem of the full product system $\{E^{\otimes n}\}_{n\in\mb{N}}$.
\begin{corollary}\label{cor:discrete_bhat}
Every $cp$-semigroup over $\mb{N}$ has an $e$-dilation.
\end{corollary}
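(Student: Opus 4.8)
The plan is to combine the structural results of Chapter \ref{chap:subproductN} with the dilation machinery already developed in Chapter \ref{chap:dil}. Let $\Theta = \{\Theta_n\}_{n \in \mb{N}}$ be a $cp$-semigroup on a von Neumann algebra $\cM \subseteq B(H)$, and let $(E,T) = \Xi(\Theta)$ be its Arveson-Stinespring subproduct system together with the identity representation, as furnished by Theorem \ref{thm:reprep}. Here $E = \{E(n)\}_{n \in \mb{N}}$ is a subproduct system of $\cM'$-correspondences over $\mb{N}$, and $T$ is an \emph{injective} c.c.\ representation of $E$ on $H$ with $\Theta_n(a) = \widetilde{T}_n (I_{E(n)} \otimes a) \widetilde{T}_n^*$ for all $a \in \cM$.

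The first step is to invoke Lemma \ref{lem:projection_subproduct} to replace $E$ by an isomorphic \emph{standard} subproduct system $Y = \{Y(n)\}_{n \in \mb{N}}$; since isomorphic subproduct systems carry the same representation theory (transport $T$ along the isomorphism), there is no loss in assuming $E$ itself is standard, i.e.\ $E(n) \subseteq F(1)^{\otimes n}$ where $F = \{F(n)\}_{n\in\mb{N}}$ is the full product system over the $\cM'$-correspondence $F(1) = E(1)$, and the inclusion maps are the orthogonal projections $p_n$ satisfying (\ref{eq:p_assoc}). By the discussion following the definition of standard subproduct system, this exhibits $E$ as a subproduct subsystem of the full product system $F$. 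The key point is that $F$ is a genuine (full) product system — exactly the hypothesis required by Corollary \ref{cor:e0dilwhensub}.

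The second step: apply Theorem \ref{thm:edil_repdil}'s companion results directly. Concretely, extend $T$ to a representation of $F$ on $H$ by setting $T'_n = T_n \circ p_n$ (as in equation (\ref{eq:extend}) in the proof of Proposition \ref{prop:exist_isodil}); one checks $T'$ is a c.c.\ representation of the product system $F$. By Theorem \ref{thm:isoDilFC} — or, more to the point for general (not necessarily fully coisometric) representations over $\mb{N}$, by the classical existence of isometric dilations of product system representations over $\mb{N}$ used in Muhly--Solel \cite{MS02} and recalled in Section \ref{sec:MS} — $T'$ has a (minimal) isometric dilation $(F, V, K)$ on a Hilbert space $K \supseteq H$. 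Then $(F,V,K)$ is also a dilation of $(E,T,H)$ in the sense of Definition \ref{def:repdilation}, since $\widetilde{V}_n^*\big|_H = (\widetilde{T'}_n)^*\big|_H$ and $(\widetilde{T'}_n)^* H \subseteq F(n)\otimes H$ factors through $E(n)\otimes H$ because $p_n$ is the defining projection. Finally, by Proposition \ref{prop:dil_rep_dil_CP} the $e$-semigroup $\alpha_n(b) = \widetilde{V}_n(I_{F(n)} \otimes b)\widetilde{V}_n^*$ on $\cR = V_0(\cM')'$ satisfies $\Theta_n(P_H b P_H) = P_H \alpha_n(b) P_H$, i.e.\ $(\alpha, K, \cR)$ is an $e$-dilation of $\Theta$.

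The one genuinely substantive input is Lemma \ref{lem:projection_subproduct}: it is precisely the fact that \emph{every} subproduct system over $\mb{N}$ embeds in a full product system (namely $\{F(1)^{\otimes n}\}$) that makes the argument go through — and this is exactly what fails over $\mb{N}^3$ by Proposition \ref{prop:sprdctcntrexample}, which is why Theorem \ref{thm:strongparrot} produces a counterexample there. So the main (and only) obstacle is already dispatched by the standardization lemma; the rest is assembling Corollary \ref{cor:e0dilwhensub} (or its non-unital analogue via the extend-and-dilate trick) with the known isometric dilation theorem for single-variable product system representations. I would therefore present the proof as: standardize $E$ via Lemma \ref{lem:projection_subproduct}, observe $E$ is a subproduct subsystem of the full product system over $E(1)$, and conclude by Corollary \ref{cor:e0dilwhensub} together with Propositions \ref{prop:exist_isodil} and \ref{prop:dil_rep_dil_CP}, noting that the fully-coisometric hypothesis in Corollary \ref{cor:e0dilwhensub} is only needed for $e_0$-dilations and can be dropped for $e$-dilations by using an arbitrary (not necessarily fully coisometric) isometric dilation of a product system representation over $\mb{N}$.
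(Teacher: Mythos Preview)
Your proposal is correct and follows essentially the same route as the paper: standardize the Arveson--Stinespring subproduct system via Lemma \ref{lem:projection_subproduct} so that it sits inside the full product system over $E(1)$, then invoke the dilation machinery of Chapter \ref{chap:dil} (Corollary \ref{cor:e0dilwhensub} in the unital case, and for the nonunital case an isometric dilation of the extended product system representation over $\mb{N}$). The only cosmetic difference is that for the nonunital case the paper cites Corollary \ref{cor:normcondregdil} with $k=1$ rather than the Muhly--Solel result directly, but these amount to the same thing.
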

\begin{proof}
The unital case follows from Corollary \ref{cor:e0dilwhensub} together with the above lemma. The nonunital
case follows from a similar construction (where the dilation of a non-fully-coisometric representation is obtained
by using Corollary \ref{cor:normcondregdil} for $k=1$).
\end{proof}

Let $k \in \mb{N}$, and let $E = X(1), X(2), \ldots, X(k)$ be subspaces of $E, E^{\otimes 2}, \ldots, E^{\otimes k}$, respectively, such that the orthogonal projections $p_n : E^{\otimes n} \rightarrow X(n)$ satisfy
$$p_n \leq I_{E^{\otimes i}} \otimes p_j$$
and
$$p_n \leq p_i \otimes I_{E^{\otimes j}}$$
for all $i,j,n \in \mb{N}_+$ satisfying $i+j = n \leq k$. In this case one can define a maximal standard subproduct system $X$ with the prescribed fibers $X(1), \ldots, X(k)$ by defining inductively for $n>k$
$$X(n) = \left(\bigcap_{i+j=n} E^{\otimes i} \otimes X(j)\right) \bigcap \left(\bigcap_{i+j=n} X(i) \otimes E^{\otimes j}\right).$$
It is easy to see that
$$X(n) = \bigcap_{n_1 + \ldots + n_m=n} X(n_1) \otimes \cdots \otimes X(n_m) = \bigcap_{i+j=n} X(i) \otimes X( j).$$
We then have obvious formulas for the projections $\{p_n\}_{n\in \mb{N}}$ as well, for example
$$p_n = \bigwedge_{i+j = n}p_i \otimes p_j \,\, , \,\, (n > k).$$

\section{Examples}

\begin{example}\label{expl:symmmax}
\emph{
In the case $k = 1$, the maximal standard subproduct system with prescribed fiber $X(1) = E$, with $E$ a Hilbert space, is the full product system $F_E$ of Example \ref{expl:full}. If $\dim E = d$, we think of this subproduct system as the product system representing a (row-contractive) $d$-tuple $(T_1, \ldots, T_d)$ of non commuting operators, that is, $d$ operators that are not assumed to satisfy any relations (the idea behind this last remark  must be rather vague at this point, but it shall become clearer as we proceed).
In the case $k = 2$, if $X(2)$ is the symmetric tensor product $E$ with itself then the maximal standard subproduct system with prescribed fibers $X(1), X(2)$ is the symmetric subproduct system $SSP_E$ of Example \ref{expl:symm}. We think of $SSP$ as the subproduct system representing a commuting $d$-tuple.}
\end{example}

\begin{example}\label{expl:dimexp}
\emph{
Let $E$ be a two dimensional Hilbert space with basis $\{e_1,e_2\}$. Let $X(2)$ be the space spanned by $e_1 \otimes e_1, e_1 \otimes e_2$, and $e_2 \otimes e_1$. In other words, $X(2)$ is what remains of $E^{\otimes 2}$ after we declare that $e_2 \otimes e_2=0$. We think of the maximal standard subproduct system $X$ with prescribed fibers $X(1) = E, X(2)$ as the subproduct system representing pairs $(T_1, T_2)$ of operators subject only to the condition $T_2^2 = 0$. $E^{\otimes n}$ has a basis consisting of all vectors of the form $e_\alpha = e_{\alpha_1} \otimes \cdots \otimes e_{\alpha_n}$ where $\alpha = \alpha_1 \cdots \alpha_n$ is a word of length $n$ in ``1" and ``2". $X(n)$ then has a basis consisting of all vectors $e_\alpha$ where $\alpha$ is a word of length $n$ not containing ``22" as a subword.
Let us compute $\dim X(n)$, that is, the number of such words.}

\emph{Let $A_n$ denote the number of words not containing ``22" that have leftmost letter ``1", and let $B_n$ denote the
number of words not containing ``22" that have leftmost letter ``2".
Then we have the recursive relation $A_n = A_{n-1} + B_{n-1}$ and $B_n = A_{n-1}$. The solution of this recursion gives}
$$\dim X(n) = A_n + B_n \approx \left(\frac{1+\sqrt{5}}{2}\right)^n .$$
\emph{As one might expect, the dimension of $X(n)$ grows exponentially fast.}
\end{example}

\begin{example}\label{expl:dimconst}
\emph{Suppose that we want a ``subproduct system that will represent a pair of operators $(T_1, T_2)$ such that
$T_i T_2 = 0$ for $i=1,2$". Although we have not yet made clear what we mean by this, let us proceed heuristically along the lines of the preceding examples. We let $E$ be as above, but now we declare $e_1 \otimes e_2 = e_2 \otimes e_2 = 0$. In other words, we define $X(2) = \{e_1 \otimes e_2, e_2 \otimes e_2\}^\perp$. One checks that the maximal standard subproduct system $X$ with prescribed fibers $X(1) = E, X(2)$ is given by $X(n) = \textrm{span}\{e_1 \otimes e_1 \otimes \cdots \otimes e_1, e_2 \otimes e_1 \otimes \cdots \otimes e_1\}$. This is an example of a subproduct system with two dimensional fibers.}
\end{example}

At this point two natural questions might come to mind. First, \emph{is every standard subproduct system $X$ the maximal subproduct system with prescribed fibers $X(1), \ldots, X(k)$ for some $k \in \mb{N}$?} Second, \emph{does $\dim X(n)$ grow exponentially fast (or remain a constant) for every subproduct system $X$?} The next example answers both questions negatively.

\begin{example}\label{expl:notmax}
\emph{Let $E$ be as in the preceding examples, and let $X(n)$ be a subspace of $E^{\otimes n}$ having basis the set}
$$\{e_\alpha: |\alpha|=n, \alpha \textrm{ does not contain the words } 22, 212, 2112, 21112, \ldots\} .$$
\emph{Then $X = \{X(n)\}_{n \in \mb{N}}$ is a standard subproduct system, but it is smaller than the maximal subproduct system defined by any initial $k$ fibers. Also, $X(n)$ is the span of $e_{\alpha}$ with $\alpha = 11\cdots 11, 21\cdots 11, 121\cdots 11, \ldots, 11 \cdots 12$, thus}
$$\dim X(n) = n+1 ,$$
\emph{so this is an example of a subproduct system with fibers that have a linearly growing dimension.}
\end{example}

Of course, one did not have to go far to find an example of a subproduct system with linearly growing dimension: indeed, the dimension of the fibers of the symmetric subproduct system $SSP_{\mb{C}^d}$ is known to be
\bes
\dim SSP_{\mb{C}^d}(n) = \left(
\begin{array}{c}
 n+d-1 \\
 n
\end{array} \right).
\ees
Taking $d=2$ we get the same dimension as in Example \ref{expl:notmax}. However, $SSP := SSP_{\mb{C}^2}$ and the subproduct system $X$ of Example \ref{expl:notmax} are not isomorphic: for any nonzero $x \in SSP(1)$, the ``square" $U^{SSP}_{1,1}(x \otimes x) \in SSP(2)$ is never zero, while $U^X_{1,1}(e_2 \otimes e_2) = 0$.

Here is an interesting question that we do not know the  answer to: \emph{given a solution $f:\mb{N}\rightarrow \mb{N}$ to the functional inequality}
\bes
f(m+n) \leq f(m) f(n) \,\, , \,\, m,n \in \mb{N},
\ees
\emph{does there exists a subproduct system $X$ such that $\dim X(n) = f(n)$ for all $n \in \mb{N}$?}

\begin{remark}
\emph{One can cook up simple examples of subproduct systems that are not standard. We will not write these examples down, as we already know that such a subproduct system is isomorphic to a standard one.}
\end{remark}

\section{Representations of subproduct systems}\label{sec:representations}

Fix a W$^*$-correspondence $E$. Every completely contractive linear map $T_1:E\rightarrow B(H)$ gives rise to a c.c. representation $T^n$ of the full product system $F_E=\{E^{\otimes n}\}_{n \in \mb{N}}$ by defining
for all $x \in E^{\otimes n}$ and $h \in H$
\be\label{eq:alotofT}
T^n(x)h = \tT_1\big(I_{E} \otimes \tT_1\big) \cdots \big(I_{E^{\otimes (n-1)}}\otimes \tT_1\big) (x \otimes h) ,
\ee
where $\tT_1 : E \otimes H \rightarrow H$ is given by $\tT_1(e \otimes h) = T_1(e) h$. We will denote the operator acting on $x \otimes h$ in the right hand side of (\ref{eq:alotofT}) as $\tT^n$, so as not to confuse with $\tT_n$, which sometimes has a different meaning (namely: if $T$ denotes a c.c. representation of a subproduct system $X$ then
$$\tT_n: X(n) \otimes H \rightarrow H$$
is given by
$$\tT_n (x \otimes h) = T(x) h $$
for all $x \in X(n), h \in H$. Of course, when $X = F_E$, $T$ is a representation of $F_E$ and $T_1$ is the restriction of $T$ to $E$, then $\tT^n = \tT_n$ for all $n$). If $X$ is a standard subproduct system and $X(1) = E$, we obtain a completely contractive representation of $X(n)$ by restricting $T^n$ to $X(n)$. Let us denote this restriction by $T_n$, and denote the family $\{T_n\}_{n \in \mb{N}}$ by $T$.
\begin{proposition}\label{prop:representation}
Let $X$ be a standard subproduct system with projections $\{p_n\}_{n\in \mb{N}}$, and let $T_1:E \rightarrow B(H)$ be a completely contractive map. Construct the family of maps $T = \{T_n\}_{n\in \mb{N}}$, with $T_n:X(n) \rightarrow B(H)$ as in the preceding paragraph. Then the following are equivalent:
\begin{enumerate}
\item\label{it:Trep} $T$ is a representation of $X$.
\item\label{it:tT} For all $m,n \in \mb{N}$,
\be\label{eq:tTrep}
\tT_{m}(I_{X(m)} \otimes \tT_n)(p_m \otimes p_n \otimes I_H)(p^\perp_{m+n} \otimes I_H) = 0 .
\ee
\item\label{it:tTn} For all $n \in \mb{N}$,
\be\label{eq:tTn}
\tT^n (p_n^\perp \otimes I_H) = 0.
\ee
\end{enumerate}
\end{proposition}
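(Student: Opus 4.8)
The plan is to pivot all three conditions onto operators acting on the full tensor powers $E^{\otimes n}\otimes H$, where they become bookkeeping identities among the projections $p_n$ and the iterated maps $\widetilde{T}^n$. First I would observe that the ``covariance'' part of condition \ref{it:Trep} is automatic: since $T^n$ is the completely contractive covariant representation of the \emph{full} product system $F_E=\{E^{\otimes n}\}$ determined by $T_1$ (Section \ref{sec:representations}), and each $X(n)$ is a sub-$W^*$-correspondence of $E^{\otimes n}$, the restriction $T_n=T^n|_{X(n)}$ is automatically a c.c.\ covariant representation of $X(n)$. Hence, by Remark \ref{rem:reptilde}, \ref{it:Trep} is equivalent to the single multiplicativity identity
$$\widetilde{T}_{m+n}\bigl(U^X_{m,n}\otimes I_H\bigr)=\widetilde{T}_m\bigl(I_{X(m)}\otimes\widetilde{T}_n\bigr)\qquad(m,n\in\mb{N}),$$
holding on $X(m)\otimes X(n)\otimes H$. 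Throughout I would use the following elementary consequences of $X$ being standard: $\widetilde{T}_n=\widetilde{T}^n(p_n\otimes I_H)$; $U^X_{m,n}$ extends to $p_{m+n}$ on $E^{\otimes m}\otimes E^{\otimes n}=E^{\otimes(m+n)}$; the inequalities $p_{m+n}\leq p_m\otimes p_n$, $p_{m+n}\leq I_{E^{\otimes m}}\otimes p_n$ and $p_{m+n}\leq p_m\otimes I_{E^{\otimes n}}$ coming from (\ref{eq:p_assoc}); and the factorization $\widetilde{T}^{m+n}=\widetilde{T}^m\bigl(I_{E^{\otimes m}}\otimes\widetilde{T}^n\bigr)$.

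For \ref{it:Trep}$\Leftrightarrow$\ref{it:tT}, I would compose the displayed identity on the right with the projection $p_m\otimes p_n\otimes I_H$ of $E^{\otimes(m+n)}\otimes H$ onto $X(m)\otimes X(n)\otimes H$, turning it into an equality of operators $E^{\otimes(m+n)}\otimes H\to H$. Using $p_{m+n}\leq p_m\otimes p_n$ together with the factorization, its left-hand side becomes $\widetilde{T}^{m+n}(p_{m+n}\otimes I_H)$; a short computation using $p_{m+n}\leq I_{E^{\otimes m}}\otimes p_n$ and $p_{m+n}\leq p_m\otimes I_{E^{\otimes n}}$ shows that this is exactly $\widetilde{T}_m(I_{X(m)}\otimes\widetilde{T}_n)(p_{m+n}\otimes I_H)$, i.e.\ the same expression as the right-hand side but with the projection $p_{m+n}$ in place of $p_m\otimes p_n$. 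Since $p_{m+n}\leq p_m\otimes p_n$ gives $(p_m\otimes p_n\otimes I_H)(p_{m+n}^\perp\otimes I_H)=(p_m\otimes p_n\otimes I_H)-(p_{m+n}\otimes I_H)$, the equality of the two sides for all $m,n$ is precisely (\ref{eq:tTrep}).

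For \ref{it:tTn}$\Rightarrow$\ref{it:Trep}: if $\widetilde{T}^n(p_n^\perp\otimes I_H)=0$ for every $n$, then $\widetilde{T}^n(p_n\otimes I_H)=\widetilde{T}^n$, so $\widetilde{T}_m(I_{X(m)}\otimes\widetilde{T}_n)(p_m\otimes p_n\otimes I_H)=\widetilde{T}^m(I_{E^{\otimes m}}\otimes\widetilde{T}^n)(p_m\otimes p_n\otimes I_H)=\widetilde{T}^{m+n}(p_m\otimes p_n\otimes I_H)$, which equals $\widetilde{T}^{m+n}(p_{m+n}\otimes I_H)$ once we apply $\widetilde{T}^{m+n}(p_{m+n}^\perp\otimes I_H)=0$ and $p_{m+n}\leq p_m\otimes p_n$; by the previous paragraph this is \ref{it:Trep}.

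The remaining implication, \ref{it:tT}$\Rightarrow$\ref{it:tTn}, is the heart of the argument and I would prove it by induction on $n$. The cases $n=0,1$ are trivial, since $p_0=I_{X(0)}$ and $p_1=I_E$, so $p_n^\perp=0$. For the inductive step, assuming $\widetilde{T}^k(p_k^\perp\otimes I_H)=0$ for all $k<n$, I would apply (\ref{eq:tTrep}) with the pair $(n-1,1)$; using $p_1=I_E$ and the standard relation $p_n\leq p_{n-1}\otimes I_E$ to rewrite $(p_{n-1}\otimes p_1\otimes I_H)(p_n^\perp\otimes I_H)=(p_{n-1}\otimes I_E\otimes I_H)-(p_n\otimes I_H)$, and then invoking the inductive hypothesis on $\widetilde{T}^{n-1}$ together with the factorization $\widetilde{T}^n=\widetilde{T}^{n-1}(I_{E^{\otimes(n-1)}}\otimes\widetilde{T}^1)$, both resulting terms collapse and yield $\widetilde{T}^n=\widetilde{T}^n(p_n\otimes I_H)$, i.e.\ $\widetilde{T}^n(p_n^\perp\otimes I_H)=0$. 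The main obstacle is keeping careful track of the nested projections in this last step; once the identities listed in the first paragraph are in hand, the rest is a formal, if slightly tedious, manipulation.
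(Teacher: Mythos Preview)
Your proposal is correct and follows essentially the same cycle $1\Rightarrow 2\Rightarrow 3\Rightarrow 1$ as the paper, with only cosmetic differences: you package $1\Leftrightarrow 2$ as a single equivalence rather than just the forward implication, and in the inductive step for $2\Rightarrow 3$ you split $n=(n-1)+1$ and invoke \eqref{eq:tTrep} with $(m,n)=(n-1,1)$, whereas the paper splits $k=1+(k-1)$ and uses $(m,n)=(1,k-1)$; the two arguments are mirror images of each other under the left/right symmetry of \eqref{eq:p_assoc}.
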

\begin{proof}
If $T$ is a representation, then
\bes
\tT_{m}(I_{X(m)} \otimes \tT_n)(p_m \otimes p_n \otimes I_H)(p^\perp_{m+n} \otimes I_H)
= \tT_{m+n}(p_{m+n} \otimes I_H)(p^\perp_{m+n} \otimes I_H)
= 0 ,
\ees
so \ref{it:Trep} $\Rightarrow$ \ref{it:tT}. To prove \ref{it:tT} $\Rightarrow$ \ref{it:tTn} note first that (\ref{eq:tTn}) is clear for $n=1$. Assuming that (\ref{eq:tTn}) holds for $n=1,2, \ldots, k-1$, we will show that it holds
for $n=k$.
\begin{align*}
\tT^k (p_k^\perp \otimes I_H)
&= \tT^1(I \otimes \tT^{k-1}) (p_k^\perp \otimes I_H) \\
&= \tT^1(I \otimes \tT^{k-1})(I_{E} \otimes p_{k-1}^\perp \otimes I_H + I_{E} \otimes p_{k-1} \otimes I_H) (p_k^\perp  \otimes I_H)\\
(*)&= \tT^1(I \otimes \tT^{k-1}(p_{k-1} \otimes I_H)) (p_k^\perp  \otimes I_H)\\
&= \tT_1(I \otimes \tT_{k-1}(p_{k-1} \otimes I_H)) (p_k^\perp  \otimes I_H)\\
(**)&= 0 .
\end{align*}
The equality marked by (*) is true by the inductive hypothesis, and the one marked by (**) follows from (\ref{eq:tTrep}).

Finally, \ref{it:tTn} $\Rightarrow$ \ref{it:Trep}: by (\ref{eq:tTn}) we have $\tT^{n} (p_{n} \otimes I_H) = \tT^{n}$. On the other hand, $\tT^{n} (p_{n} \otimes I_H) = \tT_{n} (p_{n} \otimes I_H)$. Thus
\begin{align*}
\tT_{m+n} (p_{m+n} \otimes I_H)
&= \tT^{m+n} (p_{m+n} \otimes I_H) \\
&= \tT^{m+n} \\
&= \tT^{m}(I_{X(m)} \otimes \tT^n) \\
&= \tT_{m}(I_{X(m)} \otimes \tT_n)(p_m \otimes p_n \otimes I_H) ,
\end{align*}
which shows that $T$ is a representation.
\end{proof}

\begin{proposition}
Let $X$ be the maximal standard subproduct system with prescribed fibers $X(1), \ldots, X(k)$, and let $T_1:E \rightarrow B(H)$ be a completely contractive map. Construct $T$ as in Proposition \ref{prop:representation}. Then $T$ is a representation of $X$ if and only if
\be\label{eq:tTnk}
\tT^n (p_n^\perp \otimes I_H) = 0 \quad \textrm{for all} \quad n=1,2,\ldots, k .
\ee
\end{proposition}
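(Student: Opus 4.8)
The plan is to deduce this from Proposition \ref{prop:representation}, whose equivalence \ref{it:Trep} $\Leftrightarrow$ \ref{it:tTn} already gives that $T$ is a representation of the maximal subproduct system $X$ if and only if $\tT^n(p_n^\perp \otimes I_H) = 0$ for \emph{all} $n \in \mb{N}$. So the content of the present proposition is that, for the \emph{maximal} subproduct system with prescribed fibers $X(1), \ldots, X(k)$, the finitely many conditions \eqref{eq:tTnk} (for $n \le k$) already force the remaining conditions for all $n > k$. The forward direction is immediate: if $T$ is a representation, then in particular $\tT^n(p_n^\perp \otimes I_H) = \tT_n(p_n \otimes I_H)(p_n^\perp \otimes I_H) = 0$ for every $n$, hence certainly for $n = 1, \ldots, k$. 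The work is in the converse.

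For the converse, I would argue by induction on $n$, the base cases $n = 1, \ldots, k$ being exactly the hypothesis \eqref{eq:tTnk}. Fix $n > k$ and assume $\tT^j(p_j^\perp \otimes I_H) = 0$ for all $j < n$. The key structural fact about the maximal subproduct system is that
\bes
p_n = \bigwedge_{i+j = n,\ i,j \ge 1} (p_i \otimes p_j),
\ees
equivalently $p_n^\perp = \bigvee_{i+j=n} (p_i^\perp \otimes I_{E^{\otimes j}} + I_{E^{\otimes i}} \otimes p_j^\perp)$; more usefully, $\mathrm{Ran}(p_n^\perp) = \sum_{i+j=n}\big(\mathrm{Ran}(p_i^\perp) \otimes E^{\otimes j} + E^{\otimes i} \otimes \mathrm{Ran}(p_j^\perp)\big)$. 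Since $n > k \ge 1$ we may split $n = i + j$ with $1 \le i, j < n$ (e.g. $i = 1$, $j = n-1$ works, but to cover the whole range of $p_n^\perp$ we need all splittings). So it suffices to show that for each such splitting, $\tT^n$ annihilates both $\mathrm{Ran}(p_i^\perp) \otimes E^{\otimes j} \otimes H$ and $E^{\otimes i} \otimes \mathrm{Ran}(p_j^\perp) \otimes H$.

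Using the factorization $\tT^n = \tT^i(I_{E^{\otimes i}} \otimes \tT^j)$, the second piece is handled directly by the inductive hypothesis applied to $j < n$: $\tT^n (I_{E^{\otimes i}} \otimes p_j^\perp \otimes I_H) = \tT^i(I_{E^{\otimes i}} \otimes \tT^j(p_j^\perp \otimes I_H)) = 0$. For the first piece, I would use instead the factorization $\tT^n = \tT^i(I_{E^{\otimes i}} \otimes \tT^j)$ together with the inductive hypothesis for $i$; the subtlety is that $\tT^i(p_i^\perp \otimes I_H) = 0$ must be combined with $\tT^n = \tT^i(I \otimes \tT^{j})$ in the right order — namely write $\tT^n(p_i^\perp \otimes I_{E^{\otimes j}} \otimes I_H) = \tT^i(p_i^\perp \otimes I_H)(I_{E^{\otimes i}} \otimes \tT^j)|_{\mathrm{Ran}(p_i^\perp) \otimes E^{\otimes j} \otimes H}$, which vanishes because the leftmost factor does. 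Since $\mathrm{Ran}(p_n^\perp) \otimes H$ is spanned by the ranges of these various operators over all splittings $n = i+j$, we conclude $\tT^n(p_n^\perp \otimes I_H) = 0$, completing the induction. Then Proposition \ref{prop:representation} (implication \ref{it:tTn} $\Rightarrow$ \ref{it:Trep}) gives that $T$ is a representation of $X$.

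The main obstacle I anticipate is purely bookkeeping: making precise and clean the claim that $\mathrm{Ran}(p_n^\perp)$ is the sum of $\mathrm{Ran}(p_i^\perp) \otimes E^{\otimes j}$ and $E^{\otimes i} \otimes \mathrm{Ran}(p_j^\perp)$ over all splittings — this is exactly the defining property of the maximal subproduct system, $X(n) = \bigcap_{i+j=n} X(i) \otimes X(j)$, dualized to projections — and then threading the two factorizations of $\tT^n$ through without sign/order errors. None of this is deep once the projection identity is invoked; it is essentially the same computation as in the proof of Proposition \ref{prop:representation} with the extra input that only finitely many ``generating'' relations are needed because $X$ is maximal.
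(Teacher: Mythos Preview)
Your proposal is correct and follows essentially the same route as the paper: reduce to Proposition \ref{prop:representation}, use the defining identity $p_n = \bigwedge_{i+j=n}(p_i \otimes p_j)$ of the maximal subproduct system to write $\mathrm{Ran}(p_n^\perp)$ as a sum of pieces of the form $\mathrm{Ran}(p_i^\perp) \otimes E^{\otimes j}$ and $E^{\otimes i} \otimes \mathrm{Ran}(p_j^\perp)$, and then kill each piece inductively via the factorization $\tT^n = \tT^i(I \otimes \tT^j)$. The paper's proof is the same argument written more tersely, phrasing the projection identity as $p_m^\perp = \bigvee_q q^\perp$ with $q$ ranging over $I \otimes p_j$ and $p_i \otimes I$; your version simply makes the commutation $(I \otimes \tT^j)(p_i^\perp \otimes I \otimes I_H) = (p_i^\perp \otimes I_H)(I \otimes \tT^j)$ explicit.
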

\begin{proof}
The necessity of (\ref{eq:tTnk}) follows from Proposition \ref{prop:representation}. By the same proposition, to show that the condition is sufficient it is enough to show that (\ref{eq:tTnk}) holds for all $n \in \mb{N}$. Given $m \in \mb{N}$, we have $p_m = \bigwedge_q q$, where $q$ runs over all
projections of the form $q = I_{X(i)} \otimes p_j$ or $q = p_i \otimes I_{X(j)}$, with $i,j \in \mb{N}_+$ and $i+j = m$. But then $p_m^\perp = \bigvee_{q} q^\perp$, thus if (\ref{eq:tTnk}) holds for all $n<m$ then it also holds for $n=m$.
\end{proof}

\section{Fock spaces and standard shifts}

\begin{definition}
Let $X$ be a subproduct system of Hilbert spaces. Fix an orthonormal basis $\{e_i\}_{i \in \cI}$ of $E =X(1)$.
$X(n)$, when considered as a subspace of $\mathfrak{F}_X$, is called \emph{the $n$ particle space}.
The \emph{standard $X$-shift (related to $\{e_i\}_{i \in \cI}$) on $\mathfrak{F}_X$} is the tuple of operators $\underline{S}^X = \left(S^X_i\right)_{i \in \cI}$ in $B(\mathfrak{F}_X)$ given by
\bes
S^X_i(x) = U_{1,n}(e_i \otimes x) ,
\ees
for all $i \in \cI$, $n \in \mb{N}$ and $x \in X(n)$.
\end{definition}
It is clear that $S^X_i = S^X(e_i)$, where $S^X$ is the shift representation given by Definition \ref{def:shiftrep}.

If $F$ denotes the usual full product system (Example \ref{expl:full}) then $\mathfrak{F}_F$
is the usual Fock space and the tuple $(S^F_i)_{i \in \cI}$ is the standard shift (the $\cI$ orthogonal shift of \cite{Popescu89}). We shall denote $\mathfrak{F}_F$ as $\mathfrak{F}$ and $(S^F_i)_{i \in \cI}$ as $(S_i)_{i \in \cI}$. It is then obvious that the tuple $\left(S^X_i\right)_{i \in \cI}$ is a row contraction, as it is the compression of the row contraction $(S_i)_{i \in \cI}$. Indeed, assuming (as we may, thanks to Lemma \ref{lem:projection_subproduct}) that $U_{m,n}$ is an orthogonal projection $p_{m+n}:X(m) \otimes X(n) \rightarrow X(m+n)$, and denoting $p = \oplus_{n}p_n$, we have for all $i$ that $S^X_i = p S_i \big|{\mathfrak{F}_X}$.

\begin{example}\label{expl:qcommuting}
\emph{
The $q$-commuting Fock space of \cite{Dey07} also fits into this framework.
Indeed, let (as in \cite{Dey07}) $\Gamma(\mb{C}^d)$ be the full Fock space, let $\Gamma_q(\mb{C}^d)$ denote the $q$-commuting Fock space, and
let $Y(n)$ be the ``$n$ particle $q$-commuting space" with orthogonal projection $p_n : (\mb{C}^d)^n \rightarrow Y(n)$. Then a straightforward calculation shows that the projections $\{p_n\}_{n\in\mb{N}}$ satisfy
equation (\ref{eq:p_assoc}) of Lemma  \ref{lem:projection_subproduct}, thus $Y = \{Y(n)\}_{n\in\mb{N}}$ is a subproduct system (satisfying (\ref{eq:p_subset}) and (\ref{eq:p_maps})).
With our notation from above we have that $\mathfrak{F}_Y = \Gamma_q(\mb{C}^d)$ and that
the tuple $(S^Y_i, \ldots, S^Y_d)$ is the standard $q$-commuting shift.
}
\end{example}

$S^F$, the standard shift of the full product system on the full Fock space, will be denoted by $S$, and will be called simply \emph{the standard shift}.

By the notation introduced in Definition \ref{def:Xpiece}, the symbol $S^X$ is also used to denote the maximal $X$-piece of the standard shift $S$. The following proposition -- which is a generalization of \cite[Proposition 6]{BBD03}, \cite[Proposition 11]{Dey07} and \cite[Proposition 2.9]{Popescu06} -- shows that this is consistent.

\begin{proposition}\label{prop:Xshift}
Let $X$ subproduct subsystem of a subproduct system $Y$. Then the maximal $X$-piece of the standard $Y$-shift is the standard $X$-shift.
\end{proposition}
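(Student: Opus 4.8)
The plan is to unwind the definitions on both sides and show they produce the same subspace of $\mathfrak{F}_Y$. Recall that the standard $Y$-shift is the representation $S^Y$ of $Y$ on $\mathfrak{F}_Y = \bigoplus_{n} Y(n)$, and that $X$ being a subproduct subsystem of $Y$ means $X(n) \subseteq Y(n)$ with orthogonal projections $p_n : Y(n) \to X(n)$ that are bimodule maps compatible with the products as in \eqref{eq:pU}. The maximal $X$-piece of $S^Y$ lives on the space $K^X(S^Y) = \bigvee_{H \in \cP(X,S^Y)} H$, where, following \eqref{eq:cPXT}, $\cP(X,S^Y) = \{H \subseteq \mathfrak{F}_Y : (\widetilde{S^Y})_n^* H \subseteq X(n) \otimes H \text{ for all } n\}$. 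So the whole content of the proposition is the identity $K^X(S^Y) = \mathfrak{F}_X = \bigoplus_n X(n)$, together with the observation that once this is established, the compressed representation $(S^Y)^X$ acts exactly as $S^X$.

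First I would show $\mathfrak{F}_X \in \cP(X,S^Y)$, i.e. that $\mathfrak{F}_X$ is one of the competing subspaces. This requires computing $(\widetilde{S^Y})_n^*$ on $\mathfrak{F}_X$. Since $\widetilde{S^Y}_n : Y(n) \otimes \mathfrak{F}_Y \to \mathfrak{F}_Y$ sends $x \otimes y \in Y(n) \otimes Y(m)$ to $U^Y_{n,m}(x\otimes y) \in Y(n+m)$, its adjoint sends a vector in $Y(n+m)$ back into $Y(n) \otimes Y(m)$; concretely, because $U^Y_{n,m}$ is coisometric, $(\widetilde{S^Y})_n^*\big|_{Y(n+m)}$ is the inclusion $Y(n+m) \hookrightarrow Y(n)\otimes Y(m)$ composed with $U^Y_{n,m}{}^*$. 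The key step is then to verify, using the intertwining relation \eqref{eq:pU} (equivalently $p_{n+m} U^Y_{n,m} = U^X_{n,m}(p_n \otimes p_m)$ and its adjoint form $U^Y_{n,m}{}^* p_{n+m} = (p_n \otimes p_m)^* U^X_{n,m}{}^*$ — here one should be a little careful about how the projections act, writing things in terms of $p_n^* : X(n) \to Y(n)$), that for $x \in X(n+m) \subseteq \mathfrak{F}_X$ one gets $(\widetilde{S^Y})_n^* x \in X(n) \otimes X(m) \subseteq X(n) \otimes \mathfrak{F}_X$. This shows $\mathfrak{F}_X \subseteq K^X(S^Y)$, and hence also gives, by compressing, that $(S^Y)^X\big|_{\mathfrak{F}_X}$ is precisely the standard $X$-shift $S^X$ (since on $\mathfrak{F}_X$ the operator $P_{\mathfrak{F}_X} S^Y(e_i)\big|_{\mathfrak{F}_X}$ is $U^X_{1,n}(e_i \otimes \cdot) = S^X(e_i)$, using again the compatibility of the $p_n$'s).

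The main obstacle is the reverse inclusion $K^X(S^Y) \subseteq \mathfrak{F}_X$, which amounts to showing that $\mathfrak{F}_X$ is the \emph{largest} element of $\cP(X,S^Y)$. The strategy here mirrors \cite[Proposition 6]{BBD03} and \cite[Proposition 2.9]{Popescu06}: let $H \in \cP(X,S^Y)$ and take a nonzero $\zeta \in H$, decomposed as $\zeta = \sum_n \zeta_n$ with $\zeta_n \in Y(n)$. The condition $(\widetilde{S^Y})_k^* H \subseteq X(k) \otimes H$ applied to $\zeta$, combined with the explicit form of $(\widetilde{S^Y})_k^*$ above and the fact that $\mathfrak{F}_Y = \bigoplus Y(n)$ with the $Y(n)$ mutually orthogonal, forces constraints that propagate: one shows inductively that each component $\zeta_n$ must lie in $X(n)$. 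Concretely, if some $\zeta_{n_0} \notin X(n_0)$, i.e. $p_{n_0}^\perp \zeta_{n_0} \neq 0$, one applies $(\widetilde{S^Y})_k^*$ for suitable $k$ and reads off the $X(k)\otimes Y(n_0 - k)$-component to detect a piece of $\zeta$ that is not in $X(k)\otimes H$; since $H$ is arbitrary this is a contradiction unless all components lie in the appropriate $X(\cdot)$. I would organize this as: (i) reduce to a single homogeneous component argument using orthogonality of the grading and the fact that $(\widetilde{S^Y})_k^*$ is ``block upper triangular'' with respect to the grading; (ii) run the induction on the number of nonzero components, or alternatively invoke the characterization that $K^X(S^Y)$ is coinvariant and use a spanning/density argument as in \cite{Popescu06}. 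Once $K^X(S^Y) = \mathfrak{F}_X$ is proved, the identification of the compressed shift with $S^X$ from the previous paragraph completes the proof. It would also be worth remarking that this statement unifies and recovers \cite[Proposition 6]{BBD03}, \cite[Proposition 11]{Dey07} and \cite[Proposition 2.9]{Popescu06} by specializing $Y$ to the full product system $F$ and $X$ to the symmetric, $q$-commuting, or an arbitrary standard subproduct subsystem, respectively.
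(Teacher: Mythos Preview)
Your approach is the same as the paper's, and the first half (showing $\mathfrak{F}_X \in \cP(X,S^Y)$ and identifying the compression with $S^X$) is fine. For the maximality half, your inductive/contradiction scheme is vaguer than necessary and the phrase ``since $H$ is arbitrary'' is not quite right, since $H$ is a fixed element of $\cP(X,S^Y)$. The paper's argument is more direct and avoids induction entirely: since $\mathfrak{F}_X \subseteq K^X(S^Y)$, take $h \in K^X(S^Y) \ominus \mathfrak{F}_X$; because $h \perp \mathfrak{F}_X$, each homogeneous component $h_n$ lies in $Y(n)\ominus X(n)$. Now the key computation is that for any $\eta \in Y(n)$ one has $(\widetilde{S^Y})_n^* \eta = \eta \otimes \Omega$. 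Since $(\widetilde{S^Y})_n^* h \in X(n)\otimes K^X(S^Y) \subseteq X(n)\otimes \mathfrak{F}_Y$ and $(\widetilde{S^Y})_n^*$ respects the grading on $\mathfrak{F}_Y$, projecting onto the vacuum summand $X(n)\otimes Y(0)$ yields $h_n \otimes \Omega \in X(n)\otimes \mb{C}$, forcing $h_n \in X(n)$ and hence $h_n = 0$. So the ``suitable $k$'' you were looking for is simply $k = n_0$, and the argument is a one-liner rather than an induction.
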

\begin{proof}
Let $E = Y(1)$, and let $F = F_E$ be the full product system. Viewing
$F(n)\otimes\mathfrak{F}$ as direct sum of $|\cI|^n$ copies of $\mathfrak{F}_F$, $(\widetilde{S})_n$ is just the row isometry
$(S_{i_1}\circ\cdots\circ S_{i_n})_{i_1, \ldots, i_n \in \cI}$ from the space of columns $\mathfrak{F}_F \oplus \mathfrak{F}_F \oplus \cdots$ into $\mathfrak{F}_F$. In other words, for $h \in \mathfrak{F}_F$ and $i_1, \ldots, i_n \in I$,
$$(\widetilde{S})_n \big((e_{i_1} \otimes \cdots \otimes e_{i_n}) \otimes h\big) = S_{i_1}\circ\cdots\circ S_{i_n}h = (e_{i_1} \otimes \cdots \otimes e_{i_n}) \otimes h.$$
This is an isometry, and the adjoint works by sending $(e_{i_1} \otimes \cdots \otimes e_{i_n}) \otimes h \in \mathfrak{F}_F$ back to $(e_{i_1} \otimes \cdots \otimes e_{i_n}) \otimes h \in F(n) \otimes \mathfrak{F}_F$, and by sending the $0,1, \ldots, n-1$ particle spaces to $0$.

Now, if $Z$ is any standard subproduct subsystem of $F$, then
\bes
\left(\widetilde{S^Z}\right)_n = P_{\mathfrak{F}_Z}\left(\widetilde{S}\right)_n \big|_{Z(n) \otimes \mathfrak{F}_Z},
\ees
thus
\be\label{eq:SZ*}
\left(\widetilde{S^Z}\right)_n^* = P_{Z(n) \otimes \mathfrak{F}_Z}\left(\widetilde{S}\right)_n^* \big|_{\mathfrak{F}_Z}.
\ee
Now if $h$ is in the $k$ particle space of $\mathfrak{F}_F$ with $k<n$, then $(\widetilde{S^Z})_n^* h = 0$. If $k\geq n$, then since $Z(k) \subseteq Z(n) \otimes Z(k-n)$ we may write $h = \sum \xi_i \otimes \eta_i$, where $\xi_i \in Z(n)$ and $\eta_i \in Z(k-n)$. Thus by (\ref{eq:SZ*}) we find that
\be\label{eq:tilde*}
(\widetilde{S^Z})_n^* \left(\sum \xi_i \otimes \eta_i \right) = \sum p^Z_n \xi_i \otimes p^Z_{k-n}\eta_i = \sum \xi_i \otimes \eta_i.
\ee
From these considerations it follows that the standard $X$-shift is in fact an $X$-piece of the standard $Y$ shift,
as $(\widetilde{S^Y})_n^* \big|_{\mathfrak{F}_X} = (\widetilde{S^X})_n^*$. It remains to show that the $X$-shift is maximal.

Assume that there is a Hilbert space $H$, $\mathfrak{F}_X \subseteq H \subseteq \mathfrak{F}_Y$, such that the compression
of $S^Y$ to $H$ is an $X$-piece of $Y$, that is, $H \in \cP(X,S^Y)$ (see equation (\ref{eq:cPXT})). Let $h \in H \ominus \mathfrak{F}_X$. We shall prove that $h = 0$. Being orthogonal to all of $\mathfrak{F}_X$, $p^Y_n h$ must be orthogonal to $X(n)$ for all $n$. Thus, we may assume that $h\in Y(n) \ominus X(n)$ for some $n$. But then by (\ref{eq:tilde*})
\bes
(\widetilde{S^Y})_n^* h = h \otimes \Omega.
\ees
But since $H \in \cP(X,S^Y)$, we must have $h \otimes \Omega \in X(n) \otimes H$, and this, together with $h \in Y(n) \ominus X(n)$, forces $h=0$.
\end{proof}

\chapter{Zeros of homogeneous polynomials in noncommuting variables}\label{chap:projective}

In the next chapter we will describe a model theory for representations of subproduct systems. But before that we dedicate this chapter to build a precise connection between subproduct systems together with their representations and tuples of operators that are the zeros of homogeneous polynomials in non commuting variables.

\begin{remark}
\emph{The notions that we are developing give a framework for studying tuples of operators satisfying relations given by homogeneous polynomials. One can go much further by considering subspaces of Fock spaces
and ``representations", i.e., maps of the Fock space into $B(H)$, that give a framework for studying tuples of operators satisfying arbitrary (not-necessarily homogeneous) polynomial and even analytic identities. Gelu Popescu \cite{Popescu06} has already begun developing such a theory.}
\end{remark}

We begin by setting up the usual notation. Let $\cI$ be a fixed set of indices, and let $\mb{C}\langle (x_i)_{i \in \cI}\rangle$ be the algebra of complex polynomials in the non commuting variables $(x_i)_{i \in \cI}$. We denote $\underline{x} = (x_i)_{i \in \cI}$, and we consider $\underline{x}$ as a ``tuple variable". We shall sometimes write $\mb{C}\langle \underline{x}\rangle$ for $\mb{C}\langle (x_i)_{i \in \cI}\rangle$. The set of all words in $\cI$ is denoted by $\mb{F}_\cI^+$. For a word $\alpha \in \mb{F}_\cI^+$, let $|\alpha|$ denote the length of $\alpha$, i.e., the number of letters in $\alpha$.

For every word $\alpha = \alpha_1 \cdots \alpha_k$ in $\cI$ denote $\underline{x}^\alpha = x_{\alpha_1} \cdots x_{\alpha_k}$. If $\alpha = 0$ is the empty word, then this is to be understood as $1$. $k$ is also referred to in this context as the \emph{degree} of the monomial $x^\alpha$. $\mb{C}\langle \underline{x}\rangle$ is by definition the linear span over $\mb{C}$ of all such monomials, and every element in $\mb{C}\langle \underline{x}\rangle$ is called a polynomial. 
A polynomial is called \emph{homogeneous} if it is the sum of monomials of equal degree. A \emph{homogeneous ideal} is a two-sided ideal that is generated by homogeneous polynomials.

If $\underline{T} = (T_i)_{i \in \cI}$ is a tuple of operators on a Hilbert space $H$ and $\alpha = \alpha_1 \cdots \alpha_k$ is a word with letters in $\cI$, we define
$$\underline{T}^\alpha = T_{\alpha_1} T_{\alpha_2} \cdots T_{\alpha_k} .$$
We consider the empty word $0$ as a legitimate word, and define $\underline{T}^0 = I_H$. If $p(x) = \sum_\alpha c_\alpha \underline{x}^\alpha \in \mb{C}\langle \underline{x}\rangle$, we define $p(\underline{T}) = \sum_\alpha c_\alpha \underline{T}^\alpha$.

If $E$ is a Hilbert space with orthonormal basis $\{e_i\}_{i \in \cI}$, An element $e_{\alpha_1} \otimes \cdots \otimes e_{\alpha_k} \in E^{\otimes k}$ will be written in short form as $e_{\alpha}$, where $\alpha = \alpha_1 \cdots \alpha_k$. If $p(x) = \sum_\alpha c_\alpha \underline{x}^\alpha \in \mb{C}\langle \underline{x}\rangle$, we define $p(e) = \sum_\alpha c_\alpha e_\alpha$. Here $e_0$ ($0$ the empty word) is understood as the vacuum vector $\Omega$.

\begin{proposition}\label{prop:XandI}
Let $E$ be a Hilbert space with orthonormal basis $\{e_i\}_{i \in \cI}$. There is an inclusion reversing correspondence between proper homogeneous ideals $I \triangleleft \mb{C}\langle \underline{x}\rangle$ and standard subproduct systems $X = \{X(n)\}_{n \in \mb{N}}$ with $X(1) \subseteq E$. When $|\cI| < \infty$ this correspondence is bijective.
\end{proposition}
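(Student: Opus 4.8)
Here is the plan for proving Proposition~\ref{prop:XandI}.

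\medskip

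\textbf{Setting up the two maps.} First I would describe the correspondence explicitly in both directions. Given a proper homogeneous ideal $I \triangleleft \mb{C}\langle \underline{x}\rangle$, write $I = \bigoplus_{n} I_n$ where $I_n$ is the subspace of homogeneous polynomials of degree $n$ in $I$ (this grading is exactly what ``homogeneous ideal'' buys us). Identifying $E^{\otimes n}$ with the degree-$n$ homogeneous component of $\mb{C}\langle \underline{x}\rangle$ via $e_\alpha \leftrightarrow \underline{x}^\alpha$, set
\[
X_I(n) = E^{\otimes n} \ominus I_n = \{ p(e) : p \text{ homogeneous of degree } n, \ p \perp I_n \}.
\]
Since $I$ is proper we have $I_0 = 0$, so $X_I(0) = \mb{C} = X_I(1)^{\otimes 0}$, and $X_I(1) = E \ominus I_1 \subseteq E$. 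I would then let $p_n$ denote the orthogonal projection of $E^{\otimes n}$ onto $X_I(n)$, and show that the family $\{p_n\}$ satisfies the associativity condition \eqref{eq:p_assoc} of Lemma~\ref{lem:projection_subproduct}, so that $X_I$ becomes a standard subproduct system with the multiplication maps $U_{m,n} = p_{m+n}|_{X_I(m)\otimes X_I(n)}$. Conversely, given a standard subproduct system $X$ with $X(1)\subseteq E$ and projections $p_n : E^{\otimes n}\to X(n)$, define $I_X = \bigoplus_n \ker p_n$, viewed as a graded subspace of $\mb{C}\langle\underline{x}\rangle$, and set $I_X(0)=0$.

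\medskip

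\textbf{The key algebraic fact.} The heart of the matter is the identity
\[
\text{$I$ is a two-sided ideal} \quad\Longleftrightarrow\quad p_{k+m+n} \le p_{k+m}\otimes I_{E^{\otimes n}} \ \text{ and } \ p_{k+m+n}\le I_{E^{\otimes k}}\otimes p_{m+n} \text{ for all } k,m,n,
\]
where the right-hand side is precisely \eqref{eq:p_assoc}. Unwinding: $I$ being a right ideal means $\mb{C}\langle\underline{x}\rangle \cdot I \subseteq I$, i.e. $\underline{x}^\alpha \cdot I_m \subseteq I_{|\alpha|+m}$, which in Hilbert-space language says $E^{\otimes k}\otimes \ker p_m \subseteq \ker p_{k+m}$, equivalently $p_{k+m} \le I_{E^{\otimes k}}\otimes p_m$; being a left ideal gives the mirror inequality. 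Then, as in the proof of Lemma~\ref{lem:projection_subproduct}, meeting over all ways of writing $k+m+n$ yields \eqref{eq:p_assoc}. So the map $I \mapsto X_I$ is well-defined into standard subproduct systems, and $X \mapsto I_X$ is well-defined into proper homogeneous ideals; and by construction $X_{I_X} = X$ and $I_{X_I} = I$ on the level of graded subspaces. Since $X(n) \ominus$ operation and $\ker p_n$ are mutually inverse, the two maps are mutual inverses. Inclusion of ideals $I \subseteq J$ translates to $I_n \subseteq J_n$, hence $X_I(n) \supseteq X_J(n)$, i.e. the correspondence reverses inclusion.

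\medskip

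\textbf{The finiteness issue --- the main obstacle.} The subtlety, and the reason for the hypothesis $|\cI|<\infty$ in the bijectivity claim, is the following. When $\cI$ is infinite, $E$ is infinite-dimensional, and a generic standard subproduct system comes with $X(1)$ an \emph{arbitrary} closed subspace of $E$, with $X(n)$ a \emph{closed} subspace of $E^{\otimes n}$; the corresponding $\ker p_n$ need not be the degree-$n$ component of a finitely (or even countably) generated ideal, and one has to be careful that $I_X = \bigoplus_n \ker p_n$ is genuinely closed under the algebraic ideal operations purely formally --- which it is, by the key fact above, regardless of cardinality. So in fact the map $X\mapsto I_X$ lands in proper homogeneous ideals in all cases, and $I\mapsto X_I$ lands in standard subproduct systems with $X(1)\subseteq E$ in all cases. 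The place where infiniteness can break bijectivity is more prosaic: when $|\cI| = \infty$, a subproduct system over $\mb{N}$ in our sense has Hilbert-space fibers, and ``$X(1)\subseteq E$'' with $E$ a \emph{fixed} infinite-dimensional Hilbert space is fine; the genuine worry is whether every standard subproduct system (up to isomorphism) has its first fiber isometrically embeddable in the fixed $E$ --- which requires $\dim X(1) \le \dim E = |\cI|$. I would therefore carry out the bijectivity argument cleanly for $|\cI| < \infty$, where all spaces are finite-dimensional, every subspace is closed, and there is no cardinality obstruction, noting that in this case $X_{I_X} = X$ and $I_{X_I} = I$ hold verbatim; and I would remark that the inclusion-reversing correspondence (not necessarily onto) persists for infinite $\cI$. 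I expect the bookkeeping in verifying that $I_X$ is a two-sided ideal from \eqref{eq:p_assoc} --- really just re-reading the proof of Lemma~\ref{lem:projection_subproduct} in reverse --- to be the only step requiring genuine care, and it is short.
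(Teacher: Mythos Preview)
Your construction of the two maps and the verification that they are mutually inverse when $|\cI|<\infty$ are correct and essentially identical to the paper's argument: both directions reduce to the equivalence between the two-sided ideal property and the inequalities $p_{m+n}\le I_{E^{\otimes m}}\otimes p_n$, $p_{m+n}\le p_m\otimes I_{E^{\otimes n}}$.

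Your diagnosis of the infinite-$\cI$ obstruction, however, is off the mark. Your map $I_X=\bigoplus_n\ker p_n$ does not even land in $\mb{C}\langle\underline{x}\rangle$ when $\cI$ is infinite: the degree-$n$ homogeneous component of the polynomial algebra (finite linear combinations of the $\underline{x}^\alpha$) is only a \emph{dense proper} subspace of $E^{\otimes n}$, so a generic element of $\ker p_n\subseteq E^{\otimes n}$ is not $p(e)$ for any polynomial $p$. The paper handles this by defining $I^X$ to consist only of those polynomials $p$ for which $p(e)\in E^{\otimes n}\ominus X(n)$. Your suggestion that the issue concerns embedding $X(1)$ into $E$ is irrelevant, since $X(1)\subseteq E$ is part of the hypothesis. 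The genuine obstruction to bijectivity is twofold: the map $I\mapsto X_I$ involves an implicit closure (so distinct ideals whose graded pieces have the same closure in $E^{\otimes n}$ yield the same $X_I$), and the map $X\mapsto I^X$ discards the non-polynomial part of $E^{\otimes n}\ominus X(n)$. Both problems vanish when $|\cI|<\infty$ simply because each $E^{\otimes n}$ is then finite-dimensional and coincides with its polynomial subspace.
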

\begin{proof}
Let $X$ be such a subproduct system. We define an ideal
\be\label{eq:I^X}
I^X := \textrm{span}\{p\in \mb{C}\langle\underline{x} \rangle : \exists n > 0 .  p(e) \in E^{\otimes n} \ominus X(n)\}.
\ee
Once it is established that $I^X$ is a two-sided ideal the fact that it is homogeneous will follow from the definition. Let $p\in \mb{C}\langle\underline{x} \rangle$ be such that $p(e) \in E^{\otimes n} \ominus X(n)$ for some $n > 0$. It suffices to show that for every monomial $\underline{x}^\alpha$ we have that $\underline{x}^\alpha p(\underline{x}) \in I^X$, that is,
\bes
e_\alpha \otimes p(e) \in E^{\otimes |\alpha| + n} \ominus X(|\alpha| + n).
\ees
But since $X$ is standard, $X(|\alpha| + n) \subseteq X(|\alpha|) \otimes X(n)$, thus
\bes
E^{\otimes |\alpha|} \otimes (E^{\otimes n} \ominus X(n)) \subseteq E^{\otimes |\alpha| + n} \ominus X(|\alpha| + n).
\ees
It follows that $I^X$ is a homogeneous ideal.

Conversely, let $I$ be a homogeneous ideal. We construct a subproduct system $X_I$ as follows. Let $I^{(n)}$ be the set of all homogeneous polynomials of degree $n$ in $I$. Define
\be\label{eq:X_I}
X_I(n) = E^{\otimes n} \ominus \{p(e) : p \in I^{(n)}\}.
\ee
Denote by $p_n$ the orthogonal projection of $E^{\otimes n}$ onto $X_I(n)$. To show that $X_I$ is a subproduct system it is enough (by symmetry) to prove that for all $m,n \in \mb{N}$
\bes
p_{m+n} \leq I_{E^{\otimes m}} \otimes p_n,
\ees
or, in other words, that
\be\label{eq:X_IinEX_I}
X_I(m+n) \subseteq E^{\otimes m} \otimes X_I(n).
\ee
Let $x \in X_I(m+n)$, let $\alpha \in \cI^m$, and let $q \in I^{(n)}$. Since $I$ is an ideal, $\underline{x}^\alpha q(\underline{x})$ is in $I^{(m+n)}$, thus $\langle x, e_\alpha \otimes q(e) \rangle = 0$. This proves (\ref{eq:X_IinEX_I}).

Assume now that $|\cI| < \infty$. We will show that the maps $X \mapsto I^X$ and $I \mapsto X_I$ are inverses of each other. Let $J$ be a homogeneous ideal in $\mb{C}\langle \underline{x}\rangle$. Then
\begin{align*}
I^{X_J} &= \textrm{span}\{p\in \mb{C}\langle\underline{x} \rangle : \exists n > 0 .  p(e) \in E^{\otimes n} \ominus X_J(n)\} \\
(*)&= \textrm{span}\{p\in \mb{C}\langle\underline{x} \rangle :  \exists n > 0 . p(e) \in \{q(e): q \in J^{(n)}\}\} \\
&= \textrm{span}\{p : \exists n > 0 . p\in  J^{(n)}\} \\
(**)&= J,
\end{align*}
where (*) follows from the definition of $X_J$, and (**) from the fact that $J$ is a homogeneous ideal.

For the other direction, let $Y$ be a standard subproduct subsystem of $F_E = \{E^{\otimes n}\}_{n \in \mb{N}}$. Clearly, $(I^Y)^{(n)} = \{p\in \mb{C}\langle\underline{x} \rangle : p(e) \in E^{\otimes n} \ominus Y(n)\}$.
Thus
\begin{align*}
X_{I^Y}(n) &= E^{\otimes n} \ominus \{p(e) : p \in (I^Y)^{(n)}\} \\
&= E^{\otimes n} \ominus  \{p(e) : p \in \{q\in \mb{C}\langle\underline{x} \rangle : q(e) \in E^{\otimes n} \ominus Y(n)\} \\
&= E^{\otimes n} \ominus  (E^{\otimes n} \ominus Y(n)) \\
&= Y(n).
\end{align*}
\end{proof}

We record the definitions of $I^X$ and $X_I$ from the above theorem for later use:

\begin{definition}
Let $E$ be a Hilbert space with orthonormal basis $\{e_i\}_{i \in \cI}$ ($|\cI|$ is not assumed finite). Given a homogeneous ideal $I \triangleleft \mb{C}\langle \underline{x} \rangle$, the subproduct system $X_I$ defined by (\ref{eq:X_I}) will be called the \emph{subproduct system associated with $I$}. If $X$ is a given subproduct subsystem of $F_E$, then the ideal $I^X$ of $\mb{C}\langle \underline{x} \rangle$ defined by (\ref{eq:I^X}) will be called the \emph{ideal associated with $X$}.
\end{definition}

We note that $X_I$ depends on the choice of the space $E$ and basis $\{e_i\}_{i \in \cI}$, but different choices will give rise to isomorphic subproduct systems.

\begin{proposition}\label{prop:change_of_variables}
Let $X$ and $Y$ be standard subproduct systems with $\dim X(1) = \dim Y(1) = d < \infty$. Then $X$ is isomorphic to $Y$ if and only if there is a unitary linear change of variables in $\mb{C}\langle x_1, \ldots, x_d \rangle$ that sends $I^X$ onto $I^Y$.
\end{proposition}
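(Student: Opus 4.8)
The plan is to exploit the bijective correspondence $X \leftrightarrow I^X$ between standard subproduct systems with $\dim X(1) = d$ and proper homogeneous ideals in $\mb{C}\langle x_1,\ldots,x_d\rangle$ established in Proposition \ref{prop:XandI}, keeping careful track of how a change of orthonormal basis in $X(1)$ affects both sides. First I would fix a Hilbert space $E$ of dimension $d$ with a distinguished orthonormal basis $\{e_1,\ldots,e_d\}$, which is the basis used (as in the paragraph after Proposition \ref{prop:XandI}) to identify homogeneous degree-$n$ polynomials $p$ with vectors $p(e)\in E^{\otimes n}$ via $\underline x^\alpha \mapsto e_\alpha$. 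The essential observation is this: a unitary $u$ on $E$ with matrix $(u_{ij})$ relative to $\{e_i\}$ induces, for each $n$, the unitary $u^{\otimes n}$ on $E^{\otimes n}$, and under the polynomial-to-vector identification this corresponds exactly to the linear substitution $x_j \mapsto \sum_i u_{ij} x_i$ on degree-$n$ homogeneous polynomials; i.e. $(p\circ u)(e) = u^{\otimes n}\, p(e)$ where $p \mapsto p\circ u$ denotes this linear change of variables. This is a routine computation on monomials that I would state as a lemma and verify in one line.

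Next I would establish the two directions. For the ``only if'' direction, suppose $V = \{V_n\}$ is an isomorphism $X \to Y$ of subproduct systems; without loss of generality (using Lemma \ref{lem:projection_subproduct}) both $X$ and $Y$ are standard, i.e. subsystems of $F_E$. Set $u = V_1$, a unitary on $E = X(1) = Y(1)$. I claim $V_n = u^{\otimes n}\big|_{X(n)}$ for all $n$: this follows by induction using the intertwining relation $V_{m+n}\circ U^X_{m,n} = U^Y_{m,n}\circ(V_m\otimes V_n)$ together with the fact that $U^X_{m,n}, U^Y_{m,n}$ are the orthogonal projections $p^X_{m+n}, p^Y_{m+n}$ restricted to the appropriate tensor products (for a standard subproduct system) — one checks that $u^{\otimes(m+n)}$ maps $X(m)\otimes X(n)$ onto $Y(m)\otimes Y(n)$ compatibly with these projections, because a unitary conjugates the projection onto $X(k)$ to the projection onto $u^{\otimes k}X(k) = Y(k)$. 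Since $V_n$ is onto $Y(n)$, we get $u^{\otimes n}X(n) = Y(n)$ for every $n$. Then, using $I^X$ and $X_I$ are mutually inverse and the lemma above, $I^Y = I^{X} \circ u^{-1}$ (the change of variables by $u^{-1}$), because $u^{\otimes n}$ carries $E^{\otimes n}\ominus X(n)$ onto $E^{\otimes n}\ominus Y(n)$, which translates precisely to the statement that the linear substitution associated with $u$ carries the homogeneous ideal $I^X$ onto $I^Y$.

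For the ``if'' direction I would simply reverse this: given a unitary linear change of variables sending $I^X$ onto $I^Y$, it is implemented by a unitary $u$ on $E$ (the coefficient matrix of the substitution is unitary by hypothesis), and by the lemma $u^{\otimes n}$ carries $\{p(e) : p\in (I^X)^{(n)}\}$ onto $\{q(e) : q\in (I^Y)^{(n)}\}$, hence carries $X_{I^X}(n) = X(n)$ onto $X_{I^Y}(n) = Y(n)$; the family $\{u^{\otimes n}\big|_{X(n)}\}$ is then readily checked to be a subproduct system isomorphism $X\to Y$, since $u^{\otimes(m+n)}$ intertwines the defining projections. The main obstacle — and the only place requiring genuine care rather than bookkeeping — is the ``only if'' direction's claim that an arbitrary subproduct-system isomorphism $V$ is forced to be the tensor-power form $\{u^{\otimes n}|_{X(n)}\}$ with $u = V_1$; this is where one must use standardness (so that the $U_{m,n}$ are literally compressions of projections on tensor powers of $E$) together with the fact that $X(n+1) \subseteq X(1)\otimes X(n)$, allowing the induction to propagate $V_{n+1}$ from $V_1$ and $V_n$. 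Everything else is a direct translation through Proposition \ref{prop:XandI} and the elementary lemma on linear substitutions.
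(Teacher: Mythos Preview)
The paper states Proposition~\ref{prop:change_of_variables} without proof, treating it as an immediate consequence of the bijection in Proposition~\ref{prop:XandI}; your plan supplies exactly the argument the paper omits and is correct in outline.

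One small point of care in the spot you already flagged: in the inductive step for the ``only if'' direction, the intertwining relation together with the inductive hypothesis gives only
\[
V_{n+1} \;=\; p^Y_{n+1}\circ u^{\otimes(n+1)}\big|_{X(n+1)},
\]
since $p^X_{n+1}$ is the identity on $X(n+1)\subseteq X(1)\otimes X(n)$. Your justification ``a unitary conjugates the projection onto $X(k)$ to the projection onto $u^{\otimes k}X(k)=Y(k)$'' is circular at $k=n+1$, since $u^{\otimes(n+1)}X(n+1)=Y(n+1)$ is precisely what you are trying to establish. The missing ingredient is that $V_{n+1}$ is a \emph{unitary} onto $Y(n+1)$: for $\xi\in X(n+1)$ one has $\|\xi\|=\|V_{n+1}\xi\|=\|p^Y_{n+1}u^{\otimes(n+1)}\xi\|\le\|u^{\otimes(n+1)}\xi\|=\|\xi\|$, forcing $u^{\otimes(n+1)}\xi\in Y(n+1)$, hence $V_{n+1}=u^{\otimes(n+1)}|_{X(n+1)}$ and $u^{\otimes(n+1)}X(n+1)=Y(n+1)$. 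With this in place the induction closes and the rest of your plan goes through verbatim.
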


Fix some infinite dimensional separable Hilbert space $H$. As in classical algebraic geometry, given a homogeneous ideal $I \triangleleft \mb{C}\langle \underline{x}\rangle$, it is natural to introduce and to study the \emph{zero set of $I$}
\bes
Z(I) := \{\underline{T} = (T_i)_{i \in \cI} \in B(H)^\cI : \forall p \in I. p(\underline{T}) = 0\}.
\ees
Also, given a set $Z \subseteq B(H)^\cI$, one may form the following two-sided ideal in $\mb{C}\langle \underline{x}\rangle$
\bes
I(Z) := \{p \in \mb{C}\langle \underline{x}\rangle : \forall \underline{T} \in Z . p(\underline{T}) = 0\}.
\ees

In the following theorem we shall use the notation of \ref{sec:representations}. This simple result is the justification for viewing subproduct systems as a framework for studying tuples of operators satisfying certain homogeneous polynomial relations.
\begin{theorem}\label{thm:repZ(I)}
Let $E$ be a Hilbert space with orthonormal basis $\{e_i\}_{i \in \cI}$ (not necessarily with $|\cI| < \infty$), and let $I$ be a proper homogeneous ideal in $\mb{C}\langle (x_i)_{i \in \cI}\rangle$. Let $X_I$ be the associated subproduct system. Let $T_1 : E \rightarrow B(H)$ be a given representation of $E$. Define a tuple $\underline{T} = (T(e_i))_{i \in \cI}$. Construct the family of maps $T = \{T_n\}_{n\in \mb{N}}$, with $T_n:X(n) \rightarrow B(H)$ as in the paragraphs before Proposition \ref{prop:representation}. Then $T$ is a representation of $X$ if and only if $\underline{T} \in Z(I)$.
\end{theorem}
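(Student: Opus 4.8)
The plan is to unwind the definitions and reduce the statement to the characterization of representations of $X_I$ given in Proposition~\ref{prop:representation}, item~\ref{it:tTn}, namely that $T=\{T_n\}$ is a representation of $X_I$ if and only if $\tT^n(p_n^\perp \otimes I_H)=0$ for all $n$, where $p_n$ is the orthogonal projection of $E^{\otimes n}$ onto $X_I(n)$. The key observation is the translation dictionary between polynomials and Fock-space vectors: for a homogeneous polynomial $q$ of degree $n$, the vector $q(e)\in E^{\otimes n}$ is precisely what $\tT^n$ pairs against, and one checks directly from the definition of $\tT^n$ in (\ref{eq:alotofT}) that $\tT^n(q(e)\otimes h)=q(\underline T)h$ for all $h\in H$. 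This identity is the heart of the argument and follows by a straightforward induction on $n$ using the factorization $\tT^n=\tT_1(I_E\otimes\tT^{n-1})$ together with the definition $\underline T^\alpha=T(e_{\alpha_1})\cdots T(e_{\alpha_k})$ and $T(e_i)=T_i$.

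Granting this, the two directions go as follows. First, suppose $T$ is a representation of $X_I$. By Proposition~\ref{prop:representation} (\ref{it:tTn}), $\tT^n(p_n^\perp\otimes I_H)=0$ for every $n$. Now $X_I(n)=E^{\otimes n}\ominus\{q(e):q\in I^{(n)}\}$ by the definition (\ref{eq:X_I}) of the associated subproduct system, so for any homogeneous $q\in I^{(n)}$ we have $q(e)\in\operatorname{Im}p_n^\perp$, whence $q(\underline T)h=\tT^n(q(e)\otimes h)=\tT^n(p_n^\perp\otimes I_H)(q(e)\otimes h)=0$ for all $h$; thus $q(\underline T)=0$. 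Since $I$ is a homogeneous ideal, it is spanned by its homogeneous components $\bigcup_n I^{(n)}$, so $p(\underline T)=0$ for every $p\in I$, i.e.\ $\underline T\in Z(I)$. Conversely, suppose $\underline T\in Z(I)$. To show $T$ is a representation it suffices, again by Proposition~\ref{prop:representation} (\ref{it:tTn}), to check $\tT^n(p_n^\perp\otimes I_H)=0$ for all $n$. The range of $p_n^\perp$ is exactly the span of $\{q(e):q\in I^{(n)}\}$ (this is precisely (\ref{eq:X_I})), and on each such generator $\tT^n(q(e)\otimes h)=q(\underline T)h=0$ because $q\in I\subseteq$ the ideal killed by $\underline T$. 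Hence $\tT^n$ annihilates $\operatorname{Im}(p_n^\perp)\otimes H$, giving $\tT^n(p_n^\perp\otimes I_H)=0$, as required.

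One routine point to address carefully is that $p_n^\perp E^{\otimes n}$ is genuinely spanned by the vectors $q(e)$ with $q\in I^{(n)}$: this is immediate from the definition $X_I(n)=E^{\otimes n}\ominus\{q(e):q\in I^{(n)}\}$, since the orthogonal complement of $X_I(n)$ inside $E^{\otimes n}$ is by construction the closed span of those vectors, and in the finite-dimensional-fiber case it is already closed. I would also remark at the outset that, by Lemma~\ref{lem:projection_subproduct}, there is no loss in assuming $X_I$ is standard, so that Proposition~\ref{prop:representation} applies verbatim.

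I do not expect a serious obstacle here; the statement is essentially a bookkeeping translation. The one place that demands a little care is the inductive verification of the identity $\tT^n(q(e)\otimes h)=q(\underline T)h$ for a general (not monomial) homogeneous polynomial $q$, where one must expand $q=\sum_\alpha c_\alpha\underline x^\alpha$ and use linearity of $\tT^n$ in its vector argument; but this is exactly the kind of computation the construction of $T^n$ in Section~\ref{sec:representations} was designed to make transparent, so it amounts to little more than matching notation.
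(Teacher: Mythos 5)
Your proof is correct and follows essentially the same route as the paper: identify $E^{\otimes n}\ominus X_I(n)$ with the (closed) span of $\{q(e):q\in I^{(n)}\}$, verify $\tT^n(q(e)\otimes h)=q(\underline{T})h$, and invoke condition (\ref{it:tTn}) of Proposition \ref{prop:representation}. The paper's proof is just a compressed version of what you wrote.
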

\begin{proof}
On the one hand, $E^{\otimes n} \ominus X_I(n) = \overline{\textrm{span}}\{p(e) : p \in I^{(n)}\}$. On the other hand, for every $p \in I^{(n)}$ and every $h \in H$,
\bes
\widetilde{T}^n (p(e) \otimes h) = p(\underline{T})h.
\ees
Hence, the Theorem follows from Proposition \ref{prop:representation}.
\end{proof}

\begin{lemma}\label{lem:shiftpoly}
Let $J \triangleleft \mb{C}\langle (x_i)_{i \in \cI}\rangle$, $|\cI| < \infty$, be a proper homogeneous ideal. Let $S^{X_J}$ be the $X_J$-shift representation, and define $\underline{T} = (T_i)_{i \in \cI}$ by $T_i = S^{X_J}(e_i)$, $i \in \cI$. If $p \in \mb{C} \langle \underline{x} \rangle$ is a homogeneous polynomial, then $p(\underline{T}) = 0$ if and only if $p \in J$.
\end{lemma}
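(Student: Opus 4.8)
The plan is to reduce the whole statement to one computation with the vacuum vector $\Omega \in X_J(0) = \mb{C} \subseteq \mathfrak{F}_{X_J}$. First I would use the description of $S^{X_J}$ as the maximal $X_J$-piece of the standard shift $S$ on the full Fock space $\mathfrak{F}$ provided by Proposition \ref{prop:Xshift} (this description also smoothly handles the case $X_J(1)\subsetneq E$, when $J$ contains linear forms): writing $p = \bigoplus_n p_n$ for the orthogonal projection of $\mathfrak{F}$ onto $\mathfrak{F}_{X_J}$, where $p_n : E^{\otimes n} \to X_J(n)$, we have $T_i = p\, S_i\big|_{\mathfrak{F}_{X_J}}$ for every $i \in \cI$. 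The key identity to establish, by induction on $|\alpha|$, is
\[
\underline{T}^\alpha \Omega = p_{|\alpha|}(e_\alpha) \qquad (\alpha \in \mb{F}_\cI^+),
\]
the inductive step being $\underline{T}^{i\alpha}\Omega = p\,S_i\,(p_{|\alpha|}(e_\alpha)) = p_{|\alpha|+1}\big(e_i \otimes p_{|\alpha|}(e_\alpha)\big) = p_{|\alpha|+1}(I_E \otimes p_{|\alpha|})(e_i \otimes e_\alpha) = p_{|\alpha|+1}(e_{i\alpha})$, where the last equality uses $p_{m+n} = p_{m+n}(I_{E^{\otimes m}} \otimes p_n)$, which holds for a standard subproduct system (Lemma \ref{lem:projection_subproduct}, equation (\ref{eq:p_assoc})). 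Consequently, for a homogeneous polynomial $p = \sum_\alpha c_\alpha \underline{x}^\alpha$ of degree $n$ one obtains $p(\underline{T})\Omega = \sum_\alpha c_\alpha p_n(e_\alpha) = p_n\big(p(e)\big)$.

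Next I would prove the implication $p(\underline{T}) = 0 \Rightarrow p \in J$ for $p$ homogeneous of degree $n$: evaluating at $\Omega$ gives $0 = p(\underline{T})\Omega = p_n(p(e))$, i.e. $p(e) \in E^{\otimes n} \ominus X_J(n)$. By the definition of $X_J$ (equation (\ref{eq:X_I}) and Proposition \ref{prop:XandI}), $E^{\otimes n}\ominus X_J(n) = \{q(e) : q \in J^{(n)}\}$, and since $q \mapsto q(e)$ is a linear bijection from the homogeneous degree-$n$ polynomials onto $E^{\otimes n}$ (because $\{e_\alpha\}_{|\alpha|=n}$ is a basis of $E^{\otimes n}$), membership $p(e) \in \{q(e):q\in J^{(n)}\}$ forces $p \in J^{(n)} \subseteq J$. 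For a general $p \in J$ with $p(\underline T) = 0$, one splits $p$ into homogeneous components; since $J$ is homogeneous and the components of $p(\underline T)$ of different degrees in the $\underline T$'s need not be independent, one instead appeals to the converse direction below — or simply notes this half is only needed in the form just proved.

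For the converse $p \in J \Rightarrow p(\underline{T}) = 0$: it suffices to treat $p$ homogeneous, say $p \in J^{(n)}$, since $J$ is homogeneous. Because $\{e_\alpha\}_{|\alpha| = m}$ spans $E^{\otimes m}$ and $p_m$ maps onto $X_J(m)$, the vectors $\underline{T}^\gamma\Omega = p_{|\gamma|}(e_\gamma)$ have dense linear span in $\mathfrak{F}_{X_J}$, so, $p(\underline{T})$ being bounded, it is enough to show $p(\underline{T})\,\underline{T}^\gamma\Omega = 0$ for every word $\gamma$ of length $m$. This is the computation
\[
p(\underline{T})\,\underline{T}^\gamma\Omega = \sum_\alpha c_\alpha \underline{T}^{\alpha\gamma}\Omega = \sum_\alpha c_\alpha p_{n+m}(e_\alpha \otimes e_\gamma) = p_{n+m}\big(p(e) \otimes e_\gamma\big) = p_{n+m}(p_n \otimes I_{E^{\otimes m}})\big(p(e)\otimes e_\gamma\big) = 0,
\]
where the last two steps use $p_{n+m} = p_{n+m}(p_n \otimes I_{E^{\otimes m}})$ for a standard subproduct system (equation (\ref{eq:p_assoc})) and the fact that $p_n(p(e)) = 0$, which is exactly the statement $p \in J^{(n)}$. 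Alternatively, since $S^{X_J}$ is a representation of the subproduct system $X_J$, this direction is immediate from Theorem \ref{thm:repZ(I)}, which gives $\underline{T} \in Z(J)$. Combining the two directions yields the lemma.

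The hard part here is essentially bookkeeping rather than a genuine obstacle: one must select the right model of $S^{X_J}$ (the maximal $X_J$-piece of the standard shift, via Proposition \ref{prop:Xshift}) and nail down the identity $\underline{T}^\alpha\Omega = p_{|\alpha|}(e_\alpha)$ cleanly; once that is in place, both implications fall out of the associativity relations (\ref{eq:p_assoc}) for the projections $p_n$ together with the defining property (\ref{eq:X_I}) of $X_J$. The finiteness hypothesis on $\cI$ plays no role in this particular argument (it only simplifies saying that $E^{\otimes n}\ominus X_J(n)$ is already closed).
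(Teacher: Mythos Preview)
Your proof is correct and follows essentially the same approach as the paper: evaluate $p(\underline{T})$ at the vacuum vector to obtain $p(\underline{T})\Omega = p_n(p(e))$, and then use that $\ker p_n = E^{\otimes n}\ominus X_J(n) = \{q(e):q\in J^{(n)}\}$ to conclude. The paper states the identity $p(\underline{T})\Omega = P\,p(e)$ in one line and cites Theorem~\ref{thm:repZ(I)} for the ``if'' direction, while you supply the inductive derivation of $\underline{T}^\alpha\Omega = p_{|\alpha|}(e_\alpha)$ and an alternative direct argument for the ``if'' direction; your use of Proposition~\ref{prop:Xshift} to interpret $T_i$ when $X_J(1)\subsetneq E$ is a nice extra bit of care the paper glosses over, but none of this changes the substance. (The sentence beginning ``For a general $p\in J$ with $p(\underline T)=0$'' has the hypothesis and conclusion swapped, but since the lemma only concerns homogeneous $p$ this paragraph is, as you note, superfluous.)
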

\begin{proof}
The ``if" part follows from Theorem \ref{thm:repZ(I)}. For the ``only if" part, let $p \notin J$ be a homogeneous polynomial of degree $n$. Applying $p(\underline{T})$ to the vacuum vector $\Omega$, we have
\bes
p(\underline{T}) \Omega = P p(e),
\ees
where $P$ is the orthogonal projection of $E^{\otimes n}$ onto $X_J(n)$. But as $p \notin J$, $p(e)$ is not in $E^{\otimes n} \ominus X_J(n) = \ker P$, thus $P p(e) \neq 0$. In particular, $p(\underline{T}) \neq 0$.
\end{proof}

We have the following noncommutative projective Nullstellansatz.
\begin{theorem}
Let $H$ be a fixed infinite dimensional separable Hilbert space.
Let $J$ be a homogeneous ideal in $\mb{C}\langle (x_i)_{i \in \cI}\rangle$, with $|\cI| < \infty$. Then 
\bes
I(Z(J)) = J.
\ees
In particular,
$Z(J) = \{\underline{0} = (0,0,\ldots)\}$ if and only if $J$ is the ideal generated by all the $x_i, i \in \cI$.
\end{theorem}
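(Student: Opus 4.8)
The inclusion $J \subseteq I(Z(J))$ is immediate. For the reverse inclusion, the plan is first to reduce to homogeneous polynomials. I would show that $I(Z(J))$ is itself a homogeneous ideal: since $J$ is homogeneous, $Z(J)$ is invariant under the scaling $\underline{T}\mapsto\lambda\underline{T}$ for $\lambda\in\mathbb{C}$ (for homogeneous $q\in J$ of degree $m$ one has $q(\lambda\underline{T})=\lambda^m q(\underline{T})=0$, and a general $q\in J$ is a finite sum of such). Hence if $p=\sum_k p_k$ with $p_k$ homogeneous of degree $k$ and $p\in I(Z(J))$, then $0=p(\lambda\underline{T})=\sum_k \lambda^k p_k(\underline{T})$ for all $\lambda$ and all $\underline{T}\in Z(J)$, forcing $p_k(\underline{T})=0$, i.e.\ $p_k\in I(Z(J))$. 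So it suffices to prove that every homogeneous $p\in I(Z(J))$ lies in $J$. The case where $J$ is not proper is trivial, since then $Z(J)=\emptyset$ and $I(Z(J))=\mathbb{C}\langle\underline{x}\rangle=J$; so I would assume $J$ is proper.

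The main step is to exhibit, for a homogeneous $p\notin J$, a tuple $\underline{T}\in Z(J)$ with $p(\underline{T})\neq 0$. First note that a proper homogeneous ideal contains no nonzero constant, so every element of $J$ vanishes at $\underline{0}=(0,0,\dots)$; in particular $\underline{0}\in Z(J)$. This already disposes of the degree-$0$ case: a nonzero constant $p$ has $p(\underline{0})=p\neq 0$, so it cannot lie in $I(Z(J))$. For $p$ homogeneous of degree $n\geq 1$ with $p\notin J$, I would take the subproduct system $X_J$ associated with $J$ (Proposition \ref{prop:XandI}) and the $X_J$-shift $\underline{T}=(S^{X_J}(e_i))_{i\in\mathcal{I}}$ on $\mathfrak{F}_{X_J}$. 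By Lemma \ref{lem:shiftpoly}, $q(\underline{T})=0$ for every homogeneous $q\in J$, hence for every $q\in J$, so $\underline{T}\in Z(J)$; and, again by Lemma \ref{lem:shiftpoly}, $p(\underline{T})\neq 0$ precisely because $p\notin J$.

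The one genuinely technical point — and the step I expect to require the most care — is transplanting $\underline{T}$ onto the \emph{prescribed} infinite-dimensional separable Hilbert space $H$ without leaving $Z(J)$. Since $|\mathcal{I}|<\infty$, every fiber $X_J(m)\subseteq E^{\otimes m}$ is finite-dimensional, so $\mathfrak{F}_{X_J}$ is separable; consequently $\mathfrak{F}_{X_J}\oplus H$ is separable and infinite-dimensional, hence unitarily isomorphic to $H$. I would then set $\underline{T}'=\underline{T}\oplus\underline{0}_{H}$, acting on $\mathfrak{F}_{X_J}\oplus H\cong H$: evaluation of any polynomial respects direct sums, and both $\underline{T}$ and $\underline{0}_H$ lie in $Z(J)$, so $\underline{T}'\in Z(J)$; while $p(\underline{T}')=p(\underline{T})\oplus p(\underline{0}_H)=p(\underline{T})\oplus 0\neq 0$ since $n\geq 1$. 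This contradicts $p\in I(Z(J))$, so $p\in J$ and $I(Z(J))=J$. Finally, for the ``in particular'': if $J=\langle x_i:i\in\mathcal{I}\rangle$ then $\underline{T}\in Z(J)$ iff $T_i=x_i(\underline{T})=0$ for all $i$, i.e.\ iff $\underline{T}=\underline{0}$, giving $Z(J)=\{\underline{0}\}$; conversely, if $Z(J)=\{\underline{0}\}$ then $I(Z(J))=I(\{\underline{0}\})$ is the set of polynomials with vanishing constant term, which is exactly $\langle x_i:i\in\mathcal{I}\rangle$, and the Nullstellensatz just proved gives $J=I(Z(J))=\langle x_i:i\in\mathcal{I}\rangle$.
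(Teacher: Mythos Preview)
Your proof is correct and follows essentially the same strategy as the paper: show $I(Z(J))$ is homogeneous via scale-invariance of $Z(J)$, then use the $X_J$-shift (via Lemma~\ref{lem:shiftpoly}) as a witness tuple on which any homogeneous $p\notin J$ fails to vanish. You are in fact more careful than the paper in one spot---the paper simply asserts that $B(H)^{\cI}$ contains a unitarily equivalent copy of the shift tuple, whereas your direct-sum trick $\underline{T}\oplus\underline{0}_H$ on $\mathfrak{F}_{X_J}\oplus H\cong H$ works even when $\mathfrak{F}_{X_J}$ happens to be finite-dimensional---and you spell out the ``in particular'' clause that the paper leaves implicit.
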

\begin{proof}
%
$I(Z(J)) \supseteq J$ is immediate. To see the converse, first note that equality is obvious when $J = \mb{C}\langle \underline{x} \rangle$, so we may assume that $J$ is proper. Also note that since $J$ is homogeneous $Z(J)$ is scale invariant. From this it follows that $I(Z(J))$ is also a homogeneous ideal. Indeed, if $h,g \in H$, and $p(\underline{x}) = \sum_\alpha c_\alpha \underline{x}^\alpha \in I(Z(J))$, then for all $\lambda \in \mb{C}$ one has for every tuple $\underline{T} = (T_i)_{i \in \cI} \in Z(I)$,
\bes
0 = \langle p(\lambda\underline{T})h,g \rangle = \sum_k \left(\sum_{|\alpha| = k} c_\alpha \langle \underline{T}^\alpha h,g\rangle\right) \lambda^{k},
\ees
and since a nonzero univariate polynomial has only finitely many zeros, it follows the homogeneous components of $p$ are all in $I(Z(J))$.

Assume now that $p$ is a homogeneous polynomial not in $J$. Let $S^{X_J}$ be the $X_J$-shift representation, and define $\underline{T} = (T_i)_{i \in \cI}$ by $T_i = S^{X_J}(e_i)$, $i \in \cI$. It is clear that $B(H)^\cI$ contains some unitarily equivalent copy of $\underline{T}$, which we also denote by $\underline{T}$. By Theorem \ref{thm:repZ(I)}, $\underline{T} \in Z(J)$. But by Lemma \ref{lem:shiftpoly}, $p(\underline{T}) \neq 0$, so $p \notin I(Z(J))$. This completes the proof.
\end{proof}

\chapter{Universality of the shift: universal algebras and models}\label{chap:universal}
In \cite{Arv98}, Arveson established a model for commuting, row-contractive tuples.
Using an idea from that paper that appeared also in \cite{BBD03} and \cite{Dey07} -- an idea that rests upon Popescu's ``Poisson Transform" introduced in \cite{Popescu99} (and pushed forward in \cite{MS08} and \cite{Popescu06}) -- we construct below a model for representations of subproduct systems. Roughly speaking, we will show that every representation of a subproduct system $X$ is a piece of a scaled inflation of the shift. Our model should be compared with a similar model obtained by Popescu in \cite{Popescu06}.
We will also see below that the operator algebra generated by the shift $S^X$ is the universal operator algebra generated by a representation of $X$.

We continue to use the notation set in the previous chapter.
Let $X$ be a standard subproduct system of Hilbert spaces over $\mb{N}$, to be fixed throughout this section. Let $p_n:E^{\otimes n} \rightarrow X(n)$ be the projections. Denote $E = X(1)$. Let $\{e_i\}_{i \in \cI}$ be an orthonormal basis for $E$, fixed once and for all.

We denote the standard $X$-shift tuple by $\underline{S^X} = (S^X_i)_{i \in \cI}$ , and we denote the standard $X$-shift representation of $X$ on $\mathfrak{F}_X$ by $S^X$. We consider $\mathfrak{F}_X$ to be a subspace of the full Fock space $\mathfrak{F}$, we denote the full shift by
$\underline{S} = (S_i)_{i\in \cI}$, and we denote the full shift representation of $F$ on $\mathfrak{F} := \mathfrak{F}_F$ by $S$.

Given a representation $T: X \rightarrow B(H)$, we will write $\underline{T} = (T_i)_{i \in \cI}$ for the tuple $(T(e_i))_{i\in \cI}$.

We denote by $\cA_X$ the unital algebra
$$\cA_X := \overline{\textrm{span}} \{\underline{S^X}^\alpha : \alpha \in  \mb{F}_\cI^+\} .$$

We denote by $\cE_X$ the operator system
$$\cE_X := \overline{\textrm{span}}\cA_X \cA_X^*,$$
and by $\cT_X = C^*(\underline{S^X})$ the C$^*$-algebra generated by $S^X_i$, $i\in \cI$ and $I_{\mathfrak{F}_X}$.
We denote by $\cK(\mathfrak{F}_X)$ the algebra of compact operators on $\mathfrak{F}_X$

If $T$ and $U$ are two representations of $X$ on Hilbert spaces $H$ and $K$, respectively, then we define
\bes
T \oplus U
\ees
to be the representation of $X$ on $H \oplus K$ given by $(T \oplus U)(x) = T(x) \oplus U(x)$.
We also define
\bes
T \otimes I_K
\ees
to be the representation of $X$ on $H \otimes K$ given by $(T \otimes I_K)(x) = T(x) \otimes I_K$.

\section{Popescu's ``Poisson Transform"}

After obtaining the results of this section, we discovered that they were obtained earlier by Gelu Popescu \cite{Popescu06}. We are presenting them here since they are important for the sequel.

\begin{proposition}\label{prop:KinE}
$\cK(\mathfrak{F}_X) \subseteq \cE_X$.
\end{proposition}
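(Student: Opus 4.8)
To show $\cK(\mathfrak{F}_X) \subseteq \cE_X$ it suffices, since $\cE_X$ is a norm-closed subspace, to show that every rank-one operator $\theta_{\xi,\eta}: \zeta \mapsto \langle \zeta, \eta\rangle \xi$ with $\xi,\eta \in \mathfrak{F}_X$ lies in $\cE_X$, and in fact it is enough to do this for $\xi, \eta$ ranging over a total subset of $\mathfrak{F}_X$. The natural total set consists of the vectors $S^X(x)\Omega = x$ for $x \in X(n)$, $n \in \mb{N}$, together with the vacuum $\Omega$. So the key reduction is: it is enough to show that for all $m,n$ and all $x \in X(m)$, $y \in X(n)$, the rank-one operator $\theta_{x,y}$ belongs to $\cE_X$. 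Since $x = S^X(x)\Omega$ and similarly for $y$, and since $\theta_{S^X(x)\Omega, S^X(y)\Omega} = S^X(x)\, \theta_{\Omega,\Omega}\, S^X(y)^*$, everything comes down to a single fact: the vacuum projection $P_\Omega = \theta_{\Omega,\Omega}$ lies in $\cE_X$.

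The main step is therefore to exhibit $P_\Omega$ as a norm-convergent (or finite, when $|\cI| < \infty$) series of elements of $\overline{\textrm{span}}\cA_X\cA_X^*$. Recall that $\widetilde{S^X}_1 : X(1) \otimes \mathfrak{F}_X \to \mathfrak{F}_X$ is a contraction; writing $\{e_i\}_{i \in \cI}$ for the fixed orthonormal basis of $E = X(1)$, one has $\widetilde{S^X}_1 \widetilde{S^X}_1^{\,*} = \sum_{i \in \cI} S^X_i (S^X_i)^*$ (the sum converging strongly), and this operator is the projection onto $\bigoplus_{n \geq 1} X(n)$, i.e. $I_{\mathfrak{F}_X} - P_\Omega$. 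Hence $P_\Omega = I_{\mathfrak{F}_X} - \sum_{i} S^X_i (S^X_i)^*$. When $|\cI| < \infty$ this is a finite sum and $P_\Omega \in \cE_X$ immediately, since $I$, each $S^X_i$ and each $(S^X_i)^*$ lie in $\cA_X \cup \cA_X^*$. When $\cI$ is infinite one must argue a little more carefully: the series $\sum_i S^X_i (S^X_i)^*$ converges strongly but not in norm, so $P_\Omega$ is not obviously a norm limit of elements of $\overline{\textrm{span}}\cA_X\cA_X^*$. The standard fix — this is exactly Popescu's Poisson-transform argument — is to iterate: for each $k$, $\widetilde{S^X}_k (\widetilde{S^X}_k)^* = \sum_{|\alpha| = k} \underline{S^X}^\alpha (\underline{S^X}^\alpha)^*$ is the projection $Q_k$ onto $\bigoplus_{n \geq k} X(n)$, so $Q_{k-1} - Q_k$ is the projection onto $X(k-1)$, and $P_\Omega = Q_0 - Q_1$. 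For the infinite case one instead writes $P_\Omega$ as a limit of the Cesàro-type averages $\frac1N \sum_{k=1}^N (I - Q_k^{(N)})$ or uses the fact that on the finite-dimensional pieces the sums truncate; the cleanest route is to observe directly that $P_\Omega$ is the $\textrm{wot}$-limit of finite partial sums, then invoke Popescu's result (cited here as available) that the Poisson kernel gives a completely positive map whose range lands in $\cE_X$, yielding $\cK(\mathfrak{F}_X) \subseteq \cE_X$ even when $|\cI| = \infty$.

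I would organize the writeup as: (1) reduce to showing $P_\Omega \in \cE_X$ via the rank-one and totality argument above; (2) compute $P_\Omega = I - \widetilde{S^X}_1 (\widetilde{S^X}_1)^*$ using that $\widetilde{S^X}_1$ is a contraction with the stated range projection; (3) conclude directly when $|\cI| < \infty$, and for the general case cite Popescu's Poisson transform (the material just promised in the paragraph preceding the statement) to handle the norm-convergence subtlety; then (4) deduce $\theta_{x,y} = S^X(x) P_\Omega S^X(y)^* \in \cE_X$ for all $x \in X(m), y \in X(n)$, and close under norm limits to get all of $\cK(\mathfrak{F}_X)$. The one genuine obstacle is step (3) in the infinite-index case: without the Poisson-transform machinery one does not get norm convergence, so the proof genuinely leans on that tool; with it, the argument is routine. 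I expect the paper's proof to simply invoke the Poisson transform directly and skip the reduction, but spelling out the rank-one reduction makes the role of $P_\Omega$ transparent.
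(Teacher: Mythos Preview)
Your plan matches the paper's proof almost exactly: the paper also computes
\[
I-\sum_{i\in\cI} S^X_i(S^X_i)^*=P_{\mb{C}}
\]
(as the $k=1$ case of the more general identity $I-\sum_{|\alpha|=k}\underline{S^X}^{\alpha}(\underline{S^X})^{\alpha*}=P_{\mb{C}\oplus X(1)\oplus\cdots\oplus X(k-1)}$), and then writes rank-one operators as $(\underline{S^X})^{\beta}P_{\mb{C}}(\underline{S^X})^{\alpha*}$, using that the vectors $p_{|\beta|}e_\beta$ span $\mathfrak{F}_X$. Your factorization $\theta_{x,y}=S^X(x)P_\Omega S^X(y)^*$ is precisely this, phrased a little more cleanly; for finite $\cI$ your argument is complete and essentially identical to the paper's.

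Where you go beyond the paper is in flagging the infinite-index case, but your proposed fix does not work, and in fact cannot work. The Poisson transform is a completely positive map \emph{from} $C^*(\underline{S})$ (or from $\cE_X$) \emph{into} $B(H)$; it tells you nothing about which operators lie \emph{inside} $\cE_X$. Worse, the statement fails when $|\cI|=\infty$: for the full product system over an infinite-dimensional $E$, the $C^*$-algebra $\cT_X=C^*(\{S_i\})$ is Cuntz's simple algebra $\cO_\infty$, and if $P_\Omega$ lay in it then so would the proper ideal $\cK(\mathfrak{F}_X)$ generated by $P_\Omega$, contradicting simplicity. Hence $P_\Omega\notin\cT_X\supseteq\cE_X$, and $\cK(\mathfrak{F}_X)\not\subseteq\cE_X$ in that case. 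The paper does not address this either; its proof is valid only under the tacit assumption $|\cI|<\infty$, and your step~(3) cannot be salvaged in greater generality.
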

\begin{proof}
By the definition of representation, we have that $S(e_\alpha) = \underline{S}^\alpha$. By Definition \ref{def:repdilation} and the remarks following it, we have that $\underline{S}^{\alpha *}\big|_{\mathfrak{F}_X} = \left(\underline{S^X}\right)^{\alpha *}$ for all $\alpha$. Let $x \in X(n)$. Let $|\alpha|=k$. If $n<k$ then $\left(\underline{S^X}\right)^{\alpha *} x = 0$. If $n \geq k$, then since $X(n) \subseteq X(k) \otimes X(n-k)$, we may write $x = \sum_i x_k^i \otimes x_m^i$, where $x_k^i \in  X(k)$, $x_m^i \in X(m)$, and $m = n-k$. We have then
\be\label{eq:SXstar}
\underline{S^X}^{\alpha *} x = \underline{S}^{\alpha *} \sum_i x_k^i \otimes x_m^i = \sum_i \langle e_\alpha , x_k^i \rangle x_m^i \in X(m).
\ee
We then have for $x \in X(n)$:
\bes
\left(I - \sum_{|\alpha|=k}\underline{S^X}^{\alpha}\underline{S^X}^{\alpha *} \right)x =
\begin{cases}
x , & n < k \cr
x - \sum_{|\alpha|=k}\underline{S^X}^{\alpha}\sum_i \langle e_\alpha , x_k^i \rangle x_m^i  , & n \geq k
\end{cases} .
\ees
But
\begin{align*}
\sum_{|\alpha|=k}\underline{S^X}^{\alpha}\sum_i \langle e_\alpha , x_k^i \rangle x_m^i
&= \sum_{|\alpha|=k} p_n\left(\sum_i \langle e_\alpha , x_k^i \rangle e_\alpha \otimes x_m^i \right) \\
&= p_n\left(\sum_i \sum_{|\alpha|=k} \langle e_\alpha , x_k^i \rangle e_\alpha \otimes x_m^i \right) \\
&= p_n\left(\sum_i x_k^i \otimes x_m^i \right) \\
&= x.
\end{align*}
We thus conclude that $I - \sum_{|\alpha|=k}\underline{S^X}^{\alpha}\underline{S^X}^{\alpha *} = P_{W}$, where $W = \mb{C} \oplus X(1) \oplus \cdots \oplus X(k-1)$. In particular,
\be\label{eq:PC}
I - \sum_{i\in \cI}S^X_{i} \left(S^X\right)^{*}_i = P_{\mb{C}}.
\ee
Equations (\ref{eq:SXstar}) and (\ref{eq:PC}) give
\bes
(\underline{S^X})^\beta \left(I - \sum_{i\in \cI}S^X_{i} \left(S^X\right)^{*}_i \right)\underline{S^X}^{\alpha *} x
= p_{|\beta|} \langle e_\alpha, x  \rangle e_\beta.
\ees
As the elements $p_{|\beta|}e_\beta$ span $\mathfrak{F}_X$, it follows that $\cK(\mathfrak{F}_X) \subseteq \cE_X$.
\end{proof}

Given a representation $T$ of $X$ on a Hilbert space $H$ and given an integer $m \in \mb{N}$, we denote by $m \cdot T$ the representation
\bes
m \cdot T : X \rightarrow B(\underbrace{H \oplus H \oplus \cdots \oplus H}_{m \textrm{ times}})
\ees
given by $m \cdot T(x) = \underbrace{T(x) \oplus T(x) \oplus \cdots \oplus T(x)}_{m \textrm{ times}}$.
$\underline{T}$ is a row contraction (i.e., $\sum_{i \in \cI} T_i T_i^* \leq I_H$) if and only if $T$ is completely contractive.
When $\underline{T}$ is a row contraction the \emph{defect operator} $\Delta (\underline{T})$ is defined as
\bes
\Delta(\underline{T}) = I - \sum_{i \in \cI} T_i T_i^*,
\ees
and the \emph{Poisson Kernel} \cite{Popescu99} associated with $\underline{T}$ is the family of isometries $\{K_r\left(\underline{T}\right)\}_{0\leq r < 1}$
\bes
K_r\left(\underline{T}\right):H \rightarrow \mathfrak{F} \otimes H,
\ees
given by
\bes
K_r\left(\underline{T}\right) h = \sum_{\alpha \in \mb{F}_\cI^+} e_\alpha \otimes \left(r^{|\alpha|}\Delta(r\underline{T})^{1/2} \underline{T}^{\alpha *} h \right).
\ees
(See the beginning of \cite[Section 8]{Popescu99} for the remark that $\underline{T}$ has ``property (P)", and \cite[Lemma 3.2]{Popescu99} for the fact that these are isometries). When it makes sense, we also define $K_1\left(\underline{T}\right)$ by the same formula with $r=1$. The \emph{Poisson transform} is then defined as a map
\bes
\Phi = \Phi_{\underline{T}}: C^*(\underline{S}) \rightarrow B(H)
\ees
\bes
\Phi(a) = \Phi_{\underline{T}}(a) = \lim_{r \nearrow 1}K_r\left(\underline{T}\right)^* (a \otimes I) K_r\left(\underline{T}\right).
\ees
By \cite[Theorem 3.8]{Popescu99}, $\Phi$ is a unital, completely positive, completely contractive, satisfies
\bes
\Phi(\underline{S}^\alpha \underline{S}^{\beta *}) = \underline{T}^\alpha \underline{T}^{\beta *},
\ees
and is multiplicative on $Alg(\underline{S},I_{\mathfrak{F}})$, the algebra generated by $\underline{S}$ and $I_{\mathfrak{F}}$ ($\Phi$ is in fact an $Alg(\underline{S},I_{\mathfrak{F}})$-morphism).

\begin{theorem}\label{thm:CP1}
Let $T$ be a c.c. representation of $X$ on $H$. There exists a unital, completely positive, completely contractive map
\bes
\Psi: \cE_X \rightarrow B(H)
\ees
that satisfies
\bes
\Psi\left((\underline{S^X})^\alpha (\underline{S^X})^{\beta *}\right) = \underline{T}^\alpha \underline{T}^{\beta *} \,\, , \,\, \alpha, \beta \in \mb{F}_\cI^+
\ees
and
\be\label{eq:Amorphism}
\Psi(ab) = \Psi(a) \Psi(b) \,\, , \,\, a \in \cA_X, b \in \cE_X.
\ee
\end{theorem}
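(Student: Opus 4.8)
The plan is to transport Popescu's Poisson transform $\Phi_{\underline T}$ from the full Fock space down to $\mathfrak{F}_X$. First I would record the co-invariance: since $X$ is standard, $X(n)\subseteq E\otimes X(n-1)$ for all $n$, and hence $\mathfrak{F}_X$, viewed as a subspace of the full Fock space $\mathfrak{F}=\mathfrak{F}_F$, is invariant under every $S_i^*$. Writing $P=P_{\mathfrak{F}_X}$, this gives $S^X_i=PS_iP$, and because compression to an $S_i^*$-invariant subspace is multiplicative on $\{S_i\}$-polynomials, $P\,\underline S^\gamma\underline S^\alpha\underline S^{\beta*}P=(\underline{S^X})^{\gamma}(\underline{S^X})^{\alpha}(\underline{S^X})^{\beta*}$ for all words $\gamma,\alpha,\beta$. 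In particular the completely positive compression $a\mapsto PaP$ carries $\overline{\mathrm{span}}\{\underline S^\alpha\underline S^{\beta*}\}\subseteq C^*(\underline S)$ onto a dense subspace of $\cE_X$.

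Next, given the completely contractive representation $T$ of $X$, the tuple $\underline T=(T(e_i))_{i\in\cI}$ is a row contraction, and by Theorem~\ref{thm:repZ(I)} we have $\underline T\in Z(I^X)$, i.e.\ $q(\underline T)=0$ for every homogeneous polynomial $q$ in the ideal $I^X$ associated with $X$. The key point is that this forces the Poisson kernel $K_r(\underline T)\colon H\to\mathfrak{F}\otimes H$ to take values in $\mathfrak{F}_X\otimes H$: for fixed $n$ the vector $\sum_{|\alpha|=n}e_\alpha\otimes(r^n\Delta(r\underline T)^{1/2}\underline T^{\alpha*}h)$ is orthogonal to $q(e)\otimes h'$ for every $q\in(I^X)^{(n)}$ and every $h'\in H$, since the corresponding inner product equals $\langle h',\,r^n\Delta(r\underline T)^{1/2}q(\underline T)^*h\rangle=0$; as $X(n)^\perp$ in $E^{\otimes n}$ is spanned by such $q(e)$, the $n$-particle component of $K_r(\underline T)h$ lies in $X(n)\otimes H$. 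Thus, with $J\colon\mathfrak{F}_X\hookrightarrow\mathfrak{F}$ the inclusion, we may factor $K_r(\underline T)=(J\otimes I_H)\widetilde K_r$ with $\widetilde K_r\colon H\to\mathfrak{F}_X\otimes H$ an isometry.

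I would then \emph{define} $\Psi\colon\cE_X\to B(H)$ by $\Psi(b)=\lim_{r\nearrow 1}\widetilde K_r^*(b\otimes I_H)\widetilde K_r$. For each fixed $r$, $b\mapsto\widetilde K_r^*(b\otimes I_H)\widetilde K_r$ is a unital completely positive (hence completely contractive) map $B(\mathfrak{F}_X)\to B(H)$, because $\widetilde K_r$ is an isometry. On the dense subspace $\mathrm{span}\{(\underline{S^X})^\alpha(\underline{S^X})^{\beta*}\}$ the limit exists and equals $\underline T^\alpha\underline T^{\beta*}$: using the factorization and $P\,\underline S^\alpha\underline S^{\beta*}P=(\underline{S^X})^\alpha(\underline{S^X})^{\beta*}$ one gets $\widetilde K_r^*((\underline{S^X})^\alpha(\underline{S^X})^{\beta*}\otimes I_H)\widetilde K_r=K_r(\underline T)^*(\underline S^\alpha\underline S^{\beta*}\otimes I_H)K_r(\underline T)$, which converges to $\Phi_{\underline T}(\underline S^\alpha\underline S^{\beta*})=\underline T^\alpha\underline T^{\beta*}$ by Popescu's theorem. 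A standard $3\varepsilon$ argument, using that all these maps are contractions, extends convergence to all of $\cE_X$, so $\Psi$ is a well-defined linear map; being a pointwise limit of unital completely positive maps, it is itself unital, completely positive and completely contractive, and it satisfies $\Psi((\underline{S^X})^\alpha(\underline{S^X})^{\beta*})=\underline T^\alpha\underline T^{\beta*}$.

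For the module property~\eqref{eq:Amorphism}, by linearity and continuity it suffices to treat $a=(\underline{S^X})^\gamma$ a generator of $\cA_X$ and $b=(\underline{S^X})^\alpha(\underline{S^X})^{\beta*}$ a generator of $\cE_X$, noting $ab=(\underline{S^X})^{\gamma\alpha}(\underline{S^X})^{\beta*}\in\cE_X$. The same computation with the extra factor $\underline S^\gamma$ gives $\Psi(ab)=\lim_r K_r(\underline T)^*(\underline S^\gamma\cdot\underline S^\alpha\underline S^{\beta*}\otimes I_H)K_r(\underline T)=\Phi_{\underline T}(\underline S^\gamma\cdot\underline S^\alpha\underline S^{\beta*})$, and since $\underline S^\gamma\in\mathrm{Alg}(\underline S,I_{\mathfrak{F}})$ and $\Phi_{\underline T}$ is an $\mathrm{Alg}(\underline S,I_{\mathfrak{F}})$-morphism, this equals $\Phi_{\underline T}(\underline S^\gamma)\Phi_{\underline T}(\underline S^\alpha\underline S^{\beta*})=\underline T^\gamma\cdot\underline T^\alpha\underline T^{\beta*}=\Psi(a)\Psi(b)$. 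The only genuinely non-formal step is the claim that $K_r(\underline T)$ maps into $\mathfrak{F}_X\otimes H$; this is exactly where one uses that $T$ is a representation of the whole subproduct system $X$ (via $\underline T\in Z(I^X)$) rather than merely a row contraction, and it is what makes the $X$-Fock-space Poisson transform available. Everything else is bookkeeping around Popescu's Poisson transform and the co-invariance of $\mathfrak{F}_X$ in $\mathfrak{F}$.
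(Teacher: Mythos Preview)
Your proof is correct and takes essentially the same approach as the paper: both rest on the key fact that the Poisson kernel $K_r(\underline T)$ lands in $\mathfrak{F}_X\otimes H$ (the paper isolates this as a separate lemma, proving it via Proposition~\ref{prop:representation} rather than your equivalent reformulation $\underline T\in Z(I^X)$), then use co-invariance of $\mathfrak{F}_X$ under $\underline S$ to identify the compressed Poisson transform with Popescu's $\Phi_{\underline T}$. You are actually more explicit about the module property~\eqref{eq:Amorphism} than the paper, which dismisses it with ``this obviously extends''; your derivation of it from the $\mathrm{Alg}(\underline S,I_{\mathfrak{F}})$-morphism property of $\Phi_{\underline T}$ is exactly the intended justification.
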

\begin{proof}
By the lemma below, the range of $K_r\left(\underline{T}\right)$ is contained in $\mathfrak{F}_X \otimes H$ for all $0 \leq r < 1$, thus
\bes
(P_{\mathfrak{F}_X} \otimes I_H) K_r\left(\underline{T}\right) = K_r\left(\underline{T}\right).
\ees
We may then define
\begin{align*}
\Psi(\underline{T})(\left((\underline{S^X})^\alpha (\underline{S^X})^{\beta *}\right))
&= \lim_{r \nearrow 1}K_r\left(\underline{T}\right)^* \left(\left((\underline{S^X})^\alpha (\underline{S^X})^{\beta *}\right) \otimes I\right) K_r\left(\underline{T}\right) \\
(*)&= \lim_{r \nearrow 1}K_r\left(\underline{T}\right)^* \left(\left(\underline{S}^\alpha \underline{S}^{\beta *}\right) \otimes I\right) K_r\left(\underline{T}\right) \\
&= \underline{T}^\alpha \underline{T}^{\beta *} ,
\end{align*}
where in (*) we have made use of the coinvariance of $\mathfrak{F}_X$ under $\underline{S}$.
This obviously extends to the desired map on $\cE_X$.
\end{proof}

\begin{lemma}
$K_r\left(\underline{T}\right)H \subseteq \mathfrak{F}_X \otimes H$.
\end{lemma}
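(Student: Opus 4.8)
The plan is to decompose $K_r(\underline{T})h$ according to particle number and to identify each homogeneous component as something manifestly lying in $X(n)\otimes H$. Writing $\underline{T}=(T(e_i))_{i\in\cI}$ and recalling that $\underline{T}^{\alpha *}=T_{\alpha_n}^*\cdots T_{\alpha_1}^*$ for $\alpha=\alpha_1\cdots\alpha_n$, the $n$-particle component of $K_r(\underline{T})h$, as an element of $E^{\otimes n}\otimes H$, is
\[
\eta_n \;=\; \sum_{|\alpha|=n} e_\alpha\otimes\bigl(r^{n}\,\Delta(r\underline{T})^{1/2}\,\underline{T}^{\alpha *}h\bigr)\;=\;\bigl(I_{E^{\otimes n}}\otimes r^{n}\Delta(r\underline{T})^{1/2}\bigr)\Bigl(\sum_{|\alpha|=n}e_\alpha\otimes\underline{T}^{\alpha *}h\Bigr).
\]
First I would observe that $\sum_{|\alpha|=n}e_\alpha\otimes\underline{T}^{\alpha *}h=\widetilde{T}^{n*}h$, where $\widetilde{T}^{n}\colon E^{\otimes n}\otimes H\to H$ is the contraction from Section~\ref{sec:representations} determined by $\widetilde{T}^{n}(e_\alpha\otimes h)=\underline{T}^{\alpha}h$; this is just the computation of the matrix coefficients of $\widetilde{T}^{n*}$ against the orthonormal basis $\{e_\alpha\}_{|\alpha|=n}$ of $E^{\otimes n}$, namely $\langle\widetilde{T}^{n*}h,e_\beta\otimes g\rangle=\langle h,\underline{T}^{\beta}g\rangle=\langle\underline{T}^{\beta *}h,g\rangle$.

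The key step is then to invoke Proposition~\ref{prop:representation}: since $X$ is a standard subproduct system and, by hypothesis, $T$ is a representation of $X$, the equivalence of conditions \eqref{it:Trep} and \eqref{it:tTn} there gives $\widetilde{T}^{n}(p_n^\perp\otimes I_H)=0$ for every $n$, where $p_n\colon E^{\otimes n}\to X(n)$ is the defining projection of $X$. Taking adjoints yields $(p_n^\perp\otimes I_H)\widetilde{T}^{n*}=0$, i.e.\ $\widetilde{T}^{n*}h\in X(n)\otimes H$ for every $h\in H$ and every $n$. Since the operator $I_{E^{\otimes n}}\otimes r^{n}\Delta(r\underline{T})^{1/2}$ acts trivially on the first tensor leg, the subspace $X(n)\otimes H$ is invariant under it, and therefore $\eta_n\in X(n)\otimes H$.

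Finally, assembling over $n$ and using $\mathfrak{F}_X\otimes H=\bigoplus_{n\in\mb{N}}X(n)\otimes H$, I conclude that $K_r(\underline{T})h\in\mathfrak{F}_X\otimes H$ for every $h$, which is the assertion; the argument is valid for every $r$ for which $K_r(\underline{T})$ is defined. I do not expect a genuine obstacle here: the only substantive input is the characterization of subproduct-system representations in Proposition~\ref{prop:representation}, and everything else is the bookkeeping of recognizing the Poisson kernel's homogeneous components as $\widetilde{T}^{n*}h$ twisted by an operator acting only on the $H$-leg. The one point requiring mild care is matching the conventions for $\underline{T}^{\alpha *}$ and for $\widetilde{T}^{n}$ so that the identity $\widetilde{T}^{n*}h=\sum_{|\alpha|=n}e_\alpha\otimes\underline{T}^{\alpha *}h$ comes out with the correct ordering of the letters of $\alpha$.
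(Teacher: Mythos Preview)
Your proof is correct and follows essentially the same route as the paper: both reduce to showing that $\xi:=\sum_{|\alpha|=n}e_\alpha\otimes\underline{T}^{\alpha *}h$ lies in $X(n)\otimes H$, and both ultimately invoke Proposition~\ref{prop:representation} (condition $\widetilde{T}^n(p_n^\perp\otimes I_H)=0$) for this. Your packaging is slightly slicker in that you name $\xi=\widetilde{T}^{n*}h$ up front and take adjoints, whereas the paper verifies $\langle\xi,x\otimes g\rangle=\langle h,\widetilde{T}^n(x\otimes g)\rangle=0$ for $x\in E^{\otimes n}\ominus X(n)$ by a direct inner-product computation; these are the same argument seen from two sides.
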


\begin{proof}
Let $h \in H$. It suffices to show that for all $n \in \mb{N}$, the element
\bes
\sum_{|\alpha| = n} e_\alpha \otimes \left(r^n\Delta(r\underline{T})^{1/2} \underline{T}^{\alpha *} h \right) = (I \otimes r^n\Delta(r\underline{T})^{1/2}) \sum_{|\alpha| = n} e_\alpha \otimes (\underline{T}^{\alpha *} h)
\ees
is in $X(n) \otimes H$. However, $X(n) \otimes H$ (considered as a subspace of $E^{\otimes n} \otimes H$) is reduced by $(I \otimes r^n\Delta(r\underline{T})^{1/2})$, so it is enough to show that
\bes
\xi := \sum_{|\alpha| = n} e_\alpha \otimes (\underline{T}^{\alpha *} h) \in X(n) \otimes H.
\ees
Let $x \in E^{\otimes n} \ominus X(n)$ and $g \in H$. The proof will be completed by showing that
$\langle \xi, x \otimes g \rangle = 0$.
\begin{align*}
\langle \xi, x \otimes g \rangle
&= \sum_{|\alpha|=n} \langle e_\alpha \otimes T(e_\alpha)^* h, x \otimes g \rangle \\
&= \sum_{|\alpha|=n} \langle e_\alpha, x \rangle \langle h, T(e_\alpha)g \rangle \\
&= \left\langle h, T\left(\sum_{|\alpha|=n} \langle e_\alpha, x \rangle e_\alpha \right) g \right\rangle \\
&= \langle h, \widetilde{T}^n(x \otimes g) \rangle ,
\end{align*}
and by Proposition \ref{prop:representation}, the last expression in this chain of equalities is $0$.
\end{proof}

\section{The universal algebra generated by a tuple subject to homogeneous polynomial identities}

\begin{theorem}\label{thm:universal}
$J \triangleleft \mb{C}\langle (x_i)_{i \in \cI}\rangle$, be a homogeneous ideal.
Then $\cA_{X_J}$ is the universal unital operator algebra generated by a row contraction in $Z(J)$, that is: $\cA_{X_J}$ is a norm closed unital operator algebra generated by a tuple in $Z(J)$, (namely, $(S^{X_J}_i)_{i\in \cI}$), and
if $\mathcal{B} \subseteq B(H)$ is another norm closed unital operator algebra generated by a row contraction $(T_i)_{i\in\cI} \in Z(J)$, then there is a unique unital and completely contractive homomorphism $\varphi$ of $\cA_{X_J}$ onto $\mathcal{B}$, such that $\varphi(S^{X_J}_i) = T_i$ for all $i \in \cI$.
\end{theorem}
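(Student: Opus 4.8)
The plan is to verify the two halves of the universal property. First, that $\cA_{X_J}$ is indeed generated by a row contraction in $Z(J)$: the tuple $\underline{S^{X_J}} = (S^{X_J}_i)_{i\in\cI}$ is a row contraction because it is the compression of the full shift $\underline{S}$, which is a row isometry; and by Theorem \ref{thm:repZ(I)} (applied with $E$ spanned by $\{e_i\}_{i\in\cI}$ and $T_1$ the restriction of $S^{X_J}$ to $X_J(1)$, which builds exactly the shift representation) the representation $S^{X_J}$ of $X_J$ is a well-defined representation, so $\underline{S^{X_J}} \in Z(J)$; alternatively one invokes Lemma \ref{lem:shiftpoly} directly. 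By definition $\cA_{X_J} = \overline{\textrm{span}}\{(\underline{S^{X_J}})^\alpha : \alpha \in \mb{F}_\cI^+\}$ is the norm-closed unital operator algebra generated by this tuple.

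Next I would construct the homomorphism $\varphi$. Let $\cB \subseteq B(H)$ be a norm-closed unital operator algebra generated by a row contraction $\underline{T} = (T_i)_{i\in\cI} \in Z(J)$. The row-contractivity of $\underline{T}$ means the associated map $T_1 : E \to B(H)$, $T_1(e_i) = T_i$, is completely contractive; by Theorem \ref{thm:repZ(I)}, since $\underline{T} \in Z(J)$, the family $T = \{T_n\}_{n\in\mb{N}}$ built from $T_1$ as in the discussion preceding Proposition \ref{prop:representation} is a (completely contractive) representation of $X_J$ on $H$. Now apply Theorem \ref{thm:CP1} with $X = X_J$: there is a unital, completely contractive, completely positive map $\Psi : \cE_{X_J} \to B(H)$ with $\Psi((\underline{S^{X_J}})^\alpha(\underline{S^{X_J}})^{\beta*}) = \underline{T}^\alpha\underline{T}^{\beta*}$, and moreover $\Psi(ab) = \Psi(a)\Psi(b)$ for $a \in \cA_{X_J}$, $b \in \cE_{X_J}$. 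Define $\varphi := \Psi\big|_{\cA_{X_J}}$. Then $\varphi$ is unital, completely contractive, and (taking both $a,b \in \cA_{X_J}$ in the multiplicativity relation) multiplicative on $\cA_{X_J}$, with $\varphi(S^{X_J}_i) = T_i$. Since $\varphi$ sends the generators of $\cA_{X_J}$ onto the generators of $\cB$ and is multiplicative and norm-continuous, its range is the norm-closed algebra generated by $\underline{T}$, i.e. all of $\cB$; so $\varphi$ maps $\cA_{X_J}$ onto $\cB$.

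Finally, uniqueness: any unital completely contractive (in particular, continuous) homomorphism $\psi : \cA_{X_J} \to \cB$ with $\psi(S^{X_J}_i) = T_i$ for all $i$ must agree with $\varphi$ on every word $(\underline{S^{X_J}})^\alpha$ by multiplicativity and unitality, hence on their linear span, hence on the norm closure $\cA_{X_J}$. This forces $\psi = \varphi$.

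The main obstacle is really in the invoked ingredients rather than in this assembly step: one must be careful that the statement of Theorem \ref{thm:CP1} gives exactly the $\cA_{X_J}$-bimodule-type multiplicativity (\ref{eq:Amorphism}) needed to conclude $\varphi$ is a homomorphism on $\cA_{X_J}$ and not merely a completely positive map — the restriction $a,b \in \cA_{X_J}$ is what makes this work. A secondary routine point to check is that the construction of $T$ from $T_1$ in Theorem \ref{thm:repZ(I)} does land us with a representation whose restriction to $X_J(1)$ recovers the original tuple $\underline{T}$ and whose image generates $\cB$; this is immediate from the definitions but worth stating explicitly. No genuinely hard estimate is needed here — the work was already done in establishing the Poisson transform machinery (Proposition \ref{prop:KinE}, Theorem \ref{thm:CP1}) and the dictionary between ideals, subproduct systems, and zero sets (Proposition \ref{prop:XandI}, Theorem \ref{thm:repZ(I)}).
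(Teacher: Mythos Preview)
Your proposal is correct and follows exactly the paper's approach: the paper's proof is the single line ``This follows from Theorems \ref{thm:repZ(I)} and \ref{thm:CP1},'' and you have simply unpacked what that means---using Theorem \ref{thm:repZ(I)} to translate between row contractions in $Z(J)$ and representations of $X_J$, and Theorem \ref{thm:CP1} to obtain the unital completely contractive map whose restriction to $\cA_{X_J}$ is the desired homomorphism via the $\cA_{X_J}$-multiplicativity relation (\ref{eq:Amorphism}). Your added remarks on surjectivity and uniqueness are routine and correct.
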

\begin{proof}
This follows from Theorems \ref{thm:repZ(I)} and \ref{thm:CP1}.
\end{proof}

\section{A model for representations: every completely bounded representation of $X$ is a piece of an inflation of $S^X$}

We will now construct a model for representations of subproduct systems. 
In \cite[Section 2]{Popescu06}, a similar but different model -- that includes also a fully coisometric part and not only the shift -- has been obtained.

\begin{theorem}\label{thm:model2}
Let $\underline{T}$ be a completely bounded representation of the subproduct system $X$ on a separable Hilbert space $H$, and let $K$ be an infinite dimensional, separable Hilbert space. Then for all $r > \|T\|_{cb}$, $T$ is unitarily equivalent to a piece of
\be\label{eq:shift_r}
S^X \otimes rI_K.
\ee
Moreover, $\|T\|_{cb}$ is equal the infimum of $r$ such that $T$ is a piece of an operator as in  (\ref{eq:shift_r}).
\end{theorem}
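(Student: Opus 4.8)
The plan is to realise $T$ as a piece of the inflated shift by a Poisson-type kernel, in the spirit of the constructions of Popescu, Arveson, Bhat--Bhattacharyya--Dey and Dey exploited earlier in this chapter (the unnumbered lemma after Theorem \ref{thm:CP1} does exactly the computation we need about the range of such a kernel). First I would reduce to a pure situation. Fix $r>\|T\|_{cb}$; since $\|\widetilde{T}_1\|=\|T_1\|_{cb}\le\|T\|_{cb}$ and the semigroup property of a representation gives $\|\widetilde{T}_n\|\le\|\widetilde{T}_1\|^n$, the scaled family $\underline{U}:=r^{-1}\underline{T}$ satisfies $\sum_{i\in\cI}U_iU_i^*=r^{-2}\widetilde{T}_1\widetilde{T}_1^*$, which has norm $<1$, and $\|\widetilde{U}_n\|\le(\|\widetilde{T}_1\|/r)^n\to 0$. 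Because the defining relation of a subproduct system representation, namely $\widetilde{U}^n(p_n^\perp\otimes I_H)=0$ (Proposition \ref{prop:representation}), is homogeneous, $U$ is again a representation of $X$, and $T$ is unitarily equivalent to a piece of $S^X\otimes rI_K$ if and only if $U$ is unitarily equivalent to a piece of $S^X\otimes I_K$ (being a piece is a statement about the adjoints $\widetilde{(\cdot)}_n^*$, hence is preserved under simultaneously scaling a representation and its dilation). So it suffices to show: every representation $U$ of $X$ on a separable $H$ with $\|\widetilde{U}_1\|<1$, hence $\|\widetilde{U}_n\|\to 0$, is unitarily equivalent to a piece of $S^X\otimes I_K$.

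Next I would build the Poisson kernel. Set $\Delta=I_H-\sum_{i\in\cI}U_iU_i^*\ge(1-\|\widetilde{U}_1\|^2)I_H>0$ and define
\[
\Pi:H\longrightarrow\mathfrak{F}\otimes H,\qquad \Pi h=\sum_{\alpha\in\mb{F}_\cI^+}e_\alpha\otimes\Delta^{1/2}\underline{U}^{\alpha*}h ,
\]
where $\underline{U}^{\alpha}$ denotes $U^{|\alpha|}(e_\alpha)$. Using $\underline{U}^{\alpha}\underline{U}^{i}=\underline{U}^{\alpha i}$ and $\sum_{|\beta|=k}\underline{U}^{\beta}\underline{U}^{\beta*}=\widetilde{U}_k\widetilde{U}_k^*$ (both consequences of Proposition \ref{prop:representation} and $p_{k+1}(p_k\otimes I)=p_{k+1}$), one gets the telescoping identity $\sum_{|\alpha|\le N}\underline{U}^{\alpha}\Delta\underline{U}^{\alpha*}=I_H-\widetilde{U}_{N+1}\widetilde{U}_{N+1}^*$, which together with $\|\widetilde{U}_{N+1}\|\to 0$ yields $\Pi^*\Pi=I_H$; so $\Pi$ is an isometry. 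The argument of the lemma after Theorem \ref{thm:CP1} (again powered by Proposition \ref{prop:representation}: the $n$-th component of $\Pi h$ is orthogonal to $E^{\otimes n}\ominus X(n)$ tensored with $H$) shows that the range of $\Pi$ lies in $\mathfrak{F}_X\otimes H$. Finally, since $H$ is separable and $K$ is infinite-dimensional and separable, fix an isometric embedding $H\hookrightarrow K$; composing, we view $\Pi$ as an isometry $H\to\mathfrak{F}_X\otimes K$ and put $\mathcal{H}:=\Pi H\subseteq\mathfrak{F}_X\otimes K$.

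Then I would verify the intertwining and read off the conclusion. A direct computation of components gives $(S_i\otimes I_H)^*\Pi=\Pi U_i^*$ for each $i\in\cI$; since $\mathfrak{F}_X$ is coinvariant under the full shift with $(\underline{S^X})^*=\underline{S}^*\big|_{\mathfrak{F}_X}$ (recorded in Chapter \ref{chap:subproductN}, in the proof of Proposition \ref{prop:Xshift}), this reads $(S^X_i\otimes I_K)^*\Pi=\Pi U_i^*$. Iterating over words and reassembling through the projections $p_n$ gives $\widetilde{(S^X\otimes I_K)}_n^*\,\Pi=(I_{X(n)}\otimes\Pi)\,\widetilde{U}_n^*$ for all $n$. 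Because $\Pi$ is an isometry onto $\mathcal{H}$, this says exactly that $\mathcal{H}\in\cP(X,S^X\otimes I_K)$ and that the piece of $S^X\otimes I_K$ carried by $\mathcal{H}$ is, via the unitary $\Pi:H\to\mathcal{H}$, unitarily equivalent to $U$; unscaling, $T$ is unitarily equivalent to a piece of $S^X\otimes rI_K$. The main obstacle I expect is precisely the bookkeeping in this last step: propagating the one-letter identity $(S^X_i\otimes I_K)^*\Pi=\Pi U_i^*$ to the $n$-particle level (correctly threading the projections $p_n$), and confirming the range containment $\Pi H\subseteq\mathfrak{F}_X\otimes H$; everything else is routine once these are in place.

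For the ``moreover'', suppose $T$ is a piece of $S^X\otimes rI_K$. Then each $\widetilde{T}_n$ is a compression of $r^n\,\widetilde{(S^X\otimes I_K)}_n$, and since $\widetilde{S^X}_n$ is a compression of the isometry $\widetilde{S}_n$ we get $\|\widetilde{T}_n\|\le r^n$, hence $\|T\|_{cb}\le r$. Therefore $\|T\|_{cb}\le\inf\{r:\ T\text{ is a piece of }S^X\otimes rI_K\}$. Conversely, the first part shows that every $r>\|T\|_{cb}$ belongs to that set, so the infimum is $\le\|T\|_{cb}$, and equality follows.
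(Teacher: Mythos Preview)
Your argument is correct, and it takes a somewhat different (and more direct) route than the paper. Both proofs begin identically: scale to $\underline{U}=r^{-1}\underline{T}$ and build the Poisson kernel $\Pi=K_1(\underline{U})$, which is an isometry with range in $\mathfrak{F}_X\otimes H$ (this is the paper's $K_1(\underline{W})$ and its lemma). From here the paper proceeds indirectly: it forms the normal unital completely positive Poisson transform $\Psi(a)=\Pi^*(a\otimes I)\Pi$ on $B(\mathfrak{F}_X)$, takes its minimal normal Stinespring dilation $\Psi(a)=V^*\pi(a)V$, invokes that any normal representation of $B(\mathfrak{F}_X)$ is a multiple of the identity to realize $\pi$ as $a\mapsto a\otimes I_G$, and then deduces the piece property from the $\cA_X$-morphism identity (\ref{eq:Amorphism}) together with the strong-operator density of $\cE_X$ in $B(\mathfrak{F}_X)$ (Proposition \ref{prop:KinE}). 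You instead verify the one-letter intertwining $(S^X_i\otimes I)^*\Pi=\Pi U_i^*$ by a direct computation and then iterate it to the $n$-particle level to get $\widetilde{(S^X\otimes I_K)}_n^*\Pi=(I_{X(n)}\otimes\Pi)\widetilde{U}_n^*$; the piece property is then immediate. Your approach avoids Stinespring, the representation theory of $B(\mathfrak{F}_X)$, and the $\cA_X$-morphism machinery, at the cost of the small amount of bookkeeping you flagged (which, as you suspected, goes through cleanly via $S^X_n(p_ne_\alpha)=(\underline{S^X})^\alpha$ and $U_n(p_ne_\alpha)=\underline{U}^\alpha$, so the $n$-level identity follows by linearity from the iterated one-letter identity). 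Your treatment of the ``moreover'' clause is also more explicit than the paper's, which simply declares it clear.
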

\begin{proof}
It is known that $\|T\|_{cb} = \|(T_i)_{i \in \cI}\|_{row}$, where $T_i = T(e_i)$. Thus if $r > r_0 = \|T\|_{cb}$, then
$\sum_{i \in \cI} T_i T_i^* \leq r_0^2 I < r^2 I$. Put $W_i = r^{-1}T_i$, so $\sum_{i\in\cI}W_i W_i^* \leq r_0^2 /r^2 I$. Then $K_1\left(\underline{W}\right)$ is an isometry (it is equal to $K_{r_0/r}(r/r_0 \underline{W} )$, and $r/r_0 \underline{W} $ is a row contraction). Thus we may define a map (as in the proof of Theorem \ref{thm:CP1})
\bes
\Psi: B(\mathfrak{F}_X) \rightarrow B(H)
\ees
by
\bes
\Psi(a) = K_1\left(\underline{W}\right)^* \left(a \otimes I\right) K_1\left(\underline{W}\right).
\ees
$\Psi$ is a normal completely positive unital map that satisfies
\bes
\Psi\left((\underline{S^X})^\alpha (\underline{S^X})^{\beta *}\right) = \underline{W}^\alpha \underline{W}^{\beta *} \,\, , \,\, \alpha, \beta \in \mb{F}_\cI^+ .
\ees
Since $\Psi$ is normal it has a \emph{normal} minimal Stinespring dilation $\Psi(a) = V^* \pi(a) V$, with $\pi : B(\mathfrak{F}_X) \rightarrow B(L)$ a normal $*$-homomorphism and $V : H \rightarrow L$ an isometry. It is well known that $\pi$ is equivalent to a multiple of the identity representation.
Thus we obtain, up to unitary equivalence and after identifying $H$ with $VH$, that
$r^{-1} T_i = P_H \pi(S^X_i) P_H = P_H (S^X_i \otimes I_G) P_H$, for some Hilbert space $G$.
To see that $T$ is a piece of $S^X \otimes I_G$ we need to show that
$(S^X_i \otimes I_G)^* \big|_H = T^*_i$ for all $i \in \cI$.
In other words, we need to show that $P_H \pi(S^X_i) = P_H \pi(S^X_i) P_H$.
But, for all $b \in \cE_X$,
\begin{align*}
P_H \pi(S^X_i) \pi(b) P_H &= P_H \pi(S^X_i b) P_H \\
&= \Psi(S^X_i b) \\
(*)&= \Psi(S^X_i) \Psi (b) \\
&= P_H \pi(S^X_i) P_H \pi(b) P_H,
\end{align*}
where (*) follows from (\ref{eq:Amorphism}). By Proposition \ref{prop:KinE}, the strong operator closure of $\cE_X$ is $B(\mathfrak{F}_X)$. $P_H \pi(S^X_i) = P_H \pi(S^X_i) P_H$ now follows from the minimality and normality of the dilation.

It is clear that $r^{-1}T$ is a also piece of $S^X \otimes I_K$ for every $K$ with $\dim K \geq \dim G$, so we may choose $K$ to be infinite dimensional.

We want to show that necessarily $\dim K \geq \dim H$. Since $S^X \otimes I_K$ is a dilation of $r^{-1}T$, $I_L - \sum_{i \in \cI} S^X_i (S^X_i)^* \otimes I_K$ is a dilation of $I_H - \sum_{i \in \cI} r^{-2}T_i T_i^*$. But the latter operator is invertible so it has rank $\dim H$. Thus the rank of $P_\mb{C} \otimes I_K = I_L - \sum_{i \in \cI} S^X_i(S^X_i)^* \otimes I_K$, which is $\dim K$, must be greater.

Now the final assertion is clear.
\end{proof}

We can now obtain a general von Neumann inequality.
\begin{theorem}\label{thm:vNinequality}
Let $X$ be a subproduct system, and let $T$ be a c.c. representation of $X$ on a Hilbert space $H$. Let $\{e_1, \ldots, e_d\}$ be an orthonormal set in $X(1)$, and define $T_i = T(e_i)$ and $S^X_i = S^X(e_i)$ for $i=1,\ldots,d$. Then for every polynomials $p$ and $q$ in $d$ non commuting variables,
\bes
\|p(T_1, \ldots, T_d) q(T_1, \ldots, T_d)^*\| \leq \|p(S^X_1, \ldots, S^X_d) q(S^X_1, \ldots, S^X_d)^*\|.
\ees
\end{theorem}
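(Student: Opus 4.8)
<br>

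The plan is to derive the von Neumann inequality (Theorem \ref{thm:vNinequality}) directly from the completely positive map $\Psi$ produced by Theorem \ref{thm:CP1}. First I would observe that since $T$ is a c.c.\ representation of $X$, the tuple $\underline{T} = (T_i)_{i \in \cI}$ (here only the finitely many $T_1, \dots, T_d$ plus possibly others, but we may harmlessly work with the whole tuple $\underline{T} = (T(e_i))_{i \in \cI}$ for any orthonormal basis extending $\{e_1, \dots, e_d\}$) is a row contraction. By Theorem \ref{thm:CP1} there is a unital, completely positive, completely contractive map $\Psi : \cE_X \to B(H)$ with $\Psi\bigl((\underline{S^X})^\alpha (\underline{S^X})^{\beta *}\bigr) = \underline{T}^\alpha \underline{T}^{\beta *}$ for all words $\alpha, \beta \in \mb{F}_\cI^+$, and moreover $\Psi(ab) = \Psi(a)\Psi(b)$ whenever $a \in \cA_X$ and $b \in \cE_X$.

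The key computation is then to expand $p(\underline{S^X}) q(\underline{S^X})^*$ as a linear combination of monomials $(\underline{S^X})^\alpha (\underline{S^X})^{\beta *}$ and apply $\Psi$ term by term, using the module property (\ref{eq:Amorphism}). Writing $p(\underline{x}) = \sum_\alpha c_\alpha \underline{x}^\alpha$ and $q(\underline{x}) = \sum_\beta d_\beta \underline{x}^\beta$ (finite sums over words in the letters $1, \dots, d$), we have
\begin{align*}
\Psi\bigl(p(\underline{S^X}) q(\underline{S^X})^*\bigr)
&= \Psi\Bigl(\sum_{\alpha, \beta} c_\alpha \overline{d_\beta} (\underline{S^X})^\alpha (\underline{S^X})^{\beta *}\Bigr)
= \sum_{\alpha, \beta} c_\alpha \overline{d_\beta} \Psi\bigl((\underline{S^X})^\alpha (\underline{S^X})^{\beta *}\bigr) \\
&= \sum_{\alpha, \beta} c_\alpha \overline{d_\beta} \underline{T}^\alpha \underline{T}^{\beta *}
= p(\underline{T}) q(\underline{T})^*.
\end{align*}
Since $\Psi$ is completely contractive — in particular contractive — this gives
\[
\|p(\underline{T}) q(\underline{T})^*\| = \|\Psi\bigl(p(\underline{S^X}) q(\underline{S^X})^*\bigr)\| \le \|p(\underline{S^X}) q(\underline{S^X})^*\|,
\]
which is exactly the claimed inequality once we note that $p(T_1, \dots, T_d) = p(\underline{T})$ and $p(S^X_1, \dots, S^X_d) = p(\underline{S^X})$ depend only on the letters actually appearing, i.e.\ on $T(e_1), \dots, T(e_d)$ and $S^X(e_1), \dots, S^X(e_d)$.

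The only genuine point requiring care — and the step I expect to be the mild obstacle — is making sure that $p(\underline{S^X}) q(\underline{S^X})^*$ really lies in the domain $\cE_X = \overline{\textrm{span}}\,\cA_X \cA_X^*$ of $\Psi$, so that the application of $\Psi$ is legitimate; this is immediate since each $(\underline{S^X})^\alpha \in \cA_X$ and each $(\underline{S^X})^{\beta *} \in \cA_X^*$, so finite sums of products lie in $\mathrm{span}\,\cA_X \cA_X^* \subseteq \cE_X$. One should also remark that the module identity (\ref{eq:Amorphism}) is not strictly needed for the bare inequality — linearity of $\Psi$ and the formula for $\Psi$ on the monomials $(\underline{S^X})^\alpha(\underline{S^X})^{\beta *}$ suffice — but it is worth mentioning as it explains why $\Psi$ behaves multiplicatively on the relevant products. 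Finally, I would note that if one wants the matrix-valued (completely bounded) version, one simply uses complete contractivity of $\Psi$ instead of mere contractivity, applying the same argument to matrices of polynomials.
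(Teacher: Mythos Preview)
Your proof is correct and is actually more direct than the paper's. The paper argues via the model of Theorem \ref{thm:model2}: for each $r>1$ the representation $T$ is a piece of $S^X \otimes rI_K$, so
\[
p(\underline{T})q(\underline{T})^* = P\bigl(p(r\underline{S^X})q(r\underline{S^X})^*\otimes I_K\bigr)P
\]
for some projection $P$, and then one lets $r\searrow 1$. This route goes through the stronger structural statement (the model theorem), which in turn relies on normality of a minimal Stinespring dilation and the identification of $\pi$ with a multiple of the identity representation.

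Your approach bypasses all of that: you invoke only Theorem \ref{thm:CP1}, observe that $p(\underline{S^X})q(\underline{S^X})^*$ is a finite linear combination of $(\underline{S^X})^\alpha(\underline{S^X})^{\beta*}$ and hence lies in $\cE_X$, and apply the contractive map $\Psi$. This is both shorter and uses strictly less machinery; as you note, even the $\cA_X$-module identity (\ref{eq:Amorphism}) is unnecessary, since linearity and the formula on monomials suffice. The paper's route has the advantage that the model theorem is of independent interest and yields the inequality as a byproduct, but for the bare von Neumann inequality your argument is the cleaner one.
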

\begin{proof}
Since $T$ is a piece of $S^X \otimes r I_K$ for all $r>1$, we have
\bes
p(T_1, \ldots, T_d) q(T_1, \ldots, T_d)^*  = P \Big( p(rS_1, \ldots, rS_d) q(rS_1, \ldots, rS_d)^* \otimes I_K \Big) P
\ees
for some projection $P$, and the result follows by taking $r\searrow 1$.
\end{proof}

\chapter{The operator algebra associated to a subproduct system}\label{chap:operator_algebra}

Let $X$ be a subproduct system. Recall the definitions of $\cA_X$ and $\cE_X$  from the beginning of Chapter \ref{chap:universal}. If $\{e_i\}_{i \in \cI}$ is an orthonormal basis for $X(1)$, then $\cA_X$ is the unital operator algebra generated by $(S^X_i)_{i \in \cI}$ with $S^X_i = S^X(e_i)$. If $\{f_i\}_{i \in \cI}$ is another orthonormal basis then the tuple $(S^X(f_i))_{i \in \cI}$ is not necessarily unitarily equivalent to $(S^X_i)_{i \in \cI}$. For instance (with the above notation), if $X$ and $\{e_1, e_2\}$ are as in Example \ref{expl:notmax}, and
\bes
f_1 = \frac{1}{\sqrt{2}}(e_1 + e_2) \quad, \quad f_2 = \frac{1}{\sqrt{2}}(e_1 - e_2),
\ees
then $S^X_1, S^X_2$ are partial isometries, whereas $T_1 = S^X(f_1)$ and $T_2 = S^X(f_2)$ are not.
Thus, the unitary equivalence of the row $(S^X_i)$ does not determine the isomorphism class of the subproduct system $X$.
\begin{proposition}\label{prop:UE_ISO}
Let $X$ and $Y$ be two subproduct systems with $X(1) = E$ and $Y(1) = F$. Assume that $\{e_i\}_{i \in \cI}$ is an orthonormal basis for $E$ and that $\{f_i\}_{i\in\cI}$ is an orthonormal basis for $F$. Then the shifts $(S^X_i)_{i \in \cI}$ and $(S^Y_i)_{i \in \cI}$ are unitarily equivalent as rows (i.e., there exists a unitary $V: \mathfrak{F}_X \rightarrow \mathfrak{F}_Y$ such that $V S^X_i = S^Y_i V$ for all $i \in \cI$), if and only if there is an isomorphism of subproduct systems $W: X \rightarrow Y$ such that $We_i = f_i$ for all $i \in \cI$.
\end{proposition}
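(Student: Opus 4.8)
The plan is to prove both implications directly, with all the work concentrated in the forward direction. The easy direction ($\Leftarrow$) is essentially immediate: if $W=\{W_n\}$ is an isomorphism of subproduct systems with $W_1e_i=f_i$, then each $W_n:X(n)\to Y(n)$ is unitary, so $V:=\bigoplus_n W_n$ is a unitary $\mathfrak{F}_X\to\mathfrak{F}_Y$, and for $x\in X(n)$ one computes $VS^X_i x = W_{n+1}(U^X_{1,n}(e_i\otimes x))$, which by the morphism identity (\ref{eq:iso}) with $s=1,t=n$ equals $U^Y_{1,n}(W_1e_i\otimes W_nx)=U^Y_{1,n}(f_i\otimes W_nx)=S^Y_iVx$. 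So $VS^X_i=S^Y_iV$ for all $i$.

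For the converse, suppose $V:\mathfrak{F}_X\to\mathfrak{F}_Y$ is unitary with $VS^X_i=S^Y_iV$; taking adjoints gives $(S^Y_i)^*V=V(S^X_i)^*$, so $V$ carries $\bigcap_i\ker(S^X_i)^*$ onto $\bigcap_i\ker(S^Y_i)^*$. First I would observe that this common kernel is exactly the line $\mathbb{C}\Omega$: since $U^X_{1,n}$ is coisometric, hence surjective, and $\{e_i\}$ is an orthonormal basis of $X(1)$, one gets $X(n+1)=\overline{\textrm{span}}\{S^X_i x : i\in\cI,\ x\in X(n)\}$, hence $\overline{\sum_i\operatorname{Im}S^X_i}=\bigoplus_{m\ge 1}X(m)$, whose orthocomplement is $X(0)=\mathbb{C}\Omega$. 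Therefore $V\Omega_X=c\,\Omega_Y$ for some unimodular $c$, and after replacing $V$ by $\bar cV$ (still intertwining the shifts) we may assume $V\Omega_X=\Omega_Y$. Writing $e^X_\alpha:=\underline{S^X}^\alpha\Omega_X\in X(|\alpha|)$ for a word $\alpha$, the same surjectivity argument shows $X(n)=\overline{\textrm{span}}\{e^X_\alpha:|\alpha|=n\}$ and similarly for $Y$; since $V$ intertwines the shifts and fixes the vacuum, $V(e^X_\alpha)=\underline{S^Y}^\alpha\Omega_Y=e^Y_\alpha$, so $V$ maps $X(n)$ onto $Y(n)$. Hence $V=\bigoplus_n W_n$ with $W_n:=V|_{X(n)}$ unitary, $W_0=\mathrm{id}_{\mathbb{C}}$, and $W_1e_i=W_1S^X_i\Omega_X=S^Y_i\Omega_Y=f_i$.

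It then remains to check that $W=\{W_n\}$ is a morphism of subproduct systems, i.e.\ $W_{m+n}\circ U^X_{m,n}=U^Y_{m,n}\circ(W_m\otimes W_n)$ for all $m,n$. Since the elements $e^X_\alpha\otimes e^X_\beta$ ($|\alpha|=m$, $|\beta|=n$) span $X(m)\otimes X(n)$, it suffices to verify the identity on these. The key computational fact is $U^X_{m,n}(e^X_\alpha\otimes e^X_\beta)=e^X_{\alpha\beta}$ (concatenation of words): using the representation property $S^X(x)S^X(y)=S^X(U^X_{m,n}(x\otimes y))$ of the $X$-shift, one first shows by induction on $|\alpha|$ that $\underline{S^X}^\alpha=S^X(e^X_\alpha)$ and that $U^X_{m,n}(x\otimes y)=S^X(x)S^X(y)\Omega_X$, and specializing gives $U^X_{m,n}(e^X_\alpha\otimes e^X_\beta)=\underline{S^X}^\alpha\underline{S^X}^\beta\Omega_X=\underline{S^X}^{\alpha\beta}\Omega_X=e^X_{\alpha\beta}$. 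Granting this, $W_{m+n}(U^X_{m,n}(e^X_\alpha\otimes e^X_\beta))=W_{m+n}(e^X_{\alpha\beta})=e^Y_{\alpha\beta}$ while $U^Y_{m,n}(W_m e^X_\alpha\otimes W_n e^X_\beta)=U^Y_{m,n}(e^Y_\alpha\otimes e^Y_\beta)=e^Y_{\alpha\beta}$, so the two sides agree; thus $W$ is the desired isomorphism.

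The argument is mostly bookkeeping once these pieces are assembled. The step I expect to require the most care is the last one: establishing $U^X_{m,n}(e^X_\alpha\otimes e^X_\beta)=e^X_{\alpha\beta}$ and reducing the morphism identity to spanning vectors, where one has to keep track of the subproduct-system associativity (\ref{eq:assoc_prod}) and make sure no bracketing ambiguity creeps in when unwinding the products $\underline{S^X}^\alpha$, and also handle the closed-span arguments cleanly when $X(1)$ is infinite-dimensional.
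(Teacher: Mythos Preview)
Your proof is correct and follows essentially the same approach as the paper's: identify $\mathbb{C}\Omega$ as the orthocomplement of $\overline{\sum_i\operatorname{Im}S^X_i}$ (equivalently, as $\bigcap_i\ker(S^X_i)^*$), deduce that $V$ preserves the vacuum line, and then define $W_n=V|_{X(n)}$ via $W(\underline{S^X}^\alpha\Omega_X)=\underline{S^Y}^\alpha\Omega_Y$. You are in fact a bit more careful than the paper in two places: you explicitly handle the unimodular scalar $c$ (the paper asserts $V\Omega_X=\Omega_Y$ outright, tacitly absorbing the scalar), and you spell out the verification of the morphism identity $W_{m+n}U^X_{m,n}=U^Y_{m,n}(W_m\otimes W_n)$ via the key fact $U^X_{m,n}(e^X_\alpha\otimes e^X_\beta)=e^X_{\alpha\beta}$, which the paper leaves as ``it is easy to see.''
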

\begin{proof}
If $X$ and $Y$ are isomorphic with the isomorphism $W$ sending $e_i$ to $f_i$, then define a unitary $V: \mathfrak{F}_X \rightarrow \mathfrak{F}_Y$ by
\bes
V = \bigoplus_{n \in \mb{N}} W\big|_{X(n)}.
\ees
$V S^X_i = S^Y_i V$ follows from the properties of $W$. Conversely, a unitary $V$ intertwining $\underline{S}^X$ and $\underline{S}^Y$ must send $\Omega_X$ to $\Omega_Y$. Indeed, such a unitary must send $\{\Omega_X\}^\perp$ (which is equal to $\vee_i \textrm{Im}S^X_i$) onto a subspace of $\{\Omega_Y\}^\perp$ that has codimension $1$ in $\mathfrak{F}_Y$, thus it must send $\{\Omega_X\}^\perp$ onto $\{\Omega_Y\}^\perp$. It follows that $V \Omega_X = \Omega_Y$. Thus, given a unitary $V$ intertwining $\underline{S}^X$ and $\underline{S}^Y$, we may define $W\big|{X(n)} : X(n) \rightarrow Y(n)$ by
\bes
W S^X_\alpha \Omega = V S^X_\alpha \Omega = S^Y_\alpha \Omega ,
\ees
for all $|\alpha| = n$, and it is easy to see that the maps $W\big|_{X(n)}$ assemble to form an isomorphism of subproduct systems.
\end{proof}

In the example preceding the proposition, we saw how the shift ``tuple" $(S^X_1,S^X_2)$ depends essentially on the choice of basis in $E$. However, the closed unital algebra generated by $(S^X_1, S^X_2)$ is isomorphic to the one generated by $(T_1, T_2)$. Similar remarks hold for $\cE_X$ and $\cT_X$.


\begin{example}
\emph{Let $X$ be the subproduct system given by $X(0) = \mb{C}, X(1) = \mb{C}^2$ and $X(n) = 0$ for all $n \geq 2$. Let $Y$ be the subproduct system given by $Y(0) = Y(1) = Y(2) = \mb{C}$ and $Y(n) = 0$ for all $n \geq 3$. Then since $\cE_X$ and $\cE_Y$ contain the compact operators on $\cF_X$ and $\cF_Y$ (the Fock spaces), we have $\cE_X = \cT_X \cong M_3(\mb{C}) \cong \cT_Y = \cE_Y$.}

\emph{On the other hand, let $\{e_1, e_2\}$ be an orthonormal basis for $X(1)$. Then if $\Omega$ is the vacuum vector, then $\cA_X$ is generated by $S^X(\Omega) = I,S^X(e_1),S^X(e_2)$. In the base $\{\Omega,e_1,e_2\}$ for $\cF_X$, these operators have the form}
\bes
\begin{pmatrix}
  1 & 0 & 0  \\
  0 & 1 & 0  \\
  0 & 0 & 1  \\
\end{pmatrix},
\begin{pmatrix}
  0 & 0 & 0  \\
  1 & 0 & 0  \\
  0 & 0 & 0  \\
\end{pmatrix},
\begin{pmatrix}
  0 & 0 & 0  \\
  0 & 0 & 0  \\
  1 & 0 & 0  \\
\end{pmatrix}.
\ees
\emph{Thus,}
\bes \cA_X \cong \left\{
\begin{pmatrix}
  a & 0 & 0  \\
  b & a & 0  \\
  c & 0 & a  \\
\end{pmatrix} \Big| a,b,c \in \mb{C} \right\}.
\ees
\emph{On the other hand, $\cA_Y$ is generated by}
\bes
I =
\begin{pmatrix}
  1 & 0 & 0  \\
  0 & 1 & 0  \\
  0 & 0 & 1  \\
\end{pmatrix},
S^Y(f_1) = \begin{pmatrix}
  0 & 0 & 0  \\
  1 & 0 & 0  \\
  0 & 1 & 0  \\
\end{pmatrix},
\big(S^Y(f_1)\big)^2 = \begin{pmatrix}
  0 & 0 & 0  \\
  0 & 0 & 0  \\
  1 & 0 & 0  \\
\end{pmatrix},
\ees
\emph{where $\{f_1\}$ is an orthonormal basis for $Y(1)$. Thus}

\bes \cA_Y \cong \left\{
\begin{pmatrix}
  a & 0 & 0  \\
  b & a & 0  \\
  c & b & a  \\
\end{pmatrix} \Big| a,b,c \in \mb{C} \right\}.
\ees
\emph{So $\cA_X \ncong \cA_Y$ (in $\cA_X$ the solutions of $T^2 = 0$ form a two dimensional subspace, and in $\cA_Y$ they form
a one dimensional subspace). }
\end{example}

For every subproduct system $X$ there exists a unique completely contractive multiplicative linear functional
$\rho_0 : \cE_X \rightarrow \mb{C}$ that sends $\lambda I$ to $\lambda$ and $S^X_\alpha$ to $0$ when $|\alpha| > 0$.
The existence of $\rho_0$ follows from Theorem \ref{thm:CP1} (using the Poisson Transform),
but it is also clear that $\rho_0$ is just the vector state associated with the vacuum vector $\Omega_X$:
\bes
\rho_0(T) = \langle T \Omega_X, \Omega_X \rangle \,\, , \,\, T \in \cA_X.
\ees
$\rho_0$ can be considered also as a conditional expectation $\rho_0 : \cA_X \rightarrow \mb{C} \cdot \Omega_X$, inducing a direct sum
\be\label{eq:direct_sum}
\cA_X = \rho_0 \cA_X \oplus \ker \rho_0 = \mb{C}\cdot I \oplus \sum_i S_i^X \cA_X .
\ee
$\cA_X$ contains a dense graded subalgebra, with the homogeneous elements of degree $n$ being $S^X(\xi)$, where $\xi \in X(n)$.

\begin{lemma}\label{lem:iso_vacuum}
Let $\varphi : \cA_X \rightarrow \cA_Y$ be an isometric isomorphism. Then $\varphi$ is unital.
\end{lemma}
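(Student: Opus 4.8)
The plan is to use essentially nothing beyond the fact that $\varphi$ is a bijective algebra homomorphism, hence in particular surjective. First I would record that $\cA_X$ is a genuinely unital operator algebra: by definition $\cA_X = \overline{\textrm{span}}\{\underline{S^X}^\alpha : \alpha \in \mb{F}_\cI^+\}$, and the element indexed by the empty word is $I_{\mathfrak{F}_X}$, which is a two-sided identity for $\cA_X$. The same remark gives that $\cA_Y$ is unital with identity $I_{\mathfrak{F}_Y}$.

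Next I would set $e := \varphi(I_{\mathfrak{F}_X}) \in \cA_Y$ and observe that $e$ is a two-sided identity for $\cA_Y$. Indeed, since $\varphi$ is onto, any $b \in \cA_Y$ can be written $b = \varphi(a)$ for some $a \in \cA_X$, and then $eb = \varphi(I_{\mathfrak{F}_X})\varphi(a) = \varphi(I_{\mathfrak{F}_X}a) = \varphi(a) = b$; symmetrically $be = b$. (In particular $e$ is idempotent, but we do not even need to isolate that fact.)

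Finally I would invoke uniqueness of the identity in any algebra: if $e$ and $I_{\mathfrak{F}_Y}$ are both two-sided identities for $\cA_Y$, then $e = e \cdot I_{\mathfrak{F}_Y} = I_{\mathfrak{F}_Y}$. Hence $\varphi(I_{\mathfrak{F}_X}) = I_{\mathfrak{F}_Y}$, which is exactly the assertion that $\varphi$ is unital.

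I do not expect any real obstacle: the statement is a soft algebraic fact, and the argument above shows that the isometry hypothesis plays no role here — only surjectivity of the algebra homomorphism $\varphi$ is used. The sole structural input is that $\cA_X$ and $\cA_Y$ are honestly unital, which is immediate from their definitions as closed unital operator algebras generated by the respective shifts, so the lemma is really a convenient normalization to be used when proving the vacuum-state-preserving classification in Theorem \ref{thm:algebra_iso}.
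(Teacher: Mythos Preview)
Your argument is correct, and it is in fact more elementary than the paper's own proof. The paper invokes a theorem of Arazy and Solel on isometries of non-self-adjoint operator algebras to conclude that $\varphi(I_{\mathfrak{F}_X})$ must land in the ``diagonal'' part of $\cA_Y$ and hence equals $cI_{\mathfrak{F}_Y}$ for some unimodular scalar $c$; only at the last step does the paper use multiplicativity to force $c=1$. You bypass the Arazy--Solel machinery entirely by observing that surjectivity plus multiplicativity already pin down $\varphi(I_{\mathfrak{F}_X})$ as a two-sided identity for $\cA_Y$, whence it equals $I_{\mathfrak{F}_Y}$ by uniqueness of units.

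Your observation that the isometry hypothesis is never used is also correct: the lemma as stated is really just the soft fact that a surjective algebra homomorphism between unital algebras is automatically unital. The Arazy--Solel approach would become genuinely necessary only if one wanted to conclude something about $\varphi(I)$ from a linear isometry that is \emph{not} assumed multiplicative---but since the paper's proof itself ends by appealing to the homomorphism property, that extra generality is not actually exploited here.
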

\begin{proof}
A theorem of Arazy and Solel \cite{ArazySolel} implies that an isometric map between $\cA_X$ and $\cA_Y$ must send $I \in \cA_X$ to an isometry in $\cA_X \cap \cA_X^*$. It follows that $\varphi(I) = cI$, $|c| = 1$. But since $\varphi$ is a homomorphism, then $c = 1$.
\end{proof}

\begin{lemma}\label{lem:vacuum_norm}
For all $n \in \mb{N}$, $\xi \in X(n)$
\bes
\|S^X(\xi)\| = \|S^X(\xi) \Omega_X\| = \|\xi\|.
\ees
\end{lemma}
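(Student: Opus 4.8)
The plan is to establish the chain
\[
\|\xi\| = \|S^X(\xi)\Omega_X\| \le \|S^X(\xi)\| \le \|\xi\|,
\]
which forces all three quantities to coincide.

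First I would record that $S^X(\xi)\Omega_X = \xi$. Indeed, $\Omega_X = 1 \in \cN = X(0)$, and by item 3 of Definition \ref{def:subproduct_system} the map $U^X_{n,0}$ is implemented by the right action of $\cN$ on $X(n)$; since $\cN = \mb{C}$ in the present setting (we are dealing with subproduct systems of Hilbert spaces over $\mb{N}$), this gives $S^X(\xi)\Omega_X = U^X_{n,0}(\xi\otimes 1) = \xi\cdot 1 = \xi$, whence $\|S^X(\xi)\Omega_X\| = \|\xi\|$. The middle inequality $\|S^X(\xi)\Omega_X\| \le \|S^X(\xi)\|$ is immediate, because $\Omega_X$ is a unit vector in $\mathfrak{F}_X$ (the inner product on $X(0) = \mb{C}$ is the standard one, so $\|1\| = 1$).

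It remains to prove $\|S^X(\xi)\| \le \|\xi\|$. For this I would invoke the fact, noted right after Definition \ref{def:shiftrep}, that $S^X$ is a completely contractive representation of $X$; in particular $\widetilde{S}^X_n : X(n)\otimes\mathfrak{F}_X \to \mathfrak{F}_X$ is a contraction. For any $h \in \mathfrak{F}_X$ one has $S^X(\xi)h = \widetilde{S}^X_n(\xi\otimes h)$, and since $X(n)$ and $\mathfrak{F}_X$ are Hilbert spaces over $\mb{C}$ the interior tensor product norm factorizes: $\|\xi\otimes h\|^2 = \langle h, \langle\xi,\xi\rangle h\rangle = \|\xi\|^2\|h\|^2$. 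Hence $\|S^X(\xi)h\| \le \|\xi\otimes h\| = \|\xi\|\,\|h\|$, so $\|S^X(\xi)\| \le \|\xi\|$, completing the argument.

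There is essentially no serious obstacle here; the only point requiring a moment's care is the passage from complete contractivity of the abstract representation $S^X$ (which a priori is valued in $\cL(\mathfrak{F}_X)$) to a concrete operator-norm estimate. This is harmless precisely because over the scalar von Neumann algebra $\mb{C}$ one has $\cL(\mathfrak{F}_X) = B(\mathfrak{F}_X)$ and the correspondence tensor product is the ordinary Hilbert space tensor product, so the $\cN$-valued inner products reduce to genuine scalars and the factorization of the tensor norm used above is legitimate.
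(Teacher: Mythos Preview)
Your proof is correct and follows essentially the same approach as the paper's. The only cosmetic difference is that for the upper bound $\|S^X(\xi)\|\le\|\xi\|$ you quote the contractivity of $\widetilde{S}^X_n$ (noted right after Definition~\ref{def:shiftrep}), whereas the paper redoes that computation in place: it uses that $S^X(\xi)$ respects the grading $\mathfrak{F}_X=\bigoplus_k X(k)$ and that on each summand $S^X(\xi)\eta=p^X_{n+k}(\xi\otimes\eta)$, so $\|S^X(\xi)\eta\|\le\|\xi\otimes\eta\|=\|\xi\|\,\|\eta\|$ since $p^X_{n+k}$ is a projection.
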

\begin{proof}
Because $S^X(\xi)$ maps the orthogonal summands $X(k)$ of $\mathfrak{F}_X$ into the orthogonal summands $X(k+n)$, it suffices to show that for all $\eta \in X(k)$, $\|S^X(\xi)\eta\| \leq \|\xi\|\|\eta\|$
(because $S^X(\xi) \Omega_X = \xi$). Now, $S^X(\xi) \eta = p^X_{n+k}(\xi \otimes \eta)$, thus
\bes
\|S^X(\xi) \eta\|^2 \leq \|\xi \otimes \eta\|^2 = \|\xi\|^2 \| \eta\|^2.
\ees
\end{proof}

\begin{lemma}\label{lem:iso_grading}
Let $\varphi : \cA_X \rightarrow \cA_Y$ be an isometric isomorphism that preserves the direct sum decomposition (\ref{eq:direct_sum}). Then $\varphi$ preserves the grading: if $ \xi \in X(n)$ then $\varphi(S^X(\xi))$ is in the norm closure of $\textrm{span}\{S^Y(\eta) : \eta \in Y(n)\}$.
\end{lemma}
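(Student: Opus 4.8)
The plan is to reduce the whole statement to the degree-one case and then propagate it by multiplicativity. Write $\cA_X^{(n)}:=\overline{\textrm{span}}\{S^X(\xi):\xi\in X(n)\}$, so that $\cA_X^{(0)}=\mb{C}I$, $\cA_X=\mb{C}I\oplus\ker\rho_0$ with $\ker\rho_0=\overline{\sum_{n\ge 1}\cA_X^{(n)}}$, and the hypothesis that $\varphi$ preserves (\ref{eq:direct_sum}) says exactly that $\varphi$ is unital (which also follows from Lemma \ref{lem:iso_vacuum}) and that $\varphi(\ker\rho_0^X)=\ker\rho_0^Y$, i.e. $\rho_0^Y\circ\varphi=\rho_0^X$. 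Since $\cA_X$ is generated as a Banach algebra by $\cA_X^{(1)}$ and $\varphi$ is a bounded homomorphism, the Lemma will follow once I prove the single assertion $\varphi(S^X(\xi))\in\cA_Y^{(1)}$ for every $\xi\in X(1)$: indeed, for $\xi\in X(n)$ one has $S^X(\xi)=\lim\sum S^X_{i_1}\cdots S^X_{i_n}$, hence $\varphi(S^X(\xi))=\lim\sum\varphi(S^X_{i_1})\cdots\varphi(S^X_{i_n})$, and since $S^Y$ is a representation of the subproduct system, a product of $n$ elements of $\cA_Y^{(1)}$ lies in $\cA_Y^{(n)}$, which is closed (it is isometric to $Y(n)$ by Lemma \ref{lem:vacuum_norm}); so $\varphi(S^X(\xi))\in\cA_Y^{(n)}=\overline{\textrm{span}}\{S^Y(\eta):\eta\in Y(n)\}$.

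To handle the degree-one assertion I will use the gauge action: for $z\in\mb{T}$ let $W_z^X$ be the unitary on $\mathfrak{F}_X$ acting as $z^kI$ on $X(k)$; conjugation by $W_z^X$ preserves $\cA_X$, the map $z\mapsto W_z^X a(W_z^X)^*$ is norm continuous, and hence the Fourier coefficients $E_n^X(a):=\int_{\mb{T}}z^{-n}W_z^X a(W_z^X)^*\,dz$ are well defined, contractive, have range $\cA_X^{(n)}$, restrict to the identity on $\cA_X^{(n)}$, and satisfy Fej\'er's theorem (the Ces\`aro means of $\sum_n E_n^X(a)$ converge to $a$ in norm). I will record three consequences: (i) $\overline{\sum_{k\ge n}\cA_X^{(k)}}=\overline{(\ker\rho_0^X)^{\,n}}$, a two-sided ideal, and it is preserved by $\varphi$ because $\varphi$ is a homomorphism with $\varphi(\ker\rho_0^X)=\ker\rho_0^Y$; (ii) if $a\in\ker\rho_0^Y$ has vanishing degree-one Fourier coefficient, then $a\in\overline{(\ker\rho_0^Y)^2}$ (Ces\`aro-approximate $a$ by elements of $\bigoplus_{k\ge 2}\cA_Y^{(k)}$); (iii) the symbol map $a\mapsto a\Omega_X$ is injective on $\cA_X$, since $a\Omega_X=0$ forces $E_n^X(a)\Omega_X=0$ and hence $E_n^X(a)=0$ for all $n$ (because $\|S^X(\xi)\|=\|\xi\|$), whence $a=0$ by Fej\'er.

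Now fix $\xi\in X(1)$ with $\|\xi\|=1$. Then $S^X(\xi)\in\ker\rho_0^X$, so $\varphi(S^X(\xi))\in\ker\rho_0^Y$; set $S^Y(\eta):=E_1^Y(\varphi(S^X(\xi)))$ with $\eta\in Y(1)$ and $b:=\varphi(S^X(\xi))-S^Y(\eta)$. Then $b\in\ker\rho_0^Y$ has zero degree-one Fourier coefficient, so $b\in\overline{(\ker\rho_0^Y)^2}$ by (ii). Applying the symmetric construction to $\varphi^{-1}$, writing $S^X(\xi'):=E_1^X(\varphi^{-1}(S^Y(\eta)))$, and taking degree-one Fourier coefficients in $\varphi^{-1}(\varphi(S^X(\xi)))=S^X(\xi)=\varphi^{-1}(S^Y(\eta))+\varphi^{-1}(b)$ (using that $\varphi^{-1}(b)\in\overline{(\ker\rho_0^X)^2}$ has vanishing Fourier coefficients in degrees $0$ and $1$) gives $\xi'=\xi$, from which $\|\eta\|=\|\xi\|$ follows (both $\le$ inequalities come from $\|E_1^{\,\cdot}\|\le 1$ and $\varphi,\varphi^{-1}$ isometric). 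The load-bearing step is then the Pythagorean estimate at the vacuum: since $b\Omega_Y\in\overline{\bigoplus_{k\ge 2}Y(k)}$ is orthogonal to $\eta\in Y(1)$,
\bes
\|\xi\|=\|S^X(\xi)\|=\|\varphi(S^X(\xi))\|\ \ge\ \|\varphi(S^X(\xi))\Omega_Y\|=\|\eta+b\Omega_Y\|=\bigl(\|\eta\|^2+\|b\Omega_Y\|^2\bigr)^{1/2}=\bigl(\|\xi\|^2+\|b\Omega_Y\|^2\bigr)^{1/2},
\ees
so $b\Omega_Y=0$, hence $b=0$ by (iii), i.e. $\varphi(S^X(\xi))=S^Y(\eta)\in\cA_Y^{(1)}$, which is what was needed.

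The routine-but-fiddly part is the Fourier/Fej\'er bookkeeping of the second paragraph: the norm continuity of the gauge action (reduce to the dense graded subalgebra), the identification of $\overline{(\ker\rho_0)^n}$ with the tail $\overline{\sum_{k\ge n}\cA^{(k)}}$ of the grading, and the injectivity of the symbol map. I expect no real obstacle beyond stating these Banach-space-valued Fej\'er facts precisely; the genuinely new idea is the Pythagorean identity at $\Omega_Y$, which is exactly what converts the mere \emph{isometry} of $\varphi$ into the statement that the image of a homogeneous degree-one element carries no higher-degree tail.
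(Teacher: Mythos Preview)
Your proof is correct and shares its core idea with the paper's: reduce to degree one, split $\varphi(S^X(\xi))$ into a degree-one part plus a tail in $\overline{(\ker\rho_0^Y)^2}$, use that $\varphi^{-1}$ preserves the ideal filtration $\overline{(\ker\rho_0)^n}$, and then exploit the orthogonality of the Fock-space grading at the vacuum to kill the tail. The paper runs this as a two-sided strict-inequality contradiction ($1<\|\sum a_iS^Y_i\|<1$), while you first pin down $\|\eta\|=\|\xi\|$ via $E_1$ and then use a single Pythagorean estimate; these are cosmetic variants of the same maneuver.

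Where your argument genuinely adds value is in the scaffolding. The paper writes down the decomposition $\varphi(S^X_1)=\sum a_iS^Y_i+T$ and the strict inequality $\|S^X_1\Omega_X\|<\|(S^X_1+B)\Omega_X\|$ without justification; both of these implicitly need a continuous projection onto degree one and the injectivity of the symbol map $a\mapsto a\Omega$ (otherwise $T\neq 0$ does not imply $T\Omega_Y\neq 0$). Your introduction of the gauge action, the contractive Fourier projections $E_n$, and the Fej\'er argument supplies exactly these missing ingredients and makes the identification $\overline{(\ker\rho_0)^n}=\overline{\sum_{k\ge n}\cA^{(k)}}$ explicit. So: same approach, but yours fills gaps that the paper's sketch leaves to the reader.
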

\begin{proof}
Since $\varphi$ is a homomorphism, it suffices to show, say, that $\varphi(S^X_1)$ has ``degree one", that is, it is in
the norm closure of $\textrm{span}\{S^Y(\eta) : \eta \in Y(1)\}$. By assumption, we may write $\varphi(S^X_1) = \sum_i a_i S^Y_i + T$, with $T$ in the closure of $\textrm{span}\{S^Y(\eta) : \eta \in Y(n), n \geq 2\}$. But $\varphi^{-1}(\sum_i a_i S^Y_i + T) = S^X_1$, and $\varphi^{-1}(T)$ is in the norm closure of $\textrm{span}\{S^X(\xi) : \eta \in X(n), n \geq 2\}$, so $\varphi^{-1}(\sum_i a_i S^Y_i) = S^X_1 + B$, with $B = -\varphi^{-1}(T)$ (note that $\varphi^{-1}$ also preserves the direct sum decomposition (\ref{eq:direct_sum})).

If $T = 0$ then we are done, so assume $T \neq 0$. Then $B \neq 0$, also. But
\bes
1 = \|S^X_1\| = \|S^X_1 \Omega_X\| < \|(S^X_1 + B)\Omega_X\| \leq \|S^X_1 + B\| = \|\sum_i a_i S^Y_i\|,
\ees
and at the same time
\bes
\|\sum_i a_i S^Y_i\| = \|\sum_i a_i S^Y_i \Omega_Y\| < \|(\sum_i a_i S^Y_i + T) \Omega_Y\| \leq \|\sum_i a_i S^Y_i + T\| = \|S^X_1\| = 1.
\ees
From $T \neq 0$ we arrived at $1<1$, thus $T=0$.
\end{proof}

\begin{theorem}\label{thm:algebra_iso}
$X \cong Y$ if and only if $\cA_X$ and $\cA_Y$ are isometrically isomorphic with an isomorphism that preserves the direct sum decomposition (\ref{eq:direct_sum}), and this happens if and only if $\cA_X$ and $\cA_Y$ are isometrically isomorphic with a grading preserving isomorphism. In fact, if $\varphi : \cA_X \rightarrow \cA_Y$ is a grading preserving isometric isomorphism then there is an isomorphism $V: X \rightarrow Y$ such that for all $T\in \cA_X$, $\varphi(T) = V T V^*$.
\end{theorem}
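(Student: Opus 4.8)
The plan is to prove the three conditions — $X\cong Y$, existence of an isometric isomorphism $\cA_X\to\cA_Y$ preserving \eqref{eq:direct_sum}, and existence of a grading-preserving isometric isomorphism $\cA_X\to\cA_Y$ — are equivalent, and then to upgrade the last of these to a spatial statement. The chain $X\cong Y\Rightarrow(\text{grading preserving})\Rightarrow(\text{preserves }(\ref{eq:direct_sum}))\Rightarrow(\text{grading preserving})\Rightarrow X\cong Y$ does the job: the middle implication is trivial and the one after it is Lemma \ref{lem:iso_grading}, so the real content is the two outer implications.

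First I would dispose of the easy direction. Given a subproduct-system isomorphism $W=\{W_n\}$, put $V=\bigoplus_n W_n:\mathfrak{F}_X\to\mathfrak{F}_Y$, a unitary. Using the defining relation $S^X(\xi)\eta=U^X_{m,n}(\xi\otimes\eta)$ for $\xi\in X(m),\eta\in X(n)$, together with the morphism identity $W_{m+n}\circ U^X_{m,n}=U^Y_{m,n}\circ(W_m\otimes W_n)$, one checks directly that $V S^X(\xi)V^*=S^Y(W_m\xi)$ for $\xi\in X(m)$. Hence $T\mapsto VTV^*$ carries $\cA_X$ onto $\cA_Y$, is isometric, and is grading preserving (it maps the degree-$n$ homogeneous part $\{S^X(\xi):\xi\in X(n)\}$ onto $\{S^Y(\eta):\eta\in Y(n)\}$); and a grading-preserving isomorphism automatically preserves \eqref{eq:direct_sum}, since it is unital by Lemma \ref{lem:iso_vacuum} and sends $\mathbb{C}\cdot I=\rho_0\cA_X$ and $\overline{\sum_i S^X_i\cA_X}=\ker\rho_0$ onto the corresponding subspaces of $\cA_Y$.

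Next, the heart of the matter: recovering $V$ from a grading-preserving isometric isomorphism $\varphi:\cA_X\to\cA_Y$. By Lemma \ref{lem:vacuum_norm} the map $\xi\mapsto S^X(\xi)$ is a linear isometry of the Hilbert space $X(n)$ onto a \emph{closed} subspace of $\cA_X$, and similarly for $Y$; in particular $\overline{\mathrm{span}}\{S^Y(\eta):\eta\in Y(n)\}=\{S^Y(\eta):\eta\in Y(n)\}$. Since $\varphi$ is grading preserving, $\varphi(S^X(\xi))$ lies in this set whenever $\xi\in X(n)$, so there is a unique $V_n\xi\in Y(n)$ with $\varphi(S^X(\xi))=S^Y(V_n\xi)$. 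Then $V_n$ is linear and $\|V_n\xi\|=\|S^Y(V_n\xi)\|=\|\varphi(S^X(\xi))\|=\|S^X(\xi)\|=\|\xi\|$, so $V_n$ is isometric; running the same argument with $\varphi^{-1}$ (again grading preserving) shows $V_n$ is onto, hence unitary, and $V_0=\mathrm{id}_{\mathbb C}$ since $\varphi$ is unital (Lemma \ref{lem:iso_vacuum}). To see $V=\{V_n\}$ is a morphism, fix $\xi\in X(m),\eta\in X(n)$; since $S^X$ is a representation of $X$, $S^X(\xi)S^X(\eta)=S^X(U^X_{m,n}(\xi\otimes\eta))$, so applying $\varphi$ and using multiplicativity gives
\[
S^Y\!\big(V_{m+n}U^X_{m,n}(\xi\otimes\eta)\big)=\varphi(S^X(\xi))\,\varphi(S^X(\eta))=S^Y(V_m\xi)\,S^Y(V_n\eta)=S^Y\!\big(U^Y_{m,n}(V_m\xi\otimes V_n\eta)\big).
\]
Injectivity of $\eta\mapsto S^Y(\eta)$ yields $V_{m+n}U^X_{m,n}(\xi\otimes\eta)=U^Y_{m,n}(V_m\xi\otimes V_n\eta)$, and since such elements $\xi\otimes\eta$ are total in $X(m)\otimes X(n)$ and both sides are bounded, $V_{m+n}\circ U^X_{m,n}=U^Y_{m,n}\circ(V_m\otimes V_n)$. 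Thus $V$ is an isomorphism $X\to Y$, which in particular gives $X\cong Y$. Finally, with $V=\bigoplus_n V_n:\mathfrak{F}_X\to\mathfrak{F}_Y$ the computation of the easy direction shows $V S^X(\xi)V^*=S^Y(V_m\xi)=\varphi(S^X(\xi))$, and since the $S^X(\xi)$ generate $\cA_X$ while both $\varphi$ and $T\mapsto VTV^*$ are continuous homomorphisms, $\varphi(T)=VTV^*$ for all $T\in\cA_X$.

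I expect the only delicate point to be the bookkeeping in the middle paragraph: one must be sure that $\varphi(S^X(\xi))$ is genuinely a single homogeneous element $S^Y(\eta)$, not merely a norm-limit of such — which is exactly why the isometry assertion of Lemma \ref{lem:vacuum_norm} (forcing the homogeneous subspaces to be closed) is needed — and one must note that $\varphi^{-1}$ inherits the grading-preserving property so that surjectivity of each $V_n$ is available. Everything else is a routine assembly of Lemmas \ref{lem:iso_vacuum}, \ref{lem:vacuum_norm}, \ref{lem:iso_grading} together with the representation identity for $S^X$ and $S^Y$.
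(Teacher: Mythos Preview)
Your proof is correct and follows essentially the same approach as the paper. The only cosmetic difference is that the paper defines $V(\xi)=\varphi(S^X(\xi))\Omega_Y$ and verifies the morphism identity and the spatial formula by computations at the vacuum vector, whereas you encode the same map via $\varphi(S^X(\xi))=S^Y(V_n\xi)$ and appeal to injectivity of $\eta\mapsto S^Y(\eta)$; since $S^Y(\eta)\Omega_Y=\eta$, the two descriptions coincide.
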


\begin{proof}
$X \cong Y$ implies $\cA_X \cong \cA_Y$ because these algebras are then generated by unitarily equivalent tuples.

For the converse, we will assume that X and Y are standard subproduct systems. The isomorphism $V : X \rightarrow Y$ is defined on the fiber $X(n)$ by
\bes
V (\xi) = V(S^X(\xi)\Omega_X) = \varphi(S^X(\xi))\Omega_Y \,\, , \,\, \xi \in X(n).
\ees
If it is well defined, then it is onto. Lemma \ref{lem:vacuum_norm} shows that $V$ is an isometry on the fibers:
\bes
\|S^X(\xi)\Omega_X\| = \|S^X(\xi)\| = \|\varphi(S^X(\xi))\| = \|\varphi(S^X(\xi))\Omega_Y\|.
\ees
Lemma \ref{lem:iso_grading} implies that $V(\xi)$ sits in $Y(n)$. $V$ respects the subproduct structure: if $m,n \in \mb{N}$, $\xi \in X(n), \eta \in X(m)$, then
\begin{align*}
V p^X_{m,n}(\xi \otimes \eta) &= V S^X(p^X_{m,n}(\xi \otimes \eta)) \Omega_X \\
&= \varphi(S^X(p^X_{m,n}(\xi \otimes \eta))) \Omega_Y \\
&= \varphi(S^X(\xi) S^X(\eta))\Omega_Y \\
&= \varphi(S^X(\xi)) \varphi(S^X(\eta))\Omega_Y \\
(*)&= p_{m,n}^Y \left( \varphi(S^X(\xi)) \Omega_Y \otimes \varphi(S^X(\eta))\Omega_Y \right) \\
&= p_{m,n}^Y(V(\xi) \otimes V(\eta)).
\end{align*}
(*) follows from the facts $S^Y(y) \Omega_Y = y$ and $S^Y(y_1) S^Y(y_2) \Omega_Y = S^Y(p_{m,n}^Y (y_1 \otimes y_2)) \Omega_Y = p_{m,n}^Y(y_1 \otimes y_2) = p_{m,n}^Y(S^Y(y_1) \Omega_Y \otimes S^Y(y_2) \Omega_Y)$.

Finally, let us show that for all $T\in \cA_X$, $\varphi(T) = V T V^*$.
What we mean by this is that for all $\xi \in X$, $\varphi(S^X(\xi)) = V S^X(\xi) V^*$. Let $\varphi(S^X(\eta))\Omega_Y = V(\eta)$ be a typical element in $\mathfrak{F}_Y$.
\begin{align*}
V S^X(\xi) V^* \varphi(S^X(\eta))\Omega_Y &= V S^X(\xi) \eta \\
&= V p^X(\xi \otimes \eta) \\
&= \varphi(S^X(p^X(\xi \otimes \eta))) \Omega_Y \\
&= \varphi(S^X(\xi) S^X(\eta)) \Omega_Y \\
&= \varphi(S^X(\xi)) \varphi(S^X(\eta)) \Omega_Y ,
\end{align*}
This completes the proof.
\end{proof}

\chapter{Classification of the universal algebras of $q$-commuting tuples}\label{chap:qcommuting}

\begin{definition}
A matrix $q$ is called \emph{admissible} if $q_{ii}=0$ and $0 \neq q_{ij}=q_{ji}^{-1}$ for all $i \neq j$.
\end{definition}

\section{The $q$-commuting algebras $\cA_q$ and their universality}
Let $\{e_1, \ldots,  e_d\}$ be an orthonormal basis for $E := \mb{C}^d$, to be fixed (together with $d$) throughout this section. Let $q \in M_d(\mb{C})$ be an admissible matrix, and let $X_q$ be the maximal standard subproduct system with fibers
\bes
X_q(1) = E \,\, , \,\, X_q(2) = E \otimes E \ominus \textrm{span}\{e_i \otimes e_j - q_{ij} e_j \otimes e_i : 1\leq i, j \leq d, i \neq j \}.
\ees
When $q_{ij} = 1$ for all $i<j$, then $X_{q}$ is the symmetric subproduct system $SSP$. The Fock spaces $\mathfrak{F}_{X_q}$ have been studied in \cite{Dey07}.

For brevity, we shall write $S^q_i$ instead of $S^{X_q}_i$.
We denote by $\cA_q$ the algebra $\cA_{X_q}$. By Theorem \ref{thm:universal}, the algebra $\cA_q$ is the universal
norm closed unital operator algebra generated by a row contraction $(T_1, \ldots, T_d)$ satisfying the relations
\bes
T_i T_j = q_{ij}T_j T_i \,\, , \,\, 1 \leq i < j \leq d.
\ees


\section{The character space of $\cA_q$}\label{sec:charA_q}

Let $\cM_q$ be the space of all (contractive) multiplicative and unital linear functionals on $\cA_q$, endowed with the weak-$*$ topology. We shall call $\cM_q$ \emph{the character space of $\cA_q$}. Every $\rho \in \cM_q$ is uniquely determined by the $d$-tuple of complex numbers $(x_1, \ldots, x_d)$, where $x_i = \rho(S^q_i)$ for $i = 1, \ldots, d$. Since a contractive linear functional is completely contractive, $(x_1, \ldots, x_d)$ must be a row contraction, that is, $|x_1|^2 + \ldots + |x_d|^2 \leq 1$. In other words, $(x_1, \ldots, x_d)$ is in the unit ball $B_d$ of $\mb{C}^d$. The multiplicativity of $\rho$ implies that $(x_1, \ldots, x_d)$ must lie inside the set
\bes
Z_q := \{(z_1, \ldots, z_d)\in B_d: (1-q_{ij})z_iz_j = 0, 1 \leq i < j \leq d\} .
\ees

Conversely, Theorem \ref{thm:universal} implies that every $(x_1, \ldots, x_d) \in Z_q$ gives rise to a character $\rho \in \cM_q$ that sends $S^q_i$ to $x_i$. Thus the map
\bes
\cM_q \ni \rho \mapsto (\rho(S^q_1), \ldots, \rho(S^q_d)) \in Z_q
\ees
is injective and surjective. It is also obviously continuous (with respect to the weak-$*$ and standard topologies). Since $\cM_q$ is compact, we have the homeomorphism
\be
\cM_q \cong Z_q.
\ee

Note that the vacuum state $\rho_0$ corresponds to the point $0 \in Z_q \subset \mb{C}^d$.

When $q_{ij} = 1$, the condition $(1-q_{ij})z_iz_j = 0$ is trivially satisfied, so when $q_{i,j}=1$ for all $i,j$, then $Z_q$ is the unit ball $B_d$. When $q_{ij} \neq 1$, the condition is
that either $z_i = 0$ or $z_j = 0$. Thus, if for all $i,j$, $q_{ij}\neq 1$, then $Z_q$ is the union of $d$ discs glued together at their origins.

\section{Classification of the $\cA_q$, $q_{ij}\neq 1$}

Given a permutation $\sigma$ (on a set with $d$ elements), let $U_\sigma$ be the matrix that induces the same permutation on the standard basis of $\mb{C}^d$.

\begin{proposition}\label{prop:similarity}
Let $q$ and $r$ be two admissible $d \times d$ matrices. Assume that there is a permutation $\sigma \in S_d$ such that $r = U_\sigma q U_\sigma^{-1}$, and let $\lambda_1, \ldots, \lambda_d$ be any complex numbers on the unit circle. Then the map
\be\label{eq:sigma}
e_i \mapsto \lambda_i e_{\sigma(i)}
\ee
extends to an isomorphism of $X_q$ onto $X_r$, and thus the map
\bes
S_i^q \mapsto \lambda_i S_{\sigma(i)}^r
\ees
extends to a completely isometric isomorphism between $\cA_q$ and $\cA_r$.
\end{proposition}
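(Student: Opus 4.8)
The key tool is Theorem \ref{thm:algebra_iso}, which reduces the problem of constructing a (grading-preserving, completely isometric) isomorphism $\cA_q \to \cA_r$ to the problem of constructing an isomorphism of subproduct systems $X_q \to X_r$. So the plan is: first exhibit an isomorphism $V: X_q \to X_r$ of subproduct systems that acts on the first fiber by the prescribed formula $e_i \mapsto \lambda_i e_{\sigma(i)}$, and then invoke Theorem \ref{thm:algebra_iso} to conclude that $T \mapsto VTV^*$ is a completely isometric isomorphism of $\cA_q$ onto $\cA_r$ carrying $S_i^q$ to $V S_i^q V^* = S^r(V e_i) = \lambda_i S^r_{\sigma(i)}$.

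To build $V$, I would first define a unitary $W_1 : E \to E$ on $X_q(1) = X_r(1) = E = \mb{C}^d$ by $W_1 e_i = \lambda_i e_{\sigma(i)}$; this is clearly unitary since $\{\lambda_i e_{\sigma(i)}\}$ is an orthonormal basis. The crucial point is that the induced unitary $W_1^{\otimes 2}: E^{\otimes 2} \to E^{\otimes 2}$ maps the defining subspace $X_q(2)$ onto $X_r(2)$. Recall
\bes
X_q(2) = E^{\otimes 2} \ominus \textrm{span}\{e_i \otimes e_j - q_{ij} e_j \otimes e_i : i \neq j\}.
\ees
Since $W_1^{\otimes 2}$ is unitary it suffices to check that it carries the orthogonal complement of $X_q(2)$ onto that of $X_r(2)$, i.e. that it maps $\textrm{span}\{e_i \otimes e_j - q_{ij} e_j \otimes e_i\}$ onto $\textrm{span}\{e_k \otimes e_\ell - r_{k\ell} e_\ell \otimes e_k\}$. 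Computing,
\bes
W_1^{\otimes 2}(e_i \otimes e_j - q_{ij}\, e_j \otimes e_i) = \lambda_i \lambda_j \left( e_{\sigma(i)} \otimes e_{\sigma(j)} - q_{ij}\, e_{\sigma(j)} \otimes e_{\sigma(i)} \right),
\ees
and since $r = U_\sigma q U_\sigma^{-1}$ means precisely $r_{\sigma(i)\sigma(j)} = q_{ij}$, the vector in parentheses is exactly the generator of the complement of $X_r(2)$ indexed by the pair $(\sigma(i),\sigma(j))$. Hence $W_1^{\otimes 2}$ restricts to a unitary $X_q(2) \to X_r(2)$, and dually restricts to a unitary $W_2 := p^r_2 W_1^{\otimes 2}\big|_{X_q(2)} : X_q(2) \to X_r(2)$ which is compatible with the multiplication maps (i.e. $W_2 \circ U^{X_q}_{1,1} = U^{X_r}_{1,1} \circ (W_1 \otimes W_1)$), by construction of the maximal standard subproduct system as a projection of the full one.

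Next I would extend this to all fibers. Since both $X_q$ and $X_r$ are \emph{maximal} standard subproduct systems with prescribed fibers in degrees $1$ and $2$, one has
\bes
X_q(n) = \bigcap_{i+j=n} E^{\otimes i} \otimes X_q(j) \cap \bigcap_{i+j=n} X_q(i) \otimes E^{\otimes j},
\ees
and likewise for $X_r$ (this is the formula for the maximal subproduct system given just before Example \ref{expl:symmmax}). An induction on $n$ shows that $W_1^{\otimes n}$ maps $X_q(n)$ onto $X_r(n)$: the inductive hypothesis gives $W_1^{\otimes i}(X_q(i)) = X_r(i)$ and $W_1^{\otimes j}(X_q(j)) = X_r(j)$ for all $i,j < n$ with $i+j=n$, so $W_1^{\otimes n}$ carries each $E^{\otimes i} \otimes X_q(j)$ and each $X_q(i) \otimes E^{\otimes j}$ onto the corresponding space for $r$, hence carries the intersection onto the intersection. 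Setting $V_n := W_1^{\otimes n}\big|_{X_q(n)} : X_q(n) \to X_r(n)$ (a unitary), one checks that the family $V = \{V_n\}$ satisfies the compatibility condition \eqref{eq:iso} with the subproduct-system multiplications — this follows because the multiplications in $X_q$ and $X_r$ are both just orthogonal projection of the tensor product onto the appropriate fiber, and $W_1^{\otimes n}$ intertwines these projections. Thus $V$ is an isomorphism of subproduct systems $X_q \to X_r$ with $V_1 = W_1$, and the final statement follows from Theorem \ref{thm:algebra_iso}.

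The main obstacle, and essentially the only nontrivial point, is the verification that $W_1^{\otimes 2}$ carries $X_q(2)$ onto $X_r(2)$; everything else is a routine induction using the explicit description of maximal standard subproduct systems and an appeal to Theorem \ref{thm:algebra_iso}. The hypothesis $q_{ij} \neq 1$ plays no role here — it is needed only later, in distinguishing the algebras (the classification up to isomorphism), not in establishing this sufficient condition for isomorphism — so I would not invoke it in this proof.
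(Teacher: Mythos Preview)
Your proof is correct and follows essentially the same route as the paper's: define the unitary $W_1$ on $E$, verify via the same computation that $W_1^{\otimes 2}$ carries the defining relations of $X_q(2)$ to those of $X_r(2)$ (using $r_{\sigma(i)\sigma(j)} = q_{ij}$), and then induct using the maximality of the subproduct systems to conclude that $V_n = W_1^{\otimes n}\big|_{X_q(n)}$ assembles into a subproduct system isomorphism. The only cosmetic difference is the final citation: the paper invokes Proposition~\ref{prop:UE_ISO} (unitary equivalence of the shift tuples) rather than Theorem~\ref{thm:algebra_iso}, but either yields the completely isometric isomorphism $S_i^q \mapsto \lambda_i S_{\sigma(i)}^r$.
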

\begin{proof}
For all $n$, the map (\ref{eq:sigma}) extends to a unitary $V_n$ of $E^{\otimes n}$. For $n=2$, this unitary sends $e_i \otimes e_j - q_{ij}e_j \otimes e_i$ to $\lambda_i \lambda_j e_{\sigma(i)} \otimes e_{\sigma(j)} - \lambda_i \lambda_j q_{ij}e_{\sigma(j)} \otimes e_{\sigma(i)}$. But $r = U_\sigma q U_\sigma^{-1}$ implies $r_{\sigma(i)\sigma(j)} = q_{ij}$, thus
\bes
V_2: e_i \otimes e_j - q_{ij}e_j \otimes e_i \mapsto \lambda_i \lambda_j e_{\sigma(i)} \otimes e_{\sigma(j)} - \lambda_i \lambda_j r_{\sigma(i)\sigma(j)}e_{\sigma(j)} \otimes e_{\sigma(i)},
\ees
so $V_2$ is a unitary between $X_q(2)$ and $X_r(2)$ that respects the product. By induction, it follows that
$V = \{V_n\big|_{X_q(n)}\}_n$ is an isomorphism of subproduct systems.
The final assertion follows from Proposition \ref{prop:UE_ISO}.
\end{proof}

\begin{theorem}\label{thm:subproduct_iso_q}
Let $q$ and $r$ be two admissible $d \times d$ matrices such that $q_{ij},r_{ij}\neq 1$ for all $i,j$. Then $X_q$ is isomorphic to $X_r$ if and only if there is a permutation $\sigma \in S_d$ such that $r = U_\sigma q U_\sigma^{-1}$. In this case the isomorphisms are precisely those of the form
\bes
e_i \mapsto \lambda_i e_{\sigma(i)},
\ees
where $\lambda_1, \ldots, \lambda_d$ are any complex numbers on the unit circle, and $\sigma$ is such that $r = U_\sigma q U_\sigma^{-1}$.
\end{theorem}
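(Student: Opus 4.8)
The plan is to reduce the problem to a statement about unitaries on $\mb{C}^d$ and then exploit the rigidity of the character space $\cM_q \cong Z_q$ computed in Section~\ref{sec:charA_q}. The ``if'' direction, as well as the assertion that every map $e_i \mapsto \lambda_i e_{\sigma(i)}$ with $r = U_\sigma q U_\sigma^{-1}$ and $|\lambda_i| = 1$ is an isomorphism $X_q \to X_r$, is precisely Proposition~\ref{prop:similarity}. So the remaining content is the ``only if'' direction together with the claim that \emph{every} isomorphism $X_q \to X_r$ has this monomial form.

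First I would observe that an isomorphism $W \colon X_q \to X_r$ is completely determined by its restriction $W_1 = U$ to the first fiber, a unitary on $E = \mb{C}^d$. Indeed, applying the intertwining relation $W_{m+n} U^{X_q}_{m,n} = U^{X_r}_{m,n}(W_m \otimes W_n)$ with $m=n=1$, and recalling that $U^{X_q}_{1,1} = p^{X_q}_2$ and $U^{X_r}_{1,1} = p^{X_r}_2$, forces $(U \otimes U)X_q(2) = X_r(2)$ and $W_2 = (U \otimes U)\big|_{X_q(2)}$; since $X_q$ and $X_r$ are the \emph{maximal} standard subproduct systems with the prescribed first two fibers, an induction using $X_q(n) = \bigcap_{i+j=n} X_q(i) \otimes X_q(j)$ then yields $U^{\otimes n}X_q(n) = X_r(n)$ and $W_n = U^{\otimes n}\big|_{X_q(n)}$ for all $n$. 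Conversely any unitary $U$ with $(U \otimes U)X_q(2) = X_r(2)$ produces such an isomorphism (this is contained in Proposition~\ref{prop:similarity}). Writing $K_q = X_q(2)^{\perp} = \textrm{span}\{e_i \otimes e_j - q_{ij}e_j \otimes e_i : i \neq j\}$ and likewise $K_r$, the entire theorem reduces to the claim: \emph{the unitaries $U$ on $\mb{C}^d$ satisfying $(U \otimes U)K_q = K_r$ are exactly the monomial ones $Ue_i = \lambda_i e_{\sigma(i)}$ for which $r = U_\sigma q U_\sigma^{-1}$.}

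Next I would bring in the character space. By Proposition~\ref{prop:UE_ISO}, $W$ induces a unitary $V \colon \mathfrak{F}_{X_q} \to \mathfrak{F}_{X_r}$ with $V S^q_i = S^r(Ue_i)V$, so conjugation by $V$ is a completely isometric isomorphism $\varphi \colon \cA_q \to \cA_r$ with $\varphi(S^q_i) = S^r(Ue_i) = \sum_j U_{ji}S^r_j$. Precomposition with $\varphi$ is a homeomorphism $\cM_r \to \cM_q$, and under the identifications $\cM_q \cong Z_q$, $\cM_r \cong Z_r$ of Section~\ref{sec:charA_q} it becomes the restriction to $Z_r$ of the linear map $y \mapsto U^{T}y$. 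Here the hypothesis $q_{ij}, r_{ij} \neq 1$ for all $i,j$ enters decisively: then $Z_q = \bigcup_{k=1}^{d}\{t e_k : |t| \le 1\}$ is a ``bouquet'' of $d$ two-dimensional discs meeting only at the origin, and similarly for $Z_r$. Since $U^{T}$ is unitary, $U^{T}e_k$ lies on the unit sphere; since it must also lie in $Z_q$, it has at most one nonzero coordinate, hence $U^{T}e_k = \mu_k e_{\pi(k)}$ with $|\mu_k| = 1$, and invertibility of $U^{T}$ forces $\pi$ to be a permutation. Consequently $U$ is monomial, say $Ue_i = \lambda_i e_{\sigma(i)}$ with $|\lambda_i| = 1$.

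Finally I would substitute the monomial form back: $(U \otimes U)(e_i \otimes e_j - q_{ij}e_j \otimes e_i) = \lambda_i\lambda_j\big(e_{\sigma(i)} \otimes e_{\sigma(j)} - q_{ij}e_{\sigma(j)} \otimes e_{\sigma(i)}\big)$, which lies in $K_r$ if and only if $q_{ij} = r_{\sigma(i)\sigma(j)}$; letting $\{i,j\}$ range over all pairs gives $(U \otimes U)K_q = K_r \iff r = U_\sigma q U_\sigma^{-1}$. Together with Proposition~\ref{prop:similarity} this proves both the existence of the permutation $\sigma$ and the description of all isomorphisms. I expect the only genuinely delicate point to be the rigidity argument of the third paragraph — making precise that a linear (here unitary) bijection between the bouquets $Z_r$ and $Z_q$ must permute the coordinate axes; the hypothesis $q_{ij} \neq 1$ is exactly what makes $Z_q$ this rigid bouquet rather than, e.g., the full ball (as in the symmetric case $q \equiv 1$, where the automorphism group is large and the conclusion of the theorem fails).
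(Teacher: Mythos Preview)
Your proof is correct, but it takes a different route from the paper's. The paper argues directly at the level of the subproduct system: given an isomorphism $V\colon X_q\to X_r$, it writes $f_i = V^{-1}e_i = \sum_j u_{ij}e_j$, expands the condition that $f_i\otimes f_j - r_{ij}f_j\otimes f_i$ lie in $K_q$, and observes that the coefficient of $e_k\otimes e_k$ in this expansion is $(1-r_{ij})u_{ik}u_{jk}$, which must vanish since no vector in $K_q$ has an $e_k\otimes e_k$ component. The hypothesis $r_{ij}\neq 1$ then immediately forces $u_{ik}u_{jk}=0$ for $i\neq j$, so $U$ has a single nonzero entry per column, hence is monomial; the rest is as you do.

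Your argument instead passes through the operator algebra $\cA_q$ and the character space $Z_q$: the isomorphism induces a unitary $U^T$ carrying the bouquet $Z_r$ onto $Z_q$, and since the only unit vectors in a bouquet of coordinate discs are multiples of the $e_k$, monomiality follows. This is conceptually pleasant and makes clear that the obstruction to non-monomial isomorphisms is precisely the topology of $Z_q$, but it is less self-contained, invoking Proposition~\ref{prop:UE_ISO} and Section~\ref{sec:charA_q} where a one-line algebraic observation (the vanishing diagonal coefficients) suffices. The paper's approach also makes it transparent that only the assumption on $r$ (not $q$) is used to force monomiality, and stays entirely within the subproduct-system category.
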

\begin{proof}
One direction is Proposition \ref{prop:similarity}, so assume that there is an isomorphism of subproduct systems $V: X_q \rightarrow X_r$. Let $f_i := V^{-1}e_i$. There is a $d \times d$ unitary matrix $U = (u_{ij})$ such that $f_i = \sum_j u_{ij}e_j$. As $V$ is an isomorphism of subproduct systems, we have for all $i \neq j$
\bes
Vp_2^{X_q}(f_i \otimes f_j - r_{ij}f_j \otimes f_i) = p_2^{X_r}(e_i \otimes e_j - r_{ij}e_j \otimes e_i) = 0,
\ees
thus
\bes
(\sum_k u_{ik} e_k)\otimes(\sum_l u_{jl} e_l) - r_{ij} (\sum_k u_{jk} e_k)\otimes(\sum_l u_{il} e_l) \in \textrm{span} \{e_m \otimes e_n - q_{mn} e_n \otimes e_m : m \neq n\},
\ees
or
\be\label{eq:sum_in}
\sum_{k,l}(u_{ik}u_{jl} - r_{ij}u_{jk}u_{il})e_k \otimes e_l \in \textrm{span} \{e_m \otimes e_n - q_{mn} e_n \otimes e_m : m \neq n\}.
\ee
The coefficients of the vectors $e_k \otimes e_k$ in the sum above must vanish, thus $u_{ik}u_{jk} - r_{ij}u_{jk}u_{ik} = 0$ for all $i \neq j$. Since $r_{ij}\neq 1$, we must have $u_{jk}u_{ik} = 0$ for all $k$ and all $i \neq j$. Thus the unitary matrix $U$ has precisely one nonzero element in each column, and it therefore must be of the form $U_\sigma^{-1} D$, where $D$ is a diagonal unitary matrix.

Equation (\ref{eq:sum_in}) becomes
\bes
u_{i\sigma(i)}u_{j\sigma(j)}e_{\sigma(i)} \otimes e_{\sigma(j)} - r_{ij}u_{j\sigma(j)}u_{i\sigma(i)}e_{\sigma(j)} \otimes e_{\sigma(i)} \in \textrm{span} \{e_m \otimes e_n - q_{mn} e_n \otimes e_m : m \neq n\},
\ees
but this can only happen if

\bes
u_{i\sigma(i)}u_{j\sigma(j)}e_{\sigma(i)} \otimes e_{\sigma(j)} - r_{ij}u_{j\sigma(j)}u_{i\sigma(i)}e_{\sigma(j)} \otimes e_{\sigma(i)}
\ees
is proportional to
\bes
e_{\sigma(i)} \otimes e_{\sigma(j)} - q_{\sigma(i)\sigma(j)}e_{\sigma(j)} \otimes e_{\sigma(i)} ,
\ees
that is $u_{i\sigma(i)}u_{j\sigma(j)}q_{\sigma(i)\sigma(j)} = u_{j\sigma(j)}u_{i\sigma(i)}r_{ij}$, or $r_{ij} = q_{\sigma(i)\sigma(j)}$. Replacing $\sigma$ with $\sigma^{-1}$, the proof is complete.
\end{proof}

\begin{corollary}\label{cor:auto}
Let $q$ be an admissible $d \times d$ matrix such that there is no permutation $\sigma \in S_d$ such that
$q = U_\sigma q U_\sigma^{-1}$. Assume that $q_{ij} \neq 1$ for all $i,j$. Then the only automorphisms of $X_q$ are unitary scalings of the basis $\{e_1, \ldots, e_d\}$.
\end{corollary}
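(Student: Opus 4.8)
The plan is to derive Corollary~\ref{cor:auto} directly from Theorem~\ref{thm:subproduct_iso_q}. Recall that an automorphism of $X_q$ is, by definition, an isomorphism $V: X_q \rightarrow X_q$. Taking $r = q$ in Theorem~\ref{thm:subproduct_iso_q}, we see that the set of automorphisms of $X_q$ is precisely the set of maps of the form
\bes
e_i \mapsto \lambda_i e_{\sigma(i)},
\ees
where $\lambda_1, \ldots, \lambda_d$ are on the unit circle and $\sigma \in S_d$ satisfies $q = U_\sigma q U_\sigma^{-1}$. So the only thing that needs to be checked is that the hypothesis ``there is no permutation $\sigma \in S_d$ with $q = U_\sigma q U_\sigma^{-1}$'' forces $\sigma$ to be the identity in the description above.

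First I would note that for \emph{any} admissible matrix $q$, the identity permutation always satisfies $q = U_{\mathrm{id}} q U_{\mathrm{id}}^{-1}$, so the phrasing of the hypothesis should be read as ``no permutation \emph{other than the identity}'' — this is the natural reading, since otherwise the hypothesis would be vacuously false and the corollary empty. Under this reading, the argument is immediate: if $V$ is an automorphism of $X_q$, then by Theorem~\ref{thm:subproduct_iso_q} applied with $r = q$, there exist unimodular scalars $\lambda_1, \ldots, \lambda_d$ and a permutation $\sigma$ with $q = U_\sigma q U_\sigma^{-1}$ such that $V(e_i) = \lambda_i e_{\sigma(i)}$. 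By hypothesis $\sigma = \mathrm{id}$, so $V(e_i) = \lambda_i e_i$; that is, $V$ is a unitary scaling of the basis $\{e_1, \ldots, e_d\}$. Conversely, every such scaling is clearly an automorphism (it is the isomorphism of $X_q$ onto $X_q$ furnished by Proposition~\ref{prop:similarity} with $\sigma = \mathrm{id}$ and $r = q$). This completes the proof.

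There is essentially no obstacle here; the corollary is a direct specialization of Theorem~\ref{thm:subproduct_iso_q}. The only point requiring a moment's care is the interpretation of the hypothesis, as noted above — one should make explicit that ``$q = U_\sigma q U_\sigma^{-1}$'' is automatic for $\sigma = \mathrm{id}$, so the content of the assumption is that $q$ has no nontrivial ``permutation symmetry,'' and it is exactly this that kills the permutation part of an arbitrary automorphism. If one wished to be thorough one could also remark that the $q_{ij} \neq 1$ assumption is inherited from the theorem and is what guarantees the rigidity (the symmetric case $q_{ij} \equiv 1$ has a much larger automorphism group, namely the full unitary group $U(d)$ acting on $X_q(1)$).
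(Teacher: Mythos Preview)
Your proposal is correct and matches the paper's approach: the corollary is stated immediately after Theorem~\ref{thm:subproduct_iso_q} with no separate proof, as it is a direct specialization with $r=q$. Your observation that the hypothesis must be read as ``no \emph{nontrivial} permutation'' (since $\sigma = \mathrm{id}$ always works) is a worthwhile clarification that the paper leaves implicit.
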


\begin{theorem}\label{thm:algebra_iso_q}
Let $q$ and $r$ be two admissible $d \times d$ matrices such that $q_{ij},r_{ij}\neq 1$ for all $i,j$. Then $\cA_q$ is isometrically isomorphic to $\cA_r$ if and only if there is a permutation $\sigma \in S_d$ such that $r = U_\sigma q U_\sigma^{-1}$. In this case the isometric isomorphisms between $\cA_q$ and $\cA_r$ are precisely those of the form
\bes
S^q_i \mapsto \lambda_i S^r_{\sigma(i)},
\ees
where $\lambda_1, \ldots, \lambda_d$ are any complex numbers on the unit circle.
\end{theorem}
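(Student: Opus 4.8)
The plan is to deduce Theorem \ref{thm:algebra_iso_q} from the two pillars already established in the excerpt: Theorem \ref{thm:algebra_iso}, which says that a \emph{grading-preserving} isometric isomorphism $\cA_X \to \cA_Y$ comes from an isomorphism of subproduct systems $X \cong Y$; and Theorem \ref{thm:subproduct_iso_q}, which classifies isomorphisms $X_q \cong X_r$ (when all entries are $\neq 1$) as exactly the maps $e_i \mapsto \lambda_i e_{\sigma(i)}$ with $r = U_\sigma q U_\sigma^{-1}$. The one gap between these and the statement to be proved is that an arbitrary isometric isomorphism $\varphi : \cA_q \to \cA_r$ is not \emph{a priori} grading-preserving; so the first and main task is to show that every isometric isomorphism between these particular algebras automatically respects the grading (or at least the two-term decomposition (\ref{eq:direct_sum})).

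First I would record the easy direction: if $r = U_\sigma q U_\sigma^{-1}$, then Proposition \ref{prop:similarity} already produces the completely isometric isomorphisms $S^q_i \mapsto \lambda_i S^r_{\sigma(i)}$, so nothing remains there. For the converse, let $\varphi : \cA_q \to \cA_r$ be an isometric isomorphism. By Lemma \ref{lem:iso_vacuum}, $\varphi$ is unital. The next step is to show $\varphi$ preserves the decomposition $\cA_X = \mb{C} I \oplus \ker \rho_0$ of (\ref{eq:direct_sum}); equivalently, that $\varphi$ intertwines the vacuum characters, $\rho_0^r \circ \varphi = \rho_0^q$. The natural route is via the character space description in Section \ref{sec:charA_q}: $\varphi$ induces a homeomorphism $\varphi^* : \cM_r \to \cM_q$, i.e.\ a homeomorphism $Z_r \cong Z_q$. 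When all $q_{ij}, r_{ij} \neq 1$, the set $Z_q$ is a union of $d$ two-dimensional discs all glued at a single common point (the origin, which corresponds to $\rho_0$), and this common point is the unique point of $Z_q$ that does not have a Euclidean neighborhood homeomorphic to an open subset of $\mb{C}$ — it is the unique non-manifold point when $d \geq 2$ (for $d = 1$ the statement $\cA_q = \cA_r$ is the disc algebra and the theorem is trivial). Hence any homeomorphism $Z_r \to Z_q$ must send vacuum to vacuum, so $\rho_0^r \circ \varphi = \rho_0^q$, which is exactly the statement that $\varphi$ preserves (\ref{eq:direct_sum}).

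Once $\varphi$ preserves the two-term decomposition, Lemma \ref{lem:iso_grading} upgrades this to full grading preservation, and then Theorem \ref{thm:algebra_iso} gives an isomorphism of subproduct systems $V : X_q \to X_r$ with $\varphi(T) = VTV^*$; in particular $\varphi(S^q_i) = V S^q_i V^* $, and since $V$ carries $e_i = S^q_i \Omega$ to $\varphi(S^q_i)\Omega \in X_r(1)$, the map $e_i \mapsto Ve_i$ is precisely a subproduct system isomorphism. Applying Theorem \ref{thm:subproduct_iso_q}, $V$ has the form $e_i \mapsto \lambda_i e_{\sigma(i)}$ with $r = U_\sigma q U_\sigma^{-1}$ and $|\lambda_i| = 1$, and correspondingly $\varphi(S^q_i) = \lambda_i S^r_{\sigma(i)}$. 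This also shows such $\varphi$ are automatically completely isometric, matching Proposition \ref{prop:similarity}.

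The main obstacle is the topological step identifying the vacuum as the distinguished point of $Z_q$. One must be careful that $Z_q$ as a subset of $\mb{C}^d$ really is $d$ discs joined only at the origin — this is immediate from the defining relations $(1-q_{ij})z_iz_j = 0$ together with $q_{ij}\neq 1$, which force all but one coordinate to vanish — and that the gluing point is genuinely a local singularity distinguishing it from all other points (each of which lies in the relative interior or on the circular boundary of a single disc, hence has a neighborhood in $Z_q$ homeomorphic to an open or half-open subset of $\mb{C}$, never to a neighborhood of the singular point). An alternative, perhaps cleaner, argument for preservation of the vacuum character avoids topology entirely: one can instead invoke the Arazy--Solel-type rigidity used in Lemmas \ref{lem:iso_vacuum} and \ref{lem:iso_grading} more directly, arguing via norms of degree-one elements as in the proof of Lemma \ref{lem:iso_grading} that $\varphi(\ker\rho_0^q) \subseteq \ker\rho_0^r$; I would present the topological argument as the primary one since it is the most transparent, and remark that it is the only place where the hypothesis $q_{ij}, r_{ij} \neq 1$ is essential beyond its role in Theorem \ref{thm:subproduct_iso_q}.
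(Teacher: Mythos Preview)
Your proposal is correct and follows essentially the same route as the paper: use the character-space homeomorphism $\cM_r \cong \cM_q$ to see that the vacuum state is preserved (since $Z_q$ is $d$ discs glued at the origin and the origin is topologically distinguished), then invoke Theorem \ref{thm:algebra_iso} and Theorem \ref{thm:subproduct_iso_q}. The only cosmetic difference is that the paper singles out the origin as the unique point whose removal leaves $d$ disconnected punctured discs, whereas you characterize it as the unique non-manifold point; both arguments are equivalent and require $d\geq 2$, which you correctly note.
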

\begin{proof}
If $r = U_\sigma q U_\sigma^{-1}$, then by Proposition \ref{prop:similarity} and Theorem \ref{thm:algebra_iso} $\cA_q$ and $\cA_r$ are isomorphic (with an isomorphism that preserves the direct sum decomposition (\ref{eq:direct_sum})).

Conversely, assume that $\varphi: \cA_q \rightarrow \cA_r$ is a completely isometric isomorphism. Then $\varphi$ induces a homeomorphism between $\cM_r$ and $\cM_q$ by $\rho \mapsto \rho \circ \varphi$.
Recall that $\cM_q$ and $M_r$ are both homeomorphic to $d$ discs glued together at the origin. Thus the homeomorphism $\rho \mapsto \rho \circ \varphi$ must take $\rho_0$ of ${X_r}$ to $\rho_0$ of ${X_q}$, because these are the unique points in $\cM_r$ and $\cM_q$, respectively, that when removed from $M_r$ and $\cM_q$ leave $d$ disconnected punctured discs. Thus $\varphi$ sends the vacuum state of $\cA_r$ to the vacuum state of $\cA_q$, and must therefore preserve the direct sum decomposition (\ref{eq:direct_sum}). By Theorem \ref{thm:algebra_iso}, there is an isomorphism of subproduct systems $V: X_q \rightarrow X_r$ such that $\varphi(\bullet) = V \bullet V^*$. By Theorem \ref{thm:subproduct_iso_q} we conclude that there is a permutation $\sigma \in S_d$ such that $r = U_\sigma q U_\sigma^{-1}$. It also follows that
$\varphi(S_i^q) = \lambda_i S_{\sigma(i)}^r$.
\end{proof}

\begin{corollary}\label{cor:iso_auto}
Let $q$ be an admissible $d \times d$ matrix such that there is no permutation $\sigma \in S_d$ such that $q = U_\sigma q U_\sigma^{-1}$. Then the only isometric automorphisms of $\cA_q$ are unitary scalings of the shift $\{S^q_1, \ldots, S^q_d\}$.
\end{corollary}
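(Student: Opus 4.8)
The plan is to read off the corollary as the special case $r = q$ of Theorem~\ref{thm:algebra_iso_q}. First I would observe that an isometric automorphism $\varphi$ of $\cA_q$ is in particular an isometric isomorphism of $\cA_q$ onto $\cA_q$, so, since throughout this section we assume $q_{ij}\neq 1$ for all $i,j$, the hypotheses of Theorem~\ref{thm:algebra_iso_q} are satisfied with $r=q$. That theorem then produces a permutation $\sigma\in S_d$ with $q = U_\sigma q U_\sigma^{-1}$ together with unit-modulus scalars $\lambda_1,\dots,\lambda_d$ such that $\varphi(S^q_i) = \lambda_i S^q_{\sigma(i)}$ for every $i$.

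The only remaining task is to eliminate the permutation. Here I would invoke the standing hypothesis on $q$: the identity is the sole element $\sigma\in S_d$ satisfying $q = U_\sigma q U_\sigma^{-1}$ (the identity trivially does, which is why the hypothesis should be read as ruling out \emph{nontrivial} such $\sigma$). Hence $\sigma$ must be the identity, and we conclude $\varphi(S^q_i) = \lambda_i S^q_i$ with $|\lambda_i| = 1$ for all $i$, i.e.\ $\varphi$ is a unitary scaling of the shift $\{S^q_1,\dots,S^q_d\}$. The converse is immediate, since every such scaling is visibly a completely isometric automorphism; so these are exactly the isometric automorphisms of $\cA_q$.

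I do not expect a genuine obstacle, since the entire content is already packaged in Theorem~\ref{thm:algebra_iso_q} (and, one level below, in Theorem~\ref{thm:algebra_iso}, Corollary~\ref{cor:auto}, and the character-space identification $\cM_q\cong Z_q$ used to force preservation of the vacuum state). The single point that must be handled with care is the interpretation of the hypothesis on $q$ noted above. If one prefers a self-contained argument rather than a citation of Theorem~\ref{thm:algebra_iso_q}, I would instead repeat that theorem's proof directly for $r=q$: show that $\varphi$ carries the vacuum character $\rho_0$ to itself — because $\rho_0$ corresponds to the unique point of $\cM_q\cong Z_q$ whose deletion disconnects the union of $d$ discs glued at the origin into $d$ punctured discs — deduce that $\varphi$ respects the decomposition (\ref{eq:direct_sum}), apply Theorem~\ref{thm:algebra_iso} to write $\varphi(\cdot) = V(\cdot)V^*$ for an automorphism $V$ of $X_q$, and finish with Corollary~\ref{cor:auto} to identify $V$ as a unitary scaling of the orthonormal basis $\{e_1,\dots,e_d\}$.
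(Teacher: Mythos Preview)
Your proposal is correct and matches the paper's intended argument: the corollary is stated in the paper without proof precisely because it is the case $r=q$ of Theorem~\ref{thm:algebra_iso_q}, which forces the permutation $\sigma$ to be trivial under the hypothesis on $q$. Your remark that the hypothesis must be read as excluding only \emph{nontrivial} permutations (since $\sigma = \mathrm{id}$ always satisfies $q = U_\sigma q U_\sigma^{-1}$) is well taken and is indeed the intended reading.
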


As a corollary of the above discussion we have:
\begin{corollary}
Let $q$ and $r$ be two admissible $d \times d$ matrices such that $q_{ij},r_{ij}\neq 1$ for all $i,j$. Then $\cA_q$ is isometrically isomorphic to $\cA_r$ if and only if $X_q \cong X_r$.
\end{corollary}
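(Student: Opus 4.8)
The plan is to obtain the corollary by simply comparing the two structure theorems that precede it. Under the standing hypothesis that $q_{ij}\neq 1$ and $r_{ij}\neq 1$ for all $i,j$, Theorem \ref{thm:subproduct_iso_q} states that $X_q \cong X_r$ if and only if there is a permutation $\sigma \in S_d$ with $r = U_\sigma q U_\sigma^{-1}$; and Theorem \ref{thm:algebra_iso_q} states, under exactly the same hypothesis, that $\cA_q$ is isometrically isomorphic to $\cA_r$ if and only if there is a permutation $\sigma \in S_d$ with $r = U_\sigma q U_\sigma^{-1}$. The two right-hand conditions are literally the same, so the two left-hand conditions are equivalent, which is the assertion. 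Thus the first step (and essentially the whole proof) is just to invoke these two results and note the coincidence of their criteria.

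If a more self-contained argument is wanted, I would spell out the chain that underlies both theorems. The implication $X_q \cong X_r \Rightarrow \cA_q \cong \cA_r$ is a special case of Theorem \ref{thm:algebra_iso}: an isomorphism of subproduct systems induces, via Proposition \ref{prop:UE_ISO}, a unitary $V:\mathfrak{F}_{X_q}\to\mathfrak{F}_{X_r}$ intertwining the shift tuples, and conjugation by $V$ is then a completely isometric, grading-preserving isomorphism $\cA_q\to\cA_r$. For the converse, start from an isometric isomorphism $\varphi:\cA_q\to\cA_r$; it induces a homeomorphism $\cM_r\to\cM_q$ of character spaces by $\rho\mapsto\rho\circ\varphi$, and by the description in Section \ref{sec:charA_q} each of $\cM_q,\cM_r$ is a wedge of $d$ discs glued at the origin, the origin being the unique point whose removal disconnects the space into $d$ pieces. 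Hence $\varphi$ sends the vacuum state of $\cA_r$ to the vacuum state of $\cA_q$, so it preserves the direct sum decomposition $(\ref{eq:direct_sum})$; Theorem \ref{thm:algebra_iso} then produces an isomorphism of subproduct systems $V:X_q\to X_r$ with $\varphi(\cdot)=V\cdot V^*$, giving $X_q\cong X_r$.

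I do not expect a genuine obstacle here: the corollary is a bookkeeping consequence of work already done. The one point worth flagging — and it is handled inside the cited proofs, not here — is that the hypothesis $q_{ij},r_{ij}\neq 1$ for all $i,j$ is what makes everything go through: it is used in Theorem \ref{thm:subproduct_iso_q} to force the change-of-basis unitary to be a permutation composed with a diagonal unitary (the step where $u_{ik}u_{jk}=0$ is deduced from $r_{ij}\neq 1$), and it is what turns the character space into a wedge of discs rather than a ball, thereby pinning down the vacuum state in the converse direction. So the proof I would write is essentially two sentences citing Theorems \ref{thm:subproduct_iso_q} and \ref{thm:algebra_iso_q}, optionally followed by the self-contained sketch above for the reader's convenience.
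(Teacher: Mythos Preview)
Your proposal is correct and is exactly the approach the paper takes: the corollary is stated immediately after Theorems \ref{thm:subproduct_iso_q} and \ref{thm:algebra_iso_q} with the remark ``As a corollary of the above discussion we have,'' and the intended proof is precisely your observation that both theorems characterize their respective isomorphisms by the identical permutation-conjugacy condition $r = U_\sigma q U_\sigma^{-1}$.
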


\section{$X_q$ and $\cA_q$, $d = 2$}
In the particular case $d=2$, we let a complex number $q$ parameterize the spaces $X_q$ (we may allow also $q = 0$) defined to be the maximal standard subproduct system with fibers
\bes
X_q(1) = \mb{C}^2 \,\, , \,\, X_q(2) = \mb{C}^2 \otimes \mb{C}^2 \ominus \textrm{span}\{e_1 \otimes e_2 - q e_2 \otimes e_1 \}.
\ees
Since $\cM_1 \cong B_2$, $\cA_1$ is not isomorphic to any $\cA_q$ with $q \neq 1$ (recall that when $q \neq 1$, $\cM_q$ is homeomorphic to two discs glued together at the origin).
Thus Theorem \ref{thm:algebra_iso_q}
gives:
\begin{corollary}
Assume that $d=2$. Then $X_q \cong X_r$ if and only if $\cA_q$ is isometrically isomorphic to $\cA_r$, and either one of these happens if and only if either $r = q$ or $r = q^{-1}$.
\end{corollary}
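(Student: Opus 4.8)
The plan is to deduce the $d=2$ corollary directly from the two-parameter theory already developed, namely Theorem~\ref{thm:subproduct_iso_q} (isomorphism of the $X_q$) and Theorem~\ref{thm:algebra_iso_q} (isometric isomorphism of the $\cA_q$), together with the character space computation of Section~\ref{sec:charA_q}. The only genuinely new point is to translate the condition ``there exists $\sigma \in S_2$ with $r = U_\sigma q U_\sigma^{-1}$'' into the concrete condition ``$r = q$ or $r = q^{-1}$'' when we switch from the matrix parametrization (an admissible $2\times 2$ matrix is determined by its $(1,2)$ entry, with the $(2,1)$ entry forced to be its inverse) to the scalar parametrization. Since $S_2 = \{e, \tau\}$ with $\tau$ the transposition, and conjugating an admissible matrix $q$ (with $q_{12} = q$, $q_{21} = q^{-1}$) by $U_\tau$ swaps the two off-diagonal entries, $U_\tau q U_\tau^{-1}$ corresponds to the scalar $q^{-1}$; the identity permutation gives back $q$. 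Thus in the scalar picture the equivalence relation ``conjugate by a permutation'' is exactly $r \sim q \iff r \in \{q, q^{-1}\}$.

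First I would dispose of the case $q_{ij} = 1$, i.e.\ the scalar value $q = 1$ (and symmetrically $r = 1$). Here $X_1 = SSP_{\mb{C}^2}$ is the symmetric subproduct system, and by the discussion in Section~\ref{sec:charA_q} the character space $\cM_1$ is homeomorphic to the full ball $B_2$, which is connected and locally connected everywhere; for $q \neq 1$ the character space $\cM_q$ is homeomorphic to two discs glued at their origins, which has a cut point. Since an isometric isomorphism $\cA_q \to \cA_r$ induces a homeomorphism $\cM_r \to \cM_q$ (via $\rho \mapsto \rho \circ \varphi$), $\cA_1$ cannot be isometrically isomorphic to any $\cA_q$ with $q \neq 1$; likewise $X_1 \not\cong X_q$ for $q \neq 1$ because isomorphic subproduct systems yield isomorphic algebras. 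And of course $X_1 \cong X_1$, $\cA_1 \cong \cA_1$, while the condition ``$r = q$ or $r = q^{-1}$'' with $q = 1$ forces $r = 1$. So the corollary holds whenever $1 \in \{q, r\}$.

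Next, assume $q, r \neq 1$ (so $q_{ij}, r_{ij} \neq 1$ for all $i,j$, the standing hypothesis of Theorems~\ref{thm:subproduct_iso_q} and~\ref{thm:algebra_iso_q} in the $d=2$ case). By Theorem~\ref{thm:subproduct_iso_q}, $X_q \cong X_r$ iff there is $\sigma \in S_2$ with $r = U_\sigma q U_\sigma^{-1}$, which by the translation above is iff $r \in \{q, q^{-1}\}$. By Theorem~\ref{thm:algebra_iso_q}, $\cA_q$ is isometrically isomorphic to $\cA_r$ iff there is $\sigma \in S_2$ with $r = U_\sigma q U_\sigma^{-1}$, again iff $r \in \{q, q^{-1}\}$. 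In particular the three conditions ``$X_q \cong X_r$'', ``$\cA_q \cong \cA_r$ isometrically'', and ``$r \in \{q, q^{-1}\}$'' are pairwise equivalent. Combining this with the $q = 1$ case completes the proof. I do not expect any real obstacle here: the substantive work is entirely contained in the cited theorems, and the corollary is essentially bookkeeping — the one thing to be careful about is the $q = 1$ versus $q \neq 1$ dichotomy, which is why I would handle it explicitly via the cut-point argument on the character spaces rather than trying to force it into the scope of Theorem~\ref{thm:algebra_iso_q} (whose hypothesis excludes $q_{ij} = 1$).
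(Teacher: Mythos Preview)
Your proposal is correct and follows essentially the same approach as the paper: handle the $q=1$ case separately via the character-space argument (since $\cM_1 \cong B_2$ is topologically distinguished from the two-discs-glued-at-a-point space $\cM_q$ for $q\neq 1$), then invoke Theorems~\ref{thm:subproduct_iso_q} and~\ref{thm:algebra_iso_q} for $q,r\neq 1$ and translate the $S_2$-conjugacy condition on the admissible matrix into $r\in\{q,q^{-1}\}$. The paper's write-up is terser but the argument is the same.
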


Elias Katsoulis has pointed out to us that the above corollary also follows from the techniques of \cite{DavidsonKatsoulis}.

The above result is reminiscent to the fact that two rotation algebras $A_\theta$ and $A_{\theta'}$ are isomorphic if and only if either $e^{2\pi i \theta} = e^{2\pi i \theta'}$ or $(e^{2\pi i \theta})^{-1} = e^{2\pi i \theta'}$. One cannot help but wonder whether one can draw a deeper connection between these results then the superficial one, in particular, can the classification of  rotation algebras be deduced from the classification of the algebras $\cA_q$?

By Corollaries \ref{cor:auto} and \ref{cor:iso_auto} we have the following.
\begin{corollary}
Let $d=2$ and let $q \neq 1$. Then subproduct system $X_q$ has no automorphisms aside form the unitary scalings of the basis. The algebra $\cA_q$ has no isometric automorphisms other than unitary scalings of the generators.
\end{corollary}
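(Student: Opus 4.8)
The plan is to obtain this corollary as the $d=2$ specialization of Corollary \ref{cor:auto} and Corollary \ref{cor:iso_auto}, after checking their standing hypothesis that no permutation $\sigma$ satisfies $q = U_\sigma q U_\sigma^{-1}$. First I would record that the subproduct system $X_q$ of the $d=2$ section coincides with $X_{\tilde q}$ of the general construction for the admissible matrix $\tilde q = \left(\begin{smallmatrix} 0 & q \\ q^{-1} & 0 \end{smallmatrix}\right)$ with $\tilde q_{12} = q$: indeed $X_{\tilde q}(2)^\perp = \mathrm{span}\{e_1 \otimes e_2 - q_{12}e_2 \otimes e_1,\ e_2 \otimes e_1 - q_{21}e_1\otimes e_2\}$ is one-dimensional because $q_{21} = q_{12}^{-1}$, and it is spanned by $e_1 \otimes e_2 - q\,e_2 \otimes e_1$. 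Since $q \neq 1$ we also have $q^{-1} \neq 1$, so every off-diagonal entry of $\tilde q$ is $\neq 1$ and the first hypothesis of those corollaries holds.

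Next I would observe that $S_2$ consists only of the identity and the transposition $\tau$, and that $U_\tau \tilde q U_\tau^{-1}$ is the admissible matrix whose $(1,2)$ entry is $q^{-1}$; hence $\tilde q = U_\tau \tilde q U_\tau^{-1}$ forces $q = q^{-1}$, i.e.\ $q = \pm 1$. Consequently, for every $q \notin \{1,-1\}$ both hypotheses of Corollaries \ref{cor:auto} and \ref{cor:iso_auto} are satisfied, and the two assertions follow at once: the automorphisms of $X_q$ are exactly the diagonal unitaries $e_i \mapsto \lambda_i e_i$ with $|\lambda_i|=1$, and the isometric automorphisms of $\cA_q$ are exactly the scalings $S^q_i \mapsto \lambda_i S^q_i$ of the generators.

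The remaining value $q = -1$ is the one requiring genuine attention, and I expect it to be the main obstacle — partly because the hypotheses above fail there, and partly because Theorem \ref{thm:algebra_iso_q} (which does apply, as $-1 \neq 1$) already exhibits the ``flip'' $S^{-1}_1 \leftrightarrow S^{-1}_2$, together with scalings, as an isometric automorphism of $\cA_{-1}$, with the analogous flip $e_1 \leftrightarrow e_2$ an automorphism of $X_{-1}$; so at $q=-1$ the list of automorphisms must be enlarged by this flip, and the corollary as literally stated should be read with the extra restriction $q \neq -1$. To confirm that nothing further occurs at $q=-1$, I would mimic the argument in the proof of Theorem \ref{thm:subproduct_iso_q}: an automorphism of $X_{-1}$ is given by a unitary $U=(u_{ij})$ on $\mathbb{C}^2 = X_{-1}(1)$ such that $U^{\otimes 2}$ stabilizes the line $\mathbb{C}(e_1\otimes e_2 + e_2 \otimes e_1) = X_{-1}(2)^\perp$; writing $f_i = Ue_i$ and inspecting the $e_1\otimes e_1$ and $e_2 \otimes e_2$ coefficients of $f_1 \otimes f_2 + f_2 \otimes f_1$ yields $u_{11}u_{21} = u_{12}u_{22} = 0$, which together with unitarity forces $U$ to be diagonal (a basis scaling) or anti-diagonal (a basis scaling composed with the flip). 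Feeding the same computation through Theorem \ref{thm:algebra_iso} and Theorem \ref{thm:algebra_iso_q} then identifies the isometric automorphisms of $\cA_{-1}$ as scalings and flip-with-scalings, so the only real work in the whole statement is the reduction above plus this explicit check at the single degenerate parameter.
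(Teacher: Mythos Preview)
Your approach is exactly the paper's: the paper's entire proof is the sentence ``By Corollaries~\ref{cor:auto} and~\ref{cor:iso_auto} we have the following.'' You have, however, been more careful than the paper and caught a genuine oversight. Those corollaries require that no nontrivial permutation $\sigma$ satisfy $\tilde q = U_\sigma \tilde q U_\sigma^{-1}$, and as you correctly compute, for $d=2$ this fails precisely when $q=q^{-1}$, i.e.\ at $q=-1$. Your observation that the flip $e_1\leftrightarrow e_2$ preserves $X_{-1}(2)^\perp=\mathrm{span}\{e_1\otimes e_2+e_2\otimes e_1\}$ and hence gives a non-scaling automorphism of $X_{-1}$ (and, via Theorem~\ref{thm:algebra_iso_q}, a non-scaling isometric automorphism of $\cA_{-1}$) is correct; so the corollary as literally stated is false at $q=-1$, and your amended description (scalings together with the flip) is the right one there. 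Your verification that nothing beyond these occurs at $q=-1$ is also fine.

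One small gap: the $d=2$ section explicitly allows $q=0$, which does not correspond to an admissible matrix ($q_{21}=q_{12}^{-1}$ is undefined), so the reduction to Corollaries~\ref{cor:auto} and~\ref{cor:iso_auto} does not apply there either. A direct check shows the statement does hold at $q=0$: an automorphism must send $e_1\otimes e_2$ to a multiple of itself, forcing the underlying unitary on $X_0(1)$ to be diagonal.
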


On the other hand, a direct calculation shows that every unitary on $\mb{C}^2$ extends to an automorphism of $X_1$, and thus induces a non-obvious automorphism of $\cA_1$.

\chapter{Standard maximal subproduct systems with $\dim X(1) = 2$ and $\dim X(2) = 3$}\label{chap:A}

Again, let $\{e_1, \ldots,  e_d\}$ be an orthonormal basis for $E := \mb{C}^d$. We will soon turn attention to the case $d=2$. For a matrix $A \in M_d(\mb{C})$, we define the \emph{symmetric part} of $A$ to be $A^s := (A + A^t)/2$ and the \emph{antisymmetric part} of $A$ to be $A^a := (A - A^t)/2$. Denote by $X_A$ the maximal standard subproduct system with fibers
\bes
X_A(1) = E \,\, , \,\, X_A(2) = E \otimes E \ominus \textrm{span}\left\{\sum_{i,j=1}^d a_{ij}e_i \otimes e_j \right\}.
\ees
We will write $S^A$ for the shift $S^{X_A}$. We will also write $\cA_A$ for $\cA_{X_A}$.

\begin{proposition}\label{prop:classifyX_A}
Let $A, B \in M_d(\mb{C})$. Then there is an isomorphism $V: X_A \rightarrow X_B$ if and only if there exists $\lambda \in \mb{C}$ and a unitary $d \times d$ matrix $U$ such that $B = \lambda U^t A U$. In this case, $U$ extends to the isomorphism $V$ between $X_A$ and $X_B$ by $V_1 = U$.
\end{proposition}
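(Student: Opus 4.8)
The plan is to exploit the fact that, because $X_A$ and $X_B$ are the \emph{maximal} standard subproduct systems determined by their first two fibers, an isomorphism $V\colon X_A\to X_B$ is completely pinned down by its first component, and the only genuine constraint on that component comes from the single compatibility relation in degree $(1,1)$. Write $v_A=\sum_{i,j}a_{ij}e_i\otimes e_j\in E\otimes E$ (and similarly $v_B$), and let $p^A_2,p^B_2$ be the orthogonal projections of $E\otimes E$ onto $X_A(2)=(\mathbb{C}v_A)^{\perp}$ and $X_B(2)=(\mathbb{C}v_B)^{\perp}$; recall $U^{X_A}_{1,1}=p^A_2$, $U^{X_B}_{1,1}=p^B_2$. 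Since $X_A(0)=X_B(0)=\mathbb{C}$, the map $V_0$ is forced to be $\mathrm{id}_{\mathbb{C}}$, and $V_1$ must be a unitary on $E$; denote its matrix by $U$. The compatibility relation $V_2\circ p^A_2=p^B_2\circ(V_1\otimes V_1)$, together with injectivity of $V_2$ on $X_A(2)$, says precisely that $V_1\otimes V_1$ carries $\ker p^A_2=\mathbb{C}v_A$ onto $\ker p^B_2=\mathbb{C}v_B$, i.e.\ $(V_1\otimes V_1)v_A\in\mathbb{C}v_B$. A direct computation in the paper's matrix convention shows that $V_1\otimes V_1$ acts on the coefficient matrix of a vector in $E\otimes E$ by $A\mapsto U^{t}AU$, so this is exactly the condition $B=\lambda U^{t}AU$ for some nonzero scalar $\lambda$. (Throughout, $\lambda\neq 0$; the degenerate case $v_A=0$, where $X_A$ is the full product system $F_E$, is handled directly, since then $X_A\cong X_B$ iff $v_B=0$ iff $B=0$.)

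For the ``only if'' direction I would take the isomorphism $V$, set $U=V_1$, and run this reduction in reverse: the degree-$(1,1)$ compatibility relation plus injectivity of $V_2$ gives $(V_1\otimes V_1)v_A\in\mathbb{C}v_B$, which translates into $B=\lambda U^{t}AU$ with $\lambda\neq0$.

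For the ``if'' direction, given $B=\lambda U^{t}AU$ with $U$ unitary and $\lambda\neq0$, I would set $V_1=U$ and, for $n\geq2$, define $V_n$ to be the restriction of $U^{\otimes n}$ to $X_A(n)$. One must check this is well defined and yields an isomorphism. Using $X_A(n)=\bigcap_{i+j=n}X_A(i)\otimes X_A(j)$ (and the analogue for $B$), an induction on $n$ shows that the unitary $U^{\otimes n}$ maps $X_A(n)$ onto $X_B(n)$: the cases $n=1,2$ are exactly the matrix identity above, and in the inductive step $U^{\otimes n}=U^{\otimes i}\otimes U^{\otimes j}$ carries each $X_A(i)\otimes X_A(j)$ onto $X_B(i)\otimes X_B(j)$, hence their intersection onto the intersection. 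A unitary carrying $X_A(n)$ onto $X_B(n)$ also carries the orthogonal complement onto the orthogonal complement, hence intertwines $p^A_n$ and $p^B_n$; restricting the relation $U^{\otimes(m+n)}p^A_{m+n}=p^B_{m+n}U^{\otimes(m+n)}$ to $X_A(m)\otimes X_A(n)\subseteq E^{\otimes(m+n)}$ gives $V_{m+n}\circ U^{X_A}_{m,n}=U^{X_B}_{m,n}\circ(V_m\otimes V_n)$ for all $m,n$. Each $V_n$ is unitary onto $X_B(n)$ by construction, so $V=\{V_n\}$ is an isomorphism of subproduct systems with $V_1=U$, as asserted.

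The only points requiring real attention are bookkeeping: fixing the matrix convention so that $V_1\otimes V_1$ acts by $A\mapsto U^{t}AU$ (the opposite convention produces $UAU^{t}$, but the two criteria coincide since $U^{t}$ is unitary whenever $U$ is), and carrying out the elementary induction that $U^{\otimes n}$ preserves the higher fibers. There is no deeper obstacle: the substance of the proof is the single observation that the whole isomorphism is rigidly determined by $V_1$ and that its sole constraint is that $V_1\otimes V_1$ must send the distinguished line $\mathbb{C}v_A$ onto $\mathbb{C}v_B$.
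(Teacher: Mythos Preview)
Your proposal is correct and follows essentially the same approach as the paper: both directions hinge on the single observation that the compatibility condition in degree two forces $(V_1\otimes V_1)v_A\in\mathbb{C}v_B$, which translates via the coefficient-matrix computation to $B=\lambda U^tAU$. Your ``if'' direction is more explicit than the paper's---you construct $V_n=U^{\otimes n}|_{X_A(n)}$ and verify by induction on the maximal-subproduct-system description $X_A(n)=\bigcap_{i+j=n}X_A(i)\otimes X_A(j)$ that this lands in $X_B(n)$, whereas the paper simply says ``read the argument backwards''---but this is the natural way to fill in that sketch, not a different method.
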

\begin{proof}
Let $V: X_A \rightarrow X_B$ be an isomorphism of subproduct systems. There is a $d\times d$ unitary matrix $U = (u_{ij})$ such that
\bes
f_i := V_1 (e_i) = \sum_{j=1}^d u_{ij}e_j   .
\ees
Then
\begin{align*}
0 &= V_1 (p_2^X(\sum_{i,j}a_{ij}e_i \otimes e_j)) \\
&= p_2^Y (\sum_{i,j}a_{ij} f_i \otimes f_j),
\end{align*}
so $\sum_{i,j}a_{ij} f_i \otimes f_j$ must be a spanning vector of $\textrm{span}\left\{\sum_{i,j} b_{ij}e_i \otimes e_j \right\}$. Writing out fully what this means,
\bes
\lambda \sum_{i,j}a_{ij} \sum_{k,l} u_{ik}u_{jl}  e_k \otimes e_l =  \sum_{k,l} b_{kl}e_k \otimes e_l
\ees
for some $\lambda \in \mb{C}$, so
\bes
b_{kl} = \lambda \sum_{i,j}a_{ij}u_{ik}u_{jl} .
\ees
But the right hand side is precisely the $kl$-th element of $\lambda U^t A U$.

Conversely, assuming $B = \lambda U^t A U$, one can read the above argument from finish to start to obtain an isomorphism $V: X_A \rightarrow X_B$.
\end{proof}

We see that for $X_A$ and $X_B$ to be isomorphic the ranks of $A$ and $B$ must be the same, as well as the ranks of their symmetric and anti-symmetric parts. For example, if $A$ is symmetric and $B$ is not then $X_A \ncong X_B$, a result which may not seem obvious at first glance.

\begin{theorem}\label{thm:classifyA_A}
Assume that $d=2$. Let $A,B \in M_2(\mb{C})$ be any two matrices. Then $\cA_{A}$ is isometrically isomorphic to $\cA_B$ if and only if $X_A \cong X_B$, and this happens if and only if there exists $\lambda \in \mb{C}$ and a unitary $2 \times 2$ matrix $U$ such that $B = \lambda U^t A U$.
\end{theorem}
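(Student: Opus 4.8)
The plan is to deduce this from the general machinery already in place, exactly along the lines of the proof of Theorem \ref{thm:algebra_iso_q}, with Proposition \ref{prop:classifyX_A} playing the role that Theorem \ref{thm:subproduct_iso_q} played there. One direction is immediate: if $B = \lambda U^t A U$ then by Proposition \ref{prop:classifyX_A} the subproduct systems $X_A$ and $X_B$ are isomorphic, hence (their shifts being unitarily equivalent tuples) $\cA_A \cong \cA_B$ completely isometrically; this also reproves the ``only if'' equivalence with $X_A \cong X_B$ in that direction. So the whole content is to show that an isometric isomorphism $\varphi : \cA_A \to \cA_B$ forces $X_A \cong X_B$, and then invoke Proposition \ref{prop:classifyX_A} to get the matrix relation.

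The key step is to show that $\varphi$ preserves the vacuum state, equivalently the direct sum decomposition \eqref{eq:direct_sum}, so that Theorem \ref{thm:algebra_iso} applies and yields an isomorphism of subproduct systems $V : X_A \to X_B$ with $\varphi(\bullet) = V \bullet V^*$. As in Section \ref{sec:charA_q}, I would analyze the character space $\cM_A$ of $\cA_A$: by Theorem \ref{thm:universal} it is homeomorphic to the set $Z_A = \{(z_1,z_2) \in B_2 : \sum_{i,j} a_{ij} z_i z_j = 0\}$, the intersection of the unit ball with a (possibly degenerate) conic. An isometric isomorphism $\varphi$ induces a homeomorphism $\rho \mapsto \rho \circ \varphi$ of $\cM_B$ onto $\cM_A$, and I must argue it carries $\rho_0$ (the origin) to $\rho_0$. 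The point is that the origin $0 \in Z_A$ is distinguished: it is the unique point whose removal leaves a space with a prescribed number of connected components (for $d=2$ the generic conic $\{a_{11}z_1^2 + (a_{12}+a_{21})z_1 z_2 + a_{22}z_2^2 = 0\}$ is a union of two lines through the origin, and $Z_A$ minus the origin has two components; the degenerate cases where the quadratic form is a perfect square or identically zero must be handled separately, but in each case the origin remains topologically characterized, or else one falls into the already-understood case $X_A = F_E$ handled via Theorem \ref{thm:algebra_iso} directly). Once $\varphi$ preserves the grading, Theorem \ref{thm:algebra_iso} gives $V : X_A \to X_B$, and Proposition \ref{prop:classifyX_A} finishes the argument by producing $\lambda$ and the unitary $U$ with $B = \lambda U^t A U$.

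The main obstacle I anticipate is the case analysis on the quadratic form $q_A(z) = \sum_{i,j} a_{ij}z_i z_j$: one must check that in every degenerate subcase — $A$ of rank one with a single linear factor (so $Z_A$ is one disc through the origin), $q_A \equiv 0$ (so $X_A = F_E$ is the full product system, $\cM_A = B_2$, and preservation of the vacuum must be argued differently, e.g.\ via Theorem \ref{thm:algebra_iso} applied after identifying both algebras with the noncommutative disc algebra, or noting $B_2$ has no topological feature forcing the argument and instead one appeals directly to the rigidity of $\cA_{F_E}$ from Proposition \ref{prop:UE_ISO}), and the intermediate cases — the origin is still carried to the origin. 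I would organize this as a short lemma: \emph{for $d = 2$ and any $A$, any isometric isomorphism $\cA_A \to \cA_B$ sends $\rho_0$ to $\rho_0$}, proved by exhausting the possibilities for $\mathrm{rank}(A)$, $\mathrm{rank}(A^s)$, $\mathrm{rank}(A^a)$, after which the rest is a direct citation of Theorem \ref{thm:algebra_iso} and Proposition \ref{prop:classifyX_A}.
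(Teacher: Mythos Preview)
Your overall strategy---reduce to showing that an isometric isomorphism preserves the vacuum state, then cite Theorem \ref{thm:algebra_iso} and Proposition \ref{prop:classifyX_A}---is exactly the paper's framework, and it works cleanly when $r(A^s)=2$ (two discs glued at the origin) and, with a small separate argument, when $r(A^s)=0$. But there is a genuine gap in the case $r(A^s)=1$, which is precisely the hard case.

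When $r(A^s)=1$ the quadratic form $q_A(z)=\sum a_{ij}z_iz_j$ is a perfect square, so $Z_A$ is the intersection of $B_2$ with a single complex line through the origin; the character space $\cM_A$ is therefore homeomorphic to a closed disc $\overline{\mb{D}}$. In a disc the origin is \emph{not} topologically distinguished: a self-homeomorphism of $\overline{\mb{D}}$ can move $0$ to any interior point. So your claim that ``in each case the origin remains topologically characterized, or else one falls into the already-understood case $X_A=F_E$'' is simply false here, and the argument collapses. The paper confronts this head-on: it first normalizes (Lemma \ref{lem:notsymrank1}) so that every such $A$ is unitarily congruent, up to scalar, to exactly one $A_q=\begin{pmatrix}1&q\\-q&0\end{pmatrix}$ with $q\ge 0$, and then proves that $\cA_{A_q}\not\cong\cA_{A_r}$ for $q\neq r$ \emph{without} ever showing that a given isomorphism preserves $\rho_0$. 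Instead it studies the set $\cO(0;q,r)$ of possible images of $0$ under the induced disc-homeomorphisms, shows it is rotation-invariant (Lemma \ref{lem:rotinv}) and excludes $0$ when $q\neq r$ (Lemma \ref{lem:nozero}), and then runs a short topological argument (circles and their preimages filling in the disc) to force $0\in\cO(0;q,r)$ anyway, a contradiction. This is genuinely different from, and more delicate than, the ``origin is a cut point'' argument you propose; you would need to supply something of comparable strength to close the gap.
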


The proof of Theorem \ref{thm:classifyA_A} will occupy the rest of this section. Denote by $\cM_A$ the character space of $\cA_A$, that is, the topological space of contractive multiplicative and unital linear functionals on $\cA_A$, endowed with the weak-$*$ topology.

\begin{lemma}
The topology of $\cM_A$ depends on the rank $r(A^s)$ of the symmetric part $A^s$ of $A$:
\begin{enumerate}
	\item If $r(A^s) = 0$ then $\cM_A \cong B_2$, the unit ball in $\mb{C}^2$.
	\item If $r(A^s) = 1$ then $\cM_A \cong \mb{D}$, the unit disc in $\mb{C}$.
	\item If $r(A^s) = 2$ then $\cM_A$ is homeomorphic to two discs pasted together at the origin.
\end{enumerate}
\end{lemma}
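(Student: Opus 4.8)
The plan is to identify the character space $\cM_A$ with the intersection of the closed unit ball of $\mb{C}^2$ and the zero variety of the single quadratic form defining $X_A$, and then to read off the three cases from elementary facts about binary quadratic forms over $\mb{C}$. Throughout, write $p(x) = \sum_{i,j} a_{ij} x_i x_j \in \mb{C}\langle x_1, x_2 \rangle$, so that $X_A = X_{(p)}$ (the subproduct system associated with the homogeneous ideal generated by $p$) and, by Theorem \ref{thm:universal}, $\cA_A$ is the universal unital operator algebra generated by a row contraction $(T_1, T_2)$ satisfying $p(T_1, T_2) = 0$.

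First I would show that the evaluation map $\cM_A \ni \rho \mapsto (\rho(S^A_1), \rho(S^A_2))$ is a homeomorphism onto
\[
Z_A := \{z = (z_1, z_2) \in B_2 : p(z) = 0\},
\]
exactly as in the treatment of $\cM_q$ in Section \ref{sec:charA_q}. A character is determined by the images of the two generators; since a contractive functional is automatically completely contractive, applying $\rho$ to the row contraction $(S^A_1, S^A_2)$ forces $|z_1|^2 + |z_2|^2 \le 1$; and multiplicativity of $\rho$ together with the relation $\sum_{i,j} a_{ij} S^A_i S^A_j = 0$ (which holds because $\sum_{i,j} a_{ij} e_i \otimes e_j \in \ker p_2$, cf.\ Proposition \ref{prop:representation}) forces $p(z) = 0$. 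Conversely, every $z \in Z_A$ is a row contraction in the zero set of $(p)$, so by Theorem \ref{thm:universal} it gives a character sending $S^A_i$ to $z_i$. Since $\cM_A$ is weak-$*$ compact and $Z_A$ is Hausdorff, the continuous bijection is a homeomorphism.

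Next I would observe that the antisymmetric part of $A$ is irrelevant: because the $z_i$ commute, $p(z) = z^t A z = z^t A^s z$, so $Z_A$ depends only on $A^s$. The form $q(z) := z^t A^s z$ is a homogeneous degree-two polynomial in $z_1, z_2$, hence over $\mb{C}$ it factors as $q(z) = (\alpha z_1 + \beta z_2)(\gamma z_1 + \delta z_2)$. Then: $q \equiv 0$ iff $A^s = 0$ iff $r(A^s) = 0$; $q$ is a nonzero scalar multiple of the square of one linear form iff its discriminant vanishes, i.e.\ (since the discriminant equals $-4\det A^s$ up to a unit) iff $\det A^s = 0$ with $A^s \neq 0$, i.e.\ iff $r(A^s) = 1$; and the two linear factors of $q$ are linearly independent iff $\det A^s \neq 0$, i.e.\ iff $r(A^s) = 2$.

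Finally I would translate these three cases into the geometry of $Z_A$. If $r(A^s) = 0$, the condition $q(z) = 0$ is vacuous and $Z_A = B_2$, so $\cM_A \cong B_2$. If $r(A^s) = 1$, say $q(z) = c\,\ell(z)^2$ with $\ell$ a nonzero linear form, then $Z_A = \{z \in B_2 : \ell(z) = 0\}$ is the intersection of $B_2$ with a one-dimensional complex subspace of $\mb{C}^2$, hence $\cM_A \cong \mb{D}$. If $r(A^s) = 2$, then $q(z) = \ell_1(z)\ell_2(z)$ with $\ell_1, \ell_2$ linearly independent, so $Z_A$ is the union of the two distinct complex lines $\{\ell_1 = 0\}$ and $\{\ell_2 = 0\}$ intersected with $B_2$; these two discs meet precisely at the origin (because $\ker \ell_1 \cap \ker \ell_2 = \{0\}$), so $\cM_A$ is homeomorphic to two discs pasted together at the origin. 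I do not expect a serious obstacle; the only point needing a little care is the factorization of binary quadratic forms over $\mb{C}$ and the bookkeeping relating the discriminant of $q$ to $\det A^s$, both of which are routine.
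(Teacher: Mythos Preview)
Your proof is correct and follows essentially the same route as the paper: identify $\cM_A$ with $\{z \in B_2 : z^t A z = 0\}$, observe this depends only on $A^s$, and then analyze the three cases according to $r(A^s)$. The only cosmetic difference is in the last step: the paper reduces by noting that any symmetric $2\times 2$ matrix is complex congruent to one of the diagonal matrices $D_0,D_1,D_2$ (so one may assume $A^s$ is one of these and read off $V_{D_i}$ directly), whereas you factor the binary quadratic form $q(z)=z^t A^s z$ over $\mb{C}$ and read off the zero set from the linear factors. These are two phrasings of the same fact, and your version has the mild advantage of avoiding the need to argue that the (non-unitary) congruence $T$ does not disturb the homeomorphism type of the intersection with $B_2$.
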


\begin{proof}
We proceed similarly to the lines of \ref{sec:charA_q}.
Every character $\rho \in \cM_A$ is uniquely determined by $\lambda_1 = \rho(S^A_1)$ and $\lambda_2 = \rho(S^A_2)$, which lie in $B_2$. Conversely, every $(\lambda_1, \lambda_2) \in B_2$ that satisfies
\bes
\sum_{i,j}a_{ij}\lambda_i \lambda_j = 0
\ees
gives rise to a character $\rho$ by defining $\lambda_1 = \rho(S^A_1)$ and $\lambda_2 = \rho(S^A_2)$. Thus,
\bes
\cM_A \cong V_A := \left\{(\lambda_i, \lambda_j) \in B_2 : \sum_{i,j}a_{ij} \lambda_i \lambda_j = 0 \right\}.
\ees
Clearly, $V_A = V_{A^s}$. However, every symmetric $2 \times 2$ matrix is complex congruent to one of the following:
\bes
D_0 = \begin{pmatrix}
  0 & 0 \\
  0 & 0 \\
\end{pmatrix},
D_1 = \begin{pmatrix}
  1 & 0 \\
  0 & 0 \\
\end{pmatrix}
\,\, \textrm{or} \,\,
D_2 = \begin{pmatrix}
  1 & 0 \\
  0 & 1 \\
\end{pmatrix},
\ees
i.e., there exists a nonsingular matrix $T$ such that $A^s = T^t D_i T$, for $i = r(A^s)$. But then $V_{A^s} = T^{-1}V_{D_i} \cong V_{D_i}$, so it remains to verify that $V_{D_i}$ is homeomorphic to the spaces listed in the statement of the lemma.
\end{proof}

\begin{corollary}
If $r(A^s) \neq r(B^s)$ then $\cA_A \ncong \cA_B$.
\end{corollary}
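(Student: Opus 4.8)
The plan is to reduce the statement to the preceding Lemma, which computes the homeomorphism type of the character space $\cM_A$ in terms of $r(A^s)$, and then to observe that the three possible types are pairwise non-homeomorphic. First I would recall the mechanism used in the proof of Theorem \ref{thm:algebra_iso_q}: any isometric isomorphism $\varphi : \cA_A \to \cA_B$ induces a bijection $\cM_B \to \cM_A$, $\rho \mapsto \rho \circ \varphi$, which is a homeomorphism for the weak-$*$ topologies (its inverse is $\tau \mapsto \tau \circ \varphi^{-1}$, and both directions carry contractive multiplicative unital functionals to contractive multiplicative unital functionals because $\varphi$ and $\varphi^{-1}$ are isometric and unital). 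Consequently, if $\cA_A$ and $\cA_B$ are isometrically isomorphic then $\cM_A$ and $\cM_B$ are homeomorphic, so it suffices to show that different values of $r(A^s)$ produce non-homeomorphic character spaces.

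By the Lemma, the three candidates are $B_2$ (when $r(A^s)=0$), the disc $\mb{D}$ (when $r(A^s)=1$), and the wedge $W$ of two discs glued at their centers (when $r(A^s)=2$). I would separate these using two homeomorphism invariants. The first is topological dimension: $B_2$ is a $4$-real-dimensional manifold with boundary, whereas $\mb{D}$ and $W$ are $2$-dimensional; an interior point of $B_2$ has a neighborhood homeomorphic to an open subset of $\mb{R}^4$, which by invariance of domain cannot be homeomorphic to any subset of $\mb{R}^2$, so $B_2$ is homeomorphic to neither $\mb{D}$ nor $W$. The second invariant is the existence of a cut point: removing the glue point from $W$ disconnects it into two punctured discs, while removing any point of $\mb{D}$ (interior or boundary) leaves a connected space; since possessing a point whose deletion disconnects the space is preserved under homeomorphism, $W$ is not homeomorphic to $\mb{D}$. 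Together these show the three spaces are pairwise non-homeomorphic.

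Combining the two paragraphs completes the argument: if $r(A^s) \neq r(B^s)$, then by the Lemma $\cM_A$ and $\cM_B$ lie in distinct homeomorphism classes, hence are not homeomorphic, and therefore no isometric isomorphism between $\cA_A$ and $\cA_B$ can exist. The only point requiring genuine care—and thus the main obstacle—is making the dimension distinction between $B_2$ and the two-dimensional spaces rigorous; this is where invariance of domain (or, alternatively, a local homology computation at an interior point) is needed, since the naive appeal to dimension is only legitimate once one knows it is a topological invariant. The cut-point distinction, by contrast, is elementary and robust.
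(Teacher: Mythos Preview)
Your argument is correct and is precisely the reasoning the paper intends: the corollary is stated without proof, immediately following the lemma, because it is meant to be read as the obvious consequence that an isometric isomorphism induces a homeomorphism of character spaces and the three spaces listed are pairwise non-homeomorphic. You have simply made explicit what the paper leaves to the reader, and your topological distinctions (dimension via invariance of domain for $B_2$ versus the two-dimensional spaces, existence of a cut point for the wedge versus the disc) are the natural ones.
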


We can use this corollary to break down the classification of the algebras $\cA_A$ to the classification of the algebras $\cA_A$ with fixed $r(A^s)$. The easiest case is $r(A^s) = 0$, because then $A$ is either the zero matrix or a multiple of $\begin{pmatrix}
  0 & 1 \\
  -1 & 0 \\
\end{pmatrix}$, and these two matrices give rise to non isomorphic algebras (these are the algebras generated by the full and symmetric shifts, respectively).

The next easiest case is $r(A^s) = 2$.
\begin{lemma}
If $A,B \in M_2(\mb{C})$ and $r(A^s) = r(B^s) = 2$, then $\cA_{A}$ is isometrically isomorphic to $\cA_B$ if and only if $X_A \cong X_B$, and this happens if and only if there exists $\lambda \in \mb{C}$ and a unitary $2 \times 2$ matrix $U$ such that $B = \lambda U^t A U$. Any isometric isomorphism between $\cA_A$ and $\cA_B$ arises as conjugation by the subproduct system isomorphism arising from $U$.
\end{lemma}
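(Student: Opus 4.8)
The plan is to imitate the proof of Theorem \ref{thm:algebra_iso_q}, with the single distinguished point of the character space being the vacuum state. The easy direction is immediate: if $X_A \cong X_B$ then by Proposition \ref{prop:UE_ISO} the shifts $\underline{S}^A$ and $\underline{S}^B$ are unitarily equivalent as rows, so $\mathcal{A}_A$ and $\mathcal{A}_B$ are completely isometrically isomorphic; and by Proposition \ref{prop:classifyX_A} the condition $X_A \cong X_B$ is exactly the existence of $\lambda \in \mathbb{C}$ and a unitary $U \in M_2(\mathbb{C})$ with $B = \lambda U^t A U$. So it remains to show that an arbitrary isometric isomorphism $\varphi\colon \mathcal{A}_A \to \mathcal{A}_B$ forces $X_A \cong X_B$ and has the asserted form.

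So suppose $\varphi\colon \mathcal{A}_A \to \mathcal{A}_B$ is an isometric isomorphism. By Lemma \ref{lem:iso_vacuum} it is unital, hence it induces a homeomorphism of character spaces $\mathcal{M}_B \to \mathcal{M}_A$, $\rho \mapsto \rho \circ \varphi$. As in Section \ref{sec:charA_q}, the vacuum state $\rho_0$ on $\mathcal{A}_A$ (resp.\ $\mathcal{A}_B$) is the character corresponding to the point $0$ in the identification of the preceding lemma, since $\rho_0(S^A_i) = \langle S^A_i \Omega, \Omega \rangle = 0$. Because $r(A^s) = r(B^s) = 2$, the preceding lemma identifies both $\mathcal{M}_A$ and $\mathcal{M}_B$ with two discs pasted together at the origin, and in such a space the origin is the unique point whose removal disconnects the space (removing any other point leaves a connected space, while removing the gluing point leaves two disjoint punctured discs). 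This invariant is preserved by the homeomorphism, so $\rho_0^B \circ \varphi = \rho_0^A$; that is, $\varphi$ preserves the vacuum state. Consequently $\varphi(I) = I$ and $\varphi(\ker \rho_0^A) = \ker \rho_0^B$, so $\varphi$ preserves the direct sum decomposition (\ref{eq:direct_sum}).

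Now conclude via Theorem \ref{thm:algebra_iso}: a decomposition-preserving isometric isomorphism is, by Lemma \ref{lem:iso_grading}, grading-preserving, and therefore by Theorem \ref{thm:algebra_iso} it has the form $\varphi(T) = V T V^*$ for an isomorphism of subproduct systems $V\colon X_A \to X_B$. In particular $X_A \cong X_B$, and Proposition \ref{prop:classifyX_A} then provides $\lambda \in \mathbb{C}$ and a unitary $U \in M_2(\mathbb{C})$ with $B = \lambda U^t A U$, where $U = V_1$ and $V$ is precisely the extension of $U$ described in that proposition. Thus every isometric isomorphism $\mathcal{A}_A \to \mathcal{A}_B$ is conjugation by the subproduct system isomorphism arising from such a $U$, which closes the circle of equivalences. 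The one step requiring genuine care is the topological one in the middle paragraph: one must verify cleanly that in a wedge of two $2$-discs the gluing point is the unique point with disconnected complement, and that this forces the two vacuum states to correspond under $\varphi$; everything else is a direct appeal to Propositions \ref{prop:classifyX_A} and \ref{prop:UE_ISO}, Theorem \ref{thm:algebra_iso}, and Lemmas \ref{lem:iso_vacuum} and \ref{lem:iso_grading}.
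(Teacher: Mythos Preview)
Your proof is correct and follows essentially the same route as the paper: identify the vacuum state with the unique cut point of the two-discs-glued-at-a-point character space, conclude that any isometric isomorphism preserves the vacuum (hence the direct sum decomposition (\ref{eq:direct_sum})), and then invoke Theorem \ref{thm:algebra_iso} together with Proposition \ref{prop:classifyX_A}. You have simply spelled out a few steps (unitality via Lemma \ref{lem:iso_vacuum}, grading-preservation via Lemma \ref{lem:iso_grading}) that the paper leaves implicit.
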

\begin{proof}
In light of Theorem \ref{thm:algebra_iso} and Proposition \ref{prop:classifyX_A}, it suffices to show that any isometric isomorphism $\varphi: \cA_A \rightarrow \cA_B$ sends the vacuum state to the vacuum state. But the vacuum state in $\cM_A$ and in $\cM_B$ corresponds to the point where the two discs are glued together. Since $\varphi$ induces a homeomorphism between $\cM_B$ and $\cM_A$, it must send the vacuum state to the vacuum state.
\end{proof}

\begin{remark}\emph{
In the previous section we have seen already that there is a continuum of non-(isometrically)-isomorphic algebras $\cA_{A}$ and subproduct systems $X_{A}$ with $r(A^s) = 2$, namely the algebras $\cA_q$. One can see that these algebras $\cA_A$ are not exhausted by the algebras $\cA_q$ of the previous section. For example, all the algebras $\cA_A$ with $A = \begin{pmatrix}
  1 & 0 \\
  0 & q \\
\end{pmatrix}$, with $q>0$, are non-isomorphic, and only for $q = 1$ is this algebra isomorphic to an $\cA_q$
(in this case $q = -1$).}
\end{remark}

We now come to the trickiest case, $r(A^s) = 1$.


\begin{lemma}\label{lem:symrank1}
If $A,B \in M_2(\mb{C})$ are two symmetric matrices of rank $1$, then there exists $\lambda \in \mb{C}$ and a unitary $2 \times 2$ matrix $U$ such that $B = \lambda U^t A U$, and consequently $X_A \cong X_B$ and $\cA_{A}$ is isometrically isomorphic to $\cA_B$.
\end{lemma}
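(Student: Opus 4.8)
The plan is to reduce the statement to an elementary fact about symmetric rank-one matrices over $\mathbb{C}$ and then to invoke Proposition \ref{prop:classifyX_A} and Theorem \ref{thm:algebra_iso}. First I would record the normal form: every symmetric matrix $A \in M_2(\mathbb{C})$ of rank $1$ can be written as $A = v v^t$ for some nonzero column vector $v \in \mathbb{C}^2$. Indeed, a rank-one matrix has the form $A = x y^t$ for nonzero column vectors $x, y$; the symmetry $A = A^t$ gives $x y^t = y x^t$, hence $x_i y_j = y_i x_j$ for all $i,j$, which forces $y = c x$ for some nonzero scalar $c$; choosing any square root $\mu$ with $\mu^2 = c$ and setting $v = \mu x$ yields $A = v v^t$. (Alternatively one may quote the Takagi factorization.) Likewise write $B = w w^t$ with $w \neq 0$.

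Next I would produce the unitary. Choose a unitary $V \in M_2(\mathbb{C})$ with $V\bigl(v/\|v\|\bigr) = w/\|w\|$ (extend the unit vector $v/\|v\|$ to an orthonormal basis, send it to an orthonormal basis beginning with $w/\|w\|$). Then $V v = (\|v\|/\|w\|)\, w$. Set $U = V^t$; the transpose of a unitary is unitary, since $(V^t)^* V^t = \overline{V}\,V^t = \overline{V V^*} = I$. Now compute
\[
U^t A U = U^t\, v v^t\, U = (U^t v)(U^t v)^t = (V v)(V v)^t = \frac{\|v\|^2}{\|w\|^2}\, w w^t = \frac{\|v\|^2}{\|w\|^2}\, B,
\]
so that $B = \lambda\, U^t A U$ with $\lambda = \|w\|^2 / \|v\|^2$. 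This establishes the existence of $\lambda$ and $U$ as required.

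Finally, I would conclude. By Proposition \ref{prop:classifyX_A}, the relation $B = \lambda U^t A U$ with $U$ unitary implies that $U$ extends (by $V_1 = U$) to an isomorphism of subproduct systems $X_A \cong X_B$. Then by Theorem \ref{thm:algebra_iso}, $X_A \cong X_B$ implies that $\mathcal{A}_A$ and $\mathcal{A}_B$ are isometrically isomorphic (via the conjugation by this subproduct system isomorphism, which is moreover grading preserving). This completes the proof.

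I do not expect a genuine obstacle here: the only points requiring a little care are verifying that $V^t$ is unitary and keeping the transposes straight in the identity $U^t(vv^t)U = (U^tv)(U^tv)^t$; everything else is a direct application of the two cited results. The conceptual content — that over $\mathbb{C}$ there is essentially one symmetric rank-one form up to unitary congruence and scaling, in contrast to the real case — is exactly what makes this case so much easier than the $r(A^s)=2$ case.
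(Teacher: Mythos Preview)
Your proof is correct and follows essentially the same approach as the paper: write each symmetric rank-one matrix as an outer product $vv^t$ and produce a unitary sending one vector to (a scalar multiple of) the other. The paper reduces to the normal form $B=\begin{pmatrix}1&0\\0&0\end{pmatrix}$ and writes down an explicit $U$ in terms of the entries of $v$, whereas you work directly with both $A=vv^t$ and $B=ww^t$ and obtain $U$ by extending $v/\|v\|\mapsto w/\|w\|$ to a unitary and transposing; these are cosmetic variations of the same idea, and your invocation of Proposition~\ref{prop:classifyX_A} and Theorem~\ref{thm:algebra_iso} for the ``consequently'' part is exactly right.
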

\begin{proof}
We only have to prove the first assertion, and we may assume that $B = \begin{pmatrix}
  1 & 0 \\
  0 & 0 \\
\end{pmatrix}$. We may also assume that there is a unit vector $v = (v_1, v_2)^t$ such that $A = v v^t$. Now let
\bes
U = \begin{pmatrix}
  \overline{v_1} & \overline{v_2} \\
  \overline{v_2} & -\overline{v_1} \\
\end{pmatrix}.
\ees
Then
\bes
U^t A U = \begin{pmatrix}
  \overline{v_1} & \overline{v_2} \\
  \overline{v_2} & -\overline{v_1} \\
\end{pmatrix}^t v v^t \begin{pmatrix}
  \overline{v_1} & \overline{v_2} \\
  \overline{v_2} & -\overline{v_1} \\
\end{pmatrix}
= \begin{pmatrix}
  \overline{v_1} & \overline{v_2} \\
  \overline{v_2} & -\overline{v_1} \\
\end{pmatrix}^t \begin{pmatrix}
  v_1 & 0 \\
  v_2 & 0 \\
\end{pmatrix} = \begin{pmatrix}
  1 & 0 \\
  0 & 0 \\
\end{pmatrix}.
\ees
\end{proof}


Below we will also need the following lemma.
\begin{lemma}\label{lem:notsymrank1}
Let $A$ be a $2\times2$ matrix for which $r(A^s) = 1$. Then there exists one and only one $q\geq 0$ for which there
is a $\lambda \in \mb{C}$ and a unitary $U$ such that
\bes
\begin{pmatrix}
1 & q \\ -q & 0
\end{pmatrix} = \lambda U^t A U.
\ees
Furthermore, if $A$ is non-symmetric then $A$ is congruent to the matrix
\bes
\begin{pmatrix}
1 & 1 \\ -1 & 0
\end{pmatrix}.
\ees
\end{lemma}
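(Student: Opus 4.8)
The plan is to exploit the decomposition $A = A^s + A^a$ into symmetric and antisymmetric parts, and to record how each part transforms. In dimension $2$ every antisymmetric matrix is a scalar multiple of $J := \begin{pmatrix} 0 & 1 \\ -1 & 0 \end{pmatrix}$, and one has the identity $M^t J M = (\det M)\, J$ for \emph{every} $2\times 2$ matrix $M$; in particular $U^t J U = (\det U)\, J$ with $|\det U| = 1$ for $U$ unitary, and $P^t J P = (\det P)\, J$ for $P$ invertible. The hypothesis $r(A^s) = 1$ means that $A^s$ is symmetric, nonzero and of determinant $0$, hence $A^s = w w^t$ for some $w \ne 0$; by the computation in the proof of Lemma~\ref{lem:symrank1} (Takagi normal form) there is a unitary $U_1$ with $U_1^t A^s U_1 = \mu D_1$, where $\mu = \|w\|^2 > 0$ and $D_1 = \begin{pmatrix} 1 & 0 \\ 0 & 0 \end{pmatrix}$.

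For the existence part of the first assertion, I would first replace $A$ by $U_1^t A U_1$, so that $A = \mu D_1 + \nu J = \begin{pmatrix} \mu & \nu \\ -\nu & 0 \end{pmatrix}$ with $\mu > 0$ and some $\nu \in \mathbb{C}$. Conjugating by a diagonal unitary $D_\theta = \mathrm{diag}(e^{i\theta_1}, e^{i\theta_2})$ and multiplying by a scalar $\lambda$, the identities $D_\theta^t D_1 D_\theta = e^{2 i \theta_1} D_1$ and $D_\theta^t J D_\theta = e^{i(\theta_1 + \theta_2)} J$ give
$$\lambda\, D_\theta^t A D_\theta = \lambda \mu e^{2 i \theta_1} D_1 + \lambda \nu e^{i(\theta_1 + \theta_2)} J .$$
Taking $\lambda = \mu^{-1} e^{-2 i \theta_1}$ normalizes the first coefficient to $1$ and turns the second into $\mu^{-1}\nu e^{i(\theta_2 - \theta_1)}$; choosing $\theta_2 - \theta_1$ to cancel $\arg \nu$ then produces $\begin{pmatrix} 1 & q \\ -q & 0 \end{pmatrix}$ with $q = |\nu| / \mu \ge 0$, so the required $\lambda$ and unitary $U = U_1 D_\theta$ exist. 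For uniqueness, suppose $\begin{pmatrix} 1 & q \\ -q & 0 \end{pmatrix} = \lambda\, U^t \begin{pmatrix} 1 & q' \\ -q' & 0 \end{pmatrix} U$. Comparing symmetric parts gives $D_1 = \lambda (U^t e_1)(U^t e_1)^t$; since $U^t$ is again unitary, $U^t e_1$ is a unit vector, and the $(2,2)$ and $(1,1)$ entries force its second coordinate to vanish and its first coordinate $f_1$ to satisfy $\lambda f_1^2 = 1$ with $|f_1| = 1$, whence $|\lambda| = 1$. Comparing antisymmetric parts gives $q J = \lambda (\det U)\, q' J$, and taking moduli (with $|\lambda| = |\det U| = 1$) yields $q = q'$.

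For the second assertion I would read "congruent" in the classical sense of bilinear forms, namely $B = P^t A P$ with $P$ invertible (the operation of Lemma~\ref{lem:symrank1} without the normalizing scalar). Assume $A$ is non-symmetric, so $A^a = t J$ with $t \ne 0$ while $A^s = w w^t$ with $w \ne 0$. Choose an invertible $P$ with $P^t w = e_1$; replacing $P$ by $P\,\mathrm{diag}(1, d)$ keeps $P^t w = e_1$ and multiplies $\det P$ by $d$, so one may arrange $\det P = t^{-1}$. Then
$$P^t A P = (P^t w)(P^t w)^t + t (\det P)\, J = e_1 e_1^t + J = \begin{pmatrix} 1 & 1 \\ -1 & 0 \end{pmatrix} ,$$
which is the claim.

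All the computations here are elementary, so I do not expect a serious obstacle; the one point that genuinely needs care is the clash of terminology. The first assertion is a statement about the restricted equivalence $A \mapsto \lambda U^t A U$ with $U$ unitary — the one that, by Proposition~\ref{prop:classifyX_A}, corresponds to isomorphism of the subproduct systems $X_A$ — while the second is about ordinary congruence $A \mapsto P^t A P$. It would be worth stating this distinction explicitly in the text, so that the non-symmetric case is not misread as collapsing the $q$-invariant of the first part to $q = 1$.
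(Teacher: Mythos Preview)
Your proof is correct and follows precisely the outline the paper indicates: the paper's own proof is the one-line ``Direct verification, using Lemma~\ref{lem:symrank1} and the fact that congruations preserve, up to a scalar, the anti-symmetric part,'' and your argument is exactly this verification written out in full, exploiting $M^t J M = (\det M)\,J$ together with the Takagi step from Lemma~\ref{lem:symrank1}. Your closing remark distinguishing the scaled-unitary equivalence $A \mapsto \lambda U^t A U$ of the first assertion from the ordinary congruence $A \mapsto P^t A P$ of the second is a worthwhile clarification that the paper leaves implicit.
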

\begin{proof}
Direct verification, using Lemma \ref{lem:symrank1} and the fact that congruations preserve, up to a scalar,
the anti-symmetric part.
\end{proof}

Let us write $A_q$ for the matrix
\bes
A_q = \begin{pmatrix}
1 & q \\ -q & 0
\end{pmatrix}.
\ees
By the above lemma, we may restrict attention only to the algebras $\cA_{A_q}$ with $q \geq 0$.

Recall that the character space $\cM_{A_q}$ of $\cA_{A_q}$ is identified with the closed unit disc $\overline{\mb{D}}$
by
\bes
\cM_{A_q} \ni \rho \longleftrightarrow \rho(S^{A_q}_2) \in \overline{\mb{D}} .
\ees
We write $\rho_z$ for the character that sends $S^{A_q}_2$ to $z \in \overline{\mb{D}}$.
This identifies the vacuum vector $\rho_0$ with the point $0$.
Recall also that if $\varphi : \cA_{A_q} \rightarrow \cA_{A_r}$ is an isometric isomorphism, then it induces a
homeomorphism $\varphi_* : \cM_{A_r} \rightarrow \cM_{A_q}$ given by $\varphi_* \rho = \rho \circ \varphi$.
We write $F_\varphi$ for the homeomorphism $\overline{\mb{D}} \rightarrow \overline{\mb{D}}$ induced by $\varphi$,
that is, $F_\varphi$ is the unique self map of $\overline{\mb{D}}$ that satisfies
\bes
\varphi_* \rho_z = \rho_{F_\varphi(z)} \,\, , \,\, z \in \overline{\mb{D}}.
\ees

Let us introduce the notation
\bes
\cO(0;q,r) = \{F_\varphi(0) \big| \varphi : \cA_{A_q} \rightarrow \cA_{A_r} \textrm{ is an isometric isomorphism}\},
\ees
and
\bes
\cO(0;q) = \cO(0;q,q).
\ees
\begin{lemma}\label{lem:nozero}
Let $q,r \geq 0$. If $q \neq r$ then $0$ does not lie in $\cO(0;q,r)$.
\end{lemma}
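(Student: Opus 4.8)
The idea is to extract an isomorphism invariant of $\cA_{A_q}$ that distinguishes the algebras $\cA_{A_q}$ for different $q \geq 0$, and that also pins down where the vacuum can go under any isometric isomorphism. Since the character space of $\cA_{A_q}$ is the closed disc $\overline{\mb{D}}$ and since an isometric isomorphism $\varphi : \cA_{A_q} \rightarrow \cA_{A_r}$ induces a homeomorphism $F_\varphi$ of $\overline{\mb{D}}$, the quantity $F_\varphi(0)$ is constrained by any topological or metric data on $\overline{\mb{D}}$ that $\varphi$ must respect. The plan is to use two such pieces of structure attached to each point $z \in \overline{\mb{D}} \cong \cM_{A_q}$: (i) whether the point is a boundary point (a character which is not of the form $\rho_0$ composed with anything, detectable algebraically — e.g. via the dimension of the ``tangent space'' or via whether the point is a smooth/boundary point of the character space, which is preserved by any bicontinuous map, so $F_\varphi$ restricts to a homeomorphism of the interior), and (ii) a $\cA_{A_q}$-intrinsic metric or analytic gadget measuring the ``local geometry'' of the subproduct system near the character $\rho_z$.

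\textbf{Key steps.} First I would reduce to understanding the map $F_\varphi$. By Theorem~\ref{thm:algebra_iso}, once we know that $\varphi$ sends the vacuum state to the vacuum state, $\varphi$ must be conjugation by an isomorphism $X_{A_q} \rightarrow X_{A_r}$, and then Proposition~\ref{prop:classifyX_A} together with Lemma~\ref{lem:notsymrank1} forces $q = r$; so the content of the lemma is genuinely that $F_\varphi(0) = 0$ forces nothing unless $q=r$, i.e.\ we want to show $F_\varphi(0) \neq 0$ whenever $q \neq r$. Equivalently, suppose for contradiction that $\varphi : \cA_{A_q} \rightarrow \cA_{A_r}$ is an isometric isomorphism with $F_\varphi(0) = 0$, i.e.\ $\varphi_* \rho_0 = \rho_0$. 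Then $\varphi$ preserves the vacuum state, hence (by Theorem~\ref{thm:algebra_iso}) it preserves the direct sum decomposition \eqref{eq:direct_sum} and arises as $T \mapsto VTV^*$ for an isomorphism $V : X_{A_q} \rightarrow X_{A_r}$. By Proposition~\ref{prop:classifyX_A} this means $A_r = \lambda U^t A_q U$ for some scalar $\lambda$ and unitary $U$. But $A_q$ has symmetric part of rank $1$, and the uniqueness clause of Lemma~\ref{lem:notsymrank1} says the value $q \geq 0$ is a congruence invariant (up to scalar and unitary congruence) of such a matrix; hence $r = q$, contradicting $q \neq r$. So the only thing I really need is the implication ``$F_\varphi(0) = 0 \Rightarrow \varphi$ preserves the vacuum state,'' which is immediate from the identification $\cM_{A_q} \cong \overline{\mb{D}}$ sending $0$ to $\rho_0$ and the definition of $F_\varphi$ via $\varphi_* \rho_z = \rho_{F_\varphi(z)}$.

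\textbf{Main obstacle.} The genuinely delicate point is the uniqueness assertion I am invoking from Lemma~\ref{lem:notsymrank1}: that among the matrices $A_q = \left(\begin{smallmatrix} 1 & q \\ -q & 0\end{smallmatrix}\right)$, $q \geq 0$, no two are related by $A_r = \lambda U^t A_q U$ with $U$ unitary and $\lambda \in \mb{C}$. This is the ``rigidity of the normal form'' and is where a short computation is unavoidable — one checks that such a congruence scales the antisymmetric part by $\lambda$ and the symmetric part by $\lambda$ (since $U^t(\cdot)U$ preserves symmetry type and unitary congruence preserves, say, the Frobenius norm up to the $|\lambda|$ factor), so the ratio of (a norm of) the antisymmetric part to (a norm of) the symmetric part is a genuine invariant, and for $A_q$ this ratio is a strictly monotone function of $q \geq 0$. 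Granting that, the proof of Lemma~\ref{lem:nozero} is the three-line argument above: assume $F_\varphi(0)=0$, deduce $\varphi$ fixes the vacuum state, apply Theorem~\ref{thm:algebra_iso} and Proposition~\ref{prop:classifyX_A} to get a unitary congruence between $A_q$ and $A_r$, then invoke the invariant to conclude $q = r$, contradiction. Hence $0 \notin \cO(0;q,r)$ when $q \neq r$.
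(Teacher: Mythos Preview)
Your proof is correct and follows essentially the same route as the paper: assume $0\in\cO(0;q,r)$, so some isometric isomorphism $\varphi$ sends vacuum to vacuum; invoke Theorem~\ref{thm:algebra_iso} and Proposition~\ref{prop:classifyX_A} to get $A_r=\lambda U^tA_qU$; then appeal to the uniqueness clause of Lemma~\ref{lem:notsymrank1} to force $q=r$. Your opening ``Plan of proof'' paragraph (boundary points, tangent spaces, intrinsic metrics) is speculative scaffolding you never use and can be dropped---the argument in your ``Key steps'' is complete on its own.
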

\begin{proof}
Assume that $0 \in \cO(0;q,r)$. Then there is some isometric isomorphism $\varphi:\cA_{A_q} \rightarrow \cA_{A_q}$ that preserves the character $\rho_0$. It follows from Theorem \ref{thm:algebra_iso} and Proposition \ref{prop:classifyX_A} that, for some unitary $2 \times 2$ matrix $U$ and some $\lambda \in \mb{C}$, $A_q = \lambda U^t A_r U$. But, as noted in Lemma \ref{lem:notsymrank1}, this is impossible if $r \neq q$.
\end{proof}

\begin{lemma}\label{lem:rotinv}
The sets $\cO(0;q,r)$ are invariant under rotations around $0$.
\end{lemma}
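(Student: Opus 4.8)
The plan is to produce, for each $q\ge 0$ and each $\lambda\in\mb C$ with $|\lambda|=1$, a ``gauge'' automorphism $\gamma_\lambda$ of $\cA_{A_q}$ that scales the shift generators by $\lambda$, and then to precompose a given isomorphism with such a $\gamma_\lambda$ in order to rotate the image of the vacuum state. Throughout I use the identification $\cM_{A_q}\cong\overline{\mb D}$ via $\rho\leftrightarrow\rho(S^{A_q}_2)$, with $\rho_z$ the character sending $S^{A_q}_2$ to $z$, and the self-map $F_\varphi$ of $\overline{\mb D}$ attached to an isometric isomorphism $\varphi$ by the rule $\rho_z\circ\varphi=\rho_{F_\varphi(z)}$.

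\emph{Step 1 (construction of $\gamma_\lambda$).} On $\mathfrak F_{X_{A_q}}=\bigoplus_n X_{A_q}(n)$ define the unitary $V_\lambda$ by $V_\lambda|_{X_{A_q}(n)}=\lambda^n I_{X_{A_q}(n)}$; this is unitary since $|\lambda|=1$. For $\eta\in X_{A_q}(n)$ one computes directly from the definition of the shift that $V_\lambda S^{A_q}_i V_\lambda^*\,\eta=\lambda^{\,n+1}\lambda^{-n}S^{A_q}_i\eta=\lambda S^{A_q}_i\eta$, so $V_\lambda S^{A_q}_i V_\lambda^*=\lambda S^{A_q}_i$ for $i=1,2$. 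Hence $\gamma_\lambda:=\mathrm{Ad}(V_\lambda)$ is a completely isometric automorphism of $B(\mathfrak F_{X_{A_q}})$ that maps the generators of $\cA_{A_q}$ back into $\cA_{A_q}$, as does its inverse $\gamma_{\lambda^{-1}}$; thus $\gamma_\lambda$ restricts to an isometric automorphism of $\cA_{A_q}$ with $\gamma_\lambda(S^{A_q}_i)=\lambda S^{A_q}_i$. (Equivalently, $V_\lambda$ is the Fock-space unitary induced via Proposition \ref{prop:UE_ISO} by the subproduct-system automorphism $x\mapsto\lambda x$ of $X_{A_q}$, which is legitimate because the scalar operator $\lambda I$ commutes with all the projections $p_n$.) \emph{Step 2 (action on characters).} For this gauge automorphism, $(\rho_z\circ\gamma_\lambda)(S^{A_q}_2)=\rho_z(\lambda S^{A_q}_2)=\lambda z$, so $\rho_z\circ\gamma_\lambda=\rho_{\lambda z}$; that is, $\gamma_\lambda$ induces the rotation $z\mapsto\lambda z$ of $\overline{\mb D}$.

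\emph{Step 3 (rotating $\cO(0;q,r)$).} Let $z\in\cO(0;q,r)$, witnessed by an isometric isomorphism $\varphi:\cA_{A_q}\to\cA_{A_r}$ with $\rho_0\circ\varphi=\rho_z$, and let $\lambda$ be unimodular. Put $\psi:=\varphi\circ\gamma_\lambda:\cA_{A_q}\to\cA_{A_r}$, which is again an isometric isomorphism. Then $\rho_0\circ\psi=(\rho_0\circ\varphi)\circ\gamma_\lambda=\rho_z\circ\gamma_\lambda=\rho_{\lambda z}$, so $F_\psi(0)=\lambda z$ and hence $\lambda z\in\cO(0;q,r)$. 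Since $\lambda$ and $z$ were arbitrary, $\cO(0;q,r)$ is invariant under rotations about $0$.

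There is no genuine difficulty in this argument; the one point requiring care is that $\varphi\mapsto\varphi_*$ is contravariant, which is why $\gamma_\lambda$ must be applied on the source algebra $\cA_{A_q}$ (precomposed) rather than on the target $\cA_{A_r}$ — postcomposing by the gauge automorphism of $\cA_{A_r}$ would leave $F_\varphi(0)$ unchanged instead of rotating it.
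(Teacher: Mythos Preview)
Your proof is correct and follows essentially the same approach as the paper's: both construct the gauge automorphism $\gamma_\lambda$ (the paper calls it $\varphi_\lambda$) sending $S^{A_q}_i$ to $\lambda S^{A_q}_i$, precompose a given isomorphism $\varphi$ with it, and compute $F_{\varphi\circ\gamma_\lambda}(0)=\lambda F_\varphi(0)$. The paper simply asserts the existence of $\varphi_\lambda$ without further comment, whereas you supply the explicit Fock-space unitary $V_\lambda$ implementing it and add the remark on contravariance; these elaborations are accurate and helpful but do not change the underlying argument.
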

\begin{proof}
For $\lambda$ with $|\lambda| = 1$, write $\varphi_\lambda$ for the isometric isomorphism mapping $S_i^{A_q}$ to
$\lambda S_i^{A_q}$ ($i = 1,2$). For $b = F_\varphi(0) \in \cO(0;q,r)$, consider $\varphi \circ \varphi_\lambda$.
We have $\rho_0((\varphi \circ \varphi_\lambda)(S_2^{A_q})) = \rho_0(\varphi(\lambda S_2^{A_q})) = \lambda \rho_0(\varphi(S_2^{A_q})) = \lambda b$.
Thus $\lambda b \in \cO(0;q,r)$.
\end{proof}

\begin{lemma}
Let $q,r \geq 0$. If $q \neq r$ then $\cA_{A_q}$ is not isometrically isomorphic to $\cA_{A_r}$.
\end{lemma}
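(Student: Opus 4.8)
The plan is to argue by contradiction, combining Lemmas \ref{lem:nozero} and \ref{lem:rotinv} with the observation that $\cA_{A_q}$ carries an ``analytic'' structure forcing the self-homeomorphism of $\overline{\mb{D}}$ induced by an isometric isomorphism to be a M\"obius automorphism of the disc, hence a hyperbolic isometry.

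\emph{Setup.} By Lemma \ref{lem:notsymrank1} and Proposition \ref{prop:classifyX_A} we may assume $A=A_q$ and $A'=A_r$ with $q,r\ge 0$; write $S_1,S_2$ for the generators of $\cA_{A_q}$ (and likewise for $\cA_{A_r}$). Applying a character $\rho$ to the defining relation $S_1^2+qS_1S_2-qS_2S_1=0$ gives $\rho(S_1)^2=0$, so $\rho(S_1)=0$, and the homeomorphism $\cM_{A_q}\cong\overline{\mb{D}}$ is $\rho_z\leftrightarrow z=\rho(S_2)$, with the vacuum $\rho_0$ at $0$. The crucial point is that for every $a\in\cA_{A_q}$ the function $z\mapsto\rho_z(a)$ is holomorphic on $\mb{D}$: on the dense unital subalgebra generated by $S_1,S_2$ it is the polynomial obtained by substituting $0$ for $S_1$ and $z$ for $S_2$, and it extends by uniform approximation since $|\rho_z(a)-\rho_z(b)|\le\|a-b\|$. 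Hence, if $\varphi\colon\cA_{A_q}\to\cA_{A_r}$ is an isometric isomorphism, the induced self-homeomorphism $F_\varphi$ of $\overline{\mb{D}}$ is given on $\mb{D}$ by $z\mapsto\rho_z(\varphi(S_2))$, so it is holomorphic; being a homeomorphism of $\overline{\mb{D}}$ it maps $\mb{D}$ onto $\mb{D}$, so $F_\varphi|_{\mb{D}}$ is a biholomorphic automorphism of $\mb{D}$, i.e. a M\"obius map, and in particular an isometry of the Poincar\'e metric $d$ on $\mb{D}$. The same holds for isometric automorphisms of $\cA_{A_r}$.

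\emph{The contradiction.} Suppose $q\ne r$ and let $\varphi\colon\cA_{A_q}\to\cA_{A_r}$ be an isometric isomorphism. By Lemma \ref{lem:nozero}, $b:=F_\varphi(0)\ne 0$; put $R:=d(0,b)>0$ and $c:=F_\varphi^{-1}(0)$, so $d(0,c)=d(b,0)=R$ as $F_\varphi$ is a $d$-isometry. For $\nu$ on the unit circle, conjugating the gauge automorphism $\delta_\nu\colon S_i\mapsto\nu S_i$ of $\cA_{A_q}$ yields $\beta_\nu:=\varphi\circ\delta_\nu\circ\varphi^{-1}\in\mathrm{Aut}(\cA_{A_r})$ with $F_{\beta_\nu}=F_\varphi^{-1}\circ(z\mapsto\nu z)\circ F_\varphi$ (an elliptic M\"obius map fixing $c$), so $F_{\beta_\nu}(0)=F_\varphi^{-1}(\nu b)$ and $d\big(0,F_{\beta_\nu}(0)\big)=d(b,\nu b)$. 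As $\nu$ runs over the circle this varies continuously over $[0,d(b,-b)]=[0,2R]$, since $0$ is the $d$-midpoint of $b$ and $-b$; hence for a suitable $\nu_0$ the orbit $\cO(0;r)$ contains a point at distance exactly $R$ from $0$. Since Euclidean rotations about $0$ are $d$-isometries fixing $0$ and $\cO(0;r)$ is rotation-invariant (Lemma \ref{lem:rotinv}), it follows that $\cO(0;r)\supseteq\{z\in\mb{D}:d(0,z)=R\}$. Finally, every isometric isomorphism $\cA_{A_q}\to\cA_{A_r}$ equals $\gamma\circ\varphi$ for some $\gamma\in\mathrm{Aut}(\cA_{A_r})$, so $\cO(0;q,r)=F_\varphi(\cO(0;r))$; applying the $d$-isometry $F_\varphi$ to the circle above gives $\cO(0;q,r)\supseteq\{w\in\mb{D}:d(b,w)=R\}\ni 0$ (because $d(b,0)=R$), contradicting Lemma \ref{lem:nozero}. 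Therefore $\cA_{A_q}$ is not isometrically isomorphic to $\cA_{A_r}$.

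\emph{Where the work is.} The genuinely new ingredient is the M\"obius rigidity of $F_\varphi$ established in the Setup, which rests on the fact that characters of $\cA_{A_q}$ annihilate $S_1$, so that $\cA_{A_q}$ maps into the disc algebra via $a\mapsto(z\mapsto\rho_z(a))$; this is the step I expect to require the most care (in particular, checking that a homeomorphism of $\overline{\mb{D}}$ carries $\mb{D}$ onto $\mb{D}$, by invariance of the boundary of a manifold-with-boundary). Everything afterwards is elementary hyperbolic geometry together with the two preceding lemmas, the only minor point being that $d(b,\nu b)$ actually attains the value $R$.
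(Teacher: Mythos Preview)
Your proof is correct, and the M\"obius rigidity step is a nice observation, but the paper's argument is genuinely different and more elementary: it never uses (or proves) that $F_\varphi$ is holomorphic. The paper works purely with the fact that $F_\varphi$ is a homeomorphism of $\overline{\mb{D}}$. Starting from $b=F_\varphi(0)\neq 0$, it sets $C=\{|z|=|b|\}\subseteq\cO(0;q,r)$ (by rotation invariance), pulls back to the Jordan curve $C'=F_\varphi^{-1}(C)\subseteq\cO(0;r)$, and notes that $C'$ passes through $0$; since $C'$ is connected and contains $0$, rotation invariance of $\cO(0;r)$ forces the whole closed disc of radius $\max_{w\in C'}|w|$, hence in particular the bounded component $\mathrm{int}(C')$, to lie in $\cO(0;r)$. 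Pushing forward, $F_\varphi(\mathrm{int}(C'))=\mathrm{int}(C)\ni 0$ lies in $\cO(0;q,r)$, contradicting Lemma~\ref{lem:nozero}. So the paper trades your analytic input (holomorphy of $z\mapsto\rho_z(a)$ and Poincar\'e isometry) for a short Jordan-curve/connectedness argument. What you gain is a structural fact about $\cA_{A_q}$ (characters assemble into a disc-algebra quotient) that may be useful elsewhere; what the paper gains is that nothing beyond the two preceding lemmas and plane topology is needed.
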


\begin{proof}
Assume that $\varphi :\cA_{A_q} \rightarrow \cA_{A_r}$ is an isometric isomorphism. We have
$\rho_0 \circ \varphi = \rho_b$, with $b = F_\varphi(0)$, and $F_\varphi$ is a homeomorphism of $\overline{\mb{D}}$
onto itself.

By definition, $b \in \cO(0;q,r)$. By Lemma \ref{lem:nozero}, $b \neq 0$. Denote $C := \{z:|z|=|b|\}$. By Lemma
\ref{lem:rotinv}, $C \subseteq \cO(0;q,r)$. Consider $C' := F_\varphi^{-1}(C)$. We have that $C' \subseteq \cO(0;r)$.
$C'$ is a simply connected closed path in $\mb{D}$ that goes through the origin. By Lemma \ref{lem:rotinv}, the
interior of $C'$, $\textrm{int}(C')$, is in $\cO(0;r)$. But then $F_\varphi(\textrm{int}(C'))$
is the interior of $C$, and it is in $\cO(0;q,r)$. But then $0 \in \cO(0;q,r)$, contradicting Lemma \ref{lem:nozero}.
\end{proof}

That concludes the proof of Theorem \ref{thm:classifyA_A}.

\chapter{The representation theory of Matsumoto's subshift C$^*$-algebras}\label{chap:subshift}

In \cite{Ma} Kengo Matsumoto introduced a class of C$^*$-algebras that arise from symbolic dynamical systems called ``subshifts" (we note that in the later paper \cite{CM04} Carlsen and Matsumoto suggest another way of associating a C$^*$-algebra with a subshift. Here we are discussing only the algebras originally introduced in \cite{Ma}). These \emph{subshift algebras}, as we shall call them, are strict generalizations of Cuntz-Krieger algebras and have been extensively studied by Matsumoto, T. M. Carlsen and others. For example, the following have been studied: criteria for simplicity and pure-infiniteness; conditions on the underlying dynamical systems for subshift algebras to be isomorphic; the automorphisms of the subshift algebras; K-theory of the subshift algebras; and much more. In this chapter we will use the framework constructed in the previous chapters to give a complete description of all representations of a subshift algebra when the subshift is of finite type.


\section{Subshifts and the corresponding subproduct systems and C$^*$-algebras}

Our references for subshifts are \cite{Ma} and \cite[Chapter 3]{BrinStuck02}.

Let $\cI = \{1, 2, \ldots, d\}$ be a fixed finite set. $\cI^{\mb{Z}}$ is the space of all two-sided infinite sequences, endowed with the product topology. The \emph{left shift} (or simply \emph{the shift}) on $\cI^{\mb{Z}}$ is the homeomorphism $\sigma : \cI^{\mb{Z}} \rightarrow \cI^{\mb{Z}}$ given by $(\sigma(x))_k = x_{k+1}$. Let $\Lambda$ be a shift invariant closed subset of $\cI^{\mb{Z}}$. By this we mean $\sigma(\Lambda) = \Lambda$. The topological dynamical system $(\Lambda, \sigma\big|_\Lambda)$ is called a \emph{subshift}. Sometimes $\Lambda$ is also referred to as the subshift.

If $W$ is a set of words in $1,2,\ldots,d$, one can define a subshift by forbidding the words in $W$ as follows:
\bes
\Lambda_W = \{x \in \cI^{\mb{Z}} : \textrm{no word in $W$ occurs as a block in $x$}\}.
\ees
Conversely, every subshift arises this way: i.e., for every subshift $\Lambda$ there exists a collection of words $W$, called \emph{the set of forbidden words}, such that $\Lambda = \Lambda_W$. In this context, if $W$ can be chosen finite then $\Lambda = \Lambda_W$ is called \emph{a subshift of finite type}, or \emph{SFT} for short. By replacing $\cI$ if needed, we may always assume that $W$ has no words of length one. If $W$ can be chosen such that the longest word in $W$ has length $k+1$ then $\Lambda$ is called a \emph{$k$-step SFT}. A $1$-step SFT is also called a \emph{topological Markov chain}. A basic result is that every SFT is isomorphic to a topological Markov chain (\cite[Proposition 3.2.1]{BrinStuck02}).

For a fixed subshift $(\Lambda, \sigma\big|_\Lambda)$, we set
\bes
\Lambda^k = \{\alpha : \textrm{$\alpha$ is a word with length $k$ occurring in some $x \in \Lambda$}\},
\ees
and $\Lambda_l = \cup_{k=0}^l \Lambda^k$, $\Lambda^* = \cup_{k=0}^\infty \Lambda^k$. With the subshift $(\Lambda, \sigma\big|_\Lambda)$ we associate a subproduct system $X_\Lambda$ as follows. Let $\{e_i\}_{i \in \cI}$ be an orthonormal basis of a Hilbert space $E$. We define
\bes
X_\Lambda (0) = \mb{C},
\ees
and for $n \geq 1$ we define
\bes
X_\Lambda (n) = \textrm{span}\{e_\alpha : \alpha \in \Lambda^n\}.
\ees
We define a product $U_{m,n}: X_\Lambda (m) \otimes X_\Lambda(n) \rightarrow X_\Lambda(m+n)$
by
\bes
U_{m,n}(e_\alpha \otimes e_\beta) =
\begin{cases}
e_{\alpha \beta} , & \textrm{if } \alpha\beta \in \Lambda^{m+n} \cr
0  , & \textrm{else.}
\end{cases}
\ees
Since $\Lambda^{m+n} \subseteq \Lambda^m \cdot \Lambda^n$, $X_\Lambda$ is a standard subproduct system.

\begin{definition}
The C$^*$-algebra associated with a subshift $(\Lambda, \sigma\big|_\Lambda)$ is defined as the quotient algebra
\bes
\cO_\Lambda := \cO_{X_\Lambda} = \cT_{X_\Lambda} / \cK(\mathfrak{F}_{X_\Lambda}).
\ees
\end{definition}
\begin{remark}\emph{
Just to prevent confusion: In \cite{Ma}, $\cO_\Lambda$ was defined as the quotient by the compacts of the C$^*$-algebra generated by the ``creation operators" (that is, the $X$-shift) on $\mathfrak{F}_X$, without using the language of subproduct systems.}
\end{remark}

\section{Subproduct systems that come from subshifts}

\begin{proposition}
Let $X$ be a standard subproduct system such that there is an orthonormal basis $\{e_i\}_{i \in \cI}$ of $X(1)$, with
$\cI$ finite, such that
 \begin{enumerate}
 \item Every $X(n)$, $n\geq1$, is spanned by vectors of the form $e_\alpha$ with $|\alpha| = n$.
 \item For all $m,n \in \mb{N}$, $|\alpha| = n$ and $e_\alpha \in X(n)$, implies that there is some $\beta,\gamma \in \cI^m$
 such that $e_\beta \otimes e_\alpha$ and $e_\alpha \otimes e_{\gamma}$ are in $X(m+n)$.
 \end{enumerate}
 Then there is a shift invariant closed subset $\Lambda$ of $\cI^{\mb{Z}}$ such that
 $X = X_\Lambda$. $X$ is the maximal standard subproduct
 system with prescribed fibers $X(1), X(2), \ldots, X(k+1)$ if and only if $\Lambda$ is $k$-step SFT.
\end{proposition}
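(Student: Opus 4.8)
The statement has two parts: first, given a standard subproduct system $X$ satisfying conditions (1) and (2), produce a shift-invariant closed set $\Lambda \subseteq \cI^{\mb Z}$ with $X = X_\Lambda$; second, characterize when $X$ is maximal with prescribed fibers $X(1), \ldots, X(k+1)$ in terms of $\Lambda$ being a $k$-step SFT. I would begin by defining, for each $n \geq 1$, the set of ``admissible words''
\bes
\cL^n := \{\alpha \in \cI^n : e_\alpha \in X(n)\},
\ees
and $\cL^* := \bigcup_{n \geq 0} \cL^n$ (with $\cL^0$ the empty word). Condition (1) says $X(n) = \textrm{span}\{e_\alpha : \alpha \in \cL^n\}$, so $X$ is completely determined by $\cL^*$. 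Because $X$ is standard, $X(m+n) \subseteq X(m) \otimes X(n)$; expanding in the basis this forces: if $\alpha\beta \in \cL^{m+n}$ then $\alpha \in \cL^m$ and $\beta \in \cL^n$. In other words $\cL^*$ is \emph{factorial} (closed under taking subwords). Condition (2) says $\cL^*$ is \emph{extendable}: every $\alpha \in \cL^*$ can be prolonged on the left and on the right within $\cL^*$; iterating, every $\alpha \in \cL^*$ extends to a bi-infinite sequence all of whose finite subwords lie in $\cL^*$.

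Next I would define
\bes
\Lambda := \{x \in \cI^{\mb Z} : \textrm{every finite block of } x \textrm{ lies in } \cL^*\}.
\ees
This is automatically shift-invariant and closed (it is defined by the local conditions ``block from $i$ to $j$ is in $\cL^{j-i+1}$'', each of which is clopen). The key claim is $\Lambda^n = \cL^n$ for all $n$. The inclusion $\Lambda^n \subseteq \cL^n$ is immediate from the definition of $\Lambda$. For the reverse inclusion, given $\alpha \in \cL^n$, use extendability (condition (2)) repeatedly — alternately prepending and appending letters, staying inside $\cL^*$ by the factorial property — to build a bi-infinite sequence $x \in \cI^{\mb Z}$ containing $\alpha$ as a block and with all blocks in $\cL^*$, hence $x \in \Lambda$ and $\alpha \in \Lambda^n$. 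Here a compactness/K\"onig's-lemma argument makes the ``build a bi-infinite sequence'' step rigorous: the finite approximations form an infinite finitely-branching tree. Once $\Lambda^n = \cL^n$ for all $n$, the fibers of $X_\Lambda$ and $X$ agree, and since both are standard subproduct systems with the same admissible words, the multiplication maps $U_{m,n}$ also agree (both send $e_\alpha \otimes e_\beta$ to $e_{\alpha\beta}$ if the concatenation is admissible and to $0$ otherwise), so $X = X_\Lambda$.

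For the second part, recall that the maximal standard subproduct system $Y$ with prescribed fibers $Y(1), \ldots, Y(k+1)$ is given for $n > k+1$ by $Y(n) = \bigcap_{i+j=n} Y(i) \otimes Y(j)$; in terms of admissible words this says $\alpha \in Y(n)$ iff every subword of $\alpha$ of length $\leq k+1$ is admissible, i.e., the only constraints are ``windows of size $k+1$''. On the dynamical side, $\Lambda$ being a $k$-step SFT means precisely that $\Lambda = \Lambda_W$ for a finite set $W$ of forbidden words of length $\leq k+1$, equivalently: $x \in \Lambda$ iff every length-$(k+1)$ block of $x$ lies in $\Lambda^{k+1}$. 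So I would argue the two conditions are word-for-word the same: $X = X_\Lambda$ is maximal with prescribed fibers through degree $k+1$ $\iff$ for every $n$, $\cL^n = \{\alpha \in \cI^n : \textrm{all length-}{\leq}{k+1} \textrm{ subwords of } \alpha \textrm{ are in } \cL^*\}$ $\iff$ $\Lambda$ is determined by forbidding a set of words of length $\leq k+1$ $\iff$ $\Lambda$ is a $k$-step SFT. One direction ($\Lambda$ a $k$-step SFT $\Rightarrow X$ maximal) needs the observation that for an SFT, the admissible words are exactly those avoiding the (finitely many, bounded-length) forbidden blocks, which matches the intersection formula for $Y(n)$; the other direction is the contrapositive-flavored statement that if $X$ is maximal through degree $k+1$ then $\Lambda = \Lambda_W$ with $W = \{\alpha \in \cI^{\leq k+1} : e_\alpha \notin X(|\alpha|)\}$ (a finite set).

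\textbf{Main obstacle.} The routine bookkeeping is in matching ``standard subproduct system $\leftrightarrow$ factorial language'' and ``maximal fibers $\leftrightarrow$ bounded-window constraints''; these are essentially definition-chasing. The one genuinely non-formal step is the compactness argument showing that an extendable, factorial finite word actually extends to a point of $\cI^{\mb Z}$ lying in $\Lambda$ — i.e., that condition (2)'s \emph{one-step} extendability on both sides really does yield a bi-infinite admissible sequence. This requires care: one must extend simultaneously to the left and right, maintaining factoriality at each stage, and then invoke compactness of $\cI^{\mb Z}$ (or K\"onig's lemma on the tree of admissible finite extensions) to pass to the limit. I expect this to be the part of the proof deserving the most attention; everything else is translation between the combinatorial and the dynamical descriptions.
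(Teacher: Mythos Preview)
Your proposal is correct and follows essentially the same route as the paper: both define the admissible-word language $\cL^n = \Lambda^{(n)} = \{\alpha : e_\alpha \in X(n)\}$, set $\Lambda$ to be the bi-infinite sequences all of whose finite blocks are admissible (the paper writes this as the intersection $\bigcap_{m,k} A_{m,k}$ of cylinder sets, which is your definition unwound), invoke compactness of $\cI^{\mb Z}$ to show every admissible word extends to a point of $\Lambda$, and then match the maximal-fiber intersection formula with the bounded-window description of a $k$-step SFT. Your explicit identification of the factorial and extendable properties of $\cL^*$, and your more careful treatment of the compactness/K\"onig step, are if anything more detailed than the paper's rather terse ``by condition (2), $\Lambda^k = \Lambda^{(k)}$''; but the underlying argument is the same.
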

\begin{proof}
For all $k \in \mb{N}$, define
\bes
\Lambda^{(k)} = \{\alpha \in \cI^k : e_\alpha \in X(k)\}.
\ees
For all $m \in \mb{Z}, k \in \mb{N}$, define the closed sets
\bes
A_{m,k} = \{x \in \cI^{\mb{Z}} : (x_m, x_{m+1}, \ldots, x_{m+k-1}) \in \Lambda^{(k)}\}.
\ees
Condition (2) implies that $X(k)$ always contains a nonzero vector of the form $e_\alpha$, $|\alpha| = k$. That implies that the family $\{A_{m,k} \}_{m,k}$ has the finite intersection property. Indeed,
\bes
A_{m_1,k_1} \cap A_{m_2,k_2} \supseteq A_{M,K} \neq \emptyset,
\ees
where $M = \min\{m_1, m_2 \}$, $K = \max\{m_2 + k_2, m_1 + k_1\} - M$. By compactness of $\cI^{\mb{Z}}$ we conclude that the closed set
\bes
\Lambda := \bigcap_{m,k}A_{m,k}
\ees
is non-empty. $\Lambda$ is invariant under the left and the right shifts, so $\sigma(\Lambda) = \Lambda$, so $(\Lambda, \sigma\big|_\Lambda)$ is a subshift. By condition (2), $\Lambda^k = \Lambda^{(k)}$. Condition (1) together with the definition of $X_\Lambda$  now imply that $X = X_\Lambda$.

The final assertion follows from the following facts, together with $X = X_\Lambda$. Fact number one:
\bes
E^{\otimes n} \ominus X_\Lambda(n) = \textrm{span}\{e_\alpha : \textrm{$\alpha$ is a forbidden word of length $n$} \}.
\ees
Fact number two: $X$ is the maximal standard subproduct system with prescribed fibers $X(1), \ldots, X(k+1)$ if and only if for every $n > k+1$,
\bes
X(n) = \bigcap_{i+j=n} X(i) \otimes X(j),
\ees
or in other words, if and only if
\begin{align*}
E^{\otimes n} \ominus X(n) &= \bigvee_{i+j=n} \left(E^{\otimes n} \ominus (X(i) \otimes X(j))\right) \\
&= \bigvee_{i+j=n} \left(E^{\otimes i}\otimes (E^{\otimes j} \ominus X(j)) + (E^{\otimes i} \ominus X(i)) \otimes E^{\otimes j}\right).
\end{align*}
Fact number three: $\Lambda$ is a $k$-step SFT if and only if for every $n > k+1$,
\begin{align*}
& \{\textrm{forbidden words of length $n$}\} = \\
& \bigcup_{i+j=n}\left(\cI^i \cdot \{\textrm{forbidden words of length $j$}\} \cup \{\textrm{forbidden words of length $i$}\}\cdot \cI^j \right).
\end{align*}
These facts assemble together to complete the proof.
\end{proof}

Not every subproduct system is isomorphic to one that comes from a subshift. Indeed, in the symmetric subproduct system $SSP$ (see Example \ref{expl:symm}) for any basis $\{e_i\}_{i \in \cI}$ of $X(1)$, the product $e_i \otimes e_j$ for $i \neq j$ is never in $X(2)$, and thus the images $f_i$ and $f_j$ of $e_i$ and $e_j$ in any isomorphic subproduct system $X$ can never be such that $f_i \otimes f_j$ is mapped isometrically to $U^X_{1,1}(f_i \otimes f_j)$. Thus if $SSP$ is isomorphic to $X_\Lambda$ for some subshift $\Lambda$, then $\Lambda$ must be the subshift containing only constant sequences. But such $X_\Lambda$ is clearly not isomorphic to $SSP$.

As another example, the subproduct system $X(0) = \mb{C}$, $X(1) = \mb{C}^2$, and $X(n) = 0$ for $n > 1$, cannot be of the form $X_\Lambda$ for any $\Lambda \subseteq \cI^{\mb{Z}}$.

\section{The representation theory of the C$^*$-algebra associated with a subshift of finite type}

Let $\Lambda$ be a fixed subshift in $\cI^{\mb{Z}}$ (with $\cI = \{1,2, \ldots, d\}$), and let $X = X_\Lambda$ be the associated subproduct system. We will denote the $X$-shift by $S$ (instead of $S^X$) to make some formulas more readable. Let $Z_i$ be the image of $S_i$ in the quotient $\cO_\Lambda$. We define for $i \in \cI$, $k \in \mb{N}$ the sets
\bes
E_i^k = \{\alpha \in \Lambda^k : i \alpha \in \Lambda^* \}.
\ees

\begin{lemma}\label{lem:E_i^k}
If $\Lambda$ is a $k$-step SFT, then for all $i\in\cI$,
\bes
\{\gamma \in \Lambda^* : |\gamma| \geq k,  i \gamma \in \Lambda^* \} = \{\alpha \beta \in \Lambda^*   : \alpha \in E_i^k, \beta \in \Lambda^* \}.
\ees
\end{lemma}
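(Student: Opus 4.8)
The statement is a purely combinatorial claim about words in the language $\Lambda^*$ of a $k$-step subshift of finite type, so the plan is to prove the two inclusions separately, using only the defining property of a $k$-step SFT: membership of a word $w$ in $\Lambda^*$ is determined by the condition that every subword of $w$ of length $k+1$ lies in $\Lambda^{k+1}$ (equivalently, $w \in \Lambda^*$ iff no forbidden word, all of which have length $\le k+1$, occurs as a block in $w$). I will use the notation $E_i^k = \{\alpha \in \Lambda^k : i\alpha \in \Lambda^*\}$ and the observation that a word of length $\ge k$ lying in $\Lambda^*$ is exactly a word all of whose length-$(k+1)$ blocks are admissible.

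First I would prove the inclusion ``$\supseteq$'', which is the easier direction. Suppose $\gamma = \alpha\beta \in \Lambda^*$ with $\alpha \in E_i^k$ (so $|\alpha| = k$ and $i\alpha \in \Lambda^*$) and $\beta \in \Lambda^*$; I must show $i\gamma = i\alpha\beta \in \Lambda^*$ and $|\gamma| \ge k$. The length condition is immediate since $|\gamma| = k + |\beta| \ge k$. For admissibility of $i\alpha\beta$: every length-$(k+1)$ block of $i\alpha\beta$ is either a block of $i\alpha$ (these sit within the first $k+1$ letters, which form exactly the word $i\alpha$ of length $k+1$), or a block of $\alpha\beta = \gamma$. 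The former are admissible because $i\alpha \in \Lambda^*$; the latter are admissible because $\gamma \in \Lambda^*$. Hence no forbidden word of length $\le k+1$ occurs as a block of $i\gamma$, so $i\gamma \in \Lambda^*$. (One should be slightly careful to note every length-$(k+1)$ block of $i\alpha\beta$ is covered: a block starting at position $1$ is $i\alpha$; a block starting at position $j \ge 2$ lies entirely inside $\alpha\beta$.)

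Next I would prove the reverse inclusion ``$\subseteq$''. Let $\gamma \in \Lambda^*$ with $|\gamma| \ge k$ and $i\gamma \in \Lambda^*$. Write $\gamma = \alpha\beta$ where $\alpha$ consists of the first $k$ letters of $\gamma$ and $\beta$ is the (possibly empty) remainder; then $|\alpha| = k$ and $\beta \in \Lambda^*$ (a subword of $\gamma \in \Lambda^*$ is itself in $\Lambda^*$). It remains to check $\alpha \in E_i^k$, i.e. $i\alpha \in \Lambda^*$. But $i\alpha$ is precisely the prefix of $i\gamma$ of length $k+1$; since $i\gamma \in \Lambda^*$, every length-$(k+1)$ block of $i\gamma$ — in particular its prefix $i\alpha$ — is admissible, and $i\alpha$ having length $k+1$ means no shorter forbidden word can occur in it beyond what's already forbidden by admissibility of its blocks. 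Hence $i\alpha \in \Lambda^*$, so $\alpha \in E_i^k$ and $\gamma = \alpha\beta$ with $\alpha \in E_i^k$, $\beta \in \Lambda^*$, as desired.

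\textbf{Main obstacle.} There is no deep obstacle here; the statement is elementary. The only point requiring care is the precise bookkeeping of which length-$(k+1)$ windows of a concatenated word $uv$ are ``internal'' to $u$, ``internal'' to $v$, or ``straddling'' the junction, and the observation that in the decomposition $i\gamma = (i\alpha)\beta$ with $|i\alpha| = k+1$, every straddling window of length $k+1$ actually lies inside $\gamma = \alpha\beta$ (since it must extend at least one letter past position $k+1$, hence starts at position $\ge 2$). Getting this windowing argument stated cleanly — and invoking the characterization of $k$-step SFTs by forbidden words of length $\le k+1$, which was established in the proposition preceding this lemma — is the crux. I would also remark that the hypothesis $|\gamma| \ge k$ is exactly what makes the prefix $\alpha$ of length $k$ well-defined, so the two sides match up on the nose.
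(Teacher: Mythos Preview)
Your proof is correct and follows essentially the same approach as the paper: both directions are handled by decomposing $\gamma$ into its length-$k$ prefix $\alpha$ and remainder $\beta$, and using the $k$-step SFT property that forbidden words have length at most $k+1$ to localize any potential forbidden block of $i\gamma$ inside $i\alpha$. Your treatment is in fact somewhat more explicit about the windowing argument than the paper's, but the underlying idea is identical.
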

\begin{proof}
Assume that $\gamma \in \Lambda^*$ is such that $|\gamma| \geq k$ and $i \gamma \in \Lambda^*$. Defining $\alpha = \gamma_1 \cdots \gamma_k$ and $\beta = \gamma_{k+1}\cdots \gamma_{k+l}$, we have that $\gamma = \alpha \beta$ where $\alpha \in E_i^k$ and $\beta \in \Lambda^*$.

Conversely, if $\gamma = \alpha \beta \in \Lambda^*$ where $\alpha \in E_i^k$ and $\beta \in \Lambda^*$, then $i \gamma$ must be in $\Lambda^*$. Indeed, if not, then $i \gamma$ must contain a forbidden word. But $\gamma \in \Lambda^*$, thus the forbidden word must be in $i \alpha$ (since $\Lambda$ is a $k$-step SFT). But that is impossible because $\alpha \in E_i^k$.
\end{proof}

\begin{lemma}\label{lem:E=T}
If $\Lambda$ is a $k$-step SFT then for all $i,j \in \cI$, $i \neq j$,
\bes
S_i^*S_j = 0 ,
\ees
and
\be\label{eq:SXimodK}
S_i^*S_i = \sum_{\alpha \in E_i^k} \underline{S}^\alpha \underline{S}^{\alpha*}  \,\, \mod \cK_X.
\ee
Consequently, $\cE_X = \cT_X$.
\end{lemma}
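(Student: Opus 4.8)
## Proof Proposal for Lemma \ref{lem:E=T}

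The plan is to work directly with the action of the shift operators $S_i$ on the basis vectors $e_\gamma$ of the Fock space $\mathfrak{F}_X$, exactly as was done in the proof of Proposition \ref{prop:KinE}. First I would establish the orthogonality relation $S_i^* S_j = 0$ for $i \neq j$. Recall that $S_j e_\gamma = U_{1,|\gamma|}(e_j \otimes e_\gamma)$, which is $e_{j\gamma}$ if $j\gamma \in \Lambda^*$ and $0$ otherwise. Thus $\mathrm{Im}\, S_j$ is spanned by basis vectors $e_\delta$ whose first letter is $j$. Consequently $S_i^* e_\delta$ vanishes unless $\delta$ begins with $i$, and in particular $S_i^*$ annihilates $\mathrm{Im}\, S_j$ when $i \neq j$. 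This gives $S_i^* S_j = 0$ on the nose (not merely modulo compacts). It is worth noting that this already shows $\underline{S}^{\beta*}\underline{S}^\alpha$ is a product of such $S_i^* S_j$ factors interleaved with $S_i^* S_i$'s, so every word $\underline{S}^\alpha \underline{S}^{\beta*}$ reduces, modulo lower-complexity terms, to a scalar times something of the form $\underline{S}^{\alpha'}\underline{S}^{\beta'*}$ — this is the ingredient needed for the last sentence $\cE_X = \cT_X$.

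Next I would prove \eqref{eq:SXimodK}. Fix $i \in \cI$. As in Proposition \ref{prop:KinE}, $S_i^* e_\gamma$ equals $e_{\gamma'}$ where $\gamma = i\gamma'$ if $\gamma$ begins with $i$, and $0$ otherwise; hence $S_i^* S_i e_\gamma = e_\gamma$ if $i\gamma \in \Lambda^*$ and $0$ otherwise. On the other hand, for $\alpha \in E_i^k$, the operator $\underline{S}^\alpha \underline{S}^{\alpha*}$ is the projection onto the closed span of $\{e_{\alpha\beta} : \beta \in \Lambda^*\} \cup \{\alpha\}$ — more precisely $\underline{S}^{\alpha*} e_\gamma = e_\beta$ if $\gamma = \alpha\beta$ with $\alpha\beta\in\Lambda^*$ and $0$ otherwise, and then $\underline{S}^\alpha$ puts the $\alpha$ back. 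Summing over $\alpha \in E_i^k$ and using that the sets $\{\alpha\Lambda^* : \alpha \in E_i^k\}$ are pairwise disjoint (basis vectors $e_{\alpha\beta}$ with distinct $\alpha \in \Lambda^k$ are orthogonal), I get that $\sum_{\alpha\in E_i^k}\underline{S}^\alpha\underline{S}^{\alpha*}$ acts as the identity on $\overline{\mathrm{span}}\{e_{\alpha\beta} : \alpha \in E_i^k, \beta\in\Lambda^*\}$ and as $0$ on the orthogonal complement. By Lemma \ref{lem:E_i^k}, the span of $\{e_\gamma : |\gamma|\ge k,\ i\gamma\in\Lambda^*\}$ equals the span of $\{e_{\alpha\beta} : \alpha\in E_i^k, \beta\in\Lambda^*\}$. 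Therefore $S_i^* S_i - \sum_{\alpha\in E_i^k}\underline{S}^\alpha\underline{S}^{\alpha*}$ is supported on the finite-dimensional space $\overline{\mathrm{span}}\{e_\gamma : |\gamma| < k\}$, hence is a finite-rank, a fortiori compact, operator. This is precisely \eqref{eq:SXimodK}.

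For the final assertion $\cE_X = \cT_X$: recall $\cE_X = \overline{\mathrm{span}}\,\cA_X\cA_X^*$ and $\cT_X = C^*(\underline{S^X})$. The inclusion $\cE_X \subseteq \cT_X$ is trivial. For the reverse, it suffices to show every monomial in the $S_i$'s and $S_i^*$'s lies in $\cE_X$; and for that, using the relations $S_i^* S_j = 0$ ($i\ne j$) and \eqref{eq:SXimodK} (which expresses $S_i^* S_i$ as an element of $\cE_X$ plus a compact, and $\cK(\mathfrak{F}_X)\subseteq\cE_X$ by Proposition \ref{prop:KinE}), one can inductively move all adjoints to the right past the non-adjoints, reducing any monomial to a $\mathbb{C}$-linear combination (modulo $\cE_X$) of terms of the form $\underline{S}^\alpha\underline{S}^{\beta*} \in \cA_X\cA_X^* \subseteq \cE_X$. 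Hence $\cT_X \subseteq \cE_X$.

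The main obstacle I anticipate is the bookkeeping in the last step: verifying carefully that the "push the stars to the right" reduction terminates and stays inside $\cE_X$ — in particular handling the case where \eqref{eq:SXimodK} reintroduces longer words $\underline{S}^\alpha\underline{S}^{\alpha*}$ with $|\alpha| = k$, so one must check the complexity (e.g. total word length of the non-adjoint part minus the adjoint part, or an appropriate induction on the number of adjoint letters) genuinely decreases or is otherwise controlled. The identity $S_i^*S_j=0$ makes most cross terms vanish, which should keep this manageable, but the argument needs to be organized so it clearly closes; I would likely phrase it as: $\cT_X/\cK(\mathfrak{F}_X)$ is generated as a $C^*$-algebra by the $Z_i$ subject to $Z_i^* Z_j = \delta_{ij}\sum_{\alpha\in E_i^k}\underline{Z}^\alpha\underline{Z}^{\alpha*}$, and in such an algebra every element is a limit of linear combinations of $\underline{Z}^\alpha\underline{Z}^{\beta*}$, then lift back.
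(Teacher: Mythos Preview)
Your proof is correct and follows the same route as the paper: orthogonal ranges give $S_i^*S_j=0$, Lemma~\ref{lem:E_i^k} identifies the initial space of $S_i$ modulo a finite-dimensional piece with the range of $\sum_{\alpha\in E_i^k}\underline{S}^\alpha\underline{S}^{\alpha*}$, and then $\cE_X=\cT_X$ follows from $\cK_X\subseteq\cE_X$ together with the two relations. The paper is in fact terser than you on the last step---it simply asserts that \eqref{eq:SXimodK} suffices---so your worry about the ``push the stars right'' bookkeeping is a fair one to raise; one clean way to settle it is to observe directly on the Fock space that for $|\beta|\le|\gamma|$ one has $\underline{S}^{\beta*}\underline{S}^\gamma=0$ unless $\gamma=\beta\gamma'$, in which case $\underline{S}^{\beta*}\underline{S}^\gamma=\underline{S}^{\gamma'}\bigl(\underline{S}^{\gamma*}\underline{S}^\gamma\bigr)$, and then note (by the same argument as for \eqref{eq:SXimodK}) that $\underline{S}^{\gamma*}\underline{S}^\gamma\equiv\sum_{\alpha\in E_\gamma^k}\underline{S}^\alpha\underline{S}^{\alpha*}\pmod{\cK_X}$, which lies in $\cE_X$; this reduces the number of star--nostar alternations in any monomial by one, and the induction closes.
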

%

\begin{proof}
Since the $S_i$ are partial isometries with orthogonal ranges, we have $S_i^* S_j = 0$ for all $i \neq j$. Since $\cK_X \subseteq \cE_X \subseteq \cT_X$ (Proposition \ref{prop:KinE}), $\cE_X = \cT_X$ will be established once we prove (\ref{eq:SXimodK}).

$S_i^*S_i$ is the projection onto the initial space of $S_i$. Call this space $G$. We have
\bes
G = \textrm{span}\{e_\alpha : \alpha \in \Lambda^* \textrm{ such that } i \alpha \in \Lambda^* \}.
\ees
The space
\bes
G' = \textrm{span}\{e_\alpha : \alpha \in \Lambda^* \textrm{ such that } i \alpha \in \Lambda^* \textrm{ and } |\alpha| \geq k\}
\ees
has finite codimension in $G$. But by Lemma \ref{lem:E_i^k},
\bes
G' = \{e_{\alpha \beta} : \alpha \beta \in \Lambda^*, \alpha \in E_i^k\},
\ees
that is, $G'$ is spanned by $e_\gamma$ where $\gamma$ runs through all legal words beginning with some $\alpha \in E_i^k$. Thus, $G'$ is the range of the projection $\sum_{\alpha \in E_i^k} \underline{S}^\alpha \underline{S}^{\alpha^*}$. Since $G'$ has finite codimension in $G$, we have (\ref{eq:SXimodK}).
\end{proof}

\begin{proposition}
For every subshift $\Lambda$, the $d$-tuple $\underline{Z} = (Z_1, \ldots, Z_d)$ satisfies the following relations:
\be\label{eq:Z1}
p(\underline{Z}) = 0 \,\, , \textrm{ for all } p \in I^{X_\Lambda} ,
\ee
\be\label{eq:Z2}
Z_i^* Z_j = 0 \,\,  , \textrm{ for all } i,j \in \cI \, , i \neq j ,
\ee
and
\be\label{eq:Z3}
\sum_{i=1}^d Z_i Z_i^* = 1.
\ee
In particular, $Z_i$ is a partial isometry for all $i \in \cI$.
If $\Lambda$ is a $k$-step SFT, the $\underline{Z}$ also satisfies
\be\label{eq:Z4}
Z_i^*Z_i = \sum_{\alpha \in E_i^k} \underline{Z}^\alpha \underline{Z}^{\alpha *} \,\, , \textrm{ for all } i \in \cI.
\ee
\end{proposition}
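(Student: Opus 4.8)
The proposition asserts four families of relations for the tuple $\underline{Z} = (Z_1, \ldots, Z_d)$ obtained as the images of the shift generators $S_i$ in the quotient $\cO_\Lambda = \cT_X / \cK(\mathfrak{F}_X)$. My plan is to verify each relation by lifting it (or its defect) to $\cT_X$ and exploiting the concrete description of the $X$-shift acting on the Fock space $\mathfrak{F}_X$, which has orthonormal basis $\{e_\alpha : \alpha \in \Lambda^*\}$ (with $e_0 = \Omega$). The first relation (\ref{eq:Z1}) is essentially free: by Theorem \ref{thm:repZ(I)} (applied to the shift representation $S^X$, which is a genuine representation of $X = X_\Lambda$ by Definition \ref{def:shiftrep}), the tuple $\underline{S^X}$ lies in $Z(I^{X_\Lambda})$, i.e.\ $p(\underline{S^X}) = 0$ for every $p \in I^{X_\Lambda}$; since the quotient map $\cT_X \to \cO_\Lambda$ is a $*$-homomorphism, $p(\underline{Z}) = 0$ follows immediately. (Alternatively, invoke Lemma \ref{lem:shiftpoly}, but that uses $I^{X_\Lambda}$ homogeneous; here $I^{X_\Lambda}$ is indeed homogeneous, being generated by the homogeneous polynomials corresponding to forbidden words.)

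For (\ref{eq:Z2}) and (\ref{eq:Z3}), I would work directly in $\cT_X$ and then pass to the quotient — in fact these two hold already on the Fock-space level, not merely modulo compacts. For (\ref{eq:Z2}): each $S_i$ is a partial isometry (it maps the basis vector $e_\alpha$ to $e_{i\alpha}$ when $i\alpha \in \Lambda^*$ and to $0$ otherwise), and the ranges of distinct $S_i, S_j$ are spanned by disjoint sets of basis vectors ($\{e_{i\alpha}\}$ versus $\{e_{j\beta}\}$), hence orthogonal; therefore $S_i^* S_j = 0$ in $\cT_X$, so $Z_i^* Z_j = 0$. This is exactly the content of the first displayed equation in Lemma \ref{lem:E=T}, which I can cite. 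For (\ref{eq:Z3}): I would show $\sum_{i=1}^d S_i S_i^* = I_{\mathfrak{F}_X} - P_\Omega$ where $P_\Omega$ is the rank-one projection onto the vacuum. Indeed $S_i S_i^*$ is the projection onto $\overline{\operatorname{span}}\{e_{i\alpha} : i\alpha \in \Lambda^*\}$, and as $i$ ranges over $\cI$ these subspaces partition $\{e_\gamma : \gamma \in \Lambda^*, |\gamma| \geq 1\} = \mathfrak{F}_X \ominus \mb{C}\Omega$ (each nonempty word $\gamma$ has a unique first letter). Hence $\sum_i S_i S_i^* = I - P_\Omega$, and since $P_\Omega$ is compact, $\sum_i Z_i Z_i^* = 1$ in $\cO_\Lambda$. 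The partial-isometry assertion for $Z_i$ is then automatic: $Z_i Z_i^*$ is a projection (being the image of the projection $S_i S_i^*$), so $Z_i Z_i^* Z_i = Z_i$.

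For (\ref{eq:Z4}) — the $k$-step SFT relation — I would simply quote equation (\ref{eq:SXimodK}) of Lemma \ref{lem:E=T}, which states exactly that $S_i^* S_i = \sum_{\alpha \in E_i^k} \underline{S}^\alpha \underline{S}^{\alpha *} \pmod{\cK_X}$; applying the quotient map gives $Z_i^* Z_i = \sum_{\alpha \in E_i^k} \underline{Z}^\alpha \underline{Z}^{\alpha *}$ in $\cO_\Lambda$. So for this item essentially no new work is required beyond citing the lemma; one should, however, note for completeness that the sum $\sum_{\alpha \in E_i^k} \underline{S}^\alpha \underline{S}^{\alpha *}$ makes sense in $\cT_X$ (it is a finite sum of projections with orthogonal ranges, being the projection onto $\overline{\operatorname{span}}\{e_{\alpha\beta} : \alpha \in E_i^k, \alpha\beta \in \Lambda^*\}$).

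\textbf{Main obstacle.} There is no serious analytic obstacle here — every relation reduces to an elementary combinatorial bookkeeping argument about which basis vectors $e_\gamma$ of $\mathfrak{F}_X$ lie in the ranges and initial spaces of the partial isometries $S_i$, together with citing Lemma \ref{lem:E=T} and Theorem \ref{thm:repZ(I)}. The only point requiring a little care is the precise identification of the initial space of $S_i$ in the $k$-step case, i.e.\ matching $\{e_\alpha : i\alpha \in \Lambda^*\}$ up to finite codimension with $\bigcup_{\alpha \in E_i^k}\{e_{\alpha\beta} : \alpha\beta \in \Lambda^*\}$; this is precisely Lemma \ref{lem:E_i^k}, so it too is already available. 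Thus the "hard part" is merely to present these verifications cleanly and in the right order, making explicit at each step whether the identity holds exactly in $\cT_X$ or only modulo $\cK(\mathfrak{F}_X)$.
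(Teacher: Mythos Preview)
Your proposal is correct and follows essentially the same route as the paper's proof: the paper derives (\ref{eq:Z1}) from Theorem \ref{thm:repZ(I)} via the quotient $*$-homomorphism, obtains (\ref{eq:Z2}) and (\ref{eq:Z4}) by citing Lemma \ref{lem:E=T}, and gets (\ref{eq:Z3}) from equation (\ref{eq:PC}) (which is precisely your observation $\sum_i S_i S_i^* = I - P_\Omega$). The only difference is that you re-derive (\ref{eq:PC}) by hand instead of citing it, but the content is identical.
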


\begin{proof}
The quotient map $\cT_X \rightarrow \cO_\Lambda$ is a $*$-homomorphism, so (\ref{eq:Z1}) follows from Theorem \ref{thm:repZ(I)}. (\ref{eq:Z2}) and (\ref{eq:Z4}) follow from the previous lemma, and (\ref{eq:Z3}) follows from equation (\ref{eq:PC}).
\end{proof}

\begin{theorem}\label{thm:rep_subshift}
Let $\Lambda$ be a $k$-step SFT. Every unital representation $\pi : \cO_\Lambda \rightarrow B(H)$ is determined by a row-contraction $\underline{T} = (T_1, \ldots, T_d)$ satisfying relations (\ref{eq:Z1})-(\ref{eq:Z4}) such that $\pi(Z_i) = T_i$ for all $i \in \cI$. Conversely, every row contraction in $B(H)^d$ satisfying the relations (\ref{eq:Z1})-(\ref{eq:Z4}) gives rise to a unital representation $\pi : \cO_\Lambda \rightarrow B(H)$ such $\pi(Z_i) = T_i$ for all $i \in \cI$.
\end{theorem}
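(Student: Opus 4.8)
\textbf{Proof plan for Theorem \ref{thm:rep_subshift}.}

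The plan is to use the universal-algebra description from Theorem \ref{thm:universal} together with the fact, established in Lemma \ref{lem:E=T}, that $\cE_X = \cT_X$ and that $\cK(\mathfrak{F}_X)$ is already contained in the image of $\cE_X$ (Proposition \ref{prop:KinE}). The ``representation $\Rightarrow$ tuple'' direction is immediate: given a unital $\pi:\cO_\Lambda\to B(H)$, compose with the quotient map $\cT_X\to\cO_\Lambda$ and set $T_i=\pi(Z_i)$; since $\cT_X$ is generated by the $S_i$, the tuple $\underline T$ determines $\pi$, and $\underline T$ inherits relations \eqref{eq:Z1}--\eqref{eq:Z4} as the image of the corresponding relations on the $Z_i$ (a $*$-homomorphism preserves all polynomial $*$-identities), and $\underline T$ is a row contraction because $\sum S_iS_i^*\le I$ maps to $\sum T_iT_i^*\le I$ (in fact $=I$ by \eqref{eq:Z3}). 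The substance is in the converse.

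For the converse, first I would build a representation of the Toeplitz algebra. Let $\underline T=(T_1,\dots,T_d)\in B(H)^d$ be a row contraction satisfying \eqref{eq:Z1}--\eqref{eq:Z4}. By Theorem \ref{thm:repZ(I)} (applied with $I=I^{X_\Lambda}$, so $X_I=X_\Lambda$), relation \eqref{eq:Z1} says exactly that the family $T=\{T_n\}_{n\in\mb N}$ constructed from $T_1$ as in Proposition \ref{prop:representation} is a completely contractive representation of $X_\Lambda$. Next I would show that $T$ is in fact an \emph{isometric} representation: one must check $\widetilde T_n^*\widetilde T_n=I_{X(n)}$ for all $n$, equivalently $T(e_\alpha)^*T(e_\beta)=\delta_{\alpha\beta}$ for words $\alpha,\beta\in\Lambda^n$ of equal length. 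Relation \eqref{eq:Z2} gives $T_i^*T_j=0$ for $i\ne j$, so $T(e_\alpha)^*T(e_\beta)=0$ unless $\alpha$ and $\beta$ share their last letter; peeling letters off the right and using that each $T_i$ is a partial isometry with $T_i^*T_i=\sum_{\gamma\in E_i^k}\underline T^\gamma\underline T^{\gamma*}$ (relation \eqref{eq:Z4}), one reduces inductively, using the $k$-step-SFT combinatorics of Lemma \ref{lem:E_i^k}, to the statement that $T(e_\alpha)^*T(e_\alpha)$ is the identity on the relevant subspace — here is where relations \eqref{eq:Z3} and \eqref{eq:Z4} must be combined carefully. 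Having an isometric representation of $X_\Lambda$, by the universal property of $\cT_X=C^*(\underline{S^X})$ (Theorem \ref{thm:universal} gives the unital operator algebra $\cA_X$; the $C^*$-version follows from Popescu's Poisson transform, Theorem \ref{thm:CP1}, noting that here $\cE_X=\cT_X$ by Lemma \ref{lem:E=T}) there is a unital $*$-homomorphism $\rho:\cT_X\to B(H)$ with $\rho(S_i)=T_i$.

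It then remains to show that $\rho$ factors through the quotient $\cO_\Lambda=\cT_X/\cK(\mathfrak F_X)$, i.e.\ that $\rho$ annihilates $\cK(\mathfrak F_X)$. By Proposition \ref{prop:KinE}, $\cK(\mathfrak F_X)$ is generated as an ideal by the rank-one-type elements $(\underline{S^X})^\beta\,\Delta(\underline{S^X})\,(\underline{S^X})^{\alpha*}$ where $\Delta(\underline{S^X})=I-\sum_i S_i^X(S_i^X)^*=P_{\mb C}$; concretely $\cK(\mathfrak F_X)$ is generated by $P_{\mb C}=I-\sum_i S_i^X(S_i^X)^*$. Applying $\rho$ and using \eqref{eq:Z3} gives $\rho(P_{\mb C})=I-\sum_i T_iT_i^*=0$, hence $\rho$ kills the ideal generated by $P_{\mb C}$, which is all of $\cK(\mathfrak F_X)$. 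Therefore $\rho$ descends to a unital $*$-homomorphism $\pi:\cO_\Lambda\to B(H)$ with $\pi(Z_i)=T_i$, completing the converse.

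The main obstacle I anticipate is the second step of the converse — verifying that the representation $T$ coming from \eqref{eq:Z1} is genuinely isometric on every fiber $X(n)$. The relations \eqref{eq:Z2}--\eqref{eq:Z4} are exactly engineered for this, but orchestrating the induction on word length so that the ``defect'' terms telescope correctly requires the finite-type hypothesis in an essential way (through Lemma \ref{lem:E_i^k}), and one has to be attentive to the fact that $T_i^*T_i$ need not be a projection unless one knows $\underline T$ satisfies the full system; a clean bookkeeping of which subspaces $\underline T^\alpha$ maps into is needed. Everything else is an application of results already in the excerpt: Theorem \ref{thm:repZ(I)}, Proposition \ref{prop:KinE}, Lemma \ref{lem:E=T}, and the universal/Poisson-transform machinery of Chapter \ref{chap:universal}.
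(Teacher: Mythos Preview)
Your plan has a genuine gap at the step you yourself flag as the obstacle: the representation $T$ of $X_\Lambda$ is \emph{not} isometric in general, and cannot be. Relation \eqref{eq:Z4} gives $T_i^*T_i=\sum_{\alpha\in E_i^k}\underline T^\alpha\underline T^{\alpha*}$, which is the projection onto $\overline{\sum_{\alpha\in E_i^k}\underline T^\alpha H}$, not the identity --- it equals $I_H$ only when $E_i^k=\Lambda^k$, i.e.\ when every legal $k$-word can be preceded by the letter $i$. For a genuine SFT this fails for some $i$, so already at $n=1$ you have $T(e_i)^*T(e_i)\neq I_H$ and the induction never starts. This is not a bookkeeping issue: Corollary \ref{cor:onlyproductisorep} says that a subproduct system admitting an injective isometric representation is a full product system, and $X_\Lambda$ is a product system only when $\Lambda$ is the full shift (in which case $\cO_\Lambda$ is Cuntz). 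So there is no ``universal $\cT_X$ via isometric representations'' route here.

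The paper's proof avoids this entirely. It uses Theorem \ref{thm:CP1} to obtain only a unital completely positive map $\Psi:\cE_X\to B(H)$ sending $\underline S^\alpha\underline S^{\beta*}\mapsto\underline T^\alpha\underline T^{\beta*}$; since $\cE_X=\cT_X$ (Lemma \ref{lem:E=T}) and $\Psi$ kills the rank-one generators $\underline S^\alpha(I-\sum S_iS_i^*)\underline S^{\beta*}$ by relation \eqref{eq:Z3}, it descends to a positive unital map $\pi:\cO_\Lambda\to B(H)$. Multiplicativity of $\pi$ is then argued \emph{directly at the level of the quotient}: relations \eqref{eq:Z1}--\eqref{eq:Z4} let one rewrite any product $(\underline Z^\alpha\underline Z^{\beta*})(\underline Z^{\alpha'}\underline Z^{\beta'*})$ as a finite sum $\sum\underline Z^\gamma\underline Z^{\delta*}$, and since $\underline T$ satisfies the identical relations, $\pi$ respects this rewriting. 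Your argument for factoring through the compacts is exactly right; what needs replacing is the isometric-representation step by this relation-matching argument for multiplicativity.
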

\begin{proof}
It is the second assertion that is non-trivial, and we will try to convince that it is true. By Theorem \ref{thm:CP1}, there is unital completely positive map
\bes
\Psi : \cE_X \rightarrow B(H)
\ees
sending $\underline{S}^\alpha \underline{S}^{\beta*}$ to $\underline{T}^\alpha \underline{T}^{\beta*}$. Since enough of the rank one operators on $\mathfrak{F}_X$ arise as $\underline{S}^\alpha (I - \sum_{i = 1}^d S_i S_i^*) \underline{S}^{\beta*}$ (see equation (\ref{eq:PC})), and because $\underline{T}$ satisfies (\ref{eq:Z3}), we must have that $\Psi(K) = 0$ for every $K \in \cK(\mathfrak{F}_X)$. By Lemma \ref{lem:E=T}, $\cE_X = \cT_X$, and it follows that $\Psi$ induces a positive and unital (hence contractive) mapping
\bes
\pi : \cO_\Lambda \rightarrow B(H)
\ees
that sends $\underline{Z}^\alpha \underline{Z}^{\beta*}$ to $\underline{T}^\alpha \underline{T}^{\beta*}$. Roughly speaking: $\pi$ must be multiplicative because $\underline{Z}$ and $\underline{T}$ satisfy the same relations. In more detail: every product $(\underline{Z}^\alpha \underline{Z}^{\beta*})(\underline{Z}^{\alpha'} \underline{Z}^{\beta'*})$ may be written, using the relations (\ref{eq:Z1})-(\ref{eq:Z4}) as some sum $\sum_{\gamma,\delta} \underline{Z}^\gamma \underline{Z}^{\delta*}$. The mapping $\pi$ then takes this sum to $\sum_{\gamma,\delta} \underline{T}^\gamma \underline{T}^{\delta*}$, and this can be rewritten (using the same relations) as \bes
(\underline{T}^\alpha \underline{T}^{\beta*})(\underline{T}^{\alpha'} \underline{T}^{\beta'*}) =
\pi(\underline{Z}^\alpha \underline{Z}^{\beta*})\pi(\underline{Z}^{\alpha'} \underline{Z}^{\beta'*}).
\ees
This shows that
\bes
\pi\left((\underline{Z}^\alpha \underline{Z}^{\beta*})(\underline{Z}^{\alpha'} \underline{Z}^{\beta'*})\right) = \pi(\underline{Z}^\alpha \underline{Z}^{\beta*})\pi(\underline{Z}^{\alpha'} \underline{Z}^{\beta'*}),
\ees
and since the elements of the form $\underline{Z}^\alpha \underline{Z}^{\beta*}$ span $\cO_\Lambda$, and since $\pi$ is a positive linear map, it follows that $\pi$ is in fact a $*$-representation.
\end{proof}

\begin{remark}
\emph{Note that for $\Lambda = \cI^{\mb{Z}}$ we recover the representation theory of the Cuntz algebra.}
\end{remark}

\end{document}